\documentclass{amsart}

\newtheorem{theorem}{Theorem}[section]
\newtheorem{lemma}[theorem]{Lemma}

\theoremstyle{definition}
\newtheorem{definition}[theorem]{Definition}

\theoremstyle{remark}
\newtheorem{remark}[theorem]{Remark}

\numberwithin{equation}{section}



\usepackage{verbatim}
\usepackage{pgf}
\usepackage{tikz}
\usepackage{amscd}

\usepackage{hyperref}

\newtheorem{corollary}[theorem]{Corollary}

\begin{document}

\title[Heat flow and Yang-Mills theory]{Heat flow on the moduli space of flat connections and Yang-Mills theory}
\author{R\'emi Janner}
\address{Department of Mathematics, ETH, R\"amistrasse 101, 8092 Zurich, Switzerland}
\address{Alpiq Management AG, Bahnhofquai 12, 4601 Olten, Switzerland}
\email{remi.janner@gmail.com}
\thanks{The author want to express his gratitude to Dietmar Salamon for valuable discussions and for suggesting this problem to him. For partial financial support we are most grateful to the ETH Zurich (grant TH-0106-1).}
\subjclass[2010]{58E05, 58E15, 53C22, 53C07, 53D20}
\keywords{adiabatic process, flat connection, heat flow, moduli space, Yang-Mills flow.}

\begin{abstract}
It is known that there is a bijection between the perturbed closed geodesics, 
below a given energy level, on the moduli space of flat connections $\mathcal M$ 
and families of perturbed Yang-Mills connections depending on a parameter 
$\varepsilon$. In this paper we study the heat flow on the loop space on 
$\mathcal M$ and the Yang-Mills $L^2$-flows for a $3$-manifold $N$ with 
partial rescaled metrics. Our main result is that the bounded Morse homology 
of the loop space on $\mathcal M$ is isomorphic to the bounded Morse homologies 
of the connections space of $N$.
\end{abstract}

 \maketitle


\tableofcontents

\section{Introduction and background}

In 1983 Atiyah and Bott (cf. \cite{MR702806}) introduced the moduli space of flat 
connections for a principal bundle $P\to\Sigma$ over a surface $(\Sigma,g_\Sigma)$. 
This moduli space can be seen as an infinite dimensional symplectic reduction; 
in fact, the conformal structure on the surface defines an almost complex structure, and 
with it together with the scalar product on the 1-forms one can also obtain a symplectic form. 
In case we pick a principal non trivial $\textrm{\bf SO}(3)$-bundle, then the moduli 
space $\mathcal M^g(P)$, defined as the quotient between the space of the flat connections
 $\mathcal A_0(P)\subset \mathcal A(P)$ and the identity component of the gauge 
group $\mathcal G_0(P)$, is a smooth compact symplectic manifold of dimension 
$6g-6$ (cf. \cite{MR1297130}) where $g$ denotes the genus of the surface 
and $\mathcal A(P)$ the set of the connections of the bundle.\\


\noindent{\bf Critical connections.} On the one side, we consider the loop space $\mathcal L(\mathcal M^g(P))$ of the manifold $\mathcal M^g(P)$ and if we want to compute the perturbed energy functional of a loop $\gamma$, where the perturbation comes from an equivariant Hamiltonian map, we can pick a lift $A(t)\in \mathcal A_0(P)$ of $\gamma$ and the unique loop $\Psi(t)\in\Omega^0(\Sigma,\mathfrak g_P)$ of $0$-forms satisfying the condition\footnote{$\Omega^k(\Sigma,\mathfrak g_P)$ denotes the space of $k$-forms on $\Sigma$ with values in the adjoint bundle $\mathfrak g_P:=P\times _{\mathrm{Ad}}\mathfrak g$ defined by the equivalence classes $[pg,\xi]\equiv[p,\mathrm{Ad}_g\xi]\equiv[p,g\xi g^{-1}]$ for $p\in P$, $g\in G$ and $\xi\in \mathfrak g$.}
\begin{equation}\label{cond:eqg0}
d_A^*\left(\partial_tA-d_A\Psi\right)=0.
\end{equation}
In this case, we can see $A(t)+\Psi(t) dt$ as a connection on $\Sigma \times S^1$ and the perturbed energy functional can be written as 
\begin{equation}\label{intro:EH}
E^H(A)=\frac 12 \int_0^1\left(\left\|\partial_tA-d_A\Psi\right\|^2_{L^2(\Sigma)}-H_t(A)\right) dt
\end{equation}
where $H_t:\mathcal A(P)\to \mathbb R$ is a generic equivariant Hamiltonian map. The critical loops of (\ref{intro:EH}) have to satisfy the equation 
\begin{equation}\label{intro:eqg0}
\pi_A\left(-\nabla_t(\partial_t A-d_A \Psi)-*X_t(A)\right)=0,
\end{equation}
where $\nabla_t:=\partial_t+[\Psi,.]$, where $\pi_A$ denotes the projection of the 1-forms in to the linear space of the harmonic 1-forms $H_A^1$ which corresponds to the tangent space of the manifold $\mathcal M^g(P)$ at the point $[A]$ and where the time-dependent Hamiltonian vector field $X_t$ is defined such that, for any connection $A\in \mathcal A(P)$ and any 1-form $\alpha$, $dH_t(A)\alpha=\int_\Sigma \langle X_t(A)\wedge \alpha\rangle$. The equation (\ref{intro:eqg0}) can be therefore also written as
\begin{equation}\label{intro:eqg1}
-\nabla_t(\partial_t A-d_A \Psi)-*X_t(A)-d_A^* \omega=0;
\end{equation}
the 2-form  $\omega(t)\in\Omega^2(\Sigma,\mathfrak g_P)$ is defined uniquely by the identity
\begin{equation*}
d_Ad_A^*\omega=\left[\left(\partial_t A-d_A \Psi\right) \wedge\left(\partial_t A-d_A \Psi\right )\right]-d_A*X_t(A)
\end{equation*}
and $d_A^*\omega$ corresponds to the non-harmonic part of 
$-\nabla_t(\partial_t A-d_A \Psi)-*X_t(A)$.

On the other side, if we pick the 3-manifold $\Sigma \times S^1$ with the metric $\varepsilon^2 g_\Sigma \oplus g_{S^1}$ for a positive parameter $\varepsilon$ and if we consider the principal $\textrm{\bf SO}(3)$-bundle $P\times S^1\to \Sigma\times S^1$ such that the restriction $P\times \{s\}\to \Sigma\times \{s\}$ is non trivial, then the perturbed Yang-Mills functional is
 \begin{equation}\label{intro:YME}
 \mathcal{YM}^{\varepsilon,H}(\Xi)
  =\frac12\int_{0}^1 \left(\frac 1 {\varepsilon^2}\|F_{A}\|_{L^2(\Sigma)}^2
  +\|\partial_{t}A-d_{A}\Psi\|_{L^2(\Sigma)}^2-H_{t}(A)\right) dt;
\end{equation}
where the connection $\Xi\in \mathcal A(P\times S^1)$ can be written as $=A+\Psi \,dt$ with $A(t)\in\mathcal A(P)$, $\Psi(t)\in\Omega^0(\Sigma,\mathfrak g_P)$; in fact, the curvature of $\Xi$ is $F_{\Xi}=F_{A}-(\partial_{t}A-d_{A}\Psi)\wedge dt$. A perturbed Yang-Mills connection $\Xi^\varepsilon\in\mathcal A(P\times S^1)$ has therefore to satisfy the two conditions
 \begin{equation}\label{intro:YMeq1}
 \frac1{\varepsilon^2}d_{A^\varepsilon}^*F_{A^\varepsilon}-
  \nabla_t(\partial_t A^\varepsilon-d_{A^\varepsilon}\Psi^\varepsilon)
 -*X_{t}(A^\varepsilon)=0,
  \end{equation}
 \begin{equation}\label{intro:YMeq2}
 d_{A^\varepsilon}^*(\partial_tA^\varepsilon-d_{A}\Psi^\varepsilon)=0.
\end{equation}

Next, we consider the sets of critical connections below an energy level $b$, i.e.
\begin{equation*}
\begin{split}
\mathrm{Crit}^b_{E^H}:=\big\{A+\Psi dt\in \mathcal L(\mathcal A_0(P)\otimes &\Omega^0(\Sigma,\mathfrak g_P)\wedge dt)|\,E^H(A)\leq b, (\ref{cond:eqg0}),(\ref{intro:eqg0})\big\},
\end{split}
\end{equation*}
\begin{equation*}
\begin{split}
\mathrm{Crit}^b_{\mathcal{YM}^{\varepsilon,H}}:=
\big\{\Xi^\varepsilon\in& \mathcal A(P\times S^1)|\,\mathcal {YM}^{\varepsilon,H }\left(\Xi^\varepsilon\right)\leq b, (\ref{intro:YMeq1}),(\ref{intro:YMeq2})\big\}.
\end{split}
\end{equation*}
We can now define a map between the perturbed geodesics, $\mathrm{Crit}^b_{E^{H}}$, and the set of the perturbed Yang-Mills connections, $\mathrm{Crit}^b_{\mathcal{YM}^{\varepsilon,H}}$, with energy less 
than $b$ provided that the parameter $\varepsilon$ is small enough 
(cf. \cite{MR1715156}); this map can also be defined uniquely, it is 
bijective and maps perturbed geodesics to perturbed Yang-Mills connections
 with the same Morse index (cf. \cite{remyj6}):

\begin{theorem}[cf. \cite{remyj6}, theorem 1.1]\label{thm:mainthm}
We assume that the Jacobi operators of all the perturbed geodesics are invertible and we choose a regular value $b$ of the energy $E^H$ and $p\geq 2$. Then there are two positive constants $\varepsilon_0$ and $c$ such that the following holds. For every $\varepsilon\in(0,\varepsilon_0)$ there is a unique gauge equivariant map
\begin{equation*}
\mathcal T^{\varepsilon,b}:\mathrm{Crit}^b_{E^H}
\to \mathrm{Crit}^b_{\mathcal{YM}^{\varepsilon,H}}
\end{equation*}
satisfying, for $\Xi^0 \in \mathrm{Crit}^b_{E^H}$, 
\begin{equation}\label{intro:A:a1}
d_{\Xi^0 }^{*_\varepsilon}\left(\mathcal T^{\varepsilon,b }(\Xi^0)-\Xi^0\right)=0,\quad\left\| \mathcal T^{\varepsilon,b}(\Xi^0)-\Xi^0
\right\|_{\Xi^0,2,p,\varepsilon, \Sigma\times S^1}
\leq c \varepsilon^2.
\end{equation}
Furthermore, this map is bijective and $\textrm{index}_{E^H}(\Xi^0)=\textrm{index}_{\mathcal {YM}^{\varepsilon,H}}(\mathcal T^{\varepsilon,b}(\Xi^0))$.
\end{theorem}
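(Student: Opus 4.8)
The plan is to construct the map $\mathcal T^{\varepsilon,b}$ by a quantitative implicit function theorem (Newton--Picard iteration) applied to the $\varepsilon$-rescaled Yang-Mills equations on $\Sigma\times S^1$, starting from a perturbed geodesic $\Xi^0\in\mathrm{Crit}^b_{E^H}$ as the initial approximate solution. First I would recast the system (\ref{intro:YMeq1})--(\ref{intro:YMeq2}) as a single nonlinear equation $\mathcal F^\varepsilon(\Xi)=0$ on the affine space of connections, working in the $\varepsilon$-dependent Sobolev norms $\|\cdot\|_{\Xi^0,2,p,\varepsilon,\Sigma\times S^1}$ indicated in (\ref{intro:A:a1}) (with the weight $1/\varepsilon^2$ on the $\Sigma$-derivatives matching the rescaled metric). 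The key inputs are: (i) an \emph{error estimate} showing that $\Xi^0$ is an approximate Yang-Mills connection, $\|\mathcal F^\varepsilon(\Xi^0)\|\leq C\varepsilon^2$, which follows because $\Xi^0$ satisfies (\ref{intro:eqg1}) and $F_{A^0}=0$, so the only obstruction to solving (\ref{intro:YMeq1}) is the term $\varepsilon^{-2}d_{A^0}^*F_{A^0}$ against the harmonic projection — combined with the elliptic estimate for $d_Ad_A^*$ on the bundle-valued forms over $\Sigma$ this produces the $\varepsilon^2$; and (ii) a \emph{uniform invertibility estimate} for the linearization $d\mathcal F^\varepsilon(\Xi^0)$ on the complement of the gauge orbit, with operator norm of the inverse bounded independently of $\varepsilon$. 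The invertibility is where the hypothesis that the Jacobi operators of the perturbed geodesics are invertible enters: the linearized operator splits, up to lower-order terms, into the Jacobi operator along $\gamma$ in the harmonic (moduli-space) directions, which is invertible by assumption, and a part in the normal directions which, after the $1/\varepsilon^2$ rescaling, behaves like $\varepsilon^{-2}d_Ad_A^*$ plus bounded terms and is therefore invertible with norm $O(\varepsilon^2)$ on that block — uniformly in $\varepsilon$ once $\varepsilon$ is small.

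Granting (i) and (ii), the Newton--Picard scheme converges to a genuine solution $\Xi^\varepsilon=\mathcal T^{\varepsilon,b}(\Xi^0)$ with $\|\Xi^\varepsilon-\Xi^0\|_{\Xi^0,2,p,\varepsilon,\Sigma\times S^1}\leq c\varepsilon^2$, and imposing the Coulomb-type gauge condition $d_{\Xi^0}^{*_\varepsilon}(\Xi^\varepsilon-\Xi^0)=0$ fixes the solution within its gauge orbit, giving both existence and local uniqueness; the statement then gives uniqueness of the map. Next I would check that $\mathcal T^{\varepsilon,b}$ lands in $\mathrm{Crit}^b_{\mathcal{YM}^{\varepsilon,H}}$, i.e. that the energy inequality $\mathcal{YM}^{\varepsilon,H}(\Xi^\varepsilon)\leq b$ is preserved: since $b$ is a regular value of $E^H$, the energies $\mathcal{YM}^{\varepsilon,H}(\Xi^\varepsilon)=E^H(\Xi^0)+O(\varepsilon^2)$ stay below $b$ for all the finitely many critical points involved once $\varepsilon_0$ is small enough. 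For \textbf{surjectivity}, I would argue by contradiction/compactness: if some perturbed Yang-Mills connection below level $b$ were not in the image for a sequence $\varepsilon_n\to 0$, then an $\varepsilon$-convergence (adiabatic limit) argument — bounded $\mathcal{YM}^{\varepsilon,H}$-energy forces $\|F_{A^{\varepsilon_n}}\|_{L^2(\Sigma)}^2=O(\varepsilon_n^2)$, hence after gauge fixing $A^{\varepsilon_n}$ converges modulo gauge to a flat connection and the limit solves (\ref{intro:eqg0}) — would produce a perturbed geodesic whose image under $\mathcal T^{\varepsilon_n,b}$ coincides with the given connection for large $n$ by the uniqueness in (i)+(ii), a contradiction.

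Finally, the \textbf{index equality} $\mathrm{index}_{E^H}(\Xi^0)=\mathrm{index}_{\mathcal{YM}^{\varepsilon,H}}(\mathcal T^{\varepsilon,b}(\Xi^0))$: I would compare the Hessians of $E^H$ at $\Xi^0$ and of $\mathcal{YM}^{\varepsilon,H}$ at $\Xi^\varepsilon$. Using the same splitting as in (ii), the Hessian of $\mathcal{YM}^{\varepsilon,H}$ on the harmonic block converges (as $\varepsilon\to 0$) to the Jacobi operator, hence has the same number of negative eigenvalues for $\varepsilon$ small, while on the normal block the rescaled Hessian is $\varepsilon^{-2}$ times a positive operator plus a bounded perturbation, hence positive definite and contributing no negative eigenvalues — a spectral-flow/continuity argument in $\varepsilon$ then pins the Morse indices together. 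The main obstacle I expect is step (ii): obtaining the uniform (in $\varepsilon$) lower bound on the linearized operator in the $\varepsilon$-weighted norms, because the cross terms coupling the harmonic and normal directions are only $O(1)$ while the normal block is blown up by $\varepsilon^{-2}$, so one must carefully track how these interact — this requires precise elliptic estimates for $d_A$, $d_A^*$ over $\Sigma$ with the adjoint-bundle coefficients and a Schur-complement-type argument to decouple the blocks without losing uniformity; everything else is then a fairly standard quantitative inverse function theorem together with an adiabatic-limit compactness argument.
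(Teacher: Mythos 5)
This theorem is not proved in the present paper: it is imported verbatim from the companion paper \cite{remyj6} (theorem 1.1), so there is no in-text proof to compare against. That said, your strategy coincides with the one used there and mirrored at the flow level throughout this paper (the existence/local-uniqueness theorems \ref{flow:thm:existence} and \ref{flow:thm:locuniq}, the surjectivity-by-contradiction argument of section \ref{flow:section:surj}, and the linear estimates of section \ref{flow:section:linest} built on exactly the splitting $H^1_A\oplus\textrm{im }d_A\oplus\textrm{im }d_A^*$ you describe): Newton--Picard iteration in $\varepsilon$-weighted norms, uniform invertibility of the linearization via the Jacobi operator on the harmonic block and $\varepsilon^{-2}\Delta_A$ on its complement, an adiabatic compactness argument for surjectivity, and a spectral comparison of Hessians for the index.

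One point in your step (i) is imprecise and worth fixing. Since $F_{A^0}=0$, the term $\varepsilon^{-2}d_{A^0}^*F_{A^0}$ vanishes identically and is not the source of the $\varepsilon^2$; the residual $\mathcal F^\varepsilon(\Xi^0)$ is actually only $O(1)$, its nonzero part being the non-harmonic component $-(1-\pi_{A^0})\bigl(\nabla_t(\partial_tA^0-d_{A^0}\Psi^0)+*X_t(A^0)\bigr)=d_{A^0}^*\omega$ from (\ref{intro:eqg1}). The quadratic gain comes instead from the fact that this $O(1)$ residual lies in $\textrm{im }d_{A^0}^*$, where the block $\varepsilon^{-2}d_{A^0}d_{A^0}^*+\dots$ of the linearization has inverse of norm $O(\varepsilon^2)$; equivalently, one must first correct $\Xi^0$ by the $1$-form $\alpha_0^\varepsilon$ of Remark \ref{thm:existence:crit:est} solving (\ref{eq:firststep}), which satisfies $\|\alpha_0^\varepsilon\|=O(\varepsilon^2)$, and only the corrected connection is an approximate zero of $\mathcal F^\varepsilon$ to the order needed to start the iteration. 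With that adjustment your outline matches the cited proof; the remaining steps (ii), surjectivity, and the index count are as in \cite{remyj6} and in the flow-level analogues of this paper.
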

\noindent The norm $\|\cdot\|_{\Xi^0,2,p,\varepsilon, \Sigma\times S^1}$ is introduced in the appendix \ref{appendix:norm}. 
\begin{remark}\label{thm:existence:crit:est}
With the same assumptions of the last theorem we can also conclude the following estimates (cf. \cite{remyj6}, theorem 9.1 and lemma 9.6). We consider the unique solution 
$$\alpha_0^\varepsilon(t)\in \textrm{im } \left(d_{A^0(t)}^*:\Omega^2(\Sigma,\mathfrak g_P)\to \Omega^1(\Sigma,\mathfrak g_P)\right)$$
of the equation
\begin{equation}\label{eq:firststep}
d_{A^0}^*d_{A^0}\alpha_0^\varepsilon
=\varepsilon^2\nabla_t(\partial_tA^0-d_{A^0}\Psi^0)+\varepsilon^2
*X_t(A^0),
\end{equation}
then, for $\alpha+\psi dt:=\mathcal T^{\varepsilon,b }(\Xi^0)-\Xi^0$,
\begin{equation}\label{eq:cbb2}
\left\| (1-\pi_{A^0})(\alpha-\alpha_0^\varepsilon)
\right\|_{\Xi^0,2,p,\varepsilon, \Sigma\times S^1}+\varepsilon\left\|\psi dt\right\|_{\Xi^0,2,p,\varepsilon, \Sigma\times S^1} \leq c\varepsilon^{4},
\end{equation}
\begin{equation}\label{eq:cbb2cxy}
\left\|\pi_{A^0}(\alpha)
\right\|_{\Xi^0,2,p,1, \Sigma\times S^1}+\|\alpha_0^\varepsilon\|_{\Xi^0,2,p,1, \Sigma\times S^1} \leq c\varepsilon^{2}.
\end{equation}
\end{remark}

\vspace{0.4cm}
\noindent{\bf Bijection between the flows.} The theorem \ref{thm:mainthm} allows us to identify the critical connections and thus the next natural step is to prove a bijection between the flow lines.

On the one side, every map $[\Xi]: S^1\times \mathbb R \to\mathcal M^g(P)$ can be seen as a connection $\Xi=A+\Psi dt+\Phi ds\in \mathcal A(P\times S^1\times \mathbb R)$ which satisfies
\begin{equation}\label{intro:eqgeoflow333}
\begin{split}
F_A=0, \quad \partial_t A-d_A\Psi\in H_A^1,  \quad \partial_s A-d_A\Phi\in H_A^1
\end{split}
\end{equation}
and if we have a map $A:S^1\times \mathbb R \to\mathcal A_0(P)$, 
the second and the third condition of (\ref{intro:eqgeoflow333}) 
yield to unique 0-forms $\Psi, \Phi\in \Omega^0(\Sigma\times S^1\times \mathbb R,\mathfrak g_P)$. In order to achieve the transversality condition for the heat flow we need to choose a generic abstract perturbation on the loop space instead of the Hamiltonian one. Furthermore, $[\Xi]$ is a heat flow between the perturbed geodesics $\Xi_\pm \in \mathrm{Crit}_{E^H}^b$, $b\in \mathbb R$, if it satisfies the flow equation for the functional $E^H$, i.e.
\begin{equation}\label{intro:eqgeoflow}
\begin{split}
\partial_s A-d_A\Phi -\pi_A\big(\nabla_t&\left(\partial_t A-d_A\Psi\right)+*X_t(A)\big)=0,\\
\lim_{s\to\pm \infty}\Xi(&s)=\Xi_\pm.
\end{split}
\end{equation}

On the other side, a perturbed, $\varepsilon$-dependent, Yang-Mills flow between 
two perturbed Yang-Mills connections 
$\Xi_\pm\in \mathrm{Crit}_{\mathcal {YM}^{\varepsilon,H}}^b$ can be considered as a 
connection $\Xi:=A+\Psi dt+\Phi ds$ on the 4-manifold 
$\Sigma\times S^1\times \mathbb R$, where $\Phi\in \Omega^0(\Sigma\times S^1\times \mathbb R,\mathfrak g_P)$ makes the equations gauge invariant, and it satisfies the equations
\begin{equation}\label{intro:eq}
\begin{split}
\partial_s A-d_A\Phi + \frac 1{\varepsilon^2}d_A^*F_A- \nabla_t\left(\partial_t A-d_A\Psi\right)-*X_t(A)=0,\\
\partial_s\Psi-\nabla_t\Phi- \frac 1{\varepsilon^2} d_A^*\left(\partial_t A-d_A\Psi\right)=0,\quad  \lim_{s\to \pm \infty} \Xi=\Xi_\pm.
\end{split}
\end{equation}
In the following we denote by $\mathcal M^0(\Xi_-,\Xi_+)$ (respectively by $\mathcal M^\varepsilon(\Xi_-,\Xi_+)$) the moduli space of the solutions of $(\ref{intro:eqgeoflow})$ (respectively of (\ref{intro:eq})). We can therefore expect a bijective relation also between the flows of the two functionals for $\varepsilon$ small enough.

\begin{theorem}\label{flow:thm:main1}
We assume that the energy functional $E^H$ is Morse-Smale and we choose $p>2$ and a regular value $b > 0$ of $E^H$. There are constants $\varepsilon_0, c>0$ such  that the following holds. For every $\varepsilon \in (0,\varepsilon_0)$, every pair $\Xi^{0}_\pm:=A^0_\pm+\Psi^0_\pm dt \in \mathrm {Crit}^b_{E^H}$ with index difference 1, there exists a unique map
$$\mathcal R^{\varepsilon,b}: \mathcal M^0\left(\Xi_-^0,\Xi_+^0\right)\to \mathcal M^\varepsilon\left(\mathcal T^{\varepsilon,b}(\Xi_-^0),\mathcal T^{\varepsilon,b}(\Xi_+^0)\right)$$
satisfying for each $\Xi^0\in\mathcal M^0\left(\Xi_-^0,\Xi_+^0\right)$
\begin{equation}\label{Rthm:c1}
d_{\Xi^0}^{*_\varepsilon }\left(\mathcal R^{\varepsilon,b}(\Xi^0)-\mathcal K_2^\varepsilon(\Xi^0)\right)=0, \quad \mathcal R^{\varepsilon,b}(\Xi^0)-\mathcal K_2^\varepsilon(\Xi^0) \in \textrm{im } \left(\mathcal D^\varepsilon(\mathcal K_2(\Xi^0))\right)^*,
\end{equation}
\begin{equation}\label{Rthm:c2}
\left\|\mathcal R^{\varepsilon,b}(\Xi^0)-\mathcal K_2^\varepsilon(\Xi^0)\right\|_{1,2;p,1}\leq c\varepsilon^2.
\end{equation}
Furthermore, $\mathcal R^{\varepsilon,b}$ is bijective.
\end{theorem}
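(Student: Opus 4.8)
The argument follows the by-now standard scheme for adiabatic-limit problems of this type (cf. the Dostoglou--Salamon method and its adaptations in \cite{MR1715156}, \cite{remyj6}): build an approximate Yang--Mills flow line out of a given heat flow line, correct it to an exact solution by a quantitative implicit function theorem, and then show that every Yang--Mills flow line with the prescribed endpoints arises in this way.

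First I would make the map $\mathcal K_2^\varepsilon$ precise. Given $\Xi^0\in\mathcal M^0(\Xi^0_-,\Xi^0_+)$ solving \eqref{intro:eqgeoflow}, one adds to $A^0$ the curvature correction $\alpha_0^\varepsilon$ determined fibrewise by \eqref{eq:firststep} — now depending on both $t$ and $s$ — together with the induced corrections of $\Psi^0$ and $\Phi^0$; morally this propagates the construction of $\mathcal T^{\varepsilon,b}$ along the flow. Using the flow equation \eqref{intro:eqgeoflow}, the uniform elliptic estimates for $d_{A^0}^*d_{A^0}$ on the orthogonal complement of $H^1_{A^0}$, the a priori bounds on $\Xi^0$ coming from $E^H(A^0)\le b$, and the exponential decay towards $\Xi^0_\pm$, I would show that $\mathcal K_2^\varepsilon(\Xi^0)$ solves \eqref{intro:eq} up to an error whose suitably weighted norm is $O(\varepsilon^2)$, and that near the ends $\mathcal K_2^\varepsilon(\Xi^0)$ agrees with $\mathcal T^{\varepsilon,b}(\Xi^0_\pm)$ up to exponentially small terms, invoking Theorem \ref{thm:mainthm} and the estimates \eqref{eq:cbb2}, \eqref{eq:cbb2cxy} of Remark \ref{thm:existence:crit:est}.

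The heart of the proof is the linear analysis. Linearising the gauge-fixed Yang--Mills flow operator of \eqref{intro:eq} at $\mathcal K_2^\varepsilon(\Xi^0)$, subject to the conditions \eqref{Rthm:c1}, produces the operator $\mathcal D^\varepsilon(\mathcal K_2(\Xi^0))$ acting on the anisotropic, $\varepsilon$-weighted Sobolev spaces underlying $\|\cdot\|_{1,2;p,1}$. Splitting a deformation $\xi$ into $\pi_{A^0}\xi$ and $(1-\pi_{A^0})\xi$, the operator restricted to the second block is dominated by $\frac1{\varepsilon^2}d_{A^0}^*d_{A^0}$ and is invertible with inverse of norm $O(\varepsilon^2)$, while after projection the first block converges as $\varepsilon\to0$ to the linearised heat flow operator along $\Xi^0$, which is surjective with a bounded right inverse precisely because $E^H$ is Morse--Smale and the index difference equals $1$. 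A Schur-complement (block-decomposition) argument then yields a right inverse of $\mathcal D^\varepsilon$ whose norm is bounded uniformly in $\varepsilon\in(0,\varepsilon_0)$. This uniform bound is the main obstacle: one must control the off-diagonal coupling between the two blocks together with the $s$-derivative terms, which is exactly where the precise $\varepsilon$-powers in the weighted norms and the quadratic estimates \eqref{eq:cbb2}, \eqref{eq:cbb2cxy} enter; keeping the bookkeeping of $\varepsilon$-powers consistent across the parabolic scaling is the delicate point.

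With the $O(\varepsilon^2)$ error estimate and the uniform right inverse in place, a Newton--Picard iteration (quantitative implicit function theorem) produces, for $\varepsilon\in(0,\varepsilon_0)$, a unique zero $\mathcal R^{\varepsilon,b}(\Xi^0)$ of the gauge-fixed flow operator within an $O(\varepsilon^2)$-ball around $\mathcal K_2^\varepsilon(\Xi^0)$; the two conditions in \eqref{Rthm:c1} fix the gauge and the slice transverse to the image of $(\mathcal D^\varepsilon)^*$, so $\mathcal R^{\varepsilon,b}$ is well defined and satisfies \eqref{Rthm:c2}. Injectivity is then the uniqueness clause of the implicit function theorem. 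For surjectivity I would argue by contradiction with a compactness argument: a sequence $\Xi^{\varepsilon}\in\mathcal M^\varepsilon(\mathcal T^{\varepsilon,b}(\Xi^0_-),\mathcal T^{\varepsilon,b}(\Xi^0_+))$ not lying in the image has, by the energy bound $b$ and the exclusion of bubbling on $\Sigma$ in the adiabatic limit, a subsequence converging to some $\Xi^0\in\mathcal M^0(\Xi^0_-,\Xi^0_+)$; applying the local bijection already established near $\Xi^0$ then contradicts $\Xi^\varepsilon\notin\mathrm{im}\,\mathcal R^{\varepsilon,b}$, which completes the argument.
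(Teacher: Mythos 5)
Your proposal follows essentially the same architecture as the paper: the approximate solution $\mathcal K_2^\varepsilon(\Xi^0)$ built from $\mathcal T^{\varepsilon,b}(\Xi^0_\pm)$ and a second correction (Lemma \ref{flow:lemma:firstappr}), linear estimates obtained by splitting off the harmonic part $\pi_{A^0}$ and comparing with the heat-flow linearisation (Theorems \ref{flow:thme:linest333} and \ref{flow:thm:estk2}), a Newton iteration for existence with local uniqueness in a $\delta\varepsilon$-ball (Theorems \ref{flow:thm:existence} and \ref{flow:thm:locuniq}), and surjectivity by contradiction via convergence of a non-image sequence to a heat flow line. One caution: the surjectivity step is where the bulk of the paper's work actually lives, and your sketch of it ("energy bound plus no bubbling gives a convergent subsequence; apply the local bijection") conceals two genuine inputs. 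First, the compactness is not automatic: one needs the uniform $L^\infty$ bounds on $\partial_sA-d_A\Phi$ and $\partial_tA-d_A\Psi$ (Theorem \ref{flow:thm:linf}, proved by a three-case rescaling argument), the mean-value-inequality a priori estimates of Theorem \ref{flow:thm:apriori}, and uniform exponential decay in $s$ (Theorem \ref{flow:thm:expconv}) to handle the non-compact cylinder. Second, "applying the local bijection" requires first normalising the sequence by a gauge transformation and a time-shift so that the difference to $\mathcal K_2^\varepsilon$ lies in the Coulomb slice and in $\textrm{im}\,(\mathcal D^\varepsilon)^*$ with the quantitative smallness demanded by Theorem \ref{flow:thm:locuniq} (this is the content of Theorems \ref{surj:thm:prop6.2} and \ref{flow:thm:timeshift} and the seven-step argument of Section \ref{flow:section:surj}); without these normalisations the uniqueness clause cannot be invoked. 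As a roadmap your plan is the right one, but these are missing ideas rather than omitted routine details.
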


In the last theorem the connection $\mathcal K_2^\varepsilon(\Xi^0)$ should be 
seen as a first approximation of the Yang-Mills flow and the norm 
$\|\cdot\|_{1,2;p,1}$ is defined in the section \ref{flow:subsection:norm}.\\

\noindent{\bf Isomorphism between the homologies.} The theorem \ref{thm:mainthm} assures a bijection between the critical connections 
with the same index and the theorem \ref{flow:thm:main1} between the 
flows and thus we can compare the Morse homologies defined using the
 $L^2$-flow of the two functionals below a energy level $b$. In the loop space case the homology is well defined by the work of Weber (cf. \cite{Weberhab}) and in the Yang-Mills case we know that the flow exists in the case when the base manifold is two or three dimensional (cf. \cite{MR1179335}) or when we have a symmetry of codimension 3 on the base manifold of higher dimension (cf. \cite{Rade:fv}), but no results about the Morse-Smale transversality or the orientation of the unstable manifolds are known and therefore a priori $HM_*\left(\mathcal A^{\varepsilon,b}\left(P\times S^1\right)/\mathcal G_0\left(P\times S^1\right)\right)$ might even not be defined; in our case, it makes sense because the unstable manifolds of $\mathcal A^{\varepsilon,b}\left(P\times S^1\right)/\mathcal G_0\left(P\times S^1\right)$ inherit these properties from the unstable manifolds of $\mathcal L^b\mathcal M^g(P)$. Here $\mathcal G_0\left(P\times S^1\right)$ denotes the loop group on the gauge group $\mathcal G_0(P)$; $\mathcal L^b\mathcal M^g(P)\subset \mathcal L\mathcal M^g(P)$ and $\mathcal A^{\varepsilon,b}\left(P\times S^1\right)\subset \mathcal A\left(P\times S^1\right)$ are respectively the subsets where $E^H\leq b$ and $\mathcal {YM}^{\varepsilon,H}\leq b$. Thus, we can prove the following theorem.

\begin{theorem}\label{thm:main}
We assume that the energy functional $E^H$ is Morse-Smale. For every regular value $b>0$ of $E^H$ there is a positive constant $\varepsilon_0$ such that, for $0<\varepsilon<\varepsilon_0$, the inclusion $\mathcal L^b\mathcal M^g(P)\to \mathcal A^{\varepsilon, b}\left(P\times S^1\right)/\mathcal G_0\left(P\times S^1\right)$ induces an isomorphism
$$HM_*\left(\mathcal L^b\mathcal M^g(P),\mathbb Z_2\right)\cong HM_*\left(\mathcal A^{\varepsilon,b}\left(P\times S^1\right)/\mathcal G_0\left(P\times S^1\right),\mathbb Z_2\right).$$  
\end{theorem}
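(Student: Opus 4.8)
The plan is to deduce Theorem~\ref{thm:main} from the two previous theorems by the standard ``inverse limit / continuation'' mechanism for Morse homology under perturbation, once one checks that the Morse complex on the Yang--Mills side is genuinely well defined. Concretely, I would proceed as follows.

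\textbf{Step 1: The chain complexes agree.} By Theorem~\ref{thm:mainthm} the map $\mathcal T^{\varepsilon,b}$ is a bijection $\mathrm{Crit}^b_{E^H}\to\mathrm{Crit}^b_{\mathcal{YM}^{\varepsilon,H}}$ preserving the Morse index. Hence for every $\varepsilon\in(0,\varepsilon_0)$ the free $\mathbb Z_2$-modules generated by the critical sets in each degree are canonically identified, so the chain groups $CM_*(\mathcal L^b\mathcal M^g(P))$ and $CM_*(\mathcal A^{\varepsilon,b}(P\times S^1)/\mathcal G_0(P\times S^1))$ coincide. (Here I use the hypothesis that $E^H$ is Morse--Smale, in particular Morse, so all Jacobi operators are invertible, which is what Theorem~\ref{thm:mainthm} needs.)

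\textbf{Step 2: The boundary operators agree, and the target complex is well defined.} By Theorem~\ref{flow:thm:main1}, for any pair $\Xi^0_\pm$ of index difference $1$, the map $\mathcal R^{\varepsilon,b}$ is a bijection between the flow moduli spaces $\mathcal M^0(\Xi_-^0,\Xi_+^0)$ and $\mathcal M^\varepsilon(\mathcal T^{\varepsilon,b}(\Xi_-^0),\mathcal T^{\varepsilon,b}(\Xi_+^0))$. Since $E^H$ is Morse--Smale, the former are finite sets, and by bijectivity so are the latter; counting them mod~$2$ gives the same matrix coefficient of the boundary operator on both sides. In particular this shows the Yang--Mills moduli spaces of index difference $1$ are cut out transversally and are compact zero-manifolds, so the differential on the right-hand complex is defined; one must also check $\partial^2=0$ there, which again follows because a breaking of a Yang--Mills flow line of index difference $2$ corresponds under $\mathcal R^{\varepsilon,b}$ (applied on each piece) to a breaking of a heat-flow line, i.e.\ the compactified one-dimensional moduli spaces are identified, so their boundary counts vanish simultaneously. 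The orientation issue is moot since we work over $\mathbb Z_2$; the statement about unstable manifolds inheriting their structure, alluded to in the text, is exactly what makes the right-hand $HM_*$ legitimate.

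\textbf{Step 3: Identify the map with the inclusion-induced map.} It remains to check that this chain isomorphism is the one induced by the inclusion $\iota\colon\mathcal L^b\mathcal M^g(P)\hookrightarrow\mathcal A^{\varepsilon,b}(P\times S^1)/\mathcal G_0(P\times S^1)$. The estimates \eqref{intro:A:a1}, \eqref{eq:cbb2}, \eqref{eq:cbb2cxy} in Theorem~\ref{thm:mainthm} and Remark~\ref{thm:existence:crit:est}, together with \eqref{Rthm:c1}--\eqref{Rthm:c2} in Theorem~\ref{flow:thm:main1}, say that $\mathcal T^{\varepsilon,b}$ and $\mathcal R^{\varepsilon,b}$ are $\varepsilon$-close to the inclusion of the flat connection (resp.\ the approximate solution $\mathcal K_2^\varepsilon$), uniformly; so on the chain level the comparison map is a small deformation of $\iota$ and hence chain homotopic to the map $\iota$ induces on Morse homology. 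Passing to homology then yields the claimed isomorphism.

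\textbf{Main obstacle.} The genuinely delicate point is Step~2: verifying that the Morse--Smale and compactness properties needed to \emph{define} $HM_*$ on the Yang--Mills side really do transfer through $\mathcal R^{\varepsilon,b}$ — in particular that no extra flow lines or broken trajectories appear on the Yang--Mills side that are not seen by $\mathcal R^{\varepsilon,b}$ (i.e.\ surjectivity of $\mathcal R^{\varepsilon,b}$ onto \emph{all} of $\mathcal M^\varepsilon$, including the correct behaviour of the compactifications), and that the linearized Yang--Mills flow operator is surjective along these trajectories. This is precisely where the uniform $\varepsilon$-estimates and the uniqueness clauses of Theorems~\ref{thm:mainthm} and~\ref{flow:thm:main1} must be pushed to cover the one-dimensional moduli spaces and their ends; everything else is the bookkeeping of Morse theory over $\mathbb Z_2$.
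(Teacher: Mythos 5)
Your proposal is correct and follows essentially the same route as the paper: Theorem \ref{thm:mainthm} identifies the chain groups via the index-preserving bijection $\mathcal T^{\varepsilon,b}$, Theorem \ref{flow:thm:main1} identifies the $\mathbb Z_2$-counts of index-one flow lines via the bijection $\mathcal R^{\varepsilon,b}$, and the resulting commuting diagram of chain complexes gives the isomorphism on homology. Your additional remarks on $\partial^2=0$ for the Yang--Mills complex and on identifying the chain map with the inclusion are points the paper treats only implicitly (in the introduction and Section \ref{flow:section:morse}), but they do not change the argument.
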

Another way to approach this problem could have been to consider the $W^{1,2}$-flows; 
in this case both homologies are well defined since the Palais-Smale 
condition is satisfied in both cases (cf. \cite{MR943798}) and thus by the 
general Morse theory (cf. \cite{MR2276948}) the transvervality may be achieved. 
It is also interesting to remark that the Morse homology of 
the loop space defined by the heat flow is isomorphic to its singular homology by the work of Weber (cf. \cite{Weberhab}) and to the Floer homology of the cotangent 
bundle $T^*\mathcal M^g(P)$ using the Hamiltonian $H_V$ given by the kinetic plus 
the potential energy and considering only orbits with the action bounded by $b$ as showed for the general case by Viterbo (cf. \cite{vit}), by Salamon and Weber 
(cf. \cite{MR2276534}) and by Abbondandolo and Schwarz (cf. \cite{MR2190223}, \cite{MR2276949}). The result of this paper enter therefore in a bigger picture as discussed by Swoboda and the author in \cite{elliptic}, where a new elliptic Yang-Mills theory is introduced and where it is conjectured that the elliptic Yang-Mills homology is isomorphic to $HM_*\left(\mathcal A^{\varepsilon,b}\left(P\times S^1\right)/\mathcal G_0\left(P\times S^1\right),\mathbb Z_2\right)$.\\


\noindent\textbf{Outline.} In the sections \ref{sec:geoflow}-\ref{sec:ymflow} we will discuss the 
heat flow and the Yang-Mills flow equations; after that we will define the 
$\varepsilon$-dependent norms (section \ref{flow:subsection:norm}) and introduce the Morse homologies (section \ref{flow:section:morse}). In the 
sections \ref{flow:section:linest} and \ref{flow:section:qest} we will show respectively 
some linear and quadratic estimates that we will need in the section 
\ref{flow:section:firstapprox} to construct an approximation of a perturbed 
Yang-Mills flow starting from a perturbed geodesic flow and in the section 
\ref{flow:section:themap} in order to define uniquely the map $\mathcal R^{\varepsilon,b}$ 
using a contraction argument. The next four sections are of preparatory nature for 
the proof of the surjectivity of $\mathcal R^{\varepsilon,b}$ 
(section \ref{flow:section:surj}); in fact in the section \ref{flow:section:apriori} 
we will show some a priori estimates for the perturbed Yang-Mills flow and then we will prove 
 an estimate for the $L^\infty$-norm of their curvature terms 
(section \ref{flow:section:linfty}), the uniformly exponential convergence of the 
flows (section \ref{flow:section:expconv}) and two theorems that allow to choose the right 
relative Coulomb gauge (section \ref{ch:rcg}). The surjectivity 
(section \ref{flow:section:surj}) will be showed using an indirect argument; in fact we will prove 
that any sequence of perturbed Yang-Mills flows
 $\Xi^{\varepsilon_\nu}$, $\varepsilon_\nu\to 0$, which is not in the image of the 
map $\mathcal R^{\varepsilon,b}$, has a subsequence which converges (modulo gauge) 
by the implicit function theorem to a perturbed geodesic flow and thus, by the 
uniqueness property of $\mathcal R^{\varepsilon,b}$, it is in the image of this map which is a contradiction. In the last section we will prove first the theorem \ref{flow:thm:main1}, 
which follows easily from the definition \ref{defiR} of the map 
$\mathcal R^{\varepsilon,b}$ and its surjectivity (theorem \ref{flow:thm:surj}), 
and then the theorem \ref{thm:main}.

%
%

\section{Geodesic flow}\label{sec:geoflow}

Every continuously differentiable map $[\Xi]: S^1\times \mathbb R \to\mathcal M^g(P)$ can be seen as a connection $\Xi=A+\Psi dt+\Phi ds\in \mathcal A(P\times S^1\times \mathbb R)$ which satisfies the following conditions
\begin{equation}\label{flow:eqgeoflow333}
\begin{split}
F_A=0, \quad \partial_t A-d_A\Psi\in H_A^1,  \quad \partial_s A-d_A\Phi\in H_A^1.
\end{split}
\end{equation}
In fact, for any $[\Xi]$ we can choose a lift $A:S^1\times \mathbb R \to\mathcal A_0(P)$; the second and the third condition of (\ref{flow:eqgeoflow333}) yield to unique 0-forms $\Psi(t,s), \Phi(t,s)\in \Omega^0(P,\mathfrak g_P)$. One can also consider $\Phi$ to have an exponential convergence as $|s|\to \infty$ (cf. \cite{Davisthesis}). The connection $\Xi$ is clearly not uniquely defined, but for every two connections $\Xi_1$ and $\Xi_2$ with the above properties there is a map $u\in \mathcal G_0(P\times S^1\times \mathbb R)$ such that $u^*\Xi_1=\Xi_2$, the existence and the uniqueness of $u$ follow from the definition of $\mathcal M^g(P)$ and from the equivariance of the conditions (\ref{flow:eqgeoflow333}). The gauge group $\mathcal G_0\left(P\times S^1\times \mathbb R\right)$ is defined as the set of smooth maps $g: S^1\times \mathbb R\to \mathcal G_0(P)$.\\

Furthermore, $[\Xi]$ is a heat flow between the perturbed geodesics $\Xi_\pm \in \mathrm{Crit}_{E^H}^b$, $b\in \mathbb R$, if it satisfies the flow equation for the functional $E^H$, i.e.
\begin{equation}\label{flow:eqgeoflow}
\begin{split}
\partial_s A-d_A\Phi -\pi_A\big(\nabla_t&\left(\partial_t A-d_A\Psi\right)+*X_t(A)\big)=0,\\
\lim_{s\to\pm \infty}\Xi(&s)=\Xi_\pm
\end{split}
\end{equation}
where $\nabla_t:=\partial_t+[\Psi,\cdot]$ and the perturbation term $X_t$ will be discussed in the next section. Since
\begin{align*}
d_A^*\left(\nabla_t\left(\partial_t A-d_A\Psi\right)+*X_t(A)\right)
=&\nabla_td_A^*\left(\partial_t A-d_A\Psi\right)+*d_AX_t(A)\\
&+*\left[\left(\partial_t A-d_A\Psi\right)\wedge*\left(\partial_t A-d_A\Psi\right)\right]=0,
\end{align*}
we can write the first line of (\ref{flow:eqgeoflow}) as the pair of equations
\begin{equation}\label{flow:eqgeoflowdfs}
\begin{split}
\partial_s A-d_A\Phi -\nabla_t\left(\partial_t A-d_A\Psi\right)-*X_t(A)+d_A^*\omega=0,\\
d_Ad_A^*\omega=-\left[\left(\partial_t A-d_A\Psi\right)\wedge \left(\partial_t A-d_A\Psi\right)\right]+d_A*X_t(A)
\end{split}
\end{equation}
where $\omega(t,s)\in \Omega^2(\Sigma,\mathfrak g_P)$ is uniquely defined by the second condition which is obtained deriving the first one by $d_A$ using the commutation formula (\ref{commform}) and (\ref{flow:eqgeoflow333}).

\begin{lemma}
We have the following two commutation formulas:
\begin{equation}\label{commform}
[d_A,\nabla_t]=-[(\partial_t A-d_A\Psi)\wedge\cdot\,];
\end{equation}
\begin{equation}\label{commform2}
[d_A^*,\nabla_t]=*[(\partial_t A-d_A\Psi)\wedge*\,\cdot\,].
\end{equation}
\end{lemma}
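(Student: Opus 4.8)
The plan is to verify both identities by a short direct computation. Write $\nabla_t=\partial_t+[\Psi,\,\cdot\,]$ and $d_A=d+[A\wedge\,\cdot\,]$ as operators on $\mathfrak g_P$-valued forms on $\Sigma$, and use only the following elementary facts: the mixed derivatives commute, $\partial_t\,d=d\,\partial_t$; both $\partial_t$ and $d$ satisfy the Leibniz rule with respect to the pointwise bracket $[\,\cdot\wedge\cdot\,]$; the bracket obeys the graded Jacobi identity, so that in particular $[\Psi,\,\cdot\,]$ is a derivation, $[\Psi,[A\wedge\alpha]]=[[\Psi,A]\wedge\alpha]+[A\wedge[\Psi,\alpha]]$, since $\Psi$ is a $0$-form; and $d_A\Psi=d\Psi+[A,\Psi]$.

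For (\ref{commform}) I would apply $d_A\nabla_t-\nabla_t d_A$ to a $k$-form $\alpha$ and expand. The two copies of $d\,\partial_t\alpha$ cancel, and after using the Leibniz rule the terms $[A\wedge\partial_t\alpha]$ and $[\Psi,d\alpha]$ appear on both sides and cancel as well; the remainder is $-[\partial_tA\wedge\alpha]+[d\Psi\wedge\alpha]+\big([A\wedge[\Psi,\alpha]]-[\Psi,[A\wedge\alpha]]\big)$. By the derivation property the parenthesized part equals $-[[\Psi,A]\wedge\alpha]=[[A,\Psi]\wedge\alpha]$, so, collecting and using $d_A\Psi=d\Psi+[A,\Psi]$, the total is $-[(\partial_tA-d_A\Psi)\wedge\alpha]$, which is (\ref{commform}). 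Structurally this is nothing but the curvature identity for the connection $A+\Psi\,dt$ on $\Sigma\times S^1$: the commutator of the two covariant derivatives is the contraction of $\mathrm{ad}_{F_{A+\Psi\,dt}}$ with $\partial_t$, and $F_{A+\Psi\,dt}=F_A-(\partial_tA-d_A\Psi)\wedge dt$, the $F_A$-part not contributing to the mixed commutator.

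For (\ref{commform2}) I would deduce it from (\ref{commform}) rather than repeat the computation. On $\Sigma$ one has $d_A^*=-*d_A*$ on forms of every relevant degree, because with $\dim\Sigma=2$ the universal sign $(-1)^{\dim\Sigma\,(k+1)+1}$ equals $-1$ for $k=1,2$; moreover the Hodge star $*$ comes from the fixed metric $g_\Sigma$ and acts only on the form factor, hence is $t$-independent and commutes with $[\Psi,\,\cdot\,]$, so it commutes with $\nabla_t$. Therefore, using (\ref{commform}) in the last step,
\[
[d_A^*,\nabla_t]=-[\,*\,d_A\,*\,,\nabla_t\,]=-\,*\,[d_A,\nabla_t]\,*=*\,[(\partial_tA-d_A\Psi)\wedge*\,\cdot\,],
\]
which is (\ref{commform2}). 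Equivalently, one may take formal $L^2(\Sigma)$-adjoints in (\ref{commform}): the adjoint of $\nabla_t$ in the $t$-integrated pairing is $-\nabla_t$, that of $d_A^*$ is $d_A$, and that of $\beta\mapsto[a\wedge\beta]$ is $\gamma\mapsto-*[a\wedge*\gamma]$ for a fixed $1$-form $a$, and these combine to give (\ref{commform2}).

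I do not expect a genuinely hard step here; the only thing that needs care is the sign bookkeeping — checking that the Jacobi-type cancellations in the proof of (\ref{commform}) leave precisely $-[(\partial_tA-d_A\Psi)\wedge\,\cdot\,]$, with $d_A\Psi$ and not $d\Psi$, and that the two-dimensionality of $\Sigma$ is exactly what makes the conjugation by $*$ in (\ref{commform2}) produce a $+$ sign rather than a $-$.
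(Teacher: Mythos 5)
Your computation is correct and is exactly the argument the paper intends: its proof of this lemma is the one-line remark that the identities "follow from the definitions of the operators using the Jacobi identity for the super Lie bracket," which is precisely the expansion and Jacobi cancellation you carry out for (\ref{commform}), with (\ref{commform2}) obtained by conjugating with the ($t$-independent) Hodge star. No gap; you have simply written out the details the paper omits.
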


\begin{proof} The lemma follows from the definitions of the operators using the Jacobi identity for the super Lie bracket operator.\end{proof}

The linearised operator for a heat flow $\Xi$ is
\begin{equation}\label{flow:op:lingeo}
\begin{split}
\mathcal D^0(\Xi)(\pi_A(\alpha))
:= &\pi_A\big(\nabla_s\pi_A(\alpha)-2[\psi_0, (\partial_t A-d_A\Psi)]\\
&\qquad-\nabla_t\nabla_t\pi_A(\alpha)- d*X_t(A)\pi_A(\alpha)+*[\alpha\wedge*\omega]\big)
\end{split}
\end{equation}
for any $\alpha: S^1\times\mathbb R\to \Omega^1(\Sigma,\mathfrak g_P)$ and where $\nabla_s:=\partial_s+[\Phi,\cdot]$ and $\psi_0$ is defined uniquely by $d_A^*d_A\psi_0=-2*[\pi_A(\alpha)\wedge*(\partial_t A-d_A\Psi)]$. This last formula can be obtained in the same  way as the Jacobi operator for perturbed geodesics (cf. \cite{MR1715156} or \cite{remyj6}).

\section{Perturbation}\label{sec:perturbation}

In order to achieve the transversality we have to perturb the equations and for this purpose we choose an abstract perturbation on $\mathcal L(\mathcal M^g(P))$.\\

First, we choose a perturbation $\bar {\mathcal V}:\mathcal L(\mathcal M^g(P))\to \mathbb R$ on the loop space of $\mathcal M^(P)$. We assume that $\bar{\mathcal V}$ satisfies the following condition (see condition (V4), section 2 of \cite{MR2276534} or condition (V3), section 1 of \cite{Weberhab}). For any two integers $k>0$ and $l\geq 0$ there is a constant $c=c(k,l)$ such that
\begin{equation}\label{flow:intro:cond}
\left|\nabla_t^l \nabla_s^k\bar{\mathcal V}(A)\right|
\leq c\sum_{k_j, l_j}\left(\Pi_{j, l_j>0}\left|\nabla_t^{l_j}\nabla_s^{k_j}A\right|\right)\Pi_{j, l_j=0}\left(\left|\nabla_s^{k_j}A\right|+\left\|\nabla_s^{k_j}A\right\|_{L^{p_j}}\right)
\end{equation}
for every smooth map $A:\mathbb R\to \mathcal L(\mathcal A_0(P)): s\mapsto A(s,\cdot)$ \footnote{$\Psi(t,s),\Phi(t,s)\in \Omega^0(P,\mathfrak g_P)$ are uniquely defined by $d_A^*\left(\partial_tA-d_A\Psi\right)=0$ and $d_A^*\left(\partial_sA-d_A\Phi\right)=0$.} and every $(s,t)\in \mathbb R\times S^1$; here $p_j\geq 1$ and $\sum_{l_j=0}\frac 1{p_j}=1$; the sum runs over all partitions $k_1+\dots+k_m=k$ and $l_1+\dots+l_m\leq l$ such that $k_j+l_j\geq 1$ for all $j$. For $k=0$ the same inequality holds with an additional summand $c$ on the right. In addition, $\nabla_s^{k_j}A$ and $\nabla_t^{l_j}A$ should be interpreted as $\nabla_s^{k_j-1}\left(\partial_sA-d_A\Phi\right)$ and $\nabla_t^{l_j-1}\left(\partial_tA-d_A\Psi\right)$.\\

Using the results of Weber (cf. \cite{Weberhab}, theorem 1.13)we can consider the following. For any regular value $b$ of the energy functional $E^H$ there is a Banach manifold $\mathcal O^b$ of perturbations $\bar {\mathcal V}$ that satisfy the above condition (\ref{flow:intro:cond}) and such that $E^H+\bar{\mathcal V}$ have the same critical loops as $E^H$. Moreover there is a residual subset $\mathcal O_{reg}^b\subset \mathcal O^b$ such that the perturbed functional $E^H+\bar {\mathcal V}$ is Morse-Smale below the energy level $b$ if $\bar {\mathcal V}\in \mathcal O_{reg}^b$. From now on all the computations are done using a generic perturbation of this kind. Next, we define $*X_t(A)\subset \Omega^1(\Sigma,\mathfrak g_P)$, $A\in \mathcal L(\mathcal A_0(P))$, $t\in S^1$, by
\begin{equation}\label{flow:intro:x}
\int_0^1\langle *X_t(A), \partial_sA_0(t,0)\rangle dt:=\frac d{ds}\Big|_{s=0}\left(\bar{\mathcal V}(A(s))+\int_0^1H_t(A(s)) dt\right)
\end{equation}
for every smooth variation $A(t,s)$ of $A(t)$.\\

Furthermore, using the results of Weber (cf. \cite{Weberhab}, theorems 1.7, 1.8), for any constant $b$ there are positive constants $c,\rho, c_0,c_1,c_2,\dots$ such that the following holds. If the connection $\Xi=A+\Psi dt+\Phi ds$ satisfies (\ref{flow:eqgeoflow}) and $E^H(A(\cdot,s))\leq b$, then for every $s$
\begin{equation}\label{flow:web:w1}
\|\partial_tA-d_A\Psi\|_{L^\infty}+  \|\nabla_t(\partial_tA-d_A\Psi)\|_{L^\infty}\leq c,
\end{equation}
\begin{equation}\label{flow:web:w2}
\|\partial_sA-d_A\Phi\|_{L^\infty}+  \|\nabla_t(\partial_sA-d_A\Phi)\|_{L^\infty}+  \|\nabla_s(\partial_sA-d_A\Phi)\|_{L^\infty}\leq c,
\end{equation}
\begin{equation}\label{flow:web:w3}
\|\partial_sA-d_A\Phi\|_{C^k(S^1\times[T,\infty))}\leq c_ke^{-\rho T},
\end{equation}
\begin{equation}\label{flow:web:w4}
\|\partial_sA-d_A\Phi\|_{C^k(S^1\times(-\infty,-T])}\leq c_ke^{-\rho T},
\end{equation}
for every $T\geq1$. Moreover $[A]$ converges to a perturbed geodesic in $C^2(S^1)$ as $s\to \pm\infty$.\\

Next, we need to choose an extension $\mathcal V:\mathcal L(\mathcal A(P)/\mathcal G_0(P))\to \mathbb R$, $\mathcal V(A)=\bar{\mathcal V}(A)$ for $A\in \mathcal L(\mathcal A_0(P))$, such that $\mathcal V$ satisfies (\ref{flow:intro:cond}) for any smooth map $A:\mathbb R\to\mathcal L(\mathcal A(P)$ with $\|F_{A(s)}\|_{L^2(\Sigma)}\leq \delta_0$ for every $s\in \mathbb R$, where $\delta_0$ is chosen such that the lemmas \ref{lemma76dt94} and \ref{lemma82dt94} hold for $p=2$ and $q=4$.\\
Another possibility is to extend $\bar{\mathcal V}$ in the following way. We choose a smooth map $\rho:[0,\infty)\to [0,1]$ with the property that $\rho(x)=0$ if $x\geq \delta_0$ and $\rho(x)=1$ if $x\leq \frac {2\delta_0}3$. Next, we define $\mathcal V:\mathcal L(\mathcal A(P)/\mathcal G_0(P))\to \mathbb R$ by
\begin{equation}
\mathcal V(A)=\rho\left(\sup_{t\in S^1}\|F_A(t,s)\|_{L^2}\right)\bar {\mathcal V}\left(A+*d_{A}\eta(A)\right)
\end{equation}
where $\eta(A)$ is the unique $0$-form, by lemma \ref{lemma82dt94}, which satisfies $$F_{A+*d_A\eta(A)}=0\quad \|d_A\eta(A)\|_{L^4(\Sigma)}\leq c\|F_A\|_{L^2(\Sigma)}.$$ We define $X_t(A)$ in the same way as in (\ref{flow:intro:x}).

\section{Yang-Mills flow}\label{sec:ymflow}
The Yang-Mills flow equations for $A(s)+\Psi(s) dt\in \mathcal A(P\times S^1)$ and for an $\varepsilon$-independent metric are
\begin{equation}\label{flow:intro:woeps}
\begin{split}
&\partial_s A-d_A\Phi +d_A^*F_A- \nabla_t\left(\partial_t A-d_A\Psi\right)-*X_t(A)=0,\\
&\partial_s\Psi-\nabla_t\Phi- d_A^*\left(\partial_t A-d_A\Psi\right)=0
\end{split}
\end{equation}
where $\Phi(t,s)\in \Omega^0(\Sigma,\mathfrak g_P)$ in order to make the equations gauge invariant. We can consider the $s$-dependent connection $A(s)+\Psi(s) dt$ together with the 0-form $\Phi$ as a connection $\Xi:=A+\Psi dt+\Phi ds$ on the 4-manifold $\Sigma\times S^1\times \mathbb R$. In our case we shrink the metric on $\Sigma$ by $\varepsilon^2$ and therefore the adjoint of the  exterior derivative $d_A$ contribute with a factor $\frac 1{\varepsilon^2}$ to the flow equation and if we consider the flow lines between two perturbed Yang-Mills connections $\Xi_\pm\in \mathrm{Crit}_{E^H}^b$ we have the equation
\begin{equation}\label{flow:eq}
\begin{split}
\partial_s A-d_A\Phi + \frac 1{\varepsilon^2}d_A^*F_A- \nabla_t\left(\partial_t A-d_A\Psi\right)-*X_t(A)=0,\\
\partial_s\Psi-\nabla_t\Phi- \frac 1{\varepsilon^2} d_A^*\left(\partial_t A-d_A\Psi\right)=0,\quad  \lim_{s\to \pm \infty} \Xi=\Xi_\pm.
\end{split}
\end{equation}
Another viewpoint to see the last equation is to consider (\ref{flow:intro:woeps})
for the connection
\begin{equation}
\tilde A(t,s)+\tilde \Psi(t,s) dt+\tilde \Phi (t,s) ds=A(\varepsilon t,\varepsilon^2s)+\varepsilon \Psi(\varepsilon t,\varepsilon^2 s) dt+\varepsilon^2\Phi(\varepsilon t,\varepsilon^2s) ds, 
\end{equation}
which is equivalent to (\ref{flow:eq}) for $(t,s)\in \left[0,\frac 1 \varepsilon\right]\times \mathbb R$.

\section{Norms}\label{flow:subsection:norm}

We choose a reference connection $\Xi:=A+\Psi dt+\Phi ds \in \mathcal A (\Sigma\times S^1\times \mathbb R)$; let $\xi:=\alpha+\psi dt+\phi ds \in \Omega^{1}(\Sigma\times S^1\times\mathbb R,\mathfrak g_P)$, $\alpha(t,s),\psi(t,s),\phi(t,s) \in \Omega^{j}(\Sigma,\mathfrak g_P)$, $j=0,1$, then
\begin{equation*}
\|\alpha+\psi dt+\phi ds\|_{\infty,\varepsilon }:=\|\alpha\|_{L^\infty}+\varepsilon\|\psi\|_{L^\infty }+\varepsilon^2\|\phi\|_{L^\infty},
\end{equation*}
\begin{equation*}
\|\alpha+\psi dt+\phi ds\|_{0,p,\varepsilon }^p:=\int_{S^1\times \mathbb R}\left(\|\alpha\|_{L^p(\Sigma)}^p+\varepsilon^p\|\psi\|_{L^p(\Sigma)}^p+\varepsilon^{2p}\|\phi\|_{L^p(\Sigma)}^p\right) dt\,ds,
\end{equation*}
\begin{equation*}
\begin{split}
\|\alpha&+\psi dt+\phi ds\|_{1,p,\varepsilon }^p:=\|\alpha+\psi dt+\phi ds\|_{0,p,\varepsilon }^p\\
&+\int_{S^1\times \mathbb R}\left(\|d_A^*\alpha\|_{L^p(\Sigma)}^p+\|d_A\alpha\|_{L^p(\Sigma)}^p+\varepsilon^p \|\nabla_t\alpha\|_{L^p(\Sigma)}^p+\varepsilon^{2p} \|\nabla_s\alpha\|_{L^p(\Sigma)}^p\right) dt\,ds\\
&+\int_{S^1\times \mathbb R}\left(\varepsilon^{p}\|d_A\psi \|_{L^p(\Sigma)}^p+\varepsilon^{2p} \|\nabla_t\psi \|_{L^p(\Sigma)}^p+\varepsilon^{3p} \|\nabla_s\psi \|_{L^p(\Sigma)}^p\right) dt\,ds\\
&+\int_{S^1\times \mathbb R}\left(\varepsilon^{2p}\|d_A\phi \|_{L^p(\Sigma)}^p+\varepsilon^{3p} \|\nabla_t\phi \|_{L^p(\Sigma)}^p+\varepsilon^{4p} \|\nabla_s\phi \|_{L^p(\Sigma)}^p\right) dt\,ds,
\end{split}
\end{equation*}
\begin{equation}\label{flow:definorm12}
\begin{split}
\|\alpha&+\psi dt+\phi ds\|_{1,2;p,\varepsilon }^p:=\|\alpha+\psi dt+\phi ds\|_{1,p,\varepsilon }^p\\
&+\int_{S^1\times \mathbb R}\left(\|d_A^*d_A\alpha \|_{L^p(\Sigma)}^p+\varepsilon^p \|\nabla_t d_A \alpha\|_{L^p(\Sigma)}^p\right) dt\,ds\\
&+\int_{S^1\times \mathbb R}\left(\varepsilon \|d_A^*(d_A\psi-\nabla_t \alpha) \|_{L^p(\Sigma)}^p+\varepsilon^{2} \|\nabla_t(d_A\psi-\nabla_t\alpha)\|_{L^p(\Sigma)}^p\right) dt\,ds
\end{split}
\end{equation}
where $\nabla_t:=\partial_t+[\Psi, \cdot]$ and $\nabla_s:=\partial_s+[\Phi,\cdot]$. The $\varepsilon$-dependent norms are created using the following simple rule that is given from the linearisation $\mathcal D^\varepsilon$ of the Yang-Mills flow equations. For every $\nabla_t$ and every 0-form $\psi$, which descends from a 1-form in the $t$-direction, we put an $\varepsilon$ in front of the norm; for every $\nabla_s$ and every 0-form $\phi$, coming from a 1-form in the $s$-direction, we multiply by $\varepsilon^2$. The definition (\ref{flow:definorm12}) contains, in the first line, all the $0$-order $L^p$-norms and the $L^p$-norms of all the first derivatives; in the last two lines we can find the $L^p$-norms of some second derivatives. These can be interpreted in the following way. We split $\alpha+\psi dt$ in two orthogonal components $\alpha_i+\psi_i dt\in \textrm{im } d_\Xi$ and $\alpha_k+\psi_k dt\in \ker d_\Xi^{*_\varepsilon}$; on the one side, if $\alpha+\psi dt \in \ker d_{A+\Psi dt }^{*_\varepsilon}$, then
\begin{equation*}
\begin{split}
\varepsilon\|d_A^*d_A\psi-\varepsilon^2\nabla_t\nabla_t\psi \|_{L^p}=&\varepsilon\|d_A^*d_A\psi-\nabla_t d_A^*\alpha \|_{L^p}\\
\leq& \varepsilon\|d_A^*(d_A\psi-\nabla_t\alpha) \|_{L^p}+\varepsilon\|[(\partial_t A-d_A\Psi)\wedge *\alpha]\|_{L^p},
\end{split}
\end{equation*}
\begin{equation*}
\begin{split}
\|d_Ad_A^*\alpha -\varepsilon^2\nabla_t\nabla_t\alpha \|_{L^p}=&\varepsilon^2\| d_A \nabla_t\psi-\nabla_t\nabla_t\alpha \|_{L^p}\\
\leq& \varepsilon^2\|\nabla_t(d_A\psi-\nabla_t\alpha) \|_{L^p}+\varepsilon^2\|[(\partial_t A-d_A\Psi),\psi]\|_{L^p}
\end{split}
\end{equation*}
and thus $\varepsilon \|d_A^*d_A\psi\|_{L^p}$, $\varepsilon^3\|\nabla_t\nabla_t\psi\|_{L^p}$, $\|d_A^*d_A\alpha \|_{L^p}$ and $\varepsilon^2\|\nabla_t\nabla_t\alpha \|_{L^p}$ can be estimates by $\|\alpha+\psi dt+\phi ds\|_{1,2;p,\varepsilon }$ as we will discuss in the section \ref{flow:section:linest}. On the other side, if $\alpha+\psi dt =d_A\gamma+\nabla_t\gamma dt$, $\gamma \in \Omega^0(\Sigma\times S^1\times \mathbb R,\mathfrak g_P)$, then 
$$d_A^*d_A\alpha=d_A^*[F_A,\gamma],\quad d_A^*(d_A\psi-\nabla_t\alpha)=-d_A^*[(\partial_t A-d_A\Psi),\gamma],$$
$$\nabla_t d_A\alpha=\nabla_t[F_A,\gamma],\quad \nabla_t(d_A\psi-\nabla_t\alpha)=-\nabla_t[(\partial_t A-d_A\Psi),\gamma];$$
therefore, under some extra conditions on the curvature $F_A-(\partial_t A-d_A\Psi) dt$, for example that $F_A=0$ and $\partial_t A-d_A\Psi$ is smooth, the last two lines of (\ref{flow:definorm12}) can be estimate with the first two if $\alpha+\psi dt \in \textrm{im } d_\Xi$. Thus, $\|\cdot\|_{1,2;p,\varepsilon}$ considers the $L^p$-norm of $\xi$ and of its derivatives, but the $L^p$-norm of the second derivatives in the $\Sigma\times S^1$-directions only for the $\ker d_{A+\Psi dt}^{*_\varepsilon}$-part of $\xi$. This orthogonal splitting plays a fundamental role in the proof of the linear estimates of the section \ref{flow:section:linest}. Next, we can define the Sobolev spaces 
$$W^{1,2;p}:=W^{1,2;p}\left(\Sigma\times S^1\times \mathbb R, T^*(\Sigma\times S^1\times \mathbb R)\otimes \mathfrak g_{P\times S^1\times \mathbb R} \right),$$
$$W^{1,p}:=W^{1,p}\left(\Sigma\times S^1\times \mathbb R, T^*(\Sigma\times S^1\times \mathbb R)\otimes \mathfrak g_{P\times S^1\times \mathbb R} \right)$$ as the completion, respect to the norm $\|\cdot\|_{1,2;p,1}$ and $\|\cdot\|_{1,p,1}$, of the 1-forms $$\Omega^1\left(\Sigma\times S^1\times \mathbb R, \mathfrak g_{P\times S^1\times \mathbb R}\right)$$ with compact support; we denote by $W^{1,2;p}(\Xi_-,\Xi_+)$ (respectively $W^{1,p}$) the space of all connections $\Xi$ that satisfy $\Xi-\Xi_0 \in W^{1,2;p}$ (respectively $\Xi-\Xi_0 \in W^{1,p}$) for a smooth connection $\Xi_0\in\mathcal A(P\times S^1\times \mathbb R)$ and the limit conditions $\lim_{s\to \pm\infty}\Xi=\Xi_\pm$ (respectively without limit condition). Furthermore, we denote by $\mathcal G_0^{2,p}\left(P\times S^1\times \mathbb R\right)$ the completion of $\mathcal G_0\left(P\times S^1\times \mathbb R\right)$ with respect to the Sobolev $W^{1,p}$-norm on 1-forms, i.e. $g\in\mathcal G_0^{2,p}\left(P\times S^1\times \mathbb R\right)$ if $g^{-1}d_{\Sigma\times S^1\times \mathbb R}g\in W^{1,p}$ and  by $\mathcal G_0^{1,2;p}\left(P\times S^1\times \mathbb R\right)$ the completion respect to the norm $\|\cdot\|_{1,2;p,\varepsilon}$ on the $1$-forms. In addition, we denote by $\bar {\mathcal G}_{0}^{1,2;2}(P\times S^1\times \mathbb R)$ the gauge group such that an element $g$ is locally in $\mathcal G_0^{1,2;2}\left(P\times S^1\times \mathbb R\right)$, i.e. we allow also elements that do not vanish at $\pm \infty$. We conclude this section proving the following Sobolev estimates.

\begin{theorem}[Sobolev estimate]\label{flow:thm:sob}
We choose $1\leq p,q<\infty$. Then there is a constant $c_S$ such that for any $\xi\in W^{1,p}$, $0<\varepsilon\leq1$:
\begin{enumerate}
\item If $-\frac 4q\leq 1-\frac 4p$, then
\begin{equation}
\|\xi\|_{0,q,\varepsilon}\leq c_S \varepsilon^{\frac3q-\frac3p}\|\xi\|_{1,p,\varepsilon }.
\end{equation}
\item If $0<1-\frac 4p$, then
\begin{equation}
\|\xi\|_{\infty,\varepsilon}\leq c_S \varepsilon^{-\frac 3p }\|\xi\|_{1,p,\varepsilon}.
\end{equation}
\end{enumerate}
\end{theorem}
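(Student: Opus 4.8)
The plan is to reduce the $\varepsilon$-dependent Sobolev inequalities on $\Sigma\times S^1\times\mathbb R$ to the standard ($\varepsilon=1$) Sobolev embedding theorem on a four-manifold by means of an anisotropic rescaling of coordinates. Concretely, given $\xi=\alpha+\psi\,dt+\phi\,ds$ on $\Sigma\times S^1\times\mathbb R$, introduce the rescaled variables $(\tilde t,\tilde s)=(t/\varepsilon,\,s/\varepsilon^2)$ on the (locally trivialized) base, so that the metric $\varepsilon^2 g_\Sigma\oplus g_{S^1}\oplus g_{\mathbb R}$ is, up to the global factor built into the $\varepsilon$-norms, comparable to the product metric in the tilde-variables. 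Under this change of variables $\partial_t\mapsto\varepsilon^{-1}\partial_{\tilde t}$, $\partial_s\mapsto\varepsilon^{-2}\partial_{\tilde s}$, and the weights $\varepsilon$, $\varepsilon^2$ attached to $\psi$, $\phi$ (and to the $\nabla_t$-, $\nabla_s$-derivatives) in the definitions of $\|\cdot\|_{\infty,\varepsilon}$, $\|\cdot\|_{0,p,\varepsilon}$, $\|\cdot\|_{1,p,\varepsilon}$ are exactly the ones that turn these into the unweighted $L^\infty$, $L^p$ and $W^{1,p}$ norms of the transformed form $\tilde\xi$ on $\Sigma\times S^1\times\mathbb R$ equipped with the fixed product metric. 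The only thing not absorbed is the Jacobian of the volume element: $dt\,ds=\varepsilon^{3}\,d\tilde t\,d\tilde s$, which produces the powers $\varepsilon^{3/q-3/p}$ and $\varepsilon^{-3/p}$ in the two statements.

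First I would make the bookkeeping precise. Fix a finite cover of $\Sigma\times S^1$ by coordinate charts and a subordinate partition of unity, trivialize $\mathfrak g_P$ over each chart, and reduce to a statement for $\mathfrak g$-valued forms on $U\times S^1\times\mathbb R$ with $U\subset\mathbb R^2$; the covariant derivatives $d_A,\nabla_t,\nabla_s$ differ from the flat ones by zeroth-order terms bounded in terms of the fixed reference connection $\Xi$, so up to adjusting $c_S$ one may work with ordinary derivatives. Then perform the substitution $\tilde t=t/\varepsilon$, $\tilde s=s/\varepsilon^2$ and verify componentwise that
\[
\|\xi\|_{0,p,\varepsilon}=\varepsilon^{3/p}\|\tilde\xi\|_{L^p},\qquad
\|\xi\|_{1,p,\varepsilon}\le C\,\varepsilon^{3/p}\|\tilde\xi\|_{W^{1,p}},\qquad
\|\xi\|_{\infty,\varepsilon}=\|\tilde\xi\|_{L^\infty},
\]
where all tilde-norms are taken with respect to the fixed product metric (one direction of the middle inequality, which is all that is needed, is immediate; the weights were designed so that the $\varepsilon$-powers on each term of $\|\xi\|_{1,p,\varepsilon}$ match $\varepsilon^{3/p}$ times the corresponding term of $\|\tilde\xi\|_{W^{1,p}}$).

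Next I would invoke the classical Sobolev embeddings on the four-manifold $\Sigma\times S^1\times\mathbb R$ (noncompact, but with bounded geometry, so the embedding constants are uniform; alternatively one localizes in $s$ and reassembles, the constants being $s$-independent). The Gagliardo–Nirenberg–Sobolev inequality in dimension $4$ gives $W^{1,p}\hookrightarrow L^q$ whenever $\tfrac1q\ge\tfrac14(\tfrac4p-1)$, i.e. $-\tfrac4q\le 1-\tfrac4p$, which is precisely hypothesis (1); applying it to $\tilde\xi$ and translating back through the displayed identities yields
\[
\|\xi\|_{0,q,\varepsilon}=\varepsilon^{3/q}\|\tilde\xi\|_{L^q}\le\varepsilon^{3/q}\,c\,\|\tilde\xi\|_{W^{1,p}}\le c_S\,\varepsilon^{3/q-3/p}\|\xi\|_{1,p,\varepsilon},
\]
which is (1). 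Likewise, $W^{1,p}\hookrightarrow L^\infty$ in dimension $4$ when $1-\tfrac4p>0$; applying it to $\tilde\xi$ gives $\|\xi\|_{\infty,\varepsilon}=\|\tilde\xi\|_{L^\infty}\le c\,\|\tilde\xi\|_{W^{1,p}}\le c_S\,\varepsilon^{-3/p}\|\xi\|_{1,p,\varepsilon}$, which is (2).

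\textbf{Main obstacle.} The genuine content, as opposed to scaling bookkeeping, is twofold. First, one must check that the anisotropic rescaling really does convert $\|\cdot\|_{1,p,\varepsilon}$ into (a constant times $\varepsilon^{3/p}$ times) the \emph{honest} $W^{1,p}$-norm of $\tilde\xi$ — i.e. that every first-derivative term appearing in the definition of $\|\cdot\|_{1,p,\varepsilon}$, with its specific $\varepsilon$-weight, is accounted for, and that no first-derivative term of $\|\tilde\xi\|_{W^{1,p}}$ is missing; this is where the design principle stated after \eqref{flow:definorm12} ("an $\varepsilon$ for each $\nabla_t$ and $t$-direction $0$-form, an $\varepsilon^2$ for each $\nabla_s$ and $s$-direction $0$-form") is used in an essential way. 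Second, the passage from covariant to ordinary derivatives and the patching over coordinate charts must be done so that the resulting constant depends only on the fixed geometry of $\Sigma\times S^1$ and on the reference connection, not on $\varepsilon$ — for the zeroth-order commutator terms this requires noting that they come \emph{without} the dangerous negative powers of $\varepsilon$, since a commutator $[\,\cdot\,,\Psi]$ carries no $\partial_t$ and hence is controlled by the $0$-order part of the norm. Everything else is the standard Sobolev embedding theorem applied on a fixed manifold.
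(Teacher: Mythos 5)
Your proposal is correct and follows essentially the same route as the paper: the paper's proof also defines the anisotropically rescaled form $\bar\xi$ via $\bar\alpha(t,s)=\alpha(\varepsilon t,\varepsilon^2 s)$, $\bar\psi=\varepsilon\psi(\varepsilon t,\varepsilon^2 s)$, $\bar\phi=\varepsilon^2\phi(\varepsilon t,\varepsilon^2 s)$, observes that $\|\xi\|_{1,p,\varepsilon}=\varepsilon^{3/p}\|\bar\xi\|_{W^{1,p}}$, and then invokes the standard Sobolev inequality. Your write-up just makes explicit the chart/partition-of-unity bookkeeping that the paper leaves implicit.
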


\begin{proof}
Analogously as for the lemma 4.1 in \cite{MR1283871}, we can define $\bar \xi=\bar \alpha+\bar \psi dt+ \bar \phi ds$ by $\bar\alpha(t,s)=\alpha(\varepsilon t,\varepsilon^2s)$, $\bar\psi(t,s)=\varepsilon \psi(\varepsilon t,\varepsilon^2s)$ and $\bar\phi(t,s)=\varepsilon^2\phi(\varepsilon t,\varepsilon^2s)$. Thus, $\|\xi\|_{1,p,\varepsilon}=\varepsilon^{\frac 3p}\|\bar\xi\|_{W^{1,p}}$ for $0\leq t\leq \varepsilon^{-1}$, $\varepsilon\in (0,1]$. Therefore the theorem follows from the standard Sobolev's inequality.
\end{proof}

\section{Morse homologies}\label{flow:section:morse}

In this section we want to define the Morse homologies defined using the heat flow and the Yang-Mills $L^2$-flow. We start with the Morse homology of the loop space on $\mathcal M^g(P)$. First, we introduce the moduli spaces
 \begin{align*}
\mathcal M^0(\Xi_-,\Xi_+):=&\{ \Xi \in W^{1,2;2}(\Xi_-,\Xi_+); \Xi \textrm{ satisfies } (\ref{flow:eqgeoflow333}), (\ref{flow:eqgeoflow}) \},\\
\bar{\mathcal M}^0(\Xi_-,\Xi_+):=&\mathcal M^0(\Xi_-,\Xi_+)/PG_{\infty}
\end{align*}
where
$$PG_{\infty}:=\{g\in \mathcal G_0^{1,2;2}\left(P\times S^1\times \mathbb R\right); \exists S>0 \textrm{ such that for } |s|\geq S, g(s)=1\}.$$

Now, we choose a regular value $b$ of the energy functional $E^H$. In order to define the Morse homology of the  loop space $\mathcal L^b\mathcal M^g(P)$ in our Morse-Bott setting, where a critical loop is an equivalence class $[A+\Psi dt]$ of perturbed geodesics with $A+\Psi dt\in \mathrm{Crit}_{E^H}^b$, we need to count the flow lines between the critical loops with Morse index difference $1$ in the following way. We define $\left[{\mathrm{Crit}}_{E^H}^b\right]:=\mathrm{Crit}_{E^H}^b/\mathcal G_0(P\times S^1)$ and we consider the space of flow lines between two loops $\gamma_\pm\in \left[{\mathrm{Crit}}_{E^H}^b\right]$:
$$\mathcal {FL}^0(\gamma_-,\gamma_+)=\{ \Xi \in W^{1,2;2}(\Xi_-,\Xi_+); \Xi \textrm{ satisfies } (\ref{flow:eqgeoflow333}), (\ref{flow:eqgeoflow}), [A_\pm]=\gamma_\pm\}$$
and thus the moduli space 
$$\mathcal M^0(\gamma_-,\gamma_+):=\mathcal {FL}^0(\gamma_-,\gamma_+)/ \bar {\mathcal G}_{0}^{1,2;2}(P\times S^1\times \mathbb R);$$
then we organise the critical loops of $[\mathrm{Crit}_{E^H}^b]$ in a chain complex where 
$$C_k^{E^H,b}:= \oplus_{\gamma\in [\mathrm{Crit}_{E^H}^b], index_{E^H}(\gamma)=k} \mathbb Z_2 \gamma$$
and the boundary operator $\partial_k^{E^H}: C_{k}^{E^H,b}\to C_{k-1}^{E^H,b}$ by 
$$\partial_k^{E^H}\gamma_-:=\sum_{\gamma_+ \in C_{k-1}^{E^H,b}} \left(\sharp_{\mathbb Z_2}\left( {\mathcal M}^0(\gamma_-,\gamma_+)/\mathbb R\right)\right)\gamma_+.$$
If the functional $E^H$ satisfies the transversality condition, then $\partial_{k+1}^{E^H}\partial_k^{E^H}=0$ and in this case we can define the Morse homology
\begin{equation}\label{Morseg}
HM_*\left(\mathcal L^b\mathcal M^g(P),\mathbb Z_2\right):= \ker \partial_{*}^{E^H}/\textrm{im }\partial_{*+1}^{E^H}.
\end{equation}
As we have already mentioned, by the work of Weber (cf. \cite{Weberhab}), for a generic perturbation, the transversality condition is satisfied and thus the Morse homology of the loop space   $HM_*\left(\mathcal L^b\mathcal M^g(P),\mathbb Z_2\right)$ is well defined.

\begin{remark}
For any two perturbed geodesics $\gamma_\pm\in [\mathrm{Crit}_{E^H}^b]$ and any two representatives $\Xi_\pm$ we can identify the moduli spaces ${\mathcal M}^0(\gamma_-,\gamma_+)$ and $\bar{\mathcal M}^0(\Xi_-,\Xi_+)$, in particular we have
$$\sharp_{\mathbb Z_2}\left({\mathcal M}^0(\gamma_-,\gamma_+)/\mathbb R\right)=\sharp_{\mathbb Z_2}\left(\bar{\mathcal M}^0(\Xi_-,\Xi_+)/\mathbb R\right).$$
\end{remark}

Next, we define the Morse homology for the Yang-Mills case. First, we denote by $\mathcal M^\varepsilon(\Xi_-,\Xi_+)$ and by $\bar{\mathcal M}^\varepsilon(\Xi_-,\Xi_+)$ the moduli spaces
\begin{align*}
\mathcal M^\varepsilon(\Xi_-,\Xi_+):=&\{ \Xi\in W^{1,2;2}(P\times S^1\times \mathbb R); \Xi \textrm{ satisfies } (\ref{flow:eq})\},\\
\bar{\mathcal M}^\varepsilon(\Xi_-,\Xi_+):=&\mathcal M^\varepsilon(\Xi_-,\Xi_+)/PG_{\infty}.
\end{align*}
Also in this case we can define a Morse homology for 
$$\mathcal A^{\varepsilon,b}\left(P\times S^1\right)/\mathcal G_0\left(P\times S^1\right);$$ 
in order to do that we consider the chain complex $C_k^{\mathcal{YM}^{\varepsilon,H},b}:= \oplus_{\theta\in [\mathrm{Crit}_{\mathcal{YM}^{\varepsilon,H}}^b]} \mathbb Z_2 \theta$, where 
$$[\mathrm{Crit}_{\mathcal{YM}^{\varepsilon,H}}^b]:=
\mathrm{Crit}_{\mathcal{YM}^{\varepsilon,H}}^b/\mathcal G_0({\Sigma\times S^1}),$$ with the boundary operator $\partial_k^{\mathcal{YM}^{\varepsilon,H}}: C_{k}^{\mathcal{YM}^{\varepsilon,H},b}\to C_{k-1}^{\mathcal{YM}^{\varepsilon,H},b}$ defined by 
$$\partial_k^{\mathcal{YM}^{\varepsilon,H}}\theta_-:=\sum_{\theta_+ \in C_{k-1}^{\mathcal{YM}^{\varepsilon,H},b}} \sharp_{\mathbb Z_2}\left(\mathcal M^\varepsilon(\theta_-,\theta_+)/\mathbb R\right)\theta_+$$
where $\mathcal M^\varepsilon(\theta_-,\theta_+)$ is the moduli space
$$\mathcal M^\varepsilon(\theta_-,\theta_+):=\mathcal {FL}^\varepsilon(\theta_-,\theta_+)/\bar{\mathcal G}_{0}^{1,2;2}(P\times S^1\times \mathbb R),$$
$$\mathcal {FL}^\varepsilon(\theta_-,\theta_+)=\{ \Xi \in W^{1,2;2} (\Xi_-,\Xi_+); \Xi \textrm{ satisfies } (\ref{flow:eq}), [\Xi_\pm]=\theta_\pm\}.$$
The functional $\mathcal{YM}^{\varepsilon,H}$ will inherits the transversality property of $E^H$ provided that $\varepsilon$ is small enough, in this case $\partial_{k+1}^{\mathcal{YM}^{\varepsilon,H}}\partial_k^{\mathcal{YM}^{\varepsilon,H}}=0$ and thus we can define the Morse homology
\begin{equation}\label{Morseym}
HM_*\left(\mathcal A^{\varepsilon,b}\left(P\times S^1\right)/\mathcal G_0\left(P\times S^1\right),\mathbb Z_2\right):= \ker \partial_{*}^{\mathcal{YM}^{\varepsilon,H}}/\textrm{im }\partial_{*+1}^{\mathcal{YM}^{\varepsilon,H}}.\end{equation}

\begin{remark}
Also in this case, for any two orbits of perturbed Yang-Mills connections $\theta_\pm\in [\mathrm{Crit}_{\mathcal {YM}^{\varepsilon,H}}^b]$ and any two representatives $\Xi_\pm \in\mathrm{Crit}_{\mathcal {YM}^{\varepsilon,H}}^b$ we can identify the moduli spaces ${\mathcal M}^\varepsilon(\theta_-,\theta_+)$ and $\bar{\mathcal M}^\varepsilon(\Xi_-,\Xi_+)$; in particular we have
$$\sharp_{\mathbb Z_2}\left({\mathcal M}^\varepsilon(\theta_-,\theta_+)/\mathbb R\right)=\sharp_{\mathbb Z_2}\left(\bar{\mathcal M}^\varepsilon(\Xi_-,\Xi_+)/\mathbb R\right).$$
\end{remark}

The aim of this paper is to show that the two Morse homologies (\ref{Morseg}) and (\ref{Morseym}) are isomorph and we give the proof in the section \ref{c:flow:mt}. In order to do this we need to show that there is a bijective map 
$$\mathcal R^{b,\varepsilon}: \mathcal M^0(\Xi_-,\Xi_+)\to\mathcal M^\varepsilon\left(\mathcal T^{b,\varepsilon}(\Xi_-),\mathcal T^{b,\varepsilon}(\Xi_+)\right)$$
for each regular value $b$ of $E^H$, every pair $\Xi_-, \Xi_+\in \mathrm{Crit}^{b}_{E^H}$ with index difference 1 and for $\varepsilon$ sufficiently small; for this purpose, we will proceed in the following way. In section \ref{flow:section:linest} we will prove some linear estimates using the linear operator, for a $1$-form $\xi=\alpha+\psi dt+\phi ds\in W^{1,2;p}$
\begin{equation}\label{linopflow}
\mathcal D^\varepsilon(\Xi)(\xi):=\mathcal D_1^\varepsilon(\Xi)(\xi)+\mathcal D_2^\varepsilon(\Xi)(\xi) dt+\mathcal D_3^\varepsilon(\Xi)(\xi) ds
\end{equation}
where the first two terms are the linearization of (\ref{flow:eq}), i.e.
\begin{equation}\label{flow:op:linym12}
\begin{split}
\mathcal D_1^\varepsilon(\Xi)(\xi)
:=& \nabla_s\alpha -d_A\phi +\frac 1{\varepsilon^2} d_A^*d_A\alpha +\frac 1{\varepsilon^2}*[\alpha\wedge *F_A]\\
&-\nabla_t\nabla_t\alpha+d_A\nabla_t\psi-2[\psi,(\partial_t A-d_A\Psi)]-d*X_t(A)\alpha,\\
\mathcal D_2^\varepsilon(\Xi)(\xi)
:=& \nabla_s\psi-\nabla_t\phi+\frac 2{\varepsilon^2}*[\alpha\wedge*(\partial_t A-d_A\Psi)]\\
&-\frac 1{\varepsilon^2}\nabla_t d_A^*\alpha+\frac 1{\varepsilon^2}d_A^*d_A\psi.
\end{split}
\end{equation}
and the third one is, for a fixed reference connection $\Xi^0=A^0+\Psi^0dt+\Phi^0 ds$,
\begin{equation}\label{flow:op:linym3}
\mathcal D_3^\varepsilon(\Xi)(\xi):= \nabla_s^{\Phi^0}\phi-\frac 1{\varepsilon^4}d_{A^0}^*\alpha+\frac 1{\varepsilon^2}\nabla_t^{\Psi^0}\psi.
\end{equation}
The linear operator $\mathcal D^\varepsilon(\Xi)$ can also be seen as the linearisation of the map 
$$\mathcal F^\varepsilon(\Xi):= \mathcal F^\varepsilon_1(\Xi)+\mathcal F^\varepsilon_2(\Xi) dt+\mathcal F^\varepsilon_3(\Xi) ds$$ 
where
\begin{equation}\label{flow:eqfds}
\begin{split}
\mathcal F^\varepsilon_1(\Xi):=&\partial_s A-d_A\Phi + \frac 1{\varepsilon^2}d_A^*F_A- \nabla_t\left(\partial_t A-d_A\Psi\right),\\
\mathcal F^\varepsilon_2(\Xi):=&\partial_s\Psi-\nabla_t\Phi- \frac 1{\varepsilon^2} d_A^*\left(\partial_t A-d_A\Psi\right),\\
\mathcal F_3^\varepsilon(\Xi):=& \nabla_s^{\Phi^0}(\Phi-\Phi_2)-\frac 1{\varepsilon^4}d_{A^0}^*(A-A_2)+\frac 1{\varepsilon^2}\nabla_t^{\Psi^0}(\Psi-\Psi_2)
\end{split}
\end{equation}
and $A_2+\Psi_2 dt+\Phi_2 ds:=\mathcal K_2^\varepsilon(A^0+\Psi^0 dt+\Psi^0 ds)$; we will discuss the map $\mathcal K_2^\varepsilon$ in the section \ref{flow:section:firstapprox}.\\

After computing some quadratic estimates in section \ref{flow:section:qest}, we will prove the existence and the local uniqueness of the map $\mathcal R^{b,\varepsilon}$ (section \ref{flow:section:themap}). In the following sections \ref{flow:section:apriori}, \ref{flow:section:linfty} and \ref{flow:section:expconv} we will prove some a priori estimates that we will use in the section \ref{flow:section:surj} in order to prove the surjectivity of $\mathcal R^{b,\varepsilon}$. The section \ref{ch:rcg} is devoted to prove the Coulomb gauge condition theorem.\\

%
%

\section{Linear estimates for the Yang-Mills flow operator}\label{flow:section:linest}
 As we already mentioned, by the Weber's regularity theorem (cf. \cite{Weberhab}, theorem 1.13), we can assume that the energy functional $E^H$ is Morse-Smale. In this section we will prove a linear estimate, theorem \ref{flow:thme:linest333}, for the operator $\mathcal D^\varepsilon (\Xi)$ for a perturbed geodesic flow $\Xi=A+\Phi dt+\Psi ds$.The main idea is to divide the linear operator respect to the orthogonal splitting $\ker d_{A+\Psi dt }^{*_\varepsilon}\oplus \textrm{im } d_{A+\Psi dt}$ and to use different linear estimates on the two parts. In order to investigate this we need to decompose, in a unique way, every 1-form $\xi=\alpha+\psi dt$ as $(\alpha_k+\psi_k dt)+(\alpha_i+\psi_i dt)$ where $\frac 1{\varepsilon^2}d_A^*\alpha_k-\nabla_t\psi_k=0$ and $\alpha_i+\psi_i dt=d_A\gamma+\nabla_t\gamma dt$ for a $0$-form $\gamma$. Formally, first, we solve the equation
\begin{equation}\label{flow:cond:split0}
\frac 1{\varepsilon^2}d_A^*d_A\gamma-\nabla_t\nabla_t \gamma=\frac 1{\varepsilon^2} d_A^* \alpha-\nabla_t\psi
\end{equation}
which has a unique solution $\gamma$ whose existence and uniqueness can be proved as in the lemma 6.4 of \cite{MR1283871}; then we define
\begin{equation}\label{flow:cond:split}
\alpha_i+\psi_i dt:=d_A\gamma+\nabla_t\gamma dt,\quad \alpha_k+\psi_k dt:=(\alpha+\psi dt)-(\alpha_i+\psi_i dt)
\end{equation}
and since the splitting is orthogonal, 
\begin{equation}
\|d_A\gamma+\nabla_t\gamma dt\|_{0,p,\varepsilon}\leq \|\alpha+\psi dt\|_{0,p,\varepsilon},\quad \|\alpha_k+\psi_k dt\|_{0,p,\varepsilon}\leq \|\alpha+\psi dt\|_{0,p,\varepsilon}.
\end{equation}
By definition and using the commutation formulas (\ref{commform}), (\ref{commform2}) we have also that 
\begin{equation}\label{flow:linest:eqfff}
\begin{split}
d_A\nabla_t\psi_k=&\frac 1{\varepsilon^2} d_Ad_A^*\alpha_k,\\
d_A\nabla_t\psi_i=&\nabla_t\nabla_t\alpha_i-2[(\partial_t A-d_A\Psi),\psi_i]-[\nabla_t(\partial_t A-d_A\Psi),\gamma],\\
\nabla_t d_A^*\alpha_i=&d_A^*\nabla_t d_A\gamma+*[\alpha_i\wedge*(\partial_t A-d_A\Psi)]\\
=&d_A^*d_A\psi_i+*[\alpha_i\wedge*(\partial_t A-d_A\Psi)]+d_A^*[(\partial_t A-d_A\Psi),\gamma]\\
=&d_A^*d_A\psi_i+2*[\alpha_i\wedge*(\partial_t A-d_A\Psi)]-*[d_A*(\partial_t A-d_A\Psi),\gamma].
\end{split}
\end{equation}
Now, we  can write the components of the linear operator using this splitting. On the one hand, the first component of $\mathcal D^\varepsilon(\Xi)(\xi+\phi ds)$, defined by (\ref{flow:op:linym12}), is, using the identities (\ref{flow:linest:eqfff}),
\begin{align*}
\mathcal D_1^\varepsilon(\Xi)(\xi+\phi ds)=&\nabla_s\alpha_k +\frac 1{\varepsilon^2} d_A^*d_A\alpha _k+\frac 1{\varepsilon^2} d_Ad_A^*\alpha_k-\nabla_t\nabla_t\alpha_k\\
&+\nabla_s\alpha_i-d_A\phi-[\nabla_t(\partial_t A-d_A\Psi),\gamma]\\
&-2[\psi_k,(\partial_t A-d_A\Psi)]-d*X_t(A)\alpha;
\end{align*}
in the other hand, using (\ref{flow:linest:eqfff}), the second component becomes
\begin{align*}
\mathcal D_2^\varepsilon(\Xi)(\xi+\phi ds)
=& \nabla_s\psi_k-\nabla_t\nabla_t\psi_k+\frac 1{\varepsilon^2}d_A^*d_A\psi_k+\frac 2{\varepsilon^2}*[\alpha_k\wedge*(\partial_t A-d_A\Psi)]\\
&+ \nabla_s\psi_i-\nabla_t\phi.
\end{align*}
The third component is the easiest to investigate, because it depends only on $\alpha_i+\psi_i dt$ and on $\phi$:
\begin{equation*}
\mathcal D_3^\varepsilon(\Xi)(\xi+\phi ds)=\nabla_s\phi-\frac 1{\varepsilon^4}d_A^*\alpha+\frac 1{\varepsilon^2}\nabla_t\psi
=  \nabla_s\phi-\frac 1{\varepsilon^4}d_A^*\alpha_i+\frac 1{\varepsilon^2}\nabla_t\psi_i.
\end{equation*}
Next, the idea is to consider $\mathcal D^\varepsilon(\Xi)(\xi+\phi ds)$ as the sum of the following three operators
\begin{equation*}
\begin{split}
\mathcal D^{\varepsilon,1}(\Xi)(\xi+\phi ds):=&\nabla_s\alpha_k +\frac 1{\varepsilon^2} d_A^*d_A\alpha _k+\frac 1{\varepsilon^2} d_Ad_A^*\alpha_k-[\psi_k,(\partial_t A-d_A\Psi)]\\
&-\nabla_t\nabla_t\alpha_k+\left( \nabla_s\psi_k-\nabla_t\nabla_t\psi_k+\frac 1{\varepsilon^2}d_A^*d_A\psi_k\right) dt\\
&+\frac 1{\varepsilon^2}*[\alpha_k\wedge*(\partial_t A-d_A\Psi)]\, dt,\\
\mathcal D^{\varepsilon,2}(\Xi)(\xi+\phi ds):=&\nabla_s\alpha_i-d_A\phi+ \left( \nabla_s\psi_i-\nabla_t\phi\right) dt\\
&+\left(\nabla_s\phi-\frac 1{\varepsilon^4}d_A^*\alpha+\frac 1{\varepsilon^2}\nabla_t\psi\right) ds,\\
\textrm{Rest}^\varepsilon(\Xi)(\xi+\phi ds):=&-[\nabla_t(\partial_t A-d_A\Psi),\gamma]-[\psi_k,(\partial_t A-d_A\Psi)]\\
&-d*X_t(A)\alpha+\frac 1{\varepsilon^2}*[\alpha_k\wedge*(\partial_t A-d_A\Psi)] \,dt
\end{split}
\end{equation*}
and to project them in to the two parts of the orthogonal splitting $\textrm{im } d_{A+\Psi dt} \oplus \ker d_{A+\Psi dt}^{*_\varepsilon}$. The result is that the important part of $\mathcal D^{\varepsilon,1}(\Xi)$ lies in $\ker d_{A+\Psi dt}^{*_\varepsilon}$ and that of $\mathcal D^{\varepsilon,2}(\Xi)$ in $\textrm{im } d_{A+\Psi dt}$ as is showed in the next lemma; in other words, we interchange the operator $\mathcal D^\varepsilon(\Xi)$ with the projection in the two parts of the splitting. We recall that, by (\ref{flow:web:w1}) and (\ref{flow:web:w2}), we can assume that
\begin{equation}\label{flow:aprioriest:geo}
\left\|\partial_tA-d_A\Psi \right\|_{L^\infty}+\left\|\nabla_t\left(\partial_tA-d_A\Psi\right) \right\|_{L^\infty}+\left\|\partial_sA-d_A\Phi \right\|_{L^\infty}\leq c_0
\end{equation}
for a positive constant $c_0$.
\begin{lemma}\label{flow:lemma:kerim2}
We choose $b,p>0$. For any geodesic flow $\Xi=A+\Psi dt+\Phi ds \in \mathcal M^0(\Xi_-,\Xi_+)$, $\Xi_\pm\in \mathrm{Crit}^b_{E^H}$, there exists a positive constant $c$ such that
\begin{equation*}
\left \| \Pi_{\textrm{im } d_{A+\Psi dt}} \mathcal D^{\varepsilon,1}(\Xi)(\xi+\phi ds)\right\|_{0,p,\varepsilon}\leq c\left( \|\alpha_k \|_{L^p}+\|\psi_k\|_{L^p}+\|\nabla_t\alpha_k\|_{L^p}\right),
\end{equation*}
\begin{equation*}
\left \| \left(1-\Pi_{\textrm{im } d_{A+\Psi dt}}\right)\left(\nabla_s\alpha_i -d_A\phi +\left( \nabla_s\psi_i-\nabla_t\phi\right) dt\right)\right\|_{0,p,\varepsilon}\leq c \|\alpha_i \|_{L^p},
\end{equation*}
\begin{equation*}
\varepsilon^2\left\|\mathrm {Rest}^\varepsilon(\Xi)(\xi+\phi ds)\right\|_{0,p,\varepsilon}\leq c\varepsilon \left\|\xi\right\|_{0,p,\varepsilon}
\end{equation*}
for all $\xi+\phi ds \in W^{1,2;p}$ and using the splitting $\xi=:(\alpha_k+\psi_k dt)+(\alpha_i+\psi_i dt)$ defined by (\ref{flow:cond:split0}) and (\ref{flow:cond:split}). We denote by $\Pi_{\textrm{im } d_{A+\Psi dt}}$ the projection in to the linear subspace $\textrm{im } d_{A+\Psi dt}$.
\end{lemma}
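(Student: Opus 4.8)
The plan is to establish the three estimates one after another; throughout write $\beta:=\partial_tA-d_A\Psi$, $\sigma:=\partial_sA-d_A\Phi$ and $\Pi:=\Pi_{\textrm{im } d_{A+\Psi dt}}$. The tools are: the commutation formulas (\ref{commform}), (\ref{commform2}) together with their $s$-analogues $[d_A,\nabla_s]=-[\sigma\wedge\cdot\,]$, $[d_A^*,\nabla_s]=*[\sigma\wedge*\,\cdot\,]$; the geodesic flow identities $F_A=0$ and, by (\ref{cond:eqg0}), $d_A^*\beta=0$, which together with $F_A\equiv 0$ also force $d_A\beta=0$, so that $\beta$ is a $d_A$-harmonic $1$-form on each slice; the defining relations of the splitting, $\tfrac1{\varepsilon^2}d_A^*\alpha_k=\nabla_t\psi_k$ and $\alpha_i+\psi_i\,dt=d_{A+\Psi dt}\gamma$ with $\gamma$ the solution of (\ref{flow:cond:split0}); the a priori bound (\ref{flow:aprioriest:geo}) together with (\ref{flow:web:w1})--(\ref{flow:web:w2}); and the estimate for the operator in (\ref{flow:cond:split0}) from \cite[Lemma~6.4]{MR1283871}, which I use in the following form: if $\tfrac1{\varepsilon^2}d_A^*d_A\mu-\nabla_t\nabla_t\mu=f$ then $\|d_{A+\Psi dt}\mu\|_{0,p,\varepsilon}\le c\varepsilon^2\|f\|_{L^p}$, and if in addition $f=\tfrac1{\varepsilon^2}d_A^*g$ then $\|d_{A+\Psi dt}\mu\|_{0,p,\varepsilon}\le c\|g\|_{L^p}$. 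Finally recall that $\Pi\zeta=d_{A+\Psi dt}\mu$, where $\mu$ solves (\ref{flow:cond:split0}) with right hand side $d^{*_\varepsilon}_{A+\Psi dt}\zeta$.

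For the first estimate the guiding principle is that $\mathcal D^{\varepsilon,1}(\Xi)(\xi+\phi ds)$ is, modulo the first order term $\nabla_s(\alpha_k+\psi_k\,dt)$ and zeroth order brackets with $\beta$, the $\varepsilon$-Hodge Laplacian $d^{*_\varepsilon}_{A+\Psi dt}d_{A+\Psi dt}+d_{A+\Psi dt}d^{*_\varepsilon}_{A+\Psi dt}$ applied to $\alpha_k+\psi_k\,dt\in\ker d^{*_\varepsilon}_{A+\Psi dt}$ (here $F_A=0$ is used), so its image should essentially stay in $\ker d^{*_\varepsilon}_{A+\Psi dt}$ and $\Pi$ of it should be small. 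Concretely I would compute $d^{*_\varepsilon}_{A+\Psi dt}\mathcal D^{\varepsilon,1}(\Xi)(\xi+\phi ds)$ directly, using the commutation formulas and $\tfrac1{\varepsilon^2}d_A^*\alpha_k=\nabla_t\psi_k$, and verify that every second order term cancels: the contributions $\nabla_t\nabla_t\nabla_t\psi_k$ cancel between $\tfrac1{\varepsilon^2}d_A^*$ of the $\Sigma$-component and $-\nabla_t$ of the $dt$-component; the brackets $\tfrac1{\varepsilon^2}\nabla_t(*[\alpha_k\wedge*\beta])$ cancel in pairs using the antisymmetry $[\beta\wedge*\eta]=-[\eta\wedge*\beta]$ for $\Sigma$-$1$-forms; the terms $\tfrac1{\varepsilon^2}d_A^*[\beta,\psi_k]$ cancel; and the one would-be dangerous leftover $\tfrac1{\varepsilon^2}*[\beta\wedge*d_A\psi_k]$ equals $\pm\tfrac1{\varepsilon^2}d_A^*[\beta,\psi_k]$ because $d_A^*\beta=0$. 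What is left is a sum of terms either of the form $\tfrac1{\varepsilon^2}$ times a bounded zeroth order coefficient acting on $\alpha_k$, $\psi_k$ or $\nabla_t\alpha_k$, or of the form $\tfrac1{\varepsilon^2}d_A^*$ applied to a bounded coefficient times $\psi_k$ (the $s$-part $\nabla_s(\alpha_k+\psi_k\,dt)$ behaves the same way, since $d^{*_\varepsilon}_{A+\Psi dt}\nabla_s(\alpha_k+\psi_k\,dt)=[d^{*_\varepsilon}_{A+\Psi dt},\nabla_s](\alpha_k+\psi_k\,dt)$ is $\varepsilon^{-2}$ times a bracket with $\sigma$); inserting the first type into $\|d_{A+\Psi dt}\mu\|_{0,p,\varepsilon}\le c\varepsilon^2\|f\|_{L^p}$ and the second into $\|d_{A+\Psi dt}\mu\|_{0,p,\varepsilon}\le c\|g\|_{L^p}$, and invoking (\ref{flow:aprioriest:geo}), produces exactly $c(\|\alpha_k\|_{L^p}+\|\psi_k\|_{L^p}+\|\nabla_t\alpha_k\|_{L^p})$.

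For the second estimate I use $\alpha_i+\psi_i\,dt=d_{A+\Psi dt}\gamma$ to rewrite $\nabla_s\alpha_i-d_A\phi+(\nabla_s\psi_i-\nabla_t\phi)\,dt$ as $d_{A+\Psi dt}(\nabla_s\gamma-\phi)+[\sigma,\gamma]+[c_{st},\gamma]\,dt$, where $[\nabla_s,\nabla_t]\gamma=[c_{st},\gamma]$ involves the $(s,t)$-component of $F_\Xi$; the first summand lies in $\textrm{im } d_{A+\Psi dt}$ and is killed by $1-\Pi$, and the two bracket terms are bounded by $c\|\gamma\|_{L^p}$ thanks to (\ref{flow:aprioriest:geo}) and (\ref{flow:web:w1})--(\ref{flow:web:w2}). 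Since $P$ is a non-trivial $\mathrm{SO}(3)$-bundle, $A$ has no covariant constant $0$-form, so $\|\gamma\|_{L^p}\le c\|d_A\gamma\|_{L^p}=c\|\alpha_i\|_{L^p}$, which gives the estimate. For the third estimate one just notes that $\textrm{Rest}^\varepsilon(\Xi)(\xi+\phi ds)$ is a sum of zeroth order terms: after multiplication by $\varepsilon^2$ the $\Sigma$-components $-[\nabla_t\beta,\gamma]$, $-[\psi_k,\beta]$, $-d*X_t(A)\alpha$ are controlled by $c\varepsilon^2\varepsilon^{-1}\|\xi\|_{0,p,\varepsilon}\le c\varepsilon\|\xi\|_{0,p,\varepsilon}$ (using $\|\gamma\|_{L^p}\le c\|d_A\gamma\|_{L^p}\le c\|\xi\|_{0,p,\varepsilon}$, the splitting bound $\varepsilon\|\psi_k\|_{L^p}\le\|\xi\|_{0,p,\varepsilon}$, and that $d*X_t(A)$ is bounded by (\ref{flow:intro:cond})), while the only term carrying a negative power of $\varepsilon$, namely $\tfrac1{\varepsilon^2}*[\alpha_k\wedge*\beta]\,dt$, contributes $\varepsilon^2\cdot\varepsilon\cdot\tfrac1{\varepsilon^2}\|[\alpha_k,\beta]\|_{L^p}\le c\varepsilon\|\xi\|_{0,p,\varepsilon}$ because the $dt$-component enters $\|\cdot\|_{0,p,\varepsilon}$ with the weight $\varepsilon$.

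The main obstacle is the bookkeeping in the first estimate: one has to check that, after exploiting $F_A=0$, $d_A^*\beta=d_A\beta=0$, the splitting relation $\tfrac1{\varepsilon^2}d_A^*\alpha_k=\nabla_t\psi_k$ and the antisymmetry of the bracket on $\Sigma$-$1$-forms, no uncontrolled second derivative survives in $d^{*_\varepsilon}_{A+\Psi dt}\mathcal D^{\varepsilon,1}$ and every surviving term has one of the two admissible shapes, so that the $\varepsilon^2$-gain (respectively the $\varepsilon$-free bound) in the elliptic estimate for (\ref{flow:cond:split0}) precisely absorbs the factor $\varepsilon^{-2}$ and no further negative power of $\varepsilon$ is produced.
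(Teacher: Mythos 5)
Your proposal is correct and its second and third estimates coincide with the paper's argument almost verbatim (the same rewriting $\nabla_s\alpha_i-d_A\phi+(\nabla_s\psi_i-\nabla_t\phi)\,dt=d_{A+\Psi dt}(\nabla_s\gamma-\phi)+[\partial_sA-d_A\Phi,\gamma]-[\partial_t\Phi-\partial_s\Psi-[\Phi,\Psi],\gamma]\,dt$, the same $\|\gamma\|_{L^p}\le c\|d_A\gamma\|_{L^p}$, the same direct reading of $\mathrm{Rest}^\varepsilon$). For the first estimate the underlying idea is the same — $\mathcal D^{\varepsilon,1}$ applied to the $\ker d^{*_\varepsilon}_{A+\Psi dt}$-part nearly stays in that kernel, and the error is measured by its coderivative — but the packaging differs: the paper never forms $d^{*_\varepsilon}_{A+\Psi dt}\mathcal D^{\varepsilon,1}$ explicitly; it bounds $\|\Pi\,\mathcal D^{\varepsilon,1}\|_{0,p,\varepsilon}$ by the dual pairing $\sup_\omega\langle\mathcal D^{\varepsilon,1}(\xi+\phi ds),d_A\omega+\nabla_t\omega\,dt\rangle/\|d_A\omega+\nabla_t\omega\,dt\|_{L^q}$ and integrates by parts term by term, using $\|\omega\|_{L^q}\le c\|d_A\omega\|_{L^q}$, whereas you compute the coderivative strongly and then invoke $L^p$ estimates for the operator $\varepsilon^{-2}d_A^*d_A-\nabla_t^2$ of (\ref{flow:cond:split0}). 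The cancellations you describe are exactly the identities the paper verifies under the pairing, so the content is equivalent; but note that the two elliptic estimates you use ($\|d_{A+\Psi dt}\mu\|_{0,p,\varepsilon}\le c\varepsilon^2\|f\|_{L^p}$ and its refinement for $f=\varepsilon^{-2}d_A^*g$) are not stated in the paper for general $p$ and would themselves be proved by precisely the duality argument the paper applies directly — so the paper's route is the more economical one. One small citation point: the zeroth-order term $[\partial_s\Psi-\nabla_t\Phi,\,\cdot\,]$ arising from $[d^{*_\varepsilon},\nabla_s]$ (and from $[\nabla_s,\nabla_t]\gamma$ in your second estimate) is controlled not by (\ref{flow:aprioriest:geo}) or (\ref{flow:web:w1})--(\ref{flow:web:w2}) alone but by Lemma \ref{flow:lemma:othereq1}, which deduces $\|\partial_s\Psi-\nabla_t\Phi\|_{L^\infty}\le c$ from $d_A^*d_A(\partial_s\Psi-\nabla_t\Phi)=2*[B_s\wedge *B_t]$; you should cite that lemma where you invoke boundedness of the $(s,t)$-curvature component.
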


\begin{proof}First, we remark that
\begin{equation*}
\begin{split}
&\langle\nabla_s\alpha_k+\frac1{\varepsilon^2}d_A^*d_A\alpha_k,d_A\omega\rangle+\varepsilon^2\langle\nabla_s\psi_k,\nabla_t\omega\rangle\\
&\qquad=\langle\nabla_s(d_A^*\alpha_k-\varepsilon^2\nabla_t\psi_k),\omega\rangle+\frac 1{\varepsilon^2}*\left[F_A\wedge *d_A\alpha_k\right]\\
&\qquad\quad+\langle*[(\partial_s A-d_A\Phi)\wedge*\alpha_k]+\varepsilon^2[(\partial_s\Psi-\partial_t\Phi+[\Psi,\Phi]),\psi_k],\omega\rangle\\
&\qquad=\langle*[(\partial_s A-d_A\Phi)\wedge*\alpha_k]+\varepsilon^2[(\partial_s\Psi-\partial_t\Phi+[\Psi,\Phi]),\psi_k],\omega\rangle,
\end{split}
\end{equation*}
where we used the commutation formulas (\ref{commform2}) and
\begin{equation}\label{commform3}
 [\nabla_s,\nabla_t]\omega=[(\partial_s\Psi-\partial_t\Phi+[\Psi,\Phi]),\omega]
\end{equation}
for any $0$-form $\omega\in\Omega^0(\Sigma\times S^1\times \mathbb R,\mathfrak g_P)$.
\begin{equation*}
\begin{split}
&\langle\frac 1{\varepsilon^2} d_Ad_A^*\alpha_k-d_A\nabla_t\psi_k,d_A\omega\rangle=0,\\
&\langle\frac1{\varepsilon^2}\nabla_t d_A^*\alpha_k-\nabla_t\nabla_t\psi_k,\nabla_t\omega\rangle=0,\\
&\langle-\nabla_t\nabla_t\alpha_k,d_A\omega\rangle+\langle -d_A^*\nabla_t\alpha_k,\nabla_t\omega\rangle
=-*[(\partial_t A-d_A\Psi)\wedge*\nabla_t\alpha_k],\omega\rangle,\\
&\langle \nabla_t d_A\psi_k,d_A\omega\rangle+\langle d_A^*d_A\psi_k,\nabla_t\omega\rangle\\
&\qquad=\langle *[(\partial_t A-d_A\Psi)\wedge *d_A\psi_k],\omega\rangle
=\langle -*[*(\partial_t A-d_A\Psi)\wedge\psi_k],d_A\omega\rangle
\end{split}
\end{equation*}
where for the last step we used that $d_{A}^*\left(\partial_t A-d_A \Psi\right)=0$. Next, we choose $q$ such that $\frac1p+\frac1q=1$. Then
\begin{equation*}
\begin{split}
\Big\| \Pi_{\textrm{im } d_{A+\Psi dt}} &\mathcal D^{\varepsilon,1}(\Xi)(\xi+\phi ds)\Big\|_{0,p,\varepsilon}\\
\leq &\sup_{\omega\in\Omega^0(\Sigma\times S^1\times \mathbb R)}\frac{\langle \mathcal D^{\varepsilon,1}(\Xi)(\xi+\phi ds),d_A\omega+\nabla_t\omega dt\rangle}{\|d_A\omega+\nabla_t\omega dt\|_{q} }\\
\leq & c\left( \|\alpha_k \|_{L^p}+\varepsilon^2\|\psi_k\|_{L^p}+\|\nabla_t\alpha_k\|_{L^p}\right).
\end{split}
\end{equation*}
The last estimate follows directly from the next identities, from the H\"older's inequality, from $\|\omega\|_{L^q}\leq c \|d_A\omega\|_{L^q}$ and from lemma \ref{flow:lemma:othereq1}. The second estimate of the lemma follows from the identity
\begin{equation*}
\begin{split}
\nabla_s\alpha_i -d_A\phi &+\left( \nabla_s\psi_i-\nabla_t\phi\right) dt=\nabla_s d_A\gamma -d_A\phi +\left( \nabla_s\nabla_t\gamma-\nabla_t\phi\right) dt\\
=& d_{A+\Psi dt}\left(\nabla_s\gamma-\phi\right)+[(\partial_s A-d_A\Phi), \gamma]-[(\partial_t\Phi-\partial_s\Psi-[\Phi,\Psi]),\gamma]\,dt,
\end{split}
\end{equation*}
from the a priori estimate (\ref{flow:aprioriest:geo}) and from $\|\gamma\|_{L^p}\leq c\| d_A\gamma\|_{L^p}=c\| \alpha_i \|_{L^p}$. The third estimate follows directly from the definition of $\mathrm{Rest}^\varepsilon$ and the $L^\infty$-bound (\ref{flow:aprioriest:geo}) for the curvature terms $\partial_tA-d_A\Psi$, $\nabla_t(\partial_tA-d_A\Psi)$.
\end{proof}

\begin{lemma}\label{flow:lemma:othereq1} We choose a regular value $b$ of $E^H$. There is a positive constant $c$ such that for any perturbed geodesic flow $A+\Psi dt+ \Phi ds\in \mathcal M^0(\Xi_-,\Xi_+)$, $\Xi_\pm\in \mathrm {Crit}_{E^H}^b$, 
\begin{equation}
d_A^*d_A\left(\partial_s\Psi-\nabla_t\Phi \right)=2*[B_s\wedge*B_t],
\end{equation}
\begin{equation}
\|\partial_s\Psi-\nabla_t\Phi \|_{L^\infty}\leq c
\end{equation}
hold.
\end{lemma}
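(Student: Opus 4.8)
The plan is to read off the first identity as a Bianchi identity for the connection $\Xi$ on $\Sigma\times S^1\times\mathbb R$, and then to upgrade it to a pointwise bound by elliptic regularity on $\Sigma$. Write $B_t:=\partial_t A-d_A\Psi$ and $B_s:=\partial_s A-d_A\Phi$ for the two curvature terms; by (\ref{flow:eqgeoflow333}) these lie in $H^1_A$, so in particular $F_A=0$ and $d_A^*B_t=d_A^*B_s=0$. Set $G:=\partial_s\Psi-\nabla_t\Phi$; up to a sign (and the usual conventions) $G$ is the $dt\wedge ds$-component of the curvature $F_\Xi=F_A-B_t\wedge dt-B_s\wedge ds-G\,dt\wedge ds$. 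The first step is to establish, up to a global sign fixed by the paper's conventions, the identity
\[
d_A G=\nabla_t B_s-\nabla_s B_t .
\]
This can be checked by a direct computation: expand $\nabla_t B_s$ and $\nabla_s B_t$, commute $\partial_t$ past $d_A\Phi$ and $\partial_s$ past $d_A\Psi$, observe that the mixed second derivatives of $A$ cancel, that the brackets of $\partial_t A$ with $\Phi$ and of $\partial_s A$ with $\Psi$ cancel in pairs by antisymmetry, and that the remaining bracket terms reassemble into $d_A[\Psi,\Phi]$; alternatively it is precisely the component of $d_\Xi F_\Xi=0$ of form type ($\Sigma$-$1$-form)$\wedge dt\wedge ds$.

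Next I would apply $d_A^*$ to this identity and commute it past $\nabla_t$ and $\nabla_s$ by (\ref{commform2}) and its $s$-analogue $[d_A^*,\nabla_s]=*[B_s\wedge*\,\cdot\,]$. Since $d_A^*B_t=d_A^*B_s=0$, this gives $d_A^*\nabla_t B_s=*[B_t\wedge *B_s]$ and $d_A^*\nabla_s B_t=*[B_s\wedge *B_t]$. On the surface $\Sigma$ one has the pointwise identity $*[a\wedge *b]=-*[b\wedge *a]$ for any two $\mathfrak g_P$-valued $1$-forms $a,b$ (in local conformal coordinates this is just antisymmetry of the Lie bracket), so the two contributions add rather than cancel and one obtains $d_A^* d_A G=2*[B_s\wedge *B_t]$, the first assertion. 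The one delicate point here is keeping the sign conventions for $*$, $d_A^*$ and the graded bracket mutually consistent so that the stated $+2$ comes out correctly; nothing else in this part is nontrivial.

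For the second assertion I would exploit the identity just proved. By Weber's a priori estimates (\ref{flow:web:w1}) and (\ref{flow:web:w2}) we have $\|B_t\|_{L^\infty}+\|B_s\|_{L^\infty}\le c$, so the right-hand side $h:=2*[B_s\wedge *B_t]$ satisfies $\|h(\cdot,t,s)\|_{L^p(\Sigma)}\le c$ uniformly in $(t,s)$, for every $p<\infty$. Because $P$ is the non-trivial $\mathbf{SO}(3)$-bundle, every flat connection on $\Sigma$ is irreducible, hence $d_A\colon\Omega^0(\Sigma,\mathfrak g_P)\to\Omega^1(\Sigma,\mathfrak g_P)$ is injective and $d_A^*d_A$ is invertible on $\Omega^0(\Sigma,\mathfrak g_P)$; in particular $G(\cdot,t,s)$ is determined with no kernel ambiguity. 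Since $[A(t,s)]$ ranges in the compact manifold $\mathcal M^g(P)$, the elliptic estimate $\|f\|_{W^{2,p}(\Sigma)}\le c\,\|d_A^*d_A f\|_{L^p(\Sigma)}$ holds with a constant uniform in $(t,s)$; applying it to $f=G(\cdot,t,s)$ and using the Sobolev embedding $W^{2,p}(\Sigma)\hookrightarrow C^0(\Sigma)$ for $p>1$ gives $\|\partial_s\Psi-\nabla_t\Phi\|_{L^\infty}\le c\|h\|_{L^p}\le c$.

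I expect the main obstacle to be the first step — matching the sign of $d_A G=\pm(\nabla_t B_s-\nabla_s B_t)$ and of the surface identity $*[a\wedge *b]=-*[b\wedge *a]$ to the stated $+2$ — together with the routine (but not entirely automatic) check that the elliptic constant for $d_A^*d_A$ can be taken uniform along the non-compact flow line, which follows from compactness of $\mathcal M^g(P)$ and from the uniform convergence of $A$ as $s\to\pm\infty$ provided by (\ref{flow:web:w3}) and (\ref{flow:web:w4}).
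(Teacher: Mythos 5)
Your argument is correct and follows essentially the same route as the paper: the identity is obtained by combining the Bianchi-type relation $\nabla_t B_s-\nabla_s B_t=d_A(\partial_s\Psi-\nabla_t\Phi)$ with the commutation formula (\ref{commform2}) applied to $d_A^*B_t=d_A^*B_s=0$, and the $L^\infty$ bound then follows from the a priori bounds (\ref{flow:aprioriest:geo}) together with the invertibility of $d_A^*d_A$ on $\Omega^0(\Sigma,\mathfrak g_P)$ (the paper invokes Lemma \ref{lemma76dt94} with $q=\infty$ in place of your explicit elliptic-regularity/compactness argument, but the content is the same).
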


\begin{proof}
We define $B_t=\partial_t A-d_A\Psi$ und $B_s=\partial_s A-d_A\Phi$ then
$d_A^*B_t=d_A^* B_s=0$ and therefore
\begin{equation*}
\begin{split}
\nabla_s d_A^*B_t=&*[B_s\wedge*B_t]+d_A^*\nabla_s B_t=0\\
\nabla_t d_A^*B_s=&*[B_t\wedge*B_s]+d_A^*\nabla_t B_s\\
=&-*[B_s\wedge*B_t]+d_A^*\nabla_t B_s=0\\
\end{split}
\end{equation*}
yields to $d_A^*\nabla_t B_s-d_A^*\nabla_s B_t=2*[B_s\wedge*B_t]$ where
\begin{equation*}
d_A^*\nabla_t B_s-d_A^*\nabla_s B_t=d_A^*d_A\left(\nabla_s\Psi-\nabla_t\Phi -[\Phi,\Psi]\right).
\end{equation*}
Finally, we can finish the proof of the lemma, i.e.
\begin{equation*}
d_A^*d_A\left(\partial_s\Psi-\nabla_t\Phi \right)=d_A^*d_A\left(\nabla_s\Psi-\nabla_t\Phi -[\Phi,\Psi]\right)=2*[B_s\wedge*B_t].
\end{equation*}
Furthermore, for a positive constant $c$
$$\|\partial_s\Psi-\nabla_t\Phi \|_{L^\infty}\leq 8 \|B_s\|_{L^\infty}\|B_t\|_{L^\infty}\leq c
$$
by (\ref{flow:aprioriest:geo}).
\end{proof}

\begin{theorem}\label{flow:thm:3eqbasic}
We choose a regular value $b$ of $E^H$, then there are two positive constants $c$ and $\varepsilon_0$ such that the following holds. For any $\Xi=A+\Psi dt+\Phi ds\in \mathcal M^{0}(\Xi_-,\Xi_+)$, $\Xi_\pm\in \mathrm{Crit}_{E^H}^b$, any 1-form $\xi=\alpha+\psi dt+\phi ds \in W^{1,2;p}$ and for $0<\varepsilon<\varepsilon_0$
\begin{equation}
\|\xi \|_{1,2;p,\varepsilon}\leq c\varepsilon^2\left\|\mathcal D^\varepsilon(\Xi)(\xi)\right\|_{0,p,\varepsilon }+c\|\pi_A(\alpha)\|_{L^p},
\end{equation}
\begin{equation}
\begin{split}
\|(1 -\pi_A)(\xi) \|_{1,2;p,\varepsilon}\leq &c\varepsilon^2\left\|\mathcal D^\varepsilon(\Xi)(\xi)\right\|_{0,p,\varepsilon }+c\varepsilon\|\pi_A(\alpha)\|_{L^p}\\
&+c\varepsilon^2\|\nabla_s\pi_A(\alpha)\|_{L^p}+c\varepsilon^2\|\nabla_t\nabla_t\pi_A(\alpha)\|_{L^p},
\end{split}
\end{equation}
\begin{equation}
\begin{split}
\| (1-\pi_A)\alpha \|_{1,2;p,\varepsilon} \leq& c\varepsilon^2\left(\|\mathcal D^\varepsilon(\Xi)\xi\|_{0,p,\varepsilon }+\|\nabla_s\pi_A(\alpha)\|_{L^p}+\|\pi_A(\alpha)\|_{L^p}\right)\\
&+c\varepsilon^2\left( \|\nabla_t\pi_A(\alpha)\|_{L^p}+\|\nabla_t\nabla_t\pi_A(\alpha)\|_{L^p}\right).
\end{split}
\end{equation}
\end{theorem}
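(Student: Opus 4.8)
The plan is to exploit the orthogonal splitting $\xi = (\alpha_k+\psi_k\,dt) + (\alpha_i+\psi_i\,dt) + \phi\,ds$ introduced before Lemma \ref{flow:lemma:kerim2}, together with the further decomposition of $\alpha_k$ (and of any harmonic part) via the projection $\pi_A$ onto $H^1_A$. The key structural fact, already isolated in Lemma \ref{flow:lemma:kerim2}, is that $\mathcal D^{\varepsilon,1}(\Xi)$ acts essentially within $\ker d^{*_\varepsilon}_{A+\Psi dt}$ and $\mathcal D^{\varepsilon,2}(\Xi)$ essentially within $\operatorname{im} d_{A+\Psi dt}$, with cross terms of size $O(\varepsilon)$ collected in $\mathrm{Rest}^\varepsilon$. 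So the first step is to apply $\mathcal D^\varepsilon(\Xi)$ to $\xi$, project onto the two pieces of the splitting, and estimate each piece separately; the errors incurred in commuting $\mathcal D^\varepsilon$ past the projections are controlled by Lemma \ref{flow:lemma:kerim2} and absorbed into the left-hand side once $\varepsilon$ is small enough.

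Second, I would treat the $\operatorname{im} d_\Xi$-part. Writing $\alpha_i+\psi_i\,dt = d_A\gamma+\nabla_t\gamma\,dt$, the operator $\mathcal D^{\varepsilon,2}$ reduces, modulo the lower-order terms handled in Lemma \ref{flow:lemma:kerim2}, to the scalar-type operator $\nabla_s - \tfrac1{\varepsilon^4}d_A^*d_A + \tfrac1{\varepsilon^2}\nabla_t\nabla_t$ acting on $\gamma$ and on $\phi$. This is (after the parabolic rescaling $(t,s)\mapsto(\varepsilon t,\varepsilon^2 s)$ of Section \ref{sec:ymflow}) a uniformly parabolic operator on $0$-forms, and the estimate $\|d_A\gamma+\nabla_t\gamma\,dt\|_{1,2;p,\varepsilon} + \varepsilon^2\|\phi\,ds\|_{1,2;p,\varepsilon} \le c\varepsilon^2\|\mathcal D^\varepsilon(\Xi)\xi\|_{0,p,\varepsilon}$ follows from the $L^p$-parabolic regularity theory exactly as in Lemma 6.4 of \cite{MR1283871}, using $F_A=0$ and the $L^\infty$-bounds (\ref{flow:aprioriest:geo}), (\ref{flow:web:w1})--(\ref{flow:web:w2}) and Lemma \ref{flow:lemma:othereq1}. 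Here the third component $\mathcal F_3^\varepsilon$ / $\mathcal D_3^\varepsilon$ (the relative Coulomb-gauge condition with the factor $\varepsilon^{-4}$) is precisely what makes this subsystem invertible with the stated $\varepsilon$-powers.

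Third, and this is the main obstacle, I would estimate the $\ker d^{*_\varepsilon}_{A+\Psi dt}$-part $\alpha_k+\psi_k\,dt$, whose harmonic (i.e. $\pi_A$-) component is genuinely \emph{not} controlled by $\mathcal D^\varepsilon(\Xi)\xi$ — the operator $\mathcal D^{\varepsilon,1}$ restricted there degenerates as $\varepsilon\to0$ to the geodesic-flow operator $\mathcal D^0(\Xi)$ of (\ref{flow:op:lingeo}), which has a nontrivial kernel/cokernel along the flow line. This is exactly why $\|\pi_A(\alpha)\|_{L^p}$ appears on the right of all three inequalities. The strategy is: (a) on the orthogonal complement of $\pi_A$ inside $\ker d^{*_\varepsilon}$ one has a uniform lower bound for $\mathcal D^{\varepsilon,1}$ (the Laplace-type terms $\tfrac1{\varepsilon^2}(d_A^*d_A+d_Ad_A^*)$ give a spectral gap of order $\varepsilon^{-2}$ on the non-harmonic directions), yielding the estimate for $(1-\pi_A)\xi$ after multiplying by $\varepsilon^2$; (b) the coupling between $\pi_A(\alpha)$ and the rest is weak, of order $\varepsilon$, so it can be moved to the right-hand side; (c) for the sharper second and third inequalities one differentiates: applying $\nabla_s$, $\nabla_t$, $\nabla_t\nabla_t$ to the equation and re-running the argument produces the extra terms $\|\nabla_s\pi_A(\alpha)\|_{L^p}$, $\|\nabla_t\pi_A(\alpha)\|_{L^p}$, $\|\nabla_t\nabla_t\pi_A(\alpha)\|_{L^p}$, since only the $\pi_A$-directions fail the elliptic estimate while everything else is recovered. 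Assembling the three pieces, choosing $\varepsilon_0$ small enough to absorb all the $O(\varepsilon)$ and $O(\varepsilon^2)$ cross-terms into the left-hand side, and using the equivalence of $\|\cdot\|_{1,2;p,\varepsilon}$ with the sum of its components (Section \ref{flow:subsection:norm}), gives the claimed bounds. The delicate bookkeeping is making sure every commutator term — those listed in (\ref{flow:linest:eqfff}) and (\ref{commform})--(\ref{commform3}) — carries the right power of $\varepsilon$ so that it is genuinely subordinate to $\|\xi\|_{1,2;p,\varepsilon}$.
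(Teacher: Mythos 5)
Your overall architecture — split $\xi$ into the $\ker d_{A+\Psi dt}^{*_\varepsilon}$ and $\textrm{im } d_{A+\Psi dt}$ pieces, use Lemma \ref{flow:lemma:kerim2} to interchange $\mathcal D^\varepsilon(\Xi)$ with the projections, estimate each piece by a model operator, and absorb the $O(\varepsilon)$ cross terms — is exactly the paper's, and your explanation of why $\|\pi_A(\alpha)\|_{L^p}$ must sit on the right is correct. But your treatment of the image part contains a genuine gap. The subsystem governing $\alpha_i+\psi_i\,dt+\phi\,ds$ is \emph{not} a parabolic equation for $\gamma$: the operator you write, $\nabla_s-\tfrac1{\varepsilon^4}d_A^*d_A+\tfrac1{\varepsilon^2}\nabla_t\nabla_t$, is first order in $s$ with the \emph{backward} heat sign, and eliminating $\phi$ from $\mathcal D^{\varepsilon,2}$ actually produces a second-order \emph{elliptic} (not parabolic) equation $\nabla_s^2\gamma-\tfrac1{\varepsilon^4}d_A^*d_A\gamma+\tfrac1{\varepsilon^2}\nabla_t^2\gamma=\dots$; Lemma 6.4 of \cite{MR1283871} is only used to construct $\gamma$, not to estimate it. The paper instead treats the triple $(\nabla_s\alpha-d_A\phi,\ \nabla_s\psi-\nabla_t\phi,\ \nabla_s\phi-\tfrac1{\varepsilon^4}d_A^*\alpha+\tfrac1{\varepsilon^2}\nabla_t\psi)$ as a first-order symmetric (``div--curl'') system, proves the $L^p$ estimate for its constant-coefficient model by the Marcinkiewicz--Mihlin theorem applied to the $4\times4$ symbol (Corollary \ref{flow:cor:MM2}), and recovers the zeroth-order terms by the integration-by-parts argument of Lemma \ref{flow:lemma:lplppia2}; this is Theorem \ref{flow:thm:wavefin}, and it is the only place the weighted bounds $\varepsilon^2\|\phi\|_{L^p}+\varepsilon^2\|d_A\phi\|_{L^p}+\varepsilon^3\|\nabla_t\phi\|_{L^p}+\varepsilon^4\|\nabla_s\phi\|_{L^p}$ come from. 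Your sketch provides no mechanism for these $\phi$-estimates.

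The second gap is the mechanism you propose for the refined second and third inequalities. Differentiating the equation by $\nabla_s$, $\nabla_t$, $\nabla_t\nabla_t$ and ``re-running the argument'' would bound $\|\nabla_t\xi\|$ etc.\ by $\|\nabla_t\mathcal D^\varepsilon(\Xi)\xi\|$ plus commutators — i.e.\ it puts derivatives of $\mathcal D^\varepsilon(\Xi)\xi$ on the right, which is not what the theorem asserts. In the paper the terms $\varepsilon^2\|\nabla_s\pi_A(\alpha)\|_{L^p}$ and $\varepsilon^2\|\nabla_t\nabla_t\pi_A(\alpha)\|_{L^p}$ arise for a different reason: one applies Theorem \ref{flow:thm:linestpar} to $(1-\pi_A)\alpha_k$ rather than to $\alpha_k$, and the parabolic model operator $\varepsilon^2\nabla_s-\varepsilon^2\nabla_t^2+\Delta_A$ does not commute with $1-\pi_A$; the commutator is exactly $\varepsilon^2(\nabla_s-\nabla_t\nabla_t)\pi_A(\alpha)$, supplemented by the duality computation bounding $\|\Pi_{\textrm{im } d_{A+\Psi dt}}d_A^*\nabla_t\pi_A(\alpha)\,dt\|_{0,p,\varepsilon}$ needed to control the contribution of $\mathrm{Rest}^\varepsilon$. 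You should replace the differentiation step by this commutator argument, and replace the parabolic claim for the image part by the first-order system estimate.
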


In order to prove the last statement we need the next two theorems that will be proven in the subsections \ref{flow:subsection:proofthm1}, \ref{flow:subsection:proofthm2}.

\begin{theorem}\label{flow:thm:linestpar}
We choose a regular value $b$ of $E^H$, then there are two positive constants $c$ and $\varepsilon_0$ such that the following holds. For any $\Xi=A+\Psi dt+\Phi ds\in \mathcal M^{0}(\Xi_-,\Xi_+)$, $\Xi_\pm\in \mathrm{Crit}_{E^H}^b$, any 1-form\footnote{A $i$-form $\gamma$, $i=0,1$, is an element of $W^{j,l,k;p}$, if $j$ derivatives of $\gamma$ in the $\Sigma$-direction, $l$ derivatives in the $S^1$-direction and $k$ derivatives in the $\mathbb R$ direction are in $L^p$.} $\alpha\in W^{2,2,1;p}$, any 0-form $\psi\in W^{2,2,1;p}$ and for $0<\varepsilon<\varepsilon_0$
\begin{equation} \label{flow:thm:linestpateq1}
\begin{split}
\|\alpha\|_{L^p}&+\|d_A\alpha\|_{L^p}+\|d_A^*\alpha\|_{L^p}+\|d_A^*d_A\alpha\|_{L^p}+\|d_Ad_A^*\alpha\|_{L^p}\\
&+\varepsilon \|\nabla_t \alpha\|_{L^p}+\varepsilon^2\|\nabla_t\nabla_t \alpha\|_{L^p}+\varepsilon\|d_A\nabla_t \alpha\|_{L^p}+\varepsilon\|\nabla_t d_A\alpha\|_{L^p}\\
&+\varepsilon\|d_A^*\nabla_t \alpha\|_{L^p}+\varepsilon\|\nabla_t d_A^*\alpha\|_{L^p}+\varepsilon^2\|\nabla_s\alpha\|_{L^p}\\
\leq& c\left\|\left(\varepsilon^2\nabla_s-\varepsilon^2\nabla_t^2+\Delta_A\right)\alpha\right\|_{L^p}+c\|\pi_A(\alpha)\|_{L^p},
\end{split}
\end{equation}
\begin{equation}
\begin{split}
\|\psi \|_{L^p}&+\|d_A\psi \|_{L^p}+\|d_A^*d_A\psi\|_{L^p}+\varepsilon \|\nabla_t \psi\|_{L^p}+\varepsilon^2\|\nabla_t\nabla_t \psi\|_{L^p}\\
&+\varepsilon\|d_A\nabla_t \psi\|_{L^p}+\varepsilon\|\nabla_t d_A\psi\|_{L^p}+\varepsilon^2\|\nabla_s\alpha\|_{L^p}\\
\leq&c\left\|\left(\varepsilon^2\nabla_s-\varepsilon^2\nabla_t^2+\Delta_A\right)\psi\right\|_{L^p}.
\end{split}
\end{equation}
\end{theorem}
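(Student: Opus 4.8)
The plan is to treat this as a parabolic $L^p$-estimate on the 4-manifold $\Sigma\times S^1\times\mathbb R$ for the operator $\mathcal L^\varepsilon:=\varepsilon^2\nabla_s-\varepsilon^2\nabla_t^2+\Delta_A$ acting on $\Omega^j(\Sigma,\mathfrak g_P)$-valued functions of $(t,s)$, where $\Delta_A=d_A^*d_A+d_Ad_A^*$ on $1$-forms and $\Delta_A=d_A^*d_A$ on $0$-forms. The first reduction is to undo the $\varepsilon$-rescaling exactly as in the proof of Theorem \ref{flow:thm:sob}: setting $\bar\alpha(t,s)=\alpha(\varepsilon t,\varepsilon^2 s)$ (and likewise for $\psi$, with the weight $\varepsilon$), the operator $\mathcal L^\varepsilon$ becomes, up to rescaling of the base metric on $\Sigma$, the honest parabolic operator $\partial_s-\nabla_t^2+\Delta_{\bar A}$ on the rescaled cylinder, and each term on the left-hand side of \eqref{flow:thm:linestpateq1} is, with its indicated power of $\varepsilon$, exactly the corresponding (unweighted) term for $\bar\alpha$. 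So it suffices to prove the $\varepsilon=1$ version of the estimate on $\Sigma\times[0,L]\times\mathbb R$ uniformly in $L$, which is a standard interior-plus-boundary-free parabolic Calderón–Zygmund estimate together with the elliptic estimate $\|\alpha\|_{W^{2,p}(\Sigma)}\le c(\|\Delta_A\alpha\|_{L^p(\Sigma)}+\|\alpha\|_{L^p(\Sigma)})$ on the closed surface $\Sigma$; the mixed terms like $\|d_A\nabla_t\alpha\|$, $\|\nabla_t d_A\alpha\|$, $\|d_A^*\nabla_t\alpha\|$, $\|\nabla_t d_A^*\alpha\|$ are controlled by interpolating the pure spatial $W^{2,p}$-control against the one $\nabla_t$-derivative coming from the parabolic scaling, using the commutation formulas \eqref{commform}, \eqref{commform2} and \eqref{commform3} to trade $[\,d_A,\nabla_t\,]$ and $[\,\nabla_s,\nabla_t\,]$ for zeroth-order terms times the curvature quantities $\partial_tA-d_A\Psi$, $\nabla_t(\partial_tA-d_A\Psi)$, $\partial_sA-d_A\Phi$, $\partial_s\Psi-\nabla_t\Phi$, all of which are bounded in $L^\infty$ by \eqref{flow:aprioriest:geo} and Lemma \ref{flow:lemma:othereq1}.

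The role of the harmonic-projection term $\|\pi_A(\alpha)\|_{L^p}$ is to absorb the kernel: on the closed surface $\Sigma$ the Hodge Laplacian $\Delta_A$ on $1$-forms has kernel $H_A^1$, which at a flat connection is the tangent space to the moduli space and has finite (constant) dimension $6g-6$, while on $0$-forms $\Delta_A=d_A^*d_A$ is injective (the stabiliser is trivial on the $\mathbf{SO}(3)$-bundle in question), which is why no correction term appears in the second inequality. Thus the estimate for $\alpha$ is obtained by splitting $\alpha=\pi_A(\alpha)+(1-\pi_A)\alpha$ as $(t,s)$ vary: on the coexact/exact complement one has the uniform elliptic estimate without kernel, which feeds into the parabolic estimate for $\mathcal L^\varepsilon$; on the $\pi_A$-part the spatial operator vanishes and the remaining $\partial_s,\partial_t$ derivatives are not claimed on the left-hand side, so $\|\pi_A(\alpha)\|_{L^p}$ alone is what is needed — this is the same mechanism as in Lemma 6.4 and the linear estimates of \cite{MR1283871}, which I would cite for the parabolic part rather than reproving Calderón–Zygmund from scratch.

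The main obstacle is keeping every constant uniform in $\varepsilon$ (equivalently in the cylinder length $L\to\infty$ after rescaling) and uniform over the whole moduli space of geodesic flows $\mathcal M^0(\Xi_-,\Xi_+)$; this requires that the connection $A(t,s)$, although not itself bounded in any fixed gauge, enters the estimates only through the gauge-invariant curvature quantities, which by the Weber a priori bounds \eqref{flow:web:w1}–\eqref{flow:web:w2} and the exponential decay \eqref{flow:web:w3}–\eqref{flow:web:w4} are uniformly controlled, together with the fact that the kernel dimension $\dim H_A^1$ is constant along $\mathcal M^g(P)$ so that $\pi_A$ depends smoothly on $[A]$ and its operator norm is uniformly bounded. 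Concretely I would: (i) do the rescaling; (ii) establish the spatial elliptic estimate on $\Sigma$ with the $\pi_A$-correction, uniformly in the flat connection $A(t,s)$; (iii) feed this into the parabolic $L^p$-estimate for $\partial_s-\nabla_t^2+\Delta_{\bar A}$, absorbing the commutator errors using \eqref{flow:aprioriest:geo} and Lemma \ref{flow:lemma:othereq1} and choosing $\varepsilon_0$ small so that the $O(\varepsilon)$ error terms are absorbed into the left-hand side; (iv) recover the mixed-derivative terms by interpolation and the commutation formulas; and (v) handle the $0$-form case identically, noting that injectivity of $d_A^*d_A$ on $\Omega^0(\Sigma,\mathfrak g_P)$ removes the need for any projection term.
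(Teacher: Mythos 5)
Your overall architecture — rescale by $(t,s)\mapsto(\varepsilon t,\varepsilon^2 s)$, reduce to a constant-coefficient parabolic Calder\'on--Zygmund estimate (the paper does this via the Marcinkiewicz--Mihlin multiplier theorem \ref{flow:thm:MM} and Corollary \ref{flow:cor:MM1}), pass to variable coefficients by a partition of unity using the $C^1$-closeness to a flat connection and the exponential decay at the ends, and trade mixed derivatives via the commutation formulas and the a priori bounds (\ref{flow:aprioriest:geo}) — coincides with the paper's proof up to and including Lemma \ref{flow:cor:alpha}. The difference, and the gap, is in how you pass from the resulting estimate, whose right-hand side is $c\|\mathcal L^\varepsilon\alpha\|_{L^p}+c\|\alpha\|_{L^p}$, to the claimed right-hand side $c\|\mathcal L^\varepsilon\alpha\|_{L^p}+c\|\pi_A(\alpha)\|_{L^p}$.

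Your mechanism — split $\alpha=\pi_A(\alpha)+(1-\pi_A)\alpha$ and invoke the slice-wise elliptic estimate without kernel on the complement — does not close. The lower-order term $c\|\alpha\|_{L^p}$ produced by the variable-coefficient parabolic estimate is $O(1)$, not $O(\varepsilon)$, so it cannot be absorbed by shrinking $\varepsilon_0$; and the slice-wise bound $\|(1-\pi_A)\alpha\|_{L^p(\Sigma)}\le C\|\Delta_A\alpha\|_{L^p(\Sigma)}$ only returns you to $\|\Delta_A\alpha\|_{L^p}$, which must again be estimated by the parabolic inequality, reintroducing $c\|\alpha\|_{L^p}$ with a product of constants that there is no reason to believe is less than $1$. (Note also that the splitting does not commute with $\mathcal L^\varepsilon$, so applying the parabolic estimate to $(1-\pi_A)\alpha$ separately costs further commutator terms involving $\nabla_s\pi_A$, $\nabla_t\pi_A$ — these at least are $O(\varepsilon)$ and harmless.) The paper closes the loop with two ingredients you are missing: Lemma \ref{flow:lemma:lpl2}, which lets one take the coefficient of the first-derivative terms arbitrarily small ($\delta$) at the price of a large constant times the weaker norm $\int_{S^1\times\mathbb R}\|\alpha\|_{L^2(\Sigma)}^p\,dt\,ds$; and Lemma \ref{flow:lemma:lplppia}, a weighted integration by parts in $L^p_{t,s}L^2_\Sigma$ exploiting the positivity $\langle\xi,-\varepsilon^2\partial_t^2\xi+\Delta_A\xi\rangle=\varepsilon^2\|\partial_t\xi\|^2+\|d_A\xi\|^2+\|d_A^*\xi\|^2$ and the slice-wise Hodge inequality $\|\xi\|^2\le\|d_A\xi\|^2+\|d_A^*\xi\|^2+\|\pi_A\xi\|^2$, which yields $\int\|\xi\|_{L^2(\Sigma)}^p\le c\int\|\mathcal L^\varepsilon\xi\|_{L^2(\Sigma)}^p+c\int\|\pi_A\xi\|_{L^2(\Sigma)}^p$ with no leftover zeroth-order term. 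You need this (or an equivalent spectral-gap argument at the level of the evolution operator, not merely of the slice-wise Laplacian) to complete the proof; the same device, with $\pi_A=0$ on $0$-forms, is what removes the correction term in the second inequality.
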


\begin{theorem}\label{flow:thm:wavefin}
We choose a regular value $b$ of $E^H$, then there are two positive constants $c$ and $\varepsilon_0$ such that the following holds. For any $\Xi=A+\Psi dt+\Phi ds\in \mathcal M^{0}(\Xi_-,\Xi_+)$, $\Xi_\pm\in \mathrm{Crit}_{E^H}^b$, any 1-form $\alpha+\psi dt\in W^{1,1,1;p}\cap \textrm{im } d_{A+\Psi dt }$, any 0-form $\phi\in W^{1,1,1;p}$ and any $0<\varepsilon<\varepsilon_0$
\begin{equation}
\begin{split}
\|\alpha\|_{L^p}&+\|d_A^*\alpha\|_{L^p}+\varepsilon^2\|\nabla_s\alpha\|_{L^p}+\varepsilon\|\nabla_t\alpha\|_{L^p}+\varepsilon \|\psi\|_{L^p}+\varepsilon^2\|\nabla_t\psi\|_{L^p}\\
&+\varepsilon^3\|\nabla_s\psi\|_{L^p}+\varepsilon^2 \|\phi \|_{L^p}+\varepsilon^2 \|d_A\phi\|_{L^p}+\varepsilon^3 \|\nabla_t\phi\|_{L^p}+\varepsilon^4 \|\nabla_s\phi \|_{L^p}\\
\leq &c\varepsilon^2\left\|\nabla_s\alpha-d_A\phi\right\|_{L^p}+c\varepsilon^3\left\|\nabla_s\psi-\nabla_t\phi\right\|_{L^p}\\
&+c\varepsilon^4\left\|\nabla_s\phi-\frac1{\varepsilon^4}d_A^*\alpha+\frac1{\varepsilon^2}\nabla_t\psi\right\|_{L^p}.
\end{split}
\end{equation}
\end{theorem}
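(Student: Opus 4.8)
The plan is to peel off the redundancy forced by $\alpha+\psi\,dt\in\mathrm{im}\,d_{A+\Psi dt}$, reduce the estimate to a second order operator acting on a single $0$-form, rescale away the parameter $\varepsilon$, and then invoke standard elliptic theory; the one delicate point will be that the resulting coupled operator is not elliptic in the textbook sense but only after using the spectral gap of the flat Laplacian.

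First I would use that $A(t,s)$ represents a point of the compact moduli space $\mathcal M^g(P)$, all of whose connections are irreducible, so that $d_A$ is injective on $\Omega^0(\Sigma,\mathfrak g_P)$ with uniform bounds $\|\beta\|_{L^p(\Sigma)}\le c\|d_A\beta\|_{L^p(\Sigma)}$ and $\|\beta\|_{W^{2,p}(\Sigma)}\le c\|d_A^*d_A\beta\|_{L^p(\Sigma)}$. Hence one can write $\alpha=d_A\gamma$, $\psi=\nabla_t\gamma$ for a unique $0$-form $\gamma$, and put $\eta:=\nabla_s\gamma-\phi$. Using the commutation formulas (\ref{commform}), (\ref{commform2}), the analogous identity $[d_A,\nabla_s]=-[(\partial_sA-d_A\Phi)\wedge\,\cdot\,]$ and (\ref{commform3}), the three components of $\mathcal D^{\varepsilon,2}(\Xi)$ on the image subspace become
\begin{equation*}
\begin{split}
\nabla_s\alpha-d_A\phi&=d_A\eta+[(\partial_sA-d_A\Phi),\gamma],\\
\nabla_s\psi-\nabla_t\phi&=\nabla_t\eta+[(\partial_s\Psi-\nabla_t\Phi),\gamma],\\
\nabla_s\phi-\tfrac1{\varepsilon^4}d_A^*\alpha+\tfrac1{\varepsilon^2}\nabla_t\psi&=\mathcal L^\varepsilon\gamma-\nabla_s\eta,
\end{split}
\end{equation*}
where $\mathcal L^\varepsilon:=\nabla_s^2-\tfrac1{\varepsilon^4}d_A^*d_A+\tfrac1{\varepsilon^2}\nabla_t\nabla_t$ and the bracket terms are of order zero in $\gamma$ with coefficients bounded in $L^\infty$ by (\ref{flow:aprioriest:geo}) and Lemma \ref{flow:lemma:othereq1}. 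Expanding the left hand side of the theorem with the same identities, every term there is dominated by the $\varepsilon$-weighted $W^{2,p}$-norm of $\gamma$ (built from $\|\gamma\|_{L^p}$, $\|d_A\gamma\|_{L^p}$, $\|d_A^*d_A\gamma\|_{L^p}$ and derivatives $\nabla_t,\nabla_s$ carrying the weights $\varepsilon,\varepsilon^2$) together with the term $\varepsilon^2\bigl(\|\eta\|_{L^p}+\|d_A\eta\|_{L^p}+\varepsilon\|\nabla_t\eta\|_{L^p}+\varepsilon^2\|\nabla_s\eta\|_{L^p}\bigr)$ coming from $\eta$. Since $\|\mathcal L^\varepsilon\gamma-\nabla_s\eta\|_{L^p}$, $\|d_A\eta\|_{L^p}$, $\|\nabla_t\eta\|_{L^p}$ are, up to those order-zero commutators, exactly the three quantities on the right of the theorem, everything reduces to bounding the two norms above by $c\varepsilon^4\|\mathcal L^\varepsilon\gamma-\nabla_s\eta\|_{L^p}+c\varepsilon^2\|d_A\eta\|_{L^p}+c\varepsilon^3\|\nabla_t\eta\|_{L^p}$ — the commutator contributions being absorbed into the left hand side because, once shifted over, they carry coefficients that are $O(\varepsilon)$ times the relevant $\gamma$-term.

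Next I would rescale as in Theorem \ref{flow:thm:sob} and Lemma 4.1 of \cite{MR1283871}: set $\bar\gamma(t,s):=\gamma(\varepsilon t,\varepsilon^2 s)$, $\bar\eta(t,s):=\varepsilon^2\eta(\varepsilon t,\varepsilon^2 s)$ and rescale the connection likewise. As all the operators $d_A,d_A^*$ are those of the fixed metric $g_\Sigma$, $\mathcal L^\varepsilon$ is carried to $\varepsilon^{-4}$ times the $\varepsilon$-independent elliptic operator $\bar{\mathcal L}:=\nabla_s^2+\nabla_t^2-d_{\bar A}^*d_{\bar A}$, with principal symbol $-\tau^2-\sigma^2-\abs{\zeta}^2$, the $\varepsilon$-weighted norms turn into the ordinary Sobolev norms over $\Sigma\times[0,1/\varepsilon]\times\mathbb R$, and the right hand side quantities become the components of the rescaled operator $\bar{\mathcal D}$ of (\ref{flow:op:linym12})--(\ref{flow:op:linym3}) with $\varepsilon=1$. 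The claim is thus equivalent to the uniform bound
\begin{equation*}
\|\bar\gamma\|_{W^{2,p}}+\|\bar\eta\|_{W^{1,p}}\le c\bigl(\|\bar{\mathcal L}\bar\gamma-\nabla_s\bar\eta\|_{L^p}+\|d_{\bar A}\bar\eta\|_{L^p}+\|\nabla_t\bar\eta\|_{L^p}\bigr)
\end{equation*}
on $\Sigma\times[0,1/\varepsilon]\times\mathbb R$, with $c$ independent of the large circumference $1/\varepsilon$.

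Finally I would prove this $\varepsilon$-free estimate. Writing $\bar{\mathcal L}=\nabla_s^2-K$ with $K:=d_{\bar A}^*d_{\bar A}-\nabla_t\nabla_t\ge\lambda_0>0$ — here irreducibility, uniformly over the compact moduli space, gives the spectral gap of $d_{\bar A}^*d_{\bar A}$ on $\mathfrak g_P$-valued $0$-forms — a Fourier transform in $s$ yields $\|(-\tau^2-K)^{-1}\|\le\lambda_0^{-1}$; combined with the interior elliptic estimate for $\bar{\mathcal L}$ and the exponentially small $s$-derivatives of the coefficients at $\pm\infty$ from (\ref{flow:web:w3}), (\ref{flow:web:w4}), this gives $\|\bar\gamma\|_{W^{2,p}}\le c\|\bar{\mathcal L}\bar\gamma\|_{L^p}$ uniformly in $1/\varepsilon$. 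The Poincaré inequality $\|\bar\eta\|_{L^p}\le c\|d_{\bar A}\bar\eta\|_{L^p}$ controls $\bar\eta$ in $L^p$, and one controls $\nabla_s\bar\eta$ by rewriting it as $\bar{\mathcal L}\bar\gamma-(\bar{\mathcal L}\bar\gamma-\nabla_s\bar\eta)$ and feeding the $W^{2,p}$-estimate for $\bar\gamma$ back in; undoing the rescaling and translating $\bar\gamma,\bar\eta$ back via $\alpha=d_A\gamma$, $\psi=\nabla_t\gamma$, $\phi=\nabla_s\gamma-\eta$ then finishes the argument. The main obstacle is exactly that last step: the system $(\bar\gamma,\bar\eta)\mapsto(\bar{\mathcal L}\bar\gamma-\nabla_s\bar\eta,\,d_{\bar A}\bar\eta,\,\nabla_t\bar\eta)$ is \emph{not} elliptic in the textbook sense — its symbol degenerates in the pure-$s$ cotangent direction — so the coupling of $\nabla_s\bar\eta$ into the $\bar\gamma$-equation cannot be absorbed by symbol calculus; one must use globally, and uniformly as $[\bar A]$ ranges over $\mathcal M^g(P)$ and $1/\varepsilon\to\infty$, that $d_{\bar A}^*d_{\bar A}$ has trivial kernel on $\Omega^0(\Sigma,\mathfrak g_P)$, so that on the image subspace the coupled operator behaves like an injective elliptic system with invertible limit operators. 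The corresponding estimate for the image part of the linearised operator in the adiabatic limit is carried out in \cite{MR1283871}, and the present proof is a direct adaptation of it.
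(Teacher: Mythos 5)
Your reduction takes a genuinely different route from the paper's (which keeps $\phi$ as an independent unknown, applies the Marcinkiewicz--Mihlin theorem to the first-order $4\times4$ system (\ref{flow:wave:eqcor1}), and then absorbs the zeroth-order errors with the $L^p$-energy argument of Lemma \ref{flow:lemma:lplppia2}), but the intermediate estimate you reduce to is false. By substituting $\phi=\nabla_s\gamma-\eta$ and splitting $\varepsilon^4\|\nabla_s\phi\|_{L^p}\le\varepsilon^4\|\nabla_s^2\gamma\|_{L^p}+\varepsilon^4\|\nabla_s\eta\|_{L^p}$, you replace the quantity the theorem actually controls, namely the \emph{combination} $\nabla_s\phi=\nabla_s^2\gamma-\nabla_s\eta$, by the two pieces separately, and these are not individually bounded by the right-hand side. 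Concretely, in your rescaled ($\varepsilon=1$) picture with $A$ a fixed irreducible flat connection and $\Psi=\Phi=0$: let $\beta$ be a $\lambda_0$-eigenfunction of $d_A^*d_A$, set $\bar\eta=h(s)\beta$ with $h(s)=e^{i\sigma s}\chi(s/R)$, and let $\bar\gamma$ solve $\bar{\mathcal L}\bar\gamma=\nabla_s\bar\eta$. Then $\bar{\mathcal L}\bar\gamma-\nabla_s\bar\eta=0$ and $\nabla_t\bar\eta=0$, so your right-hand side is comparable to $\|d_A\bar\eta\|_{L^p}\approx\lambda_0^{1/2}\|h\|_{L^p}$, while $\|\nabla_s\bar\eta\|_{L^p}=\|h'\|_{L^p}\approx\sigma\|h\|_{L^p}$ and likewise $\|\partial_s^2\bar\gamma\|_{L^p}\approx\sigma\|h\|_{L^p}$; the ratio blows up as $\sigma\to\infty$. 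The spectral gap you invoke in the last paragraph does not repair this: the degeneration sits in the pure $s$-frequency acting on $\eta$, and irreducibility only gives $\|\eta\|\le c\|d_A\eta\|$, not control of $\nabla_s\eta$. (The theorem itself survives this test precisely because $\nabla_s\bar\phi=\partial_s^2\bar\gamma-\nabla_s\bar\eta$ is of size $\lambda_0\sigma^{-1}\|h\|$ by cancellation.) The circularity you half-acknowledge -- needing $\|\bar{\mathcal L}\bar\gamma\|$ to bound $\|\nabla_s\bar\eta\|$ and vice versa -- cannot be closed, because there is no smallness anywhere in that loop.

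The lesson is that $\phi$ (equivalently $\eta$) must stay coupled to $\gamma$ throughout, which is exactly what the paper's Corollary \ref{flow:cor:MM2} does: the operator $\partial_s+D$ of (\ref{flow:wave:eqcor1}) is elliptic as a first-order system, its inverse multipliers are computed explicitly and verified to satisfy (\ref{flow:thmMM:eq1}), and the conclusion only ever asserts control of $\partial_s\phi$, $\partial_s\alpha_i$, $\partial_s\psi$ and of the divergence $\partial_{x_1}\alpha_1+\partial_{x_2}\alpha_2+\partial_t\psi$ -- never of $\partial_s^2\gamma$ or $\partial_s\eta$ separately. The sound parts of your plan (rescaling, the Poincar\'e inequality from irreducibility, handling the order-zero commutators and the variable coefficients by the energy argument) do correspond to steps of the paper's proof, but the core analytic estimate has to be rebuilt around the coupled first-order system rather than around $\bar{\mathcal L}$ acting on $\gamma$ alone.
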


\begin{proof}[Proof of theorem \ref{flow:thm:3eqbasic}]
By theorem \ref{flow:thm:linestpar} and the lemma \ref{flow:lemma:kerim2}
\begin{equation*}
\begin{split}
\|\alpha_k+\psi_k dt\|_{1,2;p,\varepsilon}\leq& c \|\varepsilon^2\nabla_s\alpha_k-\varepsilon^2\nabla_t^2\alpha_k+\Delta_A\alpha_k\|_{L^p}+c\|\pi_A(\alpha)\|_{L^p}\\
&+c\varepsilon \|\varepsilon^2\nabla_s\psi_k-\varepsilon^2\nabla_t^2\psi_k+d_A^*d_A\psi_k \|_{L^p}\\
\leq&c \varepsilon^2 \left |\left |\left(1-\Pi_{\textrm{im } d_{A+\Psi dt} }\right)\mathcal D^\varepsilon(\Xi)(\alpha+\psi dt+\phi ds)\right |\right |_{0,p,\varepsilon}\\
&+c\|\pi_A(\alpha)\|_{L^p}+c \varepsilon\|\alpha\|_{L^p}+c\varepsilon^2\|\psi_k\|_{L^p}+c\varepsilon^2\|\nabla_t\alpha_k\|_{L^p}
\end{split}
\end{equation*}
and by theorem \ref{flow:thm:wavefin} and the lemma \ref{flow:lemma:kerim2}
\begin{equation}\label{flow:eq:sdlsd}
\begin{split}
\|\alpha_i+\psi_i dt\|_{1,2;p,\varepsilon}
\leq&c \varepsilon^2 \left |\left |\Pi_{\textrm{im } d_{A+\Psi dt} }\mathcal D^\varepsilon(\Xi)(\alpha+\psi dt+\phi ds) \right |\right |_{0,p,\varepsilon}\\
&+c\varepsilon^2\|\mathcal D^\varepsilon_3(\Xi)(\alpha+\psi dt+\phi ds)\|_{0,p,\varepsilon }\\
&+c \varepsilon \|\alpha\|_{L^p}+c\varepsilon^2\|\psi_k\|_{L^p}+c\varepsilon^2\|\nabla_t\alpha_k\|_{L^p};
\end{split}
\end{equation}
for $\varepsilon$ small enough we can conclude therefore that
\begin{equation*}
\|\xi \|_{1,2;p,\varepsilon}\leq c\varepsilon^2\left\|\mathcal D^\varepsilon(\Xi)(\xi)\right\|_{0,p,\varepsilon }+c\|\pi_A(\alpha)\|_{L^p}.
\end{equation*}
The second estimate of the theorem follows from (\ref{flow:eq:sdlsd}) and
\begin{equation*}
\begin{split}
\|(1-\pi_A)\alpha_k&+\psi_k dt\|_{1,2;p,\varepsilon}\\
\leq& c \|\varepsilon^2\nabla_s(1-\pi_A)\alpha_k-\varepsilon^2\nabla_t^2(1-\pi_A)\alpha_k+\Delta_A(1-\pi_A)\alpha_k\|_{L^p}\\
&+c\varepsilon \|\varepsilon^2\nabla_s\psi_k-\varepsilon^2\nabla_t^2\psi_k+d_A^*d_A\psi_k \|_{L^p}\\
\leq&c \varepsilon^2 \left |\left | \left(1-\Pi_{\textrm{im } d_{A+\Psi dt}}\right) \mathcal D^\varepsilon(\Xi)(\alpha+\psi dt+\phi ds) \right |\right |_{0,2,\varepsilon}+c \varepsilon\|\alpha\|_{L^p}\\
&+c\varepsilon^2\|\psi_k\|_{L^p}+c\varepsilon^2\|\nabla_t\alpha_k\|_{L^p}+c\varepsilon^2\|(\nabla_s-\nabla_t\nabla_t)\pi_A(\xi)\|_{L^p}.
\end{split}
\end{equation*}
In order to prove the third estimate we need the following one. We choose $q$ such that $\frac 1p+\frac 1{q}=1$, then
\begin{equation*}
\begin{split}
\Big|\Big|\Pi_{\textrm{im } d_{A+\Psi dt} }d_A^*\nabla_t&\pi_A(\alpha) dt\Big|\Big|_{0,p,\varepsilon }
\leq \sup_{\gamma\in \Omega^0} \frac {\varepsilon^2\langle d_A^*\nabla_t\pi_A(\alpha), \nabla_t\gamma\rangle }{\|d_A\gamma+\nabla_t\gamma dt \|_{L^{q}} }\\
= & \sup_{\gamma\in \Omega^0} \frac {\varepsilon^2\langle \nabla_t\pi_A(\alpha), d_A\nabla_t\gamma\rangle }{\|d_A\gamma+\nabla_t\gamma dt \|_{L^{q}} }\\
= & \sup_{\gamma\in \Omega^0} \frac {\varepsilon^2\langle \nabla_t\pi_A(\alpha),\nabla_t d_A\gamma\rangle-\varepsilon^2\langle \nabla_t\pi_A(\alpha),[(\partial_t A-d_A\Psi),\gamma]\rangle }{\|d_A\gamma+\nabla_t\gamma dt \|_{L^{q}} }\\
\leq & \sup_{\gamma\in \Omega^0} \frac {\varepsilon^2\| \nabla_t\nabla_t\pi_A(\alpha)\|_{L^p}\|d_A\gamma\|_{L^{q}}+c \varepsilon^2\| \nabla_t\pi_A(\alpha)\|_{L^p}\|\gamma\|_{L^{q}} }{\|d_A\gamma+\nabla_t\gamma dt \|_{L^{q}} }\\
\leq & \varepsilon^2\| \nabla_t\nabla_t\pi_A(\alpha)\|_{L^p} +c \varepsilon^2\| \nabla_t\pi_A(\alpha)\|_{L^p}.
\end{split}
\end{equation*}
where the last inequality follows from the estimate $\|\gamma\|_{L^q}\leq c\|d_A\gamma\|_{L^q}$.
Thus, since by (\ref{flow:aprioriest:geo})
\begin{align*}
\varepsilon^2\left\|\Pi_{\textrm{im } d_{A+\Psi dt}}\mathrm{Rest}^\varepsilon(\Xi)(\xi+\phi ds)\right\|_{0,p,\varepsilon}
\leq& c\varepsilon^2\|\alpha\|_{L^p}+c\varepsilon^2\|\psi\|_{L^p}+c\varepsilon\|(1-\pi_A)\alpha\|_{L^p}\\
&+c\left|\left|\Pi_{\textrm{im } d_{A+\Psi dt} }d_A^*\nabla_t\pi_A(\alpha) dt\right|\right|_{0,p,\varepsilon },
\end{align*}

\begin{equation*}
\begin{split}
\|\alpha_i+\psi_i dt \|_{1,2;p,\varepsilon }\leq &c\varepsilon^2\left\|\Pi_{\textrm{im } d_{A+\Psi dt} }\mathcal D^\varepsilon(\xi+\phi ds) \right\|_{0,2,\varepsilon}\\
&+c\varepsilon^2\left\|\mathcal D^\varepsilon_3(\xi+\phi ds) \right\|_{0,2,\varepsilon}+c\varepsilon^2\|\psi\|_{L^p}+c\varepsilon^2\|\alpha \|_{L^p}\\
&+c\varepsilon^2\|\nabla_t\pi_A(\alpha)\|_{L^p}+c\varepsilon^2\|\nabla_t\nabla_t\pi_A(\alpha)\|_{L^p},
\end{split}
\end{equation*}
\begin{equation*}
\begin{split}
\|\alpha_k-\pi_A(\alpha)\|_{1,2;p,\varepsilon }\leq &c\varepsilon^2\left\|\left(1-\Pi_{\textrm{im } d_{A+\Psi dt}}\right)\mathcal D^\varepsilon_1((1-\pi_A)\xi+\phi ds) \right\|_{L^p}\\
\leq &c\varepsilon^2\left(\left\|\mathcal D^\varepsilon_1(\xi+\phi ds) \right\|_{L^p} +\|\psi\|_{L^p}+\|\alpha \|_{L^p}\right)\\
&+c\varepsilon^2\left( \|\nabla_s\pi_A(\alpha)\|_{L^p}+\|\nabla_t\pi_A(\alpha)\|_{L^p}+\|\nabla_t\nabla_t\pi_A(\alpha)\|_{L^p}\right)
\end{split}
\end{equation*}
and finally we can conclude that
\begin{equation*}
\begin{split}
\| (1-\pi_A)\alpha \|_{1,2;p,\varepsilon} \leq& c\varepsilon^2\left(\|\mathcal D^\varepsilon(\xi)\|_{0,p,\varepsilon }+\|\nabla_s\pi_A(\alpha)\|_{L^p}+\|\pi_A(\alpha)\|_{L^p}\right)\\
&+c\varepsilon^2\left( \|\nabla_t\pi_A(\alpha)\|_{L^p}+\|\nabla_t\nabla_t\pi_A(\alpha)\|_{L^p}\right).
\end{split}
\end{equation*}
\end{proof}

The next goal is to improve the theorem \ref{flow:thm:3eqbasic} in the sense that we want to estimate the norms using only the operator $\mathcal D^\varepsilon(\Xi)$ (theorem \ref{flow:thme:linest333}) and in order to do this we need to use the properties of the geodesic flow (lemma \ref{flow:lemma:saweb2}). We define
\begin{equation*}
\omega(A):=d_A\left(d_A^*d_A\right)^{-1}(\nabla_t(\partial_t A-d_A\Psi)+*X_t(A)).
\end{equation*}

\begin{lemma}\label{flow:lemma:diffD}
We choose a regular value $b$ of $E^H$, then there are two positive constants $c$ and $\varepsilon_0$ such that the following holds. For any $\Xi=A+\Psi dt+\Phi ds\in \mathcal M^{0}(\Xi_-,\Xi_+)$, $\Xi_\pm\in \mathrm{Crit}_{E^H}^b$, any 1-form $\xi =\alpha+\psi dz+\phi ds\in W^{1,2;p}$ and for $0<\varepsilon<\varepsilon_0$
\begin{equation}
\begin{split}
\| \pi_A(\mathcal D^\varepsilon(\Xi)&(\xi)+*[\alpha,*\omega(A)])-\mathcal D^0(\Xi)\pi_A(\xi)\|_{L^p}\\
\leq& c \|(1-\pi_A)\alpha+\psi dt \|_{L^p}+c\|\nabla_t(1-\pi_A)\alpha\|_{L^p}+\varepsilon^2 \|\psi\|_{L^p}\\
&+c\varepsilon^2 \|\nabla_t\nabla_t\pi_A(\alpha) \|_{L^p}+c\varepsilon^2 \|\nabla_t\pi_A(\alpha)\|_{L^p}+c\varepsilon^2 \|\pi_A(\alpha)\|_{L^p}\\
&+c\varepsilon^2\|\nabla_s\pi_A(\alpha) \|_{L^p}+c\varepsilon^2 \|\mathcal D^\varepsilon_2(\Xi)(\xi) \|_{L^p},
\end{split}
\end{equation}
\begin{equation}
\begin{split}
\| \pi_A((\mathcal D^\varepsilon(\Xi)^*&(\xi)+*[\alpha,*\omega(A)])-(\mathcal D^0(\Xi))^*\pi_A(\xi)\|_{L^p}\\
\leq& c \|(1-\pi_A)\alpha+\psi dt \|_{L^p}+c\|\nabla_t(1-\pi_A)\alpha\|_{L^p}+\varepsilon^2 \|\psi\|_{L^p}\\
&+c\varepsilon^2 \|\nabla_t\nabla_t\pi_A(\alpha) \|_{L^p}+c\varepsilon^2 \|\nabla_t\pi_A(\alpha)\|_{L^p}+c\varepsilon^2 \|\pi_A(\alpha)\|_{L^p}\\
&+c\varepsilon^2\|\nabla_s\pi_A(\alpha) \|_{L^p}+c\varepsilon^2 \|(\mathcal D^\varepsilon_2(\Xi))^*(\xi) \|_{L^p}.
\end{split}
\end{equation}

\end{lemma}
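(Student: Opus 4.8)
\emph{Proof sketch.} I only describe the first inequality in detail; the second is obtained by the same argument applied to the formal adjoint operators, as explained at the end.

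Since $\Xi$ is a geodesic flow, $F_A=0$ and $d_A^*(\partial_tA-d_A\Psi)=0$ by (\ref{flow:eqgeoflow333}); in particular the curvature term $\frac1{\varepsilon^2}*[\alpha\wedge*F_A]$ in $\mathcal D_1^\varepsilon$ vanishes. The plan is to expand $\mathcal D^\varepsilon(\Xi)(\xi)=\mathcal D_1^\varepsilon(\Xi)(\xi)+\mathcal D_2^\varepsilon(\Xi)(\xi)\,dt+\mathcal D_3^\varepsilon(\Xi)(\xi)\,ds$ using (\ref{flow:op:linym12}), (\ref{flow:op:linym3}), to apply $\pi_A$ to the $\Sigma$--component, and to compare term by term with (\ref{flow:op:lingeo}). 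The key point is that $\pi_A$ kills exact $1$--forms and $1$--forms of the type $d_A^*(\,\cdot\,)$; hence $\pi_A(d_A\phi)=\pi_A(d_A\nabla_t\psi)=0$ and, crucially, $\pi_A\bigl(\frac1{\varepsilon^2}d_A^*d_A\alpha\bigr)=0$, so that the only negative power of $\varepsilon$ disappears after projection. What survives is $\pi_A\bigl(\mathcal D_1^\varepsilon(\Xi)(\xi)+*[\alpha,*\omega(A)]\bigr)=\pi_A\bigl(\nabla_s\alpha-\nabla_t\nabla_t\alpha-2[\psi,\partial_tA-d_A\Psi]-d*X_t(A)\alpha+*[\alpha,*\omega(A)]\bigr)$. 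Note that $\omega(A)$ satisfies $d_Ad_A^*\omega(A)=-[(\partial_tA-d_A\Psi)\wedge(\partial_tA-d_A\Psi)]+d_A*X_t(A)$ (because $d_A(\partial_tA-d_A\Psi)=0$ and $F_A=0$), which is exactly the equation defining the $2$--form $\omega$ in (\ref{flow:eqgeoflowdfs}); thus $*[\pi_A(\alpha)\wedge*\omega(A)]$ is the term appearing in $\mathcal D^0$, and $\omega(A)$ is bounded by Weber's estimates (\ref{flow:aprioriest:geo}). Throughout I use the uniform elliptic estimates $\|f\|_{L^p}\le c\|d_Af\|_{L^p}$ and $\|f\|_{L^p}\le c\|d_A^*d_Af\|_{L^p}$ on $\Sigma$, valid along the flow by irreducibility of the relevant flat connections and compactness of $\mathcal M^g(P)$.

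Next I write $\alpha=\pi_A(\alpha)+(1-\pi_A)\alpha$ and commute $\pi_A$ past $\nabla_s$ and past $\nabla_t\nabla_t$. Using the commutation formulas (\ref{commform}), (\ref{commform2}) and (\ref{commform3}), together with the Hodge decomposition $(1-\pi_A)\alpha=d_A\gamma+d_A^*\beta$ and the elliptic estimates above, every contribution of the non--harmonic part becomes a bracket of $\partial_sA-d_A\Phi$, $\partial_tA-d_A\Psi$ or $\nabla_t(\partial_tA-d_A\Psi)$ with $(1-\pi_A)\alpha$ or $\nabla_t(1-\pi_A)\alpha$; by the $L^\infty$--bounds (\ref{flow:aprioriest:geo}) these are $\le c\bigl(\|(1-\pi_A)\alpha+\psi dt\|_{L^p}+\|\nabla_t(1-\pi_A)\alpha\|_{L^p}\bigr)$. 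The same applies to $*[(1-\pi_A)\alpha,*\omega(A)]$ and to $d*X_t(A)(1-\pi_A)\alpha$. After these reductions, $\pi_A\bigl(\mathcal D_1^\varepsilon(\Xi)(\xi)+*[\alpha,*\omega(A)]\bigr)$ equals $\mathcal D^0(\Xi)\pi_A(\xi)$ up to the above error terms and up to the replacement of $\psi_0$ (defined by $d_A^*d_A\psi_0=-2*[\pi_A(\alpha)\wedge*(\partial_tA-d_A\Psi)]$) by $\psi$; this last discrepancy is $-2\pi_A\bigl([\psi-\psi_0,\partial_tA-d_A\Psi]\bigr)$, which is $\le c\|\psi-\psi_0\|_{L^p}$.

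It remains to estimate $\|\psi-\psi_0\|_{L^p}$ by the right--hand side of the lemma, and this is the main obstacle. Multiplying the second linearised Yang--Mills equation $\mathcal D_2^\varepsilon(\Xi)(\xi)$ in (\ref{flow:op:linym12}) by $\varepsilon^2$, subtracting the defining equation of $\psi_0$, and using $d_A^*\alpha=d_A^*(1-\pi_A)\alpha$ and (\ref{commform2}), one gets $d_A^*d_A(\psi-\psi_0)=\varepsilon^2\mathcal D_2^\varepsilon(\Xi)(\xi)-\varepsilon^2(\nabla_s\psi-\nabla_t\phi)-2*[(1-\pi_A)\alpha\wedge*(\partial_tA-d_A\Psi)]+\nabla_td_A^*(1-\pi_A)\alpha$, where the last two terms are $\le c\bigl(\|(1-\pi_A)\alpha\|_{L^p}+\|\nabla_t(1-\pi_A)\alpha\|_{L^p}\bigr)$. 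To control the term $\varepsilon^2\|\nabla_s\psi-\nabla_t\phi\|_{L^p}$ I would split $\psi=\psi_0+(\psi-\psi_0)$: the $\nabla_s\psi_0$--part is differentiated from its defining equation and, using Lemma \ref{flow:lemma:othereq1}, (\ref{commform3}) and Weber's higher regularity (\ref{flow:web:w2}) to dispose of $\nabla_s(\partial_tA-d_A\Psi)$, is bounded by $c\varepsilon^2\bigl(\|\nabla_s\pi_A(\alpha)\|_{L^p}+\|\nabla_t\nabla_t\pi_A(\alpha)\|_{L^p}+\|\nabla_t\pi_A(\alpha)\|_{L^p}+\|\pi_A(\alpha)\|_{L^p}\bigr)$, precisely the $\varepsilon^2$--weighted terms on the right of the lemma; the $\nabla_s(\psi-\psi_0)$--part is rewritten, via the same Yang--Mills equation, so that $(\varepsilon^2\nabla_s-\varepsilon^2\nabla_t^2+d_A^*d_A)(\psi-\psi_0)$ is expressed through $\varepsilon^2\mathcal D_2^\varepsilon(\Xi)(\xi)$, the $(1-\pi_A)\alpha$--terms, $\varepsilon^2\nabla_t\phi$, $\varepsilon^2\nabla_s\psi_0$ and $-\varepsilon^2\nabla_t\nabla_t(\psi-\psi_0)$, and then Theorem \ref{flow:thm:linestpar} applied to the $0$--form $\psi-\psi_0$ (whose left--hand side contains $\varepsilon^2\|\nabla_s(\psi-\psi_0)\|_{L^p}$ and $\varepsilon^2\|\nabla_t\nabla_t(\psi-\psi_0)\|_{L^p}$) lets one absorb the genuinely second--order pieces for $\varepsilon$ small. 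Inverting $d_A^*d_A$ on $\Sigma$ then gives the desired bound on $\|\psi-\psi_0\|_{L^p}$, and collecting everything proves the first inequality. The delicate point — where I expect the real work to lie — is this $\varepsilon$--power bookkeeping: showing that after using $\mathcal D_2^\varepsilon$ and the parabolic estimate the $\phi$--contribution and all but the advertised $\varepsilon^2$--weighted norms of $\pi_A(\alpha)$ really cancel or get absorbed, so that no $\varepsilon^{-2}$ or unweighted $\|\pi_A(\alpha)\|_{L^p}$ term survives.

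For the second inequality one runs the identical argument with $\mathcal D^\varepsilon(\Xi)^*$ in place of $\mathcal D^\varepsilon(\Xi)$ and $(\mathcal D^0(\Xi))^*$ in place of $\mathcal D^0(\Xi)$. The adjoint operator still carries the single singular term $\frac1{\varepsilon^2}d_A^*d_A$ on the $\alpha$--component and its analogue $(\mathcal D_2^\varepsilon(\Xi))^*$ on the $dt$--component, so after projecting with $\pi_A$ the singular term and the exact/coexact pieces again drop out, the same commutation formulas produce the same list of error terms controlled by $\|(1-\pi_A)\alpha+\psi dt\|_{L^p}$ and $\|\nabla_t(1-\pi_A)\alpha\|_{L^p}$, the comparison of the $\psi$--variable with $\psi_0$ is run through $(\mathcal D_2^\varepsilon(\Xi))^*(\xi)$ exactly as above, and Theorem \ref{flow:thm:linestpar} (whose estimates are for the selfadjoint parabolic operator $\varepsilon^2\nabla_s-\varepsilon^2\nabla_t^2+\Delta_A$) applies verbatim. \qed
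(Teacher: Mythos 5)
Your first half is the paper's own first step: expand $\mathcal D_1^\varepsilon$, use that $\pi_A$ annihilates $d_A\phi$, $d_A\nabla_t\psi$ and $\frac1{\varepsilon^2}d_A^*d_A\alpha$ (and that $F_A=0$ kills the curvature bracket), compare with (\ref{flow:op:lingeo}), and reduce everything to $\|(1-\pi_A)\alpha\|_{L^p}$, $\|\nabla_t(1-\pi_A)\alpha\|_{L^p}$ and $\|\psi-\psi_0\|_{L^p}$. Up to there the two arguments coincide.

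The gap is in the estimate of $\|\psi-\psi_0\|_{L^p}$, and it is genuine. First, a local error: in your identity for $d_A^*d_A(\psi-\psi_0)$ you bound $\nabla_t d_A^*(1-\pi_A)\alpha$ by $\|\nabla_t(1-\pi_A)\alpha\|_{L^p}$; this is false as stated, since the term carries an extra derivative on $\Sigma$ (it is rescuable only after inverting $d_A^*d_A$, using that $d_A(d_A^*d_A)^{-1}d_A^*$ is a bounded projection on $L^p$, and you would have to say so). Second, and decisively: the terms $\varepsilon^2\nabla_t\phi$ and $\varepsilon^2\nabla_s\psi$ that enter your equation for $(\varepsilon^2\nabla_s-\varepsilon^2\nabla_t^2+d_A^*d_A)(\psi-\psi_0)$ are never absorbed. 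The form $\phi$ does not appear on the right-hand side of the lemma except inside $\varepsilon^2\|\mathcal D_2^\varepsilon(\Xi)(\xi)\|_{L^p}$ (which itself contains $-\nabla_t\phi$, so extracting $\varepsilon^2\nabla_t\phi$ from it is circular), and $\varepsilon^2\nabla_t\phi$ is not a ``second-order piece in $\psi-\psi_0$'' that Theorem \ref{flow:thm:linestpar} can swallow -- it is an independent unknown. You explicitly defer this verification (``where I expect the real work to lie''), but it is precisely the content of the lemma. The paper closes this with a different decomposition: it splits $\alpha+\psi dt=(\alpha_k+\psi_k dt)+(\alpha_i+\psi_i dt)$ into $\ker d_{A+\Psi dt}^{*_\varepsilon}\oplus\textrm{im } d_{A+\Psi dt}$ as in (\ref{flow:cond:split}), bounds $\|\psi_i\|_{L^p}$ directly via $\|\psi_i\|_{L^p}\leq c\|d_A\psi_i\|_{L^p}$ and the relation $d_A^*\alpha_k=\varepsilon^2\nabla_t\psi_k$, and applies Theorem \ref{flow:thm:linestpar} to $\psi_k-\psi_0$ rather than to $\psi-\psi_0$. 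The $\phi$-dependence then enters only through $\nabla_s\alpha_i-d_A\phi+(\nabla_s\psi_i-\nabla_t\phi)dt=d_{A+\Psi dt}(\nabla_s\gamma-\phi)+[(\partial_sA-d_A\Phi),\gamma]-[\cdots,\gamma]\,dt$, which by Lemma \ref{flow:lemma:kerim2} and (\ref{flow:aprioriest:geo}) contributes only $c\|\alpha_i\|_{L^p}\leq c\|(1-\pi_A)\alpha+\psi dt\|_{L^p}$, with no trace of $\phi$ or of unweighted norms of $\pi_A(\alpha)$. Without this (or an equivalent) structural cancellation your argument does not close; the same remark applies verbatim to the adjoint case.
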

\begin{proof}
By definition we have that
\begin{equation*}
\begin{split}
\pi_A\mathcal D^\varepsilon(\Xi)(\xi)
:=& \pi_A\left(\nabla_s\alpha-\nabla_t\nabla_t\alpha-2[\psi, (\partial_t A-d_A\Psi)]- d*X_t(A)\alpha\right),\\
\mathcal D^0(\Xi)(\pi_A(\xi))
:= &\pi_A\big(\nabla_s\pi_A(\alpha)-2[\psi_0, (\partial_t A-d_A\Psi)]\\
&\qquad-\nabla_t\nabla_t\pi_A(\alpha)- d*X_t(A)\pi_A(\alpha)+*[\pi_A(\alpha)\wedge*\omega(A)]\big)
\end{split}
\end{equation*}
where $d_A^*d_A\psi_0=-2*[\pi_A(\alpha)\wedge*(\partial_t A-d_A\Psi)]$; therefore
\begin{equation*}
\begin{split}
\| \pi_A(\mathcal D^\varepsilon(\Xi)(\xi)&+*[\alpha\wedge*\omega])-\mathcal D^0(\Xi)\pi_A(\xi)\|_{L^p}\\
\leq&\|\pi_A\left(\nabla_s(1-\pi_A)\alpha-\nabla_t\nabla_t(1-\pi_A)\alpha\right) \|_{L^p}+c\|(1-\pi_A)\alpha\|_{L^p}\\
&+\|\pi_A\left(2[(\psi-\psi_0), (\partial_t A-d_A\Psi)]\right)\|_{L^p}\\
\leq& c \|(1-\pi_A)\alpha\|_{L^p}+c\|\nabla_t(1-\pi_A)\alpha\|_{L^p}+c\|\psi-\psi_0 \|_{L^p}
\end{split}
\end{equation*}
where we used the commutation formula and the uniform $L^\infty$ bound of the curvatures in order to drop the derivative $\nabla_s$ and a derivative $\nabla_t$. Next, we split the 1-form $\alpha+\psi dt=(\alpha_k+\psi_k dt)+(\alpha_i+\psi_i dt)$ in the same way as (\ref{flow:cond:split}). We can easily remark that
\begin{equation*}
\|\alpha_i+\psi_i dt\|_{L^p}+\|\alpha_k+\psi_k dt\|_{L^p}\leq  2 \|\alpha+\psi dt\|_{L^p},
\end{equation*}
\begin{equation*}
\|\alpha_i+\psi_i dt\|_{L^p}\leq  \|(1-\pi_A)\alpha+\psi dt\|_{L^p}.
\end{equation*}
Furthermore, since $\|\psi_i\|_{L^p}\leq c \| d_A\psi_i\|_{L^p}$, by the commutation formula
\begin{align*}
\|\psi_i\|_{L^p}\leq &\|(1-\pi_A)\alpha\|_{L^p}
+\|\nabla_t\alpha_i\|_{L^p}\\
\leq&\|(1-\pi_A)\alpha\|_{L^p}
+\|\nabla_t(1-\pi_A)\alpha\|_{L^p}+\|d_{A}^*\nabla_t(1-\pi_A)\alpha_k\|_{L^p}\\
\intertext{$\alpha_k+\psi_k dt$ lies in the kernel of $d_{A+\Psi ds}$, thus}
\leq&\|(1-\pi_A)\alpha\|_{L^p}
+\|\nabla_t(1-\pi_A)\alpha\|_{L^p}+\varepsilon^2\|\nabla_t\nabla_t\psi_k\|_{L^p}\\
\leq&\|(1-\pi_A)\alpha\|_{L^p}
+\|\nabla_t(1-\pi_A)\alpha\|_{L^p}\\
&+\varepsilon^2\|\nabla_t\nabla_t(\psi_k-\psi_0)\|_{L^p}
+\varepsilon^2 \|\nabla_t\nabla_t\psi_0\|_{L^p}\\
\intertext{by the definition of $\psi_0$}
\leq&\|(1-\pi_A)\alpha\|_{L^p}
+\|\nabla_t(1-\pi_A)\alpha\|_{L^p}+\varepsilon^2\|\nabla_t\nabla_t(\psi_k-\psi_0)\|_{L^p}\\
&+\varepsilon^2 \|\nabla_t\nabla_t\pi_A\alpha\|_{L^p}+\varepsilon^2 \|\pi_A(\alpha)\|_{L^p}+\varepsilon^2\|\nabla_t\pi_A(\alpha)\|_{L^p}.
\end{align*}
Moreover, the theorem \ref{flow:thm:linestpar} yields to
\begin{align*}
\|\psi_k-\psi_0\|_{L^p}&+\varepsilon^2\|\nabla_t\nabla_t(\psi_k-\psi_0)\|_{L^p}
\\
\leq &c  \|(\varepsilon^2\nabla_s-\varepsilon^2\nabla_t\nabla_t+d_A^*d_A)(\psi_k-\psi_0)\|_{L^p}\\
\intertext{by the definition of $\mathcal D_2^\varepsilon$ and by lemma \ref{flow:lemma:kerim2}}
\leq& c \varepsilon^2 \|\mathcal D_2^\varepsilon(\Xi)(\alpha+\psi dt) \|_{L^p}+c\varepsilon^2 \|\nabla_t\nabla_t\psi_0 \|_{L^p}\\
&+c\|(1-\pi_A)\alpha\|_{L^p}+c\varepsilon^2\|\nabla_s\psi_0\|_{L^p}+c\varepsilon^2\|\alpha\|_{L^p}\\
&+c\varepsilon^2\|\psi_k\|_{L^p}+c\varepsilon^2\|\nabla_t\alpha\|_{L^p}+c\varepsilon^2\|\nabla_t\nabla_t\pi_A(\alpha)\|_{L^p}\\
\leq& c \varepsilon^2 \|\mathcal D_2^\varepsilon(\Xi)(\alpha+\psi dt) \|_{L^p}+c\varepsilon^2 \|\nabla_t\nabla_t\pi_A(\alpha) \|_{L^p}\\
&+c\|(1-\pi_A)\alpha\|_{L^p}+c\varepsilon^2 \|\nabla_t\pi_A(\alpha)\|_{L^p}+c\varepsilon^2 \|\pi_A(\alpha)\|_{L^p}\\
&+c\varepsilon^2\|\nabla_s\pi_A(\alpha) \|_{L^p}+c\varepsilon^2\|\psi_k\|_{L^p},
\end{align*}
where the last two steps follows from the definition of $\psi_0$. Thus, by the last estimates we can conclude that
\begin{equation*}
\begin{split}
\| \pi_A(\mathcal D^\varepsilon(\Xi)(\xi)&+*[\alpha\wedge*\omega])-\mathcal D^0(\Xi)\pi_A(\xi)\|_{L^p}\\
\leq& c \|(1-\pi_A)\alpha \|_{L^p}+c\|\nabla_t(1-\pi_A)\alpha\|_{L^p}+c\varepsilon^2 \|\nabla_t\nabla_t\pi_A(\alpha) \|_{L^p}\\
&+c\varepsilon^2 \|\nabla_t\pi_A(\alpha)\|_{L^p}+c\varepsilon^2 \|\pi_A(\alpha)\|_{L^p}+c\varepsilon^2\|\nabla_s\pi_A(\alpha) \|_{L^p}\\
&+c\varepsilon^2 \|\mathcal D_2^\varepsilon(\Xi)(\alpha+\psi dt) \|_{L^p}+\varepsilon^2\|\psi\|_{L^p}
\end{split}
\end{equation*}
The second estimate of the lemma follows exactly in the same way.
\end{proof}

The following lemma was proved by Salamon and Weber in \cite{MR2276534} (lemma D.7).

\begin{lemma} \label{flow:lemma:saweb2}
Assume $E^H$ is Morse-Smale and let $\Xi=A+\Psi dt+\Phi ds \in\mathcal M^0(\Xi_-,\Xi_+)$, $\Xi_\pm\in \mathrm{Crit}_{E^H}^b$. Then, for every $p>1$, there is 
a constant $c>0$ such that
\begin{equation}
\|\alpha \|_{L^p}+\|\nabla_s\alpha \|_{L^p}+\|\nabla_t\nabla_t\alpha\|\leq c \|\mathcal D^0(\Xi)^*(\alpha)\|_{L^p},
\end{equation}
\begin{equation}
\|\alpha \|_{L^p}+\|\nabla_s\alpha \|_{L^p}+\|\nabla_t\nabla_t\alpha\|\leq c \left( \|\alpha-(\mathcal D^0(\Xi))^*(\eta)\|_{L^p}+\|\mathcal D^0(\Xi)(\alpha)\|_{L^p}\right)
\end{equation}
for all compactly supported vector fields $\alpha,\eta\in H_{A}^1$.
\end{lemma}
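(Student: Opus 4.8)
The statement is precisely Lemma~D.7 of \cite{MR2276534}, transported from the heat flow on the loop space of a closed Riemannian manifold to our situation with the finite-dimensional manifold $\mathcal M^g(P)$ in the role of the target; the plan is to reproduce that argument. The point is to show that the linearised heat flow $\mathcal D^0(\Xi)$ is a Fredholm operator between the parabolic Sobolev spaces which, under the Morse--Smale hypothesis, is surjective, and then to read off both inequalities from the resulting a priori estimates together with the $L^2$-orthogonal splitting $L^p=\ker\mathcal D^0(\Xi)\oplus\operatorname{im}\mathcal D^0(\Xi)^*$.

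First I would record the structure of the operator. Writing $B_t:=\partial_tA-d_A\Psi$ and using $F_A=0$ along the flow together with the commutation formulas (\ref{commform}), (\ref{commform2}), formula (\ref{flow:op:lingeo}) takes the shape $\mathcal D^0(\Xi)=\nabla_s-\nabla_t\nabla_t+\mathcal S(s)$, acting on sections of the bundle over $S^1\times\mathbb R$ whose fibre over $(t,s)$ is the space $H^1_{A(t,s)}$ of $d_{A(t,s)}$-harmonic $1$-forms, where $\mathcal S(s)$ is a family of symmetric zeroth-order operators, nonlocal through the $0$-form $\psi_0$ of (\ref{flow:op:lingeo}); on each slice $\{s\}$ it is (minus) the Jacobi operator of $E^H$ at the loop $[A(\cdot,s)]$. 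By Weber's asymptotic estimates (\ref{flow:web:w3})--(\ref{flow:web:w4}) the coefficients converge exponentially as $s\to\pm\infty$ to the Jacobi operators $\mathcal A_\pm$ at the endpoints $\Xi_\pm$, which are invertible since $\Xi_\pm$ are nondegenerate critical points.

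Next I would invoke the standard theory of operators of the form $\partial_s-\mathcal A(s)$ on a cylinder with invertible self-adjoint limits $\mathcal A_\pm$: in the anisotropic (parabolic) spaces $W^{1,2;p}$ --- one derivative in $s$, two in $t$ --- the operator $\mathcal D^0(\Xi)\colon W^{1,2;p}\to L^p$ is Fredholm, with index the spectral flow of $(\mathcal A(s))_s$; this is exactly where the exponential convergence of the previous step enters. The Morse--Smale hypothesis says that $\mathcal D^0(\Xi)$ is surjective, so $K:=\ker\mathcal D^0(\Xi)$ is finite-dimensional and consists of smooth, exponentially decaying sections, and $\mathcal D^0(\Xi)^*$ is injective with closed range. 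A Fredholm operator with trivial kernel is bounded below, which gives $\|\alpha\|_{W^{1,2;p}}\le c\,\|\mathcal D^0(\Xi)^*\alpha\|_{L^p}$; together with interior elliptic regularity in the $t$-variable this controls $\|\alpha\|_{L^p}+\|\nabla_s\alpha\|_{L^p}+\|\nabla_t\nabla_t\alpha\|_{L^p}$ by the right-hand side, which is the first inequality.

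For the second inequality I would use the splitting $L^p=K\oplus\operatorname{im}\mathcal D^0(\Xi)^*$ coming from closedness of the range; since $K$ is finite-dimensional and spanned by smooth sections, the $L^2$-projection $\Pi$ onto $K$ is bounded on every $L^p$ and all norms on $K$ are equivalent. Given $\alpha$ and any $\eta$, decompose $\alpha=\alpha_1+\alpha_2$ with $\alpha_1=\Pi\alpha\in K$ and $\alpha_2\in\operatorname{im}\mathcal D^0(\Xi)^*$; since $\mathcal D^0(\Xi)^*\eta\in\operatorname{im}\mathcal D^0(\Xi)^*$ one has $\alpha_1=\Pi(\alpha-\mathcal D^0(\Xi)^*\eta)$, so $\|\alpha_1\|_{W^{1,2;p}}\le c\,\|\alpha-\mathcal D^0(\Xi)^*\eta\|_{L^p}$; and $\mathcal D^0(\Xi)$ restricted to $\operatorname{im}\mathcal D^0(\Xi)^*$ is injective with closed range, hence bounded below, so $\|\alpha_2\|_{W^{1,2;p}}\le c\,\|\mathcal D^0(\Xi)\alpha_2\|_{L^p}=c\,\|\mathcal D^0(\Xi)\alpha\|_{L^p}$. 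Adding the two estimates and again passing from the $W^{1,2;p}$-norm to $\|\alpha\|_{L^p}+\|\nabla_s\alpha\|_{L^p}+\|\nabla_t\nabla_t\alpha\|_{L^p}$ finishes the proof. The genuinely technical point, and the one I expect to be the main obstacle, is the Fredholm step: setting up the anisotropic parabolic functional-analytic framework on the cylinder for sections of the varying bundle $\{H^1_{A(t,s)}\}_{(t,s)}$ and establishing the Fredholm property with the correct index while carrying along the nonlocal zeroth-order term in $\mathcal S(s)$; the two inequalities are then formal consequences of surjectivity plus elliptic regularity, which is why in the paper this whole discussion is subsumed in the citation to Lemma~D.7 of \cite{MR2276534}.
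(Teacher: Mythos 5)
Your proposal is correct, and it reconstructs exactly the argument that the paper itself does not spell out: the text simply cites Lemma~D.7 of \cite{MR2276534}, whose proof is the Fredholm/surjectivity scheme you describe (asymptotically invertible Jacobi operators, Fredholm property of $\nabla_s-\nabla_t\nabla_t+\mathcal S(s)$ on the parabolic spaces $W^{1,2;p}\to L^p$, triviality of $\ker\mathcal D^0(\Xi)^*$ from Morse--Smale surjectivity for the first inequality, and the splitting off of the finite-dimensional kernel for the second). You also correctly flag the only genuinely delicate points — the anisotropic Fredholm theory on the cylinder with the nonlocal zeroth-order term, and, implicitly, that the $L^2$-orthogonal projection onto the smooth, exponentially decaying kernel is bounded on $L^p$ so that the splitting $L^p=\ker\mathcal D^0(\Xi)\oplus\overline{\textrm{im}\,\mathcal D^0(\Xi)^*}$ makes sense for $p\neq 2$ — both of which are handled in the cited appendix.
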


\begin{theorem}\label{flow:thme:linest333}
We choose a regular value $b$ of $E^H$, then there are two positive constants $c$ and $\varepsilon_0$ such that the following holds. For any $\Xi=A+\Psi dt+\Phi ds\in \mathcal M^{0}(\Xi_-,\Xi_+)$, $\Xi_\pm\in \mathrm{Crit}_{E^H}^b$, and any $0<\varepsilon<\varepsilon_0$ the following estimates hold.
\begin{equation}
\begin{split}
\| \pi_A(\alpha) \|_{1,2;p,1} \leq &c\varepsilon \|\mathcal D^\varepsilon(\Xi)(\xi)\|_{0,p,\varepsilon}+c\|\pi_A(\mathcal D^\varepsilon(\Xi)(\xi)+*[\alpha,*\omega(A)])\|_{L^p}\\
&+c\|\pi_A(\xi-(\mathcal D^0(\Xi))^*(\pi_A(\eta)))\|_{L^p},
\end{split}
\end{equation}
\begin{equation}
\begin{split}
\|(1-\pi_A)\xi\|_{1,2;p,\varepsilon }\leq &c\varepsilon^2\|\mathcal D^\varepsilon(\Xi)(\xi)\|_{0,p,\varepsilon}+c\varepsilon \|\pi_A(\mathcal D^\varepsilon(\Xi)(\xi)+*[\alpha,*\omega(A)])\|_{L^p}\\
&+c\varepsilon \|\pi_A(\xi-(\mathcal D^0(\Xi))^*(\pi_A(\eta)))\|_{L^p},
\end{split}
\end{equation}
\begin{equation}
\begin{split}
\|(1-\pi_A)\alpha \|_{1,2;p,\varepsilon }\leq &c\varepsilon^2\|\mathcal D^\varepsilon(\Xi)(\xi)\|_{0,p,\varepsilon}+c\varepsilon^2 \|\pi_A(\xi-(\mathcal D^0(\Xi))^*(\pi_A(\eta)))\|_{L^p}
\end{split}
\end{equation}
for all compactly supported $1$-forms $\xi,\eta \in W^{1,2;p}$ and $0<\varepsilon\leq \varepsilon_0$. 
\end{theorem}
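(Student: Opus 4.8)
Write $c$ for a constant that may change from line to line, depends on $b$ and $p$ but not on $\varepsilon$ or on $\xi,\eta$, and abbreviate $D:=\|\mathcal D^\varepsilon(\Xi)(\xi)\|_{0,p,\varepsilon}$, $E:=\|\pi_A(\xi-(\mathcal D^0(\Xi))^*(\pi_A(\eta)))\|_{L^p}$, $G:=\|\pi_A(\mathcal D^\varepsilon(\Xi)(\xi)+*[\alpha,*\omega(A)])\|_{L^p}$, $Q:=\|(1-\pi_A)\alpha\|_{1,2;p,\varepsilon}$, $\widetilde Q:=\|(1-\pi_A)\xi\|_{1,2;p,\varepsilon}$ and $P:=\|\pi_A(\alpha)\|_{L^p}+\|\nabla_s\pi_A(\alpha)\|_{L^p}+\|\nabla_t\pi_A(\alpha)\|_{L^p}+\|\nabla_t\nabla_t\pi_A(\alpha)\|_{L^p}$. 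The plan is to first establish the single inequality
\[
P\le cE+cG+c\varepsilon D\qquad(\star)
\]
for $0<\varepsilon\le\varepsilon_0$ with $\varepsilon_0$ small, and then to deduce the three estimates of the theorem from $(\star)$ and Theorem \ref{flow:thm:3eqbasic}.

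\emph{Proof of $(\star)$.} Since $\xi$ is compactly supported, $\pi_A(\alpha)$ and $\pi_A(\eta)$ are compactly supported sections of $H^1_A$; because $(\mathcal D^0(\Xi))^*(\pi_A(\eta))\in H^1_A$ we also have $\pi_A(\xi-(\mathcal D^0(\Xi))^*(\pi_A(\eta)))=\pi_A(\alpha)-(\mathcal D^0(\Xi))^*(\pi_A(\eta))$. Thus Lemma \ref{flow:lemma:saweb2} applied to $(\pi_A(\alpha),\pi_A(\eta))$ gives $\|\pi_A(\alpha)\|_{L^p}+\|\nabla_s\pi_A(\alpha)\|_{L^p}+\|\nabla_t\nabla_t\pi_A(\alpha)\|_{L^p}\le cE+c\|\mathcal D^0(\Xi)(\pi_A(\alpha))\|_{L^p}$, and adding the interpolation inequality $\|\nabla_t\pi_A(\alpha)\|_{L^p}\le c\|\pi_A(\alpha)\|_{L^p}+c\|\nabla_t\nabla_t\pi_A(\alpha)\|_{L^p}$ in the $S^1$-variable yields $P\le cE+c\|\mathcal D^0(\Xi)(\pi_A(\alpha))\|_{L^p}$. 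By Lemma \ref{flow:lemma:diffD} (with the sharper form of the error obtained in its proof) $\|\mathcal D^0(\Xi)(\pi_A(\alpha))\|_{L^p}\le G+\mathrm{Err}$, where $\mathrm{Err}$ is a finite sum of terms each bounded by one of $c\|(1-\pi_A)\alpha\|_{L^p}\le cQ$, $c\|\nabla_t(1-\pi_A)\alpha\|_{L^p}\le c\varepsilon^{-1}Q$, $c\varepsilon^2\|\psi\|_{L^p}\le c\varepsilon\widetilde Q$, $c\varepsilon^2\|\mathcal D^\varepsilon_2(\Xi)(\xi)\|_{L^p}\le c\varepsilon D$, or $c\varepsilon^2$ times a summand of $P$. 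Since Theorem \ref{flow:thm:3eqbasic}(2),(3) give $\widetilde Q\le c\varepsilon^2D+c\varepsilon P$ and $Q\le c\varepsilon^2D+c\varepsilon^2P$ (using $\|\pi_A(\alpha)\|_{L^p}\le P$), it follows that $\mathrm{Err}\le c\varepsilon D+c\varepsilon P$, hence $P\le cE+cG+c\varepsilon D+c\varepsilon P$; absorbing $c\varepsilon P$ for $\varepsilon_0$ small proves $(\star)$.

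\emph{The three estimates.} First, since $\pi_A(\alpha)$ is $\Sigma$-harmonic, a direct inspection of (\ref{flow:definorm12}) — using (\ref{commform2}) and (\ref{flow:web:w1}) to bound the only surviving second-order term $d_A^*\nabla_t\pi_A(\alpha)$ by $c\|\pi_A(\alpha)\|_{L^p}$ — gives $\|\pi_A(\alpha)\|_{1,2;p,1}\le cP$, so the first estimate is exactly $(\star)$. For the second, Theorem \ref{flow:thm:3eqbasic}(2) gives $\widetilde Q\le c\varepsilon^2D+c\varepsilon P$, and inserting $(\star)$ yields $\widetilde Q\le c\varepsilon^2D+c\varepsilon E+c\varepsilon G$, as claimed. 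For the third estimate one must remove $G$: here I use the crude bound $G\le D+cP+cQ$, coming from $\|\pi_A(\mathcal D^\varepsilon_1(\Xi)\xi)\|_{L^p}\le\|\mathcal D^\varepsilon_1(\Xi)\xi\|_{L^p}\le D$ and $\|\pi_A(*[\alpha,*\omega(A)])\|_{L^p}\le c\|\alpha\|_{L^p}\le cP+cQ$ (with $\|\omega(A)\|_{L^\infty}\le c$ by (\ref{flow:web:w1})). Multiplying $(\star)$ by $\varepsilon^2$ and substituting $G\le D+cP+cQ$ gives $\varepsilon^2P\le c\varepsilon^2E+c\varepsilon^2D+c\varepsilon^2Q+c\varepsilon^2P$; absorbing $c\varepsilon^2P$ for $\varepsilon_0$ small leaves $\varepsilon^2P\le c\varepsilon^2E+c\varepsilon^2D+c\varepsilon^2Q$. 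Feeding this into $Q\le c\varepsilon^2D+c\varepsilon^2P$ (Theorem \ref{flow:thm:3eqbasic}(3)) and absorbing the resulting $c\varepsilon^2Q$ gives $Q\le c\varepsilon^2D+c\varepsilon^2E$, which is the third estimate.

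The main obstacle is the near-circularity of the scheme: the harmonic quantity $P$ controls $G$ via Salamon–Weber and Lemma \ref{flow:lemma:diffD}, but $G$ in turn controls $P$, so no single application of these lemmas closes the loop. What rescues the argument is purely quantitative: every feedback term — the non-harmonic parts $Q,\widetilde Q$ of $\xi$ (controlled to order $\varepsilon^2$ by Theorem \ref{flow:thm:3eqbasic}(2),(3)), the $\psi$-contribution to the error in Lemma \ref{flow:lemma:diffD} (which enters only as $\varepsilon^2\|\psi\|$), and the term $\mathcal D^\varepsilon_2(\Xi)(\xi)$ — carries at least one power of $\varepsilon$, so one or two successive absorptions, carried out in the order $P$ then $Q$, eliminate all circular terms once $\varepsilon_0$ is small. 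The delicate point in the bookkeeping is to make sure no step secretly loses a power of $\varepsilon$, in particular to balance the factor $\varepsilon^{-1}$ produced by $\|\nabla_t(1-\pi_A)\alpha\|_{L^p}$ against the $\varepsilon$-weight that $\|\cdot\|_{1,2;p,\varepsilon}$ places on $\nabla_t\alpha$.
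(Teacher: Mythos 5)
Your overall scheme is the same as the paper's: combine Theorem \ref{flow:thm:3eqbasic} with the Salamon--Weber estimate of Lemma \ref{flow:lemma:saweb2} and the comparison Lemma \ref{flow:lemma:diffD}, and close the resulting system by absorbing the $\varepsilon$-weighted feedback terms. Your inequality $(\star)$ and the deduction of the first two estimates are correct and track the paper's argument (including the careful bookkeeping that every error term in Lemma \ref{flow:lemma:diffD} fed back through Theorem \ref{flow:thm:3eqbasic}(2),(3) carries at least one net power of $\varepsilon$).

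The derivation of the third estimate, however, contains a genuine error. From $(\star)$ and the crude bound $G\le D+cP+cQ$ you arrive at $\varepsilon^2P\le c\varepsilon^2E+c\varepsilon^2D+c\varepsilon^2Q+c\varepsilon^2P$ and then claim to ``absorb $c\varepsilon^2P$ for $\varepsilon_0$ small.'' This absorption is not available: the term to be absorbed and the left-hand side carry the \emph{same} power $\varepsilon^2$, so the comparison is $\varepsilon^2P\le\cdots+c\,\varepsilon^2P$ with $c$ a fixed constant independent of $\varepsilon$ (it comes from $\|\omega(A)\|_{L^\infty}\le c$ in the bound $\|\pi_A(*[\alpha,*\omega(A)])\|_{L^p}\le c\|\alpha\|_{L^p}$ and from the Salamon--Weber constant in $(\star)$), and there is no reason for $c<1$. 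Shrinking $\varepsilon_0$ changes nothing, because $P$ and $G$ control each other with $O(1)$ constants: $(\star)$ gives $P\lesssim E+G+\varepsilon D$ while your crude bound gives $G\lesssim D+P+Q$, so the loop $P\to G\to P$ cannot be closed by smallness of $\varepsilon$. To be fair, the paper is itself terse here: its own final displayed inequality for $\|(1-\pi_A)\alpha\|_{1,2;p,\varepsilon}$ still carries the term $c\varepsilon^2\|\pi_A(\mathcal D^\varepsilon(\Xi)(\xi)+*[\alpha,*\omega])\|_{L^p}$, and the later analogue (Theorem \ref{flow:thm:estk2}, third estimate) retains exactly such a term on the right-hand side. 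So either the third estimate should be read with the $c\varepsilon^2 G$ term present, or its removal requires an argument you have not supplied; in either case the step as you have written it is not valid, and you should either keep $c\varepsilon^2\|\pi_A(\mathcal D^\varepsilon(\Xi)(\xi)+*[\alpha,*\omega(A)])\|_{L^p}$ in your conclusion or justify its elimination by something other than absorption.
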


\begin{proof}
By theorem \ref{flow:thm:3eqbasic} and by lemma \ref{flow:lemma:saweb2} we have that
\begin{align*}
\|\xi\|_{1,2;p,\varepsilon}+\|\nabla_s\pi_A(\xi)\|_{L^p}&+\|\nabla_t\pi_A(\xi)\|_{L^p}+\|\nabla_t\nabla_t\pi_A(\alpha)\|_{L^p}\leq c\varepsilon\|\mathcal D^\varepsilon(\Xi)(\xi)\|_{0,p,\varepsilon }\\
&+c\|\pi_A(\alpha-\mathcal D^0(\Xi)^*(\eta))\|_{L^p}+c\|\mathcal D^0(\Xi)(\pi_A(\alpha))\|_{L^p}
\end{align*}
and thus always by lemma \ref{flow:lemma:saweb2} and by the last estimate
\begin{align*}
\|(1-\pi_A)(\xi)\|_{1,2;p,\varepsilon }\leq& c\varepsilon^2 \|\mathcal D^\varepsilon(\Xi)(\xi)\|_{0,p,\varepsilon}+c\varepsilon\|\pi_A(\alpha-\mathcal D^0(\Xi)^*(\eta))\|_{L^p}\\
&+c\varepsilon \|\mathcal D^0(\Xi)(\pi_A(\alpha))\|_{L^p},\\
\|(1-\pi_A)\alpha \|_{1,2;p,\varepsilon }\leq& c\varepsilon^2  \|\mathcal D^\varepsilon(\Xi)(\xi)\|_{0,p,\varepsilon}+c\varepsilon^2 \|\pi_A(\alpha-\mathcal D^0(\Xi)^*(\eta))\|_{L^p}\\
&+c\varepsilon^2 \|\mathcal D^0(\Xi)(\pi_A(\alpha))\|_{L^p}.
\end{align*}
Therefore, with the lemma \ref{flow:lemma:diffD} we obtain
\begin{align*}
\| \xi \|_{1,2;p,\varepsilon}&+\|\nabla_s\pi_A(\xi)\|_{L^p}+\|\nabla_t\nabla_t\pi_A(\alpha)\|_{L^p} \\
\leq &c \varepsilon \|\mathcal D^\varepsilon(\Xi)(\xi)\|_{0,p,\varepsilon}+c\|\pi_A(\mathcal D^\varepsilon(\Xi)(\xi)+*[\alpha,*\omega])\|_{L^p}\\
&+c\|\pi_A(\xi-\mathcal (D^\varepsilon(\Xi))^*(\eta)-*[\eta,*\omega])\|_{L^p}\\
&+\frac c\varepsilon \|(1-\pi_A)\xi \|_{1,2;p,\varepsilon },\\
\|(1-\pi_A)\xi\|_{1,2;p,\varepsilon }\leq &c\varepsilon^2\|\mathcal D^\varepsilon(\Xi)(\xi)\|_{0,p,\varepsilon}+c\varepsilon \|\pi_A(\mathcal D^\varepsilon(\Xi)(\xi)+*[\alpha,*\omega])\|_{L^p}\\
&+c\varepsilon \|\pi_A(\xi-\mathcal (D^\varepsilon(\Xi))^*(\eta)-*[\eta,*\omega])\|_{L^p}\\
&+c\|(1-\pi_A)\alpha \|_{1,2;p,\varepsilon },\\
\|(1-\pi_A)\alpha \|_{1,2;p,\varepsilon }\leq &c\varepsilon^2\|\mathcal D^\varepsilon(\Xi)(\xi)\|_{0,p,\varepsilon}+c\varepsilon^2 \|\pi_A(\mathcal D^\varepsilon(\Xi)(\xi)+*[\alpha,*\omega])\|_{L^p}\\
&+c\varepsilon ^2\|\pi_A(\xi-\mathcal (D^\varepsilon(\Xi))^*(\eta)-*[\eta,*\omega])\|_{L^p}
\end{align*}
and thus the theorem follows combining these last three estimates.

\end{proof}

In the same way as theorem \ref{flow:thme:linest333} one can prove the following theorem.
\begin{theorem}\label{flow:thme:linest333tre}
We choose a regular value $b$ of $E^H$, then there are two positive constants $c$ and $\varepsilon_0$ such that the following holds. For any $\Xi=A+\Psi dt+\Phi ds\in \mathcal M^{0}(\Xi_-,\Xi_+)$, $\Xi_\pm\in \mathrm{Crit}_{E^H}^b$, and any $0<\varepsilon<\varepsilon_0$ the following estimates hold.
\begin{equation}
\begin{split}
\|(1-\pi_A)\xi\|_{1,2;p,\varepsilon }&+\varepsilon\| \pi_A(\alpha) \|_{1,2;p,1}\\
\leq &c\varepsilon^2\|(\mathcal D^\varepsilon(\Xi))^*(\xi)\|_{0,p,\varepsilon}+c\varepsilon \|\pi_A((\mathcal D^\varepsilon(\Xi))^*(\xi)+*[\alpha,*\omega(A)])\|_{L^p},
\end{split}
\end{equation}
\begin{equation}
\begin{split}
\|(1-\pi_A)\alpha \|_{1,2;p,\varepsilon }\leq &c\varepsilon^2\|(\mathcal D^\varepsilon(\Xi))^*(\xi)\|_{0,p,\varepsilon}
\end{split}
\end{equation}
for all compactly supported $1$-forms $\xi \in W^{1,2;p}$ and $0<\varepsilon\leq \varepsilon_0$. 
\end{theorem}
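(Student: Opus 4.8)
The plan is to mirror the proof of Theorem~\ref{flow:thme:linest333} line by line, replacing the operator $\mathcal D^\varepsilon(\Xi)$ everywhere by its formal $L^2$-adjoint $(\mathcal D^\varepsilon(\Xi))^*$ with respect to the $\varepsilon$-weighted inner product underlying $\|\cdot\|_{0,2,\varepsilon}$, and $\mathcal D^0(\Xi)$ by $(\mathcal D^0(\Xi))^*$. The first thing I would do is compute $(\mathcal D^\varepsilon(\Xi))^*$ from (\ref{flow:op:linym12})--(\ref{flow:op:linym3}): it has exactly the same shape as $\mathcal D^\varepsilon(\Xi)$, the only differences being that $\nabla_s$ is replaced by $-\nabla_s$, that the coupling pairs (the $\alpha$--$\psi$ terms $d_A\nabla_t\psi$, $-\frac1{\varepsilon^2}\nabla_t d_A^*\alpha$ and $\pm\frac1{\varepsilon^2}*[\alpha\wedge*(\partial_tA-d_A\Psi)]$, and the $\alpha$--$\phi$ terms $-d_A\phi$, $-\frac1{\varepsilon^4}d_A^*\alpha$) get interchanged up to sign, and that a few zeroth-order terms bounded by (\ref{flow:aprioriest:geo}) appear. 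In particular the orthogonal splitting $\ker d_{A+\Psi dt}^{*_\varepsilon}\oplus\textrm{im }d_{A+\Psi dt}$ still diagonalises the principal part of $(\mathcal D^\varepsilon(\Xi))^*$, so Lemma~\ref{flow:lemma:kerim2} carries over verbatim to the adjoint.

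Next I would establish the exact analogue of Theorem~\ref{flow:thm:3eqbasic} for $(\mathcal D^\varepsilon(\Xi))^*$. On the $\ker d_{A+\Psi dt}^{*_\varepsilon}$-component this reduces to the parabolic estimate of Theorem~\ref{flow:thm:linestpar} applied to the \emph{backward} heat operator $-\varepsilon^2\nabla_s-\varepsilon^2\nabla_t^2+\Delta_A$; since the geodesic flow $\Xi$ is defined for all $s\in\mathbb R$ and the curvature bounds (\ref{flow:web:w1})--(\ref{flow:web:w2}) are $s$-independent, this follows from Theorem~\ref{flow:thm:linestpar} after the substitution $s\mapsto-s$. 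On the $\textrm{im }d_{A+\Psi dt}$-component it reduces to the hyperbolic estimate of Theorem~\ref{flow:thm:wavefin}, which is invariant under $s\mapsto-s$. Running the argument of Theorem~\ref{flow:thm:3eqbasic} word for word then gives
$$\|\xi\|_{1,2;p,\varepsilon}\leq c\varepsilon^2\|(\mathcal D^\varepsilon(\Xi))^*(\xi)\|_{0,p,\varepsilon}+c\|\pi_A(\alpha)\|_{L^p}$$
together with the refined $(1-\pi_A)$-versions.

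It then remains to absorb $\|\pi_A(\alpha)\|_{L^p}$. Here I would apply the \emph{first} inequality of Lemma~\ref{flow:lemma:saweb2}, namely $\|\alpha\|_{L^p}+\|\nabla_s\alpha\|_{L^p}+\|\nabla_t\nabla_t\alpha\|_{L^p}\leq c\|(\mathcal D^0(\Xi))^*(\alpha)\|_{L^p}$, to $\alpha=\pi_A(\alpha)\in H_A^1$; this controls $\|\pi_A(\alpha)\|_{1,2;p,1}$ and, crucially, involves no correction term, which is exactly why the present theorem has no $\|\pi_A(\xi-(\mathcal D^0(\Xi))^*(\eta))\|_{L^p}$ summand, in contrast with Theorem~\ref{flow:thme:linest333}. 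Finally I would invoke the second inequality of Lemma~\ref{flow:lemma:diffD}, which is already stated for $(\mathcal D^\varepsilon(\Xi))^*$, to replace $(\mathcal D^0(\Xi))^*(\pi_A(\xi))$ by $\pi_A((\mathcal D^\varepsilon(\Xi))^*(\xi)+*[\alpha,*\omega(A)])$, at the cost of terms that are $O(\varepsilon^2)$ times quantities already under control plus a factor $\frac1\varepsilon\|(1-\pi_A)\xi\|_{1,2;p,\varepsilon}$ that is absorbed on the left-hand side for $\varepsilon$ small; combining the three estimates and tracking the powers of $\varepsilon$ exactly as in the proof of Theorem~\ref{flow:thme:linest333} yields the two asserted inequalities.

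The only genuine point to check is the first step --- that the parabolic and hyperbolic linear estimates and the splitting lemma are stable under passing to the adjoint operator. Since $(\mathcal D^\varepsilon(\Xi))^*$ differs from $\mathcal D^\varepsilon(\Xi)$ only by the sign of $\nabla_s$ and by lower-order terms controlled through (\ref{flow:aprioriest:geo}), and all the relevant estimates are invariant under $s\mapsto-s$, this presents no real difficulty; the remainder of the proof is the same $\varepsilon$-power bookkeeping as before.
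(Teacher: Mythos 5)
Your proposal is correct and coincides with the paper's intended argument: the paper gives no separate proof of this theorem, stating only that it follows "in the same way" as Theorem \ref{flow:thme:linest333}, and your write-up is a faithful expansion of exactly that route (adjoint versions of the splitting lemma and of Theorems \ref{flow:thm:linestpar} and \ref{flow:thm:wavefin} via $s\mapsto -s$, the first inequality of Lemma \ref{flow:lemma:saweb2} for the harmonic part — which is why no $\eta$-correction term appears — and the second estimate of Lemma \ref{flow:lemma:diffD} to pass from $(\mathcal D^0)^*$ to $(\mathcal D^\varepsilon)^*$). No gaps.
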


%
%
\subsection{Proof of the theorem \ref{flow:thm:linestpar} }\label{flow:subsection:proofthm1}

We will use the following criterion to prove our estimates (cf. \cite{MR2030823}, theorem C.2).

\begin{theorem}[Marcinkiewicz, Mihlin]\label{flow:thm:MM}
Let $m:\mathbb R^n\to \mathbb C$ be a measurable function that for some constant $c_0$ satisfies
\begin{equation}\label{flow:thmMM:eq1}
\left|x_{i_1}x_{i_2}... x_{i_s}\frac {\partial^sm }{\partial_{x_{i_1}}\partial_{x_{i_2}}...\partial_{x_{i_s}} }\right|\leq c_0
\end{equation}
for all integers $0\leq s\leq n$ and $1\leq i_1<i_2<...<i_s\leq n$. We define $T_m:L^2(\mathbb R^2)\to L^2(\mathbb R^2)$ by
\begin{equation*}
T_mf:=\mathcal F^{-1} (m\mathcal F(f))
\end{equation*} 
where $f\in L^2(\mathbb R^2)$ and $\mathcal F: L^2(\mathbb R^n,\mathbb C)\to L^2(\mathbb R^n,\mathbb C)$ is the Fourier transformation given by
$$(\mathcal F f)(y_1,...,y_{n}):= \frac 1{2\pi}\int _{-\infty}^\infty\dots \int_{-\infty}^\infty e^{-i(y_0x_0+\dots+y_nx_n)}f(x_1,\dots, x_n) dx_1\dots dx_n$$
for $f\in L^2(\mathbb R^n,\mathbb C)\cap L^1(\mathbb R^n,\mathbb C)$. Then $m$ is an $L^p$-multiplier for all $1<p<\infty$, i.e. there exists a constant $c$ such that whenever $f\in L^p(\mathbb R^n)\cap L^2(\mathbb R^2)$ then $T_mf\in L^p(\mathbb R^n)$ and 
\begin{equation}
\|T_mf\|_{L^p}\leq c \|f\|_{L^p}.
\end{equation}
\end{theorem}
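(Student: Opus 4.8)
The plan is to peel off the easy case $p=2$ with Plancherel, reduce the range $2<p<\infty$ to $1<p<2$ by duality, and treat $1<p<2$ by a product (one-variable-at-a-time) Littlewood--Paley decomposition in which hypothesis (\ref{flow:thmMM:eq1}) is used only to produce a bounded-variation estimate on dyadic rectangles. First I would take $s=0$ in (\ref{flow:thmMM:eq1}), which gives $\sup|m|\le c_0$, so Plancherel immediately yields $\|T_mf\|_{L^2}=\|m\mathcal F(f)\|_{L^2}\le c_0\|f\|_{L^2}$. Since the adjoint of $T_m$ is the multiplier operator with symbol $\overline m$, which again satisfies (\ref{flow:thmMM:eq1}) with the same constant (the condition is invariant under complex conjugation), a uniform bound $\|T_m\|_{L^p\to L^p}\le C$ for all symbols satisfying (\ref{flow:thmMM:eq1}) and all $1<p<2$ automatically gives the corresponding bound for $2<p<\infty$ by duality. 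So it is enough to prove the $L^p$ estimate for $1<p<2$.

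For $1<p<2$ I would decompose $\mathbb R^n$ (minus the coordinate hyperplanes, a null set) into the dyadic rectangles $R_{\mathbf k}=\prod_{i=1}^n\{2^{k_i-1}\le|x_i|<2^{k_i}\}$, $\mathbf k\in\mathbb Z^n$ (together with the $2^n$ sign sectors), and let $\Delta_{\mathbf k}$ denote the Fourier projection with symbol $\mathbf 1_{R_{\mathbf k}}$, so that $T_m=\sum_{\mathbf k}T_m\Delta_{\mathbf k}$ with $T_m\Delta_{\mathbf k}$ having symbol $m\,\mathbf 1_{R_{\mathbf k}}$. The hypothesis (\ref{flow:thmMM:eq1}) enters here: integrating $|\partial_{i_1}\!\cdots\partial_{i_s}m|\le c_0/(|x_{i_1}|\cdots|x_{i_s}|)$ over $R_{\mathbf k}$ in each of the variables $x_{i_1},\dots,x_{i_s}$ (each one-dimensional integral contributing $\int_{2^{k-1}}^{2^k}dt/t=\log 2$) shows that the restriction $m|_{R_{\mathbf k}}$ has iterated bounded variation bounded by a uniform constant times $c_0$, independently of $\mathbf k$. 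Equivalently, each $m\,\mathbf 1_{R_{\mathbf k}}$ can be written, by a Fubini/Stieltjes argument over half-rectangles, as an average of sharp indicators of sub-rectangles of $R_{\mathbf k}$ with total mass controlled by $c_0$.

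The final step would be to convert this into a square-function estimate: using the representation above and Minkowski's inequality, one bounds $\|T_mf\|_{L^p}$ by a uniform constant times
\begin{equation*}
\Big\|\Big(\sum_{\mathbf k}|\Delta_{\mathbf k}f|^2\Big)^{1/2}\Big\|_{L^p},
\end{equation*}
and the latter is $\le C_p\|f\|_{L^p}$ for $1<p<\infty$ by the Littlewood--Paley inequality applied once in each coordinate direction (the iterated $g$-function estimate). Together with the $p=2$ case and the duality remark this gives $\|T_mf\|_{L^p}\le Cc_0\|f\|_{L^p}$ for all $1<p<\infty$, which is the assertion.

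The hard part will be exactly this last passage. The condition (\ref{flow:thmMM:eq1}) never controls a repeated derivative $\partial_i^2 m$, so the single-annulus Mihlin--H\"ormander kernel argument is unavailable and one is forced to decompose simultaneously in all $n$ frequency variables and to invoke the vector-valued (iterated) Littlewood--Paley inequality; keeping all constants uniform in $\mathbf k$, justifying the convergence of $\sum_{\mathbf k}T_m\Delta_{\mathbf k}$ in the strong operator topology on $L^p$, and making the half-rectangle averaging rigorous is where essentially all of the work lies. (Since the paper only needs this for a fixed small dimension, one may alternatively run the classical two- or three-dimensional Marcinkiewicz argument verbatim, but the structure of the proof is the same.)
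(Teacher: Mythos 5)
The first thing to say is that the paper does not prove this statement at all: Theorem \ref{flow:thm:MM} is quoted verbatim with a citation to \cite{MR2030823} (Theorem C.2 there) and is used as a black box in Corollaries \ref{flow:cor:MM1} and \ref{flow:cor:MM2}. So there is no in-paper argument to compare yours against; what you have written is the standard textbook proof of the Marcinkiewicz multiplier theorem (Stein, \emph{Singular Integrals}, Ch.~IV). Your reduction steps are all correct: $s=0$ plus Plancherel for $p=2$, duality via $T_m^*=T_{\overline m}$ for $2<p<\infty$, the dyadic-rectangle decomposition, and the observation that integrating $|\partial_{i_1}\cdots\partial_{i_s}m|\le c_0/(|x_{i_1}|\cdots|x_{i_s}|)$ over a dyadic rectangle gives a uniform iterated-variation bound.

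The one place where your outline is not yet a proof is the passage you yourself flag. "Minkowski's inequality" is not the tool that converts the variation bound into the square-function estimate. The correct chain is: (i) the lower Littlewood--Paley inequality $\|T_mf\|_p\lesssim\|(\sum_{\mathbf k}|\Delta_{\mathbf k}T_mf|^2)^{1/2}\|_p$; (ii) the fundamental-theorem-of-calculus representation of $m$ on $R_{\mathbf k}$, which writes $\Delta_{\mathbf k}T_mf$ as a superposition, with total mass $\lesssim c_0$, of partial-sum operators $S_{I_1\times\cdots\times I_n}\Delta_{\mathbf k}f$ over sub-rectangles; (iii) the \emph{vector-valued} uniform boundedness of these partial-sum operators (equivalently, the $\ell^2$-valued Riesz projection / Hilbert transform bound of Marcinkiewicz--Zygmund), applied once per coordinate, which is what lets you pass from $\sum_{\mathbf k}|S_{\cdot}\Delta_{\mathbf k}f|^2$ to $\sum_{\mathbf k}|\Delta_{\mathbf k}f|^2$ inside the $L^p$ norm; and (iv) the upper (iterated) Littlewood--Paley inequality. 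Step (iii) is an independent nontrivial ingredient that Minkowski alone does not supply, and without it the argument does not close. Since the paper itself only invokes the theorem for $n\le 4$ and for explicit rational symbols, citing the result as the paper does (or running the classical proof with the vector-valued partial-sum estimate made explicit) is the efficient route; your sketch is the right skeleton but step (iii) must be named and proved, not absorbed into "Minkowski".
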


\begin{corollary}\label{flow:cor:MM1}
For every $p>1$ there is a positive constant $c$ such that
\begin{equation*}
\left\| \partial_s u\right\|_{L^p}+\sum_{i,j=0}^{n-1}\left\|\partial_{x_i}\partial_{x_j} u\right\|_{L^p}
\leq c \left\| \left(\partial_s-\sum_{i=0}^{n-1}\partial_{x_i}\partial_{x_i}\right)u\right\|_{L^p}
\end{equation*}
for every $u\in W_0^{1,2,p}(\mathbb R\times \mathbb R^n)\cap W_0^{1,2,2}(\mathbb R\times \mathbb R^n)$.
\end{corollary}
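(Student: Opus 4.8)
The plan is to write the second derivatives of $u$ as Fourier multiplier operators applied to $f:=\left(\partial_s-\sum_{i=0}^{n-1}\partial_{x_i}\partial_{x_i}\right)u$ and to check that the corresponding multipliers fall under Theorem \ref{flow:thm:MM} (applied with "$n$" replaced by "$n+1$", the underlying space being $\mathbb R\times\mathbb R^n$). Since the asserted inequality is closed and both sides are continuous with respect to the relevant norms, it suffices to prove it for $u\in C_c^\infty(\mathbb R\times\mathbb R^n)$, for which $f\in C_c^\infty$ as well and every Fourier transform below is a Schwartz function; the general case then follows by approximation.

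First I would take the $(n+1)$-dimensional Fourier transform, writing $\sigma$ for the variable dual to $s$ and $\xi=(\xi_0,\dots,\xi_{n-1})$ for the variables dual to $x$. With the convention of Theorem \ref{flow:thm:MM} one gets $\widehat f=(i\sigma+|\xi|^2)\,\widehat u$, and because the symbol $i\sigma+|\xi|^2$ vanishes only at the origin, $\widehat u=\widehat f/(i\sigma+|\xi|^2)$. Hence
\begin{equation*}
\partial_su=T_{m_0}f,\qquad \partial_{x_i}\partial_{x_j}u=T_{m_{ij}}f,\qquad m_0=\frac{i\sigma}{i\sigma+|\xi|^2},\quad m_{ij}=\frac{-\xi_i\xi_j}{i\sigma+|\xi|^2}.
\end{equation*}
These are finitely many bounded symbols, so it is enough to verify the hypothesis (\ref{flow:thmMM:eq1}) for each of them with a uniform constant and then sum the resulting estimates.

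The key observation is that $m_0$ and all the $m_{ij}$ are smooth on $\mathbb R^{n+1}\setminus\{0\}$ and invariant under the parabolic rescaling $(\sigma,\xi)\mapsto(\lambda^2\sigma,\lambda\xi)$, $\lambda>0$. Because the indices $i_1<\dots<i_s$ in (\ref{flow:thmMM:eq1}) are pairwise distinct, each quantity $x_{i_1}\cdots x_{i_s}\,\partial_{x_{i_1}}\cdots\partial_{x_{i_s}}m$ equals the composition $(x_{i_1}\partial_{x_{i_1}})\cdots(x_{i_s}\partial_{x_{i_s}})m$ of the one-variable Euler operators, and each of $\sigma\partial_\sigma$ and $\xi_k\partial_{\xi_k}$ commutes with the above rescaling; therefore each such quantity is again parabolically homogeneous of degree $0$ and continuous on $\mathbb R^{n+1}\setminus\{0\}$, hence determined by its values on the compact parabolic sphere $\{\sigma^2+|\xi|^4=1\}$ and bounded by a constant $c_0=c_0(n)$. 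Thus Theorem \ref{flow:thm:MM} applies to $m_0$ and to each $m_{ij}$ and yields $\|T_{m_0}f\|_{L^p}+\sum_{i,j}\|T_{m_{ij}}f\|_{L^p}\le c\,\|f\|_{L^p}$, which is precisely the claimed estimate.

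The only point requiring a little care is to rule out any hidden singularity of the symbols along the sets $\{\sigma=0\}$ and $\{\xi=0\}$: on the first, $|\xi|^2>0$ keeps the denominator away from zero, and on the second, $i\sigma\ne0$ does the same, so smoothness and homogeneity of degree $0$ genuinely hold on all of $\mathbb R^{n+1}\setminus\{0\}$ and the compactness argument of the previous paragraph goes through without modification. This is the step I would expect to be the main (though still routine) obstacle.
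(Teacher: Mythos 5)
Your proof is correct and follows essentially the same route as the paper: express $\partial_s u$ and $\partial_{x_i}\partial_{x_j}u$ as multiplier operators applied to $f=(\partial_s-\sum_i\partial_{x_i}^2)u$ and invoke Theorem \ref{flow:thm:MM}. The only difference is that you actually verify hypothesis (\ref{flow:thmMM:eq1}) via the parabolic homogeneity of the symbols (correctly), whereas the paper simply asserts that the multipliers satisfy it.
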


\begin{proof}
We define $f\in L^p(\mathbb R\times \mathbb R^n)\cap L^2(\mathbb R\times \mathbb R^n)$ by $f=\left(\partial_s-\sum_{i=0}^{n-1}\partial_{x_i}\partial_{x_i}\right)u$ and thus
$$\mathcal F (f)= \left( i\sigma+ \sum_{i=0}^{n-1}y_i^2\right) \mathcal F(u)$$
and therefore
$$\mathcal F(\partial_su)=\frac {i\sigma }{i\sigma+ \sum_{i=0}^{n-1}y_i^2 }\mathcal F (f)=:m_{s}(\sigma,y_0,...,y_{n-1})\mathcal F (f),$$
$$\mathcal F(\partial_{x_i}\partial_{x_j}u)=\frac {y_i y_j }{i\sigma+ \sum_{i=0}^{n-1}y_i^2 }\mathcal F (f)=:m_{y_i}(\sigma,y_0,...,y_{n-1})\mathcal F (f).$$
The multipliers $m_{s}(\sigma,y_0,...,y_{n-1})$ and $m_{y_i}(\sigma,y_0,...,y_{n-1})$ satisfy the condition $(\ref{flow:thmMM:eq1})$ and therefore we can apply the theorem \ref{flow:thm:MM} and conclude the proof.
\end{proof}

We denote $d_Ad_A^*+d_A^*d_A$ by $\Delta_A$.

\begin{lemma}\label{flow:fjdfjdkjdkj}
We choose a connection $A_0\in\mathcal A_0(P)$, then there is a positive constant $c$ such that for any 0- or 1-form $\alpha$ with compact support:
\begin{equation}\label{fdssgasfaaaa}
\begin{split}
\|\partial_s\alpha&\|_{L^p( \Sigma\times\mathbb R^2)}
+\|\partial_t^2\alpha\|_{L^p(\Sigma\times\mathbb R^2)} 
+\| d_{A_0}d_{A_0}^*\alpha\|_{L^p(\Sigma\times\mathbb R^2)}\\
&+\| d_{A_0}^*d_{A_0}\alpha\|_{L^p(\Sigma\times\mathbb R^2)}
+\| \partial_t d_{A_0}^*\alpha\|_{L^p(\Sigma\times\mathbb R^2)}+\| \partial_t d_{A_0}\alpha\|_{L^p(\Sigma\times\mathbb R^2)}\\
\leq &c\left\|\left(\partial_s-\partial_t^2+\Delta_{A_0}\right)\alpha\right\|_{L^p(\Sigma\times\mathbb R^2)}+c \|\alpha\|_{L^p(\Sigma\times\mathbb R^2)}\\
&+c \|\partial_t\alpha\|_{L^p(\Sigma\times\mathbb R^2)}+c \|d_{A_0}\alpha\|_{L^p(\Sigma\times\mathbb R^2)}+c \|d_{A_0}^*\alpha\|_{L^p(\Sigma\times\mathbb R^2)}.
\end{split}
\end{equation}
\end{lemma}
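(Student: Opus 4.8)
The plan is to reduce the estimate, by a localisation and coefficient-freezing argument on $\Sigma$, to the constant-coefficient parabolic estimate of Corollary \ref{flow:cor:MM1}. Write the coordinates of $\Sigma\times\mathbb R^2$ as $(x,t,s)$ with $x\in\Sigma$. Since $\Delta_{A_0}$ acts only in the $\Sigma$-directions and is of Laplace type (scalar principal symbol), in a coordinate chart on $\Sigma$ with coordinates $x=(x_1,x_2)$ one has
\[
\Delta_{A_0}\alpha=-g^{ij}(x)\,\partial_{x_i}\partial_{x_j}\alpha+b^i(x)\,\partial_{x_i}\alpha+c(x)\alpha,
\]
where $g^{ij}$ is the inverse metric and the lower-order coefficients $b^i,c$ involve $A_0$. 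Hence, in a normal-coordinate chart centred at a point $p$ (so that $g^{ij}(p)=\delta^{ij}$) the operator $\partial_s-\partial_t^2+\Delta_{A_0}$ has the same frozen principal part as $\partial_s-\partial_{x_0}^2-\partial_{x_1}^2-\partial_{x_2}^2$ with $x_0:=t$, which is exactly the model operator of Corollary \ref{flow:cor:MM1} with $n=3$.

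First I would fix a finite atlas of $\Sigma$ by normal-coordinate charts together with a subordinate partition of unity $\{\chi_k\}$, chosen fine enough that $\abs{g^{ij}(x)-\delta^{ij}}\leq\delta$ on the support of each $\chi_k$, for a small $\delta$ fixed later. Applying Corollary \ref{flow:cor:MM1} to $\chi_k\alpha$ (first for smooth compactly supported $\alpha$, the general case by density) gives
\[
\|\partial_s(\chi_k\alpha)\|_{L^p}+\sum_{a,b=0}^{2}\|\partial_{x_a}\partial_{x_b}(\chi_k\alpha)\|_{L^p}\leq c\,\Big\|\Big(\partial_s-\sum_{a=0}^{2}\partial_{x_a}^2\Big)(\chi_k\alpha)\Big\|_{L^p}.
\]
On the right I rewrite $\partial_s-\sum_a\partial_{x_a}^2=(\partial_s-\partial_t^2+\Delta_{A_0})+(g^{ij}-\delta^{ij})\partial_{x_i}\partial_{x_j}-b^i\partial_{x_i}-c$. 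The freezing term $(g^{ij}-\delta^{ij})\partial_{x_i}\partial_{x_j}(\chi_k\alpha)$ is bounded by $\delta$ times the second-derivative terms appearing on the left; after summing over $k$ and choosing $\delta$ small it is absorbed into the left-hand side. The terms $b^i\partial_{x_i}(\chi_k\alpha)$ and $c(\chi_k\alpha)$, together with the commutator $[\Delta_{A_0},\chi_k]\alpha$ (note $\chi_k$ commutes with $\partial_s$ and $\partial_t$), involve only $\alpha$ and first $\Sigma$-derivatives of $\alpha$, which on the surface are controlled by $\|d_{A_0}\alpha\|_{L^p}+\|d_{A_0}^*\alpha\|_{L^p}+\|\alpha\|_{L^p}$ via the ellipticity of $d_{A_0}\oplus d_{A_0}^*$; and $\|\chi_k(\partial_s-\partial_t^2+\Delta_{A_0})\alpha\|_{L^p}$ sums to $c\|(\partial_s-\partial_t^2+\Delta_{A_0})\alpha\|_{L^p}$. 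Summing over $k$ and using $\sum_k\chi_k=1$ recovers control of $\|\partial_s\alpha\|_{L^p}$, $\|\partial_t^2\alpha\|_{L^p}$, all second $\Sigma$-derivatives $\|\partial_{x_i}\partial_{x_j}\alpha\|_{L^p}$ and the mixed derivatives $\|\partial_t\partial_{x_i}\alpha\|_{L^p}$. Finally, in each chart $d_{A_0}d_{A_0}^*$, $d_{A_0}^*d_{A_0}$, $\partial_t d_{A_0}^*$ and $\partial_t d_{A_0}$ are smooth-coefficient combinations of exactly these controlled second derivatives plus first-order terms, so the whole left-hand side of (\ref{fdssgasfaaaa}) is dominated by its right-hand side.

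The main obstacle is bookkeeping rather than conceptual: one must check that the freezing error is genuinely absorbable, i.e. that after fixing $\delta$ and passing to the finite subcover the constant remains uniform, and that every term generated by the partition of unity and by the first-order part of $\Delta_{A_0}$ is controlled by the five quantities $\|\alpha\|_{L^p}$, $\|\partial_t\alpha\|_{L^p}$, $\|d_{A_0}\alpha\|_{L^p}$, $\|d_{A_0}^*\alpha\|_{L^p}$ allowed on the right (together with $\|(\partial_s-\partial_t^2+\Delta_{A_0})\alpha\|_{L^p}$ itself). The only genuine geometric input is the first-order ellipticity of $d_{A_0}\oplus d_{A_0}^*$ on $\Sigma$, which is what lets one trade raw coordinate derivatives $\partial_{x_i}\alpha$ for $d_{A_0}\alpha$ and $d_{A_0}^*\alpha$; everything else is the standard parabolic regularity machinery packaged in Theorem \ref{flow:thm:MM} and Corollary \ref{flow:cor:MM1}.
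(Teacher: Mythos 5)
Your argument is correct and is essentially the proof the paper gives: localise over a finite atlas of $\Sigma$ with a partition of unity, apply Corollary \ref{flow:cor:MM1} (with $x_0=t$ and the two $\Sigma$-coordinates) to each localised piece after freezing the metric, absorb the small perturbation of the principal part, and control the commutator and lower-order terms by the first-order quantities allowed on the right. The paper states this in one sentence; your write-up just supplies the bookkeeping it leaves implicit.
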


\begin{proof}
The previous corollary continues to holds if we consider a metric closed to a constant metric. Therefore we can pick a finite atlas $\{\mathbb R^2\times V_i,\varphi_i:\mathbb R^2\times V_i\to \mathbb R^2\times\Sigma\}_{i\in I}$ and a partition of the unity $\{\rho_i\}_{i\in I}\subset C^\infty(\mathbb R^2\times \Sigma,[0,1])$, $\sum_{i\in I}\rho_i(x)=1$ for every $x\in \mathbb R^2\times\Sigma$ and $\mathrm{supp}(\rho_i)\subset \varphi_i(\mathbb R^2\times V_i)$ for any $i\in I$, and apply the corollary \ref{flow:cor:MM1} for $(\rho_i\circ\varphi_i)\alpha_i$ where $\alpha_i$ is the local representations of $\alpha$ on $\mathbb R^2\times V_i$. Summing all the estimates and considering the smooth constant connection $A_0(t,s)$ we obtain (\ref{fdssgasfaaaa}).
\end{proof}

\begin{lemma}\label{flow:fjdfjdkjdkj22}
We choose a flat connection $A_0\in\mathcal A_0(P)$, then there is a positive constant $c$ such that for any 0- or 1-form $\alpha$ with compact support:
\begin{equation}
\begin{split}
\varepsilon^2\|\partial_s\alpha&\|_{L^p( \Sigma\times\mathbb R^2)}
+\varepsilon^2\|\partial_t^2\alpha\|_{L^p(\Sigma\times\mathbb R^2)} 
+\| d_{A_0}d_{A_0}^*\alpha\|_{L^p(\Sigma\times\mathbb R^2)}\\
&+\| d_{A_0}^*d_{A_0}\alpha\|_{L^p(\Sigma\times\mathbb R^2)}
+\varepsilon\| \partial_t d_{A_0}^*\alpha\|_{L^p(\Sigma\times\mathbb R^2)}+\varepsilon\| \partial_t d_{A_0}\alpha\|_{L^p(\Sigma\times\mathbb R^2)}\\
\leq &c\varepsilon^2\left\|\left(\partial_s-\partial_t^2+\frac 1{\varepsilon^2}\Delta_{A_0}\right)\alpha\right\|_{L^p(\Sigma\times\mathbb R^2)}+c \|\alpha\|_{L^p(\Sigma\times\mathbb R^2)}\\
&+c\varepsilon \|\partial_t\alpha\|_{L^p(\Sigma\times\mathbb R^2)}+c \|d_{A_0}\alpha\|_{L^p(\Sigma\times\mathbb R^2)}+c \|d_{A_0}^*\alpha\|_{L^p(\Sigma\times\mathbb R^2)}
\end{split}
\end{equation}
\end{lemma}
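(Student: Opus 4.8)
The plan is to reduce Lemma~\ref{flow:fjdfjdkjdkj22} to the already proven Lemma~\ref{flow:fjdfjdkjdkj} by the anisotropic rescaling of the two Euclidean variables used in the proof of Theorem~\ref{flow:thm:sob} (and, analogously, in lemma~4.1 of \cite{MR1283871}). The essential observation is that the fixed connection $A_0\in\mathcal A_0(P)$ is independent of $t$ and $s$, so that $d_{A_0}$, $d_{A_0}^*$, $\Delta_{A_0}$ and $\partial_t$ all commute with the substitution $(t,s)\mapsto(\varepsilon t,\varepsilon^2 s)$; this is precisely the scaling that turns $\partial_s-\partial_t^2+\frac 1{\varepsilon^2}\Delta_{A_0}$ into $\frac 1{\varepsilon^2}$ times the unweighted operator of Lemma~\ref{flow:fjdfjdkjdkj}, and it keeps the constant $c$ of that lemma unchanged, hence $\varepsilon$-independent.

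Concretely, given a compactly supported $0$- or $1$-form $\alpha$ on $\Sigma\times\mathbb R^2$, I would put $\bar\alpha(t,s):=\alpha(\varepsilon t,\varepsilon^2 s)$, which again has compact support. Then $\partial_t^2\bar\alpha=\varepsilon^2(\partial_t^2\alpha)(\varepsilon t,\varepsilon^2 s)$, $\partial_s\bar\alpha=\varepsilon^2(\partial_s\alpha)(\varepsilon t,\varepsilon^2 s)$ and $\partial_t\bar\alpha=\varepsilon(\partial_t\alpha)(\varepsilon t,\varepsilon^2 s)$, while $d_{A_0}^*\bar\alpha=(d_{A_0}^*\alpha)(\varepsilon t,\varepsilon^2 s)$ and likewise for $d_{A_0}$ and $\Delta_{A_0}$; hence
\begin{equation*}
\left(\partial_s-\partial_t^2+\Delta_{A_0}\right)\bar\alpha=\varepsilon^2\left[\left(\partial_s-\partial_t^2+\frac 1{\varepsilon^2}\Delta_{A_0}\right)\alpha\right](\varepsilon t,\varepsilon^2 s),
\end{equation*}
and each of the terms $\partial_t d_{A_0}^*\bar\alpha$ and $\partial_t d_{A_0}\bar\alpha$ carries one extra factor $\varepsilon$. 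Moreover the change of variables $t'=\varepsilon t$, $s'=\varepsilon^2 s$ multiplies every $L^p(\Sigma\times\mathbb R^2)$-norm by the common Jacobian factor $\varepsilon^{-3/p}$, because only the two Euclidean directions are rescaled.

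The last step is to apply Lemma~\ref{flow:fjdfjdkjdkj} to $\bar\alpha$, insert the identities above, and divide both sides by $\varepsilon^{-3/p}$; collecting the powers of $\varepsilon$ produces precisely the weighted estimate of Lemma~\ref{flow:fjdfjdkjdkj22}. For instance the left-hand summand $\|\partial_s\bar\alpha\|_{L^p}=\varepsilon^{2}\varepsilon^{-3/p}\|\partial_s\alpha\|_{L^p}$ yields the coefficient $\varepsilon^2$, the term $\|d_{A_0}d_{A_0}^*\bar\alpha\|_{L^p}=\varepsilon^{-3/p}\|d_{A_0}d_{A_0}^*\alpha\|_{L^p}$ yields the coefficient $1$, and the leading term on the right becomes $c\varepsilon^2\|(\partial_s-\partial_t^2+\frac 1{\varepsilon^2}\Delta_{A_0})\alpha\|_{L^p}$. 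I do not expect a genuine obstacle here; the only point that requires care is the bookkeeping of the $\varepsilon$-exponents together with verifying that the Jacobian factor $\varepsilon^{-3/p}$ indeed appears uniformly on both sides of the inequality and therefore cancels, which holds because every summand in Lemma~\ref{flow:fjdfjdkjdkj} is a mixed second-order expression in $\alpha$ measured in one and the same $L^p$-norm.
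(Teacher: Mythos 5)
Your proposal is correct and is exactly the paper's argument: the paper proves Lemma~\ref{flow:fjdfjdkjdkj22} in one line by the rescaling $\bar\alpha(t,s):=\alpha(\varepsilon t,\varepsilon^2 s)$ applied to Lemma~\ref{flow:fjdfjdkjdkj}, and your write-up simply spells out the derivative identities and the cancellation of the uniform Jacobian factor $\varepsilon^{-3/p}$.
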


\begin{proof}
The lemma follows from the previous lemma \ref{flow:fjdfjdkjdkj} using the rescaling $\bar \alpha(t,s):=\alpha(\varepsilon t,\varepsilon^2s)$ .
\end{proof}

\begin{lemma}\label{flow:lemma:c2}
We choose a flat connection $A_0 \in\mathcal A_0(P)$ and a constant $c_0$, then there is a positive constant $c$ such that following holds. For any connection $A\in \mathcal A^{2,p}(P\times \mathbb R^2)$ which satisfies
\begin{equation}\label{flow:esttLmm}
\sup_{(s,t)\in \mathbb R^2}\left(\|A(s,t)-A_0\|_{C^1}+\varepsilon\|\partial_tA\|_{L^\infty}\right)\leq c_0
\end{equation}
 and for any 0- or 1-form $\alpha$ with compact support:
\begin{equation}
\begin{split}
\varepsilon^2\|\partial_s\alpha&\|_{L^p( \Sigma\times\mathbb R^2)}
+\varepsilon^2\|\partial_t^2\alpha\|_{L^p(\Sigma\times\mathbb R^2)} 
+\| d_{A}d_{A}^*\alpha\|_{L^p(\Sigma\times\mathbb R^2)}\\
&+\| d_{A}^*d_{A}\alpha\|_{L^p(\Sigma\times\mathbb R^2)}
+\varepsilon\| \partial_t d_{A}^*\alpha\|_{L^p(\Sigma\times\mathbb R^2)}+\varepsilon\| \partial_t d_{A}\alpha\|_{L^p(\Sigma\times\mathbb R^2)}\\
\leq &c\varepsilon^2\left\|\left(\partial_s-\partial_t^2+\frac 1{\varepsilon^2}d_{A}d_{A}^*+\frac 1 {\varepsilon^2}d_{A}^*d_{A}\right)\alpha\right\|_{L^p(\Sigma\times\mathbb R^2)}+c \|\alpha\|_{L^p(\Sigma\times\mathbb R^2)}.
\end{split}
\end{equation}
\end{lemma}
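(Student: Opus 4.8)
The plan is to reduce the estimate for the variable connection $A$ to the one for the flat reference connection $A_0$, which is exactly lemma \ref{flow:fjdfjdkjdkj22}, and to treat the difference $d_A-d_{A_0}$ as a zeroth order perturbation whose errors can be absorbed.

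Write $a:=A-A_0\in\Omega^1(\Sigma,\mathfrak g_P)$, set $L_A:=\partial_s-\partial_t^2+\tfrac1{\varepsilon^2}\Delta_A$, denote by $Q_A(\alpha)$ the left-hand side of the asserted inequality and by $Q_{A_0}(\alpha)$ the same expression with $A_0$ in place of $A$. The hypothesis (\ref{flow:esttLmm}), together with the fact that $A_0$ is independent of $t$ (so $\partial_ta=\partial_tA$), gives $\|a\|_{C^0(\Sigma\times\mathbb R^2)}+\|d_{A_0}a\|_{C^0(\Sigma\times\mathbb R^2)}\le c\,c_0$ and $\varepsilon\|\partial_ta\|_{C^0(\Sigma\times\mathbb R^2)}\le c_0$. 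Using $d_A=d_{A_0}+[a\wedge\,\cdot\,]$ together with the corresponding formula for $d_A^*$ and multiplying out, the differences $(\Delta_A-\Delta_{A_0})\alpha$, $(d_Ad_A^*-d_{A_0}d_{A_0}^*)\alpha$ and $(d_A^*d_A-d_{A_0}^*d_{A_0})\alpha$ are finite sums of terms $[d_{A_0}a,\alpha]$ and $[a,[a,\alpha]]$ (zeroth order in $\alpha$, with $L^\infty$-coefficient $\le c\,c_0$, resp.\ $\le c\,c_0^2$) and $[a,d_{A_0}\alpha]$, $[a,d_{A_0}^*\alpha]$ (first order in $\alpha$ in the $\Sigma$-directions, coefficient $\le c_0$); similarly $(\partial_td_A-\partial_td_{A_0})\alpha$ and $(\partial_td_A^*-\partial_td_{A_0}^*)\alpha$ produce in addition $[\partial_ta,\alpha]$ and $[a,\partial_t\alpha]$, which when multiplied by the prefactor $\varepsilon$ are bounded by $c_0\|\alpha\|_{L^p}+c_0\,\varepsilon\|\partial_t\alpha\|_{L^p}$. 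Hence
\[
\varepsilon^2\bigl\|(L_A-L_{A_0})\alpha\bigr\|_{L^p}=\bigl\|(\Delta_A-\Delta_{A_0})\alpha\bigr\|_{L^p}\le c\,c_0\bigl(\|\alpha\|_{L^p}+\|d_{A_0}\alpha\|_{L^p}+\|d_{A_0}^*\alpha\|_{L^p}\bigr),
\]
and every summand of $Q_A(\alpha)$ differs from the corresponding summand of $Q_{A_0}(\alpha)$ by at most $c\,c_0\bigl(\|\alpha\|_{L^p}+\|d_{A_0}\alpha\|_{L^p}+\|d_{A_0}^*\alpha\|_{L^p}+\varepsilon\|\partial_t\alpha\|_{L^p}\bigr)$.

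Next I would feed this into lemma \ref{flow:fjdfjdkjdkj22} applied to $A_0$. First I discard the lower order terms on its right-hand side: applying fibrewise over $(s,t)\in\mathbb R^2$ the interpolation inequality for intermediate derivatives on the closed surface $\Sigma$, namely $\|d_{A_0}\alpha\|_{L^p(\Sigma)}+\|d_{A_0}^*\alpha\|_{L^p(\Sigma)}\le\delta\|\Delta_{A_0}\alpha\|_{L^p(\Sigma)}+C_\delta\|\alpha\|_{L^p(\Sigma)}$, and integrating in $(s,t)$, absorbs $\|d_{A_0}\alpha\|_{L^p}+\|d_{A_0}^*\alpha\|_{L^p}$ into $\|d_{A_0}d_{A_0}^*\alpha\|_{L^p}+\|d_{A_0}^*d_{A_0}\alpha\|_{L^p}$ for $\delta$ small, while $\varepsilon\|\partial_t\alpha\|_{L^p}\le\delta\varepsilon^2\|\partial_t^2\alpha\|_{L^p}+C_\delta\|\alpha\|_{L^p}$ absorbs the $\varepsilon\|\partial_t\alpha\|_{L^p}$ term; the $\delta$'s, hence the $C_\delta$'s, are independent of $\varepsilon$. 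This yields $Q_{A_0}(\alpha)\le c\varepsilon^2\|L_{A_0}\alpha\|_{L^p}+c\|\alpha\|_{L^p}$. Combining with the two perturbation bounds of the previous paragraph gives
\[
Q_A(\alpha)\le c\varepsilon^2\|L_A\alpha\|_{L^p}+c\,c_0\bigl(\|\alpha\|_{L^p}+\|d_{A_0}\alpha\|_{L^p}+\|d_{A_0}^*\alpha\|_{L^p}+\varepsilon\|\partial_t\alpha\|_{L^p}\bigr).
\]
Finally, the same interpolation inequality, now combined with the perturbation estimates of the second paragraph read in reverse, lets me dominate $\|d_{A_0}\alpha\|_{L^p}+\|d_{A_0}^*\alpha\|_{L^p}$ by $\delta\bigl(\|d_Ad_A^*\alpha\|_{L^p}+\|d_A^*d_A\alpha\|_{L^p}\bigr)+C_\delta\|\alpha\|_{L^p}$ and $\varepsilon\|\partial_t\alpha\|_{L^p}$ by $\delta\varepsilon^2\|\partial_t^2\alpha\|_{L^p}+C_\delta\|\alpha\|_{L^p}$; choosing $\delta$ small (depending only on the fixed constant $c_0$, hence $\varepsilon$-freely) and absorbing these terms into $Q_A(\alpha)$ produces the claimed estimate.

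The one real difficulty is this last absorption. Because the prefactor $\tfrac1{\varepsilon^2}$ in $L_A$ is large, the perturbation $L_A-L_{A_0}$ is not small, and the errors it generates involve the first $\Sigma$-derivatives $d_{A_0}\alpha$, $d_{A_0}^*\alpha$, which are \emph{not} among the quantities appearing on the left-hand side of the lemma. The point is that they are nonetheless absorbable — not because $c_0$ is small (it need not be), but because the interpolation inequality on $\Sigma$ exchanges one derivative for an arbitrarily small multiple of $\Delta_{A_0}\alpha$, i.e.\ of $d_{A_0}d_{A_0}^*\alpha+d_{A_0}^*d_{A_0}\alpha$, which \emph{is} controlled, plus a large multiple of $\|\alpha\|_{L^p}$, the small parameter being independent of $\varepsilon$. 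The analogous point in the $t$-direction is precisely what makes the extra hypothesis $\varepsilon\|\partial_tA\|_{L^\infty}\le c_0$ necessary: without it the error term $\varepsilon\|[\partial_ta,\alpha]\|_{L^p}$ coming from $\varepsilon\,\partial_td_A\alpha$ would be uncontrolled.
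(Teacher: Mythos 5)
Your proposal is correct and follows the same route the paper takes: the paper's (one-sentence) proof invokes the flat-connection estimate of lemma \ref{flow:fjdfjdkjdkj} (in its rescaled form, lemma \ref{flow:fjdfjdkjdkj22}), the hypothesis (\ref{flow:esttLmm}) to control the commutator terms coming from $d_A-d_{A_0}$, and lemma \ref{lemma:dAalphaest} to absorb the resulting first-order terms — exactly the three ingredients you spell out. Your write-up simply supplies the details the paper omits, including the correct observation that the hypothesis $\varepsilon\|\partial_tA\|_{L^\infty}\leq c_0$ is what controls the error from $\varepsilon\,\partial_td_A\alpha$.
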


\begin{proof}
This lemma follow directly from the lemma \ref{flow:fjdfjdkjdkj} using the assumption (\ref{flow:esttLmm}) and the lemma \ref{lemma:dAalphaest}.
\end{proof}

\begin{lemma}\label{flow:cor:alpha}
We choose a regular value $b$ of $E^H$, then there is a positive constant $c$ such that the following holds. For any $\Xi=A+\Psi dt+\Phi ds\in \mathcal M^{0}(\Xi_-,\Xi_+)$, $\Xi_-,\Xi_+\in \mathrm{Crit}_{E^H}^b$, and any 0- or 1-form $\alpha\in W^{2,2,1;p}$
\begin{equation}\label{flow:cor:alpha:eq21}
\begin{split}
\|\alpha\|_{L^p}&+\|d_A\alpha\|_{L^p}+\|d_A^*\alpha\|_{L^p}+\|d_A^*d_A\alpha\|_{L^p}+\|d_Ad_A^*\alpha\|_{L^p}+\varepsilon \|\nabla_t \alpha\|_{L^p}\\
&+\varepsilon^2\|\nabla_t\nabla_t \alpha\|_{L^p}+\varepsilon^2\|\nabla_s\alpha\|_{L^p}+\varepsilon\| \nabla_t d_{A}^*\alpha\|_{L^p}+\varepsilon\| \nabla_t d_{A}\alpha\|_{L^p}\\
\leq &c\left\|\left(\varepsilon^2\nabla_s-\varepsilon^2\nabla_t^2+\Delta_A\right)\alpha\right\|_{L^p}+c\|\alpha\|_{L^p}.
\end{split}
\end{equation}
\end{lemma}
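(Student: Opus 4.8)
The plan is to deduce the global estimate from the local, essentially constant‑coefficient estimate of Lemma~\ref{flow:lemma:c2} by localizing in the $(t,s)$‑variables, passing to a temporal gauge on each piece, and summing. Fix a locally finite cover $\{Q_i\}$ of $S^1\times\mathbb R$ by coordinate squares of a uniformly small side $\delta>0$, with a subordinate partition of unity $\{\rho_i\}\subset C^\infty(S^1\times\mathbb R,[0,1])$, $\sum_i\rho_i\equiv 1$, whose derivatives are bounded independently of $i$; by the exponential convergence (\ref{flow:web:w3})--(\ref{flow:web:w4}) the flow is, up to an exponentially small error, $s$‑independent on the two ends $|s|\ge T$, so only finitely many models of $Q_i$ occur up to translation and every constant below can be taken uniform. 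As all norms in (\ref{flow:cor:alpha:eq21}) are gauge invariant, it suffices to estimate each $\rho_i\alpha$ after a gauge transformation on $\Sigma\times Q_i$ and to sum over $i$, the bounded multiplicity of the cover recovering the left‑hand side up to a universal factor.

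The main point, and the one I expect to be the real obstacle, is to choose these local gauges with constants independent of $i$ and of the flow. On $Q_i$ I first put $\Xi$ into temporal gauge in the $t$‑direction, so $\Psi=0$ and $\nabla_t=\partial_t$; then $\partial_tA=\partial_tA-d_A\Psi$ is $L^\infty$‑bounded by (\ref{flow:web:w1}), hence $\varepsilon\|\partial_tA\|_{L^\infty}\le c\varepsilon\le c_0$ once $\varepsilon_0$ is small. Since the moduli space $\mathcal M^g(P)$ is compact, cover it by finitely many $C^1$‑balls with flat centres $A_0^{(1)},\dots,A_0^{(m)}$; gauging the single flat connection $A(t_i,s_i)$ into one of them, extending along the slice $t=t_i$ by a temporal gauge in $s$ and then along $t$ by the temporal gauge already fixed, the a priori $L^\infty$‑bounds on $\partial_tA-d_A\Psi$, $\partial_sA-d_A\Phi$ and their derivatives (\ref{flow:web:w1})--(\ref{flow:web:w2}), together with the elliptic regularity along $\Sigma$ they entail for a family of flat connections, keep $A$ in $C^1(\Sigma)$ close to a fixed $A_0^{(i)}$ throughout $Q_i$ once $\delta$ is small. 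Thus (\ref{flow:esttLmm}) holds on every $Q_i$, and moreover $\Phi$ is $L^\infty$‑bounded there by Lemma~\ref{flow:lemma:othereq1}. Making this simultaneous control of $\|A-A_0^{(i)}\|_{C^1(\Sigma)}$ and $\varepsilon\|\partial_tA\|_{L^\infty}$ uniform --- and handling the noncompact ends --- is the delicate part; it follows the pattern of the gauge‑fixing lemmas of \cite{MR1283871}.

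With the gauges fixed, apply Lemma~\ref{flow:lemma:c2} to the compactly supported form $\rho_i\alpha$ on $\Sigma\times\mathbb R^2$. Commuting $\varepsilon^2\partial_s-\varepsilon^2\partial_t^2+\Delta_A$ past $\rho_i$, which depends only on $(t,s)$ and so commutes with $\Delta_A$, leaves besides $\rho_i$ times the right‑hand side of the lemma only the terms $\varepsilon^2(\partial_s\rho_i)\alpha$, $\varepsilon^2(\partial_t^2\rho_i)\alpha$ and $2\varepsilon^2(\partial_t\rho_i)(\partial_t\alpha)$; the last is absorbed via the interpolation $\varepsilon\|\partial_t\alpha\|_{L^p}\le c\|\alpha\|_{L^p}+c\varepsilon^2\|\partial_t^2\alpha\|_{L^p}$ in the $t$‑variable, whose second term is moved to the left after summing. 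In the temporal gauge $\nabla_t=\partial_t$, so the operator in (\ref{flow:cor:alpha:eq21}) differs from that of Lemma~\ref{flow:lemma:c2} only by $\varepsilon^2(\nabla_s-\partial_s)=\varepsilon^2[\Phi,\cdot]$, bounded by $c\|\alpha\|_{L^p}$; similarly each covariant norm on the left of (\ref{flow:cor:alpha:eq21}) agrees with the ordinary norm furnished by Lemma~\ref{flow:lemma:c2} up to a zeroth‑order term with bounded coefficient, i.e.\ up to $c\|\alpha\|_{L^p}$ after using the power of $\varepsilon$ already present. Finally the two spatial norms $\|d_A\alpha\|_{L^p}$, $\|d_A^*\alpha\|_{L^p}$, absent from Lemma~\ref{flow:lemma:c2}, come from the fibrewise elliptic estimate $\|\alpha\|_{W^{1,p}(\Sigma)}\le c\|\Delta_A\alpha\|_{L^p(\Sigma)}^{1/2}\|\alpha\|_{L^p(\Sigma)}^{1/2}+c\|\alpha\|_{L^p(\Sigma)}$ integrated over $S^1\times\mathbb R$, since $\|\Delta_A\alpha\|_{L^p}\le\|d_A^*d_A\alpha\|_{L^p}+\|d_Ad_A^*\alpha\|_{L^p}$ has already been controlled. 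Summing over $i$ and absorbing the finitely many small terms gives (\ref{flow:cor:alpha:eq21}).
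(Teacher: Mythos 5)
Your argument is correct and follows essentially the same route as the paper: a partition of unity on $S^1\times\mathbb R$ subordinate to charts on which (\ref{flow:esttLmm}) holds (the ends being covered using the exponential convergence), a local application of Lemma~\ref{flow:lemma:c2} to $(\rho_i\circ\varphi_i)\alpha_i$ with the cutoff commutators absorbed, and an interpolation step to recover $\|d_A\alpha\|_{L^p}$ and $\|d_A^*\alpha\|_{L^p}$. The only deviation is your passage to temporal gauge, which the paper avoids by simply absorbing the zeroth-order commutators with $\Psi$, $\partial_t\Psi$ and $\Phi$ via their uniform $L^\infty$ bounds and Lemmas~\ref{flow:lemma40} and~\ref{lemma:dAalphaest}; this is a cosmetic difference.
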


\begin{proof} We choose a finite atlas $\{B_i,\varphi_i:B_i\to S^1\times \mathbb R\}_{i\in I}$ of $S^1\times \mathbb R$ such that the condition (\ref{flow:esttLmm}) is satisfied for every chart; we can cover the two ends of the cylinder with two chart each because $A(t,s)$ converges exponentially to $A_\pm$ as $s\to \pm\infty$ and thus for $s_0$ big enough
\begin{align*}
\sup_{(s,t)\in S^1\times [s_0,\infty)}\left(\|A(s,t)-A_+\|_{C^1}+\varepsilon\|\partial_tA\|_{L^\infty}\right)\leq &c_0,\\
\sup_{(s,t)\in S^1\times (-\infty,s_0]}\left(\|A(s,t)-A_-\|_{C^1}+\varepsilon\|\partial_tA\|_{L^\infty}\right)\leq& c_0.
\end{align*}
Furthermore, we take a partition of the unity $\sum_{i\in \mathbb N}\rho_i(t,s)=1$, $\rho_i(t,s)\in[0,1]$ and $\mathrm{supp}( \rho_i)\subset \varphi(B_i)$; next, collecting the estimate given by the lemma \ref{flow:lemma:c2} on every chart $B_i\times \Sigma$ for $(\rho_i\circ \varphi_i)\alpha_i$, where $\alpha_i$ is the representation of $\alpha$ on $B_i\times \Sigma$, we obtain
\begin{equation}
\begin{split}
\varepsilon^2\|\partial_s\alpha\|_{L^p}&+\varepsilon^2\|\partial_t\partial_t \alpha\|_{L^p}+ \|d_A^*d_A\alpha\|_{L^p}+\|d_Ad_A^*\alpha\|_{L^p}\\
&+\varepsilon\| \partial_t d_{A}^*\alpha\|_{L^p}+\varepsilon\| \partial_t d_{A}\alpha\|_{L^p}\\
\leq &c\left\|\left(\varepsilon^2\partial_s-\varepsilon^2\partial_t^2+\Delta_A\right)\alpha\right\|_{L^p}+c\|\alpha\|_{L^p}+c\varepsilon^2\|\partial_t\alpha\|_{L^p}.
\end{split}
\end{equation}
 Since $\|\Psi\|_{L^\infty}+\|\partial_t\Psi\|_{L^\infty}+\|\Phi\|_{L^\infty}\leq c_1$, we have
 \begin{equation}
\begin{split}
\varepsilon^2\|\nabla_s\alpha\|_{L^p}&+\varepsilon^2\|\nabla_t\nabla_t \alpha\|_{L^p}+ \|d_A^*d_A\alpha\|_{L^p}+\|d_Ad_A^*\alpha\|_{L^p}\\
&+\varepsilon\| \nabla_t d_{A}^*\alpha\|_{L^p}+\varepsilon\| \nabla_t d_{A}\alpha\|_{L^p}\\
\leq &c\left\|\left(\varepsilon^2\nabla_s-\varepsilon^2\nabla_t^2+\Delta_A\right)\alpha\right\|_{L^p}+c\|\alpha\|_{L^p}\\
&+c\varepsilon^2\|\nabla_t\alpha\|_{L^p}+c\varepsilon\|  d_{A}^*\alpha\|_{L^p}+c\varepsilon\|  d_{A}\alpha\|_{L^p}.
\end{split}
\end{equation}
 The estimate (\ref{flow:cor:alpha:eq21}) follows then from the lemmas \ref{flow:lemma40} and \ref{lemma:dAalphaest}.
\end{proof}

\begin{lemma} \label{flow:lemma:lplppia}
We choose a regular value $b$ of $E^H$, then there are two positive constants $c$ and $\varepsilon_0$ such that the following holds. For any $\Xi=A+\Psi dt+\Phi ds\in \mathcal M^{0}(\Xi_-,\Xi_+)$, $\Xi_\pm\in \mathrm{Crit}_{E^H}^b$, any $i$-form $\xi\in W^{2,2,1;p}$, $i=0,1$ and $0<\varepsilon<\varepsilon_0$
\begin{equation}
\begin{split}
\int_{S^1\times\mathbb R} \|\xi\|^p_{L^2(\Sigma)}  \,dt\,ds\leq &c \int_{S^1\times\mathbb R}\|\varepsilon^2\partial_s\xi-\varepsilon^2\partial_t^2\xi+\Delta_A\xi \|^p_{L^2(\Sigma)}dt\,ds\\
&+c\int_{S^1\times \mathbb R}\|\pi_A(\xi)\|^p_{L^2(\Sigma)}dt\,ds.
\end{split}
\end{equation}
\end{lemma}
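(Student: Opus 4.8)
The plan is to reduce the mixed-norm estimate to a fibrewise energy identity on $\Sigma$ combined with a nonlinear (Moser-type) test-function argument in the variables $(t,s)$, so that none of the multiplier machinery of the preceding lemmas is required. Set $L_\varepsilon:=\varepsilon^2\partial_s-\varepsilon^2\partial_t^2+\Delta_A$ with $\Delta_A=d_Ad_A^*+d_A^*d_A$ acting in the $\Sigma$-variables, and $w(t,s):=\|\xi(t,s)\|_{L^2(\Sigma)}^2$. By density it suffices to treat smooth compactly supported $\xi$; I assume $p\ge2$ (the only range used in the paper), and for $i=0$ I put $\pi_A\xi:=0$.

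First I would record a uniform spectral gap. Since $F_A=0$, every $A(t,s)$ is flat and irreducible, so $[A(t,s)]$ lies in the compact moduli space $\mathcal M^g(P)$; the lowest non-zero eigenvalue of $\Delta_{A(t,s)}$ on $\Omega^j(\Sigma,\mathfrak g_P)$, $j=0,1$, is a continuous and everywhere positive function of $[A(t,s)]$ (its kernel, $H^1_A$ for $j=1$ and $\{0\}$ for $j=0$, has constant dimension over $\mathcal M^g(P)$), hence is bounded below by a constant $\lambda>0$. Consequently, for all $(t,s)$ and every $j$-form $\beta$,
\[
\langle\Delta_{A(t,s)}\beta,\beta\rangle_{L^2(\Sigma)}=\|d_{A(t,s)}\beta\|_{L^2(\Sigma)}^2+\|d_{A(t,s)}^*\beta\|_{L^2(\Sigma)}^2\ \ge\ \lambda\,\|(1-\pi_{A(t,s)})\beta\|_{L^2(\Sigma)}^2 .
\]

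Next I would expand, pointwise in $(t,s)$,
\[
\langle L_\varepsilon\xi,\xi\rangle_{L^2(\Sigma)}=\tfrac{\varepsilon^2}2\,\partial_s w-\varepsilon^2\,\partial_t\langle\partial_t\xi,\xi\rangle_{L^2(\Sigma)}+\varepsilon^2\|\partial_t\xi\|_{L^2(\Sigma)}^2+\langle\Delta_A\xi,\xi\rangle_{L^2(\Sigma)},
\]
drop the nonnegative term $\varepsilon^2\|\partial_t\xi\|^2$, insert the gap together with $\|(1-\pi_A)\xi\|^2=w-\|\pi_A\xi\|^2$, and bound the left-hand side by $\|L_\varepsilon\xi\|_{L^2(\Sigma)}\sqrt w$ (Cauchy--Schwarz), obtaining
\[
\tfrac{\varepsilon^2}2\,\partial_s w-\varepsilon^2\,\partial_t\langle\partial_t\xi,\xi\rangle_{L^2(\Sigma)}+\lambda w\ \le\ \|L_\varepsilon\xi\|_{L^2(\Sigma)}\sqrt w+\lambda\|\pi_A\xi\|_{L^2(\Sigma)}^2 .
\]
I would then multiply by $(w+\delta)^{p/2-1}$ ($\delta>0$) and integrate over $S^1\times\mathbb R$: the $\partial_s$-term vanishes (compact support), and integrating the $\partial_t$-term by parts over $S^1$ and using $\partial_t w=2\langle\partial_t\xi,\xi\rangle$ turns it into $\tfrac{\varepsilon^2(p-2)}4\int(w+\delta)^{p/2-2}(\partial_t w)^2\ge0$, which is discarded. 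Letting $\delta\to0$ and applying Hölder's inequality (with exponents $(p,\tfrac p{p-1})$ to $\int\|L_\varepsilon\xi\|_{L^2(\Sigma)}w^{(p-1)/2}$ and $(\tfrac p2,\tfrac p{p-2})$ to $\int\|\pi_A\xi\|_{L^2(\Sigma)}^2 w^{p/2-1}$), dividing by $\big(\int w^{p/2}\big)^{(p-2)/p}$, absorbing via Young's inequality, and finally raising to the power $p/2$, one arrives at
\[
\int_{S^1\times\mathbb R}\|\xi\|_{L^2(\Sigma)}^p\ \le\ c\int_{S^1\times\mathbb R}\|L_\varepsilon\xi\|_{L^2(\Sigma)}^p+c\int_{S^1\times\mathbb R}\|\pi_A\xi\|_{L^2(\Sigma)}^p
\]
with $c=c(\lambda,p)$, which is the claim (with $L_\varepsilon\xi=\varepsilon^2\partial_s\xi-\varepsilon^2\partial_t^2\xi+\Delta_A\xi$).

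The only genuine inputs are the uniform spectral gap, which follows from compactness of $\mathcal M^g(P)$, and the favourable sign of the $\varepsilon^2\partial_t^2$ contribution after testing; everything else is exponent bookkeeping. The conceptual obstacle this argument circumvents is that $\Delta_A$ is merely positive semi-definite, with kernel $\textrm{im }\pi_A=H^1_A$, so $L_\varepsilon$ is not invertible and no parabolic regularity estimate is directly available — the nonlinear test function is precisely what is blind to this kernel, producing the explicit remainder $\|\pi_A\xi\|$. The regulariser $\delta$ is needed only to justify the integration by parts where $w$ may vanish, and passing to $\delta\to0$ is harmless by monotone convergence; the extension from smooth compactly supported $\xi$ to $W^{2,2,1;p}$ is by density.
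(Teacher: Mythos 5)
Your argument is correct and is essentially the paper's own proof: both test the operator against the weight $\|\xi\|_{L^2(\Sigma)}^{p-2}$, use that the $\partial_s$-contribution integrates to zero and that the $\varepsilon^2\partial_t^2$-contribution produces nonnegative terms after integration by parts over $S^1$, invoke a Poincar\'e/spectral-gap inequality for $\Delta_A$ on the complement of $H^1_A$ to pass from $\|d_A\xi\|^2+\|d_A^*\xi\|^2$ to $\|\xi\|^2-\|\pi_A\xi\|^2$, and finish with H\"older. Your $\delta$-regularisation of the weight and the explicit derivation of the uniform gap from compactness of $\mathcal M^g(P)$ are only minor technical variants of the paper's presentation.
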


\begin{proof}

In this proof we denote the norm $\|\cdot\|_{L^2(\Sigma) }$ by $\|\cdot\|$. If we consider only the Laplace part of the operator, we obtain that
\begin{equation*}
\begin{split}
\int_{S^1\times\mathbb R}&\|\xi\|^{p-2}\langle\xi,-\varepsilon^2\partial^2_t\xi+\Delta_A\xi\rangle ds\,dt\\
=&\int_{S^1\times\mathbb R}\|\xi\|^{p-2}\left(\varepsilon^2\|\partial_t\xi\|^2+\|d_A\xi\|^2+\|d_A^*\xi\|^2\right)ds\,dt\\
&+\int_{S^1\times\mathbb R}(p-2)\|\xi\|^{p-4}\langle\xi,\partial_t\xi\rangle^2ds\,dt
\end{split}
\end{equation*}
and thus
\begin{equation}\label{flow:dghhga}
\begin{split}
\int_{S^1\times\mathbb R}&\|\xi\|^{p-2}\left(\varepsilon^2\|\partial_t\xi\|^2+\|d_A\xi\|^2+\|d_A^*\xi\|^2\right)ds\,dt\\
\leq &\int_{S^1\times\mathbb R}\|\xi\|^{p-2}\langle\xi,-\varepsilon^2\partial^2_t\xi+\Delta_A\xi\rangle ds\,dt\\
= &\int_{S^1\times\mathbb R}\|\xi\|^{p-2}\langle\xi,\varepsilon^2\partial_s\xi-\varepsilon^2\partial^2_t\xi+\Delta_A\xi\rangle ds\,dt\\
\leq&\int_{S^1\times\mathbb R}\|\xi\|^{p-1}\|\varepsilon^2\partial_s\xi-\varepsilon^2\partial^2_t\xi+\Delta_A\xi\|\,ds\,dt\\
\leq &\left(\int_{S^1\times\mathbb R}\|\xi\|^p ds\,dt\right)^{\frac{p-1}p}\left(\int_{S^1\times\mathbb R}\|\varepsilon^2\partial_s\xi-\varepsilon^2\partial^2_t\xi+\Delta_A\xi\|^p ds\,dt\right)^{\frac{1}p}
\end{split}
\end{equation}
where the second step follows from
\begin{equation*}
\int_{S^1\times\mathbb R}\|\xi\|^{p-2}\langle\xi,\partial_s\xi\rangle ds\,dt
=\frac 1p\int_{S^1\times\mathbb R}\partial_s\|\xi\|^p ds\,dt=0,
\end{equation*}
the third from the Cauchy-Schwarz inequality and the fourth from the H\"older's inequality. Therefore, by lemma \ref{flow:lemma:lpl2}
\begin{align*}
\int_{S^1\times\mathbb R}&\|\xi\|^p ds\,dt\leq \int_{S^1\times\mathbb R}\|\xi\|^{p-2}\left(\|d_A\xi\|^2+\|d_A^*\xi\|^2+\|\pi_A(\xi)\|^2\right)ds\,dt\\
\intertext{by (\ref{flow:dghhga}) we have that}
\leq& \left(\int_{S^1\times\mathbb R}\|\xi\|^p ds\,dt\right)^{\frac{p-1}p}\left(\int_{S^1\times\mathbb R}\|\varepsilon^2\partial_s\xi-\varepsilon^2\partial^2_t\xi+\Delta_A\xi\|^p ds\,dt\right)^{\frac{1}p}\\
&+\int_{S^1\times\mathbb R}\|\xi\|^{p-1}\|\pi_A(\xi)\|\,ds\,dt\\
\intertext{and by the H\"older's inequality}
\leq& \left(\int_{S^1\times\mathbb R}\|\xi\|^p ds\,dt\right)^{\frac{p-1}p}\left(\int_{S^1\times\mathbb R}\|\varepsilon^2\partial_s\xi-\varepsilon^2\partial^2_t\xi+\Delta_A\xi\|^p ds\,dt\right)^{\frac{1}p}\\
&+\left(\int_{S^1\times\mathbb R}\|\xi\|^p ds\,dt\right)^{\frac{p-1}p}\left(\int_{S^1\times\mathbb R}\|\pi_A(\xi)\|^p ds\,dt\right)^{\frac{1}p};
\end{align*}
thus, we can conclude that
\begin{equation*}
\int_{S^1\times\mathbb R}\|\xi\|^p ds\,dt\leq c\int_{S^1\times\mathbb R}\left( \|\varepsilon^2\partial_s\xi-\varepsilon^2\partial^2_t\xi+\Delta_A\xi\|^p+\|\pi_A(\xi)\|^p\right)ds\,dt.
\end{equation*}
and hence we finished the proof of the lemma.
\end{proof}

\begin{proof}[Proof of theorem \ref{flow:thm:linestpar}]
By lemma \ref{flow:lemma:lpl2}, for any $\delta>0$ there is a $c_0$ such that
\begin{equation*}
\begin{split}
\|\alpha\|_{L^p}^p\leq &\delta\left(\|d_A\alpha\|_{L^p}^p+\|d_A*\alpha\|_{L^p}^p\right)+ c_0\int_{S^1\times\mathbb R }\|\alpha\|_{L^2}^p dt\,ds \\
\leq &\delta\left(\|d_A\alpha\|_{L^p}^p+\|d_A*\alpha\|_{L^p}^p\right)+ c_0c_1\int_{S^1\times\mathbb R }\|\pi_A(\alpha)\|_{L^2}^p dt\,ds\\
&+ c_0c_1\int_{S^1\times\mathbb R}\|\varepsilon^2\partial_s\alpha-\varepsilon^2\partial_t^2\alpha+\Delta_A\alpha\|^p_{L^2}dt\,ds\\
\leq &\delta\left(\|d_A\alpha\|_{L^p}^p+\|d_A*\alpha\|_{L^p}^p\right)+ c_0c_1c_2\|\pi_A(\alpha)\|_{L^p}^p\\
&+ c_0c_1c_2\|\varepsilon^2\partial_s\alpha-\varepsilon^2\partial_t^2\alpha+\Delta_A\alpha \|^p_{L^p}\\
\leq &\delta\left(\|d_A\alpha\|_{L^p}^p+\|d_A*\alpha\|_{L^p}^p\right)+ c_0c_1c_2\|\pi_A(\alpha)\|_{L^p}^p\\
&+ c_0c_1c_2\|\varepsilon^2\nabla_s\alpha-\varepsilon^2\nabla_t^2\alpha+\Delta_A\alpha \|^p_{L^p}+c_3\varepsilon^2\|\Psi\|_{L^\infty}\|\nabla_t\alpha \|_{L^p}^p\\
&+ c_3\varepsilon^2\left( \|\Psi\|_{L^\infty}^2+\|\partial_t\Psi\|_{L^\infty}+\|\Phi \|_{L^\infty}\right)  \|\alpha \|_{L^p}^p 
\end{split}
\end{equation*}
where the second step follows form the lemma \ref{flow:lemma:lplppia} and the third by the H\"older's inequality with $c_2:=\left(\int_{\Sigma}\mathrm{dvol}_{\Sigma}\right)^{\frac{p-2}p}$. Therefore if we choose $\delta$ and $\varepsilon$ small enough we can improve the estimate (\ref{flow:cor:alpha:eq21}) of the corollary  \ref{flow:cor:alpha} using the last estimate, i.e.
\begin{equation*}
\begin{split}
\|\alpha\|_{L^p}&+\|d_A\alpha\|_{L^p}+\|d_A^*\alpha\|_{L^p}+\|d_A^*d_A\alpha\|_{L^p}+\|d_Ad_A^*\alpha\|_{L^p}+\varepsilon \|\nabla_t \alpha\|_{L^p}\\
&+\varepsilon^2\|\nabla_t\nabla_t \alpha\|_{L^p}+\varepsilon\|\nabla_t d_A\alpha\|_{L^p}+\varepsilon\|\nabla_t d_A^*\alpha\|_{L^p}+\varepsilon^2\|\nabla_s\alpha\|_{L^p}\\
\leq &\left\|\left(\varepsilon^2\nabla_s-\varepsilon^2\nabla_t^2+\Delta_A\right)\alpha\right\|_{L^p}+c\|\pi_A(\alpha)\|_{L^p},
\end{split}
\end{equation*}
because $\|\Psi\|_{L^\infty}+\|\partial_t\Psi\|_{L^\infty}+\|\Phi \|_{L^\infty}$ is bounded by a constant. Furthermore, the terms $\varepsilon\| d_A\nabla_t\alpha\|_{L^p}$ and $\varepsilon\|d_A^*\nabla_t \alpha\|_{L^p}$ can be estimated by 
$$d_A\nabla_t\alpha\|_{L^p}+\varepsilon\|d_A^*\nabla_t \alpha\|_{L^p}\leq \varepsilon\|\nabla_t d_A\alpha\|_{L^p}+\varepsilon\|\nabla_t d_A^*\alpha\|_{L^p}+c\varepsilon\|\alpha\|_{L^p}$$
using the commutation formulas and because the curvature $\partial_tA-d_A\Psi$ is bounded; we proved therefore (\ref{flow:thm:linestpateq1}) and the second inequality of the theorem can be proved in the same way.
\end{proof}

%
%
\subsection{Proof of the theorem \ref{flow:thm:wavefin} }\label{flow:subsection:proofthm2}

Before proving the theorem we will show some preliminary results, in fact the theorem \ref{flow:thm:wavefin} will then follow from the corollary \ref{flow:cor:wave} and the lemmas \ref{flow:lemma:lpl2} and \ref{flow:lemma:lplppia2}.

\begin{corollary}\label{flow:cor:MM2}
For every $p>1$ there is a positive constant $c$ such that the following holds. For every two maps $\gamma \in W_0^{2,p}(\mathbb R^4, \mathbb R)$, $\phi \in W_0^{1,p}(\mathbb R^4, \mathbb R)$ we have that
\begin{equation}\label{flow:wave:eqcor2}
\begin{split}
\|\partial_{s}\alpha_1\|_{L^p}&+\lambda \|\partial_{x_1}\alpha_1\|_{L^p}+\|\partial_{s}\alpha_2\|_{L^p}+\lambda \|\partial_{x_2}\alpha_2\|_{L^p}
+\lambda \|\partial_{t}\alpha_1\|_{L^p}\\
&+\lambda \|\partial_{t}\alpha_2\|_{L^p}
+\|\partial_s\psi\|_{L^p} +\lambda\|\partial_t\psi\|_{L^p}+\|\partial_s\phi \|_{L^p}\\
&+\|\partial_{x_1}\phi\|_{L^p}
+\|\partial_{x_2}\phi\|_{L^p}+\|\partial_{t}\phi\|_{L^p}\\
\leq &c \|\partial_s\alpha_1-\partial_{x_1}\phi\|_{L^p}+c \|\partial_s\alpha_2-\partial_{x_2}\phi\|_{L^p}+c\|\partial_s\psi-\partial_t\phi\|_{L^p}\\
&+c\|\partial_s\phi+\partial_{x_2}\alpha_1+\partial_{x_1}\alpha_2+\partial_t\psi \|_{L^p}+c\lambda \|\partial_s\gamma\|_{L^p},
\end{split}
\end{equation}
where $\alpha_1=\partial_{x_1}\gamma$, $\alpha_2=\partial_{x_2}\gamma$, $\psi=\partial_{t}\gamma$ and $\lambda\in [0,1]$.
\end{corollary}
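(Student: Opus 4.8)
The plan is to mimic the proof of Corollary \ref{flow:cor:MM1}: reduce (\ref{flow:wave:eqcor2}) to a list of explicit Fourier-multiplier estimates and invoke the Marcinkiewicz--Mihlin theorem (Theorem \ref{flow:thm:MM}).

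Using $\alpha_1=\partial_{x_1}\gamma$, $\alpha_2=\partial_{x_2}\gamma$, $\psi=\partial_t\gamma$, I would rewrite every term of (\ref{flow:wave:eqcor2}) in terms of the two scalars $\gamma,\phi$ and take the Fourier transform in $(t,s,x_1,x_2)$, with dual variables $(\tau,\sigma,\eta_1,\eta_2)$ and $r^{2}:=\tau^{2}+\eta_1^{2}+\eta_2^{2}$. The transforms of the five right-hand terms of (\ref{flow:wave:eqcor2}) form a vector $\widehat D=M(\tau,\sigma,\eta_1,\eta_2,\lambda)(\widehat\gamma,\widehat\phi)^{T}$, and the key point is that the two columns of the $5\times2$ matrix $M$ are orthogonal in $\mathbb C^{5}$, with $|\mathrm{Col}_1|^{2}=\sigma^{2}r^{2}+r^{4}+\lambda^{2}\sigma^{2}$ and $|\mathrm{Col}_2|^{2}=\sigma^{2}+r^{2}$; equivalently $|\widehat D|^{2}=(\sigma^{2}r^{2}+r^{4}+\lambda^{2}\sigma^{2})|\widehat\gamma|^{2}+(\sigma^{2}+r^{2})|\widehat\phi|^{2}$. (This is the Fourier avatar of working inside $\mathrm{im}\,d_{A+\Psi\,dt}$ together with the gauge-fixing line $\lambda\,\partial_s\gamma$.) Hence $M$ has the left inverse $M^{+}=(M^{*}M)^{-1}M^{*}$, and each term on the left of (\ref{flow:wave:eqcor2}) --- a monomial in $(\tau,\sigma,\eta_1,\eta_2,\lambda)$ applied to $\widehat\gamma$ or $\widehat\phi$ --- is recovered from $\widehat D$ through a multiplier whose denominator is $\sigma^{2}r^{2}+r^{4}+\lambda^{2}\sigma^{2}$ or $\sigma^{2}+r^{2}$.

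It then remains to check that each of these finitely many multipliers satisfies (\ref{flow:thmMM:eq1}) with a constant independent of $\lambda\in[0,1]$; then Theorem \ref{flow:thm:MM}, applied term by term, yields the $L^{p}$-bounds whose sum is (\ref{flow:wave:eqcor2}). The $\lambda$-uniformity is forced by homogeneity: both denominators are homogeneous --- of degrees $4$ and $2$ --- under the joint dilation $(\tau,\sigma,\eta_1,\eta_2,\lambda)\mapsto\mu\,(\tau,\sigma,\eta_1,\eta_2,\lambda)$, while each numerator that appears is homogeneous of the matching degree and, using $\lambda\le1$ and the arithmetic--geometric inequality, is pointwise dominated by its denominator. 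So every such multiplier is bounded and homogeneous of degree zero on $(\mathbb R^{4}\times[0,1])\setminus\{0\}$, and for fixed $\lambda>0$ it is even smooth on $\mathbb R^{4}\setminus\{0\}$, since the denominator then vanishes only at the origin.

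I expect the main obstacle to be exactly this last verification: routine Calder\'on--Zygmund bookkeeping, but delicate near the locus $\{r=0\}$, where $\mathrm{Col}_1$ (and, for $\lambda=0$, the first denominator) degenerates. There one has to notice that the left-hand monomials not individually recoverable from $\widehat D$ all vanish, so that no singular multiplier arises and the Mihlin constants remain bounded as $\lambda\to0$.
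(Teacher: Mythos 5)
Your proposal is correct in outline, but it takes a genuinely different route from the paper. The paper never substitutes $\gamma$ into the first-order system: it treats $(\alpha_1,\alpha_2,\psi,\phi)$ as four \emph{independent} unknowns, writes the four right-hand expressions as a square $4\times4$ operator (\ref{flow:wave:eqcor1}), inverts its symbol, and reads off the non-$\lambda$-weighted terms together with $\|\partial_{x_1}\alpha_1+\partial_{x_2}\alpha_2+\partial_t\psi\|_{L^p}$ from classical degree-zero multipliers with denominator $\sigma^2+y_1^2+y_2^2+\tau^2$ (or that times $\sigma$). Only \emph{then} does it use $\alpha_i=\partial_{x_i}\gamma$, $\psi=\partial_t\gamma$: since $\|\partial_s\gamma-\Delta\gamma\|_{L^p}\le\|\partial_{x_1}\alpha_1+\partial_{x_2}\alpha_2+\partial_t\psi\|_{L^p}+\|\partial_s\gamma\|_{L^p}$, all the $\lambda$-weighted second derivatives of $\gamma$ follow from the already-proved heat estimate of Corollary \ref{flow:cor:MM1}. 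Your single overdetermined $5\times2$ system in $(\widehat\gamma,\widehat\phi)$, with the orthogonality $\langle\mathrm{Col}_1,\overline{\mathrm{Col}_2}\rangle=0$ and the pseudo-inverse, merges these two steps; the column-norm identity you state is right (note it requires reading the fourth datum as $\partial_s\phi+\partial_{x_1}\alpha_1+\partial_{x_2}\alpha_2+\partial_t\psi=\partial_s\phi+\Delta\gamma$, as in the paper's system (\ref{flow:wave:eqcor1}) — otherwise the columns are not orthogonal), and all the resulting monomial-over-denominator bounds do hold via $\lambda\le1$ and AM--GM, whence the Marcinkiewicz condition (\ref{flow:thmMM:eq1}) follows term by term because the denominators are sums of even monomials.

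Two cautions on your justification. First, the joint-dilation homogeneity claim is not literally true: for the $\lambda$-weighted left-hand terms the numerator entries (e.g.\ $\lambda\eta_1^{3}\sigma$ arising from $\lambda\partial_{x_1}^{2}\gamma$ against the first component of $\mathrm{Col}_1$) have degree $5$ against a degree-$4$ denominator, so these multipliers are not homogeneous of degree zero in $(\tau,\sigma,\eta,\lambda)$; what saves you is precisely $\lambda\le1$ plus AM--GM, not scaling. Second, "smooth on $\mathbb R^4\setminus\{0\}$ for fixed $\lambda>0$" is not enough — you need the constant in (\ref{flow:thmMM:eq1}) uniform in $\lambda$, and your denominator $\sigma^2r^2+r^4+\lambda^2\sigma^2$ degenerates along the whole $\sigma$-axis as $\lambda\to0$, so the term-by-term verification is genuinely required. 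The paper's two-step structure buys exactly the avoidance of this degenerate denominator (its multipliers only ever have $\sigma^2+r^2$ downstairs), at the cost of invoking Corollary \ref{flow:cor:MM1} a second time; your version is more unified but needs the extra bookkeeping you correctly identify as the main obstacle.
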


\begin{proof} In order to prove this corollary we need to apply the theorem \ref{flow:thm:MM} of Marcinkiewicz and Mihlin stated in the previous subsection and in order to do this we have to define the multipliers and prove the assumption (\ref{flow:thmMM:eq1}); therefore, we look at the following system of equations 
\begin{equation}\label{flow:wave:eqcor1}
f=\left(\begin{array}{c}
f_1\\ f_2\\ f_3\\ f_4
\end{array}
\right):=
\left(\begin{array}{cccc}
\partial_s &0&0&-\partial_{x_1}\\
0&\partial_s&0&-\partial_{x_2}\\
0&0&\partial_s&-\partial_t\\
\partial_{x_1}&\partial_{x_2}&\partial_t&\partial_s
\end{array}\right)
\left(\begin{array}{c}
\alpha_1\\ \alpha_2\\ \psi\\ \phi
\end{array}
\right).
\end{equation}
One can remark that the four lines of (\ref{flow:wave:eqcor1}) correspond to the first four terms in the $L^p$-norm in the right side of the estimate (\ref{flow:wave:eqcor2}). Applying the Fourier transformation to (\ref{flow:wave:eqcor2}) we obtain
\begin{equation*}
\mathcal F(f)=\left(\begin{array}{cccc}
\sigma i &0&0&-y_1 i\\
0&\sigma i &0&-y_2 i\\
0&0&\sigma i &-\tau i\\
y_1 i&y_2 i&\tau i&\sigma i 
\end{array}\right)
\left(\begin{array}{c}
\mathcal F(\alpha_1)\\ \mathcal F(\alpha_2)\\ \mathcal F(\psi)\\ \mathcal F(\phi)
\end{array}
\right)
\end{equation*}
and thus computing its invers:
\begin{align*}
\mathcal F(\alpha_1)=& \frac {-i \left( {\sigma}^{2}+{y_{{2}}}^{2}+{\tau}^{2} \right) }{ \left( {\sigma}^{2}+{y_{{1}}}^{2}+{y_{{2}}}^{2}+{\tau}^{2} \right) \sigma}  \mathcal F(f_1)+{\frac {iy_{{2}}y_{{1}}}{ \left( {\sigma}^{2}+{y_{{1}}}^{2}+{y_{{2}}}^{2}+{\tau}^{2} \right) \sigma}}  \mathcal F(f_2)\\
&+  \frac {i\tau\,y_{{1}}} { \left( {\sigma}^{2}+{y_{{1}}}^{2}+{y_{{2}}}^{2}+{\tau}^{2} \right) \sigma} \mathcal F(f_3)+ \frac {-iy_{{1}}}{{\sigma}^{2}+{y_{{1}}}^{2}+{y_{{2}}}^{2}+{\tau}^{2}} \mathcal F(f_4),
\end{align*}

\begin{align*}
\mathcal F(\phi)=& {\frac {iy_{{1}}}{{\sigma}^{2}+{y_{{1}}}^{2}+{y_{{2}}}^{2}+{\tau}^{2}}} \mathcal F(f_1)+{\frac {iy_{{2}}}{{\sigma}^{2}+{y_{{1}}}^{2}+{y_{{2}}}^{2}+{\tau}^{2}}}\mathcal F(f_2)\\
&+{\frac {i\tau}{{\sigma}^{2}+{y_{{1}}}^{2}+{y_{{2}}}^{2}+{\tau}^{2}}}\mathcal F(f_3)+\frac {-i\sigma}{{\sigma}^{2}+{y_{{1}}}^{2}+{y_{{2}}}^{2}+{\tau}^{2}}\mathcal F(f_4),
\end{align*}

and then

\begin{align*}
\mathcal F(\partial_s\alpha_1)=&\frac { {\sigma}^{2}+{y_{{2}}}^{2}+{\tau}^{2}  }{ {\sigma}^{2}+{y_{{1}}}^{2}+{y_{{2}}}^{2}+{\tau}^{2}  } \mathcal F(f_1)+{\frac {-y_{{2}}y_{{1}}}{  {\sigma}^{2}+{y_{{1}}}^{2}+{y_{{2}}}^{2}+{\tau}^{2}  }}\mathcal F(f_2)\\
&+{\frac {-\tau\,y_{{1}}}{  {\sigma}^{2}+{y_{{1}}}^{2}+{y_{{2}}}^{2}+{\tau}^{2}  }}\mathcal F(f_3)+\frac {y_{{1}}\sigma}{{\sigma}^{2}+{y_{{1}}}^{2}+{y_{{2}}}^{2}+{\tau}^{2}}\mathcal F(f_4),
\end{align*}

\begin{align*}
\mathcal F(\partial_s\phi)=&\frac {-y_{{1}}\sigma}{{\sigma}^{2}+{y_{{1}}}^{2}+{y_{{2}}}^{2}+{\tau}^{2}}\mathcal F(f_1)+{\frac {-y_{{2}}\sigma}{{\sigma}^{2}+{y_{{1}}}^{2}+{y_{{2}}}^{2}+{\tau}^{2}}}\mathcal F(f_2)\\
&+{\frac {-\tau\sigma}{{\sigma}^{2}+{y_{{1}}}^{2}+{y_{{2}}}^{2}+{\tau}^{2}}}\mathcal F(f_3)+{\frac {\sigma^2}{{\sigma}^{2}+{y_{{1}}}^{2}+{y_{{2}}}^{2}+{\tau}^{2}}}\mathcal F(f_4).
\end{align*}
\noindent The formulas for $\mathcal F(\alpha_2)$ and for $\mathcal F(\phi)$, respectively for $\mathcal F(\partial_s\alpha_2)$ and for $\partial_s\mathcal F(\phi)$, are similar to that of $\mathcal F(\alpha_1)$, respectively to that of $\mathcal F(\partial_s\alpha_1)$. Since the multipliers for $\mathcal F(\partial_s\alpha_1)$, $\mathcal F(\partial_s\alpha_2)$,  $\mathcal F(\partial_s\psi)$ and $\mathcal F(\partial_s\phi)$ satisfy the assumption (\ref{flow:thmMM:eq1}) of the theorem \ref{flow:thm:MM}, we can conclude that
\begin{equation}\label{flow:wave:eqcor3}
\begin{split}
\|\partial_{s}\alpha_1\|_{L^p}&+\|\partial_{s}\alpha_2\|_{L^p}+\|\partial_s\psi\|_{L^p} +\|\partial_s\phi \|_{L^p}+\|\partial_{x_1}\phi\|_{L^p}\\
&+\|\partial_{x_2}\phi\|_{L^p}+\|\partial_{t}\phi\|_{L^p}+\|\partial_{x_1}\alpha_1+\partial_{x_2}\alpha_2+\partial_t\psi\|_{L^p}\\
\leq &c \|\partial_s\alpha_1-\partial_{x_1}\phi\|_{L^p}+c \|\partial_s\alpha_2-\partial_{x_2}\phi\|_{L^p}+c\|\partial_s\psi-\partial_t\phi\|_{L^p}\\
&+c\|\partial_s\phi+\partial_{x_2}\alpha_1+\partial_{x_1}\alpha_2+\partial_t\psi \|_{L^p}.
\end{split}
\end{equation}
Next, we use that $\alpha_1=\partial_{x_1}\gamma$, $\alpha_2=\partial_{x_2}\gamma$, $\psi=\partial_{t}\gamma$ and thus
\begin{equation*}
\begin{split}
\|\partial_s\gamma-(\partial_{x_1}^2+\partial_{x_2}^2+\partial_t^2)\gamma\|_{L^p}
\leq &\|(\partial_{x_1}^2+\partial_{x_2}^2+\partial_t^2)\gamma\|_{L^p}+\|\partial_s\gamma\|_{L^p}\\
\leq &\|\partial_{x_1}\alpha_1+\partial_{x_2}\alpha_2+\partial_t\psi\|_{L^p}+\|\partial_s\gamma\|_{L^p}.
\end{split}
\end{equation*}
Therefore by corollary \ref{flow:cor:MM1}, it follow that
\begin{equation}\label{flow:wave:eqcor66}
\begin{split}
\lambda\|\partial_{x_1}\alpha_1\|_{L^p}+&\lambda \|\partial_{x_2}\alpha_2\|_{L^p}+\lambda \|\partial_{t}\alpha_1\|_{L^p}+\lambda \|\partial_{t}\alpha_2\|_{L^p}+\lambda \|\partial_t\psi\|_{L^p}\\
\leq &c \lambda \|\partial_s\alpha_1-\partial_{x_1}\phi\|_{L^p}+c\lambda  \|\partial_s\alpha_2-\partial_{x_2}\phi\|_{L^p}+c\lambda \|\partial_s\psi-\partial_t\phi\|_{L^p}\\
&+c\lambda\|\partial_s\phi+\partial_{x_2}\alpha_1+\partial_{x_1}\alpha_2+\partial_t\psi \|_{L^p}+c\lambda \|\partial_s\gamma\|_{L^p}.
\end{split}
\end{equation}
Therefore the theorem follows combining (\ref{flow:wave:eqcor3}) and (\ref{flow:wave:eqcor3}). 
\end{proof}

\begin{lemma}\label{flow:cor:wave}
We choose a regular value $b$ of $E^H$, then there is a positive constant $c$ such that the following holds. For any $\Xi=A+\Psi dt+\Phi ds\in \mathcal M^{0}(\Xi_-,\Xi_+)$, $\Xi_\pm\in \mathrm{Crit}_{E^H}^b$, and any 1-form $\alpha+\psi dt=d_{A+\Psi dt }\gamma \in W^{1,2;p}\cap \textrm{im } d_{A+\Psi dt }$
\begin{equation}\label{floe:ds}
\begin{split}
\|\alpha\|_{L^p}&+\|d_A^*\alpha\|_{L^p}+\varepsilon^2\|\nabla_s\alpha\|_{L^p}+\varepsilon\|\nabla_t\alpha\|_{L^p}+\varepsilon \|\psi\|_{L^p}+\varepsilon^2\|\nabla_t\psi\|_{L^p}\\
&+\varepsilon^3\|\nabla_s\psi\|_{L^p}
+\varepsilon^2 \|\phi \|_{L^p}+\varepsilon^2 \|d_A\phi\|_{L^p}+\varepsilon^3 \|\nabla_t\phi\|_{L^p}+\varepsilon^4 \|\nabla_s\phi \|_{L^p}\\
\leq &c\varepsilon^2\left\|\nabla_s\alpha-d_A\phi\right\|_{L^p}+c\varepsilon^3\left\|\nabla_s\psi-\nabla_t\phi\right\|_{L^p}\\
&+c\varepsilon^4\left\|\nabla_s\phi-\frac1{\varepsilon^4}d_A^*\alpha+\frac1{\varepsilon^2}\nabla_t\psi\right\|_{L^p}
+c\|\alpha\|_{L^p}+c\varepsilon^2 \|\phi \|_{L^p}.
\end{split}
\end{equation}
\end{lemma}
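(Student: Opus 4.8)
The plan is to reduce the estimate (\ref{floe:ds}) to the constant-coefficient Fourier-multiplier estimate of Corollary \ref{flow:cor:MM2}, via the parabolic rescaling of Section \ref{sec:ymflow} and a partition of unity, and then to absorb the remaining lower-order terms for $\varepsilon$ small.

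First I would record the elementary consequences of the hypothesis $\alpha+\psi dt=d_{A+\Psi dt}\gamma$. Since $A$ is flat with trivial isotropy we have the Poincar\'e inequality $\|\gamma\|_{L^p}\leq c\|d_A\gamma\|_{L^p}=c\|\alpha\|_{L^p}$; applying it to the $0$-form $\nabla_t\gamma$ together with the commutation formula (\ref{commform}) and the uniform bound (\ref{flow:aprioriest:geo}) on $\partial_tA-d_A\Psi$ gives $\|\psi\|_{L^p}=\|\nabla_t\gamma\|_{L^p}\leq c\|\nabla_t\alpha\|_{L^p}+c\|\alpha\|_{L^p}$, and applying it to the $0$-form $\nabla_s\gamma-\phi$, in combination with $d_A(\nabla_s\gamma-\phi)=(\nabla_s\alpha-d_A\phi)+[(\partial_sA-d_A\Phi)\wedge\gamma]$, gives $\|\nabla_s\gamma\|_{L^p}\leq c\|\nabla_s\alpha-d_A\phi\|_{L^p}+c\|\phi\|_{L^p}+c\|\alpha\|_{L^p}$. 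These inequalities will later supply the two admissible error terms $c\|\alpha\|_{L^p}$, $c\varepsilon^2\|\phi\|_{L^p}$ on the right of (\ref{floe:ds}), and will dispose of the redundant left-hand zeroth-order terms $\|\alpha\|_{L^p}$, $\varepsilon\|\psi\|_{L^p}$, $\varepsilon^2\|\phi\|_{L^p}$.

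Next I would apply the rescaling $\bar\gamma(t,s):=\gamma(\varepsilon t,\varepsilon^2s)$, $\bar A:=A(\varepsilon t,\varepsilon^2s)$, $\bar\Psi:=\varepsilon\Psi(\varepsilon t,\varepsilon^2s)$, $\bar\Phi:=\varepsilon^2\Phi(\varepsilon t,\varepsilon^2s)$ (hence $\bar\alpha=d_{\bar A}\bar\gamma$, $\bar\psi=\bar\nabla_t\bar\gamma$, $\bar\phi=\varepsilon^2\phi(\varepsilon t,\varepsilon^2s)$) used in the proof of Theorem \ref{flow:thm:sob}: under it every $\varepsilon$-weighted norm on the left of (\ref{floe:ds}) becomes, up to the common factor $\varepsilon^{3/p}$, the corresponding unweighted $W^{1,p}$-norm of $\bar\alpha+\bar\psi dt+\bar\phi ds$ over $\Sigma\times[0,\varepsilon^{-1}]\times\mathbb R$, and the three $\varepsilon$-dependent operators on the right become the $\varepsilon$-independent first-order system $\bar\nabla_s\bar\alpha-d_{\bar A}\bar\phi$, $\bar\nabla_s\bar\psi-\bar\nabla_t\bar\phi$, $\bar\nabla_s\bar\phi-d_{\bar A}^*\bar\alpha+\bar\nabla_t\bar\psi$. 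Then I would cover $\Sigma\times[0,\varepsilon^{-1}]\times\mathbb R$ by a finite atlas of charts on each of which the metric is $C^1$-close to the Euclidean one and $\bar A$ is $C^1$-close to a flat connection $A_0$ satisfying (\ref{flow:esttLmm}); this is possible exactly as in the proof of Lemma \ref{flow:cor:alpha}, the two ends $s\to\pm\infty$ being covered by finitely many charts thanks to the exponential convergence (\ref{flow:web:w3}), (\ref{flow:web:w4}), and the condition $\varepsilon\|\partial_tA\|_{L^\infty}\leq c_0$ of (\ref{flow:esttLmm}) being met because of the Weber bound (\ref{flow:web:w1}). After multiplying by a subordinate partition of unity $\{\rho_i\}$ and passing to local coordinates, the rescaled system agrees on each chart with the Euclidean system $(\partial_s\alpha_1-\partial_{x_1}\phi,\ \partial_s\alpha_2-\partial_{x_2}\phi,\ \partial_s\psi-\partial_t\phi,\ \partial_s\phi+\partial_{x_1}\alpha_1+\partial_{x_2}\alpha_2+\partial_t\psi)$ of Corollary \ref{flow:cor:MM2}, with $\alpha_j=\partial_{x_j}\gamma$, $\psi=\partial_t\gamma$, up to terms containing at most one derivative of $\bar\gamma,\bar\phi$ and carrying a coefficient bounded by the $C^1$-closeness and by (\ref{flow:aprioriest:geo}) (the commutators with $\rho_i$, the departure of the metric and of $\bar A$ from the constant model, and the brackets with $\bar\Psi,\bar\Phi$). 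Applying Corollary \ref{flow:cor:MM2} with $\lambda=1$ — its last right-hand term $c\|\partial_s\gamma\|_{L^p}$ being controlled, after undoing the rescaling, by the bound on $\varepsilon^2\|\nabla_s\gamma\|_{L^p}$ from the first step — summing over $i$ and undoing the rescaling yields (\ref{floe:ds}) together with finitely many extra summands, each of which either carries a strictly larger power of $\varepsilon$ than the matching left-hand norm or is dominated by $c\|\alpha\|_{L^p}+c\varepsilon^2\|\phi\|_{L^p}$; choosing $\varepsilon_0$ small, the former are absorbed on the left.

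The main obstacle is the bookkeeping of the $\varepsilon$-powers in the final absorption step: one must check that every error term produced by the localization and by the commutation formulas indeed gains a positive power of $\varepsilon$ relative to the eleven left-hand norms of (\ref{floe:ds}), or else is one of the two allowed terms $c\|\alpha\|_{L^p}$, $c\varepsilon^2\|\phi\|_{L^p}$. A subsidiary, but routine, technical point is that the charts near $s=\pm\infty$ must be chosen after — and compatibly with — the rescaling, so that (\ref{flow:esttLmm}) holds on a chart of size $\sim\varepsilon^{-1}$; this works precisely because (\ref{flow:esttLmm}) requires only a $C^1$-bound on $\bar A$ and a bound on $\varepsilon\|\partial_tA\|_{L^\infty}$, not a small chart.
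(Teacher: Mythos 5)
Your proposal is correct and follows essentially the same route as the paper: reduce to the constant-coefficient multiplier estimate of Corollary \ref{flow:cor:MM2} by the parabolic rescaling and a partition of unity (exactly as Lemma \ref{flow:cor:alpha} is deduced from Corollary \ref{flow:cor:MM1}), and then dispose of the residual terms $\|\partial_s\gamma\|_{L^p}$ and $\varepsilon\|\psi\|_{L^p}$ using the Poincar\'e inequality of Lemma \ref{lemma76dt94} and the commutation formulas. The only cosmetic difference is that you bound $\varepsilon^2\|\nabla_s\gamma\|_{L^p}$ directly from $d_A(\nabla_s\gamma-\phi)=(\nabla_s\alpha-d_A\phi)-[(\partial_sA-d_A\Phi),\gamma]$, whereas the paper obtains the same bound by first applying the localized estimate with $\lambda=0$; both are fine.
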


\begin{proof}
In the same way that the lemma \ref{flow:cor:alpha} follows from the corollary \ref{flow:cor:MM1}, the corollary \ref{flow:cor:MM2} implies

\begin{align*}
\|\alpha\|_{L^p}&+\varepsilon^2\|\nabla_{s}\alpha\|_{L^p}+\lambda \|d_A^*\alpha\|_{L^p}+\lambda\varepsilon\|\nabla_t \alpha\|_{L^p}+\varepsilon^3\|\nabla_s\psi\|_{L^p} +\lambda\varepsilon^2\|\nabla_t\psi\|_{L^p}\\
&+\varepsilon^2\|\phi\|_{L^p}+\varepsilon^4\|\nabla_s\phi \|_{L^p}+\varepsilon^2\|d_A\phi\|_{L^p}+\varepsilon^3\|\nabla_{t}\phi\|_{L^p}\\
\leq &c \varepsilon^2\|\nabla_s\alpha-d_A\phi\|_{L^p}+c\varepsilon^3\|\nabla_s\psi-\nabla_t\phi\|_{L^p}+c\lambda\varepsilon^2\|\nabla_s\gamma\|_{L^p}\\
&+c\varepsilon^4\left\|\nabla_s\phi+\frac 1{\varepsilon^4}d_A^*\alpha_1+\frac1{\varepsilon^2}\nabla_t\psi\right \|_{L^p}+c\|\alpha\|_{L^p}+c\varepsilon^2\|\phi\|_{L^p}
\end{align*}
The term $\varepsilon^2\|\nabla_s\gamma\|_{L^p}$ can be estimate by $c\varepsilon^2\|\nabla_s\alpha\|_{L^p}+c\varepsilon^2\|\alpha\|_{L^p}$ by the lemma \ref{lemma76dt94} and the commutation formula and thus using the last estimate for $\lambda=0$
\begin{align*}
\varepsilon^2\|\nabla_s\gamma\|_{L^p}\leq &c \varepsilon^2\|\nabla_s\alpha-d_A\phi\|_{L^p}+c\varepsilon^3\|\nabla_s\psi-\nabla_t\phi\|_{L^p}\\
&+c\varepsilon^4\left\|\nabla_s\phi+\frac 1{\varepsilon^4}d_A^*\alpha_1+\frac1{\varepsilon^2}\nabla_t\psi\right \|_{L^p}+c\|\alpha\|_{L^p}+c\varepsilon^2\|\phi\|_{L^p}.
\end{align*}
Furthermore, by the lemma \ref{lemma76dt94} and the commutation formula
$$\varepsilon\|\psi\|_{L^p}\leq c\varepsilon\|d_A\psi\|_{L^p}
\leq c\|\alpha\|_{L^p}+c\varepsilon\|\nabla_t\alpha\|_{L^p}$$
and finally collecting the last thee estimates we obtain (\ref{floe:ds}).
\end{proof}

\begin{lemma} \label{flow:lemma:lplppia2}
We choose a regular value $b$ of $E^H$ and a $\delta>0$, then there are two positive constants $c$ and $\varepsilon_0$ such that the following holds. For any $\Xi=A+\Psi dt+\Phi ds\in \mathcal M^{0}(\Xi_-,\Xi_+)$, $\Xi_\pm\in \mathrm{Crit}_{E^H}^b$, any 1-form $\xi:=\alpha+\psi dt+\phi dt \in W^{1,1,1;p}$, where $\alpha+\psi dt\in \textrm{im } d_{A+\Psi dt}$, and any $0<\varepsilon<\varepsilon_0$
\begin{equation}
\begin{split}
\int_{S^1\times\mathbb R}& \|\xi\|^{p-2}_{L^2(\Sigma)}\left(\|\alpha\|^2_{L^2(\Sigma)}+\varepsilon^4\|\phi\|_{L^2(\Sigma)}^2\right)  dt\,ds\\
\leq& \int_{S^1\times\mathbb R} \left(c\|\varepsilon^2\partial_s\alpha-\varepsilon^2d_A\phi \|^p_{L^2(\Sigma) } +c\varepsilon^p \|\varepsilon^2\partial_s\psi-\varepsilon^2\partial_t\phi \|^p_{L^2(\Sigma) }\right)dt\,ds\\
&+\int_{S^1\times\mathbb R} \left(c\varepsilon^{2p} \left\|\varepsilon^2\partial_s\phi-\frac 1{\varepsilon^2}d_A^*\alpha+\partial_t\psi \right\|^p_{L^2(\Sigma) }+\delta  \|\xi\|^{p}_{L^2(\Sigma)} \right)dt\,ds.
\end{split}
\end{equation}
\end{lemma}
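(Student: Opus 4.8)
The plan is to follow the scheme of Lemma~\ref{flow:lemma:lplppia}: test the Yang--Mills flow operator against $\|\xi\|_{L^2(\Sigma)}^{p-2}$ times a suitable $0$-form, integrate over $S^1\times\mathbb R$, keep the coercive contributions on the left and estimate the rest by Cauchy--Schwarz and Hölder. The new feature is that the spatial part of $\mathcal D^{\varepsilon,2}(\Xi)$ is of wave type rather than of Laplace type, so testing against the solution itself only yields a conservation identity; one must first use the hypothesis $\alpha+\psi\,dt\in\textrm{im } d_{A+\Psi dt}$ to recover ellipticity. Write $D_1:=\nabla_s\alpha-d_A\phi$, $D_2:=\nabla_s\psi-\nabla_t\phi$ and $D_3:=\nabla_s\phi-\varepsilon^{-4}d_A^*\alpha+\varepsilon^{-2}\nabla_t\psi$ for the three components of $\mathcal D^{\varepsilon,2}(\Xi)(\xi)$; replacing every $\partial_s,\partial_t$ by $\nabla_s,\nabla_t$ throughout costs only terms bounded by $c\|\xi\|_{L^2(\Sigma)}$ (the $\Psi,\Phi$ being uniformly bounded), harmless for the final $\delta\,\|\xi\|^p$ term.

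First I would write $\alpha=d_A\gamma$ and $\psi=\nabla_t\gamma$ for the unique $0$-form $\gamma$ of the splitting, and record the Poincaré inequalities $\|\gamma\|_{L^2(\Sigma)}\le c\|\alpha\|_{L^2(\Sigma)}$ and $\|\phi\|_{L^2(\Sigma)}\le c\|d_A\phi\|_{L^2(\Sigma)}$, valid because the flat connection $A$ is irreducible. Setting $\mu:=\nabla_s\gamma-\phi$, the commutation formulas~(\ref{commform}), (\ref{commform3}) (and their $\nabla_s$-analogues) give $D_1=d_A\mu+[B_s,\gamma]$ and $D_2=\nabla_t\mu+[\,\cdot\,,\gamma]$, whence, using Lemma~\ref{flow:lemma:othereq1}, the $L^\infty$-bounds~(\ref{flow:aprioriest:geo}) and the Poincaré inequality once more, $\|\mu\|_{L^2(\Sigma)}$ and $\|\nabla_t\mu\|_{L^2(\Sigma)}$ are bounded by $\|D_1\|_{L^2(\Sigma)}+\|D_2\|_{L^2(\Sigma)}+c\|\alpha\|_{L^2(\Sigma)}$. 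Substituting $\phi=\nabla_s\gamma-\mu$ into $D_3$ and multiplying by $-\varepsilon^4$ produces the identity
\begin{equation*}
\big(d_A^*d_A-\varepsilon^2\nabla_t^2-\varepsilon^4\nabla_s^2\big)\gamma=-\varepsilon^4D_3-\varepsilon^4\nabla_s\mu,
\end{equation*}
whose left-hand side is a genuinely positive elliptic operator.

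The core computation is then to test this identity against $\|\xi\|_{L^2(\Sigma)}^{p-2}\gamma$ and integrate over $S^1\times\mathbb R$. The $d_A^*d_A$-term contributes $\int\|\xi\|^{p-2}\|\alpha\|^2$, and integrating by parts in $t$ and in $s$ the other two terms contributes $\int\|\xi\|^{p-2}\big(\varepsilon^2\|\psi\|^2+\varepsilon^4\|\nabla_s\gamma\|^2\big)$ together with weight-derivative terms; from $\varepsilon^4\|\nabla_s\gamma\|^2\ge\tfrac12\varepsilon^4\|\phi\|^2-\varepsilon^4\|\mu\|^2$ and the bound on $\|\mu\|$ one extracts $\tfrac12\varepsilon^4\|\phi\|^2$, modulo a term $c\varepsilon^4\|\alpha\|^2$ (absorbed for $\varepsilon$ small) and a term controlled by $\varepsilon^4\|D_1\|^2$. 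On the right one is left with $\varepsilon^4\langle D_3,\gamma\rangle$ and $\varepsilon^4\langle\nabla_s\mu,\gamma\rangle$ (the latter integrated by parts once more in $s$); distributing the weight $\|\xi\|^{p-2}$ between the two factors and applying Cauchy--Schwarz and Hölder, exactly as in Lemma~\ref{flow:lemma:lplppia}, these yield precisely the weighted norms of $D_1,D_2,D_3$ that appear on the right-hand side of the statement.

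The main obstacle is the last step, the absorption of the error terms. In the parabolic case of Lemma~\ref{flow:lemma:lplppia} the term produced when a derivative hits the weight, namely $(p-2)\|\xi\|^{p-4}\langle\xi,\partial_t\xi\rangle^2\ge0$, is nonnegative and can simply be dropped; here the weight-derivative terms have the form $\varepsilon^{j}\!\int(\partial_t\|\xi\|^{p-2})\langle\cdot,\cdot\rangle$ and $\varepsilon^{j}\!\int(\partial_s\|\xi\|^{p-2})\langle\cdot,\cdot\rangle$ and have no definite sign, so each must be handled by hand. Each such term, however, carries a surplus power of $\varepsilon$ coming from the $\varepsilon$-weighting of $\nabla_s,\nabla_t$ and from the Poincaré bounds on $\gamma$ and $\mu$; after estimating the remaining first-order factor $\langle\xi,\nabla_t\xi\rangle$ or $\langle\xi,\nabla_s\xi\rangle$ by Cauchy--Schwarz, one absorbs these terms by Young's inequality into $\tfrac12\int\|\xi\|^{p-2}(\|\alpha\|^2+\varepsilon^4\|\phi\|^2)$, into the operator norms, and into $\delta\int\|\xi\|^p$ — allowing the constant $c$ to depend on $\delta$ and choosing $\varepsilon_0=\varepsilon_0(\delta)$ small. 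Rearranging then gives the asserted inequality.
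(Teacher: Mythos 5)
Your reduction to the potential $\gamma$ is a genuinely different route from the paper's, and its algebraic core is sound: the identity $\bigl(d_A^*d_A-\varepsilon^2\nabla_t^2-\varepsilon^4\nabla_s^2\bigr)\gamma=-\varepsilon^4D_3-\varepsilon^4\nabla_s\mu$ is correct, as are the bounds on $\mu$ and $\nabla_t\mu$ via $D_1,D_2$ and the Poincar\'e inequalities. The paper instead keeps the full $3\times3$ first-order system: it forms the operator $D$ whose rows are exactly $D_1,D_2,D_3$ (with $\varepsilon$-weights), tests $D^*D\xi$ against $\|\xi\|^{p-2}$ times the components of $\xi$ weighted by $1,\varepsilon^2,\varepsilon^4$, and collects the coercive quantity $B$, which contains \emph{all} the weighted first-derivative norms $\|d_A^*\alpha\|^2$, $\varepsilon^4\|\partial_t\psi\|^2$, $\varepsilon^4\|d_A\phi\|^2$, $\varepsilon^4\|\partial_s\alpha\|^2$, etc.

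This structural difference is exactly where your argument breaks. Because the paper tests each component with the same $\varepsilon$-weights that define $\|\xi\|_{L^2(\Sigma)}^2$, the $s$-weight-derivative contributions combine into the manifestly nonnegative term $(p-2)\int\|\xi\|^{p-4}\bigl(\langle\alpha,\varepsilon^2\partial_s\alpha\rangle+\varepsilon^2\langle\psi,\varepsilon^2\partial_s\psi\rangle+\varepsilon^4\langle\phi,\varepsilon^2\partial_s\phi\rangle\bigr)^2$, which is simply dropped — no absorption is needed — and the few remaining cross terms (e.g.\ $c\varepsilon^2\int\|\xi\|^{p-1}\|d_A^*\alpha\|$) carry a genuine factor of $\varepsilon^2$ and are absorbed into $B$. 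In your scalar scheme the weight-derivative term is $\varepsilon^4\int(\partial_s\|\xi\|^{p-2})\langle\nabla_s\gamma,\gamma\rangle$, and the dangerous piece of $\partial_s\|\xi\|^{p-2}$ is $(p-2)\|\xi\|^{p-4}\varepsilon^4\langle\phi,\nabla_s\phi\rangle$. Since $\varepsilon^2\nabla_s\phi=D_3'+\varepsilon^{-2}d_A^*\alpha-\nabla_t\psi$ and $\varepsilon^4\|\langle\nabla_s\gamma,\gamma\rangle\|\le c\varepsilon^2\|\xi\|^2+\dots$, this produces, after Cauchy--Schwarz, a term of the form $c\int\|\xi\|^{p-1}\|d_A^*\alpha\|$ with \emph{no} surplus power of $\varepsilon$. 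But $\|d_A^*\alpha\|=\|d_A^*d_A\gamma\|$ is a second derivative of $\gamma$: it appears neither in your coercive left-hand side (which only yields $\|\alpha\|$, $\varepsilon\|\psi\|$, $\varepsilon^2\|\nabla_s\gamma\|$ with weight $\|\xi\|^{p-2}$), nor in $\delta\int\|\xi\|^p$, nor in the three operator norms on the right of the statement. So the final absorption step, as you describe it, cannot close; to repair it you would have to re-introduce control of the weighted first derivatives of $\xi$ — which is precisely what the paper's system-level test and the perfect-square structure of the weight term provide.
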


\begin{proof}
In this proof we denote the norm $\|\cdot\|_{L^2(\Sigma) }$ by $\|\cdot\|$. We consider $\xi=\alpha+\psi dt+\phi ds$ where $\alpha+\psi dt=d_{A+\Psi dt}\gamma$ and

\begin{equation*}
\bar \xi=\bar \alpha+\bar\psi dt+\bar \phi ds= D\xi=\left(\begin{array}{ccc}\varepsilon^2\partial_s&0& -\varepsilon^2d_A\\0&\varepsilon^2\partial_s&-\varepsilon^2\partial_t\\ -\frac 1{\varepsilon^2}d_A^*&\partial_t&\varepsilon^2\partial_s\end{array}\right)
\left(\begin{array}{c}\alpha\\\psi\\\phi\end{array}\right);
\end{equation*}
thus $D^*D\xi$ can be written in the following way
\begin{equation*}
\begin{split}
D^*\bar \xi= \left(\begin{array}{ccc}-\varepsilon^2 \partial_s&0& -\varepsilon^2d_A\\0&-\varepsilon^2\partial_s&-\varepsilon^2\partial_t\\ -\frac 1{\varepsilon^2}d_A^*&\partial_t&-\varepsilon^2\partial_s\end{array}\right)
\left(\begin{array}{ccc}\varepsilon^2\partial_s&0& -\varepsilon^2d_A\\0&\varepsilon^2\partial_s&-\varepsilon^2\partial_t\\ -\frac 1{\varepsilon^2}d_A^*&\partial_t&\varepsilon^2\partial_s\end{array}\right)
\left(\begin{array}{c}\alpha\\\psi\\\phi\end{array}\right)\\
= \left(\begin{array}{ccc}-\varepsilon^4 \partial_s^2+d_Ad_A^*&-\varepsilon^2d_A\partial_t&\varepsilon^4(\partial_s d_A-d_A\partial_s)\\
\partial_t d_A^*&-\varepsilon^4\partial_s^2-\varepsilon^2\partial_t^2&0\\ -d_A^*\partial_s+\partial_s d_A^*&0&d_A^*d_A-\varepsilon^2\partial_t^2-\varepsilon^4\partial_s^2\end{array}\right)
\left(\begin{array}{c}\alpha\\\psi\\\phi\end{array}\right).
\end{split}
\end{equation*}
We define
\begin{equation*}
\begin{split}
B:=&\frac12 \|d_A^*\alpha-\varepsilon^2\partial_t\psi \|^2+\varepsilon^4\|\partial_s\alpha\|^2+\varepsilon^6\|\partial_s\psi\|^2+\varepsilon^4\|d_A\phi \|^2\\
&+\frac12\left(\|d_A^*\alpha \|^2+\|\varepsilon^2\partial_t\psi \|^2\right)+\varepsilon^6\|\partial_t\phi\|^2+\varepsilon^8\|\partial_s\phi\|^2
\end{split}
\end{equation*}
\begin{equation*}
G^p:=\|\varepsilon^2\partial_s\alpha-\varepsilon^2d_A\phi \|^p +\varepsilon^p \|\varepsilon^2\partial_s\psi-\varepsilon^2\partial_t\phi \|^p+\varepsilon^{2p} \left\|\varepsilon^2\partial_s\phi-\frac 1{\varepsilon^2}d_A^*\alpha+\partial_t\psi \right\|^p
\end{equation*}
Using the partial integration we obtain
\begin{equation}\label{flow:est:waved1}
\begin{split}
\int_{S^1\times\mathbb R}&\|\xi\|^{p-2}\langle \alpha, (-\varepsilon^4\partial_s^2+d_Ad_A^*)\alpha-\varepsilon^2d_A\partial_t\psi\rangle dt\,ds\\
&+\int_{S^1\times\mathbb R}\|\xi\|^{p-2} \varepsilon^2\langle\psi,(-\varepsilon^4\partial_s^2-\varepsilon^2\partial_t^2)\psi+\partial_t d_A^*\alpha\rangle dt\, ds\\
&+\int_{S^1\times\mathbb R}\|\xi\|^{p-2}\varepsilon^4\langle\phi,(d_A^*d_A-\varepsilon^2\partial_t^2-\varepsilon^4\partial_s^2)\phi\rangle  dt\,ds\\
=&\int_{S^1\times\mathbb R}\|\xi\|^{p-2}B\,dt\,ds\\
&+(p-2)\int_{S^1\times\mathbb R}\|\xi\|^{p-4}\left(\langle\alpha,\varepsilon^2\partial_s\alpha\rangle+\varepsilon^2\langle\psi, \varepsilon^2\partial_s \psi\rangle+\varepsilon^4\langle\phi, \varepsilon^2\partial_s\phi\rangle\right)^2dt\,ds\\
&+\int_{S^1\times\mathbb R}\|\xi\|^{p-2}\varepsilon^2\partial_t\langle\psi,-\varepsilon^2 \partial_t \psi\rangle dt\,ds+\int_{S^1\times\mathbb R}\|\xi\|^{p-2}\varepsilon^2\langle d_A\psi, \partial_t \alpha\rangle dt\,ds
\end{split}
\end{equation}
whose last term, using that $\alpha+\psi dt=d_A\gamma+\partial_t\gamma dt$, can be estimate as follows
\begin{equation}\label{flow:est:waved2}
\begin{split}
\int_{S^1\times\mathbb R}&\|\xi\|^{p-2}\varepsilon^2\langle d_A\psi, \partial_t \alpha\rangle dt\,ds=\int_{S^1\times\mathbb R}\|\xi\|^{p-2}\varepsilon^2\langle d_A\nabla_t\gamma, \partial_t d_A\gamma\rangle dt\,ds\\
\geq&\int_{S^1\times\mathbb R}\|\xi\|^{p-2}\varepsilon^2\|d_A\psi\|^2-c\varepsilon^2\int_{S^1\times\mathbb R}\|\xi\|^{p-1}(\|\alpha\|+\|d_A^*\alpha\|)dt\,ds.
\end{split}
\end{equation}
Since the penultimate line of (\ref{flow:est:waved1}) is positive, (\ref{flow:est:waved1}) and (\ref{flow:est:waved2}) yield
\begin{align*}
\int_{S^1\times\mathbb R}&\|\xi\|^{p-2}B dt\,ds\\
\leq&\int_{S^1\times\mathbb R}\|\xi\|^{p-2}\left(\langle \alpha, (D^*\bar\xi)_1\rangle+\varepsilon^2 \langle \psi, (D^*\bar\xi)_2\rangle+\varepsilon^4\langle \phi, (D^*\bar\xi)_3\rangle\right)dt\,ds\\
&-\int_{S^1\times\mathbb R } \|\xi\|^{p-2} \varepsilon^4\left(\langle \alpha,[\partial_s A,\phi]\rangle-\langle\phi,*[\partial_s A\wedge*\alpha]\rangle\right)dt\,ds\\
&+c\varepsilon^2\int_{S^1\times\mathbb R}\|\xi\|^{p-1}(\|\alpha\|+\|d_A^*\alpha\|)dt\,ds\\
\intertext{integrating by parts the first line after the inequality and using the Cauchy-Schwarz inequality, we obtain}
\leq&c\int_{S^1\times\mathbb R}\|\xi\|^{p-2} B^{\frac12} \left(\|\varepsilon^2\bar\alpha \| +\varepsilon \|\varepsilon^2\bar\psi \|+\varepsilon^{2} \left\|\varepsilon^2\bar\phi \right\|\right) dt\,ds\\
&+c\varepsilon^2\int_{S^1\times\mathbb R } \|\xi\|^{p} dt\,ds
+c\varepsilon^2\int_{S^1\times\mathbb R}\|\xi\|^{p-1}\|d_A^*\alpha\|\,dt\,ds\\
\intertext{Since $2ab\leq a^2+b^2$ for any $a,b\in\mathbb R$, choosing $\varepsilon$ small enough}
\leq&c\int_{S^1\times\mathbb R}\|\xi\|^{p-2}\left(\|\varepsilon^2\bar\alpha \| +\varepsilon \|\varepsilon^2\bar\psi \|+\varepsilon^{2} \left\|\varepsilon^2\bar\phi \right\|\right) ^2dt\,ds\\
&+ \frac12\int_{S^1\times\mathbb R}\|\xi\|^{p-2} B dt\,ds+c\varepsilon^2\int_{S^1\times\mathbb R } \|\xi\|^{p} dt\,ds.
\end{align*}
The last estimate implies that
\begin{align*}
\frac12\int_{S^1\times\mathbb R}&\|\xi\|^{p-2}B\,dt\,ds
\leq c\int_{S^1\times\mathbb R}\|\xi\|^{p-2}G^2ds+\varepsilon^2\int_{S^1\times\mathbb R } \|\xi\|^{p} dt\,ds\\
\leq& c\left(\int_{S^1\times\mathbb R}\|\xi\|^{p}ds\right)^{1-\frac2p}\left(\int_{S^1\times\mathbb R}G^p ds\right)^{\frac2p}+c\varepsilon^2\int_{S^1\times\mathbb R } \|\xi\|^{p} dt\,ds\\
\leq& c\int_{S^1\times\mathbb R}G^p ds+\delta \int_{S^1\times\mathbb R } \|\xi\|^{p} dt\,ds,
\end{align*}
where in the second step we use the H\"older's estimate and in the third the estimate $ab\leq \frac{a^r}r+\frac {b^q}q$, $\frac 1r+\frac 1q=1$, with $r=\frac p2$. Finally,
\begin{equation*}
\begin{split}
\int_{S^1\times\mathbb R} &\|\xi\|^{p-2}\left(\|\alpha\|^2+\varepsilon^4\|\phi\|^2\right)  dt\,ds
\leq\int_{S^1\times\mathbb R}\|\xi\|^{p-2}\left(\|d_A^*\alpha\|^2+\varepsilon^4\|d_A\phi \|^2\right)ds\\
\leq& c\int_{S^1\times\mathbb R}G^p ds+\delta \int_{S^1\times\mathbb R } \|\xi\|^{p} dt\,ds
\end{split}
\end{equation*}
and thus the lemma is proved.
\end{proof}

\begin{proof}[Proof of theorem \ref{flow:thm:wavefin}]
By lemma \ref{flow:lemma:lpl2}, for any $\delta>0$ there is a positive constant $c_0$ such that
\begin{align*}
 \|\alpha\|_{L^p}^p&+\varepsilon^{2p}\|\phi\|_{L^p}^p\leq \delta\left( \|d_A\alpha\|_{L^p}^p+\|d_A^*\alpha\|_{L^p}^p+\varepsilon^{2p}\|d_A\phi\|_{L^p}^p\right)\\
&+c_0\int_{S^1\times \mathbb R}\left( \|\alpha\|_{L^2}^p+\varepsilon^{2p}\|\phi\|_{L^2}^p\right) dt\, ds\\
\intertext{since $\alpha=d_A\gamma$ and the connection is flat on $\Sigma$, $\|d_A\alpha\|_{L^p}$ vanishes. By the lemma \ref{flow:lemma:lplppia2} we have then}
\leq& \delta\left(\|d_A^*\alpha\|_{L^p}^p+\varepsilon^{2p}\|d_A\phi\|_{L^p}^p\right)\\
&+ c_0c_1\int_{S^1\times\mathbb R} \left(\|\varepsilon^2\partial_s\alpha-\varepsilon^2d_A\phi \|^p_{L^2(\Sigma) } +\varepsilon^p \|\varepsilon^2\partial_s\psi-\varepsilon^2\partial_t\phi \|^p_{L^2(\Sigma) }\right)dt\,ds\\
&+\int_{S^1\times\mathbb R} \left(c_0c_1\varepsilon^{2p} \left\|\varepsilon^2\partial_s\phi-\frac 1{\varepsilon^2}d_A^*\alpha+\partial_t\psi \right\|^p_{L^2(\Sigma) }+\delta  \|\xi\|^{p}_{L^2(\Sigma)} \right)dt\,ds\\
\intertext{and by the H\"older's inequality with $c_2=\left(\int_\Sigma \mathrm{dvol}_\Sigma\right)^{\frac {p-2}p}$}
\leq& \delta\left(\|d_A^*\alpha\|_{L^p}^p+\varepsilon^{2p}\|d_A\phi\|_{L^p}^p+c_2\|\xi\|_{L^p}^p\right)+ c_0c_1c_2\varepsilon^{2p}\|\partial_s\alpha-d_A\phi \|^p_{L^2}\\
& +c_0c_1c_2\varepsilon^{3p} \|\partial_s\psi-\partial_t\phi \|^p_{L^p}
+c_0c_1c_2\varepsilon^{4p} \left\|\partial_s\phi-\frac 1{\varepsilon^4}d_A^*\alpha+\frac 1{\varepsilon^2}\partial_t\psi \right\|^p_{L^p}\\
\intertext{since $\|\Psi\|_{L^\infty}+\|\Phi\|_{L^\infty}\leq c_3$, for $c_0c_1c_3\varepsilon^p\leq\delta$}
\leq& \delta\left(\|d_A^*\alpha\|_{L^p}^p+\varepsilon^{2p}\|d_A\phi\|_{L^p}^p+2c_2\|\xi\|_{L^p}^p\right)+ c_0c_1c_2\varepsilon^{2p}\|\nabla_s\alpha-d_A\phi \|^p_{L^2}\\
& +c_0c_1c_2\varepsilon^{3p} \|\nabla_s\psi-\nabla_t\phi \|^p_{L^p}
+c_0c_1c_2\varepsilon^{4p} \left\|\nabla_s\phi-\frac 1{\varepsilon^4}d_A^*\alpha+\frac 1{\varepsilon^2}\nabla_t\psi \right\|^p_{L^p}.
\end{align*}
Therefore, the theorem follows from the lemma \ref{flow:cor:wave} and the last estimate choosing $\delta$ small enough.
\end{proof}

%
%
\section{Quadratic estimates}\label{flow:section:qest}

In this section we prove the following quadratic estimates.
\begin{lemma}\label{flow:lemma:qe1}
For any $c_0>0$ there are two positive constants $c$ and $\varepsilon_0$ such that, for any $0<\varepsilon<\varepsilon_0$, the  following holds. If two connections $\Xi=A+\Psi dt+\Phi ds$, $\bar\Xi=\bar A+\bar\Psi dt+\bar \Phi ds\in W^{1,p}$, with $\bar\alpha+\bar\psi dt+\bar \phi ds:= \Xi-\bar\Xi$, satisfies $\|\bar\alpha+\bar\psi dt\|_{\infty,\varepsilon }\leq c_0$, then
\begin{equation*}
\begin{split}
\varepsilon^2\big\|\big(\mathcal D^\varepsilon&(\Xi)-\mathcal D^\varepsilon(\bar\Xi)    \big)(\alpha+\psi dt+ \phi ds)\big\|_{0,p,\varepsilon } \\
\leq&c \|\bar\alpha+\bar\psi dt+\bar \phi ds\|_{\infty,\varepsilon }\|\alpha+\psi dt+ \phi ds\|_{1,p,\varepsilon }\\
&+c \|\alpha+\psi dt+ \phi ds\|_{\infty,\varepsilon }\|\bar\alpha+\bar\psi dt\|_{1,p,\varepsilon },
\end{split}
\end{equation*}
\begin{equation*}
\begin{split}
\varepsilon^2\big\|\big(\mathcal D^\varepsilon&(\Xi)-\mathcal D^\varepsilon(\bar\Xi)    \big)(\alpha+\psi dt+ \phi ds)\big\|_{0,p,\varepsilon } \\
\leq&c \|(1-\pi_A)\bar\alpha+\bar\psi dt+\bar \phi ds\|_{\infty,\varepsilon }\|\alpha+\psi dt+ \phi ds\|_{1,p,\varepsilon }\\
&+c \|\bar\alpha+\bar\psi dt+\bar \phi ds\|_{\infty,\varepsilon }\|(1-\pi_A)\alpha+\psi dt+ \phi ds\|_{1,p,\varepsilon }\\
&+c \|\alpha+\psi dt+ \phi ds\|_{0,p,\varepsilon }\left( \|d_A\bar\alpha\|_{L^\infty}+\|d_A^*\bar\alpha\|_{L^\infty}+\varepsilon\|\nabla_t\bar\alpha\|_{L^\infty}\right)\\
&+c \|\alpha+\psi dt+ \phi ds\|_{0,p,\varepsilon }\left( \varepsilon\|d_A\bar\psi\|_{L^\infty}+\varepsilon^2\|\nabla_t\bar\psi \|_{L^\infty}\right),
\end{split}
\end{equation*}
\begin{equation*}
\begin{split}
\varepsilon^2\big|\big|\pi_A\big(\mathcal D^\varepsilon&(\Xi)-\lambda *[\alpha\wedge \omega(A)]-\mathcal D^\varepsilon(\bar \Xi)    \big)(\alpha+\psi dt+ \phi ds)\big|\big|_{L^p} \\
\leq&c \|\bar\alpha+\bar\psi dt+\bar \phi ds\|_{\infty,\varepsilon }\|(1-\pi_A)\alpha+\psi dt+ \phi ds\|_{1,p,\varepsilon }+c\varepsilon^2\|\bar\alpha\|_{L^\infty}\|\alpha\|_{L^p}\\
&+c\varepsilon^2 \|\psi dt\|_{L^p }\|\nabla_t\bar\alpha\|_{L^\infty}+c\varepsilon^2\|\bar\psi\|_{L^\infty}\|\nabla_t\alpha\|_{L^p}+c\varepsilon^2\|\nabla_t\bar\psi\|_{L^\infty}\|\alpha\|_{L^p}\\
&+c\varepsilon^2\left( \|\bar\phi\|_{L^\infty }+\|\bar\alpha\|_{L^\infty}^2\right) \|\pi_A(\alpha) \|_{L^p}+ c \|\alpha\|_{L^\infty}\|d_A\bar\alpha-\lambda\varepsilon^2\omega(A)\|_{L^p},
\end{split}
\end{equation*}
for any $\alpha+\psi dt+ \phi ds\in W^{1,2;p}$ and where $\lambda\in \{0,1\}$.
\end{lemma}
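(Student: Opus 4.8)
The plan is to exploit the explicit formulas for $\mathcal D_1^\varepsilon, \mathcal D_2^\varepsilon$ in (\ref{flow:op:linym12}) and for $\mathcal D_3^\varepsilon$ in (\ref{flow:op:linym3}) and to observe that the difference $\mathcal D^\varepsilon(\Xi)-\mathcal D^\varepsilon(\bar\Xi)$ is \emph{algebraic of order one}: each term of the difference is a pointwise Lie-bracket pairing of a component of $\bar\alpha+\bar\psi dt+\bar\phi ds:=\Xi-\bar\Xi$ (or a first derivative thereof, or the curvature difference $F_A-F_{\bar A}$, which by $F_A=d_{\bar A}\bar\alpha+\frac12[\bar\alpha\wedge\bar\alpha]$ is again controlled by $\bar\alpha$ and its first derivative) with a component of $\alpha+\psi dt+\phi ds$ (or a first derivative thereof). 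So I would first write out this difference term by term, grouping the summands into: (i) terms where no derivative falls on the $\bar{}$-factor, so the $\bar{}$-factor sits in $L^\infty$ and the other factor in $W^{1,p}$; (ii) terms where a derivative does fall on the $\bar{}$-factor — for instance $\frac1{\varepsilon^2}\ast[\alpha\wedge\ast(F_A-F_{\bar A})]$ and the $\nabla_t$ and $d_A^*, d_A$ correction pieces — which one rewrites, via the commutation formulas (\ref{commform}), (\ref{commform2}), so that either the derivative moves onto the $\alpha$-factor (giving case (i) with roles reversed) or it produces a genuine $\|d_{\bar A}\bar\alpha\|_{L^\infty}$, $\|d_{\bar A}^*\bar\alpha\|_{L^\infty}$, $\varepsilon\|\nabla_t\bar\alpha\|_{L^\infty}$ etc.\ term multiplying the $0$-order norm of $\alpha+\psi dt+\phi ds$.

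The second ingredient is bookkeeping of the $\varepsilon$-weights. The norms $\|\cdot\|_{0,p,\varepsilon}$, $\|\cdot\|_{1,p,\varepsilon}$, $\|\cdot\|_{\infty,\varepsilon}$ were rigged (the ``simple rule'' of Section~\ref{flow:subsection:norm}) precisely so that each $\nabla_t$ or $dt$-component carries one power of $\varepsilon$ and each $\nabla_s$ or $ds$-component carries $\varepsilon^2$; since the bracket is bilinear and the overall prefactor in the operator is $\frac1{\varepsilon^2}$ (absorbed by the $\varepsilon^2$ on the left-hand side of the claimed inequality) or $O(1)$, I would check componentwise that every term of $\varepsilon^2(\mathcal D^\varepsilon(\Xi)-\mathcal D^\varepsilon(\bar\Xi))(\alpha+\psi dt+\phi ds)$, measured in $\|\cdot\|_{0,p,\varepsilon}$, is dominated by a product of a $\|\cdot\|_{\infty,\varepsilon}$-norm of one argument and a $\|\cdot\|_{1,p,\varepsilon}$-norm of the other — exactly with the $\varepsilon$-powers that make the two factors appear in those weighted norms. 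This is where Hölder's inequality $\|fg\|_{L^p}\le\|f\|_{L^\infty}\|g\|_{L^p}$ (fibrewise over $\Sigma$, then integrated over $S^1\times\mathbb R$) is applied. The first two displayed inequalities follow immediately from this term-by-term analysis; the only subtlety is that in the worst terms the $\varepsilon$-weight count forces the $L^\infty$-factor to be of ``$dt$-type'' or ``$ds$-type'', which is automatic since those are exactly the terms where $\psi,\phi,\bar\psi,\bar\phi$ (or $\nabla_t, \nabla_s$) appear.

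For the refined second inequality, I would split $\bar\alpha=\pi_A(\bar\alpha)+(1-\pi_A)(\bar\alpha)$ (and similarly for $\alpha$ where needed), and note that in $\mathcal D_1^\varepsilon$ the only purely algebraic zeroth-order contribution of $\pi_A(\bar\alpha)$ comes through $F_A-F_{\bar A}$ (which contains $d_A\pi_A(\bar\alpha)+\ldots$, hence a derivative) and through the $\frac1{\varepsilon^2}d_A^*d_A$ piece; after commuting derivatives these are absorbed into the $\|d_A\bar\alpha\|_{L^\infty}+\|d_A^*\bar\alpha\|_{L^\infty}+\varepsilon\|\nabla_t\bar\alpha\|_{L^\infty}$-terms multiplying $\|\cdot\|_{0,p,\varepsilon}$, with the remaining genuinely $\infty$-type pieces carrying $(1-\pi_A)\bar\alpha$. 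For the last (projected) inequality, I would start from the identity for $\pi_A\mathcal D^\varepsilon(\Xi)$ already recorded in the proof of Lemma~\ref{flow:lemma:diffD} — namely that $\pi_A\mathcal D^\varepsilon$ only sees $\nabla_s\alpha-\nabla_t\nabla_t\alpha-2[\psi,\partial_tA-d_A\Psi]-d\!\ast\! X_t(A)\alpha$ plus the $\ast[\alpha\wedge\ast\omega(A)]$-correction — subtract the corresponding expression for $\bar\Xi$, and again expand the difference into brackets; the terms $\varepsilon^2\|\psi dt\|_{L^p}\|\nabla_t\bar\alpha\|_{L^\infty}$ and $\varepsilon^2\|\bar\psi\|_{L^\infty}\|\nabla_t\alpha\|_{L^p}$ are exactly what comes out of the difference of the $\nabla_t\nabla_t$-terms after one commutation, the $(\|\bar\phi\|_{L^\infty}+\|\bar\alpha\|_{L^\infty}^2)\|\pi_A(\alpha)\|_{L^p}$-term comes from the $\nabla_s$ and quadratic $\gamma$-pieces hitting the harmonic part, and the final $\|\alpha\|_{L^\infty}\|d_A\bar\alpha-\lambda\varepsilon^2\omega(A)\|_{L^p}$-term is precisely the bracket of $\alpha$ with the $\omega$-correction combined with the curvature-difference term, using $F_A=d_{\bar A}\bar\alpha+\tfrac12[\bar\alpha\wedge\bar\alpha]$. \textbf{The main obstacle} I anticipate is purely organisational rather than conceptual: keeping the $\varepsilon$-weight bookkeeping honest across the dozen-odd summands of each difference, especially in the terms where a commutation formula is invoked to move a derivative, and making sure that in the refined estimates the derivative-of-$\bar\alpha$ factors land on $\|\cdot\|_{0,p,\varepsilon}$ and not on a norm that would be too weak. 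No new analytic input beyond bilinearity of the bracket, Hölder, the commutation formulas (\ref{commform})–(\ref{commform3}), and the structure of $F_A$ in terms of $\bar\alpha$ is needed.
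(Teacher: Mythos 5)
Your proposal is correct and follows essentially the same route as the paper: the paper's proof consists precisely of writing out the explicit term-by-term identities for $(\mathcal D_i^\varepsilon(\Xi)-\mathcal D_i^\varepsilon(\bar\Xi))(\alpha+\psi dt+\phi ds)$, $i=1,2,3$ (in which the curvature difference appears as $d_A\bar\alpha+\tfrac12[\bar\alpha\wedge\bar\alpha]$ and the derivative terms such as $\nabla_t[\bar\psi,\alpha]$ and $d_A^*[\bar\alpha\wedge\alpha]$ are distributed exactly as you describe), and then estimating each bracket with H\"older against the $\varepsilon$-weighted norms. The only remark is that your bookkeeping discussion is considerably more explicit than the paper's, which stops at listing the identities.
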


\begin{proof}
The lemma can be proved directly estimating term by term the following identities.
\begin{equation*}
\begin{split}
\big(\mathcal D^\varepsilon_1&(\Xi)-\mathcal D^\varepsilon_1(\bar \Xi)   \big)(\alpha+\psi dt+ \phi ds)=[\bar\phi, \alpha]-[\bar\alpha,\phi]-\frac 1{\varepsilon^2 }[\bar \alpha,*d_A\alpha+[\bar\alpha\wedge \alpha]]\\
&+\frac 1{\varepsilon^2 }*\left[\alpha, *\left(d_A\bar\alpha+\frac12\left[\bar\alpha\wedge\bar\alpha\right]\right)\right]
+\frac 1{\varepsilon^2}d_A^*[\bar\alpha\wedge\alpha]-\left[\bar\psi,(\nabla_t\alpha+[\bar\psi,\alpha])\right]\\
&+[\bar\alpha,(\nabla_t\psi+[\bar\psi,\psi])] +d_A[\bar\psi,\psi]-2[\psi,(\nabla_t\bar\alpha-d_A\bar\psi-[\bar \alpha,\bar\psi])]\\
&-\nabla_t[\bar\psi,\alpha]-(d*X_t(A)-d*X_t(\bar A))\alpha,
\end{split}
\end{equation*}
\begin{equation*}
\begin{split}
\pi_A\big(\mathcal D^\varepsilon_1&(\Xi)-\frac 1{\varepsilon^2}*[\alpha,* \omega(A)]-\mathcal D^\varepsilon_1(\bar \Xi)   \big)(\alpha+\psi dt+ \phi ds)
=\pi_A\Big([\bar\phi, \alpha]-[\bar\alpha,\phi]\\
&-\frac 1{\varepsilon^2 }[\bar \alpha,*d_A\alpha+[\bar\alpha\wedge \alpha]]-\left[\bar\psi,(\nabla_t\alpha+[\bar\psi,\alpha])\right]\\
&+\frac 1{\varepsilon^2 }*\left[\alpha, *\left(d_A\bar\alpha-\lambda\varepsilon^2\omega(A)+\frac12\left[\bar\alpha\wedge\bar\alpha\right]\right)\right]+[\bar\alpha,(\nabla_t\psi+[\bar\psi,\psi])] \\
&-2[\psi,(\nabla_t\bar\alpha-d_A\bar\psi-[\bar \alpha,\bar\psi])]-\nabla_t[\bar\psi,\alpha]-(d*X_t(A)-d*X_t(\bar A))\alpha\Big),
\end{split}
\end{equation*}
\begin{equation*}
\begin{split}
\big(\mathcal D^\varepsilon_2&(\Xi)-\mathcal D^\varepsilon_2(\bar \Xi)   \big)(\alpha+\psi dt+ \phi ds)=[\bar\phi, \psi]-[\bar\psi,\phi]-\frac1 {\varepsilon^2 }*\nabla_t[\bar\alpha\wedge*\alpha]\\
&-\frac 2{\varepsilon^2 }*[\alpha,*(\nabla_t\bar\alpha-d_A\bar\psi-[\bar \alpha,\bar\psi])]+\frac 1{\varepsilon^2}[\bar\psi, (d_A^*\alpha-*[\bar\alpha\wedge*\alpha])]\\
&-*\frac 1{\varepsilon^2}[\bar\alpha\wedge *(d_A\psi+[\alpha,\psi])]+\frac 1{\varepsilon^2}d_A^*[\alpha,\psi],
\end{split}
\end{equation*}
\begin{equation*}
\big(\mathcal D^\varepsilon_3(\Xi)-\mathcal D^\varepsilon_3(\bar \Xi)   \big)(\alpha+\psi dt+ \phi ds)
=[\bar\phi, \phi]+*\frac 1{\varepsilon^4}[\bar\alpha\wedge*\alpha]+\frac 1{\varepsilon^2}[\bar\psi,\psi].
\end{equation*}

\end{proof}

We choose a connection $\Xi=A+\Psi dt+\Phi ds\in W^{1,p}$ and a 1-form $\xi=\alpha+\psi dt+\phi ds\in W^{1,2;p}$, then
$\mathcal F^\varepsilon(\Xi+\xi)
=\mathcal F^\varepsilon(\Xi)+\mathcal D^\varepsilon(\Xi)(\xi)
+\mathcal C^\varepsilon(\Xi)(\xi)
$
and we denote by $\mathcal C_1^\varepsilon(\Xi)$, $\mathcal C_2^\varepsilon(\Xi)$ and $\mathcal C_3^\varepsilon(\Xi)$ the three components of $\mathcal C^\varepsilon(\Xi)=\mathcal C_1^\varepsilon(\Xi)+\mathcal C_2^\varepsilon(\Xi)\, dt+\mathcal C_3^\varepsilon(\Xi)\, ds$; in this case we have the following estimates.

\begin{lemma}\label{flow:lemma:qe2}
For any $c_0>0$ there are constants $c>0$ and $\varepsilon_0>0$ such that, for any $0<\varepsilon<\varepsilon_0$,
\begin{equation}
\varepsilon^2\left\|\mathcal C^\varepsilon(\Xi)(\xi)\right\|_{0,p,\varepsilon } \leq c \|\xi\|_{\infty,\varepsilon }\|\xi\|_{1,p,\varepsilon }
\end{equation}
\begin{equation}
\begin{split}
\varepsilon^2\left\|\pi_A\left(\mathcal C_1^\varepsilon(\Xi)(\xi)\right)\right\|_{0,p,\varepsilon } 
\leq &c \|(1-\pi_A)\xi\|_{\infty,\varepsilon }\left( \|(1-\pi_A)\alpha+\psi dt\|_{1,p,\varepsilon }+\varepsilon\|\nabla_t\alpha \|_{L^p}\right)\\
&+c \|\pi_A\alpha \|_{L^\infty }\|(1-\pi_A)\alpha+\psi dt\|_{1,p,\varepsilon }\\
&+c \|\pi_A\alpha \|_{L^\infty }( \|\pi_A\alpha \|_{L^\infty }+\varepsilon^2) \| \pi_A\alpha \|_{L^p }
\end{split}
\end{equation}
for any $\xi:=\alpha+\psi dt+ \phi ds\in W^{1,2;p}$ and where we assume that $\|\alpha+\psi dt\|_{\infty,\varepsilon }\leq c_0$.
\end{lemma}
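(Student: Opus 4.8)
The plan is to prove Lemma \ref{flow:lemma:qe2} by the same strategy used for Lemma \ref{flow:lemma:qe1}: one writes out $\mathcal C^\varepsilon(\Xi)(\xi)$ component by component and estimates each term. Since $\mathcal C^\varepsilon(\Xi)(\xi)=\mathcal F^\varepsilon(\Xi+\xi)-\mathcal F^\varepsilon(\Xi)-\mathcal D^\varepsilon(\Xi)(\xi)$, and $\mathcal F^\varepsilon$ is (in each component) a polynomial of degree at most two in the connection coefficients plus the perturbation term $*X_t$, the quadratic remainder $\mathcal C^\varepsilon$ consists precisely of the terms that are quadratic in $\xi$ (together with the second-order Taylor remainder of $*X_t(A)$, which by the estimates on $X_t$ from Section \ref{sec:perturbation} is controlled by $\|\xi\|_{1,p,\varepsilon}\|\xi\|_{\infty,\varepsilon}$ as well). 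Explicitly, specializing the identities in the proof of Lemma \ref{flow:lemma:qe1} to $\bar\Xi=\Xi$, $\bar\alpha=-\alpha$, $\bar\psi=-\psi$, $\bar\phi=-\phi$ gives the four components
\begin{equation*}
\begin{split}
\mathcal C_1^\varepsilon(\Xi)(\xi)=&-[\phi,\alpha]-\tfrac1{\varepsilon^2}[\alpha,*d_A\alpha]+\tfrac1{\varepsilon^2}*[\alpha\wedge*\tfrac12[\alpha\wedge\alpha]]+\tfrac1{2\varepsilon^2}d_A^*[\alpha\wedge\alpha]\\
&+[\psi,[\psi,\alpha]]-[\alpha,[\psi,\psi]]+d_A[\psi,\psi]-2[\psi,\nabla_t\alpha]-\nabla_t[\psi,\alpha]+\dots,\\
\mathcal C_2^\varepsilon(\Xi)(\xi)=&-[\phi,\psi]+\tfrac1{\varepsilon^2}*\nabla_t[\alpha\wedge*\alpha]-\tfrac2{\varepsilon^2}*[\alpha\wedge*\nabla_t\alpha]+\dots,\\
\mathcal C_3^\varepsilon(\Xi)(\xi)=&*\tfrac1{\varepsilon^4}[\alpha\wedge*\alpha]+\tfrac1{\varepsilon^2}[\psi,\psi]
\end{split}
\end{equation*}
up to commutator rearrangements; the point is that every summand is a product of two of the quantities $\alpha,\psi,\phi$ and their first derivatives in the directions $\Sigma$, $t$, $s$.

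For the first estimate I would bound each such product by Hölder's inequality, placing one factor in $L^\infty$ and the other in $L^p$, and then check the powers of $\varepsilon$: the definition of $\|\cdot\|_{\infty,\varepsilon}$ and $\|\cdot\|_{1,p,\varepsilon}$ attach weights $\varepsilon^{j}$ to a factor carrying $j$ powers of $dt$ or $ds$ or the corresponding $\nabla$, while the differential operators in $\mathcal F^\varepsilon$ come with the compensating $\varepsilon^{-2}$ or $\varepsilon^{-4}$ prefactors; a term-by-term bookkeeping shows that after multiplying by the overall $\varepsilon^2$ every term is $\le c\|\xi\|_{\infty,\varepsilon}\|\xi\|_{1,p,\varepsilon}$. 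This is exactly the same computation as the first estimate of Lemma \ref{flow:lemma:qe1} with $\bar\Xi=\Xi$, so it can be cited rather than redone.

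The second estimate is the substantive one. Here one projects $\mathcal C_1^\varepsilon(\Xi)(\xi)$ onto the harmonic part via $\pi_A$ and decomposes $\alpha=\pi_A(\alpha)+(1-\pi_A)\alpha$. The quadratic terms involving at least one factor of $(1-\pi_A)\alpha$, $\psi$ or $\phi$ are estimated as before, contributing $\|(1-\pi_A)\xi\|_{\infty,\varepsilon}\big(\|(1-\pi_A)\alpha+\psi dt\|_{1,p,\varepsilon}+\varepsilon\|\nabla_t\alpha\|_{L^p}\big)$ and the mixed term $\|\pi_A\alpha\|_{L^\infty}\|(1-\pi_A)\alpha+\psi dt\|_{1,p,\varepsilon}$; the extra $\varepsilon\|\nabla_t\alpha\|_{L^p}$ appears because $\nabla_t(1-\pi_A)\alpha$ is not directly controlled by $\|(1-\pi_A)\alpha+\psi dt\|_{1,p,\varepsilon}$ and must be bounded using $\nabla_t\alpha$ and the commutator $[\nabla_t,\pi_A]$, which is a zeroth-order operator bounded by the curvature estimates (\ref{flow:aprioriest:geo}). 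The crucial purely-harmonic terms are $\tfrac1{2\varepsilon^2}d_A^*[\pi_A(\alpha)\wedge\pi_A(\alpha)]$, $\tfrac1{\varepsilon^2}*[\pi_A(\alpha)\wedge*[\pi_A(\alpha)\wedge\pi_A(\alpha)]]$, $d_A[\psi,\psi]$ projected back, and $-[\phi,\pi_A(\alpha)]$: after applying $\pi_A$, $d_A^*$ drops out on the first ($\pi_A\circ d_A^*=0$ on the relevant range), the cubic term is $\varepsilon^{-2}\cdot\|\pi_A\alpha\|_{L^\infty}^2\|\pi_A\alpha\|_{L^p}$ which after the $\varepsilon^2$ prefactor gives $\|\pi_A\alpha\|_{L^\infty}^2\|\pi_A\alpha\|_{L^p}$, and the $\phi$-term gives $\varepsilon^2\|\pi_A\alpha\|_{L^\infty}\|\pi_A\alpha\|_{L^p}$ (using $\|\phi\|_{\infty,\varepsilon}=\varepsilon^2\|\phi\|_{L^\infty}$). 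Collecting these reproduces the claimed right-hand side. The main obstacle is precisely this harmonic-projection bookkeeping: one has to verify that, after the $\pi_A$-projection, exactly the $\varepsilon^{-2}$-singular terms which are quadratic or cubic in $\pi_A(\alpha)$ survive, that the most singular ones ($d_A^*[\,\cdot\,\wedge\,\cdot\,]$) are annihilated by $\pi_A$, and that the curvature of the background geodesic flow (bounded via Weber's estimates, equation (\ref{flow:aprioriest:geo})) absorbs all commutator terms $[\nabla_t,\pi_A]$, $[d_A,\pi_A]$ without picking up uncontrolled $\varepsilon$-powers.
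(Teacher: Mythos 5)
Your proposal follows essentially the same route as the paper: write out the quadratic remainder $\mathcal C^\varepsilon(\Xi)(\xi)$ explicitly and estimate term by term via H\"older, using that $\pi_A$ annihilates the most singular term $\varepsilon^{-2}d_A^*[\alpha\wedge\alpha]$ and that the surviving cubic term in $\pi_A(\alpha)$ yields the $\|\pi_A\alpha\|_{L^\infty}^2\|\pi_A\alpha\|_{L^p}$ contribution. The only discrepancy is your nonzero formula for $\mathcal C_3^\varepsilon$: since $\mathcal F_3^\varepsilon$ and $\mathcal D_3^\varepsilon$ are built from the fixed reference connection $\Xi^0$ (see (\ref{flow:op:linym3}) and (\ref{flow:eqfds})), the third component is affine in $\Xi$ and $\mathcal C_3^\varepsilon(\Xi)(\xi)=0$ — though the extra terms you list would in any case be absorbed by the $\varepsilon$-weights in $\|\cdot\|_{0,p,\varepsilon}$, so this does not affect the estimate.
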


\begin{proof}Also this lemma can be showed estimating term by term the identities:
\begin{equation*}
\begin{split}
\mathcal C_1^\varepsilon(\Xi)&(\xi)=[\phi,\alpha]-[\alpha,\phi]+\frac1{\varepsilon^2}d_A^*[\alpha\wedge\alpha]\\
&-\frac 1{\varepsilon^2}*[\alpha\wedge*(d_A\alpha+[\alpha\wedge\alpha])]+\frac1{\varepsilon^2}*\left[\alpha\wedge*\left(d_A\alpha+\frac12[\alpha\wedge\alpha]\right)\right]\\
&-[\psi,(\nabla_t\alpha+[\psi,\alpha])]-2[\psi,(\nabla_t\alpha-d_A\psi-[\alpha,\psi])]\\
&-\nabla_t[\psi,\alpha]+[\alpha,\nabla_t\psi]-(*X_t(A+\alpha)-*X_t(A)-d*X_t(A)\alpha),
\end{split}
\end{equation*}
\begin{equation}\label{xvasvdb}
\begin{split}
\pi_A\big(\mathcal C_1^\varepsilon(\Xi)&(\xi)\big)=\pi_A\Big( [\phi,\alpha]-[\alpha,\phi]-[\psi,(\nabla_t\alpha+[\psi,\alpha])]\\
&-\frac 1{\varepsilon^2}*[\alpha\wedge*(d_A\alpha+[\alpha\wedge\alpha])]+\frac1{\varepsilon^2}*\left[\alpha\wedge*\left(d_A\alpha+\frac12[\alpha\wedge\alpha]\right)\right]\\
&-2[\psi,(\nabla_t\alpha-d_A\psi-[\alpha,\psi])]-\nabla_t[\psi,\alpha]+[\alpha,\nabla_t\psi]\\
&-(*X_t(A+\alpha)-*X_t(A)-d*X_t(A)\alpha)\Big),
\end{split}
\end{equation}
\begin{equation}\label{qestnvdsopapa}
\begin{split}
\mathcal C_2^\varepsilon(\Xi)&(\xi)=[\phi,\psi]-[\psi,\phi]
+\frac 2{\varepsilon^2}*[\alpha\wedge*(\nabla_t\alpha-d_A\psi-[\alpha,\psi])]\\
&-\frac1{\varepsilon^2}[\psi,d_A^*\alpha]-\frac 1{\varepsilon^2}*[\alpha\wedge *(d_A\psi+[\alpha,\psi])]+\frac 1{\varepsilon^2}d_A^*[\alpha,\psi],
\end{split}
\end{equation}
\begin{equation*}
\mathcal C_3^\varepsilon(\Xi)(\alpha+\psi dt+ \phi ds)=0.
\end{equation*}
\end{proof}

%
%

\section{The map $\mathcal K^{\varepsilon}_2$: A first approximation}\label{flow:section:firstapprox}

In the next section, we will construct, using a Newton's iteration, a perturbed Yang-Mills flow $\Xi^\varepsilon\in\mathcal M^\varepsilon\left(\mathcal T^{\varepsilon, b}(\Xi_-),\mathcal T^{\varepsilon, b}(\Xi_+)\right)$ for any perturbed geodesic flow $\Xi^0\in\mathcal M^0\left(\Xi_-,\Xi_+\right)$ and every pair of connections $\Xi_\pm\in \mathrm{Crit}^b_{E^H}$with index difference 1, where $b$ is a regular value of $E^H$. For this purpose, we need to define a connection $\mathcal K^\varepsilon_2(\Xi^0)$ which is an approximate solution of the perturbed Yang-Mills flow equation. $\mathcal K^\varepsilon_2(\Xi^0)$ is constructed in two steps: First, we add to $\Xi_0$ a $1$-form $\alpha_0^\varepsilon(s)+\psi_0^\varepsilon(s) dt$ which satisfies the limit conditions $\lim_{s\to \pm\infty}\Xi^0+\alpha_0^\varepsilon+\psi_0^\varepsilon dt=\mathcal T^{\varepsilon,b}(\Xi_\pm)$ and then we add a second 1-form in order to have an approximated solution of the Yang-Mills flow equations. \\

First, we recall that a connection $\Xi^0:=A^0+\Psi^0 dt+\Phi^0 ds$ descends to a flow line between two perturbed geodesics $\Xi_\pm=A_\pm+\Psi_\pm dt$ when it satisfies the equations (\ref{flow:eqgeoflow}) and (\ref{flow:eqgeoflow333}), i.e.
\begin{equation*}
\begin{split}
\partial_s A^0-d_{A^0}\Phi^0-\pi_{A^0}\left(\nabla_t(\partial_t A^0-d_{A^0}\Psi^0)+*X_s(A^0)\right)=0,\\
d_{A^0}^*(\partial_t A^0-d_{A^0}\Psi^0)=d_{A^0}^*(\partial_s A^0-d_{A^0}\Phi^0)=0.
\end{split}
\end {equation*}
Next, we choose a smooth positive function $\theta$ such that $\theta(s)=0$ for $s\leq1$, $\theta(s)=1$ when $s\geq2$, $0\leq \theta\leq 1$ and $0\leq \partial_s\theta\leq c_0$ with $c_0>0$. Thus, we define $\alpha_0^\varepsilon+\psi_0^\varepsilon dt$ as  
\begin{equation}\label{flow:k2:eqq1}
\begin{split}
\alpha_0^\varepsilon(s)+\psi_0^\varepsilon(s) dt:=&\theta(-s) g(s)^{-1}(\mathcal T^{\varepsilon,b }(A_-+\Psi_- dt)-(A_-+\Psi_- dt))g(s)\\
&+\theta(s) g(s)^{-1}(\mathcal T^{\varepsilon,b }(A_++ \Psi_+ dt)-(A_++\Psi_+ dt))g(s),
\end{split}
\end{equation}
where $g\in\mathcal G_0(P\times S^1\times \mathbb R)$ satisfies 
\begin{equation}\label{flow:k2:eqq2}
g^{-1}\partial_sg=\Phi^0, \quad\lim_{s\to -\infty}g= 1;
\end{equation}
we introduce $g$ in order to make the definition of $\Xi^0+\alpha_0^\varepsilon+\psi_0^\varepsilon dt$ gauge-invariant. 

\begin{lemma}\label{flow:lemma:firstappr}
We choose two constants $b > 0$, $p>2$. There are positive constants  $\varepsilon_0, c$ such  that the following holds. For every $\varepsilon \in (0,\varepsilon_0)$, every pair $\Xi_\pm:=A_\pm+\Psi_\pm dt \in \mathrm {Crit}^b_{E^H}$ that are perturbed closed geodesics of index difference one, there exists a unique equivariant map
\begin{equation*}
\mathcal K_2^\varepsilon: \mathcal M^0 (\Xi_-, \Xi_+) \to W^{1,2;p}\left(\mathcal T^{\varepsilon,b }(\Xi_-), \mathcal T^{\varepsilon,b}(\Xi_+)\right)
\end{equation*}
such that for any $\Xi^0:=A^0+\Psi^0 dt+\Phi^0 ds \in \mathcal M^0 (\Xi_-, \Xi_+)$, with $\alpha_0^\varepsilon+\psi_0^\varepsilon dt$ and $g$ defined as in (\ref{flow:k2:eqq1}) and in (\ref{flow:k2:eqq2}),
\begin{equation}\label{flow:k2:leqw}
\mathcal K_2^\varepsilon(\Xi^0)-(\Xi^0+\alpha_0^\varepsilon+\psi^\varepsilon_0 dt) \in \textrm{im } d_{A^0}^*
\end{equation}
and
\begin{equation}\label{flow:k2:leqw2}
\begin{split}
\frac 1{\varepsilon^2}d_{A^0}^*&d_{A^0}\left(\mathcal K_2^\varepsilon(\Xi^0)-(\Xi^0+\alpha_0^\varepsilon+\psi^\varepsilon_0 dt)\right)\\
=&d_{A^0}^*\left(d_{A^0}d_{A^0}^*\right)^{-1}d_{A^0}\left(\nabla_t(\partial_t A^0-d_{A^0}\Psi^0)+*X_t(A^0)\right)\\
&-\theta(-s) d_{A^0}^*d_{A^0}g^{-1}\left(d_{A_-}d_{A_-}^*\right)^{-1}\left(\nabla_t^{\Psi_-}\left(\partial_tA_--d_{A_-}\Psi_-\right)+*X_t(A_-)\right)g\\
&-\theta(s) d_{A^0}^*d_{A^0}g^{-1}\left(d_{A_+}d_{A_+}^*\right)^{-1}\left(\nabla_t^{\Psi_+}\left(\partial_tA_+-d_{A_+}\Psi_+\right)+*X_t(A_+)\right)g\\
&-\theta(-s)d_{A^0}^*d_{A^0}\pi_{g^{-1}(A_-)g}(\alpha_0^\varepsilon)
-\theta(s)d_{A^0}^*d_{A^0}\pi_{g^{-1}(A_+)g}(\alpha_0^\varepsilon)
\end{split}
\end{equation}
In addition, it satisfies
\begin{equation}\label{flow:k2:leqw3}
\|\mathcal K_2^\varepsilon(\Xi^0)-(\Xi^0+\alpha_0^\varepsilon+\psi^\varepsilon_0 dt)\|_{1,2;p,1}\leq c\varepsilon^2,
\end{equation}
\begin{equation}\label{flow:k2:leqw4}
\|\mathcal F_1^\varepsilon(\mathcal K_2^\varepsilon(\Xi^0))\|_{L^p}\leq c\varepsilon^{2},\quad \|\mathcal F_2^\varepsilon(\mathcal K_2^\varepsilon(\Xi^0))\|_{L^p}\leq c.
\end{equation}
\end{lemma}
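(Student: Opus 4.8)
The construction is in two stages, mirroring the statement. In the first stage I would define the 1-form $\alpha_0^\varepsilon + \psi_0^\varepsilon\,dt$ exactly as in (\ref{flow:k2:eqq1}); by Theorem \ref{thm:mainthm} and the estimate (\ref{intro:A:a1}) (together with Remark \ref{thm:existence:crit:est}) each of the summands $\mathcal T^{\varepsilon,b}(A_\pm+\Psi_\pm dt) - (A_\pm+\Psi_\pm dt)$ has $\|\cdot\|_{\Xi_\pm,2,p,\varepsilon,\Sigma\times S^1}\le c\varepsilon^2$, and since $\theta$ and $g$ are smooth with bounded derivatives and $g$ is a genuine gauge transformation, cutting off and conjugating introduces only the single extra term $g^{-1}(\partial_s\theta)(\cdots)g$, controlled by $\partial_s\theta$ having compact support. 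Thus $\Xi^0 + \alpha_0^\varepsilon + \psi_0^\varepsilon\,dt$ is a connection in $W^{1,2;p}\big(\mathcal T^{\varepsilon,b}(\Xi_-),\mathcal T^{\varepsilon,b}(\Xi_+)\big)$ with the right limits, and $\|\alpha_0^\varepsilon+\psi_0^\varepsilon dt\|_{1,2;p,1}\le c\varepsilon^2$. The conjugation by $g$ makes the whole construction gauge-equivariant, which is needed for the final equivariance claim.

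In the second stage I would look for a correction lying in $\mathrm{im}\, d_{A^0}^*$, i.e.\ write $\mathcal K_2^\varepsilon(\Xi^0) = \Xi^0 + \alpha_0^\varepsilon + \psi_0^\varepsilon dt + d_{A^0}^*\omega$ for a $2$-form $\omega$, and determine $\omega$ by imposing (\ref{flow:k2:leqw2}). The right-hand side of (\ref{flow:k2:leqw2}) is designed so that $d_{A^0}^*d_{A^0}$ applied to $d_{A^0}^*\omega$ produces exactly the non-harmonic part of $\nabla_t(\partial_t A^0 - d_{A^0}\Psi^0) + *X_t(A^0)$ coming from the geodesic flow equation, minus the analogous asymptotic contributions at $s\to\pm\infty$ (so that the correction decays), minus the projections $\pi_{g^{-1}(A_\pm)g}(\alpha_0^\varepsilon)$ that account for the tangential part of the first-stage correction. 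Since $A^0$ is flat on $\Sigma$ and converges exponentially to $A_\pm$, the operator $d_{A^0}^*d_{A^0}$ on $\mathrm{im}\, d_{A^0}^* \subset \Omega^1(\Sigma,\mathfrak g_P)$ is invertible with uniformly bounded inverse, so $\omega$ (hence $d_{A^0}^*\omega$) exists uniquely and is equivariant; this gives (\ref{flow:k2:leqw}) and (\ref{flow:k2:leqw2}) by construction.

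For the size estimate (\ref{flow:k2:leqw3}) I would apply the elliptic/parabolic linear estimates of Section \ref{flow:section:linest} — specifically Theorem \ref{flow:thm:linestpar} applied fibrewise, or directly the invertibility of $\frac1{\varepsilon^2}d_{A^0}^*d_{A^0}$ on the $\mathrm{im}\, d_{A^0}^*$-component — to the defining relation (\ref{flow:k2:leqw2}): the right-hand side, multiplied by $\varepsilon^2$, is bounded in $L^p$ by the $C^2$-bounds (\ref{flow:web:w1})--(\ref{flow:web:w2}) on the curvature terms of the geodesic flow together with the already established $\|\alpha_0^\varepsilon\|\le c\varepsilon^2$ and the exponential decay (\ref{flow:web:w3})--(\ref{flow:web:w4}) which makes the $\theta(\pm s)$-terms cancel the geodesic-flow term up to $O(\varepsilon^2)$ in the $W^{1,2;p}$-norm. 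Finally, for (\ref{flow:k2:leqw4}) I would plug $\mathcal K_2^\varepsilon(\Xi^0)$ into $\mathcal F_1^\varepsilon$ and $\mathcal F_2^\varepsilon$ of (\ref{flow:eqfds}): by construction the leading $\frac1{\varepsilon^2}d_A^*F_A$ and $\nabla_t(\partial_t A - d_A\Psi)$ terms are arranged to cancel against $\partial_s A - d_A\Phi$ up to the harmonic projection (which vanishes by the geodesic flow equation (\ref{flow:eqgeoflow})) and up to quadratic terms in $d_{A^0}^*\omega$, which are $O(\varepsilon^2)$ by (\ref{flow:k2:leqw3}) and the quadratic estimates of Section \ref{flow:section:qest}; the $\mathcal F_2^\varepsilon$ component only gains a bounded (not $O(\varepsilon^2)$) term because $\frac1{\varepsilon^2}d_A^*(\partial_t A - d_A\Psi)$ is merely $O(1)$ after the first-stage correction.

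\textbf{Main obstacle.} The delicate point is the estimate (\ref{flow:k2:leqw3}): one must check that the four "subtracted" terms in (\ref{flow:k2:leqw2}) really do cancel the geodesic-flow term $d_{A^0}^*(d_{A^0}d_{A^0}^*)^{-1}d_{A^0}(\nabla_t(\partial_t A^0 - d_{A^0}\Psi^0) + *X_t(A^0))$ up to $O(\varepsilon^2)$ uniformly in $s$ — this requires comparing the operators $(d_{A^0}d_{A^0}^*)^{-1}$ and $(d_{A_\pm}d_{A_\pm}^*)^{-1}$ along the flow and exploiting the exponential convergence (\ref{flow:web:w3})--(\ref{flow:web:w4}) carefully, since a naive bound would only give $O(1)$. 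Once this cancellation is quantified, everything else is a routine application of the linear and quadratic estimates already established.
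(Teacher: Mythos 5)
Your construction coincides with the paper's: first the cut-off, gauge-conjugated corrections $\alpha_0^\varepsilon+\psi_0^\varepsilon\,dt$ at the two ends, then a correction in $\textrm{im }d_{A^0}^*$ determined by (\ref{flow:k2:leqw2}) through the bijectivity of $d_{A^0}^*d_{A^0}$ on $\textrm{im }d_{A^0}^*$ (using $F_{A^0}=0$), and the same strategy for the estimates. However, you have misplaced the delicate point, and the step where the real difficulty sits is left unjustified. The bound (\ref{flow:k2:leqw3}) requires no cancellation between the geodesic-flow term and the subtracted asymptotic terms: the entire right-hand side of (\ref{flow:k2:leqw2}) is $O(1)$ by (\ref{flow:web:w1})--(\ref{flow:web:w4}), so after multiplying by $\varepsilon^2$ and inverting $d_{A^0}^*d_{A^0}$ one gets $c\varepsilon^2$ directly; no comparison of $\left(d_{A^0}d_{A^0}^*\right)^{-1}$ with $\left(d_{A_\pm}d_{A_\pm}^*\right)^{-1}$ along the flow is needed there.

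The genuine obstacle is the first inequality of (\ref{flow:k2:leqw4}). In the expansion of $\mathcal F_1^\varepsilon(\mathcal K_2^\varepsilon(\Xi^0))$ there appear, besides quadratic terms in the correction (which are indeed $O(\varepsilon^2)$ since $\frac 1{\varepsilon^2}(\varepsilon^2)^2=\varepsilon^2$), terms that are \emph{linear} in $\alpha_0^\varepsilon$ and carry a factor $\frac 1{\varepsilon^2}$, e.g. $\frac 1{\varepsilon^2}*[\alpha\wedge *d_{A_\pm}\alpha_0^\varepsilon]$ and $\frac 1{\varepsilon^2}d_{A^0}^*[\alpha\wedge\alpha_0^\varepsilon]$ with $\alpha$ essentially $A^0-A_\pm(s)$. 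Since $\alpha_0^\varepsilon=O(\varepsilon^2)$, these are a priori only $O(1)$. The paper disposes of them by combining them with the terms $\frac1{\varepsilon^2}d_{A^0}^*d_{A^0}\bar\alpha_1^\varepsilon$ and $d_{A^0}^*d_{A^0}\pi_{A_\pm}(\alpha_0^\varepsilon)$ deliberately built into (\ref{flow:k2:leqw2}), and the residue is controlled by the refined estimate (\ref{eq:cbb2}) of Remark \ref{thm:existence:crit:est}: the non-harmonic part of $\mathcal T^{\varepsilon,b}(\Xi_\pm)-\Xi_\pm$ agrees with the explicit first-order term solving (\ref{eq:firststep}) up to $O(\varepsilon^4)$, so that after division by $\varepsilon^2$ one retains $O(\varepsilon^2)$. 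This $O(\varepsilon^4)$ input --- not exponential convergence of the flow or a Green's-operator comparison --- is the ingredient your sketch is missing; without it your argument for $\|\mathcal F_1^\varepsilon(\mathcal K_2^\varepsilon(\Xi^0))\|_{L^p}\leq c\varepsilon^{2}$ only yields $O(1)$, i.e. the trivial bound that already holds for $\mathcal F_2^\varepsilon$.
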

\begin{figure}[ht]
\begin{center}
\begin{tikzpicture} 
\filldraw (0,0) circle (1.5pt) node[below,left] {$\Xi_-$}
(6,0) circle (1.5pt) node[below] {$\Xi_+$}
(0,2) circle (1.5pt) node[above, left] {$\Xi_-^\varepsilon$}
(6,1) circle (1.5pt) node[above] {$\Xi_+^\varepsilon$}; 
\draw (0,0) .. controls (1,1) and (2,0.5) .. (3,0); 
\draw (3,0) .. controls (4,-0.5)  and (5,-0.5) .. node[near start,below] {$\Xi^0$}(6,0); 
 
\draw (0,0) .. controls (0,1) and (0,1) .. node[left] {$\alpha_{-}^\varepsilon+\psi_-^\varepsilon dt$}  (0,2); 
\draw (6,0) .. controls (6,0.5) and (6,0.5) .. node[right] {$\alpha_{+}^\varepsilon+\psi_+^\varepsilon dt$}  (6,1); 

\draw[blue] (0,2) .. node[above] {$\Xi^0+\alpha_0^\varepsilon+\phi_0^\varepsilon dt$} controls (1,3) and (2,2.5) ..(2.5,2.2); 
\draw[blue] (2.5,2.2) .. controls (3,1.9) and (2.7,0) ..  (3,0); 
\draw[blue] (3,0) .. controls (3.3,0) and (3,1.1) .. (3.5,0.9); 
\draw[blue] (3.5,0.9) .. controls (4,0.6)  and (5,0.5) .. (6,1); 

\draw[magenta] (0,2) .. controls (1,2.8) and (2,2.5) ..(3,1.5); 
\draw[magenta] (3,1.5) .. controls (4,0.6)  and (5,0.7) ..node[above] {$\mathcal K_2^\varepsilon(\Xi^0)$} (6,1); 
\end{tikzpicture}
\caption{Situation of the lemma \ref{flow:lemma:firstappr}.}
\end{center}
\end{figure}
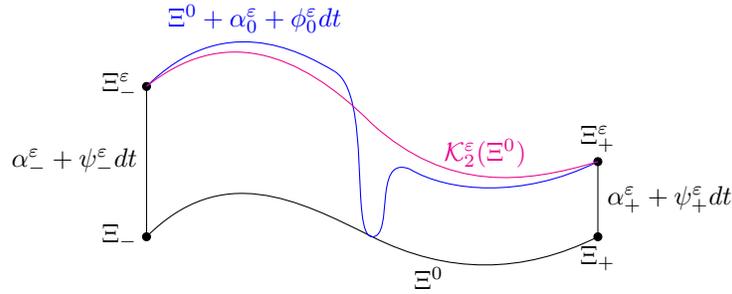
%

%

The critical points of our two functionals $E^H$ und $\mathcal {YM}^{\varepsilon,H}$ do not coincide and therefore the difference $\Xi-\Xi^0$ between a geodesic flow $\Xi^0\in\mathcal M^0\left(\Xi_-,\Xi_+\right)$ and any connection $\Xi$ which converges to $\mathcal T^{\varepsilon, b}(\Xi_\pm)$ as $s$ goes to $\pm\infty$ can not be estimates using the norm defined in the subsection \ref{flow:subsection:norm}. We need therefore another reference connection in order to compute the norms and for this purpose we use $\mathcal K^\varepsilon_2(\Xi^0)$. 

\begin{proof}[Proof of lemma \ref{flow:lemma:firstappr}]
$\mathcal K_2^\varepsilon(\Xi^0)$ is uniquely defined by (\ref{flow:k2:leqw}) and (\ref{flow:k2:leqw2}) because $F_{A^0}=0$ and 
$$d_{A^0}^*d_{A^0}:\textrm{im } d_{A^0}^*\Omega^2(\Sigma,\mathfrak g_P)\to \textrm{im } d_{A^0}^*\Omega^2(\Sigma,\mathfrak g_P)$$
is bijective. Furthermore, the lemma \ref{lemma76dt94}, the commutation formulas (\ref{commform}), (\ref{commform2}) and the estimates of the geodesic flow (\ref{flow:web:w1})-(\ref{flow:web:w4}) yield to (\ref{flow:k2:leqw3}). Therefore we need only to prove (\ref{flow:k2:leqw4}).\\

We define $\Xi_1^\varepsilon:=A_1^\varepsilon+\Psi_1^\varepsilon dt+\Phi_1^\varepsilon ds:= \Xi^0+\alpha_0^\varepsilon+\psi_0^\varepsilon dt$ and we consider
\begin{equation}
\begin{split}
A_\pm(s)+\Psi_\pm(s) dt=& g(s)^{*}(A_\pm+\Psi_\pm dt),\\
\alpha(s)+\psi(s) dt=& \theta(-s) ((A^0(s)+\Psi^0(s) dt)-(A_-(s)+\Psi_-(s) dt))\\
&+\theta(s) ((A^0(s)+\Psi^0(s) dt)-(A_+(s)+\Psi_+(s)dt))
\end{split}
\end{equation}
and
\begin{equation}
\begin{split}
\bar\alpha_1^\varepsilon:=&\theta(-s)\left(d_{A_-}d_{A_-}^*\right)^{-1}\left(\nabla_t^{\Psi_-}\left(\partial_tA_--d_{A_-}\Psi_-\right)+*X_t(A_-)\right)\\
&\theta(s)\left(d_{A_+}d_{A_+}^*\right)^{-1}\left(\nabla_t^{\Psi_+}\left(\partial_tA_+-d_{A_+}\Psi_+\right)+*X_t(A_+)\right).
\end{split}
\end{equation}
Furthermore, we consider $A_2^\varepsilon+\Psi_2^\varepsilon dt+\Phi_2^\varepsilon ds:= \Xi_1^\varepsilon+\alpha_1^\varepsilon:=\mathcal K^\varepsilon_2(\Xi^0)$. If we look at the expansion
\begin{equation}\label{flow:eq:k2:w}
\begin{split}
\mathcal F^\varepsilon_1\left(\mathcal K_2^\varepsilon(\Xi^0)\right)=& \partial_s A_2^\varepsilon-d_{A_2^\varepsilon}\Phi_2^\varepsilon+\frac 1{\varepsilon^2}d_{A_2^\varepsilon}^*F_{A_2^\varepsilon}\\
&-\nabla_t^{\Psi_2^\varepsilon}(\partial_t A_2^\varepsilon-d_{A_2^\varepsilon}\Psi_2^\varepsilon)-*X_t(A_2^\varepsilon)\\
=& \partial_s A^0-d_{A^0}\Phi^0-\pi_{A^0}\left(\nabla_t(\partial_t A^0-d_{A^0}\Psi^0)+*X_s(A^0)\right)\\
&+\left(\nabla_s-\nabla_t^2\right) \alpha_1^\varepsilon-*X_t(A^2)+*X_t(A^0+\alpha_0^\varepsilon)\\
&-\frac 1{\varepsilon^2}*\left[\alpha_1^\varepsilon\wedge *\left(d_{A^0}\alpha_1^\varepsilon+\frac 12[\alpha_1^\varepsilon\wedge \alpha_1^\varepsilon]\right)\right]+\frac 1{2\varepsilon^2}d_{A^0}^*[\alpha_1^\varepsilon\wedge\alpha_1^\varepsilon]\\
&+\frac 1{\varepsilon^2}d_{A^0}^*d_{A^0}\alpha_1^\varepsilon- (1-\pi_{A_0})\left(\nabla_t(\partial_t A^0-d_{A^0}\Psi^0)+*X_t(A^0)\right)\\
&+F_1^\varepsilon+F_2^\varepsilon+F_3^\varepsilon
\end{split}
\end{equation}
we remark that the second line vanishes because $\Xi^0$ is a geodesic flow, the third and the fourth can be estimates by $c\varepsilon^2$ because the $\|\cdot\|_{1,2;p,1}$-norm of $\alpha_1^\varepsilon$ can be estimated by the same factor by (\ref{flow:k2:leqw3}). The fifth line of (\ref{flow:eq:k2:w}) can be written by the definitions as $$-\frac1{\varepsilon^2}d_{A^0}^*d_{A^0}\bar\alpha_1^\varepsilon-\theta(-s)d_{A^0}d_{A^0}\pi_{A_-}(\alpha_0^\varepsilon)-\theta(s)d_{A^0}d_{A^0}\pi_{A_+}(\alpha_0^\varepsilon).$$
Therefore we have that
\begin{align*}
\left\|\mathcal F^\varepsilon_1\left(\mathcal K_2^\varepsilon(\Xi^0)\right)\right\|_{L^p}\leq& c\varepsilon^2+\left\|\frac1{\varepsilon^2}d_{A^0}^*d_{A^0}(\bar\alpha_1^\varepsilon+\pi_{A_-}(\alpha_0^\varepsilon)+\pi_{A_+}(\alpha_0^\varepsilon))-F_1^\varepsilon\right\|_{L^p}\\
&+\left\|F^\varepsilon_2\right\|_{L^p}+\left\|F^\varepsilon_3\right\|_{L^p}.
\end{align*}
For this purpose, we need to investigate $F_1^\varepsilon$, $F_2^\varepsilon$ and $F_3^\varepsilon$. In order to simplify the exposition we evaluate $F_i^\varepsilon(s)$ for $s\leq0$; for $s>0$ the computation is the same, we only need to substitute $A_-+\Psi_- dt$ with $A_++\Psi_+ dt$ and $\theta(-s)$ with $\theta(s)$. If we denote $\partial_tA_--d_{A_-}\Psi_-$ by $B_t^-$, we have
\begin{align*}
F_1^\varepsilon:=&\frac 1{\varepsilon^2} d_{A_-}^*\left(d_{{A_-}}\alpha_0^\varepsilon+\frac 12[\alpha_0^\varepsilon\wedge\alpha_0^\varepsilon]\right)-[\psi_0^\varepsilon,B_t^-]\\
&-\frac 1{\varepsilon^2} *\left [\alpha_0^\varepsilon\wedge\left(d_{{A_-}}\alpha_0^\varepsilon+\frac 12[\alpha_0^\varepsilon\wedge\alpha_0^\varepsilon]\right)\right]\\
&-[\psi_0^\varepsilon,(\nabla_t^{\Psi_-}\alpha_0^\varepsilon-d_{A_-}\psi_0^\varepsilon-[\alpha_0^\varepsilon,\psi_0^\varepsilon])]\\
&-\nabla_t^{\Psi_-}\left(\nabla_t^{\Psi_-}\alpha_0^\varepsilon-d_{{A_-}}\psi_0^\varepsilon-[\alpha_0^\varepsilon,\psi_0^\varepsilon]\right)\\
&-*X_t({A_-}+\alpha_0^\varepsilon)+*X_t({A_-})\\
& -\frac 1{\varepsilon^2}*\left[\alpha\wedge *d_{{A_-}}\alpha_0^\varepsilon\right]
+\frac 1{\varepsilon^2}d_{A^0}^*[\alpha\wedge \alpha_0^\varepsilon]
\end{align*}
and since $\mathcal T^{\varepsilon,b}(\Xi_-)$ is a perturbed Yang-Mills connection and by the definition of $\alpha_0^\varepsilon+\psi_0^\varepsilon dt$ we get that
\begin{equation*}
\begin{split}
0=&\theta(-s)\left(-\nabla_t^{\Psi_-}B_t^--*X_t(A^-)\right)+
\frac 1{\varepsilon^2} d_{A_-}^*\left(d_{{A_-}}\alpha_0^\varepsilon+\frac 12[\alpha_0^\varepsilon\wedge\alpha_0^\varepsilon]\right)\\
&-\frac 1{\varepsilon^2} *\left [\alpha_0^\varepsilon\wedge\left(d_{{A_-}}\alpha_0^\varepsilon+\frac 12[\alpha_0^\varepsilon\wedge\alpha_0^\varepsilon]\right)\right]-[\psi_0^\varepsilon,B_t^-]\\
&-[\psi_0^\varepsilon,(\nabla_t^{\Psi_-}\alpha_0^\varepsilon-d_{{A_-}}\psi_0^\varepsilon-[\alpha_0^\varepsilon,\psi_0^\varepsilon])]-\nabla_t^{\Psi_-}\left(\nabla_t^{\Psi_-}\alpha_0^\varepsilon-d_{A_-}\psi_0^\varepsilon-[\alpha_0^\varepsilon,\psi_0^\varepsilon]\right)\\
&-*X_t({A_-}+\alpha_0^\varepsilon)+*X_t({A_-})+F_{[-2,0]}^\varepsilon
\end{split}
\end{equation*}
where $F_{[-2,0]}^\varepsilon$ contain only quadratic terms with support in $[-2,0]$. Therefore we can write $F_1^\varepsilon$ in the following way by the definition of $\bar\alpha_1^\varepsilon$
\begin{align*}
F_1^\varepsilon=&\theta(-s)\left(\nabla_t^{\Psi_-}B_t^-+*X_t(A^-)\right)+F_{[-2,0]}^\varepsilon\\
& -\frac 1{\varepsilon^2}*\left[\alpha\wedge *d_{{A_-}}\alpha_0^\varepsilon\right]
+\frac 1{\varepsilon^2}d_{A^0}^*[\alpha\wedge \alpha_0^\varepsilon]\\
=&\frac 1{\varepsilon^2} d_{A^0}^*d_{A^0}(\bar\alpha_1^\varepsilon+\pi_{A_-}(\alpha_0^\varepsilon))+F_{[-2,0]}^\varepsilon\\
& -\frac 1{\varepsilon^2}*\left[\alpha\wedge *d_{{A_-}}(\alpha_0^\varepsilon-\bar\alpha_1^\varepsilon)\right]
+\frac 1{\varepsilon^2}d_{A^0}^*[\alpha\wedge ((1-\pi_{A_-})\alpha_0^\varepsilon-\bar\alpha_1^\varepsilon)]
\end{align*}
The two terms are
\begin{align*}
F_2^\varepsilon:=& -\frac 1{\varepsilon^2}*\left[\alpha\wedge *\frac 12[\alpha_0^\varepsilon\wedge\alpha_0^\varepsilon]\right]\\
&-\left[\psi_0^\varepsilon,\left(\nabla_t^{\Psi_-}\alpha-d_{A^-}\psi-[\alpha,\psi]\right)\right]
-\left[\psi_0^\varepsilon,[\psi,\alpha_0^\varepsilon]-[\alpha,\psi_0^\varepsilon]\right]\\
&-\left[\psi,\left(\nabla_t^{\Psi_-}\alpha_0^\varepsilon-d_{A^0}\psi_0^\varepsilon-[\alpha_0^\varepsilon,\psi_0^\varepsilon]\right)\right]
-\nabla_t\left([\psi,\alpha_0^\varepsilon]-[\alpha,\psi_0^\varepsilon]\right)\\
&-*X_t(A^0+\alpha_0^\varepsilon)+*X_t({A_-}+\alpha_0^\varepsilon)+*X_t(A^0)-*X_t(A_-),
\end{align*}
\begin{align*}
F_3^\varepsilon:=&-\nabla_t[\psi_0^\varepsilon, \alpha_1^\varepsilon]
-[\psi_0^\varepsilon, \nabla_t\alpha_1^\varepsilon]\\
&-\frac 1{\varepsilon^2}*\left[\alpha_0^\varepsilon\wedge *\left(d_{A^0}\alpha_1^\varepsilon+\frac 12[\alpha_1^\varepsilon\wedge \alpha_1^\varepsilon]\right)\right]+\frac 1{\varepsilon^2}d_{A^0}^*[\alpha_0^\varepsilon\wedge\alpha_1^\varepsilon]\\
&-\frac 1{\varepsilon^2}*\left[\alpha_1^\varepsilon\wedge *\left(d_{A^0}\alpha_0^\varepsilon+[\alpha_0^\varepsilon\wedge \alpha_1^\varepsilon]\right)\right].
\end{align*}
\noindent This allows us to conclude that, by the a priori estimates (\ref{flow:web:w1})-(\ref{flow:web:w4}),
\begin{align*}
\left\|\mathcal F^\varepsilon_1\left(\mathcal K_2^\varepsilon(\Xi^0)\right)\right\|_{L^p}\leq & c\varepsilon^2+\left\|\frac1{\varepsilon^2}d_{A^0}^*d_{A^0}\bar\alpha_1^\varepsilon-F_1^\varepsilon\right\|_{L^p}+\left\|F^\varepsilon_2\right\|_{L^p}+\left\|F^\varepsilon_3\right\|_{L^p}
\leq  c\varepsilon^2
\end{align*}
in order to see this we need that 
$$\|d_{{A_-}}(\alpha_0^\varepsilon-\bar \alpha_1^\varepsilon) \|_{L^p(\Sigma\times S^1)}+\|(1-\pi_{A_-})\alpha_0^\varepsilon-\bar \alpha_1^\varepsilon  \|_{L^p(\Sigma\times S^1)}\leq c\varepsilon^{4}$$
which holds by the remark \ref{thm:existence:crit:est}. The second Yang-Mills flow equation can be written as
\begin{equation*}
\begin{split}
\mathcal F_2^\varepsilon\left(\mathcal K_2^\varepsilon(\Xi^0)\right)=&\partial_s\Psi^2-\nabla_t\Phi^2-\frac 1{\varepsilon^2}d_{A^2}^*(\partial_t A^2-d_{A^2}\Psi^2)\\
=&\partial_s\Psi^0-\nabla_t\Phi^0-\frac 1{\varepsilon^2}d_{A^0}^*(\partial_t A^0-d_{A^0}\Psi^0)\\
&+\frac 1{\varepsilon^2}*\left[(\alpha_0^\varepsilon+\alpha_1)\wedge*(\partial_t A^0-d_{A^0}\Psi^0)\right]\\
&+\frac 1{\varepsilon^2}*\left[(\alpha_0^\varepsilon+\alpha_1)\wedge*\left(\nabla_t^{\Psi^0}\alpha_0^\varepsilon-d_{A^0}\psi_0^\varepsilon-[\alpha_0^\varepsilon,\psi_0^\varepsilon]\right)\right]\\
&+\frac 1{\varepsilon^2}*\left[(\alpha_0^\varepsilon+\alpha_1)\wedge *\left(\nabla_t^{\Psi^0}\alpha_1-[\alpha_1,\psi_0^\varepsilon]\right)\right]\\
&+d_{A^0}^*\left((\partial_t A^2-d_{A^2}\Psi^2-(\partial_t A^0-d_{A^0}\Psi^0\right)
\end{split}
\end{equation*}
and therefore by the lemma \ref{flow:lemma:othereq1} and the identity $d_{A^0}^*(\partial_t A^0-d_{A^0}\Psi^0)=0$
\begin{equation*}
\begin{split}
\left\|\mathcal F_2^\varepsilon\left(\mathcal K_2^\varepsilon(\Xi^0)\right)\right\|_{L^p}\leq & \left \|2\left[(\partial_s A^0-d_{A^0}\Phi^0)\wedge *(\partial_t A^0-d_{A^0}\Psi^0\right] \right \|_{L^p}\\
&+ \frac c{\varepsilon^2}\left\| \alpha_0^\varepsilon+\alpha_1\right\|_{2,p,\varepsilon} +\left\|d_{A^0}^*d_{A^0}\psi_0^\varepsilon\right\|_{L^p}\leq c.
\end{split}
\end{equation*}
\end{proof}

\begin{theorem}\label{flow:thm:estk2}
We choose a regular value $b$ of $E^H$, $p>2$, then there are two positive constants $c$ and $\varepsilon_0$ such that the following holds. For any $\Xi^0\in\mathcal M^0(\Xi_-,\Xi_+)$ with $\Xi_\pm\in \mathrm{Crit}^b_{E^H}$ the estimates
\begin{equation}
\begin{split}
\|\pi_A( \xi) \|_{1,2;p,1} \leq &c\varepsilon \|\mathcal D^\varepsilon(\mathcal K_2^\varepsilon(\Xi^0))\xi\|_{0,p,\varepsilon}+c\|\pi_A\mathcal D^\varepsilon(\mathcal K_2^\varepsilon(\Xi^0))\xi\|_{L^p},\\
\|(1-\pi_A)\xi\|_{1,2;p,\varepsilon }\leq &c\varepsilon^2\|\mathcal D^\varepsilon(\mathcal K_2^\varepsilon(\Xi^0))\xi\|_{0,p,\varepsilon}+c\varepsilon \|\pi_A\mathcal D^\varepsilon\mathcal K_2^\varepsilon(\Xi^0))\xi\|_{L^p},\\
\|(1-\pi_A)\alpha \|_{1,2;p,\varepsilon }\leq &c\varepsilon^2\|\mathcal D^\varepsilon(\mathcal K_2^\varepsilon(\Xi^0))\xi\|_{0,p,\varepsilon}+c\varepsilon^2 \|\pi_A\mathcal D^\varepsilon\mathcal K_2^\varepsilon(\Xi^0))\xi\|_{L^p}\\
\end{split}
\end{equation}
hold for all compactly supported $1$-form $\xi=\alpha+\psi dt+\phi ds\in W^{1,2;p} ,\eta \in W^{1,2;p}$, $\xi \in \textrm{im }(\mathcal D^\varepsilon(\mathcal K_2^\varepsilon(\Xi^0)))^* $ and $0<\varepsilon\leq \varepsilon_0$.
\end{theorem}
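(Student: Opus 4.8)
The idea is that $\mathcal K_2^\varepsilon(\Xi^0)$ is an $O(\varepsilon^2)$-perturbation, in the $\|\cdot\|_{1,2;p,1}$-norm, of the perturbed geodesic flow $\Xi^0$ (Lemma \ref{flow:lemma:firstappr}, estimate (\ref{flow:k2:leqw3}), together with Remark \ref{thm:existence:crit:est}), so the linear estimates of Theorems \ref{flow:thme:linest333} and \ref{flow:thme:linest333tre} — which are stated with a geodesic flow as reference connection — can be transported to the reference connection $\mathcal K_2^\varepsilon(\Xi^0)$ by means of the quadratic estimates of Lemma \ref{flow:lemma:qe1}. By the Sobolev embedding one moreover gets $\|\mathcal K_2^\varepsilon(\Xi^0)-\Xi^0\|_{\infty,\varepsilon}\le\|\mathcal K_2^\varepsilon(\Xi^0)-\Xi^0\|_{\infty,1}\le c\varepsilon^{\mu}$ for some $\mu>0$, so that for $\varepsilon$ small the hypotheses of Lemma \ref{flow:lemma:qe1} are satisfied with $\bar\Xi=\Xi^0$ and $\Xi=\mathcal K_2^\varepsilon(\Xi^0)$.

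First I would bound the difference of the two linearised operators. Writing $\mathcal D^\varepsilon(\mathcal K_2^\varepsilon(\Xi^0))=\mathcal D^\varepsilon(\Xi^0)+(\mathcal D^\varepsilon(\mathcal K_2^\varepsilon(\Xi^0))-\mathcal D^\varepsilon(\Xi^0))$ and feeding $\|\mathcal K_2^\varepsilon(\Xi^0)-\Xi^0\|_{1,p,\varepsilon}\le c\varepsilon^2$, $\|\mathcal K_2^\varepsilon(\Xi^0)-\Xi^0\|_{\infty,\varepsilon}\le c\varepsilon^{\mu}$ and the Sobolev bound for $\|\xi\|_{\infty,\varepsilon}$ (Theorem \ref{flow:thm:sob}) into the first and third estimates of Lemma \ref{flow:lemma:qe1}, one obtains
\begin{equation*}
\varepsilon^2\big\|\big(\mathcal D^\varepsilon(\mathcal K_2^\varepsilon(\Xi^0))-\mathcal D^\varepsilon(\Xi^0)\big)(\xi)\big\|_{0,p,\varepsilon}\le c\,\varepsilon^{\delta}\big(\|\pi_A(\alpha)\|_{1,2;p,1}+\|(1-\pi_A)\xi\|_{1,2;p,\varepsilon}\big)
\end{equation*}
with $\delta=\delta(p)>0$ for $p>2$, and the analogous bound for the harmonic part $\pi_A$ of the difference, where I have also used $\|\xi\|_{1,p,\varepsilon}\le\|\pi_A(\alpha)\|_{1,2;p,1}+\|(1-\pi_A)\xi\|_{1,2;p,\varepsilon}$.

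Then I would substitute these into the three estimates of Theorems \ref{flow:thme:linest333} and \ref{flow:thme:linest333tre} applied to $\Xi^0$, via $\|\mathcal D^\varepsilon(\Xi^0)\xi\|_{0,p,\varepsilon}\le\|\mathcal D^\varepsilon(\mathcal K_2^\varepsilon(\Xi^0))\xi\|_{0,p,\varepsilon}+\|(\mathcal D^\varepsilon(\mathcal K_2^\varepsilon(\Xi^0))-\mathcal D^\varepsilon(\Xi^0))\xi\|_{0,p,\varepsilon}$, and similarly for $\pi_A\mathcal D^\varepsilon$; since every extra term carries a positive power of $\varepsilon$, it is absorbed into the left-hand side once $\varepsilon_0$ is small enough. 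It remains to collapse the right-hand side to the stated form. Here the hypothesis $\xi\in\textrm{im }(\mathcal D^\varepsilon(\mathcal K_2^\varepsilon(\Xi^0)))^*$ is used: $\xi$ is $L^2$-orthogonal to $\ker\mathcal D^\varepsilon(\mathcal K_2^\varepsilon(\Xi^0))$, and since $\mathcal K_2^\varepsilon(\Xi^0)$ is $O(\varepsilon^2)$-close to the Morse--Smale flow $\Xi^0$ — whose endpoints have index difference one, so that $\mathcal D^0(\Xi^0)$ is onto with one-dimensional kernel spanned by $\partial_s\Xi^0$ — this kernel stays within $O(\varepsilon)$ of the line $\mathbb R\,\partial_s\Xi^0$. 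Hence in Theorems \ref{flow:thme:linest333}, \ref{flow:thme:linest333tre} the auxiliary $1$-form $\eta$ can be chosen so that $\|\pi_A(\xi-(\mathcal D^0(\Xi^0))^*(\pi_A(\eta)))\|_{L^p}\le c\varepsilon\|\xi\|_{1,2;p,\varepsilon}$, which is again absorbed, and the correction $*[\alpha,*\omega(A)]$ produced by Lemma \ref{flow:lemma:diffD} enters only through its harmonic part and, by $\xi\in\textrm{im }(\mathcal D^\varepsilon)^*$ and the self-adjoint structure in that lemma, is dominated modulo lower order by $\|\pi_A\mathcal D^\varepsilon(\mathcal K_2^\varepsilon(\Xi^0))\xi\|_{L^p}$ itself. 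Collecting the surviving terms yields the three inequalities of the theorem.

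The step I expect to be the main obstacle is the last one: proving quantitatively, uniformly in $\varepsilon$, that $\ker\mathcal D^\varepsilon(\mathcal K_2^\varepsilon(\Xi^0))$ remains within $O(\varepsilon)$ of the flow direction $\partial_s\Xi^0$ — so that the condition $\xi\in\textrm{im }(\mathcal D^\varepsilon)^*$ genuinely removes the cokernel correction — together with the bookkeeping that ensures each remaining error term carries a strictly positive power of $\varepsilon$. The treatment of the $\omega$-term is likewise delicate, since $\omega(A)$ contains the $O(1)$ quantities $\nabla_t(\partial_tA-d_A\Psi)$ and $*X_t(A)$ and must be handled exclusively through the harmonic projection and the adjoint, exactly as in Lemma \ref{flow:lemma:diffD}.
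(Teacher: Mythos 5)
Your overall strategy coincides with the paper's: treat $\mathcal K_2^\varepsilon(\Xi^0)$ as an $O(\varepsilon^2)$-perturbation of $\Xi^0$, bound $\mathcal D^\varepsilon(\mathcal K_2^\varepsilon(\Xi^0))-\mathcal D^\varepsilon(\Xi^0)$ (and its harmonic projection, after subtracting $*[\alpha,*\omega(A^0)]$) via Lemma \ref{flow:lemma:qe1}, and feed the result into Theorems \ref{flow:thme:linest333} and \ref{flow:thme:linest333tre}, absorbing the errors for small $\varepsilon$. Up to that point your argument matches the paper, including the observation that $\tfrac1{\varepsilon^2}d_{A^0}(\mathcal K_2^\varepsilon(\Xi^0)-\Xi^0)$ converges to $\omega(A^0)$, which is exactly why the $\omega$-correction of Theorem \ref{flow:thme:linest333} is converted into $\|\pi_A\mathcal D^\varepsilon(\mathcal K_2^\varepsilon(\Xi^0))\xi\|_{L^p}$.

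The gap is in the step you yourself flag as the main obstacle: the elimination of the term $\|\pi_A(\xi-(\mathcal D^0(\Xi^0))^*(\pi_A(\eta)))\|_{L^p}$. You propose to do this by showing that $\ker\mathcal D^\varepsilon(\mathcal K_2^\varepsilon(\Xi^0))$ stays within $O(\varepsilon)$ of $\mathbb R\,\partial_s\Xi^0$ and then choosing $\eta$ accordingly. That fact is not available at this stage: in the paper the one-dimensionality of this kernel and its proximity to the flow direction are only established later (in the proof of Theorem \ref{flow:thm:timeshift}), and they are deduced from precisely the uniform estimates you are trying to prove, so your route is at best a substantial detour and at worst circular. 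The paper's mechanism is more direct and you should use it: the hypothesis $\xi\in\textrm{im }(\mathcal D^\varepsilon(\mathcal K_2^\varepsilon(\Xi^0)))^*$ gives you a concrete $\eta$ with $\xi=(\mathcal D^\varepsilon(\mathcal K_2^\varepsilon(\Xi^0)))^*\eta$; plug that very $\eta$ into Theorem \ref{flow:thme:linest333}. Then $\pi_A(\xi-(\mathcal D^0(\Xi^0))^*(\pi_A(\eta)))$ is the difference between $(\mathcal D^\varepsilon(\mathcal K_2^\varepsilon(\Xi^0)))^*\eta$ and $(\mathcal D^0(\Xi^0))^*(\pi_A(\eta))$ projected onto the harmonic part, and this is bounded by $c\varepsilon^{1-1/p}\|\xi\|_{0,p,\varepsilon}$ using the adjoint version of the quadratic estimates, the second estimate of Lemma \ref{flow:lemma:diffD}, and Theorem \ref{flow:thme:linest333tre} (the latter to control the non-harmonic part of $\eta$ in terms of $(\mathcal D^\varepsilon)^*\eta=\xi$). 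No information about the kernel is needed.
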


\begin{proof}
By the theorem \ref{thm:mainthm}, by the remark \ref{thm:existence:crit:est} and by the Sobolev theorem \ref{lemma:sobolev} we have that
$$ \varepsilon\left\|(1-\pi_A)\alpha_0^\varepsilon+\psi_0^\varepsilon dt\right\|_{\infty,\varepsilon}+\|d_A\alpha_0^\varepsilon\|_{L^\infty}+\|d_A^*\alpha_0^\varepsilon\|_{L^\infty}\leq c\varepsilon^{4-\frac 1p},$$
$$\varepsilon\|\psi_0^\varepsilon\|_{L^\infty}+\varepsilon^2\|\nabla_t\psi_0^\varepsilon\|_{L^\infty}\leq c\varepsilon^{3-\frac 1p},\quad\|\nabla_t\alpha_0^\varepsilon\|_{L^\infty}+\|\pi_A(\alpha_0^\varepsilon)\|_{L^\infty}\leq c\varepsilon^{2};$$
in addition, by the previous lemma \ref{flow:lemma:firstappr} we know that
$$\left\|\mathcal K_2^\varepsilon(\Xi^0)-(\Xi^0+\alpha_0^\varepsilon+\psi_0^\varepsilon dt)\right\|_{1,2;p,1}\leq c\varepsilon^2.$$
Thus by the quadratic estimates of the lemma \ref{flow:lemma:qe1}, we obtain
\begin{equation*}
 \begin{split}
  \varepsilon^2&\left\|\left(\mathcal D^\varepsilon \left(\mathcal K_2^\varepsilon\left(\Xi^0\right)\right)-\mathcal D^\varepsilon\left(\Xi^0\right)\right)\xi\right\|_{0,p,\varepsilon} \\
   \leq & \varepsilon^2\left\|\left(\mathcal D^\varepsilon \left(\mathcal K_2^\varepsilon\left(\Xi^0\right)\right)-\mathcal D^\varepsilon\left(\Xi^0+\alpha_0^\varepsilon+\psi_0^\varepsilon dt\right)\right)\xi\right\|_{0,p,\varepsilon}\\
&+ \varepsilon^2\left\|\left(\mathcal D^\varepsilon \left(\Xi^0+\alpha_0^\varepsilon+\psi_0^\varepsilon dt\right)-\mathcal D^\varepsilon\left(\Xi^0\right)\right)\xi\right\|_{0,p,\varepsilon}\\
\leq & c\varepsilon^{2-\frac 1p}\|\xi\|_{1,2;p,\varepsilon},
 \end{split}
\end{equation*}

\begin{equation}\label{flow:k2fhods2}
 \begin{split}
&\left\|\pi_{A^0}\left(\mathcal D^\varepsilon \left(\mathcal K_2^\varepsilon\left(\Xi^0\right)\right)\xi-\mathcal D^\varepsilon\left(\Xi^0\right)\xi-*[\alpha,*\omega(A^0)] \right)\right\|_{0,p,\varepsilon}
\leq c\varepsilon^{1-\frac 1p}\|\xi\|_{1,2;p,\varepsilon}
 \end{split}
\end{equation}
where we used that
$$\omega(A^0)= d_{A^0}\left(d_{A^0}^*d_{A^0}\right)^{-1}\left(\nabla_t\left(\partial_tA^0-d_{A^0}\Psi^0\right)+*X_t(A^0)\right)$$
and, with the notation of the proof of theorem \ref{flow:lemma:firstappr},
\begin{align*}
&\left\|\frac 1{\varepsilon^2}d_{A^0}\left(\mathcal K_2^\varepsilon(\Xi^0)-\Xi^0\right)-\omega(A^0)\right\|_{L^\infty}\\
\leq & \theta(-s)\left\|d_{A^0}\left(\mathcal T^{\varepsilon,b}(\Xi_-)-\Xi_--\bar\alpha_1^\varepsilon\right) \right\|_{L^\infty}
+c \theta(-s)\left\|\pi_{A^0}\left(\mathcal T^{\varepsilon,b}(\Xi_-)-\Xi_-\right) \right\|_{L^\infty}\\
&
+\theta(s)\left\|d_{A^0}\left(\mathcal T^{\varepsilon,b}(\Xi_+)-\Xi_+-\bar\alpha_1^\varepsilon\right) \right\|_{L^\infty}
+c\theta(s)\left\|\pi_{A^0}\left(\mathcal T^{\varepsilon,b}(\Xi_+)-\Xi_+\right) \right\|_{L^\infty}
\end{align*}
which is smaller than $c\varepsilon^2$. Furthermore, for $\xi=(\mathcal D^\varepsilon(\mathcal K_2^\varepsilon))^*\eta$, $\eta=\eta_1+\eta_2 dt+\eta_3ds$, we have by the lemmas \ref{flow:lemma:qe1} (for the adjoint operators) and \ref{flow:lemma:diffD} as well as by the theorem \ref{flow:thme:linest333tre} that
\begin{equation}\label{flow:k2fhods}
\begin{split}
&\|\pi_A(\xi-(\mathcal D^0(\Xi^0))^*(\pi_A(\eta)))\|_{L^p}\\
\leq & \|\pi_A((\mathcal D^\varepsilon(\Xi^0))^*(\eta)-*[\eta_1\wedge*\omega(A^0)]-(\mathcal D^0(\Xi^0))^*(\pi_A(\eta)))\|_{L^p}+c\varepsilon^{1-\frac 1p} \|\xi \|_{0,p,\varepsilon}\\
\leq & c\varepsilon^{1-\frac 1p} \|\xi \|_{0,p,\varepsilon}.
\end{split}
\end{equation}
The theorem follows then from the theorem \ref{flow:thme:linest333} and the last computations.
\end{proof}

%
%

\section{The map $\mathcal R^{\varepsilon,b }$ between flows}\label{flow:section:themap}

In this section we will show that, for any pair $\Xi^0_\pm\in \mathrm {Crit}_{E^H}$, any geodesic flow $\Xi^0\in \mathcal M^0 \left(\Xi^{0}_-, \Xi^{0}_+\right)$ can be approximated by a Yang-Mills flow $$\Xi^\varepsilon\in \mathcal M^\varepsilon\left(\mathcal T^{\varepsilon,b }\left(\Xi^{0}_-\right), \mathcal T^{\varepsilon,b}\left(\Xi^{0}_+\right)\right)$$ provided that $\varepsilon$ is small enough. In addition, $\Xi^\varepsilon$ will turn out to be the unique Yang-Mills flow in a ball around $\Xi^0$ of radius $\delta\varepsilon$. Therefore we can define an injective map $\mathcal R^{\varepsilon,b}$ between the flows of the two functionals provided that we choose an energy bound $b$ for the critical connections and $\varepsilon$ small enough.

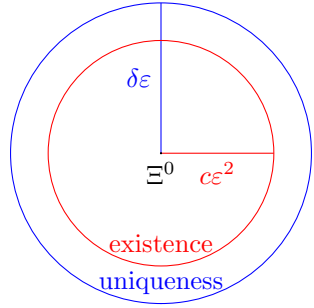
\begin{figure}[ht]
\begin{center}
\begin{tikzpicture}[scale=2] 
\draw[blue] (0,0) circle (1cm); 
\draw[blue] (0,0)--node[left]{$\delta\varepsilon$}(0,1);
\draw[red] (0,0) circle (0.75cm); 
\draw[red] (0,0)--node[below]{$c\varepsilon^2$}(0.75,0);

\draw (0,0) circle (0.1pt) node[below]{$\Xi^0$};

\draw[red] (0,-0.72) circle (0pt) node[above]  {existence} ; 
\draw[blue] (0,-1) circle (0.01pt) node[above]  {uniqueness} ; 
\end{tikzpicture}
\end{center}
\caption{Existence and uniqueness.}
\end{figure}

\begin{theorem}[Existence]\label{flow:thm:existence}
We assume that the energy functional $E^H$ is Morse-Smale and we choose two constants $b > 0$, $p>2$. There are constants $\varepsilon_0, c>0$ such  that the following holds. For every $\varepsilon \in (0,\varepsilon_0)$, every pair $\Xi^{0}_\pm:=A^0_\pm+\Psi^0_\pm dt \in \mathrm {Crit}^b_{E^H}$ that are perturbed closed geodesics of index difference one and every $\Xi^0:=A^0+\Psi^0 dt+\Phi^0 ds \in \mathcal M^0 (\Xi^{0}_-, \Xi^{0}_+)$, there exists a connection $\Xi^\varepsilon\in \mathcal M^\varepsilon\left(\mathcal T^{\varepsilon,b }\left(\Xi^{0}_-\right), \mathcal T^{\varepsilon,b}\left(\Xi^{0}_+\right)\right)$ which satisfies
\begin{equation}
d_{\Xi^0}^{*_\varepsilon }\left(\Xi^\varepsilon-\mathcal K_2^\varepsilon\left(\Xi^0\right)\right)=0, \quad \Xi^\varepsilon-\mathcal K_2^\varepsilon\left(\Xi^0\right) \in \textrm{im } \left(\mathcal D^\varepsilon\left(\mathcal K_2\left(\Xi^0\right)\right)\right)^*,
\end{equation}
\begin{equation}
\left\|\left(1-\pi_{A^0}\right)\left(\Xi^\varepsilon-\mathcal K_2^\varepsilon\left(\Xi^0\right)\right)\right\|_{1,2;p,\varepsilon}+\varepsilon\left\|\pi_{A^0}\left(\Xi^\varepsilon-\mathcal K_2^\varepsilon\left(\Xi^0\right)\right)\right\|_{1,2;p,1}\leq c\varepsilon^3.
\end{equation}
\end{theorem}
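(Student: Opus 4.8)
The plan is to construct $\Xi^\varepsilon$ by a Newton--Picard iteration starting at the approximate solution $\mathcal K_2^\varepsilon(\Xi^0)$, exactly in the spirit of the adiabatic-limit arguments of Dostoglou--Salamon. First I would set up the iteration on the affine space of connections of the form $\mathcal K_2^\varepsilon(\Xi^0)+\xi$ with $\xi\in\operatorname{im}(\mathcal D^\varepsilon(\mathcal K_2^\varepsilon(\Xi^0)))^*$, so that the gauge condition $d_{\Xi^0}^{*_\varepsilon}(\Xi^\varepsilon-\mathcal K_2^\varepsilon(\Xi^0))=0$ together with membership in $\operatorname{im}(\mathcal D^\varepsilon(\mathcal K_2(\Xi^0)))^*$ is built in from the start (note these two conditions are the same modulo lower-order terms, since $\mathcal D^\varepsilon$ differs from $d^{*_\varepsilon}d$ by zeroth-order curvature terms). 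On this space the linearised operator is invertible with the estimates of Theorem~\ref{flow:thm:estk2}; the point is that $\mathcal D^\varepsilon(\mathcal K_2^\varepsilon(\Xi^0))$ restricted to the complement of $\ker$ is bounded below in the $\|\cdot\|_{1,2;p,\varepsilon}$-norm by $c^{-1}\varepsilon^{-2}$ times the $\|\cdot\|_{0,p,\varepsilon}$-norm of its image, uniformly in $\varepsilon$.

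Next I would run the contraction. Writing $\mathcal F^\varepsilon(\mathcal K_2^\varepsilon(\Xi^0)+\xi)=\mathcal F^\varepsilon(\mathcal K_2^\varepsilon(\Xi^0))+\mathcal D^\varepsilon(\mathcal K_2^\varepsilon(\Xi^0))\xi+\mathcal C^\varepsilon(\mathcal K_2^\varepsilon(\Xi^0))(\xi)$, the iteration $\xi_{n+1}:=-\mathcal D^\varepsilon(\mathcal K_2^\varepsilon(\Xi^0))^{-1}\big(\mathcal F^\varepsilon(\mathcal K_2^\varepsilon(\Xi^0))+\mathcal C^\varepsilon(\mathcal K_2^\varepsilon(\Xi^0))(\xi_n)\big)$ is a contraction on a small ball. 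The first ingredient is the size of the zeroth term: Lemma~\ref{flow:lemma:firstappr} gives $\|\mathcal F_1^\varepsilon(\mathcal K_2^\varepsilon(\Xi^0))\|_{L^p}\leq c\varepsilon^2$ and $\|\mathcal F_2^\varepsilon(\mathcal K_2^\varepsilon(\Xi^0))\|_{L^p}\leq c$, while $\mathcal F_3^\varepsilon(\mathcal K_2^\varepsilon(\Xi^0))=0$ by the definition of $\mathcal K_2^\varepsilon$ and the relative Coulomb gauge condition (\ref{flow:k2:leqw}); weighting by the norm $\|\cdot\|_{0,p,\varepsilon}$, the $dt$-component carries a factor $\varepsilon$ and the $ds$-component a factor $\varepsilon^2$, so $\|\mathcal F^\varepsilon(\mathcal K_2^\varepsilon(\Xi^0))\|_{0,p,\varepsilon}\leq c\varepsilon^2$. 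Applying the inverse bound of Theorem~\ref{flow:thm:estk2}, which costs a factor $\varepsilon$ (for the $(1-\pi_A)$ part one gains $\varepsilon^2$, and the $\pi_A(\alpha)$ part is bounded by $c\varepsilon\|\mathcal D^\varepsilon\xi\|_{0,p,\varepsilon}+c\|\pi_A\mathcal D^\varepsilon\xi\|_{L^p}$ with the latter also of order $\varepsilon^2$ here), the first Newton step satisfies $\|(1-\pi_{A^0})\xi_1\|_{1,2;p,\varepsilon}+\varepsilon\|\pi_{A^0}\xi_1\|_{1,2;p,1}\leq c\varepsilon^3$. The second ingredient is the quadratic term: by Lemma~\ref{flow:lemma:qe2} and the Sobolev estimate Theorem~\ref{flow:thm:sob} one gets $\varepsilon^2\|\mathcal C^\varepsilon(\mathcal K_2^\varepsilon(\Xi^0))(\xi)-\mathcal C^\varepsilon(\mathcal K_2^\varepsilon(\Xi^0))(\xi')\|_{0,p,\varepsilon}\leq c\varepsilon^{-\frac1p}(\|\xi\|_{1,2;p,\varepsilon}+\|\xi'\|_{1,2;p,\varepsilon})\|\xi-\xi'\|_{1,2;p,\varepsilon}$, so on the ball of radius $\delta\varepsilon^{1+1/p}$ (or more carefully, using the refined second estimate of Lemma~\ref{flow:lemma:qe2} which only sees the $(1-\pi_A)$-part and hence gains extra powers of $\varepsilon$) the map is a contraction with factor $\leq\frac12$ for $\varepsilon$ small. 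The fixed point $\xi_\infty$ then exists, lies in $\operatorname{im}(\mathcal D^\varepsilon(\mathcal K_2(\Xi^0)))^*$ by construction, and inherits the bound $\|(1-\pi_{A^0})\xi_\infty\|_{1,2;p,\varepsilon}+\varepsilon\|\pi_{A^0}\xi_\infty\|_{1,2;p,1}\leq c\varepsilon^3$ from the geometric-series estimate, which is exactly the assertion with $\Xi^\varepsilon:=\mathcal K_2^\varepsilon(\Xi^0)+\xi_\infty$.

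One point that needs care is the limit conditions: one must check that $\xi_\infty$ is genuinely in $W^{1,2;p}$ relative to $\mathcal K_2^\varepsilon(\Xi^0)$, i.e.\ decays as $s\to\pm\infty$, so that $\Xi^\varepsilon$ converges to $\mathcal T^{\varepsilon,b}(\Xi^0_\pm)$. This follows because $\mathcal K_2^\varepsilon(\Xi^0)$ already has the right limits by (\ref{flow:k2:eqq1}) and because the iteration takes place in the Banach space defined by the $\|\cdot\|_{1,2;p,\varepsilon}$-completion of compactly supported forms, so each $\xi_n$ and hence the limit lies in $W^{1,2;p}$; one also uses the exponential convergence of the geodesic flow (\ref{flow:web:w3})--(\ref{flow:web:w4}) to control the tails of $\mathcal F^\varepsilon(\mathcal K_2^\varepsilon(\Xi^0))$. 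That $\Xi^\varepsilon$ then actually solves (\ref{flow:eq}) rather than merely $\mathcal F^\varepsilon=0$ requires observing that $\mathcal F_3^\varepsilon=0$ forces the additional gauge equation $\nabla_s^{\Phi^0}(\Phi-\Phi_2)-\varepsilon^{-4}d_{A^0}^*(A-A_2)+\varepsilon^{-2}\nabla_t^{\Psi^0}(\Psi-\Psi_2)=0$, which combined with $\mathcal F_1^\varepsilon=\mathcal F_2^\varepsilon=0$ and a unique-continuation/elliptic argument for this auxiliary equation yields $\Phi=\Phi_2$ up to the correct gauge, hence a true solution of the Yang--Mills flow.

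I expect the main obstacle to be the bookkeeping of $\varepsilon$-powers in the quadratic estimate: one needs the contraction factor on a ball whose radius is $O(\varepsilon^3)$ (not merely $O(\varepsilon)$), and the naive bound from Lemma~\ref{flow:lemma:qe2} combined with Theorem~\ref{flow:thm:sob} loses a factor $\varepsilon^{-1/p}$. The resolution is to exploit the finer second estimate in Lemma~\ref{flow:lemma:qe2}, which bounds the $\pi_A$-part of the quadratic term using only the $(1-\pi_A)$-components together with $L^\infty$-bounds on the curvature of $\mathcal K_2^\varepsilon(\Xi^0)$ (controlled by Lemma~\ref{flow:lemma:firstappr} and the geodesic-flow estimates), so that the dangerous $\pi_A(\alpha)$-direction, which only carries the weak weight $1$ in $\|\cdot\|_{1,2;p,1}$, never gets multiplied by a badly-scaled factor. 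Getting these weights to close is the technical heart of the argument; everything else is a routine application of the linear theory of Section~\ref{flow:section:linest} and the approximation lemma of Section~\ref{flow:section:firstapprox}.
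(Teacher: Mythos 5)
Your proposal is essentially the paper's own argument: a Newton iteration with frozen linearisation at $\mathcal K_2^\varepsilon(\Xi^0)$, restricted to $\textrm{im }(\mathcal D^\varepsilon(\mathcal K_2^\varepsilon(\Xi^0)))^*$, seeded by the $O(\varepsilon^2)$ error bound of Lemma \ref{flow:lemma:firstappr}, inverted via Theorem \ref{flow:thm:estk2}, and closed with the quadratic estimates of Lemmas \ref{flow:lemma:qe1} and \ref{flow:lemma:qe2}, including the crucial use of the refined $\pi_A$ versus $(1-\pi_A)$ splitting to make the $\varepsilon$-weights close. One small correction to your final paragraph: no unique-continuation or elliptic argument is needed at the end, since $\mathcal F_1^\varepsilon=\mathcal F_2^\varepsilon=0$ are literally the two Yang--Mills flow equations of (\ref{flow:eq}) and $\mathcal F_3^\varepsilon=0$ is, up to the factor $-\varepsilon^{-2}$, literally the gauge condition $d_{\Xi^0}^{*_\varepsilon}(\Xi^\varepsilon-\mathcal K_2^\varepsilon(\Xi^0))=0$; in particular one does not conclude $\Phi=\Phi_2$, nor does one need to.
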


\begin{theorem}[Local uniqueness]\label{flow:thm:locuniq}
We choose $p>3$. For every pair $\Xi^{0}_\pm:=A^0_\pm+\Psi^0_\pm dt \in \mathrm {Crit}^b_{E^H}$ that are perturbed closed geodesics of index difference one, every $\Xi^0:=A^0+\Psi^0 dt+\Phi^0 ds \in \mathcal M^0 \left(\Xi^{0}_-, \Xi^{0}_+\right)$ and any $c>0$ there are an $\varepsilon_0>0$ and a $\delta>0$ such that the following holds. If $\Xi^\varepsilon,\bar\Xi^\varepsilon\in \mathcal M^\varepsilon\left(\mathcal T^{\varepsilon,b}\left(\Xi^0_-\right),\mathcal T^{\varepsilon,b}\left(\Xi^0_+\right)\right)$ satisfy
\begin{equation*}
d_{\Xi^0}^{*_\varepsilon}\left(\Xi^\varepsilon-\mathcal K_2^\varepsilon\left(\Xi^0\right)\right)=d_{\Xi^0}^{*_\varepsilon}\left(\bar\Xi^\varepsilon-\mathcal K_2^\varepsilon\left(\Xi^0\right)\right)=0,
\end{equation*}
\begin{equation*}
\Xi^\varepsilon-\mathcal K_2^\varepsilon\left(\Xi^0\right),\, \bar \Xi^\varepsilon-\mathcal K_2^\varepsilon\left(\Xi^0\right) \in \textrm{im } \left(\mathcal D^\varepsilon\left(\mathcal K_2^\varepsilon\left(\Xi^0\right)\right)\right)^*
\end{equation*}
and the estimates
\begin{equation}\label{flow:eq:locuniq:l6}
\left\|\Xi^\varepsilon-\mathcal K_2^\varepsilon\left(\Xi^0\right)\right\|_{1,2;p,\varepsilon}\leq c\varepsilon^2,
\end{equation}
\begin{equation}\label{flow:eq:locuniq:l7}
\left\|\bar \Xi^\varepsilon-\mathcal K_2^\varepsilon\left(\Xi^0\right)\right\|_{1,2;p,\varepsilon}+\left\|\bar \Xi^\varepsilon-\mathcal K_2^\varepsilon\left(\Xi^0\right)\right\|_{L^\infty}\leq \delta\varepsilon,
\end{equation}
then $\bar\Xi^\varepsilon=\Xi^\varepsilon$.
\end{theorem}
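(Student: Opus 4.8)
Throughout write $\Xi_0:=\mathcal K_2^\varepsilon(\Xi^0)$, $\xi:=\Xi^\varepsilon-\Xi_0$, $\bar\xi:=\bar\Xi^\varepsilon-\Xi_0$ and $\zeta:=\xi-\bar\xi=\Xi^\varepsilon-\bar\Xi^\varepsilon$. The plan is to run the standard contraction-mapping uniqueness argument: $\zeta$ will satisfy a linear equation with quadratic right-hand side, and the linear estimate of Theorem~\ref{flow:thm:estk2} together with the quadratic estimates of Lemma~\ref{flow:lemma:qe2} will give an inequality $\|\zeta\|_{1,2;p,\varepsilon}\le\theta\,\|\zeta\|_{1,2;p,\varepsilon}$ with $\theta<1$, forcing $\zeta=0$. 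First I would observe that $\mathcal F^\varepsilon(\Xi_0+\xi)=\mathcal F^\varepsilon(\Xi_0+\bar\xi)=0$: indeed $\mathcal F_1^\varepsilon$ and $\mathcal F_2^\varepsilon$ vanish on $\Xi^\varepsilon$ and $\bar\Xi^\varepsilon$ by (\ref{flow:eq}), and by (\ref{flow:eqfds}) the component $\mathcal F_3^\varepsilon$ is exactly the gauge operator $d_{\Xi^0}^{*_\varepsilon}(\,\cdot-\Xi_0)$, which vanishes on both by hypothesis. Expanding $\mathcal F^\varepsilon(\Xi_0+\eta)=\mathcal F^\varepsilon(\Xi_0)+\mathcal D^\varepsilon(\Xi_0)\eta+\mathcal C^\varepsilon(\Xi_0)(\eta)$ (the splitting introduced before Lemma~\ref{flow:lemma:qe2}) and subtracting yields
\begin{equation*}
\mathcal D^\varepsilon(\Xi_0)\zeta=-\bigl(\mathcal C^\varepsilon(\Xi_0)(\xi)-\mathcal C^\varepsilon(\Xi_0)(\bar\xi)\bigr).
\end{equation*}
Moreover $\zeta\in\textrm{im }(\mathcal D^\varepsilon(\Xi_0))^{*}$, being a difference of two such forms, and $\zeta\in W^{1,2;p}$ with $\lim_{s\to\pm\infty}\zeta=0$ since $\Xi^\varepsilon$ and $\bar\Xi^\varepsilon$ have the common limits $\mathcal T^{\varepsilon,b}(\Xi^0_\pm)$; hence $\zeta$ is an admissible test form for Theorem~\ref{flow:thm:estk2}.

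Next I would apply Theorem~\ref{flow:thm:estk2} to $\zeta$. Adding its first two estimates and using $\|\cdot\|_{1,2;p,\varepsilon}\le\|\cdot\|_{1,2;p,1}$ for $\varepsilon\le1$ gives
\begin{equation*}
\|\zeta\|_{1,2;p,\varepsilon}\le c\varepsilon\bigl\|\mathcal D^\varepsilon(\Xi_0)\zeta\bigr\|_{0,p,\varepsilon}+c\bigl\|\pi_{A^0}\mathcal D^\varepsilon(\Xi_0)\zeta\bigr\|_{L^p},
\end{equation*}
so by the identity above it suffices to bound $\varepsilon\|\mathcal C^\varepsilon(\Xi_0)(\xi)-\mathcal C^\varepsilon(\Xi_0)(\bar\xi)\|_{0,p,\varepsilon}$ and $\|\pi_{A^0}(\mathcal C^\varepsilon(\Xi_0)(\xi)-\mathcal C^\varepsilon(\Xi_0)(\bar\xi))\|_{L^p}$ by $\theta\,\|\zeta\|_{1,2;p,\varepsilon}$ with $\theta<1$. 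For this I would record the a priori sizes of $\xi$ and $\bar\xi$: from (\ref{flow:eq:locuniq:l6}), $\|\xi\|_{1,2;p,\varepsilon}\le c\varepsilon^{2}$, hence by the Sobolev estimate of Theorem~\ref{flow:thm:sob} (here $p>3$ is used, so that $2-\tfrac3p>1$) $\|\xi\|_{\infty,\varepsilon}\le c\varepsilon^{2-3/p}$; from (\ref{flow:eq:locuniq:l7}), $\|\bar\xi\|_{1,2;p,\varepsilon}+\|\bar\xi\|_{\infty,\varepsilon}\le c\delta\varepsilon$; and, feeding the equations $\mathcal D^\varepsilon(\Xi_0)\xi=-\mathcal F^\varepsilon(\Xi_0)-\mathcal C^\varepsilon(\Xi_0)(\xi)$ (and likewise for $\bar\xi$) together with $\|\mathcal F^\varepsilon(\Xi_0)\|_{0,p,\varepsilon}\le c\varepsilon$ (from Lemma~\ref{flow:lemma:firstappr}, since $\mathcal F_3^\varepsilon(\Xi_0)=0$) into the third estimate of Theorem~\ref{flow:thm:estk2}, that the $(1-\pi_{A^0})$-components of $\xi$ and $\bar\xi$ are in fact of size $O(\varepsilon^{2})$ in $\|\cdot\|_{1,2;p,\varepsilon}$, i.e. much smaller than their $\pi_{A^0}$-parts, and similarly $\|\pi_{A^0}\xi\|_{L^\infty}\le c\varepsilon^{2}$, $\|\pi_{A^0}\bar\xi\|_{L^\infty}\le c\delta\varepsilon$.

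Then I would write $\mathcal C^\varepsilon(\Xi_0)(\xi)-\mathcal C^\varepsilon(\Xi_0)(\bar\xi)$, using the bilinearity of $\mathcal C^\varepsilon(\Xi_0)$ and a mean-value estimate for its genuinely nonlinear (perturbation) part, as a sum of terms each carrying one factor $\zeta$ and one factor $\xi$ or $\bar\xi$, and apply the polarised forms of the two estimates of Lemma~\ref{flow:lemma:qe2}. The first one bounds $\varepsilon^{2}\|\mathcal C^\varepsilon(\Xi_0)(\xi)-\mathcal C^\varepsilon(\Xi_0)(\bar\xi)\|_{0,p,\varepsilon}$ by
\begin{equation*}
c\bigl(\|\xi\|_{\infty,\varepsilon}+\|\bar\xi\|_{\infty,\varepsilon}\bigr)\|\zeta\|_{1,p,\varepsilon}+c\bigl(\|\xi\|_{1,p,\varepsilon}+\|\bar\xi\|_{1,p,\varepsilon}\bigr)\|\zeta\|_{\infty,\varepsilon},
\end{equation*}
and, dividing by $\varepsilon$ and inserting the above sizes together with $\|\zeta\|_{\infty,\varepsilon}\le c\varepsilon^{-3/p}\|\zeta\|_{1,p,\varepsilon}$ (and, in the one genuinely delicate monomial where $\zeta$ is undifferentiated and a differentiated $\bar\xi$ appears, the refined $O(\varepsilon^{2})$ bound on $(1-\pi_{A^0})\bar\xi$), every resulting term is bounded by $c(\delta+\varepsilon^{\kappa})\|\zeta\|_{1,2;p,\varepsilon}$ for some $\kappa>0$. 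For the $\pi_{A^0}$-part I would use the sharper second estimate of Lemma~\ref{flow:lemma:qe2}, whose right-hand side only sees the small $(1-\pi_{A^0})$-components of $\xi,\bar\xi,\zeta$ and the $\pi_{A^0}$-components always multiplied by an extra $\|\pi_{A^0}\alpha\|_{L^\infty}$ or $\varepsilon^{2}$; with $\|\pi_{A^0}\xi\|_{L^\infty}\le c\varepsilon^{2}$ and $\|\pi_{A^0}\bar\xi\|_{L^\infty}\le c\delta\varepsilon$ this again produces only terms $c(\delta+\varepsilon^{\kappa})\|\zeta\|_{1,2;p,\varepsilon}$. Choosing first $\delta$ and then $\varepsilon_0$ small enough that the resulting total coefficient is $<1$ forces $\|\zeta\|_{1,2;p,\varepsilon}=0$, i.e. $\Xi^\varepsilon=\bar\Xi^\varepsilon$.

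The main obstacle is precisely the $\varepsilon$-power bookkeeping of the last two paragraphs. The operator $\mathcal D^\varepsilon(\Xi_0)$ is singular of order $\varepsilon^{-2}$, so Theorem~\ref{flow:thm:estk2} only furnishes a right inverse whose norm degenerates like $\varepsilon^{-2}$, and whose $\pi_{A^0}$-block degenerates only like $1$ (not like $\varepsilon$); one must therefore check that the quadratic remainder $\mathcal C^\varepsilon(\Xi_0)(\xi)-\mathcal C^\varepsilon(\Xi_0)(\bar\xi)$ gains back enough powers of $\varepsilon$. This is why the $\varepsilon$-weighted norms, the orthogonal splitting into $\pi_{A^0}$- and $(1-\pi_{A^0})$-parts, the two-part structure of Lemma~\ref{flow:lemma:qe2} (with the improved bound on the $\pi_{A^0}$-projection), and the fact that $\xi$ — unlike $\bar\xi$ — obeys the much stronger bound $\varepsilon^{2}$ are all needed, and why the hypothesis $p>3$ enters; everything else is a routine term-by-term computation.
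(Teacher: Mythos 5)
Your proposal is correct and follows essentially the same route as the paper: a bootstrap that upgrades the $\delta\varepsilon$ bound on $\bar\Xi^\varepsilon-\mathcal K_2^\varepsilon(\Xi^0)$ to $O(\varepsilon^2)$ via Theorem~\ref{flow:thm:estk2} and the quadratic estimates (absorbing the $c\delta$ coefficient), followed by a contraction estimate for $\zeta=\Xi^\varepsilon-\bar\Xi^\varepsilon$ with coefficient $c\varepsilon^{1-3/p}$, which is where $p>3$ enters. The only cosmetic difference is that the paper expands the equation for $\zeta$ around $\bar\Xi^\varepsilon$ (so the nonlinearity is $\mathcal C^\varepsilon(\bar\Xi^\varepsilon)(\zeta)$ plus a difference of linearizations), whereas you expand around $\mathcal K_2^\varepsilon(\Xi^0)$ and polarize — these are algebraically equivalent.
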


\begin{definition}\label{defiR}
We choose $p>3$. For every regular value $b>0$ of the energy $E^H$ there are positive constants $\varepsilon_0$, $\delta$ and $c$ such that the assertion of the theorems \ref{flow:thm:existence} and \ref{flow:thm:locuniq} hold with these constants. Shrink $\varepsilon_0$ such that $c\varepsilon_0+c_0c\varepsilon_0^{1-\frac 3p}<\delta$, where $c_0$ is the constant of the Sobolev's theorem \ref{flow:thm:sob}. Theorems \ref{flow:thm:existence} and \ref{flow:thm:locuniq} assert that, for every pair $\Xi^0_\pm:=A_\pm^0+\Psi^0_\pm dt \in \mathrm{Crit}^b_{E^H}$ that are perturbed closed geodesics of index difference one, every $\Xi^0\in \mathcal M^0\left(\Xi^0_-,\Xi^0_+\right)$ and every $\varepsilon$ with $0<\varepsilon<\varepsilon_0$, there is a unique $\Xi^\varepsilon\in \mathcal M^\varepsilon\left(\mathcal T^{\varepsilon,b}\left(\Xi^0_-\right),\mathcal T^{\varepsilon,b}\left(\Xi^0_+\right)\right)$ satisfying
\begin{equation}\label{flow:defiR1}
d_{\Xi^0}^{*_\varepsilon }\left(\Xi^\varepsilon-\mathcal K_2^\varepsilon\left(\Xi^0\right)\right)=0, \quad \Xi^\varepsilon-\mathcal K_2^\varepsilon\left(\Xi^0\right) \in \textrm{im } \left(\mathcal D^\varepsilon\left(\mathcal K_2\left(\Xi^0\right)\right)\right)^*
\end{equation}
and
\begin{equation}\label{flow:defiR2}
\left\|\Xi^\varepsilon-\mathcal K_2^\varepsilon\left(\Xi^0\right)\right\|_{1,2;p,\varepsilon}\leq c\varepsilon^2.
\end{equation}
We define the map 
\begin{equation*}
\mathcal R ^{\varepsilon,b}: \mathcal M^0\left(\Xi^0_-,\Xi^0_+\right)\to \mathcal M^\varepsilon\left(\mathcal T^{\varepsilon,b}\left(\Xi^0_-\right),\mathcal T^{\varepsilon,b}\left(\Xi^0_+\right)\right)
\end{equation*}
by $\mathcal R ^{\varepsilon,b}\left(\Xi^0\right):=\Xi^\varepsilon$ where $\Xi^\varepsilon\in\mathcal M^\varepsilon\left(\mathcal T^{\varepsilon,b}\left(\Xi^0_-\right),\mathcal T^{\varepsilon,b}\left(\Xi^0_+\right)\right)$ is the unique Yang-Mills flow satisfying (\ref{flow:defiR1}) and (\ref{flow:defiR1}).
\end{definition}

\begin{proof}[Proof of theorem \ref{flow:thm:existence}]
We choose $\Xi^\varepsilon_2:= \mathcal K_2^\varepsilon(\Xi^0)$. By induction we define, for $k\geq3$, $\Xi^\varepsilon_k:=\Xi_{k-1}^\varepsilon+\eta^\varepsilon_{k-1}$, $\eta_k^\varepsilon=\alpha_k^\varepsilon+\psi_k^\varepsilon dt+\phi_k^\varepsilon ds$, where $\eta_k$ is defined by
\begin{equation}\label{flow:eq:exkl1}
\mathcal D^\varepsilon(\mathcal K_2^\varepsilon(\Xi^0))(\eta_{k-1}^\varepsilon)=-\mathcal F^\varepsilon(\Xi^\varepsilon_{k-1}),\quad \eta_{k-1}^\varepsilon \in \textrm{im } (\mathcal D^\varepsilon(\mathcal K_2^\varepsilon(\Xi^0)))^*.
\end{equation}
In addition, one can remark that
\begin{equation}\label{flow:eq:exkl2}
\mathcal F^\varepsilon(\Xi_{k-1}^\varepsilon)=\mathcal F^{\varepsilon}(\Xi^\varepsilon_{k-2})+\mathcal D^\varepsilon(\Xi^\varepsilon_{k-2})(\eta_{k-2}^\varepsilon)+\mathcal C^\varepsilon(\Xi^\varepsilon_{k-2})(\eta_{k-2}^\varepsilon).
\end{equation}
By theorem \ref{flow:thm:estk2} we have the estimate
\begin{align*}
\| (1-\pi_{A^0})&\eta_{k-1} ^\varepsilon\|_{1,2;p,\varepsilon}+\varepsilon\|\pi_A(\alpha_{k-1}^\varepsilon)\|_{1,2;p,1}\\
\leq &c\varepsilon^2\|\mathcal D^\varepsilon(\mathcal K_2^\varepsilon(\Xi^0))(\eta_{k-1}^\varepsilon)\|_{0,p,\varepsilon }+c\varepsilon\left\|\pi_{A^0}(\mathcal D^\varepsilon(\mathcal K_2^\varepsilon(\Xi^0))(\eta_{k-1}^\varepsilon))\right\|_{L^p}\\
= &c\varepsilon^2\|\mathcal F^\varepsilon(\Xi^\varepsilon_{k-1})\|_{0,p,\varepsilon }+c\varepsilon\left\|\pi_{A^0}(\mathcal F^\varepsilon(\Xi^\varepsilon_{k-1}))\right\|_{L^p}
\end{align*}
where the last step follows from (\ref{flow:eq:exkl1}) and by (\ref{flow:eq:exkl2}) we obtain
\begin{align*}
\varepsilon^2\|\mathcal F^\varepsilon&(\Xi^\varepsilon_{k-1})\|_{0,p,\varepsilon }+c\varepsilon\left\|\pi_{A^0}(\mathcal F^\varepsilon(\Xi^\varepsilon_{k-1}))\right\|_{L^p}\\
\leq &c\varepsilon^2\|\mathcal C^\varepsilon(\Xi^\varepsilon_{k-2})(\eta_{k-2}^\varepsilon)\|_{0,p,\varepsilon }+c\varepsilon\left\|\pi_{A^0}(\mathcal C^\varepsilon(\Xi^\varepsilon_{k-2})(\eta_{k-2}^\varepsilon))\right\|_{L^p}\\
&+c\varepsilon^2\left\|\left( \mathcal D^\varepsilon(\Xi_{k-1}^\varepsilon)-\mathcal D^\varepsilon(\mathcal K_2^\varepsilon(\Xi^0))\right) \eta_{k-1}^\varepsilon\right\|_{0,p,\varepsilon }\\
&+c\varepsilon\left\|\pi_{A^0}\left( \mathcal D^\varepsilon(\Xi^\varepsilon_{k-1})-\mathcal D^\varepsilon(\mathcal K_2^\varepsilon(\Xi^0))\right) \eta_{k-1}^\varepsilon\right\|_{L^p}\\
\intertext{and finally using the lemmas  \ref{flow:lemma:qe1} and \ref{flow:lemma:qe2}, we can conclude}
\leq &c\|\eta_{k-2}^\varepsilon\|_{\infty,\varepsilon}\|\alpha_{k-2}^\varepsilon+\psi_{k-2}^\varepsilon \|_{1,1;p,\varepsilon}\\
&+\frac c\varepsilon| |(1-\pi_A)\alpha_{k-2}^\varepsilon+\psi_{k-2}^\varepsilon dt+\phi_{k-2}^\varepsilon ds\|_{\infty,\varepsilon }\|\alpha_{k-2}^\varepsilon+\psi_{k-2}^\varepsilon dt\|_{1,p,\varepsilon}\\
&+\frac c\varepsilon\|\pi_A(\alpha_{k-2}^\varepsilon)\|_{L^\infty }\|(1-\pi_A)\alpha_{k-2}^\varepsilon+\psi_{k-2}^\varepsilon dt+\phi_{k-2}^\varepsilon ds\|_{1,p,\varepsilon}\\
&+\frac c\varepsilon\|\pi_A(\alpha_{k-2}^\varepsilon)\|_{L^\infty }(\|\pi_A(\alpha_{k-2}^\varepsilon)\|_{L^\infty}+\varepsilon^2)\|\pi_A(\alpha_{k-2}^\varepsilon)\|_{L^p}.
\end{align*}
Next, by the estimates of lemma \ref{flow:lemma:firstappr}:
\begin{equation*}
\|\mathcal F_1^\varepsilon(\mathcal K_2^\varepsilon(\Xi^0))\|_{L^p}\leq c\varepsilon^{2},\quad \|\mathcal F_2^\varepsilon(\mathcal K_2^\varepsilon(\Xi^0))\|_{L^p}\leq c,
\end{equation*}
there is a positive constant $c_0$ such that
\begin{equation*}
\| (1-\pi_{A^0})\eta_{2}^\varepsilon \|_{1,2;p,\varepsilon}+\varepsilon\|\pi_A(\alpha_{2}^\varepsilon)\|_{1,2;p,1}\leq  c_0\varepsilon^3.
\end{equation*}

Using the Sobolev's theorem \ref{flow:thm:sob}, one can easily see that, for $\varepsilon$ small enough and $k>3$,  by induction there are two positive constants $c_1$, $c$ such that
\begin{equation*}
\begin{split}
\| (1-\pi_{A^0})\eta_{3} ^\varepsilon\|_{1,2;p,\varepsilon}+\varepsilon\|\pi_A(\alpha_{3}^\varepsilon)\|_{1,2;p,1}\leq &c\varepsilon^{4-\frac 3p },\\
\| (1-\pi_{A^0})\eta_{k}^\varepsilon \|_{1,2;p,\varepsilon}+\varepsilon\|\pi_A(\alpha_{k}^\varepsilon)\|_{1,2;p,1}\leq & 2^{-(k-2)}c_0 \varepsilon^{3 },\\
\varepsilon^2\|\mathcal F^\varepsilon(\Xi_{k}^\varepsilon)\|_{0,p,\varepsilon }+\varepsilon\left\|\pi_{A^0}(\mathcal F^\varepsilon(\Xi_{k}^\varepsilon))\right\|_{L^p}\leq&2^{-(k-2)}c_1 \varepsilon^{3 }.
\end{split}
\end{equation*}
Therefore $\mathcal F^\varepsilon(\Xi^{k})$ converges to $0$ and we can choose $\Xi^\varepsilon:= \mathcal K_2(\Xi^0)+\sum_{k=2}^\infty \eta_k^\varepsilon$. Then
\begin{equation*}
\Xi^\varepsilon-\mathcal K_2^\varepsilon(\Xi^0) \in \textrm{im } (\mathcal D^\varepsilon(\mathcal K_2^\varepsilon(\Xi^0)))^*, \quad \mathcal F^\varepsilon(\Xi^{\varepsilon })=0
\end{equation*}
and
\begin{equation*}
\left\|\left(1-\pi_{A^0}\right)\left(\Xi^\varepsilon-\mathcal K_2^\varepsilon(\Xi^0)\right)\right\|_{1,2;p,\varepsilon}+\varepsilon\left\|\pi_{A^0}\left(\Xi^\varepsilon-\mathcal K_2^\varepsilon(\Xi^0)\right)\right\|_{1,2;p,1}\leq c\varepsilon^3.
\end{equation*}
Since $\mathcal F^\varepsilon(\Xi^\varepsilon)=0$, by the definition of $\mathcal F^\varepsilon_3$, $d_{\Xi^0}^{*_\varepsilon}\left(\Xi^\varepsilon-\mathcal K^\varepsilon_2(\Xi^0)\right)=0$ holds and thus we concluded the proof of the theorem \ref{flow:thm:existence}.
\end{proof}

\begin{proof}[Proof of theorem \ref{flow:thm:locuniq}]
First, we improve the estimate (\ref{flow:eq:locuniq:l7}) and we show that
\begin{equation}\label{flow:eq:locuniq:l8}
\|\bar\Xi^\varepsilon-\mathcal K_2^\varepsilon(\Xi^0)\|_{1,2;p,\varepsilon}+\varepsilon  \|\pi_{A_0}(\bar\Xi^\varepsilon-\mathcal K_2^\varepsilon(\Xi^0))\|_{1,2;p,1}\leq c\varepsilon^3.
\end{equation}
In order to fulfil this task, we consider the identity
\begin{equation}\label{flow:locuniqeq4}
\mathcal D_i^\varepsilon(\mathcal K_2^\varepsilon(\Xi^0))(\bar\Xi^\varepsilon-\mathcal K_2^\varepsilon(\Xi^0))=-C_i^\varepsilon(\mathcal K_2^\varepsilon(\Xi^0))(\bar \Xi^\varepsilon-\mathcal K_2^\varepsilon(\Xi^0))-\mathcal F_i^\varepsilon(\mathcal K_2^\varepsilon(\Xi^0))
\end{equation}
then, by theorem \ref{flow:thm:estk2}
\begin{align*}
\|\bar\Xi^\varepsilon&-\mathcal K_2^\varepsilon(\Xi^0)\|_{1,2;p,\varepsilon}+\varepsilon  \|\pi_{A_0}(\bar\Xi^\varepsilon-\mathcal K_2^\varepsilon(\Xi^0))\|_{1,2;p,1}\\
\leq&c \varepsilon^2\|\mathcal D^\varepsilon(\mathcal K_2^\varepsilon(\Xi^0))(\bar\Xi^\varepsilon-\mathcal K_2^\varepsilon(\Xi^0))\|_{0,p,\varepsilon }\\
&+c\varepsilon \|\pi_{A_0}\mathcal D^\varepsilon(\mathcal K_2^\varepsilon(\Xi^0))(\bar\Xi^\varepsilon-\mathcal K_2^\varepsilon(\Xi^0))\|_{L^p }\\
\intertext{next, we can apply (\ref{flow:locuniqeq4}), i.e.}
\leq&c \varepsilon^2\|\mathcal C^\varepsilon(\mathcal K_2^\varepsilon(\Xi^0))(\bar\Xi^\varepsilon-\mathcal K_2^\varepsilon(\Xi^0))\|_{0,p,\varepsilon }\\
&+c\varepsilon \|\pi_{A_0}\mathcal C^\varepsilon(\mathcal K_2^\varepsilon(\Xi^0))(\bar\Xi^\varepsilon-\mathcal K_2^\varepsilon(\Xi^0))\|_{L^p }\\
&+c \varepsilon^2\|\mathcal F^\varepsilon(\mathcal K_2^\varepsilon(\Xi^0))\|_{0,p,\varepsilon }+c\varepsilon \|\pi_{A_0}\mathcal F^\varepsilon(\mathcal K_2^\varepsilon(\Xi^0))\|_{L^p }\\
\intertext{and the quadratic estimates (\ref{flow:lemma:qe1}) and (\ref{flow:lemma:qe2}): }
\leq &c\varepsilon^3+ c \|\bar\Xi^\varepsilon-\mathcal K_2^\varepsilon(\Xi^0)\|_{\infty,\varepsilon} \|\bar\Xi^\varepsilon-\mathcal K_2^\varepsilon(\Xi^0)\|_{1,1;p,\varepsilon }\\
&+ \frac c{\varepsilon } \|(1-\pi_{A_0})(\bar\Xi^\varepsilon-\mathcal K_2^\varepsilon(\Xi^0))\|_{\infty,\varepsilon }\|(1-\pi_{A_0})(\bar\Xi^\varepsilon-\mathcal K_2^\varepsilon(\Xi^0))\|_{1,1;p,\varepsilon}\\
&+c  \|(1-\pi_{A_0})(\bar\Xi^\varepsilon-\mathcal K_2^\varepsilon(\Xi^0))\|_{\infty,\varepsilon }\|\nabla_t\pi_{A_0}(\bar\Xi^\varepsilon-\mathcal K_2^\varepsilon(\Xi^0))\|_{L^p}\\
&+\frac c{\varepsilon} \|\pi_{A_0}(\bar\Xi^\varepsilon-\mathcal K_2^\varepsilon(\Xi^0))\|_{L^\infty }\|(1-\pi_{A_0})(\bar\Xi^\varepsilon-\mathcal K_2^\varepsilon(\Xi^0))\|_{1,1;p,\varepsilon }\\
&+\frac c{\varepsilon} \|\pi_{A_0}(\bar\Xi^\varepsilon-\mathcal K_2^\varepsilon(\Xi^0))\|_{L^\infty }^2\|\pi_{A_0}(\bar\Xi^\varepsilon-\mathcal K_2^\varepsilon(\Xi^0))\|_{L^p}\\
&+ c\varepsilon \|\pi_{A_0}(\bar\Xi^\varepsilon-\mathcal K_2^\varepsilon(\Xi^0))\|_{L^\infty }\|\pi_{A_0}(\bar\Xi^\varepsilon-\mathcal K_2^\varepsilon(\Xi^0))\|_{L^p}\\
\leq& c\varepsilon^3+ c\delta  \|(1-\pi_{A_0})(\bar\Xi^\varepsilon-\mathcal K_2^\varepsilon(\Xi^0))\|_{1,1;p,\varepsilon}\\
&+c\delta \varepsilon \|\pi_{A_0}(\bar\Xi^\varepsilon-\mathcal K_2^\varepsilon(\Xi^0))\|_{1,1;p,1 }
\end{align*}
where the last inequality follows from the assumptions. Therefore for $\delta$ small enough (\ref{flow:eq:locuniq:l8}) holds. Furthermore, by (\ref{flow:eq:locuniq:l6}) and (\ref{flow:eq:locuniq:l8}), $\|\bar\Xi^\varepsilon-\Xi^\varepsilon\|_{1,2;p,\varepsilon}\leq c\varepsilon^2$. Thus, always by theorem \ref{flow:thm:estk2},
\begin{align*}
\|\Xi^\varepsilon-\bar\Xi^\varepsilon&\|_{1,2;p,\varepsilon}+\varepsilon  \|\pi_{A_0}(\Xi^\varepsilon-\bar\Xi^\varepsilon)\|_{1,2;p,1}\\
\leq&c \varepsilon^2\|\mathcal D^\varepsilon(\mathcal K_2^\varepsilon(\Xi^0))(\Xi^\varepsilon-\bar\Xi^\varepsilon)\|_{0,p,\varepsilon }+c \varepsilon \|\pi_{A_0}\mathcal D^\varepsilon(\mathcal K_2^\varepsilon(\Xi^0))(\Xi^\varepsilon-\bar\Xi^\varepsilon)\|_{L^p }\\
\intertext{and since $\mathcal F^\varepsilon(\bar\Xi^\varepsilon)\mathcal F^\varepsilon(\Xi^\varepsilon)=0$, $\mathcal D_i^\varepsilon(\bar \Xi^\varepsilon)( \Xi^\varepsilon-\bar\Xi^\varepsilon)=-C_i^\varepsilon(\bar \Xi^\varepsilon)( \Xi^\varepsilon-\bar\Xi^\varepsilon)$ and thus we obtain}
\leq&c \left(\varepsilon^2\|C_i^\varepsilon(\bar \Xi^\varepsilon)(\Xi^\varepsilon-\bar\Xi^\varepsilon)\|_{0,p,\varepsilon }+\varepsilon \|\pi_{A_0}C_i^\varepsilon(\bar \Xi^\varepsilon)(\Xi^\varepsilon-\bar\Xi^\varepsilon)\|_{L^p }\right)\\
&+c \varepsilon^2\|(\mathcal D^\varepsilon(\bar \Xi^\varepsilon)-\mathcal D^\varepsilon(\mathcal K_2^\varepsilon(\Xi^0)))(\Xi^\varepsilon-\bar\Xi^\varepsilon)\|_{0,p,\varepsilon }\\
&+c\varepsilon \|\pi_{A_0}(\mathcal D^\varepsilon(\bar\Xi^\varepsilon)-\mathcal D^\varepsilon(\mathcal K_2^\varepsilon(\Xi^0)))(\Xi^\varepsilon-\bar\Xi^\varepsilon)\|_{L^p }\\
\leq &c\varepsilon^{1-\frac 3p}  \left( \|(1-\pi_{A_0})(\Xi^\varepsilon-\bar\Xi^\varepsilon)\|_{1,1;p,\varepsilon}+ \varepsilon \|\pi_{A_0}(\Xi^\varepsilon-\bar\Xi^\varepsilon)\|_{1,1;p,1 }\right)
\end{align*}
where the last inequality follows from the quadratic estimates of the lemmas  \ref{flow:lemma:qe1} and \ref{flow:lemma:qe2}. Hence for $p>3$ and $\varepsilon$ small enough $\bar\Xi^\varepsilon=\Xi^\varepsilon$.
\end{proof}

%
%

\section{A priori estimates for the Yang-Mills flow}\label{flow:section:apriori}

In this section we will prove some a priori estimates, that will be stated in theorem \ref{flow:thm:apriori},  on the curvature for a perturbed Yang-Mills flow. These will then be used to prove the surjectivity of the map $\mathcal R^{\varepsilon, b}$ in the section \ref{flow:section:surj}. \\

In order to simplify the exposition we denote by $\|\cdot\|$ the $L^2$-norm over $\Sigma$ and we introduce the following notation. We choose two perturbed Yang-Mills connections $\Xi_\pm^\varepsilon\in \mathrm{Crit}_{\mathcal {YM}^{\varepsilon,H}}^b$ where $b>0$. For any Yang-Mills flow $\Xi^\varepsilon:=A+\Psi dt+\Phi ds \in\mathcal M^\varepsilon(\Xi^\varepsilon_-,\Xi^\varepsilon_+)$ we define
\begin{equation}\label{flow:eq:apriori:eq1}
B_t:=\partial_t A-d_A\Psi,\quad B_s:=\partial_s A-d_A\Phi, \quad C:=\partial_t\Psi-\partial_s\Phi-[\Psi,\Phi];
\end{equation}
thus, the Yang-Mills flow equations (\ref{flow:intro:woeps}) can be written as
\begin{equation}\label{flow:eq:apriori:eq2}
B_s+\frac 1{\varepsilon^2}d_A^*F_A-\nabla_t B_t-*X_t(A)=0,\quad C-\frac 1{\varepsilon^2}d_A^*B_t=0;
\end{equation}
with this notation, the Bianchi identities are
\begin{equation}\label{flow:eq:apriori:eq3}
\nabla_t F_A=d_AB_t,\quad \nabla_s F_A=d_AB_s,\quad \nabla_t B_s-\nabla_s B_t=d_AC
\end{equation}
and the commutation formulas
\begin{equation}\label{flow:eq:apriori:eq4}
[\nabla_t,d_A]=B_t,\quad [\nabla_s,d_A]=B_s,\quad [\nabla_s,\nabla_t]=C.
\end{equation}

Furthermore, we have the identity
\begin{equation}
\left\|B_s+ C\, dt\right\|_{0,2,\varepsilon }^2
= \mathcal {YM}^{\varepsilon,H }(\Xi_-^\varepsilon)-\mathcal {YM}^{\varepsilon,H }(\Xi_+^\varepsilon)
\end{equation}
which can be showed by the direct computation:
\begin{align*}
\left\| B_s+ C\,dt\right\|_{0,2,\varepsilon }^2
=&\int_{\mathbb R}\| B_s+C\, dt \|^2_{0,2,\varepsilon}\,ds\\
\intertext{and by the Yang-Mills flow equations (\ref{flow:intro:woeps})}
= &\int_{\mathbb R}\int_{\Sigma\times S^1 }\langle B_s,-\frac 1{\varepsilon^2}d_A^*F_A+\nabla_t B_t +*X_t(A)\rangle\,\mathrm{dvol}_{\Sigma} \,dt\,ds\\
&+ \int_{\mathbb R}\int_{\Sigma\times S^1 }\langle C, d_A^*B_t\rangle\,\mathrm{dvol}_{\Sigma}\, dt\,ds\\
= &\int_{\mathbb R}\int_{\Sigma\times S^1 }\left(-\frac 1{\varepsilon^2}\langle d_A B_s,F_A\rangle- \langle\nabla_t B_s, B_t \rangle\right)\mathrm{dvol}_{\Sigma} \,dt\,ds\\
&-\int_{\mathbb R}\partial_s H(A) \, ds+ \int_{\mathbb R}\int_{\Sigma\times S^1 }\langle d_AC, B_t\rangle\,\mathrm{dvol}_{\Sigma} \,dt\,ds\\
\intertext{and by the Bianchi identity (\ref{flow:eq:apriori:eq3})}
= &-\int_{\mathbb R}\int_{\Sigma\times S^1 }\frac 1{\varepsilon^2}\langle \nabla_s F_A,F_A\rangle\mathrm{dvol}_{\Sigma} \,dt\,ds-\int_{\mathbb R}\partial_s H(A) \, ds\\
&-\int_{\mathbb R}\int_{\Sigma\times S^1 } \langle\nabla_sB_t, B_t\rangle\mathrm{dvol}_{\Sigma} \,dt\,ds\\
=& \mathcal {YM}^{\varepsilon,H }(A_-+\Psi_-dt)-\mathcal {YM}^{\varepsilon,H }(A_++\Psi_+dt).
\end{align*}
Furthermore, we denote by $a_i^1$, $i=1,2,3$, the three operators $d_A$, $d_A^*$ and $\varepsilon\nabla_t$, by $a_i^2$, $i=1,\dots ,9$, the nine operators defined combining two operators between $d_A$, $d_A^*$ and $\varepsilon\nabla_t$, by $a_i^3$, $i=1,\dots ,27$, the $27$ operators defined combining three operators between $d_A$, $d_A^*$, $\varepsilon\nabla_t$ and finally we denote by $a_i^4$, $i=1,\dots ,81$, the $81$ operators defined combining four operators between $d_A$, $d_A^*$, $\varepsilon\nabla_t$. In addition we denote by $\mathcal N_j(t,s)$ the norms
$$\mathcal N_j(t,s):=\sum_{i=1,\dots,3^j}\left(\|a_i^jB_s\|^2+\varepsilon^4\|a_i^jC\|^2\right).$$

\begin{theorem}\label{flow:thm:apriori}
We choose an open interval $\Omega\subset \mathbb R$, a compact set $Q\subset \Omega$,  $p>4$ and two constants $b, c_0>0$. There are two positive constants $\varepsilon_0$, $c$ such that the following holds. If a perturbed Yang-Mills flow $\Xi=A+\Psi dt+\Phi ds \in \mathcal M^\varepsilon(\Xi_-,\Xi_+)$, with $\Xi_-, \Xi_+ \in \mathrm{Crit}^b_{\mathcal {YM}^{\varepsilon, H}}$ and $0<\varepsilon<\varepsilon_0$, satisfies
\begin{equation}\label{flow:apriori74}
\sup_{(t,s)\in S^1\times \mathbb R}\left(\|\partial_t A-d_A\Psi\|_{L^4(\Sigma)}+\|\partial_s A-d_A\Phi\|_{L^\infty(\Sigma)}\right)\leq c_0,
\end{equation} 
then
\begin{equation}
\int_{S^1\times Q}\left(\|F_A\|^2+\varepsilon^2\|\nabla_t F_A\|^2+\|d_A^*F_A\|^2\right)dt\,ds\leq c\varepsilon^4,
\end{equation}
\begin{equation}\label{oggi1}
\begin{split}
\sup_{S^1\times Q }&\left(\varepsilon^2\|B_t\|^2+ \|d_A^*d_AB_t\|^2+ \|d_A^*d_Ad_A^*B_t\|^2\right)\\
\leq& c\int_{S^1\times \Omega} \left(\varepsilon^2\|B_s\|^2+\varepsilon^2\|B_t\|^2+\|F_A\|^2+\varepsilon^2c_{\dot X_t(A)}+\varepsilon^4\|C\|^2\right) dt\, ds,
\end{split}
\end{equation}
\begin{equation}\label{oggi2}
\begin{split}
\sup_{S^1\times Q }&\left(\varepsilon^2\|B_s\|^2+ \|d_A^*d_AB_s\|^2+ \|d_Ad_A^*B_s\|^2\right)\\
\leq& c\int_{S^1\times \Omega} \left(\varepsilon^2\|B_s\|^2+\varepsilon^2\|B_t\|^2+\|F_A\|^2+\varepsilon^2c_{\dot X_t(A)}+\varepsilon^4\|C\|^2\right) dt\, ds,
\end{split}
\end{equation}
\begin{equation}\label{flow:eq:apriorig2}
\sup_{S^1\times Q }\left(\|F_A\|+\varepsilon \|\nabla_t F_A\|+\varepsilon^2 \|\nabla_t\nabla_t F_A\|\right)\leq c \varepsilon^{2},
\end{equation}
\begin{equation}\label{flow:eq:apriorig1}
\sup_{(t,s)\in S^1\times \mathbb R}\|F_A\|_{L^\infty(\Sigma)}\leq c \varepsilon^{2}
\end{equation}
where $c_{\dot X_t(A)}$ is a constant which estimates the norm $\|\dot X_t(A)\|_{L^\infty(\Sigma)}^2$. The constant $c$ depends on $\Omega$ and on $Q$, but only on their length and on the distance between their boundaries. Furthermore,
\begin{equation}\label{flow:eq:apriorig20}
\begin{split}
\sup_{(t,s)\in S^1\times Q}&\left(\varepsilon^2\|B_s\|^2+\varepsilon^4\|C\|^2+\mathcal N_1+\mathcal N_2+\mathcal N_3+\mathcal N_4\right)\\
\leq &\varepsilon^2c \int_{S^1\times \Omega}\left(\|B_s\|^2+\varepsilon^2\|C\|^2\right) dt\,ds.
\end{split}
\end{equation}
\end{theorem}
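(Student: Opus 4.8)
The plan is to express every quantity occurring on the left-hand sides as (a component of) the solution of a parabolic differential inequality for the $\varepsilon$-rescaled heat operator $\varepsilon^2\nabla_s-\varepsilon^2\nabla_t^2+\Delta_A$, equivalently $\nabla_s-\nabla_t^2+\frac1{\varepsilon^2}\Delta_A$, and then to apply an interior parabolic estimate on the nested cylinders $S^1\times Q\subset S^1\times\Omega$ in order to bound a pointwise ($L^2$-in-$\Sigma$, sup-in-$(t,s)$) norm by an integral over the larger cylinder. The three inputs are the flow equations (\ref{flow:eq:apriori:eq2}); the Bianchi identities (\ref{flow:eq:apriori:eq3}) together with the commutation formulas (\ref{flow:eq:apriori:eq4}); and the energy identity $\|B_s+C\,dt\|_{0,2,\varepsilon}^2=\mathcal{YM}^{\varepsilon,H}(\Xi_-)-\mathcal{YM}^{\varepsilon,H}(\Xi_+)$, which, because $\mathcal{YM}^{\varepsilon,H}(\Xi_\pm)\le b$ and $\mathcal{YM}^{\varepsilon,H}$ is bounded below, yields the a priori bound $\int_{S^1\times\mathbb R}(\|B_s\|^2+\varepsilon^2\|C\|^2)\,dt\,ds\le c$. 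Two structural facts make the $\varepsilon$-powers of the statement appear: the flat connections in $\mathcal M^g(P)$ are irreducible, so $\Delta_A$ on $\Sigma$-forms (and $d_A^*$ on $\Sigma$-$2$-forms) is invertible with $\varepsilon$-independent bound, hence the strong elliptic term $\frac1{\varepsilon^2}\Delta_A$ gains a factor $\varepsilon^2$ per $\Sigma$-derivative; and, after the substitution $(t,s)\mapsto(\varepsilon t,\varepsilon^2s)$ of Section \ref{sec:ymflow}, the remaining derivatives carry exactly the weights of the norm of Section \ref{flow:subsection:norm}.

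First I would prove the $L^2$-estimate. The second flow equation gives $d_A^*B_t=\varepsilon^2C$ and the first gives $d_A^*F_A=\varepsilon^2(\nabla_tB_t+*X_t(A)-B_s)$; since the flat connections are irreducible one has the slicewise estimate $\|F_A\|\le c\|d_A^*F_A\|$, so it suffices to bound $\int_{S^1\times\Omega}(\|\nabla_tB_t\|^2+\|d_AB_t\|^2)$ uniformly in $\varepsilon$. This I would obtain by testing the heat inequality for $B_t$ against $B_t$, using the energy bound, the hypothesis (\ref{flow:apriori74}), and the exponential convergence of $A$ to the endpoints $A_\pm$, where $\|F_{A_\pm}\|=O(\varepsilon^2)$ by Theorem \ref{thm:mainthm} and Remark \ref{thm:existence:crit:est}. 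Combining these gives $\int_{S^1\times Q}(\|F_A\|^2+\varepsilon^2\|\nabla_tF_A\|^2+\|d_A^*F_A\|^2)\le c\varepsilon^4$.

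Next I would treat the pointwise bounds, one family at a time. For (\ref{flow:eq:apriorig20}): using (\ref{flow:eq:apriori:eq2})--(\ref{flow:eq:apriori:eq4}) and a Weitzenb\"ock identity on $\Sigma$ one checks that $B_s$ and $\varepsilon^2C$ satisfy $(\varepsilon^2\nabla_s-\varepsilon^2\nabla_t^2+\Delta_A)(\,\cdot\,)$ equal to expressions quadratic in the curvature quantities (schematically $*[B_s\wedge*F_A]$, $[B_t\wedge B_t]$, $[B_s,C]$, $\dot X_t(A)B_s$, and their $\varepsilon$-weighted relatives), which are absorbed using (\ref{flow:apriori74}) and the Sobolev embedding on $\Sigma\times S^1$ (here $p>4$ enters); the interior parabolic estimate on a slightly smaller cylinder then yields the $\mathcal N_1$-part. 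Applying $d_A$, $d_A^*$, $\varepsilon\nabla_t$ once, twice and three times to these heat equations and commuting derivatives through produces heat inequalities for $a_i^jB_s$ and $a_i^jC$ whose error terms are again quadratic and bounded by the already-established $\mathcal N_k$ with $k<j$; iterating over the four nested cylinders gives all of (\ref{flow:eq:apriorig20}). The estimates (\ref{oggi1}), (\ref{oggi2}) follow the same pattern, using $d_A^*B_t=\varepsilon^2C$ to trade $B_t$-norms for $C$-norms; and, feeding (\ref{oggi1})--(\ref{oggi2}) and the $L^2$-bound into the heat inequality for $F_A$ (whose right-hand side is $\sim*[B_s\wedge*F_A]+[B_t\wedge B_t]+\dots$) gives (\ref{flow:eq:apriorig2}), while $\|F_A\|\le c\varepsilon^2$ slicewise, an elliptic bootstrap on $\Sigma$ ($d_AF_A=0$, $d_A^*F_A=O(\varepsilon^2)$, hence $\|F_A\|_{W^{1,p}(\Sigma)}\le c\varepsilon^2$), and $W^{1,p}(\Sigma)\hookrightarrow L^\infty(\Sigma)$ give (\ref{flow:eq:apriorig1}).

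The hard part will be the uniformity in $\varepsilon$: each differentiation of the flow equations produces a factor $\varepsilon^{-2}$ in front of a curvature term, so one must track the exact $\varepsilon$-weights to be sure that the quadratic error terms are genuinely absorbed by the gains from $\frac1{\varepsilon^2}\Delta_A$ and from $F_A=O(\varepsilon^2)$, rather than producing a blow-up. This is precisely why (\ref{flow:apriori74}) is phrased as a bound on $\|\partial_tA-d_A\Psi\|_{L^4(\Sigma)}$ and $\|\partial_sA-d_A\Phi\|_{L^\infty(\Sigma)}$ that is uniform in $(t,s)$ but lives on single $\Sigma$-slices: the parabolic estimates have to be localized in $s$ (so that $c$ depends on $Q,\Omega$ only through their lengths and the distance between their boundaries), and the nonlinear terms must be estimated slicewise in $\Sigma$ before integrating over $S^1\times\Omega$. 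A secondary, purely organizational difficulty is that reaching the fourth-order quantities $a_i^4B_s$, $a_i^4C$ in $\mathcal N_4$ requires running the differentiate--commute--absorb scheme four times, each pass generating a large but finite list of commutator terms that all reduce, via (\ref{flow:eq:apriori:eq2})--(\ref{flow:eq:apriori:eq4}), to quantities already controlled at the previous stage.
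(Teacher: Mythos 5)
Your overall strategy coincides with the paper's: one computes $\tfrac12(\partial_t^2-\partial_s)$ of the slicewise $L^2(\Sigma)$-norms of the curvature quantities, obtains differential inequalities of the form $(\Delta-\partial_s)w\ge -aw$ resp.\ $(\Delta-\partial_s)u\ge g-f$, and applies the mean-value and integral estimates on nested parabolic cylinders (Lemmas \ref{flow:lemma:SWB1} and \ref{flow:lemma:SWB4}), with the global energy identity $\|B_s+C\,dt\|_{0,2,\varepsilon}^2=\mathcal{YM}^{\varepsilon,H}(\Xi_-)-\mathcal{YM}^{\varepsilon,H}(\Xi_+)$ and the slicewise bound $\int_0^1(\varepsilon^{-2}\|F_A\|^2+\|B_t\|^2)\,dt\le 2b$ feeding the right-hand sides, and the cascade of inequalities for $\mathcal N_1,\dots,\mathcal N_4$ run over four nested cylinders exactly as you describe. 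There are, however, two concrete defects in your outline.

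First, you invoke the slicewise estimate $\|F_A\|\le c\|d_A^*F_A\|$ (and, implicitly, all of Lemmas \ref{lemma76dt94}--\ref{lemma82dt94}) by appealing to irreducibility of the flat connections, but $A(t,s)$ is not flat, and these lemmas require the hypothesis $\|F_A\|_{L^2(\Sigma)}\le\delta_0$ \emph{uniformly in $(t,s)$} before they apply. This smallness is not free: the energy bound only gives $\int_0^1\|F_A\|^2\,dt\le 2b\varepsilon^2$ for each $s$, and upgrading this to $\sup_{(t,s)}\|F_A\|_{L^2(\Sigma)}\le\delta_0$ is itself a mean-value argument (the paper's Step 1, applying Lemma \ref{flow:lemma:SWB1} to $\|F_A\|^2$ on a cylinder $P_r$ with $r$ chosen in terms of $\delta_0$). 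Without this preliminary step the Poincar\'e-type inequalities you use throughout are unjustified. Second, your derivation of the $L^2$-estimate appeals to the exponential convergence of $A$ to $A_\pm$; this is circular, since the exponential convergence (Theorems \ref{flow:thm:expconv} and \ref{flow:thm:apriori22}) is proved \emph{from} the a priori estimates of the present theorem, in particular from (\ref{flow:eq:apriorig20}). It is also unnecessary: Lemma \ref{flow:lemma:SWB4} applied to the inequality for $\|F_A\|^2$ (whose right-hand side contains $+\tfrac1{\varepsilon^2}\|d_A^*F_A\|^2+\|\nabla_tF_A\|^2$ with a favourable sign) needs only the integrability of $\|F_A\|^2$ and $\|B_t\|^2$ over $S^1\times\Omega$, which the energy bound already provides. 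Both defects are repairable, but as written the proof does not close.
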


\begin{remark}
 In the theorem we assume that the $L^4(\Sigma)$-norm of the curvature term $\partial_tA-d_A\Psi$ and the $L^\infty(\Sigma)$-norm of $\partial_sA-d_A\Phi$ are uniformly bounded; this condition is, for a Yang-Mills flow, always satisfied if we choose $\varepsilon$ small enough as we will see in section \ref{flow:section:linfty}.
\end{remark}

Before starting to prove the last theorem we consider the following three lemmas. The first one show a regularity result for the curvature terms $B_s$, $C$. The last two are the lemmas B.1. and B.4. that Salamon and Weber proved in \cite{MR2276534}. For an interval $Q\subset \mathbb R$ and a $0$- or $1$-form $\alpha$ we define the norm $\|\cdot\|_{1,2,2;2,\varepsilon,Q}$ by
\begin{align*}
\|&\alpha\|_{1,2,2;2,\varepsilon,Q}^2:=\int_{S^1\times Q}\left(\|\alpha\|^2+\varepsilon^4\|\nabla_s\alpha\|^2+\|d_{  A }\alpha\|^2+\|d_{  A }^*\alpha\|^2\right) dt\,ds\\
&+\int_{S^1\times Q}\left(\varepsilon^2\|\nabla_t \alpha\|^2+\|d_{  A }^*d_{  A }\alpha\|^2+\|d_{  A }d_{  A }^*\alpha\|^2+\varepsilon^4\|\nabla_t\nabla_t \alpha\|^2\right) dt\, ds.
\end{align*}

\begin{lemma}\label{flow:smooth}
We choose a positive constant $b$, then there are two positive constants $\varepsilon_0$, $c$ such that the following holds. For any perturbed Yang-Mills flow $\Xi=A+\Psi dt+\Phi ds \in \mathcal M^\varepsilon(\Xi_-,\Xi_+)$, with $\Xi_-, \Xi_+ \in \mathrm{Crit}^b_{\mathcal {YM}^{\varepsilon, H}}$ and $0<\varepsilon<\varepsilon_0$, satisfies
$$\| B_s \|_{1,2,2;2,\varepsilon ,\mathbb R}^2+ \varepsilon ^2\|C \|_{1,2,2;2,\varepsilon ,\mathbb R}^2 \leq c. $$
\end{lemma}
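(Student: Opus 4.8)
The plan is to prove Lemma~\ref{flow:smooth} by combining the global energy identity with the linear estimates of Section~\ref{flow:section:linest} applied to the curvature terms, exploiting the fact that $B_s+C\,dt$ and its derivatives satisfy parabolic-type equations obtained by differentiating the Yang-Mills flow equations~(\ref{flow:eq:apriori:eq2}).

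First I would record the starting point: integrating the energy identity
\[
\|B_s+C\,dt\|_{0,2,\varepsilon}^2=\mathcal{YM}^{\varepsilon,H}(\Xi_-)-\mathcal{YM}^{\varepsilon,H}(\Xi_+)\le 2b
\]
gives the zeroth-order bound $\int_{S^1\times\mathbb R}(\|B_s\|^2+\varepsilon^2\|C\|^2)\,dt\,ds\le 2b$; here I use that both endpoints lie in $\mathrm{Crit}^b_{\mathcal{YM}^{\varepsilon,H}}$, so their Yang-Mills energy is bounded by $b$. Next, from the second equation in~(\ref{flow:eq:apriori:eq2}) we have $\varepsilon^2 C=d_A^*B_t$, and applying $\nabla_s$ to the flow equations together with the Bianchi identities~(\ref{flow:eq:apriori:eq3}) and the commutation formulas~(\ref{flow:eq:apriori:eq4}) shows that $B_s$ satisfies an equation of the schematic form
\[
\varepsilon^2\nabla_s B_s-\varepsilon^2\nabla_t\nabla_t B_s+\Delta_A B_s = \text{(lower order terms involving }B_s,C,F_A\text{ and the curvature of }\Xi^0),
\]
and similarly for $C$. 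The nonlinear terms are all quadratic or contain a factor of $F_A$; the point is that the a priori $L^\infty$-bounds on $B_t$, $B_s$ that come from the Yang-Mills side (or, in the regime where the lemma is used, from the hypothesis~(\ref{flow:apriori74})) make these terms controllable. I would then apply a cutoff-function argument in the $s$-variable: multiplying by $\beta(s)^2$ for smooth compactly supported $\beta$, testing against $B_s$, and integrating by parts produces, after using $2ab\le a^2+b^2$ to absorb the commutator terms into the left side, the interior estimate
\[
\|B_s\|_{1,2,2;2,\varepsilon,Q}^2+\varepsilon^2\|C\|_{1,2,2;2,\varepsilon,Q}^2
\le c\int_{S^1\times\Omega}(\|B_s\|^2+\varepsilon^2\|C\|^2)\,dt\,ds
\]
for any compact $Q\subset\Omega$; iterating from the energy bound covers all of $\mathbb R$ by a standard exhaustion (using that near $\pm\infty$ the flow converges exponentially to the critical connections, so the right-hand side is uniformly small there).

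The main obstacle I anticipate is controlling the second-order terms $d_A^*d_A B_s$, $d_Ad_A^*B_s$, $\varepsilon^2\nabla_t\nabla_t B_s$ (and the analogues for $C$): these are not directly estimated by the naive energy inequality and require using the full strength of the parabolic linear estimate of Lemma~\ref{flow:lemma:c2} (or its globalized version, Lemma~\ref{flow:cor:alpha}) applied locally on charts where the connection $A$ is $C^1$-close to a flat reference connection, which in turn needs the hypothesis that $\varepsilon\|\partial_t A\|_{L^\infty}$ is bounded. One has to be careful that the term $\|\pi_A(\cdot)\|$ appearing on the right of those estimates does not cause trouble; here one exploits that $B_s$ need not be harmonic but its projection is itself controlled by $B_s$ and $B_t$ via the flow equation. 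Once these second-order bounds are in hand, the remaining terms in $\|\cdot\|_{1,2,2;2,\varepsilon,\mathbb R}$ follow by interpolation and the commutation formulas, and summing the local estimates over a locally finite cover of $\mathbb R$ together with the global energy bound yields the uniform constant $c$ claimed in the statement.
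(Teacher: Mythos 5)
Your overall architecture (differentiate the flow equations to get parabolic equations for $B_s$ and $C$, prove a cutoff/local second-order estimate, sum over a cover of $\mathbb R$ using the global energy identity) is the same as the paper's. But there is a genuine gap in how you control the nonlinear terms, and it is a circularity rather than a technicality. You propose to handle the quadratic terms ($*[B_s\wedge *B_t]$, $\tfrac1{\varepsilon^2}*[B_s,*F_A]$, $[C,B_t]$, \dots) using ``the a priori $L^\infty$-bounds on $B_t$, $B_s$ that come from the Yang--Mills side (or \dots from the hypothesis~(\ref{flow:apriori74}))'', and you invoke Lemma~\ref{flow:lemma:c2}/Lemma~\ref{flow:cor:alpha}, which require a uniform $C^1$-closeness of $A(s,t)$ to a flat reference connection. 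None of these pointwise bounds is available here: Lemma~\ref{flow:smooth} is used \emph{inside} the proof of Theorem~\ref{flow:thm:linf} (Case~3), which is precisely the theorem establishing that (\ref{flow:apriori74}) holds, and in that proof one is arguing by contradiction with a sequence along which the sup-norms blow up. So any proof of Lemma~\ref{flow:smooth} that presupposes $L^\infty$ control of $B_t$ or $B_s$, or $C^1$ control of $A$, cannot be used where the lemma is needed. The same caveat applies to your appeal to exponential convergence near $\pm\infty$, which is only proved later (Section~\ref{flow:section:expconv}) and on top of the a priori estimates; fortunately the summation step does not actually need it, since the overlapping local estimates are absorbed by the global energy bound alone.

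The paper closes this gap using only the energy bound $\int_{\Sigma\times S^1}\left(\varepsilon^{-2}\|F_A\|^2+\|B_t\|^2\right)dt\le b$ together with the $\varepsilon$-weighted Sobolev inequality on $\Sigma\times S^1$: for each fixed $s$ one bounds, e.g.,
\begin{equation*}
\varepsilon^2\left\|*[B_s\wedge *B_t]\right\|_{L^2(\Sigma\times S^1)}^2\le c\,\varepsilon^2\|B_t\|_{L^2(\Sigma\times S^1)}^2\|B_s\|_{L^\infty(\Sigma\times S^1)}^2\le c\,\varepsilon\left(\|B_s\|^2+\|d_A^*d_AB_s\|^2+\|d_Ad_A^*B_s\|^2+\varepsilon^4\|\nabla_t\nabla_tB_s\|^2\right),
\end{equation*}
i.e.\ the quadratic structure supplies $\varepsilon^2$ while the Sobolev embedding costs only $\varepsilon^{-1}$, leaving a factor $c\varepsilon$ times the very norm $\|B_s\|_{1,2,2;2,\varepsilon}^2$ one is estimating, which is then absorbed for $\varepsilon$ small. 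Your proof would be repaired by replacing the $L^\infty$/$C^1$ hypotheses with this energy-plus-weighted-Sobolev absorption; note also that the second-order local estimate here is needed only at the $L^2$ level, where it follows by expanding the square and integrating by parts, so the multiplier machinery behind Lemma~\ref{flow:lemma:c2} (and its $C^1$-closeness hypothesis) is not required.
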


\begin{proof} By the Yang-Mills flow equation (\ref{flow:eq:apriori:eq2}), the Bianchi identity (\ref{flow:eq:apriori:eq3}) and the commutation formula (\ref{flow:eq:apriori:eq2}) we have 
\begin{align*}
0=&\nabla_s B_s +\frac 1{\varepsilon^2  }\nabla_sd_{  A }^*F_{  A }-\nabla_s\nabla_t B_t - \nabla_s*X_{t}(  A )\\
=&\nabla_s B_s+\frac 1{\varepsilon^2  }d_{  A }^*d_{  A } B_s -\nabla_t\nabla_t B_s -\nabla_td_{  A }C \\
&- d*X_{t}(  A )  B_s -\frac1{\varepsilon^2 }*[ B_s ,*F_{  A }]-[C , B_t ]\\
=&\nabla_s B_s +\frac 1{\varepsilon^2  }d_{  A }^*d_{  A } B_s -\nabla_t\nabla_t B_s -d_{  A }d_{  A }^* B_s \\
&- d*X_{t}(  A ) B_s -\frac1{\varepsilon^2 }*[ B_s ,*F_{  A }]-2[C , B_t ],
\end{align*}
\begin{align*}
0=&\nabla_sC -\frac 1{\varepsilon^2  }\nabla_sd_{  A }^* B_t \\
=&\nabla_sC +\frac 1{\varepsilon^2  }d_{  A }^*d_{  A }C +\frac 1{\varepsilon^2  } d_{  A }^*\nabla_t B_s +\frac 1{\varepsilon^2  }*[ B_s ,* B_t ]\\
=&\nabla_sC +\frac 1{\varepsilon^2  }d_{  A }^*d_{  A }C +\nabla_t\nabla_tC +\frac 2{\varepsilon^2  }*[ B_s ,* B_t ].
\end{align*}
Furthermore, choosing $s_0\in \mathbb R$ and a smooth cut-off function with support in $[s_0-1,s_0+2]$ and with value 1 on $[s_0,s_0+1]$, one can prove that, for $\Omega_1(s_0):=\Sigma\times S^1\times [s_0-1,s_0+2]$, 
\begin{equation}
\begin{split}
\| B_s &\|_{1,2,2;2,\varepsilon ,[s_0,s_0+1]}^2+ \varepsilon ^2\|C \|_{1,2,2;2,\varepsilon ,[s_0,s_0+1]}^2 \\
  \leq&\varepsilon^4 \left \|\nabla_s B_s +\frac 1{\varepsilon^2  }d_{  A }^*d_{  A } B_s -\nabla_t\nabla_t B_s +\frac 1{\varepsilon^2  }d_{  A }d_{  A }^* B_s \right\|_{L^2(\Omega_1(s_0))}^2\\
&+\varepsilon^6   \left\|\nabla_sC +\frac 1{\varepsilon^2  }d_{  A}^*d_{  A }C +\nabla_t\nabla_tC \right\|_{L^2(\Omega_1(s_0))}^2\\
&+\left\| B_s \right\|_{L^2(\Omega_1(s_0))}^2+\varepsilon ^2\|C \|^2_{L^2(\Omega_1(s_0))}\\
=& \left\|-\varepsilon^2  d*X_{t}(  A ) B_s -*[ B_s ,*F_{  A }]-2\varepsilon^2  [C , B_t ]\right\|_{L^2(\Omega_1(s_0))}^2\\
&+\varepsilon^2 \left\|*[ B_s ,* B_t ]\right\|_{L^2(\Omega_1(s_0))}^2+\left\| B_s \right\|_{L^2(\Omega_1(s_0))}^2+\varepsilon ^2\|C \|^2_{L^2(\Omega_1(s_0))}\\
\leq &\left\| B_s \right\|_{L^2(\Omega_1(s_0))}^2+\varepsilon ^2\|C \|^2_{L^2(\Omega_1(s_0))}\\
&+c\varepsilon \| B_s \|_{1,2,2;2,\varepsilon ,[s_0-1,s_0+2]}^2+ \varepsilon ^{3}\|C \|_{1,2,2;2,\varepsilon ,[s_0-1,s_0+2]}^2
%
%
\end{split}
\end{equation}
In order to prove the last estimate we use the energy bound
$$\int_{\Sigma\times S^1}\left(\frac 1{\varepsilon ^2}\|F_{A }\|^2+\| B_t \|^2\right) dt\leq b$$
combined with the Sobolev inequality for 1-forms on $\Sigma\times S^1$: For example for the term $\left\|*[ B_s ,* B_t ]\right\|_{L^2(\Omega_1(s_0))}^2$ we proceed in the following way.
\begin{align*}
\varepsilon ^2&\left\|*[ B_s ,* B_t ]\right\|_{L^2(\Omega_1)}^2\leq c\varepsilon ^2\int_{[s_0-1,s_0+2]} \left\| B_t \right\|_{L^2(\Sigma\times S^1)}^2\left\| B_s \right\|_{L^\infty(\Sigma\times S^1)}^2ds\\
\leq &c\varepsilon ^2\int_{[s_0-1,s_0+2]} \left\| B_s \right\|_{L^\infty(\Sigma\times S^1)}^2ds\\
\leq &c\varepsilon ^2\int_{[s_0-1,s_0+2]} \frac 1{\varepsilon }\left(\left\| B_s \right\|_{L^2(\Sigma\times S^1)}^2+  \left\|d_{A }^*d_{A } B_s \right\|_{L^2(\Sigma\times S^1)}^2\right) ds\\
&+c\varepsilon ^2\int_{[s_0-1,s_0+2]} \frac 1{\varepsilon }\left(\left\|d_{A }d_{A }^*  B_s \right\|_{L^2(\Sigma\times S^1)}^2+\varepsilon ^4\left\|\nabla_t\nabla_t  B_s \right\|_{L^2(\Sigma\times S^1)}^2\right)   ds\\
\leq& c\varepsilon  \| B_s \|_{1,2,2;2,\varepsilon ,[s_0-1,s_0+2]} .
\end{align*}
Finally since
\begin{align*}
\sum_{i=0}^\infty&\varepsilon^{\frac {|i|}2} \left(\| B_s \|_{1,2,2;2,\varepsilon  ,[s_0+i,s_0+i+1]}^2+ \varepsilon ^2\|C \|_{1,2,2;2,\varepsilon ,[s_0+i,s_0+i+1]}^2\right)\\
\leq &\sum_{i=0}^\infty  \varepsilon^{\frac {|i|}2} \left(\| B_s \|_{L^2(\Sigma\times S^1\times [s_0+i-1,s_0+i+2])}^2+ \varepsilon ^2\|C \|_{L^2(\Sigma\times S^1\times [s_0+i-1,s_0+i+2])}^2\right)\\
&+c \varepsilon \sum_{i=0}^\infty \varepsilon^{\frac {|i|}2} \left(\| B_s \|_{1,2,2;2,\varepsilon  ,[s_0+i-1,s_0+i+2]}^2+ \varepsilon ^2\|C \|_{1,2,2;2,\varepsilon ,[s_0+i-1,s_0+i+2]}^2\right)\\
\leq & 2 \|B_s\|_{L^2(\Omega_1(s_0))}^2+2\varepsilon^2\|C\|^2_{L^2(\Omega_1(s_0))}\\
&+c\varepsilon^{\frac 12}\sum_{i=0}^\infty\varepsilon^{\frac {|i|}2} \left(\| B_s \|_{1,2,2;2,\varepsilon  ,[s_0+i,s_0+i+1]}^2+ \varepsilon ^2\|C \|_{1,2,2;2,\varepsilon ,[s_0+i,s_0+i+1]}^2\right),
\end{align*}
we can conclude that 
\begin{align*}
\| B_s& \|_{1,2,2;2,\varepsilon  ,\mathbb R}^2+ \varepsilon ^2\|C \|_{1,2,2;2,\varepsilon ,\mathbb R}^2\\
\leq& \sum_{s_0\in \mathbb Z}\sum_{i=0}^\infty\varepsilon^{\frac {|i|}2} \left(\| B_s \|_{1,2,2;2,\varepsilon  ,[s_0+i,s_0+i+1]}^2+ \varepsilon ^2\|C \|_{1,2,2;2,\varepsilon ,[s_0+i,s_0+i+1]}^2\right)\\
 \leq& 5\left\| B_s \right\|_{L^2}^2+5\varepsilon ^2\|C \|^2_{L^2}\leq 5b
\end{align*}
for $\varepsilon$ small enough.
\end{proof}

\begin{remark}
We can prove that the curvature of a connection, which represents a Yang-Mills flow, is smooth with an analogous argument as for the previous lemma. Our connection satisfies the perturbed Yang-Mills equation and thus, by the Bianchi identity (\ref{flow:eq:apriori:eq3}) and the commutation formula (\ref{flow:eq:apriori:eq3}),
\begin{equation*}
\begin{split}
0=&d_AB_s+\frac 1{\varepsilon^2}d_Ad_A^*F_A-d_A\nabla_t B_t-d_A*X_t(A)\\
=&\nabla_s F_A+\frac 1 {\varepsilon^2}d_Ad_A^*F_A-\nabla_t d_AB_t+[B_t\wedge B_t]-d_A*X_t(A)\\
=&\nabla_s F_A+\frac 1 {\varepsilon^2}d_Ad_A^*F_A-\nabla_t\nabla_t F_A+[B_t\wedge B_t]-d_A*X_t(A),
\end{split}
\end{equation*}
\begin{align*}
0=&\nabla_t B_s +\frac 1{\varepsilon^2  }\nabla_td_{  A }^*F_{  A }-\nabla_t\nabla_t B_t - \nabla_t*X_{t}(  A )\\
=&\nabla_s B_t+d_{  A }C +\frac 1{\varepsilon^2  }d_{  A }^*\nabla_tF_A-\nabla_t\nabla_t B_t  \\
&- d*X_{t}(  A )  B_t-*\dot X_t(A) -\frac1{\varepsilon^2 }*[ B_t ,*F_{  A }]\\
=&\nabla_s B_t+\frac 1{\varepsilon^2  }d_{  A }d_{  A }^* B_t +\frac 1{\varepsilon^2  }d_{  A }^*d_{  A } B_t-\nabla_t\nabla_t B_t \\
&- d*X_{t}(  A )  B_t-*\dot X_t(A) -\frac1{\varepsilon^2 }*[ B_t ,*F_{  A }]
\end{align*}
Thus in the same way as for the last lemma we can estimate all the first derivatives of $F_A$ and $B_t$ in a set $\Omega_{s_0}:=\Sigma\times S^1\times [s_0,s_0+1]$, $s_0\in\mathbb R$. Then, by a bootstrapping argument one can prove that the curvature terms are in $W^{k,2}$ for any $k$.
\end{remark}

We denote by $\Delta:=\partial_1^2+\partial_2^2+\dots+\partial_n^2$ the standard Laplacian on $\mathbb R^n$, $n>0$, and we define $P_r:=(-r^2,0)\times B_r(0)$.

\begin{lemma}\label{flow:lemma:SWB1}
For every $n\in\mathbb N$ there is a constant $c_n>0$ such that the following holds for every $r\in (0,1]$. If $a\geq0$ and $w:\mathbb R\times \mathbb R^n\supset P_r\to \mathbb R$ is $C^1$ in the $s$-variable and $C^2$ in the $x$-variable such that
\begin{equation}
(\Delta-\partial_s) w\geq -a w,\quad w\geq0,
\end{equation}
then
\begin{equation}
w(0)\leq \frac {c_n e^{ar^2}}{r^{n+2}}\int_{P_r}w.
\end{equation}
\end{lemma}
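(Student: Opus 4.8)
The plan is to reduce this to the standard sub-mean-value (local maximum) estimate for the heat operator, which is a classical parabolic Moser iteration result. First I would note that the hypothesis $(\Delta-\partial_s)w\geq -aw$ together with $w\geq 0$ on $P_r$ says that $w$ is a non-negative subsolution (up to the zeroth-order term $aw$) of the heat equation; the first step is to absorb the zeroth-order perturbation. Set $v(s,x):=e^{as}w(s,x)$ on $P_r$. Then a direct computation gives $(\Delta-\partial_s)v=e^{as}((\Delta-\partial_s)w-aw)\geq 0$, so $v\geq 0$ is an honest subsolution of the heat equation $(\Delta-\partial_s)v\geq 0$ on $P_r=(-r^2,0)\times B_r(0)$, with the parabolic cylinder having its ``top'' at the origin.

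Next I would invoke the local boundedness estimate for non-negative subsolutions of the heat equation (parabolic De Giorgi–Nash–Moser; see e.g. the versions in Lieberman or Moser's original paper, or the formulation convenient here in \cite{MR2276534}). On the unit cylinder $P_1$, this gives a dimensional constant $c_n$ with
\begin{equation*}
\sup_{P_{1/2}}v\leq c_n\int_{P_1}v,
\end{equation*}
and in particular $v(0)\leq c_n\int_{P_1}v$. The third step is a rescaling argument to pass from $P_1$ to $P_r$ for $r\in(0,1]$: given $v$ on $P_r$, define $\tilde v(s,x):=v(r^2 s, r x)$ on $P_1$. Since $\Delta$ scales by $r^{-2}$ and $\partial_s$ by $r^{-2}$, $\tilde v$ is again a non-negative subsolution on $P_1$, so $\tilde v(0)\leq c_n\int_{P_1}\tilde v = c_n r^{-n-2}\int_{P_r}v$, i.e. $v(0)\leq c_n r^{-n-2}\int_{P_r}v$. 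Finally, undoing the substitution, $w(0)=v(0)$ (since $e^{a\cdot 0}=1$), while on $P_r\subset(-r^2,0)\times B_r(0)$ we have $s\in(-r^2,0]$ so $e^{as}\leq e^{ar^2}$ (using $a\geq 0$), hence $\int_{P_r}v=\int_{P_r}e^{as}w\leq e^{ar^2}\int_{P_r}w$. Combining,
\begin{equation*}
w(0)\leq \frac{c_n e^{ar^2}}{r^{n+2}}\int_{P_r}w,
\end{equation*}
which is exactly the claimed bound.

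The only genuine content is the parabolic local-boundedness estimate on the fixed cylinder $P_1$; everything else is bookkeeping (the exponential gauge transformation in $s$, the parabolic rescaling, and the elementary bound $e^{as}\le e^{ar^2}$ on the time interval). I would therefore not reprove the Moser iteration here but cite it, remarking only that the regularity hypotheses on $w$ ($C^1$ in $s$, $C^2$ in $x$) are exactly what is needed for the weak-subsolution formulation to apply, and that the constant produced depends only on $n$ since it comes from the estimate on the normalized cylinder. The main (and essentially only) subtlety to check carefully is that the zeroth-order term is absorbed with the correct sign — i.e. that multiplying by $e^{as}$ rather than $e^{-as}$ is what turns the differential inequality into a clean subsolution inequality, and that this same factor contributes the harmless $e^{ar^2}$ on the right-hand side rather than blowing up.
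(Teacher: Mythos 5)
The paper does not prove this lemma at all: it is quoted verbatim as Lemma B.1 of Salamon--Weber \cite{MR2276534}, so there is no internal proof to compare against. Your overall strategy --- remove the zeroth-order term by an exponential factor in $s$, apply the parabolic sub-mean-value inequality for non-negative subsolutions of $(\Delta-\partial_s)$ on the unit cylinder, and rescale --- is the right one and is essentially how the cited result is established.

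However, the step you yourself single out as ``the main subtlety'' contains a genuine sign error. With $v=e^{as}w$ one gets $(\Delta-\partial_s)v=e^{as}\left((\Delta-\partial_s)w-aw\right)$, and the hypothesis $(\Delta-\partial_s)w\geq -aw$ only yields $(\Delta-\partial_s)v\geq -2a\,e^{as}w=-2av$, which is an inequality of the same type as the one you started from (with $2a$ in place of $a$), not a clean subsolution inequality. The correct substitution is $v=e^{-as}w$: then
\begin{equation*}
(\Delta-\partial_s)v=e^{-as}\left((\Delta-\partial_s)w+aw\right)\geq 0 ,
\end{equation*}
so $v$ is an honest non-negative subsolution. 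The rest of your bookkeeping survives this correction unchanged: $v(0)=w(0)$, and on $P_r$ one has $s\in(-r^2,0]$, hence $0\leq -as\leq ar^2$ and $e^{-as}\leq e^{ar^2}$, so $\int_{P_r}v\leq e^{ar^2}\int_{P_r}w$, and the parabolic rescaling $\tilde v(s,x)=v(r^2s,rx)$ converts the unit-cylinder estimate into the stated bound with the factor $r^{-n-2}$. With that one-character fix (and the citation of the Moser local boundedness estimate on $P_1$, which your regularity hypotheses amply justify), the argument is complete.
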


\begin{lemma}\label{flow:lemma:SWB4}
Let $R, r>0$ and $u:\mathbb R\times \mathbb R^n\supset P_{R+r}\to \mathbb R$ be $C^1$ in the s-variable and $C^2$ in the x-variable and $f, g: P_{R+r}\to \mathbb R$ be continuous functions such that
\begin{equation}
(\Delta-\partial_s) u \geq g-f,\quad u\geq0,\quad f\geq0,\quad g\geq0.
\end{equation}
Then
\begin{equation}
\int_{P_R}g\leq \int_{P_{R+r}}f+\left(\frac 4{r^2}+\frac 1{Rr}\right)\int_{P_{R+r}\backslash P_R}u.
\end{equation}
\end{lemma}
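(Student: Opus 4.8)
\textbf{Proof proposal for Lemma \ref{flow:lemma:SWB4}.}

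The plan is to reduce the statement to a localised energy identity obtained by testing the differential inequality against a suitable cut-off function, exactly as in the classical interior estimates for subsolutions of the heat equation. First I would fix $R,r>0$ and choose a cut-off $\chi=\chi(s,x)\in C^\infty(\mathbb R\times\mathbb R^n,[0,1])$ with $\chi\equiv 1$ on the parabolic cylinder $P_R$, $\mathrm{supp}\,\chi\subset P_{R+r}$, and with the standard parabolic size control $|\partial_s\chi|\leq c/r^2+c/(Rr)$ and $|\Delta\chi|\leq c/r^2$ on $P_{R+r}\setminus P_R$; such a $\chi$ exists because the geometry of $P_{R+r}\setminus P_R$ only sees the scales $r$ (in $x$ and in the backward $s$-direction) and $R$ (at the top). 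The point of allowing the combination $4/r^2+1/(Rr)$ on the right of the conclusion is precisely that the $s$-derivative of a cut-off which is $1$ on $(-R^2,0)\times B_R$ and $0$ outside $(-(R+r)^2,0)\times B_{R+r}$ can be made $O(1/r^2+1/(Rr))$, while the spatial Laplacian only costs $O(1/r^2)$.

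Next I would multiply the inequality $(\Delta-\partial_s)u\geq g-f$ by $\chi\geq 0$ and integrate over $P_{R+r}$. Since $u\geq0$, $f\geq0$, $g\geq0$, and $\chi\equiv1$ on $P_R$, we get
\begin{equation*}
\int_{P_R}g\leq \int_{P_{R+r}}\chi\,g\leq \int_{P_{R+r}}\chi f+\int_{P_{R+r}}\chi(\partial_s u-\Delta u).
\end{equation*}
Integrating by parts in the $x$-variables (no boundary term, as $\chi$ vanishes near $\partial B_{R+r}$) turns $-\int\chi\Delta u$ into $\int u\,\Delta\chi$ up to a term $\int\nabla\chi\cdot\nabla u$ which, after one further integration by parts, is again $\tfrac12\int u\,\Delta\chi$; and integrating by parts in $s$ turns $\int\chi\,\partial_s u$ into $-\int u\,\partial_s\chi$, again with no boundary contribution because $\chi$ vanishes at $s=-(R+r)^2$ and we may take $\chi$ to vanish in a neighbourhood of $s=0$ outside $P_R$ (or handle the top face directly, where $u\geq0$ gives the correct sign). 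Collecting terms,
\begin{equation*}
\int_{P_R}g\leq \int_{P_{R+r}}f+\int_{P_{R+r}\setminus P_R}u\bigl(\Delta\chi-\partial_s\chi\bigr)\leq \int_{P_{R+r}}f+\Bigl(\frac4{r^2}+\frac1{Rr}\Bigr)\int_{P_{R+r}\setminus P_R}u,
\end{equation*}
using $u\geq0$ and the size estimates on $\chi$ on the annular region, which is where $\Delta\chi-\partial_s\chi$ is supported.

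The only genuine issue is the regularity needed to justify the integrations by parts: $u$ is merely $C^1$ in $s$ and $C^2$ in $x$, which is exactly enough for the manipulations above to be classical, so no approximation argument is really required; if one wanted to be careful about the top face $s=0$ one would first prove the estimate on $P_{R}^{\tau}:=(-R^2,-\tau)\times B_R$ for small $\tau>0$, with the cut-off vanishing past $s=-\tau/2$ in the annulus, and then let $\tau\to0$ using continuity of $u$ and $g$ up to $s=0$. I expect the main (very mild) obstacle to be bookkeeping the precise constant $4/r^2+1/(Rr)$: one must be slightly economical in the choice of $\chi$, taking it to be a product of a spatial bump of width $r$ and an $s$-bump that decreases from $1$ to $0$ over the interval between $-R^2$ and $-(R+r)^2$, whose length is $2Rr+r^2$, giving $|\partial_s\chi|\lesssim 1/(Rr)$ there, together with a separate piece near the lateral boundary contributing $|\Delta\chi|\lesssim 1/r^2$; summing the two contributions yields the stated coefficient. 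This is precisely Lemma B.4 of \cite{MR2276534}, and the argument above is the one given there.
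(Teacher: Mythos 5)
Your argument is correct and is essentially the proof of the result being cited: the paper itself gives no proof of this lemma but refers to Salamon–Weber \cite{MR2276534}, Lemma B.4, whose proof is exactly this cut-off-and-integrate-by-parts argument with a product cut-off $\chi(s,x)=a(s)b(|x|)$ (piecewise quadratic in $|x|$ over a layer of width $r$, giving $\Delta\chi\leq 4/r^2$, and monotone in $s$ over an interval of length $2Rr+r^2\geq Rr$, giving $\partial_s\chi\leq 1/(Rr)$, with the top face $s=0$ handled by the sign $u\geq 0$). The only blemishes are transcription slips in the intermediate displays — the inequality $(\Delta-\partial_s)u\geq g-f$ yields $\int\chi g\leq\int\chi f+\int\chi(\Delta u-\partial_s u)$, not $\int\chi(\partial_s u-\Delta u)$, and after integrating by parts the bulk term is $\int u(\Delta\chi+\partial_s\chi)$ (with $\int\chi\Delta u=\int u\,\Delta\chi$ exactly, not via a factor $\tfrac12$) — none of which affects the final estimate since one only needs the pointwise upper bound on the cut-off derivatives together with $u\geq0$.
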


\begin{proof}[Proof of the theorem \ref{flow:thm:apriori}]
The proof will be divided in seven steps. In the first one we will prove that the $L^2$-norm over $\Sigma$ of $F_A$ can be bounded by any positive constant provided we choose $\varepsilon$ small enough. This allows us to apply the lemmas \ref{lemma76dt94} and \ref{lemma82dt94} for $p=2$. The next three steps provide bounds of the $L^2$-norm over $\Sigma$ for some curvature terms (step 2), their derivatives (step 3) and their second derivatives (step 4). In the last two steps we will prove the estimates (\ref{flow:eq:apriorig1}) and (\ref{flow:eq:apriorig2}).\\

\noindent{\bf Step 1.} We choose a positive constant $\delta_0<1$. There is a constant $\varepsilon_0>0$ such that the following holds. If $0<\varepsilon<\varepsilon_0$, then $\sup_{(t,s)\in S^1\times\mathbb R } \|F_A\|_{L^2(\Sigma)}\leq \delta_0$.

\begin{proof}[Proof of step 1.]
The idea is to use lemma \ref{flow:lemma:SWB1} and therefore we need an estimate from below of $\left(\partial_t^2-\partial_s\right)\| F_A\|^2$. First, using the Bianchi identity (\ref{flow:eq:apriori:eq3}) in the second and in the fourth equality, the commutation formula (\ref{flow:eq:apriori:eq4}) in the third and the Yang-Mills flow equation (\ref{flow:eq:apriori:eq2}) in the fourth, we obtain that
\begin{equation}\label{flow:eq:estmkc}
\begin{split}
\frac 12\big(\partial_t^2&-\partial_s\big)\| F_A\|^2=\|\nabla_t F_A\|^2+\langle F_A, \nabla_t\nabla_t F_A\rangle-\langle F_A,\nabla_s F_A\rangle\\
=&  \|\nabla_t F_A\|^2+\langle F_A, \nabla_t d_A B_t\rangle-\langle F_A,\nabla_s F_A\rangle\\
=&  \|\nabla_t F_A\|^2+\langle F_A, d_A\nabla_t B_t\rangle+\langle F_A, [B_t\wedge B_t]\rangle-\langle F_A,\nabla_s F_A\rangle\\
=&  \|\nabla_t F_A\|^2+\frac 1{\varepsilon^2}\langle F_A, d_Ad_A^*F_A\rangle+\langle F_A, d_A B_s\rangle \\
&-\langle F_A,d_A*X_t(A)\rangle +\langle F_A, [B_t\wedge B_t]\rangle-\langle F_A,d_A B_s\rangle\\
=&  \|\nabla_t F_A\|^2+\frac 1{\varepsilon^2}\|d_A^*F_A\|^2  +\langle F_A, [B_t\wedge B_t]\rangle-\langle d_A^*F_A,*X_t(A)\rangle;
\end{split}
\end{equation}
therefore, applying the Cauchy-Schwarz inequality
$$|\langle d_A^*F_A,*X_t(A)\rangle|\leq c\|d_A^*F_A\|\leq \frac 1{2\varepsilon^2}\|d_A^*F_A\|^2+2c^2\varepsilon^2,$$
for any positive $\gamma_1$ we have
\begin{equation}
\begin{split}
\frac 12\left(\partial_t^2-\partial_s\right)\| F_A\|^2\geq &-\|B_t\|_{L^4(\Sigma)}^2\| F_A\|-c \varepsilon^2\geq-\frac c{\gamma_1}\|F_A\|^2-\gamma_1-c \varepsilon^2\\
\geq&-\frac c{\gamma_1}\left(\|F_A\|^2+\frac{\gamma_1^2}c-\varepsilon^2\gamma_1\right)
\end{split}
\end{equation}
where $c\geq 1$ depends on $\|B_t\|_{L^4(\Sigma)}$ and on $\|X_t\|_{L^2(\Sigma)}$. Thus, by lemma \ref{flow:lemma:SWB1}, for every $r\in (0,1]$, $P_r:=(-r^2,0)\times B_r(0)$,
\begin{equation}
\begin{split}
\|F_A\|^2\leq&\frac {c_1 e^{\frac c{\gamma_1}r^2}}{r^3}\int_{P_r}\left(\|F_A\|^2+\frac{\gamma_1^2}c+\varepsilon^2\gamma_1\right) dt\,ds\\
\leq & \frac {4c_1 e^{\frac c{\gamma_1}r^2}b\varepsilon^2}{r}+ 2c_1e^{\frac c{\gamma_1}r^2}\left(\frac{\gamma_1^2}c+\varepsilon^2\gamma_1\right)
\end{split}
\end{equation}
and where we use that $\int_0^1\|F_A\|^2 dt\leq 2b\varepsilon^2$; next, we choose $\gamma_1:=\frac {\delta_0}{2(c_1e)^{\frac12}}$ and $r=\left(\frac{\gamma_1}c\right)^{\frac12}$, then $\|F_A\|^2<4\sqrt 2 c_1^{\frac54}c^{\frac12} e^{\frac54}b\delta_0^{-\frac12}\varepsilon^2+ \frac 12\delta_0^2+{(c_1e)^{\frac12} \delta_0\varepsilon^2 }$ and finally with
 $\varepsilon^2 <\frac12 \frac{\delta_0^\frac 52}{4\sqrt 2 c_1^{\frac54}c^{\frac12} e^{\frac54}b+(c_1e)^{\frac12} \delta_0^{\frac32}}$, it follows that $\|F_A\|_{L^2(\Sigma)}<\delta_0$ and we end the proof of the first step.
\end{proof}
For the rest of the proof we choose $\delta_0$ satisfying the condition of the theorems \ref{lemma76dt94} and \ref{lemma82dt94} for $p=2$.\\

\noindent{\bf Step 2.} There are two constants $\varepsilon_0, c>0$ such that the following holds. If $0<\varepsilon<\varepsilon_0$, then
\begin{equation*}
\begin{split}
\sup_{(s,t)\in S^1\times Q}&\left(\|F_{ A}\|^2 +\varepsilon^2 \|B_t \|^2+\varepsilon^2\|B_s\|^2\right) \\
\leq &c\int_{S^1\times\Omega } \left(\varepsilon^2 \|B_t \|^2+\varepsilon^2c_{ \dot X_t} +\|F_{ A}\|^2+\varepsilon^2\|B_s\|^2+\varepsilon^4\|C\|^2\right)dt\,ds,
\end{split}
\end{equation*}
\begin{equation*}
\begin{split}
\int_{S^1\times Q}\left(\|F_A\|^2+ \varepsilon^2\|\nabla_t F_A\|^2+\|d_A^*F_A\|^2\right)\leq c\varepsilon^2\int_{S^1\times \Omega}\left(\|F_A\|^2+\varepsilon^2\|B_t\|^2\right) dt,
\end{split}
\end{equation*}
\begin{equation*}
\begin{split}
\int_{S^1\times Q}&\left( \varepsilon^2\|\nabla_tB_t\|^2+\|d_AB_t\|^2+\|d_A^*B_t\|^2\right) dt\, ds\\
\leq &c \int_{S^1\times \Omega} \left(\|F_A\|^2+\varepsilon^2\|B_t\|^2+\varepsilon^2 c_{\dot X_t(A)}\right) dt\,ds,
\end{split}
\end{equation*}
\begin{equation*}
\begin{split}
\int_{S^1\times Q}&\left( \varepsilon^2\|\nabla_tB_s\|^2+\|d_AB_s\|^2+\|d_A^*B_s\|^2\right) dt\, ds\\
\leq& c \int_{S^1\times \Omega} \left( \|F_A\|^2+\varepsilon^2 \|B_s\|^2+\varepsilon^2\|B_t\|^2+\varepsilon^4\|C\|^2\right) dt\,ds.
\end{split}
\end{equation*}

\begin{proof}[Proof of step 2.]
Analogously to the first step we need to compute $\frac 12 \left(\partial_t^2-\partial_s\right)\|B_t\|^2$, $\frac 12 \left(\partial_t^2-\partial_s\right)\|B_s\|^2$ and $\frac 12 \left(\partial_t^2-\partial_s\right)\|C\|^2$ in order to apply the lemmas \ref{flow:lemma:SWB1} and \ref{flow:lemma:SWB4}. On the one hand, we consider the following computation
\begin{align*}
\frac 12 (\partial_t^2&-\partial_s)\|B_t \|^2= \|\nabla_tB_t\|^2-\langle\nabla_s B_t,B_t\rangle+\langle \nabla_t \nabla_t B_t,B_t\rangle\\
=&\|\nabla_tB_t\|^2-\langle\nabla_s B_t,B_t\rangle+\frac 1{\varepsilon^2 }\langle \nabla_t d_{ A}^*F_{ A} ,B_t\rangle\\
&+\langle \nabla_t B_s,B_t\rangle-\langle\nabla_t *X_{t}( A),B_t\rangle\\
=&\|\nabla_tB_t\|^2+\frac 1{\varepsilon^2}\langle d_{ A}^*\nabla_t F_{ A} ,B_t\rangle
+\frac 1{\varepsilon^2 }\langle *[B_t,*F_{ A}] ,B_t\rangle\\
&+\langle d_{ A}C,B_t\rangle-\langle d*X_{t}( A)B_t+\dot X_t( A),B_t\rangle\\
\intertext{where for the second equality we use the Yang-Mills flow equation (\ref{flow:eq:apriori:eq2}) and for the third the commutation formula (\ref{flow:eq:apriori:eq4}) and the Bianchi identity (\ref{flow:eq:apriori:eq3}). By the Yang-Mills flow equation (\ref{flow:eq:apriori:eq2}) and the Bianchi identity (\ref{flow:eq:apriori:eq3}) we finally obtain that}
=&\|\nabla_tB_t\|^2+\frac 1{\varepsilon^2 }\| d_{ A}B_t\|^2+\frac 1{\varepsilon^2 }\| d_{ A}^*B_t\|^2
+\frac 1{\varepsilon^2 }\langle *[B_t,*F_{ A}] ,B_t\rangle\\
&-\langle d*X_{t}( A)B_t+\dot X_{t}( A),B_t\rangle.
\end{align*}
Thus, since by the Cauchy-Schwarz inequality and the Sobolev estimate
\begin{align*}
|\langle *[B_t,*F_{ A}] ,B_t\rangle|
\leq& c\left (\|B_t\|+\|d_AB_t\|+\|d_A^*B_t\|\right)\|B_t\|_{L^4(\Sigma)}\|F_A\|\\
\leq&  c\varepsilon^2\|B_t\|^2+\frac 1{\varepsilon^2} \|F_A\|^2+ \frac12\|d_AB_t\|^2+\frac12\|d_A^*B_t\|^2
\end{align*}
$$|\langle d*X_{t}( A)B_t+\dot{X}_{t}( A),B_t\rangle|\leq 
c\|B_t\|^2+\|\dot X_t(A)\|^2,$$
we have
\begin{equation}\label{flow:est1:apriori}
\begin{split}
\left(\partial_t^2-\partial_s\right) \|B_t\|^2
\geq &\|\nabla_tB_t\|^2+\frac 1{\varepsilon^2}\|d_AB_t\|^2+\frac 1{\varepsilon^2}\|d_A^*B_t\|^2\\
&- \frac c{\varepsilon^4}\|F_A\|^2-c\|B_t\|^2-c\|\dot X_t(A)\|^2.
\end{split}
\end{equation}
On the other hand, using the Bianchi identity (\ref{flow:eq:apriori:eq3}) in the second equality and the commutation formula (\ref{flow:eq:apriori:eq4}) in the third, it follows that
\begin{align*}
\frac 12(\partial_t^2&-\partial_s)\|B_s\|^2= \|\nabla_t B_s\|^2+\langle B_s,\nabla_t\nabla_t B_s\rangle-\langle B_s,\nabla_s B_s\rangle\\
=&\|\nabla_t B_s\|^2+\langle B_s,\nabla_t\nabla_s B_t\rangle+\langle B_s,\nabla_t d_AC\rangle-\langle B_s,\nabla_s B_s\rangle\\
=&\|\nabla_t B_s\|^2+\langle B_s,\nabla_s\nabla_t B_t\rangle-\langle B_s,[C,B_t]\rangle\\
&+\langle d_A^*B_s,\nabla_t C\rangle+ \langle B_s, [B_t,C]\rangle-\langle B_s,\nabla_s B_s\rangle\\
\intertext{the next equality follows from the Yang-Mills flow equation (\ref{flow:eq:apriori:eq2}) and the after one from the commutation formula (\ref{flow:eq:apriori:eq4})}
=&\|\nabla_t B_s\|^2+\frac 1{\varepsilon^2}\langle B_s,\nabla_s d_A^*F_A\rangle-\langle B_s,\nabla_s*X_t(A)\rangle+\langle B_s,\nabla_s B_s\rangle\\
&-2\langle B_s,[C,B_t]\rangle+\frac 1{\varepsilon^2}\langle d_A^*B_s,\nabla_t d_A^*B_t\rangle-\langle B_s,\nabla_s B_s\rangle\\
=&\|\nabla_t B_s\|^2+\frac 1{\varepsilon^2}\langle d_AB_s,\nabla_s F_A\rangle-\frac 1{\varepsilon^2}\langle B_s,*[B_s,*F_A]\rangle\\
&-\langle B_s,\nabla_s*X_t(A)\rangle-2\langle B_s,[C,B_t]\rangle+\frac 1{\varepsilon^2}\langle d_A^*B_s,d_A^*\nabla_t B_t\rangle\\
\intertext{finally, applying the Bianchi identity (\ref{flow:eq:apriori:eq3}) and the Yang-Mills flow equation (\ref{flow:eq:apriori:eq2}) one more time, we can conclude that}
=&\|\nabla_t B_s\|^2+\frac 1{\varepsilon^2}\|\nabla_s F_A\|-\frac 1{\varepsilon^2}\langle B_s,*[B_s,*F_A]\rangle-\langle B_s,\nabla_s*X_t(A)\rangle\\
&-2\langle B_s,[C,B_t]\rangle+\frac 1{\varepsilon^2}\langle d_A^*B_s,d_A^*B_s\rangle\\
&+\frac 1{\varepsilon^4}\langle d_A^*B_s,d_A^*d_A^*F_A\rangle+\frac 1{\varepsilon^2}\langle d_A^*B_s,d_AX_t(A)\rangle\\
=&\|\nabla_t B_s\|^2+\frac 1{\varepsilon^2}\|\nabla_s F_A\|+\frac 1{\varepsilon^2}\|d_A^*B_s\|^2-2\langle B_s,[C,B_t]\rangle\\
&-\frac 1{\varepsilon^2}\langle B_s,*[B_s,*F_A]\rangle-\langle B_s,d*X_t(A)B_s\rangle
\end{align*}
where we use that $[F_A,*F_A]=0$ and $d_AX_t(A)=0$. Thus, since $\|B_s\|_{L^\infty(\Sigma)}$ is bounded by a constant by the assumptions,
\begin{equation}\label{flow:est1:apriori2}
\begin{split}
\left(\partial_t^2-\partial_s\right) \|B_s\|^2
\geq &\|\nabla_tB_s\|^2+\frac 1{\varepsilon^2}\|d_AB_s\|^2+\frac 1{\varepsilon^2}\|d_A^*B_s\|^2\\
&- \frac c{\varepsilon^4}\|F_A\|^2-c\|B_s\|^2-c \|B_t\|^2-\|C\|^2.
\end{split}
\end{equation}
If we consider
\begin{align*}
\frac 12 \big(\partial_t^2&-\partial_s\big)\varepsilon^4\|C\|^2=\frac 12 \big(\partial_t^2-\partial_s\big)\|d_A^*B_t\|^2\\
=& \|\nabla_td_A^*B_t\|^2-\langle \nabla_s d_A^*B_t, d_A^*B_t \rangle
+\langle \nabla_t\nabla_td_A^*B_t,d_A^*B_t\rangle\\
\intertext{by the Yang-Mills flow equation (\ref{flow:eq:apriori:eq2}) and the commutation formula (\ref{flow:eq:apriori:eq4}) we have}
=& \|\nabla_td_A^*B_t\|^2-\langle \nabla_s d_A^*B_t, d_A^*B_t \rangle+\frac 1{\varepsilon^2}\langle \nabla_td_A^*d_A^*F_A,d_A^*B_t\rangle\\
&+\langle \nabla_td_A^*B_s,d_A^*B_t\rangle-\langle \nabla_td_A^**X_t(A),d_A^*B_t\rangle-\langle *\nabla_t[B_t\wedge *B_t],d_A^*B_t\rangle\\
\intertext{and using the commutation formula (\ref{flow:eq:apriori:eq2}) one more time}
=& \|\nabla_td_A^*B_t\|^2-\langle \nabla_s d_A^*B_t, d_A^*B_t \rangle+\langle d_A^*\nabla_tB_s,d_A^*B_t\rangle\\
&-\langle *[B_t\wedge *B_s],d_A^*B_t\rangle-\langle *\nabla_td_A X_t(A),d_A^*B_t\rangle\\
=& \|\nabla_td_A^*B_t\|^2+\frac 2{\varepsilon^2}\|d_Ad_A^*B_t\|^2\\
&-2\langle *[B_t\wedge *B_s],d_A^*B_t\rangle-\langle *\nabla_td_A X_t(A),d_A^*B_t\rangle
\end{align*}
where the last step follows from the Bianchi identity (\ref{flow:eq:apriori:eq3}) and the commutation formula (\ref{flow:eq:apriori:eq4}). Using the Cauchy-Schwarz inequality and that $\|B_s\|_{L^\infty(\Sigma)}$ is uniformly bounded, one can easily see that
\begin{equation}\label{flow:apr:kjdas}
\big(\partial_t^2-\partial_s\big)\varepsilon^4\|C\|^2\geq {\varepsilon^2}\|d_AC\|^2+c\varepsilon^2\|B_t\|^2+c\varepsilon^2\|\dot X_t(A)\|^2
\end{equation}
and therefore, by (\ref{flow:est1:apriori}), (\ref{flow:est1:apriori2}), (\ref{flow:apr:kjdas}) and $\|C\|\leq c\|d_AC\|$ by the lemma \ref{lemma76dt94} we have for two positive constants $c$, $c_0$
\begin{equation}\label{flow:ineq:apriori:22}
\begin{split}
(\partial_t^2&-\partial_s)\left(\|B_t\|^2+\|B_s\|^2+c_0\varepsilon^2\|C\|^2\right)\\
\geq &\|\nabla_tB_t\|^2+\frac 1{\varepsilon^2 }\| d_{ A}B_t\|^2+\frac 1{\varepsilon^2 }\| d_{ A}^*B_t\|^2+\|\nabla_t B_s\|^2+\frac 1{\varepsilon^2}\|d_AB_s\|\\
&+\frac 1{\varepsilon^2}\|d_A^*B_s\|^2
-\frac c {\varepsilon^4 }\|d_A^*F_{ A}\|^2- c\|B_t\|^2- c \|B_s\|^2-c\|\dot X_t(A)\|^2.
\end{split}
\end{equation}
Since $\|F_{ A }\|\leq \delta$, $\|F_A\|_{L^4(\Sigma)}\leq c\|d_A^*F_A\|$ for a constant $c$ by the theorems \ref{lemma76dt94} and \ref{lemma82dt94} there is  a $ A_1 \in \mathcal A^0(P)$ such that $\| A- A_1 \|\leq \|F_{ A}\|$ and thus we can write
\begin{equation}
\begin{split}
d_{ A}*&X_{t}( A)
= d_{ A }\left(*X_{t}( A)-*X_{t}( A_1)\right)\\
&+d_{ A_1 }*X_{t}( A_1)+\left[(A-A_1)\wedge * X_{t}( A_1)\right]
\end{split}
\end{equation}
where $d_{ A_1 }*X_{t}( A_1)=0$. Therefore, by (\ref{flow:eq:estmkc})
\begin{equation}\label{flow:ineq:apriori:11}
\begin{split}
\frac 12\left(\partial_t^2-\partial_s\right)\| F_A\|^2\geq &\frac 1{4\varepsilon^2}\| d_A^*F_A\|^2+\frac 14\|\nabla_tF_A\|^2-c\varepsilon^2\|B_t\|^2\\
\end{split}
\end{equation}
and with (\ref{flow:ineq:apriori:22}) it follows that for a constant $c_0$ big enough
\begin{equation*}
\begin{split}
\frac 12 \left(\partial_t^2-\partial_s\right)&\left( c_0\|F_{ A}\|^2+\varepsilon^2\|B_t \|^2+\varepsilon^2\|B_s\|^2+\varepsilon^2c_{\dot X_t} +c_0\varepsilon^4\|C\|^2\right)\\
\geq& -c \left(  c_0\|F_{ A}\|^2+\varepsilon^2 \|B_t \|^2+\varepsilon^2\|B_s\|^2+\varepsilon^2c_{\dot X_t}+c_0\varepsilon^4\|C\|^2 \right).
\end{split}
\end{equation*}
Finally by lemma \ref{flow:lemma:SWB1}, for a fix $r$ we can conclude that 
\begin{equation*}
\begin{split}
\sup_{(t,s)\in S^1\times Q}&\left(\|F_{ A}\|^2+\varepsilon^2 \|B_t \|^2+\varepsilon^2\|B_s\|^2+c_0\varepsilon^4\|C\|^2\right)\\
\leq &c \int_{S^1\times \Omega } \left(\varepsilon^2 \|B_t \|^2+\varepsilon^2c_{\dot X_t}+ \|F_{ A}\|^2+\varepsilon^2 \|B_s\|^2+c_0\varepsilon^4\|C\|^2\right)dt\,ds.
\end{split}
\end{equation*}
Next, we can apply lemma \ref{flow:lemma:SWB4} to the inequality (\ref{flow:ineq:apriori:11}) and we obtain
\begin{equation}\label{llkdskjgfj}
\begin{split}
\int_{S^1\times Q}&\left(\|F_A\|^2+\varepsilon^2\|\nabla_t F_A\|^2+\|d_A^*F_A\|^2\right)dt\,ds\\
\leq & c \varepsilon^2 \int_{S^1\times \Omega} \left(\|F_A\|^2+\varepsilon^2 \|B_t\|^2\right) dt\,ds \leq c\varepsilon^4,
\end{split}
\end{equation}
where the estimate of $\|F_A\|_{L^2(S^1\times Q)}^2$ follows from $\|F_A\|\leq c \|d_A^*F_A\|$. Using the lemma \ref{flow:lemma:SWB4} for the inequalities (\ref{flow:est1:apriori}) and (\ref{flow:ineq:apriori:22}) we can conclude the last two estimates of the third step:
\begin{equation}
\begin{split}
\int_{S^1\times Q}&\left( \varepsilon^2\|\nabla_tB_t\|^2+\|d_AB_t\|^2+\|d_A^*B_t\|^2\right) dt\, ds\\
\leq &c \int_{S^1\times \Omega} \left(\frac 1{\varepsilon^2}\|F_A\|^2+\varepsilon^2\|B_t\|^2+\varepsilon^2c_{\dot X_t(A)}\right) dt\,ds,
\end{split}
\end{equation}
\begin{equation}
\begin{split}
\int_{S^1\times Q}&\left( \varepsilon^2\|\nabla_tB_s\|^2+\|d_AB_s\|^2+\|d_A^*B_s\|^2\right) dt\, ds\\
\leq& c \int_{S^1\times \Omega} \left( \frac 1{\varepsilon^2}\|F_A\|^2+\varepsilon^2 \|B_s\|^2+\varepsilon^2\|B_t\|^2+c_0\varepsilon^4\|C\|^2\right) dt\,ds
\end{split}
\end{equation}
and the second step follows combining the last two estimates with (\ref{llkdskjgfj}).
\end{proof}
\noindent{\bf Step 3.}There are two constants $\varepsilon_0, c>0$ such that the following holds. If $0<\varepsilon<\varepsilon_0$, then
\begin{equation}\label{flow:eq:apriori:djdk2}
\begin{split}
\int_{S^1\times Q}& \left(\varepsilon^2\|\nabla_t d_AB_t\|^2+ \|d_A^*d_AB_t\|^2+  \varepsilon^2\|\nabla_td_A^*B_t\|^2+ \|d_Ad_A^*B_t\|^2\right)\\
\leq & c\varepsilon^2\int_{S^1\times \Omega}\left( \|F_A\|^2+\varepsilon^2\|B_t\|^2+\varepsilon^2c_{\dot X_t(A)}\right) dt\, ds,
\end{split}
\end{equation}
\begin{equation}\label{flow:eq:apriori:djdkss2}
\begin{split}
\int_{S^1\times Q}& \left(\varepsilon^2\|\nabla_t d_AB_s\|^2+ \|d_A^*d_AB_s\|^2+  \varepsilon^2\|\nabla_td_A^*B_s\|^2+ \|d_Ad_A^*B_s\|^2\right)\\
\leq & c\int_{S^1\times \Omega}\left( \|F_A\|^2+\varepsilon^2\|B_t\|^2+\varepsilon^2\|B_s\|^2+\varepsilon^4 c_{\dot X_t}+\varepsilon^6\|C\|^2\right) dt\, ds.
\end{split}
\end{equation}

\begin{proof}[Proof of step 3] Like in the previous steps we will prove this one using the lemmas \ref{flow:lemma:SWB1} and \ref{flow:lemma:SWB4} and therefore we need to compute $\frac 12 \big(\partial_t^2-\partial_s\big)||d_AB_t||^2$, $\frac 12 \big(\partial_t^2-\partial_s\big)||d_A^*B_t||^2$, $\frac 12 \big(\partial_t^2-\partial_s\big)||d_AB_s||^2$, $\frac 12 \big(\partial_t^2-\partial_s\big)||d_A^*B_t||^2$ and $\frac 12 \big(\partial_t^2-\partial_s\big)||d_A^*F_A||^2$. First, analogously to the previous steps, we obtain that
\begin{equation}\label{flow:eq:oihds1}
\begin{split}
\frac 12 \big(\partial_t^2&-\partial_s\big)\left(\|d_AB_t\|^2+\|d_A^*B_t\|^2\right)\geq \frac 12 \|\nabla_td_AB_t\|^2+ \frac 1{2\varepsilon^2}\|d_A^*d_AB_t\|^2\\
&+\frac 12 \|\nabla_td_A^*B_t\|^2+ \frac 1{2\varepsilon^2}\|d_Ad_A^*B_t\|^2
-\frac c{\varepsilon^2}\|d_A^*F_A\|^2\\
&-c \varepsilon^2 \|B_t\|^2-c\varepsilon^2\|\dot X_t(A)\|_{L^\infty }^2-c \varepsilon^2 \|\nabla_tB_t\|^2.
\end{split}
\end{equation}
and combined with (\ref{flow:est1:apriori}) yield to
\begin{equation}
\begin{split}
\big(\partial_t^2&-\partial_s\big)\left(  \|d_AB_t\|^2+  \|d_A^*B_t\|^2+c_1\|F_A\|^2+c_0\varepsilon^2\|B_t\|^2\right)\\
\geq&\|\nabla_td_AB_t\|^2+ \frac 1{\varepsilon^2}\|d_A^*d_AB_t\|^2+ \|\nabla_td_A^*B_t\|^2+ \frac 1{\varepsilon^2}\|d_Ad_A^*B_t\|^2\\
& - c\varepsilon^2\|B_t\|^2-c\varepsilon^2\|\dot X_t(A)\|_{L^\infty(\Sigma)}^2
\end{split}
\end{equation}
for two positive constants $c_0$ and $c_1$.
Therefore by lemma \ref{flow:lemma:SWB4} we have, for an open set $\Omega_1$ with $Q\subset \Omega_1\subset\subset \Omega$
\begin{equation}\label{flow:eq:apriori:djdk}
\begin{split}
\int_{S^1\times Q}& \left(\varepsilon^2\|\nabla_t d_AB_t\|^2+ \|d_A^*d_AB_t\|^2+  \varepsilon^2\|\nabla_td_A^*B_t\|^2+ \|d_Ad_A^*B_t\|^2\right)\\
\leq & c\varepsilon^2\int_{S^1\times \Omega_1}\left( \|d_AB_t\|^2+ \|d_A^*B_t\|^2+\|F_A\|^2+\varepsilon^2\|B_t\|^2+\varepsilon^2c_{\dot X_t(A)}\right) dt\, ds\\
\leq & c\varepsilon^2\int_{S^1\times \Omega}\left( \|F_A\|^2+\varepsilon^2\|B_t\|^2+\varepsilon^2c_{\dot X_t(A)}\right) dt\, ds
\end{split}
\end{equation}
where the second estimate follows from step 2.
Moreover, by the Bianchi identity $d_AB_s=\nabla_sF_A$ and by the identity $\nabla_tC=\frac 1{\varepsilon^2}d_A^*B_s$ which follows from the Yang-Mills flow equation, for a positive $c_1$
\begin{equation}
\begin{split}
\frac 12(\partial_t^2-\partial_s)&(\|d_AB_s\|^2+c_1\|d_A^*B_s\|^2)\geq \|\nabla_td_AB_s\|^2+\frac 1{\varepsilon^2}\|d_A^*d_AB_s\|^2\\
&+\|\nabla_td_A^*B_s\|^2+\frac 1{\varepsilon^2}\|d_Ad_A^*B_s\|^2\\
&- \frac {c}{\varepsilon^2} \| d_A^*B_s\|^2-c\varepsilon^2\|\nabla_tB_s\|^2-c\varepsilon^2\|\nabla_tB_t\|^2-c\frac 1{\varepsilon^2} \|d_Ad_A^*B_t\|^2\\
&-c\varepsilon^2 \|B_t\|^2-c\varepsilon^2 \|B_s\|^2-\frac c{\varepsilon^2}\|d_A^*F_A\|^2-c\frac {\|F_A\|^2}{\varepsilon^2}\|d_Ad_A^*B_s\|^2.
\end{split}
\end{equation}

Collecting all the estimates, for a constant $c_0>0$, we obtain
\begin{equation*}
\begin{split}
&\frac 12 \big(\partial_t^2-\partial_s\big)\left(c_0 \|F_A\|^2+c_0 \varepsilon^2\|B_t\|^2 +c_0\varepsilon^2 \|B_s\|^2\right)\\
&+\frac 12 \big(\partial_t^2-\partial_s\big)\left( c_1\|d_A^*B_s\|^2+ \|d_AB_s\|^2+\|d_A^*B_t\|^2\right)\\
&\qquad\geq \frac 1{\varepsilon^2}\|d_Ad_A^*B_s\|^2+\frac 1{\varepsilon^2}\|d_A^*d_AB_s\|^2- c\varepsilon^2\|B_t\|^2\\
&-c\varepsilon^2\|\dot X_t(A)\|_{L^\infty }-c\varepsilon^2 \|B_s\|^2\\
\end{split}
\end{equation*}
%
By the lemma \ref{flow:lemma:SWB4} we have then,
\begin{equation}
\begin{split}
\int_{ S^1\times Q}&\left( \|d_Ad_A^*B_s\|^2+ \|d_A^*d_AB_s\|^2\right)\\
\leq& c\varepsilon^2\int_{S^1\times  \Omega_1 } \left(\varepsilon^2\|B_t\|^2+\varepsilon^2c_{\dot X_t}+ \varepsilon^2\|B_s\|^2\right)\\
&+ c \varepsilon^2\int_{S^1\times  \Omega_1 } \left(\|F_A\|^2+ \|d_A^*B_s\|^2+ \|d_AB_s\|^2+\|d_A^*B_t\|^2\right)\\
\leq& c \int_{S^1\times \Omega } \left(\varepsilon^2\|B_t\|^2+\varepsilon^4 c_{\dot X_t}+\varepsilon^2 \|B_s\|^2+\varepsilon^2\|F_A\|^2+\varepsilon^6\|C\|^2\right).
\end{split}
\end{equation}

\end{proof}

\noindent{\bf Step 4.} There are two constants $\varepsilon_0, c>0$ such that the following holds.If $0<\varepsilon<\varepsilon_0$, then
\begin{equation*}
\begin{split}
\sup_{(t,s)\in S^1\times Q}&\left(\|d_Ad_A^*B_t\|^2+\|d_A^*d_AB_t\|^2\right)\\
\leq &c \int_{S^1\times \Omega}\left(\|F_A\|^2+\varepsilon^2\|B_t\|^2+\varepsilon^2c_{\dot X_t(A)}\right) dt\,ds,
\end{split}
\end{equation*}
\begin{equation*}
\begin{split}
\sup_{(t,s)\in S^1\times Q }&\left (\|d_A^*d_AB_s\|^2+\|d_Ad_A^*B_s\|^2\right)\\
\leq &\int_{S^1\times \Omega }(c\|F_A\|^2+c\varepsilon^2\|B_t\|^2+\varepsilon^2\|B_s\|^2+\varepsilon^2c_{\dot X_t(A)}+\varepsilon^4\|C\|^2) dt\,ds,
\end{split}
\end{equation*}
\begin{equation*}
\begin{split}
\sup_{(t,s)\in S^1\times Q}&\|d_A^*d_Ad_A^*B_t\|^2\\
\leq&c \int_{S^1\times \Omega} \left(\varepsilon^2\|B_s\|^2+\varepsilon^2\|B_t\|^2+\|F_A\|^2+\varepsilon^4c_{\dot X_t}+\varepsilon^4\|C\|^2\right)dt\,ds.
\end{split}
\end{equation*}

\begin{proof}[Proof of step 4] Analogously to the first two steps we can estimate  $$\big(\partial_t^2-\partial_s\big)
\left(\|d_Ad_A^*B_t\|^2+\|d_A^*d_AB_t\|^2\right)$$ and we obtain that there are two positive constants $c$ and $c_0$ such that
\begin{equation*}
\begin{split}
\big(\partial_t^2-\partial_s\big)&
\left(\|d_Ad_A^*B_t\|^2+\|d_Ad_A^*B_t\|^2+{c_0}\|F_A\|^2+c_0\varepsilon^2\|B_t\|+c_0\varepsilon^2\|B_s\|^2\right)\\
\geq &-c \varepsilon^2\|B_t\|^2-c\varepsilon^2\|B_s\|^2+\varepsilon c_{\dot X_t}.
\end{split}
\end{equation*}
We can therefore conclude by the lemma \ref{flow:lemma:SWB1} and the previous step that
\begin{equation*}
\begin{split}
\sup_{(t,s)\in S^1\times Q}&\left(\|d_Ad_A^*B_t\|^2+\|d_A^*d_AB_t\|^2\right)\\
\leq &c \int_{S^\times \Omega}\left(\|F_A\|^2+\varepsilon^2\|B_t\|^2+\varepsilon^2c_{\dot X_t(A)}\right) dt\,ds.
\end{split}
\end{equation*}
Furthermore, for two positive constants $c$ and $c_0$
\begin{equation*}
\begin{split}
\frac 12 \left(\partial_t^2-\partial_s\right) (&\|d_A^*d_AB_s\|^2+\|d_Ad_A^*B_s\|^2+c_0\|F_A\|^2+c_0\|d_AB_t\|^2+c_0\|d_Ad_A^*B_t\|^2\\
&+c_0\|d_A^*d_AB_t\|^2+c_0\|d_A^*B_t\|^2+c_0\varepsilon^2 \|B_t\|^2+c_0\varepsilon^2\|B_s\|^2)\\
\geq &c\left(\|F_A\|^2+\varepsilon^2 \|B_t\|^2+\varepsilon^2\|B_s\|^2+c_{\dot X_t(A)}\right)
\end{split}
\end{equation*}
Thus, by the lemma \ref{flow:lemma:SWB1} and the previous steps we can comnclude that
\begin{equation*}
\begin{split}
\sup_{(t,s)\in S^1\times Q }& (\|d_A^*d_AB_s\|^2+\|d_Ad_A^*B_s\|^2)\\
\leq &c\int_{S^1\times \Omega_1 }\left(\|d_A^*d_AB_s\|^2+\|d_Ad_A^*B_s\|^2+\|F_A\|^2+\|d_AB_t\|^2\right) dt\,ds\\
\leq &c\int_{S^1\times \Omega_1 }\left(\|d_Ad_A^*B_t\|^2+\|d_A^*B_t\|^2\right) dt\,ds\\
\leq &c\int_{S^1\times \Omega_1 }\left(\varepsilon^2\|B_t\|^2+\varepsilon^2\|B_s\|^2+\varepsilon^2c_{\dot X_t(A)}\right) dt\,ds\\
\leq &c\int_{S^1\times \Omega }\left(\|F_A\|^2+\varepsilon^2\|B_t\|^2+\varepsilon^2\|B_s\|^2+\varepsilon^2c_{\dot X_t(A)}\right) dt\,ds.
\end{split}
\end{equation*}
Moreover,

\begin{equation*}
\begin{split}
\frac 12 \big(\partial_t^2&-\partial_s\big)\|d_A^*d_Ad_A^*B_t\|^2
\geq  \|\nabla_td_A^*d_Ad_A^*B_t\|^2+\frac 1{\varepsilon^2}\|d_Ad_A^*d_Ad_A^*B_t\|^2\\
&-c\varepsilon^2 \left(\|B_s\|^2+\|d_AB_s\|^2+\|d_AB_t\|^2\right)-c\varepsilon^2\|d_A d_A^*B_s\|-\frac c{\varepsilon^2}\|d_A^*F_A\|^2
\end{split}
\end{equation*}
and hence by the lemma \ref{flow:lemma:SWB1} we can conclude the proof of the fourth step:
\begin{equation*}
\begin{split}
\sup_{(t,s)\in S^1\times Q}&\|d_A^*d_Ad_A^*B_t\|^2\\
\leq&c \int_{S^1\times \Omega_1} \left(\varepsilon^2\|B_s\|^2+\varepsilon^2\|d_AB_s\|^2+\varepsilon^2\|d_AB_t\|^2\right)dt\,ds\\
&+c \int_{S^1\times \Omega_1} \left(\varepsilon^2\|B_t\|^2+\|F_A\|^2+\|d_A^*d_Ad_A^*B_t\|^2\right)dt\,ds\\
\leq&c \int_{S^1\times \Omega} \left(\varepsilon^2\|B_s\|^2+\varepsilon^2\|B_t\|^2+\|F_A\|^2+\varepsilon^4c_{\dot X_t}+\varepsilon^4\|C\|^2\right)dt\,ds.
\end{split}
\end{equation*}
\end{proof}

\noindent{\bf Step 5.} There are two constants $\varepsilon_0, c>0$ such that the following holds. If $0<\varepsilon<\varepsilon_0$, then

\begin{equation}\label{flow:apriori:esti1}
\sup_{S^1\times Q}\left(\|F_A\|+\varepsilon\|\nabla_t F_A\|+\|d_Ad_A^* F_A\|\right)\leq c\varepsilon^{2-\frac1p}.
\end{equation}

\begin{proof}[Proof of step 5.] In order to prove the fifth step we need the following observation. For
$$f(t,s):=\|F_A\|^2+\varepsilon^2\|\nabla_t F_A\|^2+\varepsilon^{4}\|d_Ad_A^* F_A\|^2+\varepsilon^2\|d_A^*B_t\|^2$$
we have that
$$\varepsilon^2(\partial_t^2-\partial_s)f\geq {c_0}f-c\varepsilon^4$$
and since for $p\in \mathbb N$, $p\leq2$
$$\partial_t^2f^p=pf^{p-1}\partial_t^2f+p(p-1)\left(\partial_tf \right)^2\geq pf^{p-1}\partial_t^2f,$$

\begin{align*}
\varepsilon^2(\partial_t^2-\partial_s)f^p\geq &pf^{p-1}(\partial_t^2-\partial_s)f 
\geq pc_0f^p-pc\varepsilon^4f^{p-1}
\geq {c_0}f^p-c\varepsilon^{4p}
\end{align*}
where we used that $ab\leq \frac {(p-1)a^{\frac p{p-1}}}p+\frac{b^p}p$ for any positive numbers $a$ and $b$. Therefore by lemma \ref{flow:lemma:SWB4}, for a sequence of open sets $\Omega_i\subset \mathbb R$, $i=1,\dots,2p$, $Q\subset\Omega_0\subset\subset \Omega_1\subset\subset \Omega_2\subset\subset \dots \subset\subset \Omega_{2p}\subset\subset\Omega$
$$\int_{S^1\times \Omega_0}f^{p}dt\,ds\leq c\varepsilon^{4p}+c\varepsilon^2\int_{S^1\times \Omega_1}f^p dt\,ds\leq c\varepsilon^{4p}+c\varepsilon^{4p}\int_{S^1\times \Omega_{2p}}f^p dt\,ds\leq c\varepsilon^{4p}$$
where the last step follows by iterating the first one and from $\sup_{\Omega_{2p}}f\leq c\varepsilon$. By lemma \ref{flow:lemma:SWB1}
$$\sup_{S^1\times Q}\varepsilon^2f^p\leq c\varepsilon^{4p}+\varepsilon^2\int_{S^1\times \Omega_0}f^{p}dt\,ds \leq c\varepsilon^{4p}$$
and therefore
$$\sup_{S^1\times Q}\left(\|F_A\|+\varepsilon\|\nabla_t F_A\|+\|d_Ad_A^* F_A\|\right)\leq c\varepsilon^{2-\frac1p}.$$
\end{proof}

\noindent{\bf Step 6.}  There are two constants $\varepsilon_0, c>0$ such that the following holds. If $0<\varepsilon<\varepsilon_0$, then

\begin{equation}\label{flow:apriori:esti11}
\sup_{S^1\times Q}\left(\|F_A\|_{L^\infty(\Sigma)}+\|d_Ad_A^* F_A\|+\varepsilon^2\|\nabla_t\nabla_t F_A\|\right)\leq c\varepsilon^{2}.
\end{equation}

\begin{proof}
One can show that, by the previous steps, there are three constants $c$, $c_0$ and $c_1$ such that
\begin{align*}
\frac 12\left(\partial_t^2-\partial_s\right)&\left(\|\nabla_t\nabla_tF_A\|^2+\frac {c_0}{\varepsilon^2}\|F_A\|^2+c_1\|B_t\|^2\right)\\
\geq &-c\left(\|\nabla_t\nabla_tF_A\|^2+\frac {c_0}{\varepsilon^2}\|F_A\|^2+c_1\|B_t\|^2\right)
\end{align*}
and thus by the lemma \ref{flow:lemma:SWB1}
$$\sup_{s\in Q}\|\nabla_t\nabla_tF_A\|^2\leq c\int_{S^1\times \Omega} \left(\|\nabla_t\nabla_tF_A\|^2+\frac {c_0}{\varepsilon^2}\|F_A\|^2+c_1\|B_t\|^2\right) dt\,ds\leq c.$$
Furthermore, by the Bianchi identity (\ref{flow:eq:apriori:eq3}) and the Yang-Mills flow equation (\ref{flow:eq:apriori:eq2})
\begin{align*}
\sup_{s\in Q}\|d_Ad_A^*F_A\|^2\leq &\sup_{s\in Q}\varepsilon^4\|d_A\nabla_tB_t\|^2+c\varepsilon^4\\
\leq &\sup_{s\in Q}\varepsilon^4\|\nabla_td_AB_t\|^2+\varepsilon^4\|[B_t\wedge B_t]\|^2+c\varepsilon^4\\
\leq &\sup_{s\in Q}\varepsilon^4\|\nabla_t\nabla_tF_A\|^2+c\varepsilon^4\leq c\varepsilon^4
\end{align*}
and by the lemma \ref{lemma76dt94}
$$\|F_A\|_{L^\infty(\Sigma)}\leq c \|d_Ad_A^*F_A\|_{L^(\Sigma)}\leq c\varepsilon^2.$$
\end{proof}

\noindent{\bf Step 7.}  There are two constants $\varepsilon_0, c>0$ such that the following holds. If $0<\varepsilon<\varepsilon_0$, then
\begin{align*}
\sup_{(t,s)\in S^1\times Q}&\left(\varepsilon^2\|B_s\|^2+\varepsilon^4\|C\|^2+\mathcal N_1+\mathcal N_2+\mathcal N_3+\mathcal N_4\right)\\
\leq &\varepsilon^2c \int_{S^1\times \Omega}\left(\|B_s\|^2+\varepsilon^2\|C\|^2\right) dt\,ds.
\end{align*}

\begin{proof}
Since we know now that, for a positive constant $c_0$,
$$\varepsilon^2\|B_t\|_{L^\infty(\Sigma)}+\|F_A\|_{L^\infty(\Sigma)}\leq c_0\varepsilon^2,$$
using the computations of the second step we can obtain the following estimate a positive constant $c$
\begin{align*}
\frac 12\left(\partial_t^2-\partial_s\right)&\left(\|B_s\|^2+\varepsilon^2\|C\|^2\right)\\
\geq& \|\nabla_tB_s\|^2+\frac 1{\varepsilon^2}\|d_AB_s\|^2+\frac 1{\varepsilon^2}\|d_A^*B_s\|+\varepsilon^2\|\nabla_tC\|^2+\|d_AC\|^2\\
&-c\left(\|B_s\|^2+\varepsilon^2\|C\|^2\right).
\end{align*}
Analogously, for seven positive constants $c_i$, $i=1,\dots,7$, we have
\begin{align*}
\left(\partial_t^2-\partial_s\right)&\left(\mathcal N_1+c_1\varepsilon^2\|B_s\|^2+c_1\varepsilon^4\|C\|^2\right)\\
\geq& \frac {c_5}{\varepsilon^2}\mathcal N_2-c\left(\mathcal N_1+c_1\varepsilon^2\|B_s\|^2+c_1\varepsilon^4\|C\|^2\right),
\end{align*}
\begin{align*}
\left(\partial_t^2-\partial_s\right)&\left(\mathcal N_2+c_2\mathcal N_1+c_2c_1\varepsilon^2\|B_s\|^2+c_2c_1\varepsilon^4\|C\|^2\right)\\
\geq &\frac {c_6}{\varepsilon^2}\mathcal N_3
-c\left(\mathcal N_2+c_2\mathcal N_1+c_2c_1\varepsilon^2\|B_s\|^2+c_2c_1\varepsilon^4\|C\|^2\right),
\end{align*}
\begin{align*}
\left(\partial_t^2-\partial_s\right)&\left(\mathcal N_3+c_3\mathcal N_2+c_3c_2\mathcal N_1+c_3c_2c_1\varepsilon^2\|B_s\|^2+c_3c_2c_1\varepsilon^4\|C\|^2\right)\\
\geq &\frac {c_7}{\varepsilon^2}\mathcal N_4
-c\left(\mathcal N_4+c_4\mathcal N_3+c_4c_3\mathcal N_2+c_4c_3c_2\mathcal N_1\right)\\
&-c\left(c_4c_3c_2c_1\varepsilon^2\|B_s\|^2+c_4c_3c_2c_1\varepsilon^4\|C\|^2\right)
\end{align*}
\begin{align*}
\left(\partial_t^2-\partial_s\right)&\left(\mathcal N_4+c_4\mathcal N_3+c_4c_3\mathcal N_2+c_4c_3c_2\mathcal N_1\right)\\
&+\left(\partial_t^2-\partial_s\right)\left(c_4c_3c_2c_1\varepsilon^2\|B_s\|^2+c_4c_3c_2c_1\varepsilon^4\|C\|^2\right)\\
\geq &
-c\left(\mathcal N_4+c_4\mathcal N_3+c_4c_3\mathcal N_2+c_4c_3c_2\mathcal N_1\right)\\
&-c\left(c_4c_3c_2c_1\varepsilon^2\|B_s\|^2+c_4c_3c_2c_1\varepsilon^4\|C\|^2\right).
\end{align*}
The seventh step follows then from the lemmas \ref{flow:lemma:SWB1} and \ref{flow:lemma:SWB4} and the last estimates.
\end{proof}

\noindent With the seventh step we concluded the proof of the theorem \ref{flow:thm:apriori}.
\end{proof}

%
%
\section{$L^\infty$-bound for a Yang-Mills flow}\label{flow:section:linfty}

In this section we want to show that for any value $b>0$, any Yang-Mills flow $\Xi^\varepsilon\in \mathcal M^\varepsilon(\Xi^\varepsilon_-,\Xi^\varepsilon_+)$, $\Xi^\varepsilon_\pm\in \mathrm{Crit}_{\mathcal{YM}^{\varepsilon,H}}^b$ satisfies the $L^\infty$-estimate (\ref{flow:est:linfty}). This result allows us to apply the theorem \ref{flow:thm:apriori} needed in the proof of the surjectivity in the section \ref{flow:section:surj}.

\begin{theorem}\label{flow:thm:linf}
We choose a regular value $b>0$. Then there are two positive constants $c$ and $\varepsilon_0$ such that the following holds. For any $\varepsilon$, $0<\varepsilon<\varepsilon_0$,  any $\Xi_-^\varepsilon$, $\Xi_+^\varepsilon \in \mathrm{Crit}_{\mathcal {YM}^{\varepsilon,H}}^b$ and any $\Xi^\varepsilon=A^\varepsilon+\Psi^\varepsilon dt+\Phi^\varepsilon ds \in \mathcal M^\varepsilon(\Xi^\varepsilon_-,\Xi_+^\varepsilon)$
\begin{equation}\label{flow:est:linfty}
\left\|\partial_s A^\varepsilon-d_{A^\varepsilon}\Phi^\varepsilon\right\|_{L^\infty(\Sigma) }+\left\|\partial_t A^\varepsilon-d_{A^\varepsilon}\Psi^\varepsilon\right\|_{L^\infty(\Sigma) }\leq c.
\end{equation} 
\end{theorem}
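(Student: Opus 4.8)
The plan is to argue by contradiction, using a rescaling (bubbling) argument together with the energy identity for $\mathcal{YM}^{\varepsilon,H}$.

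First I would record the global energy bounds. Along a flow line $\mathcal{YM}^{\varepsilon,H}$ is non-increasing in $s$, and since $\mathcal{YM}^{\varepsilon,H}(\Xi^\varepsilon_\pm)\le b$ we get $\mathcal{YM}^{\varepsilon,H}(\Xi^\varepsilon(s))\le b$ for every $s$. Writing $B_t=\partial_tA^\varepsilon-d_{A^\varepsilon}\Psi^\varepsilon$, $B_s=\partial_sA^\varepsilon-d_{A^\varepsilon}\Phi^\varepsilon$ and $C=\partial_t\Psi^\varepsilon-\partial_s\Phi^\varepsilon-[\Psi^\varepsilon,\Phi^\varepsilon]$, this gives
\[
\int_0^1\Big(\tfrac1{\varepsilon^2}\|F_{A^\varepsilon}\|_{L^2(\Sigma)}^2+\|B_t\|_{L^2(\Sigma)}^2\Big)\,dt\le c\qquad\textrm{for all }s,
\]
and, by the identity established in Section \ref{flow:section:apriori},
\[
\big\|B_s+C\,dt\big\|_{0,2,\varepsilon}^2=\mathcal{YM}^{\varepsilon,H}(\Xi^\varepsilon_-)-\mathcal{YM}^{\varepsilon,H}(\Xi^\varepsilon_+)\le c.
\]
The first bound yields $\|F_{A^\varepsilon}\|_{L^2(\Sigma\times S^1)}\le c\varepsilon$, so the curvature in the fibre direction carries only $O(\varepsilon^2)$ of energy, and the second bounds the total kinetic energy of the flow on the whole cylinder $S^1\times\mathbb R$. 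I would also note that near $s=\pm\infty$ the estimate is harmless: by Theorem \ref{thm:mainthm} and Remark \ref{thm:existence:crit:est} every $\Xi^\varepsilon_\pm\in\mathrm{Crit}^b_{\mathcal{YM}^{\varepsilon,H}}$ is $O(\varepsilon^2)$-close, in a norm controlling $L^\infty(\Sigma)$, to a perturbed geodesic, whose curvature terms are uniformly bounded by (\ref{flow:web:w1})--(\ref{flow:web:w2}).

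Now suppose (\ref{flow:est:linfty}) fails. Then there are $\varepsilon_\nu\to 0$, perturbed Yang-Mills flows $\Xi^{\varepsilon_\nu}$ whose critical limits have energy $\le b$, and points $z_\nu\in\Sigma\times S^1\times\mathbb R$ at which the local curvature density of $\Xi^{\varepsilon_\nu}$ blows up. It is convenient to pass to the metric $g_\Sigma\oplus\varepsilon_\nu^{-2}g_{S^1}\oplus\varepsilon_\nu^{-4}g_{\mathbb R}$, under which the flow equations (\ref{flow:eq}) become the ordinary Yang-Mills $L^2$-flow on a $4$-manifold (cf. the rescaling in Section \ref{sec:ymflow}), and to use a point-selection lemma to arrange that, after replacing $z_\nu$ by nearby points and choosing scales $r_\nu\to 0$, the blow-ups $\hat\Xi^\nu$ of $\Xi^{\varepsilon_\nu}$ at $z_\nu$ at scale $r_\nu$ have curvature density $1$ at the origin and $\le 2$ on every fixed ball. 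By the $\varepsilon$-regularity and compactness theory for the Yang-Mills flow in dimension $\le 4$ (cf. \cite{MR1179335}, \cite{Rade:fv}) and the energy bounds above, a subsequence of $\hat\Xi^\nu$ converges, modulo gauge and on compact sets, to a non-constant limit — a non-flat Yang-Mills connection, or non-constant Yang-Mills flow, on one of $\mathbb R^2,\mathbb R^3,\mathbb R^4$ or $\mathbb R^k\times\mathbb R$ according to which of $r_\nu$, $r_\nu/\varepsilon_\nu$, $r_\nu/\varepsilon_\nu^2$ remain bounded — and in every case energy quantization forces a definite amount $\hbar>0$ of Yang-Mills energy to concentrate near $z_\nu$. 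The last step is to rule out each type of bubble. A bubble purely in the $\Sigma$-directions is built from the curvature component $F_{A^{\varepsilon_\nu}}$; since $\|F_{A^{\varepsilon_\nu}}\|_{L^2(\Sigma\times S^1)}\to 0$ and a finite-energy Yang-Mills connection on $\mathbb R^2$ is flat, no such bubble can carry energy $\hbar$. A bubble involving the $S^1$- or $\mathbb R$-directions would draw its energy from $\int\big(\tfrac1{\varepsilon_\nu^2}\|F\|^2+\|B_t\|^2+\|B_s\|^2+\varepsilon_\nu^2\|C\|^2\big)$ over a shrinking neighbourhood of $z_\nu$; but after undoing the rescaling, the bounds $\int_0^1(\tfrac1{\varepsilon_\nu^2}\|F\|^2+\|B_t\|^2)\,dt\le c$ and $\|B_s+C\,dt\|_{0,2,\varepsilon_\nu}^2\le c$ force this quantity over such neighbourhoods to $0$ (for flows with an extra $\mathbb R$-variable one also uses that the finite total kinetic energy prevents escape of energy to $s=\pm\infty$). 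Either way we contradict $\hbar>0$, so (\ref{flow:est:linfty}) holds for all sufficiently small $\varepsilon$.

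I expect the main obstacle to be precisely the second and third steps: setting up the anisotropic blow-up so that the rescaled equations genuinely converge to the standard Yang-Mills flow and so that all the energy quantities transform with controllable $\varepsilon$-weights, and proving the sharp energy-quantization statement — that there are no small non-flat solutions — in each of the limit geometries that actually occur. Once this compactness package is available, the contradiction, and hence the theorem, follows directly.
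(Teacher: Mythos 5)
Your overall strategy---argue by contradiction, rescale at a point where the curvature sup blows up, and use the global energy bounds $\int_0^1(\tfrac1{\varepsilon^2}\|F_A\|^2+\|B_t\|^2)\,dt\le c$ and $\|B_s+C\,dt\|_{0,2,\varepsilon}^2\le c$ to kill the rescaled solution---is the same as the paper's. But the way you propose to close the argument has a genuine gap. The paper never forms a limit object and never invokes energy quantization: after rescaling only in the $t$- and $s$-directions (by $m_\nu$ and $m_\nu^2$, or by $\varepsilon_\nu$ and $\varepsilon_\nu^2$, according to whether $\varepsilon_\nu m_\nu\to 0$, $\to c_1>0$, or $\to\infty$), the rescaled energies scale like $\|\bar B_s^\nu\|_{L^2}^2\le c/m_\nu$ (resp.\ $\le c\varepsilon_\nu$), i.e.\ they tend to $0$, and the mean-value inequalities of Lemmas \ref{flow:lemma:SWB1} and \ref{flow:lemma:SWB4}, applied to $\|F_A\|^2,\|B_t\|^2,\|B_s\|^2,\|C\|^2$ as functions of $(t,s)$ and combined with the elliptic estimates on $\Sigma$ (Lemma \ref{lemma76dt94}), convert this vanishing energy directly into $\sup(\|\bar B_s^\nu\|_{L^\infty(\Sigma)}+\|\bar B_t^\nu\|_{L^4(\Sigma)})\to0$, contradicting the normalization. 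Your final step gestures at exactly this mechanism, but you then route the contradiction through ``energy quantization forces a definite amount $\hbar>0$ to concentrate,'' which presupposes convergence of the blow-ups to a nonconstant standard Yang--Mills object with a quantized energy gap. That package is not available off the shelf here: in the regime $\varepsilon_\nu m_\nu\to0$ the limit equations degenerate to a heat-flow-type problem on the moduli space rather than a Yang--Mills flow on $\mathbb R^4$, and in the regime $\varepsilon_\nu m_\nu\to\infty$ the rescaled equations carry the singular factor $1/(\varepsilon_\nu^2 m_\nu^2)$ in front of $d_A^*F_A$; citing the $\varepsilon$-regularity/compactness theory of \cite{MR1179335} or \cite{Rade:fv} does not cover these anisotropic limits, and establishing the needed regularity is precisely the content of Theorem \ref{flow:thm:apriori} and of the case analysis in the paper's proof.

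A second concrete omission: in the regime $\varepsilon_\nu m_\nu\to\infty$ the bound $\int_0^1\|B_t\|^2\,dt\le b$ alone does not make the rescaled $L^4(\Sigma)$-norm of $B_t$ small, because the rescaling is only by $m_\nu$ while the natural scale of $B_t$ is $1/\varepsilon_\nu\ll m_\nu$. The paper has to derive the auxiliary estimate (\ref{lkllla}), which controls $\|d_{\bar A}\bar B_t\|^2+\|d_{\bar A}^*\bar B_t\|^2$ in terms of $1/m_\nu$, $\varepsilon_\nu$, and the $B_s$-terms by exploiting the perturbed Yang--Mills equation itself (expanding $\|B_s+\tfrac1{\varepsilon^2}d_A^*F_A-\nabla_tB_t-*X_t(A)\|^2$ together with the Bianchi identity). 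Nothing in your sketch produces this control, and without it the $B_t$-part of the sup cannot be shown to vanish in that case. Finally, note the paper's preliminary reduction: it suffices to bound $\|B_s\|_{L^\infty(\Sigma)}$ and $\|B_t\|_{L^4(\Sigma)}$, since (\ref{oggi1})--(\ref{oggi2}) and the Sobolev embedding for $1$-forms on $\Sigma$ then upgrade the $L^4$-bound on $B_t$ to $L^\infty$; working with the $L^4(\Sigma)$-norm of $B_t$ rather than its sup is what makes the energy bound per time-slice usable.
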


\begin{proof}
Since the two estimates (\ref{oggi1}) and (\ref{oggi2}) in theorem \ref{flow:thm:apriori} hold also is we assume
\begin{equation}\label{oggi3}
\begin{split}
\sup_{(t,s)\in S^1\times \mathbb R}\Big(&\left\|\partial_s A^\varepsilon-d_{A^\varepsilon}\Phi^\varepsilon\right\|_{L^\infty(\Sigma) }+\left\|\partial_t A^\varepsilon-d_{A^\varepsilon}\Psi^\varepsilon\right\|_{L^4(\Sigma) }\Big)\leq c
\end{split}
\end{equation} 
then by the Sobolev estimate for 1-forms on $\Sigma$, (\ref{oggi3}) implies (\ref{flow:est:linfty}). In order to prove the theorem we assume therefore that there are sequences $\varepsilon_\nu\to 0$ and $\Xi^\nu:=A^\nu+\Psi^\nu dt+\Phi^\nu ds \in \mathcal M^{\varepsilon_\nu}(\Xi^{\varepsilon_\nu}_-,\Xi^{\varepsilon_\nu}_+)$, $\Xi_\pm^{\varepsilon_\nu}\in \mathrm{Crit}_{\mathcal YM^{\varepsilon_\nu,H}}^b$ such that
\begin{equation}
\begin{split}
m_\nu:=&\sup_{(t,s)\in S^1\times \mathbb R}\Big(\left\|\partial_s A^\nu-d_{A^\nu }\Phi^\nu\right\|_{L^\infty(\Sigma) }^{\frac 12 }+\left\|\partial_t A^\nu-d_{A^\nu }\Psi^\nu\right\|_{L^4(\Sigma)}\Big)\to \infty;
\end{split}
\end{equation}
furthermore we assume that there is a sequence $(t_\nu,s_\nu)$ such that
\begin{equation}
\begin{split}
&\left\|\partial_s A^\nu(t_\nu,s_\nu)-d_{A^\nu(t_\nu,s_\nu) }\Phi^\nu(t_\nu,s_\nu)\right\|_{L^\infty(\Sigma) }^{\frac 12 }\\
&\quad+\left\|\partial_t A^\nu(t_\nu,s_\nu)-d_{A^\nu(t_\nu,s_\nu) }\Psi^\nu(t_\nu,s_\nu)\right\|_{L^4(\Sigma)}=m_\nu.
\end{split}
\end{equation}

We will consider the following three cases. We denote by $\|\cdot\|$ the $L^2$-norm on $\Sigma$.\\

\noindent{\bf Case 1: $\lim_{\nu\to\infty}\varepsilon_\nu m_\nu=0$.} We take the sequence of connections $\bar \Xi^\nu=\bar A^\nu+\bar\Psi^\nu dt+\bar\Phi^\nu ds$ defined by
\begin{equation}\label{flow:linfty:w1}
\begin{split}
&\bar A^\nu(t,s):= A^\nu\left(\frac 1{m_\nu }t+t_\nu,\frac 1{m_\nu^2}s+s_\nu\right),\\
&\bar \Psi^\nu(t,s):= \frac 1{m_\nu}\Psi^\nu\left(\frac 1{m_\nu }t+t_\nu,\frac 1{m_\nu^2}s+s_\nu\right),\\
&\bar \Phi^\nu(t,s):=\frac 1{m_\nu^2} \Phi^\nu\left(\frac 1{m_\nu }t+t_\nu,\frac 1{m_\nu^2}s+s_\nu\right);
\end{split}
\end{equation}
then $\bar\Xi^\nu$ satisfies the Yang-Mills flow equations
\begin{equation}\label{flow:linfty:w2}
\begin{split}
&\partial_s \bar A^\nu-d_{ \bar A^\nu} \bar\Phi^\nu+\frac 1{\varepsilon_\nu^2m_\nu^2}d_{\bar A^\nu}^*F_{\bar A^\nu}-\nabla_t^{\bar \Psi^\nu }\left(\partial_t \bar A^\nu-d_{\bar A^\nu}\bar\Psi^\nu\right)-*\frac 1{m_\nu^2}X_{\frac 1{m_\nu}t+t_\nu }(\bar A^\nu)=0,\\
&\partial_s\bar \Psi^\nu-\nabla_t^{\bar \Psi^\nu}\bar\Phi^\nu-\frac 1{\varepsilon_\nu^2m_\nu^2 }d_{\bar A^\nu}^*\left(\partial_t\bar A^\nu-d_{\bar A^\nu }\bar\Psi^\nu\right)=0.
\end{split}
\end{equation}
If we define $\bar B_t^\nu:=\partial_t \bar A^\nu-d_{\bar A^\nu}\bar \Psi^\nu$, $\bar B_s^\nu:=\partial_s \bar A^\nu-d_{\bar A^\nu}\bar \Phi^\nu$ and $\bar C^\nu:=\partial_s\bar\Psi^\nu-\partial_t\bar \Phi^\nu-[\bar \Psi^\nu,\bar \Phi^\nu]$, then we have the following estimates for the norms:
\begin{equation}\label{flow:ap:codsu1}
\begin{split}
\sup_{(t,s)\in S^1\times \mathbb R}\Big(\left\|\bar B_s^\nu\right\|_{L^\infty(\Sigma)}^{\frac 12 }&+\left\|\bar B_t^\nu\right\|_{L^4(\Sigma) }\Big)=1,
\end{split}
\end{equation}
\begin{equation}\label{flow:ap:codsu2}
\begin{split}
\left\|\bar B_s^\nu\right\|_{L^2}^2=&\int_{\mathbb R}\int_{-\frac{m_\nu}2 }^{\frac {m_\nu}2}\left\|\bar B_s^\nu\right\|_{L^2(\Sigma) }^2dt\,ds\\
=&\int_{\mathbb R}\int_{-\frac12 }^{\frac 12}\frac 1{m_\nu^4 }\left\|\partial_s A^\nu-d_{A^\nu} \Phi^\nu\right\|_{L^2(\Sigma) }^2m_\nu^3\, dt\, ds\\
=&\frac 1{m_\nu}\left\|\partial_s A^\nu-d_{ A^\nu} \Phi^\nu\right\|_{L^2}^2\leq \frac c{m_\nu},
\end{split}
\end{equation}
\begin{equation}\label{flow:ap:codsu3}
\begin{split}
\varepsilon^2_\nu m_\nu^2\left\|\bar C^\nu\right\|_{L^2}^2=&\int_{\mathbb R}\int_{-\frac{m_\nu}2 }^{\frac {m_\nu}2}\left\|\bar C^\nu\right\|_{L^2(\Sigma) }^2dt\,ds\\
=&\int_{\mathbb R}\int_{-\frac12 }^{\frac 12}\frac {\varepsilon^2_\nu m_\nu^2}{m_\nu^6 }\left\|\partial_s \Psi^\nu-\partial_t\Phi-[\Psi,\Phi]\right\|_{L^2(\Sigma) }^2m_\nu^3\, dt\, ds\\
=&\frac {\varepsilon_\nu^2}{m_\nu}\left\|\partial_s \Psi^\nu-\partial_t\Phi-[\Psi,\Phi]\right\|_{L^2}^2\leq \frac c{m_\nu}.
\end{split}
\end{equation}

The theorem \ref{flow:thm:apriori} tell us that for every open interval $\Omega\subset \mathbb R$, $0\in\Omega$, and every compact set $Q\in \Omega$ there is a positive constant $c$ such that
\begin{equation*}
\begin{split}
\sup_{(t,s)\in S^1\times Q}&\left(\varepsilon_\nu^2 m_\nu^2\|\bar B_t^\nu \|^2+\|d_{\bar A^\nu}\bar B_t^\nu \|^2+\|d_{\bar A^\nu}^*\bar B_t^\nu \|^2\right)\\
&+\sup_{(t,s)\in S^1\times Q}\left(\varepsilon_\nu^2 m_\nu^2\|\bar B_s^\nu \|^2+\|d_{\bar A^\nu}d_{\bar A^\nu}^*\bar B_s^\nu \|^2+\|d_{\bar A^\nu}d_{\bar A^\nu}^*\bar B_s^\nu \|^2\right)\\
\leq &c \int_{S^1\times \Omega} \left(\|F_{\bar A^\nu}\|^2+\varepsilon_\nu^2 m_\nu^2\|\bar B_t^\nu \|^2+{\varepsilon_\nu^2 m_\nu^2}c_{\frac 1{m_\nu^2}\dot X_{\frac 1{m_\nu}t+t_\nu}(\bar A^\nu)}\right) dt\\
&+c \int_{S^1\times \Omega} \left(\varepsilon_\nu^2m_\nu^2\|\bar B_s^\nu\|^2+\varepsilon_\nu^4m_\nu^4\|C^\nu\|^2\right) dt\\
\leq & c{\varepsilon_\nu^2 m_\nu^2}\left( c_{\frac 1{m_\nu^2}\dot X_{\frac 1{m_\nu}t+t_\nu}(\bar A^\nu)}+\frac 1{m_\nu }\right)
\end{split}
\end{equation*}
where for the last inequality we used that 
$$\int_{S^1\times \Omega}\|F_{\bar A^\nu}\|^2 dt\leq m_\nu c\sup {s\in \Omega}\int_{S^1}\|F_{A^\nu}\|^2 dt\leq c\varepsilon_\nu^2 m_\nu.$$
Since 
\begin{equation}\label{flow:ap:codsu4}
\left |\left |\frac 1{m_\nu^2}\dot X_{\frac 1{m_\nu}t +t_\nu}({\bar A^\nu})\right|\right |_{L^\infty}=\frac 1{m_\nu^3}\|\dot X_t(A^\nu)\|_{L^\infty}\leq \frac c{m_\nu^3}=:\left(c_{\frac 1{m_\nu^2}\dot X_{\frac 1{m_\nu}t+t_\nu}(\bar A^\nu)}\right)^{\frac12},
\end{equation}
it follows that
\begin{equation*}
\begin{split}
\sup_{(t,s)\in S^1\times \mathbb R} \|\bar B_t^\nu \|_{L^4(\Sigma)}\leq& c\sup_{(t,s)\in S^1\times \mathbb R}\left( \|\bar B_t^\nu\|+\|d_{\bar A^\nu}\bar B_t^\nu\|+\|d_{\bar A^\nu}^*\bar B_t^\nu\|\right)\\
\leq& \frac c{\sqrt {m_\nu} }\to 0 \quad (\nu\to \infty),\\
\sup_{(t,s)\in S^1\times \mathbb R}\|\bar B_s^\nu\|_{L^\infty(\Sigma)}\leq& c\sup_{(t,s)\in S^1\times \mathbb R}\big( \|\bar B_s^\nu\|+\|d_{\bar A^\nu}\bar B_s^\nu\|+\|d_{\bar A^\nu}^*\bar B_s^\nu\|\\
&\qquad \qquad\qquad+\|d_{\bar A^\nu}d_{\bar A^\nu}^*\bar B_s^\nu\|+\|d_{\bar A^\nu}^*d_{\bar A^\nu}\bar B_s^\nu\|\big)\\
\leq& \frac c{\sqrt {m_\nu}  }\to 0 \quad (\nu\to \infty),
\end{split}
\end{equation*}
and this is a contradiction.\\

\noindent{\bf Case 2: $\lim_{\nu\to \infty} \varepsilon_\nu m_\nu=c_1 >0$. } We consider the sequence of connections $\bar \Xi^\nu=\bar A^\nu+\bar\Psi^\nu dt+\bar\Phi^\nu ds$ defined by
\begin{equation*}
\begin{split}
&\bar A^\nu(t,s):= A^\nu\left(\varepsilon_\nu t+t_\nu,\varepsilon_\nu ^2s+s_\nu\right),\quad 
\bar \Psi^\nu(t,s):= \varepsilon_\nu \Psi^\nu\left(\varepsilon_\nu t+t_\nu,\varepsilon_\nu ^2s+s_\nu\right),\\
&\bar \Phi^\nu(t,s):=\varepsilon_\nu ^2 \Phi^\nu\left(\varepsilon_\nu t+t_\nu,\varepsilon_\nu ^2s+s_\nu\right);
\end{split}
\end{equation*}
then $\Xi^\nu$ satisfies the Yang-Mills flow equations
\begin{equation*}
\begin{split}
&\partial_s \bar A^\nu-d_{A^\nu} \Phi^\nu+d_{\bar A^\nu}^*F_{\bar A^\nu}-\nabla_t^{\bar \Psi^\nu }\left(\partial_t \bar A^\nu-d_{\bar A^\nu}\bar\Psi^\nu\right)-\varepsilon_\nu^2*X_{\varepsilon_\nu t+t_\nu }(\bar A^\nu)=0,\\
&\partial_s\bar \Psi^\nu-\nabla_t^{\bar \Psi^\nu}\bar\Phi^\nu- d_{\bar A^\nu}^*\left(\partial_t\bar A^\nu-d_{\bar A^\nu }\bar\Psi^\nu\right)=0,
\end{split}
\end{equation*}
In addition, if define $\bar B_t^\nu:=\partial_t \bar A^\nu-d_{\bar A^\nu}\bar \Psi^\nu$, $\bar B_s^\nu:=\partial_s \bar A^\nu-d_{\bar A^\nu}\bar \Phi^\nu$ and $\bar C^\nu:=\partial_s\bar\Psi^\nu-\partial_t\bar \Phi^\nu-[\bar \Psi^\nu,\bar \Phi^\nu]$, for $\nu\to \infty$, we have that
\begin{equation*}
\begin{split}
&\sup_{(t,s)\in S^1\times \mathbb R}\Big(\left\|\bar B_s^\nu\right\|_{L^2(\Sigma) }^{\frac 12 }+\left\|\bar B_t^\nu\right\|_{L^2(\Sigma) }\Big)=c_\nu\to c_1,
\end{split}
\end{equation*}
\begin{equation*}
\begin{split}
\left\|\partial_s \bar A^\nu-d_{\bar A^\nu}\bar \Phi^\nu\right\|_{L^2}^2=&\int_{\mathbb R}\int_{-\frac 1{2\varepsilon_\nu} }^{\frac 1{2\varepsilon_\nu} }\left\|\partial_s \bar A^\nu-d_{\bar A^\nu}\bar \Phi^\nu\right\|_{L^2(\Sigma) }^2dt\,ds\\
=&\int_{\mathbb R}\int_{-\frac12 }^{\frac 12}\varepsilon_\nu^4\left\|\partial_s A^\nu-d_{A^\nu} \Phi^\nu\right\|_{L^2(\Sigma) }^2\frac 1{\varepsilon_\nu^3}\, dt\, ds\\
=&\varepsilon_\nu \left\|\partial_s A^\nu-d_{ A^\nu} \Phi^\nu\right\|_{L^2}^2\leq c\varepsilon_\nu,
\end{split}
\end{equation*}
\begin{equation*}
\begin{split}
\left\|\bar C^\nu\right\|_{L^2}^2=&\int_{\mathbb R}\int_{-\frac 1{2\varepsilon_\nu} }^{\frac 1{2\varepsilon_\nu} }\left\|\bar C^\nu\right\|_{L^2(\Sigma) }^2dt\,ds\\
=&\int_{\mathbb R}\int_{-\frac12 }^{\frac 12}\varepsilon_\nu^6\left\|\partial_s \Psi^\nu-\partial_t \Phi^\nu-[\Psi^\nu,\Phi^\nu]\right\|_{L^2(\Sigma) }^2\frac 1{\varepsilon_\nu^3}\, dt\, ds\\
=&\varepsilon_\nu^3 \left\|\partial_s \Psi^\nu-\partial_t \Phi^\nu-[\Psi^\nu,\Phi^\nu]\right\|_{L^2}^2\leq c\varepsilon_\nu
\end{split}
\end{equation*}
We can compute the estimates (\ref{oggi1}) and (\ref{oggi2}) of the theorem \ref{flow:thm:apriori} also for $\varepsilon=1$ exactly in the same way as we did in the proof of that theorem. Thus, for every open interval $\Omega\subset \mathbb R$, $0\in \Omega$, and every compact set $Q\in \Omega$ there is a positive constat $c$ such that
 
\begin{equation*}
\begin{split}
\sup_{(t,s)\in S^1\times Q}&\left(\|\bar B_t^\nu \|^2+\|d_{\bar A^\nu}\bar B_t^\nu \|^2+\|d_{\bar A^\nu}^*\bar B_t^\nu \|^2\right)\\
&+\sup_{(t,s)\in S^1\times Q}\left(\|\bar B_s^\nu \|^2+\|d_{\bar A^\nu}d_{\bar A^\nu}^*\bar B_s^\nu \|^2+\|d_{\bar A^\nu}^*d_{\bar A^\nu}\bar B_s^\nu \|^2\right)\\
\leq &c \int_{S^1\times \Omega} \left(\|F_{\bar A^\nu}\|^2+\|\bar B_t^\nu \|^2+c_{\varepsilon_\nu^2\dot X_{\varepsilon_\nu t +t_\nu}({\bar A^\nu}) }+\|\bar B_s^\nu\|^2+\|\bar C^\nu\|^2\right) dt\\
\leq &c \varepsilon_\nu+c\frac 1 {\varepsilon_\nu}c_{\varepsilon_\nu^2\dot X_{\varepsilon_\nu t+t_\nu }({\bar A^\nu}) }
\end{split}
\end{equation*}
where for the last estimate we used that
$$\int_{S^1\times \Omega}\|F_{\bar A^\nu}\|^2dt\leq \frac c{\varepsilon_\nu}\sup_{s\in \Omega}\int_{S^1}\|F_{A^\nu}\|^2dt\leq c\varepsilon_\nu.$$

Next, we consider
$$\left |\left |\varepsilon_\nu^2\dot X_{\varepsilon_\nu t+t_\nu }\right|\right |_{L^\infty}=\varepsilon_\nu^3  \|\dot X_t(A)\|_{L^\infty}\leq  c\varepsilon_\nu^3=:\left(c_{\varepsilon_\nu^2\dot X_{\varepsilon_\nu t+t_\nu }({\bar A^\nu}) }\right)^{\frac12},$$
then
\begin{equation*}
\begin{split}
\sup_{(t,s)\in S^1\times \mathbb R}\|\bar B_t^\nu \|_{L^4(\Sigma)}\leq& c\sup_{(t,s)\in S^1\times \mathbb R}\left( \|\bar B_t^\nu\|+\|d_{\bar A^\nu}\bar B_t^\nu\|+\|d_{\bar A^\nu}^*\bar B_t^\nu\|\right)\\
\leq&c\varepsilon_\nu^{\frac12}\to 0 \quad (\nu\to \infty),\\
\sup_{(t,s)\in S^1\times \mathbb R}\|\bar B_s^\nu\|_{L^\infty(\Sigma)}\leq& c\sup_{(t,s)\in S^1\times \mathbb R}\big( \|\bar B_s^\nu\|+\|d_{\bar A^\nu}\bar B_s^\nu\|+\|d_{\bar A^\nu}^*\bar B_s^\nu\|\\
&\qquad \qquad\qquad+\|d_{\bar A^\nu}d_{\bar A^\nu}^*\bar B_s^\nu\|+\|d_{\bar A^\nu}^*d_{\bar A^\nu}\bar B_s^\nu\|\big)\\
\leq& \varepsilon_\nu^{\frac12}\to 0 \quad (\nu\to \infty),
\end{split}
\end{equation*}
and this is a contradiction.\\

\noindent {\bf Case 3: $\lim_{\nu\to \infty} \varepsilon_\nu m_\nu=\infty$. }We consider the substitution (\ref{flow:linfty:w1}) as in the case 1 and hence the new connection satisfies the Yang-Mills equations (\ref{flow:linfty:w2}) and the estimates (\ref{flow:ap:codsu1})-(\ref{flow:ap:codsu1}). In addition, we denote $ B_s^\nu:=\partial_s A^\nu-d_{ A^\nu}\Phi^\nu$, $\bar B_t^\nu:=\partial_t\bar A^\nu-d_{\bar A^\nu}\bar\Psi^\nu$, $\bar B_s^\nu:=\partial_s  \bar A^\nu-d_{\bar A^\nu} \bar\Phi^\nu$. We recall that by the computations in the first and in the second step of the proof of theorem \ref{flow:thm:apriori} we have

\begin{equation*}
\begin{split}
\frac 12 (\partial_t^2-\partial_s)\|\bar B_t^\nu \|^2=&\|\nabla_t^{\nu }\bar B_t^\nu\|^2+\frac 1{\varepsilon_\nu^2m_\nu^2 }\| d_{\bar A^\nu}\bar B_t^\nu\|^2+\frac 1{\varepsilon_\nu^2m_\nu^2 }\| d_{\bar A^\nu}^*\bar B_t^\nu\|^2\\
&+\frac 1{\varepsilon_\nu^2m_\nu^2 }\langle *[\bar B_t^\nu,*F_{\bar A^\nu}] ,\bar B_t^\nu\rangle-\langle \frac 1{m_\nu^2}d*X_{\frac 1{m_\nu}t}(\bar A^\nu)\bar B_t^\nu\\
&+\frac 1{m_\nu^2}\dot X_{\frac 1{m_\nu}t+t_\nu}(\bar A^\nu),\bar B_t^\nu\rangle,\\
\intertext{ }
\frac 12 \left(\partial_t^2-\partial_s\right)\|F_{\bar A^\nu}\|^2=&\|\nabla_t^\nu F_{\bar A^\nu}\|^2+\frac 1{\varepsilon_\nu^2 m_\nu^2}\|d_{\bar A^\nu}^*F_{\bar A^\nu}\|^2\\
&+\langle F_{\bar A^\nu},[\bar B_t^\nu\wedge \bar B_t^\nu]\rangle-\langle d_{\bar A^\nu }^*F_{\bar A^\nu},*\frac 1{m_\nu^2}X_{\frac 1{m_\nu}t+t_\nu}(\bar A^\nu)\rangle,\\
\intertext{ }
\frac 12\big(\partial_t^2-\partial_s\big)\|\bar B_s^\nu \|^2=&\|\nabla_t^\nu \bar B_s\|^2+\frac 1{\varepsilon_\nu^2m_\nu^2} \|d_{\bar A^\nu}\bar B_s^\nu\|+\frac 1{\varepsilon_\nu^2m_\nu^2}\|d_{\bar A^\nu}^*\bar B_s^\nu\|^2\\
&+\frac 1{\varepsilon_\nu^2m_\nu^2}\langle \bar B_s^\nu,[d_{\bar A^\nu}^*\bar B_t,\bar B_t^\nu]\rangle-\frac 2{\varepsilon_\nu^2 m_\nu^2}\langle \bar B_s^\nu,*[\bar B_s^\nu,*F_{\bar A^\nu}]\rangle\\
&-2\langle \bar B_s^\nu,\nabla_s^\nu*\frac 1{m_\nu^2}X_{\frac 1{m_\nu}t+t_\nu}(\bar A^\nu)\rangle,
\end{split}
\end{equation*}
thus for a constant $c_0>0$
\begin{equation*}
\begin{split}
 \left(\partial_t^2-\partial_s\right)&\left( \frac {c_0}{\varepsilon_\nu^2 m_\nu^2}\|F_{\bar A^\nu}\|^2+\|\bar B_t^\nu \|^2+\|\bar B_s^\nu\|^2\right)\\
\geq &- c\frac 1{\varepsilon_\nu^2 m_\nu^2} \|F_{\bar A^\nu}\|^2-c\|\bar B_t^\nu \|^2 -c\|\bar B_s^\nu \|^2- \frac c{m_\nu^2}
\end{split}
\end{equation*}
and hence by the lemma \ref{flow:lemma:SWB1} there is, for any open set $\Omega\subset \mathbb R$, $0\in\Omega$, and any compact interval $Q\subset \Omega$, a positive constant $c$ such that
\begin{equation*}
\begin{split}
 \sup_{(t,s)\in S^1\times Q}&\left(\|\bar B_s^\nu\|^2+\|\bar B_t\|^2\right)\\
\leq& c\int _{S^1\times \Omega} \left(\|\bar B_s^\nu\|^2+\|\bar B_t^\nu\|^2+\frac {1}{\varepsilon_\nu^3 m_\nu^3}\|F_{\bar A^\nu}\|^2+\frac 1{m_\nu^2}\right) dt\,ds\\
 \leq &\frac c{m_\nu}+c \sup_{s\in \Omega} \int_{S^1}\|\bar B_t^\nu\|^2 dt\leq \frac c{m_\nu}.
 \end{split}
\end{equation*}
Analogously, using the computations of the proof of theorem \ref{flow:thm:apriori} we can obtain that there is a constant $c_0$ such that 
\begin{equation}
 \begin{split}
 \left(\partial_t^2-\partial_s\right)&\left( \|d_{\bar A^\nu}\bar B_t^\nu \|^2+\|d_{\bar A^\nu}^*d_{\bar A^\nu}\bar B_s^\nu\|^2+\|d_{\bar A^\nu}^*\bar B_t^\nu \|^2+\|d_{\bar A^\nu}d_{\bar A^\nu}^*\bar B_s^\nu\|^2\right)\\
&+\left(\partial_t^2-\partial_s\right)\left( \frac {c_0}{\varepsilon_\nu^2 m_\nu^2}\|F_{\bar A^\nu}\|^2+\|\bar B_t^\nu \|^2+\|\bar B_s^\nu\|^2\right)\\
\geq &- \frac c{\varepsilon_\nu^2 m_\nu^2}\|F_{\bar A^\nu}\|^2-c\|d_{\bar A^\nu}\bar B_t^\nu \|^2-c\|d_{\bar A^\nu}^*d_{\bar A^\nu}\bar B_s^\nu\|^2-c\|\bar B_t^\nu\|^2\\
&-c \|d_{\bar A^\nu}^*\bar B_t^\nu \|^2-c\|d_{\bar A^\nu}d_{\bar A^\nu}^*\bar B_s^\nu\|^2-c\|\bar B_s^\nu\|^2
 \end{split}
\end{equation}
thus, by the Sobolev estimates for 1-forms on $\Sigma$ and the lemma \ref{flow:lemma:SWB1} we get, for $\Omega_1=\Sigma\times S^1\times \Omega$,
\begin{equation*}
 \begin{split}
 \sup_{(t,s)\in S^1\times Q}&\left( \|\bar B_t^\nu\|^2_{L^4(\Sigma)}+\|\bar B_s^\nu\|^2_{L^\infty(\Sigma)}\right)\\
\leq  &\sup_{(t,s)\in S^1\times Q}\left( \|\bar B_s^\nu\|^2+\|d_{\bar A^\nu}^*d_{\bar A^\nu}\bar B_s^\nu\|^2+\|d_{\bar A^\nu}d_{\bar A^\nu}^*\bar B_s^\nu\|^2\right)\\
    &+\sup_{(t,s)\in S^1\times Q}\left( \|\bar B_t^\nu \|^2+ \|d_{\bar A^\nu}\bar B_t^\nu \|^2+\|d_{\bar A^\nu}^*\bar B_t^\nu \|^2\right)\\
\leq &c\int_{S^1\times \Omega}  \left( \|d_{\bar A^\nu}^*d_{\bar A^\nu}\bar B_s^\nu\|^2+\|d_{\bar A^\nu}d_{\bar A^\nu}^*\bar B_s^\nu\|^2\right) dt\,ds\\ 
&+c\int_{S^1\times \Omega}  \left( \|d_{\bar A^\nu}\bar B_t^\nu \|^2+\|d_{\bar A^\nu}^*\bar B_t^\nu \|^2\right) dt\,ds+\frac c{ m_\nu}\\
\leq &c\frac 1{m_\nu}\left( \|d_{ A^\nu}^*d_{A^\nu} B_s^\nu\|^2_{L^2(\Omega_1)}+\|d_{ A^\nu}d_{A^\nu}^* B_s^\nu\|^2_{L^2(\Omega_1)}\right) \\ 
&+c\varepsilon_\nu\|d_{ A^\nu} B_s^\nu \|^2_{L^2(\Omega_1)}+\frac {c\varepsilon_\nu^2}{m_\nu}\|\nabla_t B_s^\nu \|^2_{L^2(\Omega_1)}+\frac c{ m_\nu}+c\varepsilon_\nu\\
\leq &\frac c{ m_\nu}+ c\varepsilon_\nu \to 0
\end{split}
\end{equation*}
where the last estimate follows from the lemma \ref{flow:smooth} and the third by 
\begin{equation}\label{lkllla}
\begin{split}
\|d_{\bar A^\nu} \bar B_t^\nu\|^2_{L^2(\Omega_1)}&+\|d_{\bar A^\nu}^*\bar B_t^\nu\|^2_{L^2(\Omega_1)}\\
\leq& \frac {c}{m_\nu}+c\varepsilon_\nu+c\varepsilon_\nu\|d_{A^\nu} B_s^\nu\|_{L^2(\Omega_1)}^2+c\frac {\varepsilon_\nu^2}{m_\nu}\|\nabla_t B_s^\nu\|_{L^2(\Omega_1)}^2
\end{split}
\end{equation}
and therefore we have a contradiction. In order to finish the proof of the theorem it remains only to prove (\ref{lkllla}). By the identities
\begin{equation*}
\begin{split}
&\varepsilon^2\left\|\bar B_s^\nu+\frac 1{\varepsilon^2}d_{\bar A^\nu}^*F_{\bar A^\nu}-\nabla_t \bar B_t^\nu-*X_{\frac 1{m_\nu}t+t_\nu}(\bar A^\nu)\right\|^2_{L^2(\Sigma\times S^1)}\\
&+\|\nabla_t F_{\bar A^\nu}-d_{\bar A^\nu}\bar B_t^\nu\|^2_{L^2(\Sigma\times S^1)}\\
=&\|\bar B_s^\nu\|^2+
\frac 1{\varepsilon^2}\left\|d_{\bar A^\nu}^*F_{\bar A^\nu}\right\|^2_{L^2(\Sigma\times S^1)}+\varepsilon^2\left\|\nabla_t \bar B_t^\nu\right\|^2_{L^2(\Sigma\times S^1)}\\
&+\varepsilon^2
\left\|X_{\frac 1{m_\nu}t+t_\nu}({\bar A^\nu})\right\|^2_{L^2(\Sigma\times S^1)}+\left\|\nabla_t F_{\bar A^\nu}\right\|^2_{L^2(\Sigma\times S^1)}+\left\|d_{\bar A^\nu}\bar B_t^\nu\right\|^2_{L^2(\Sigma\times S^1)}\\
&-2\varepsilon^2\left\langle *X_{\frac 1{m_\nu}t+t_\nu}(\bar A^\nu),\frac 1{\varepsilon^2}d_{\bar A^\nu}^*F_{\bar A^\nu}-\nabla_t \bar B_t^\nu\right\rangle\\
&+2\left\langle \bar B_s^\nu,d_{\bar A^\nu}^*F_{\bar A^\nu}-\varepsilon^2\nabla_t\bar B_t^\nu-\varepsilon^2*X_{\frac 1{m_\nu}t+t_\nu}({\bar A^\nu})\right\rangle\\
& -2 \left\langle d_{\bar A^\nu}^*F_{\bar A^\nu},\varepsilon^2\nabla_t \bar B_t^\nu\right\rangle - \left\langle\nabla_t F_{\bar A^\nu},d_{\bar A^\nu}\bar B_t^\nu\right\rangle,
\end{split}
\end{equation*}
\begin{equation*}
-2\left\langle d_{\bar A^\nu}^*F_{\bar A^\nu},\nabla_t \bar B_t^\nu\right\rangle -2\left\langle\nabla_t F_{\bar A^\nu},d_{\bar A^\nu}\bar B_t^\nu\right\rangle
=2\left\langle F_{\bar A^\nu},[\bar B_t^\nu\wedge \bar B_t^\nu]\right\rangle,
\end{equation*}
\begin{equation*}
2\left\langle \bar B_s^\nu,d_{\bar A^\nu}^*F_{\bar A^\nu}-\varepsilon^2\nabla_t\bar B_t^\nu\right\rangle=
2\left\langle d_{\bar A^\nu}\bar B_s^\nu, F_{\bar A^\nu}\right\rangle-\varepsilon^2\left\langle\nabla_t\bar B_s^\nu,\bar B_t^\nu\right\rangle,
\end{equation*}
by the Bianchi identity $\nabla_t F_{\bar A^\nu}-d_A\bar B_t=0$ and by the perturbed Yang-Mills equation (\ref{intro:YMeq1}) we have:
\begin{align*}
&\|d_{\bar A^\nu} \bar B_t^\nu\|^2_{L^2(\Omega_1)}+\|d_{\bar A^\nu}^*\bar B_t^\nu\|^2_{L^2(\Omega_1)}+\varepsilon^2_\nu m_\nu^2\|\nabla_t\bar B_t^\nu\|^2_{L^2(\Omega_1)}\\
\leq& \frac c{m_\nu^2}\|*X_{\frac 1{m_\nu}t+t_\nu}(\bar A^\nu)\|_{L^\infty}\|F_{\bar A^\nu}\|^2_{L^2(\Sigma\times S^1)}+\varepsilon^2_\nu |\langle *\nabla_t X_{\frac 1{m_\nu}t+t_\nu}({\bar A^\nu}),\bar B_t^\nu\rangle| \\
&+c\int_{\Omega} \left( {\varepsilon_\nu m_\nu}\|d_{\bar A^\nu}\bar B_s^\nu\|_{\Sigma\times S^1}^2+\frac 1{\varepsilon_\nu m_\nu}\|F_{\bar A^\nu}\|_{\Sigma\times S^1}^2\right) ds\\
&+c\int_{\Omega} \left(\varepsilon^2_\nu m_\nu^2\|\nabla_t\bar B_s^\nu\|_{\Sigma\times S^1}^2+\sup_{s\in \Omega}\|\bar B_t^\nu\|_{\Sigma\times S^1}^2\right) ds\\
&+c\int_{\Omega} \left(\varepsilon^2_\nu\|\bar B_s^\nu\|_{\Sigma\times S^1}^2+c\varepsilon_\nu^2\right) ds\\
&+c\int_{\Omega}\|F_{\bar A^\nu}\|_{L^2(\Sigma\times S^1)}(\varepsilon_\nu m_\nu)^{-\frac 12}\left( \|\bar B_t^\nu\|^2_{L^2(\Sigma\times S^1)}+\|d_{\bar A^\nu}\bar B_t^\nu\|^2_{L^2(\Sigma\times S^1)}\right)\\
&+c\int_{\Omega}\|F_{\bar A^\nu}\|_{L^2(\Sigma\times S^1)}(\varepsilon_\nu m_\nu)^{-\frac 12}\left( \|d_{\bar A^\nu}^*\bar B_t^\nu\|^2_{L^2(\Sigma\times S^1)}+\varepsilon^2_\nu m_\nu^2\|\nabla_t\bar B_t^\nu\|^2_{L^2(\Sigma\times S^1)}\right)\\
\leq& c\frac {\varepsilon_\nu}{m_\nu^2}+\varepsilon^2_\nu  +c\varepsilon_\nu\|d_{A^\nu} B_s^\nu\|_{L^2(\Omega_1)}^2+c\varepsilon_\nu\\
&+c\frac {\varepsilon_\nu^2}{m_\nu}\|\nabla_t B_s^\nu\|_{L^2(\Omega_1)}^2+\frac c{m_\nu}+c\varepsilon_\nu^2\\
&+c\varepsilon_\nu ^{\frac 12}\sup_{s\in \Omega} \|\bar B_t^\nu\|^2_{L^2(\Sigma\times S^1)}\\
&+c\varepsilon_\nu ^{\frac 12}\left(\|d_{\bar A^\nu}\bar B_t^\nu\|^2_{L^2(\Omega_1)}+ \|d_{\bar A^\nu}^*\bar B_t^\nu\|^2_{L^2(\Omega_1)}+\varepsilon^2_\nu m_\nu^2\|\nabla_t\bar B_t^\nu\|^2_{L^2(\Omega_1)}\right)
\end{align*}
where we use the H\"older inequality and the Sobolev estimate in the first estimate. Thus choosing $\varepsilon_\nu$ small enough

\begin{align*}
\|d_{\bar A^\nu} \bar B_t^\nu\|^2_{L^2(\Omega_1)}&+\|d_{\bar A^\nu}^*\bar B_t^\nu\|^2_{L^2(\Omega_1)}\\
\leq& \frac {c}{m_\nu}+c\varepsilon_\nu+c\varepsilon_\nu\|d_{A^\nu} B_s^\nu\|_{L^2(\Omega_1)}^2+c\frac {\varepsilon_\nu^2}{m_\nu}\|\nabla_t B_s^\nu\|_{L^2(\Omega_1)}^2.
\end{align*}
With this last estimate we conclude the discussion of the third case and thus, also the proof of the theorem \ref{flow:thm:linf}.
\end{proof}

%
%
\section{Exponential convergence}\label{flow:section:expconv}

In this section we will prove the exponential convergence of the curvature terms $B_s$ and $C$  stated in the next theorem.  In order to simplify the exposition, for this section, we denote by $\|\cdot\|$ the norm $\|\cdot\|_{L^2(\Sigma\times S^1)}$.

\begin{theorem}\label{flow:thm:expconv}
We assume that every perturbed closed geodesic on $\mathcal M^g(P)$ is non-degenerate. Then for every $c_0,b >0$ there are four positive constants $\delta, \varepsilon_0, c, \rho$ such that the following holds. If $\Xi\in \mathcal M^{\varepsilon }(\Xi^\varepsilon_-,\Xi^\varepsilon_+)$, $\Xi^\varepsilon_-,\Xi^\varepsilon_+\in \mathrm{Crit}^b_{\mathcal{YM}^{\varepsilon,H} }$ with $0<\varepsilon \leq \varepsilon_0$, satisfies, for $B_t:=\partial_tA-d_A\Psi$, $B_s:=\partial_sA-d_A\Phi$, $C:=\partial_s\Psi-\partial_t\Phi-[\Psi,\Phi]$,
\begin{equation}
\| B_s\|_{L^\infty(\Sigma\times S^1\times \mathbb R) }+\|B_t\|_{L^\infty(\Sigma\times S^1\times \mathbb R) }+\varepsilon \| C\|_{L^\infty(\Sigma\times S^1\times \mathbb R) }\leq c_0
\end{equation}
and
\begin{equation}
\|B_s\|^2_{L^2(\Sigma\times S^1\times [0,\infty))}+\varepsilon^2\|C\|^2_{L^2(\Sigma\times S^1\times [0,\infty))}\leq \delta,
\end{equation}
then, for $S\geq1$,
\begin{equation}
\|B_s\|_{L^2(\Sigma\times S^1\times [S,\infty))}^2+\varepsilon^2\|C\|_{L^2(\Sigma\times S^1\times [S,\infty))}^2\leq c e^{-\rho S }.
\end{equation}

\end{theorem}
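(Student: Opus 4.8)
The plan is to prove the exponential decay by a standard differential-inequality argument for the quantity $f(S):=\|B_s\|_{L^2(\Sigma\times S^1\times[S,\infty))}^2+\varepsilon^2\|C\|_{L^2(\Sigma\times S^1\times[S,\infty))}^2$, combined with the a priori elliptic estimates of Theorem \ref{flow:thm:apriori} (which apply because of the $L^\infty$-bound of Theorem \ref{flow:thm:linf}, hypothesis \eqref{flow:apriori74} being satisfied). First I would recall, as in the proof of Lemma \ref{flow:smooth} and Step 2 of Theorem \ref{flow:thm:apriori}, the evolution equations
\begin{equation*}
\nabla_s B_s+\tfrac1{\varepsilon^2}d_A^*d_AB_s-\nabla_t\nabla_tB_s-d_Ad_A^*B_s-d*X_t(A)B_s-\tfrac1{\varepsilon^2}*[B_s,*F_A]-2[C,B_t]=0,
\end{equation*}
\begin{equation*}
\nabla_sC+\tfrac1{\varepsilon^2}d_A^*d_AC+\nabla_t\nabla_tC+\tfrac2{\varepsilon^2}*[B_s,*B_t]=0,
\end{equation*}
and to pair the first with $B_s$ and the second with $\varepsilon^2 C$, integrating over $\Sigma\times S^1$. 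The key step is to show that the resulting function $g(s):=\int_{\Sigma\times S^1}(\|B_s\|^2+\varepsilon^2\|C\|^2)\,dt$ satisfies a differential inequality $g''(s)\ge \rho^2 g(s)$ for $s$ large, for some $\rho>0$ independent of $\varepsilon$. This is where the non-degeneracy of the perturbed closed geodesics enters: the limit connections $\Xi_\pm^\varepsilon=\mathcal T^{\varepsilon,b}(\Xi_\pm^0)$ are non-degenerate critical points of $\mathcal{YM}^{\varepsilon,H}$ with a spectral gap bounded below uniformly in $\varepsilon$ (this follows from Theorem \ref{thm:mainthm} together with the linear estimates of Theorem \ref{flow:thme:linest333}), so that once $\Xi^\varepsilon(s)$ is $C^0$-close to the appropriate limit — which happens because the energy $\|B_s+C\,dt\|_{0,2,\varepsilon}^2$ equals the energy difference of the endpoints and the smallness hypothesis forces the flow into a small neighbourhood — the second variation controls the first.

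More precisely, I would argue as follows. Using the two evolution equations, partial integration in the $\Sigma\times S^1$ variables, the Bianchi identities \eqref{flow:eq:apriori:eq3}, the commutation formulas \eqref{flow:eq:apriori:eq4} and the Cauchy--Schwarz inequality, one gets
\begin{equation*}
\tfrac12 g''(s)=\tfrac12\big(\partial_s^2-0\big)\text{-type terms}\ge \tfrac1{\varepsilon^2}\|d_A^*B_s\|^2+\|\nabla_tB_s\|^2+\varepsilon^2\|\nabla_tC\|^2+\|d_AC\|^2 - c\big(\|B_s\|^2+\varepsilon^2\|C\|^2\big),
\end{equation*}
where the negative term comes from the curvature couplings $*[B_s,*F_A]$, $[C,B_t]$, $*[B_s,*B_t]$ and the perturbation; here one uses $\|F_A\|_{L^\infty}\le c\varepsilon^2$ from \eqref{flow:eq:apriorig1} and the uniform bounds on $B_t$, $\|\dot X_t(A)\|$. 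To turn this into $g''\ge\rho^2 g$ I would invoke the uniform coercivity at the ends: since $E^H$ is Morse-Smale and the geodesics non-degenerate, the Jacobi operator $\mathcal D^0(\Xi_\pm^0)$ is invertible, hence by Lemma \ref{flow:lemma:saweb2} and Theorem \ref{flow:thme:linest333} there is a uniform $c_0>0$ with
\begin{equation*}
\|\pi_A(\xi)\|_{1,2;p,1}+\tfrac1{\varepsilon}\|(1-\pi_A)\xi\|_{1,2;p,\varepsilon}\le c_0\|\mathcal D^\varepsilon(\Xi)\xi\|_{0,p,\varepsilon}
\end{equation*}
along the Yang-Mills connections near the ends; applied to $\xi=B_s+C\,dt+(\text{gauge direction})$ this yields $\int_{\Sigma\times S^1}(\|B_s\|^2+\varepsilon^2\|C\|^2)\,dt\le c_0^2\int_{\Sigma\times S^1}(\|\nabla_tB_s\|^2+\tfrac1{\varepsilon^2}\|d_A^*B_s\|^2+\cdots)\,dt$ for $s$ in the region where the flow is close to $\Xi_\pm^\varepsilon$. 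Substituting back gives $g''(s)\ge\rho^2 g(s)$ there, with $\rho^2=1/c_0^2-c\varepsilon^2>0$ for $\varepsilon$ small.

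Finally, having $g''\ge\rho^2 g$, $g\ge0$, and $\int_1^\infty g<\infty$ (which is the content of Lemma \ref{flow:smooth} together with the finite-energy identity), a standard ODE comparison argument gives $g(s)\le g(S_0)e^{-\rho(s-S_0)}$ for $s\ge S_0$, where $S_0$ is chosen so that the flow has entered the small neighbourhood of the ends; the passage from $S_0$ fixed to all $S\ge1$ only changes the constant $c$. Integrating $f(S)=\int_S^\infty g(s)\,ds\le \tfrac1\rho g(S)\le c\,e^{-\rho S}$ yields the claimed estimate. The main obstacle I anticipate is establishing the $\varepsilon$-independent spectral gap at the ends — i.e.\ that the second variation of $\mathcal{YM}^{\varepsilon,H}$ at $\mathcal T^{\varepsilon,b}(\Xi_\pm^0)$ is uniformly coercive on the relevant complement, so that $\rho$ can be taken independent of $\varepsilon$; this requires carefully combining the a priori smallness of $F_A$, the structure of the norm $\|\cdot\|_{1,2;p,\varepsilon}$, and Theorem \ref{flow:thme:linest333} rather than a naive application of the Morse--Bott property of $E^H$, and one must also keep track that the gauge-fixing (the $\Phi$-direction) does not spoil the positivity. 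The remaining difficulty is purely bookkeeping: matching the pointwise-in-$s$ quantity $g(s)$ with the mixed $L^2/\Sigma$-norms appearing in Theorem \ref{flow:thm:apriori} and absorbing the cross terms, which is routine given the uniform curvature bounds already in hand.
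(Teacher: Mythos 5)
Your proposal is correct and follows essentially the same route as the paper: derive the evolution equations for $B_s$ and $C$, show $g''(s)\ge\rho^2 g(s)$ for $g(s)=\int_{\Sigma\times S^1}(\|B_s\|^2+\varepsilon^2\|C\|^2)\,dt$ using an $\varepsilon$-independent coercivity estimate at the ends, and conclude by the ODE comparison and integration. The only point worth noting is that the uniform spectral gap you correctly identify as the main obstacle is supplied in the paper not by Theorem \ref{flow:thme:linest333} (which is stated for geodesic flows) but by a dedicated per-time-slice estimate, Lemma \ref{flow:lemma:expconv}, proved by a compactness/contradiction argument using weak Uhlenbeck compactness and the non-degeneracy of the limiting geodesics — precisely the careful combination you anticipate.
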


\begin{lemma}\label{flow:lemma:expconv}
We choose a positive constant $c_0$ and we assume that every perturbed closed geodesic on $\mathcal M^g(P)$ is non-degenerate. Then there are positive constants $\delta_0, \varepsilon_0$ and $c$ such that the following holds. If $A+\Psi dt$ is a connection on $P\times S^1$ which satisfies, for $0<\varepsilon\leq \varepsilon_0$, $B_t:=\partial_tA-d_A\Psi$,
\begin{equation}
\left \| \frac 1{\varepsilon^2}d_A^*F_A-\nabla_t B_t-*X_t(A)\right\|_{L^\infty}+\frac 1{\varepsilon}\left \| |d_A^*B_t\right\|_{L^\infty}+\sup_{t\in S^1} \|F_A\|_{L^2(\Sigma) }\leq \delta_0,
\end{equation}
\begin{equation}
\left \|B_t\right\|_{L^\infty}\leq c_0,
\end{equation}
where $B_t:=\partial_tA-d_A\Psi$, then
\begin{equation}\label{flow:lemma:expconv:eq33}
\begin{split}
\|\alpha+\psi dt\|_{2,2,\varepsilon } \leq &c\left( \varepsilon\left\| \mathcal D^\varepsilon(A,\Psi)(\alpha,\psi)\right\|_{0,2,\varepsilon}+ \left\| \pi_{A^0}\mathcal D^\varepsilon(A,\Psi)(\alpha,\psi)\right\|_{L^2}\right),
\end{split}
\end{equation}
\begin{equation}\label{flow:lemma:expconv:eq33x}
\begin{split}
\|(1-\pi_{A^0})\alpha+\psi dt\|_{2,2,\varepsilon } \leq &c\varepsilon^2\left\| \mathcal D^\varepsilon(A,\Psi)(\alpha,\psi)\right\|_{0,2,\varepsilon}\\
&+ c\varepsilon\left\| \pi_{A^0}\mathcal D^\varepsilon(A,\Psi)(\alpha,\psi)\right\|_{L^2}
\end{split}
\end{equation}
for every 1-form $\alpha+\psi dt$ on $P\times S^1$.
\end{lemma}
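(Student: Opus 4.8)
The plan is to reduce the statement of Lemma \ref{flow:lemma:expconv} to the three-dimensional analogue of Theorem \ref{flow:thme:linest333}, obtained by forgetting the $s$-direction entirely. Indeed, the operator $\mathcal D^\varepsilon(A,\Psi)$ appearing here is the ``$s$-independent'' version of the linearised Yang-Mills flow operator: its two components are exactly $\mathcal D_1^\varepsilon$ and $\mathcal D_2^\varepsilon$ from (\ref{flow:op:linym12}) with all $\nabla_s$-terms dropped, together with the gauge-fixing component $\mathcal D_3^\varepsilon$ from (\ref{flow:op:linym3}) restricted to $d_{A^0}^*\alpha$ on $P\times S^1$. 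So the whole machinery of Section \ref{flow:section:linest} applies verbatim after replacing $\Sigma\times S^1\times\mathbb R$ by $\Sigma\times S^1$ and setting $\nabla_s=0$, $\phi=0$. The only structural input that was used about $\Xi$ in Section \ref{flow:section:linest} is the $L^\infty$-bound (\ref{flow:aprioriest:geo}) on the curvature terms $B_t$ and $\nabla_t B_t$, plus the fact that $A$ is flat on $\Sigma$ (so that $\pi_A$ and the splitting (\ref{flow:cond:split})-(\ref{flow:cond:split0}) make sense) and the Morse-Smale / non-degeneracy hypothesis which feeds Lemma \ref{flow:lemma:saweb2}. Here flatness on $\Sigma$ is replaced by the weaker hypothesis $\sup_t\|F_A\|_{L^2(\Sigma)}\le\delta_0$, which is exactly the smallness needed to invoke the Coulomb-type results (lemmas \ref{lemma76dt94}, \ref{lemma82dt94}) and to approximate $A$ by a genuine flat connection $A^0$ with $\|A-A^0\|_{L^\infty}\le c\|F_A\|_{L^2}$.

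First I would repeat the orthogonal splitting: decompose $\alpha+\psi dt=(\alpha_k+\psi_k dt)+(\alpha_i+\psi_i dt)$ with $\frac1{\varepsilon^2}d_A^*\alpha_k-\nabla_t\psi_k=0$ and $\alpha_i+\psi_i dt=d_A\gamma+\nabla_t\gamma dt$, solving (\ref{flow:cond:split0}) with $\nabla_s$ absent; existence and uniqueness of $\gamma$ is the $s$-independent case of Lemma 6.4 of \cite{MR1283871}, and the energy estimates for the splitting go through unchanged. Then I would establish the analogue of Lemma \ref{flow:lemma:kerim2}: the image of $\mathcal D^{\varepsilon,1}$ in $\mathrm{im}\,d_{A+\Psi dt}$ is controlled by $\|\alpha_k\|_{L^2}+\|\psi_k\|_{L^2}+\|\nabla_t\alpha_k\|_{L^2}$, the ``wave part'' $\mathcal D^{\varepsilon,2}$ has image essentially in $\mathrm{im}\,d_{A+\Psi dt}$, and the remainder term is $O(\varepsilon)$; the computations are the same integration-by-parts identities, now using only the commutation formulas (\ref{commform2}) and the hypotheses on $\|B_t\|_{L^\infty}$, $\|d_A^*B_t\|_{L^\infty}$, $\|\frac1{\varepsilon^2}d_A^*F_A-\nabla_t B_t-*X_t(A)\|_{L^\infty}$ in place of the flow equation. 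Next I would prove the parabolic and wave linear estimates — the $s$-independent forms of Theorems \ref{flow:thm:linestpar} and \ref{flow:thm:wavefin} — which follow from the Marcinkiewicz–Mihlin corollaries \ref{flow:cor:MM1}, \ref{flow:cor:MM2} on $\Sigma\times S^1$ (a fortiori on $\mathbb R^3$ via charts, since the $\partial_s$ direction is simply absent), combined with lemmas \ref{flow:lemma:lpl2}, \ref{flow:lemma:lplppia}, \ref{flow:lemma:lplppia2}. Assembling these exactly as in the proof of Theorem \ref{flow:thm:3eqbasic} gives
\begin{equation*}
\|\alpha+\psi dt\|_{2,2,\varepsilon}\le c\varepsilon^2\|\mathcal D^\varepsilon(A,\Psi)(\alpha,\psi)\|_{0,2,\varepsilon}+c\|\pi_{A^0}(\alpha)\|_{L^2},
\end{equation*}
and then the $s$-independent analogues of Lemma \ref{flow:lemma:diffD} and Lemma \ref{flow:lemma:saweb2} let me trade the term $\|\pi_{A^0}(\alpha)\|_{L^2}$ for $\|\pi_{A^0}(\mathcal D^\varepsilon(A,\Psi)(\alpha,\psi))\|_{L^2}$ plus a higher-order error absorbable for $\varepsilon$ small. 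Splitting off the harmonic part then yields (\ref{flow:lemma:expconv:eq33}) and, with the extra factor of $\varepsilon$ coming from the $(1-\pi_{A^0})$-projection as in Theorem \ref{flow:thme:linest333}, the estimate (\ref{flow:lemma:expconv:eq33x}).

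The main obstacle I anticipate is that the Jacobi operator of the underlying perturbed geodesic must be invertible in order to control the harmonic part $\pi_{A^0}(\alpha)$ — this is where Lemma \ref{flow:lemma:saweb2} enters, but that lemma was stated for an actual geodesic flow $\Xi\in\mathcal M^0(\Xi_-,\Xi_+)$, whereas here $A+\Psi dt$ is only a connection on $P\times S^1$ that is close to a perturbed Yang-Mills connection (equivalently, close to a perturbed geodesic via Theorem \ref{thm:mainthm}). So the step that needs care is: use the smallness of $\|\frac1{\varepsilon^2}d_A^*F_A-\nabla_t B_t-*X_t(A)\|_{L^\infty}$ and $\sup_t\|F_A\|_{L^2(\Sigma)}$ to conclude that the harmonic projection of $A+\Psi dt$ is $C^1$-close to a nondegenerate perturbed geodesic, hence its Jacobi operator is still invertible with a uniform bound, and the $s$-independent version of Lemma \ref{flow:lemma:saweb2} (just the elliptic estimate $\|\alpha\|_{L^2}+\|\nabla_t\nabla_t\alpha\|_{L^2}\le c\|\mathcal D^0(A,\Psi)^*\alpha\|_{L^2}$ for the Jacobi operator) holds. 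This is a standard perturbation-of-Fredholm argument but it is the one genuinely non-formal point; everything else is a line-by-line specialisation of Section \ref{flow:section:linest} with $\nabla_s\equiv0$.
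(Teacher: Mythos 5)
Your proposal is correct in outline but is organised quite differently from the paper's proof, and the difference is worth spelling out. The paper argues indirectly: it assumes the lemma fails along sequences $A^\nu+\Psi^\nu dt$, $\varepsilon_\nu\to 0$, $\delta_\nu\to 0$, first proves a preliminary claim giving uniform bounds on $\|F_{A^\nu}\|_{1,2,\varepsilon_\nu}$ and $\|B_t^\nu\|_{1,2,1}$ (hence uniform $W^{1,p}$ curvature bounds for $p<6$), invokes weak Uhlenbeck compactness to extract a limit $A^0+\Psi^0 dt$ which is shown to satisfy the perturbed geodesic equations, applies the non-degeneracy hypothesis to that limit to control $\|\pi_{A^0}(\alpha)\|_{1,2,1}$, checks explicitly that the error between the Jacobi operator at $A^0$ and the relevant terms of $\mathcal D^{\varepsilon_\nu}(A^\nu,\Psi^\nu)$ is absorbable for large $\nu$, and then cites the three-dimensional linear estimates of \cite{remyj6} (theorems 7.1 and 7.2) to reach a contradiction. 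You instead propose a direct, line-by-line specialisation of Section \ref{flow:section:linest} with $\nabla_s\equiv 0$, which is legitimate and is essentially what the cited theorems of \cite{remyj6} amount to; the real divergence is in how the harmonic part is handled. You correctly isolate the one non-formal point — that a connection satisfying the stated smallness hypotheses has harmonic projection $C^1$-close to a non-degenerate perturbed geodesic, with uniformly invertible Jacobi operator — but you should be aware that this statement is itself most naturally proved by exactly the contradiction-plus-Uhlenbeck-compactness argument that constitutes the paper's entire proof (a direct implicit-function-theorem argument would presuppose the uniform invertibility you are trying to establish, and a quantitative "approximate solutions lie near true solutions" statement still needs compactness of the set of admissible connections modulo gauge, which is where the paper's preliminary claim on curvature bounds enters). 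So your route relocates, rather than removes, the compactness step; what it buys is a cleaner separation between the soft input (uniform invertibility of the Jacobi operator near the compact, non-degenerate critical set) and the hard multiplier estimates, whereas the paper's formulation gets the limit geodesic and the uniform constants in one stroke at the cost of an indirect argument whose constants are not explicit.
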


\begin{proof}[Proof of lemma \ref{flow:lemma:expconv}.]
We suppose that the lemma does not hold. Then there are sequences $A^\nu+\Psi^\nu dt$, $\varepsilon_\nu\to 0$, such that, for $B_t^\nu:=\partial_tA^\nu-d_{A^\nu}\Psi^\nu$, and
\begin{equation*}
\delta_\nu:=\left\|\frac 1{\varepsilon^2}d_{A^\nu}^*F_{A^\nu}-\nabla_t^{\Psi^\nu} B_t^\nu-*X_t(A^\nu) \right\|_{L^\infty}+\left\|\frac 1{\varepsilon_\nu}d_{A^\nu}^*B_t^\nu\right\|_{L^\infty}+\sup_{t\in S^1} \|F_{A^\nu}\|_{L^2(\Sigma)},
\end{equation*}
with $\delta_\nu\to 0$ and (\ref{flow:lemma:expconv:eq33}) or (\ref{flow:lemma:expconv:eq33x}) are not satisfied for $A+\Psi dt=A^\nu+\Psi^\nu dt$ and $\varepsilon=\varepsilon_\nu$.\\

{\bf Claim:} The following two estimates hold:
\begin{equation*}
\begin{split}
\|F_A\|_{1,2,\varepsilon}\leq &
\left\|d_A^*F_A-\varepsilon^2\nabla_t B_t-\varepsilon^2*X_t(A)\right\|+c\varepsilon^2,\\
\|B_t \|_{1,2,1}\leq &\left\|d_A^*B_t\right\|+
\left\|\frac 1{\varepsilon^2}d_A^*F_A-\nabla_t B_t-*X_t(A)\right\|
+\|B_t\|+c.
\end{split}
\end{equation*}

\begin{proof} By the identity
\begin{equation*}
\begin{split}
&\left\|d_A^*F_A-\varepsilon^2\nabla_t B_t-\varepsilon^2*X_t(A)\right\|^2
+\varepsilon^2\|\nabla_t F_A-d_AB_t\|^2\\
=&\left\|d_A^*F_A\right\|^2+\varepsilon^4\left\|\nabla_t B_t\right\|^2+
\varepsilon^4\left\|X_t(A)\right\|^2+\varepsilon^2 \left\|\nabla_t F_A\right\|^2\\
&+\varepsilon^2\left\|d_AB_t\right\|^2-2\left\langle \varepsilon^2*X_t(A),d_A^*F_A-\varepsilon^2\nabla_t B_t\right\rangle\\
&-2\varepsilon^2 \left\langle d_A^*F_A,\nabla_t B_t\right\rangle -2\varepsilon^2 \left\langle\nabla_t F_A,d_AB_t\right\rangle,
\end{split}
\end{equation*}
\begin{equation*}
\begin{split}
\left\|d_A^*B_t\right\|^2&+
\left\|\frac 1{\varepsilon^2}d_A^*F_A-\nabla_t B_t-*X_t(A)\right\|^2
+\frac 1{\varepsilon^2}\|\nabla_t F_A-d_AB_t\|^2\\
=&\left\|d_A^*B_t\right\|^2+
\frac 1{\varepsilon^4}\left\|d_A^*F_A\right\|^2+\left\|\nabla_t B_t\right\|^2+
\left\|X_t(A)\right\|^2\\
&+\frac 1{\varepsilon^2} \left\|\nabla_t F_A\right\|^2+\frac 1{\varepsilon^2}\left\|d_AB_t\right\|^2-2\left\langle *X_t(A),\frac 1{\varepsilon^2}d_A^*F_A-\nabla_t B_t\right\rangle\\
&-2\frac 1{\varepsilon^2} \left\langle d_A^*F_A,\nabla_t B_t\right\rangle -2\frac 1{\varepsilon^2} \left\langle\nabla_t F_A,d_AB_t\right\rangle,
\end{split}
\end{equation*}
\begin{equation*}
-2\left\langle d_A^*F_A,\nabla_t B_t\right\rangle -2\left\langle\nabla_t F_A,d_AB_t\right\rangle
=2\left\langle F_A,[B_t\wedge B_t]\right\rangle
\end{equation*}
and since $\nabla_t F_A-d_AB_t=0$ by the Bianchi identity, the lemma holds.
\end{proof}

Thus, the $L^p$-norm of the curvatures $F_{A^\nu}+B_t^\nu dt$ is uniformly bounded for any $p$ which satisfies the Sobolev's condition $-\frac 3p< 1-\frac 32$ and hence for $p<6$. Therefore, by the weak Uhlenbeck compactness theorem (see \cite{MR648356} or theorem 7.1. in \cite{MR2030823}), we can assume that $A^\nu+\Psi^\nu dt$ converges weakly to a $A^0+\Psi^0dt$ in $W^{1,p}$ and hence also strongly in $L^\infty$. In addition we have that, for $B_t^0:=\partial_tA^0-d_{A^0}\Psi^0$,
\begin{equation*}
F_{A^0}=0,\quad d_{A^0}^*B_t^0=d_{A^0}B_t^0=0,\quad \nabla_t^0B_t^0+*X_t(A^0)\in \textrm{im } d_{A^0}^*;
\end{equation*}
thus, $A^0+\Psi^0dt$ satisfies the equations of a perturbed closed geodesic and therefore for any $1$-form $\alpha$ on $P\times S^1$ with $\alpha(t)\in \Omega^1(\Sigma,\mathfrak g_P)$
\begin{equation*}
\|\pi_{A^0}(\alpha)\|_{1,2,1}\leq c\|\mathcal D^0(A^0)(\pi_{A^0}(\alpha),\psi_0)\|_{L^2}
\end{equation*}
where $\psi$ satisfies $-2*\left[\pi_{A^0}(\alpha) \wedge *B_t^0\right]-d_{A^0}^*d_{A^0}\psi_0=0$. Then 
\begin{equation*}
\begin{split}
\|\pi_{A^0}(\alpha)\|_{1,2,1}\leq c\Big|\Big|\pi_{A^0}\Big(&2[\psi_0,B_t^\nu]+d*X_t(A^\nu)\pi_{A^0}(\alpha)\\
&+\nabla_t^{\Psi^\nu}\nabla_t^{\Psi^\nu}\pi_{A^0}(\alpha)+\frac 1{\varepsilon_\nu^2}*[\pi_{A^0}(\alpha)\wedge *F_{A^\nu }]\Big)\Big|\Big|_{L^2}
\end{split}
\end{equation*}
where we used 
\begin{equation*}
\begin{split}
&\left\|\left[\pi_{A_0}(\alpha)\wedge*\left(d_{A^0}(d_{A^0}^*d_{A^0})^{-1}(\nabla_t^0B_t^0-*X_t(A^0))-\frac 1{\varepsilon^2_\nu}F_{A^\nu}\right)\right]\right\|\\
 & \qquad\qquad\leq\|\pi_{A_0}(\alpha)\|_{L^\infty}
 \left\|\frac 1{\varepsilon_\nu^2}d_{A^\nu}^*F_{A^\nu}-\nabla_t^0 B_t^0-*X_t(A^0)\right\|\\
& \qquad\qquad\quad+\|\pi_{A_0}(\alpha)\|_{L^\infty}
  \left\|\nabla_t^0 B_t^0+*X_t(A^0)\right\|_{L^\infty}\left\|A^\nu-A^0\right\| \\ 
 & \qquad\qquad\leq\|\pi_{A_0}(\alpha)\|_{L^\infty}\left\|\frac 1{\varepsilon_\nu^2}d_{A^\nu}^*F_{A^\nu}-\nabla_t^\nu B_t^\nu-*X_t(A^\nu)\right\|\\
 & \qquad\qquad\quad+\|\pi_{A_0}(\alpha)\|_{L^\infty}
 \left\|\nabla_t^0B_t^0+*X_t(A^0)-\nabla_t^\nu B_t^\nu-*X_t(A^\nu)\right\|\\
  & \qquad\qquad\leq \frac 12\|\pi_{A_0}(\alpha)\|_{1,2,1}
\end{split}
\end{equation*}
for $\nu$ big enough. Therefore, analogously to the theorems 7.1 and 7.2 of \cite{remyj6}, one can prove that

\begin{equation*}
\begin{split}
\|\alpha\|_{2,2,\varepsilon_\nu }\leq c\Big(&\varepsilon_\nu \left\|\mathcal D^{\varepsilon_\nu}(A^\nu,\Psi^\nu)(\alpha,\psi)\right\|_{0,2,\varepsilon_\nu }
+\left\|\pi_{A^0}\mathcal D_1^{\varepsilon_\nu} (A^\nu,\Psi^\nu)(\alpha,\psi)\right\|_{L^2}\Big),
\end{split}
\end{equation*}

\begin{equation*}
\begin{split}
\|\alpha+\psi ds-\pi_{A^0}(\alpha)\|_{2,2,\varepsilon_\nu }\leq c\Big(&\varepsilon_\nu^2 \left\|\mathcal D^{\varepsilon_\nu}(A^\nu,\Psi^\nu)(\alpha,\psi)\right\|_{0,2,\varepsilon_\nu }\\
&+\varepsilon_\nu \left\|\pi_{A^0}\mathcal D_1^{\varepsilon_\nu} (A^\nu,\Psi^\nu)(\alpha,\psi)\right\|_{L^2}\Big).
\end{split}
\end{equation*}
Thus, we have a contradiction and hence we finished the proof of the lemma \ref{flow:lemma:expconv}.
\end{proof}

\begin{proof}[Proof of theorem \ref{flow:thm:expconv}.]

To prove this theorem we proceed as Dostoglou and Salamon did for the theorem 7.4 in \cite{MR1283871}. The idea is to find a positive bounded function $f(s)$ such that it satisfies 
\begin{equation}\label{flow:proof:eqfdsjapo}
f''(s)\geq \rho^2 f(s)
\end{equation}
 for $s\geq 1$. Then, this implies that $f$ has an exponential decay, because, since
\begin{equation*}
 \partial_s\left( e^{-\rho s}\left(f'(s)+\rho f(s)\right)\right)= e^{-\rho s}\left(-\rho^2 f(s)+ f''(s)\right) \geq 0,
\end{equation*} 
$f'(s)+\rho f(s)<0$ (otherwise $e^{-\rho s}\left(f'(s)+\rho f(s)\right)$ would be positive and increase; thus, since $f(s)$ is bounded, $e^{-\rho s }f(s)$ would decrease and hence $f'(s)$ would increase. Therefore $f(s)$ would be unbounded which is a contradiction.) and hence $e^{\rho s}f(s)$ is decreasing. Therefore, if a function $f$ satisfies (\ref{flow:proof:eqfdsjapo}), then
\begin{equation}\label{flow:proof:eqfdsjapo2}
f(s)\leq e^{-\rho (s-1) }c_1.
\end{equation}
with $c_1=f(1)$. By the a priori estimate (\ref{flow:eq:apriorig20}) and the lemma \ref{lemma76dt94} for any $\delta$  
\begin{equation}
\|B_s\|_{L^\infty(\Sigma\times S^1\times \{s\})}+\varepsilon \|C\|_{L^\infty(\Sigma\times S^1\times \{s\})}\leq \delta
\end{equation}
holds for $s$ sufficiently big. We define
\begin{equation}
f(s):=\frac 12 \int_0^1\left(\|B_s(t,s)\|_{L^2(\Sigma)}^2+ \varepsilon^2\|C(t,s)\|_{L^2(\Sigma)}^2\right) dt;
\end{equation}
then, as we will show later,
\begin{equation}\label{flow:thm:expconv:eq1}
\begin{split}
f''(s)=&\frac12 \partial_s^2\big(\|B_s\|^2_{L^2(\Sigma\times S^1)}+\varepsilon^2\|C\|^2_{L^2(\Sigma\times S^1)}\big)\\
\geq& \Big\|\frac 1{\varepsilon^2}\left(d_A^*d_AB_s+d_Ad_A^*B_s*+*[B_s,*F_A]\right)\\
&\qquad\qquad\qquad-\nabla_t\nabla_t B_s-d*X_t(A)B_s-2[C,B_t]\Big\|^2\\
&+ \frac 1{\varepsilon^2}\left \| d_A^*d_AC-\varepsilon^2\nabla_t\nabla_t C+*[B_s\wedge *B_t]\right \|^2.
\end{split}
\end{equation}
Next, for $s\geq 1$ and $\delta$ sufficiently small we apply the lemma \ref{flow:thm:expconv} for $\alpha+\psi dt:= B_s+C dt$ 
and thus,
\begin{equation*}
\begin{split}
f(s)=&\frac 12\left(\|B_s\|^2_{2,2,\varepsilon}+\varepsilon^2\|C\|^2_{2,2,\varepsilon}\right)\\
\leq& c\Big\|\frac 1{\varepsilon^2}\left(d_A^*d_AB_s+d_Ad_A^*B_s*+*[B_s,*F_A]\right)\\
&\qquad\qquad\qquad-\nabla_t\nabla_t B_s-d*X_t(A)B_s-2[C,B_t]\Big\|^2\\
&+c\varepsilon^6\left \| \frac 1{\varepsilon^2} d_A^*d_AC-\nabla_t\nabla_t C+\frac 2{\varepsilon^2}*[B_s\wedge *B_t]\right \|^2\\
\leq& c f''(s).
\end{split}
\end{equation*}
Therefore, $\rho^2 f(s)\leq  f''(s)$ for $\rho>0$ small enough. Thus, by (\ref{flow:proof:eqfdsjapo2}),
\begin{equation*}
\int_S^\infty\left(\|B_s\|_{L^2(\Sigma\times S^1) }^2+\varepsilon^2\|C\|_{L^2(\Sigma\times S^1)}^2\right) ds\leq c e^{-\rho S }
\end{equation*}
for $S\geq1$.

\begin{proof}[Proof of (\ref{flow:thm:expconv:eq1})] First, we consider the following two short computations that are consequence of the commutation formula (\ref{flow:eq:apriori:eq4}) and of the Yang-Mills flow equation (\ref{flow:eq:apriori:eq2}):  
\begin{equation}\label{flow:eq:expconvv1}
\begin{split}
\nabla_t d_AC=&d_A\nabla_t C+[B_t,C]=\frac 1{\varepsilon^2} d_A\nabla_t d_A^*B_t+[B_t,C]\\
=&\frac 1{\varepsilon^2} d_Ad_A^*\nabla_t B_t+[B_t,C]=\frac 1{\varepsilon^2} d_Ad_A^*B_s+[B_t,C],
\end{split}
\end{equation}
\begin{equation}\label{flow:eq:expconvv2}
\begin{split}
d_A^*\nabla_t B_s=& \nabla_t d_A^*B_s+*[B_t\wedge*B_s]
=\nabla_t d_A^*\nabla_t B_t+*[B_t\wedge *B_s]\\
=&\nabla_t\nabla_t d_A^*B_t+*[B_t\wedge*B_s]
=\varepsilon^2\nabla_t\nabla_t C+*[B_t\wedge*B_s],
\end{split}
\end{equation}
for in both cases we use $[B_t\wedge *B_t]=0$ and $[F_A,*F_A]=0$. In the following we use the notation
\begin{align*}
D_1:=&\frac 1{\varepsilon^2}d_A^*d_AB_s-\nabla_t\nabla_t B_s+\nabla_t d_AC-d*X_t(A)B_s-
\frac 1{\varepsilon^2}*[B_s,*F_A]+[B_t,C],\\
D_2:=&\varepsilon^2\nabla_t\nabla_tC-d_A^*d_AC-*2[B_s\wedge *B_t].
\end{align*}
Next, we can compute the second derivative of $\|B_s\|^2+\|C\|^2$, i.e.
\begin{align*}
\frac12 \partial_s^2\big(\|B_s\|^2&+\varepsilon^2\|C\|^2\big)\\
=&\|\nabla_s B_s\|^2+\varepsilon^2\|\nabla_s C\|^2+\langle \nabla_s\nabla_s B_s,B_s\rangle+\varepsilon^2\langle \nabla_s\nabla_s C,C\rangle\\
=&\left\|\frac 1{\varepsilon^2}\nabla_s d_A^*F_A-\nabla_s\nabla_t B_t-*\nabla_s X_t(A)\right\|^2+\frac1{\varepsilon^2}\|\nabla_s d_A^*B_t\|^2\\
&-\left\langle \nabla_s\nabla_s\left(\frac 1{\varepsilon^2}d_A^*F_A-\nabla_t B_t-*X_t(A)\right),B_s\right\rangle+\langle \nabla_s\nabla_s d_A^*B_t,C\rangle\\
\intertext{where in the second step we use the Yang-Mills flow equation (\ref{flow:eq:apriori:eq2}). Then by the commutation formula (\ref{flow:eq:apriori:eq4})}
=& \left\|\frac 1{\varepsilon^2}d_A^*\nabla_s F_A-\nabla_t\nabla_s B_t-d*X_t(A)B_s-\frac 1{\varepsilon^2}*[B_s,*F_A]-[C,B_t]\right\|^2\\
&+\frac 1{\varepsilon^2}\|d_A^*\nabla_s B_t-*[B_s\wedge * B_t]\|^2\\
&-\left\langle \nabla_s\left(\frac 1{\varepsilon^2}d_A^*\nabla_s F_A-\nabla_t\nabla_s B_t-d*X_t(A)B_s\right),B_s\right\rangle\\
&-\left\langle \nabla_s\left(-\frac 1{\varepsilon^2}*[B_s,*F_A]-[C,B_t]\right),B_s\right\rangle\\
&+\left\langle \nabla_s\left(d_A^*\nabla_s B_t-*[B_s\wedge B_t]\right),C\right\rangle\\
\intertext{and by the Bianchi identity}
=& \left\|D_1\right\|^2+\frac 1{\varepsilon^2}\|d_A^*\nabla_ t B_s-d_A^*d_AC-*[B_s\wedge *B_t]\|^2\\
&-\left\langle \nabla_s\left(\frac 1{\varepsilon^2}d_A^*d_AB_s-\nabla_t\nabla_ t B_s+\nabla_t d_AC-d*X_t(A)B_s\right),B_s\right\rangle\\
&-\left\langle \nabla_s\left(-\frac 1{\varepsilon^2}*[B_s,*F_A]-[C,B_t]\right),B_s\right\rangle\\
&+\left\langle \nabla_s\left(d_A^*\nabla_t B_s-d_A^*d_AC-*[B_s\wedge B_t]\right),C\right\rangle\\
\intertext{in addition, if we apply (\ref{flow:eq:expconvv1}) and (\ref{flow:eq:expconvv2}), then}
=& \left\|D_1\right\|^2+\frac 1{\varepsilon^2}\|D_2\|^2-\left\langle \nabla_s D_1,B_s\right\rangle+\left\langle \nabla_s D_2,C\right\rangle\\
\intertext{if we permute the derivatives in $D_1$ and $D_2$ with $\nabla_s$ and we apply the partial integration, then}
=& \left\|D_1\right\|^2+\frac 1{\varepsilon^2}\|D_2\|^2-\left\langle \nabla_sB_s, D_1\right\rangle+\left\langle \nabla_sC, D_2\right\rangle\\
&-\left\langle \left[\nabla_s,\left(\frac 1{\varepsilon^2}d_A^*d_A-\nabla_t\nabla_ t+\frac 1{\varepsilon^2}d_Ad_A^*\right)\right]B_s,B_s\right\rangle\\
&+\left\langle \frac 1{\varepsilon^2}*\left[B_s,*\nabla_sF_A\right] +2\left[C,\nabla_sB_t\right]+d^2*X_t(A)[B_s,B_s],B_s\right\rangle\\
&+\left\langle \left[\nabla_s,\left(\varepsilon^2\nabla_t\nabla_t-d_A^*d_A\right)\right]C,C\right\rangle-2\left\langle *\left[B_s\wedge *\nabla_s B_t\right],C\right\rangle
\end{align*}
The last three lines can be estimates by
\begin{align*}
\left|\left\langle \left[\nabla_s,\frac 1{\varepsilon^2}d_A^*d_A\right]B_s,B_s\right\rangle\right|=&\frac 1{\varepsilon^2}\left| 2\left \langle \left[B_s\wedge B_s\right],d_A B_s\right\rangle\right|\\
\leq &\frac {c \delta}{\varepsilon^2}  \|B_s\|\cdot \|d_AB_s\|
\leq \frac {c \delta}{\varepsilon^4}  \|d_AB_s\|^2+\delta \|B_s\|^2,\\
\left|\left\langle \left[\nabla_s,\nabla_t\nabla_ t\right]B_s,B_s\right\rangle\right|=&\left| \left \langle \left[C,\nabla_t B_s\right],B_s\right\rangle
+\left\langle \nabla_t\left[C,B_s\right], B_s\right\rangle\right|\\
=&\left|2 \left \langle \left[C,\nabla_t B_s\right],B_s\right\rangle\right|\\
\leq &c\delta \| C\|\cdot \|B_s\| \leq c\delta\|C\|^2+ \delta \|B_s\|^2, \\
\left|\left\langle \left[\nabla_s,\frac 1{\varepsilon^2}d_Ad_A^*\right]B_s,B_s\right\rangle\right|
=&0,\\
\left|\left\langle \frac 1{\varepsilon^2}*\left[B_s,*\nabla_sF_A\right],B_s\right\rangle\right|
\leq &\frac {c\delta}{\varepsilon^2} \|d_AB_s\|\cdot\|B_s\| 
\leq \frac {c\delta}{\varepsilon^4} \|d_AB_s\|^2+\delta\|B_s\|^2, \\
\left|\left\langle 2\left[C,\nabla_sB_t\right],B_s\right\rangle\right|
\leq& c\delta \|d_AB_s\|\cdot\|C\| \leq \frac {c\delta}{\varepsilon^2} \|d_AB_s\|^2+\varepsilon^2\delta\|C\|^2, \\
\left|\left\langle \left[\nabla_s,\varepsilon^2\nabla_t\nabla_t\right]C,C\right\rangle\right|=&0,\\
\left|\left\langle \left[\nabla_s,d_A^*d_A\right]C,C\right\rangle\right|
=& \left| 2\left\langle \left[C,d_AC\right],B_s \right\rangle\right|\leq c\delta\|d_AC\|^2,\\
\left|\left\langle 2*\left[B_s\wedge *\nabla_s B_t\right],C\right\rangle\right|\leq & c\delta\|C\|\cdot \|d_AB_s\|\leq \varepsilon^2\delta\|C\|^2+\frac {c\delta}{\varepsilon^2} \|d_AB_s\|^2,\\
|\langle d^2*X_t(A)[B_s,B_s],B_s\rangle |\leq &c\delta \|B_s\|^2;
\end{align*}
we can therefore conclude that
\begin{align*}
\frac12 \partial_s^2\big(\|B_s\|^2+\varepsilon^2\|C\|^2\big)
\geq& 2\left\|D_1\right\|^2+\frac 2{\varepsilon^2}\|D_2\|^2
-\frac {c\delta}{\varepsilon^4}  \|d_AB_s\|^2\\
&- \delta\|B_s\|^2- c\delta \|C\|^2+c\delta\|d_AC\|\\
\geq& \left\|D_1\right\|^2+\frac 1{\varepsilon^2}\|D_2\|^2
\end{align*}
where the last step follows from the lemma \ref{flow:lemma:expconv} and choosing $\delta$ and $\varepsilon_0$ small enough and thus, we concluded the proof of the identity (\ref{flow:thm:expconv:eq1}).
\end{proof}
\noindent We concluded therefore the proof of the theorem \ref{flow:thm:expconv}.
\end{proof}
\noindent Next, we use the notation of the section \ref{flow:section:apriori}.
\begin{theorem}\label{flow:thm:apriori22}
We choose four constants $b, c_0>0$, $p, s_1\geq 2$. There are three positive constants $\varepsilon_0$, $c$ and $\rho$ such that the following holds. If a perturbed Yang-Mills flow $\Xi=A+\Psi dt+\Phi ds \in \mathcal M^\varepsilon(\Xi_-,\Xi_+)$, with $\Xi_\pm=A_\pm+\Psi_\pm dt \in \mathrm{Crit}^b_{\mathcal {YM}^{\varepsilon, H}}$ and $0<\varepsilon<\varepsilon_0$, satisfies
\begin{equation}\label{flow:apriori7422}
\|\partial_t A-d_A\Psi\|_{L^4(\Sigma)}+\|\partial_s A-d_A\Phi\|_{L^\infty(\Sigma)}\leq c_0,
\end{equation} 
then
\begin{equation}\label{flow:eq:apriorig122}
\sup_{(t,s)\in S^1\times[s_0,\infty)}\left(\|B_s\|_{L^\infty(\Sigma)}+\varepsilon\|C\|_{L^\infty(\Sigma)}\right)\leq c e^{-\rho s_0},
\end{equation}
\begin{equation}\label{flow:eq:apriorig1221}
\sup_{(t,s)\in S^1\times[s_0,\infty)}\left(\mathcal N_1+\mathcal N_2+\mathcal N_3+\mathcal N_4\right)\leq c\varepsilon^2 e^{-\rho s_0},
\end{equation}
\begin{equation}\label{flow:eq:apriorig1222}
\sup_{(t,s)\in S^1\times[s_0,\infty)}\left(\|\alpha\|_{L^\infty(\Sigma)}+\varepsilon\|\nabla_t\alpha\|_{L^\infty(\Sigma)}
\right)\leq c e^{-\rho s_0},
\end{equation}
\begin{equation}\label{flow:eq:apriorig1223}
\sup_{(t,s)\in S^1\times[s_0,\infty)}\left(\|d_A^*\alpha\|_{L^\infty(\Sigma)}+\|d_A\alpha\|_{L^\infty(\Sigma)}
\right)\leq c e^{-\rho s_0},
\end{equation}
\begin{equation}\label{flow:eq:apriorig1224}
\sup_{(t,s)\in S^1\times[s_0,\infty)}\left(\varepsilon\|\nabla_td_A\alpha\|_{L^p(\Sigma)}+\varepsilon^2\|\nabla_sd_A^*\alpha\|_{L^p(\Sigma)}\right)\leq ce^{-\rho s_0} ,
\end{equation}
\begin{equation}\label{flow:eq:apriorig1225}
\sup_{(t,s)\in S^1\times[s_0,\infty)}\left(\varepsilon\|\psi\|_{L^\infty(\Sigma)}
+\varepsilon\|d_A\psi\|_{L^\infty(\Sigma)}
+\varepsilon^2\|\nabla_t\psi\|_{L^\infty(\Sigma)}\right)\leq c e^{-\rho s_0},\\
\end{equation}
\begin{equation}\label{flow:eq:apriorig1226}
\sup_{(t,s)\in S^1\times[s_0,\infty)}\left(\varepsilon^2\|\nabla_td_A\psi\|_{L^p(\Sigma)}+\varepsilon^3\|\nabla_sd_A\phi\|_{L^p(\Sigma)}\right)\leq ce^{-\rho s_0} ,\\
\end{equation}
\begin{equation}\label{flow:eq:apriorig1227}
\sup_{S^1\times [s_0,\infty) }\left(\|F_A-F_{A_+}(s)\|_{L^\infty(\Sigma)}+\varepsilon \|\nabla_t (F_A-F_{A_+}(s))\|_{L^p(\Sigma)}\right)\leq c \varepsilon^{2-\frac 1p}e^{-\rho s_0},\\
\end{equation}
\begin{equation}\label{flow:eq:apriorig1228}
\sup_{S^1\times [s_0,\infty) }\left(\varepsilon^{1-\frac 1p} \|\nabla_s F_A\|_{L^p(\Sigma}+\varepsilon^2 \|\nabla_t\nabla_t (F_A-F_{A_+}(s))\|_{L^p(\Sigma)}\right)\leq c \varepsilon^{2-\frac 1p}e^{-\rho s_0},
\end{equation}
\begin{equation}\label{flow:eq:apriorig1229}
\sup_{S^1\times [s_0,\infty) } \left(\left\|B_t- B_t^+\right\|_{L^\infty}+\varepsilon\left\|\nabla_t(B_t- B_t^+)\right\|_{L^p}\right)\leq ce^{-\rho s_0},
\end{equation}
\begin{equation}\label{flow:eq:apriorig12210}
\sup_{S^1\times [s_0,\infty) } \varepsilon^2\left\|\nabla_s(B_t- B_t^+)\right\|_{L^p}\leq ce^{-\rho s_0},
\end{equation}
where $s_0>s_1$ and, for $g\in \mathcal G_0^{2,2}(\Sigma\times S^1\times \mathbb R)$ defined by $g^{-1}\partial_sg=\Phi$,
$$A_+(s)+\Psi_+(s) dt:=g(s)^*\left(A_++\Psi_+ dt\right),$$
$$\alpha(s)+\phi(s) dt:= (A(s)+\Psi(s) dt)-(A_+(s)+\Psi_+(s) dt),$$
$$B_t^+(s):=\partial_tA_+(s)+d_{A_+(s)}\Psi_+(s).$$
\end{theorem}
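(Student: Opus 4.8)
The plan is to run the argument of Theorem~\ref{flow:thm:apriori} a second time, but localized on unit $s$-intervals and fed with the exponential $L^2$-decay of $B_s$ and $C$ provided by Theorem~\ref{flow:thm:expconv}, and then to transfer the resulting control of the curvature terms $B_s,C,F_A,B_t$ to the differences $\alpha,\phi$ from the limiting connection by integrating suitable transport equations in the $s$-variable. By Theorem~\ref{flow:thm:linf} and the Sobolev estimate on $\Sigma$, hypothesis (\ref{flow:apriori7422}) implies the $L^\infty$-hypotheses of both Theorem~\ref{flow:thm:apriori} and Theorem~\ref{flow:thm:expconv}. Hence, once $s$ exceeds a threshold $s_1=s_1(b)$ for which the tail-energy smallness hypothesis of Theorem~\ref{flow:thm:expconv} holds --- a uniform threshold, obtained from the energy identity $\|B_s+C\,dt\|_{0,2,\varepsilon}^2=\mathcal{YM}^{\varepsilon,H}(\Xi_-)-\mathcal{YM}^{\varepsilon,H}(\Xi_+)$ together with the uniform bound of Lemma~\ref{flow:smooth} --- we obtain
\[
\|B_s\|_{L^2(\Sigma\times S^1\times[S,\infty))}^2+\varepsilon^2\|C\|_{L^2(\Sigma\times S^1\times[S,\infty))}^2\le ce^{-\rho S},\qquad S\ge s_1.
\]
Moreover $A_+(s)+\Psi_+(s)\,dt+\Phi\,ds=g(s)^*(\Xi_+^\varepsilon)$ is the gauge transport of an $s$-independent connection, so its $B_s$ and $C$ vanish, $F_{A_+}(s)=g(s)^{-1}F_{A_+}g(s)$ and $B_t^+(s)=g(s)^{-1}B_t^+g(s)$; in particular $\|F_{A_+}(s)\|_{L^\infty(\Sigma)}\le c\varepsilon^2$ by Theorem~\ref{thm:mainthm} and Remark~\ref{thm:existence:crit:est}.

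\textbf{Localized a priori estimates.} Fix $s\ge s_0>s_1$ and apply the estimates of Theorem~\ref{flow:thm:apriori} with $Q:=[s-\tfrac12,s+\tfrac12]$ and $\Omega:=(s-1,s+1)$; the constants there depend only on the (fixed) lengths and the gap between the boundaries, hence are uniform in $s$. Since
\[
\int_{S^1\times\Omega}\!\bigl(\|B_s\|^2+\varepsilon^2\|C\|^2\bigr)\,dt\,ds\le\int_{S^1\times[s_0-1,\infty)}\!\bigl(\|B_s\|^2+\varepsilon^2\|C\|^2\bigr)\,dt\,ds\le ce^{-\rho(s_0-1)},
\]
estimate (\ref{flow:eq:apriorig20}) gives (\ref{flow:eq:apriorig1221}) at once and, after the Sobolev estimate on $\Sigma$ applied to $B_s$ and $C$ (using that $\mathcal N_1+\mathcal N_2$ controls their $d_A$-derivatives of order one and two and that $\|C\|_{L^2(\Sigma)}\le c\|d_AC\|_{L^2(\Sigma)}$ by Lemma~\ref{lemma76dt94}), also (\ref{flow:eq:apriorig122}). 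The curvature estimates (\ref{flow:eq:apriorig1227})--(\ref{flow:eq:apriorig1228}) and (\ref{flow:eq:apriorig1229})--(\ref{flow:eq:apriorig12210}) are then obtained by rerunning, on the same intervals, the subsolution arguments of Steps~5--6 of the proof of Theorem~\ref{flow:thm:apriori} for the \emph{differences} $F_A-F_{A_+}(s)$ and $B_t-B_t^+(s)$ in place of $F_A$ and $B_t$: because their $\nabla_s$-derivatives are $d_AB_s$ and $\nabla_tB_s-d_AC$ (both exponentially small by the previous sentence), and the $\dot X_t$-type inhomogeneities are replaced by $X_t(A)-X_t(A_+(s))$ (exponentially small once (\ref{flow:eq:apriorig1222}) is available), the differential inequalities now have a decaying right-hand side.

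\textbf{Transfer to the differences and higher derivatives.} Differentiating the gauge identities in $s$ yields the transport equations
\[
\nabla_s\alpha=B_s,\qquad \nabla_s\phi=C,\qquad \nabla_s\bigl(F_A-F_{A_+}(s)\bigr)=d_AB_s,\qquad \nabla_s\bigl(B_t-B_t^+(s)\bigr)=\nabla_tB_s-d_AC,
\]
whose left-hand sides tend to $0$ as $s\to\infty$; integrating each from $s$ to $\infty$ and inserting the pointwise ($L^\infty(\Sigma)$ or $L^p(\Sigma)$) bounds of the previous step gives (\ref{flow:eq:apriorig1222}) and the $\phi$-part of (\ref{flow:eq:apriorig1225}). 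The algebraic identities $F_A-F_{A_+}(s)=d_A\alpha-\tfrac12[\alpha\wedge\alpha]$ and $\nabla_t\alpha=\bigl(B_t-B_t^+(s)\bigr)+d_A\phi-[\alpha,\phi]$ convert these into (\ref{flow:eq:apriorig1223}) and the remaining parts of (\ref{flow:eq:apriorig1225}), and, after applying $\nabla_t$ and using (\ref{flow:eq:apriorig1227})--(\ref{flow:eq:apriorig1228}), into the mixed second-order estimates (\ref{flow:eq:apriorig1224}) and (\ref{flow:eq:apriorig1226}); for the few terms not reachable this way one invokes the localized linear estimate --- the version of Lemma~\ref{flow:lemma:expconv}, resp.\ Theorem~\ref{flow:thm:estk2} --- applied to $\alpha+\phi\,dt$ on a unit $s$-interval, noting that the linearized operator applied to $\alpha+\phi\,dt$ equals $\nabla_s(\alpha+\phi\,dt)=B_s+C\,dt$ up to quadratic terms already controlled. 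Choosing the final $\rho$ to be the smallest of those appearing and absorbing the factors $e^{\rho}$ from the shifts $s_0-1\mapsto s_0$ into $c$ completes the proof.

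\textbf{Main obstacle.} The delicate part is the bookkeeping: one must verify, in every subsolution inequality of Theorem~\ref{flow:thm:apriori} rewritten for a difference, that the inhomogeneity is genuinely a decaying quantity --- which works only because each error term is either $\nabla_s$-exact (hence vanishes on the $s$-independent limit) or of the form $X_t(A)-X_t(A_+(s))$ --- and that the weights of $\varepsilon$ in the $\varepsilon$-dependent norms conspire so that the single decaying input $\int(\|B_s\|^2+\varepsilon^2\|C\|^2)$ propagates through all of (\ref{flow:eq:apriorig122})--(\ref{flow:eq:apriorig12210}) with exactly the stated $\varepsilon$-powers. A secondary point, already flagged above, is to produce the threshold $s_1$ depending only on $b$ rather than on the individual flow, using the energy identity and the uniform bound of Lemma~\ref{flow:smooth}.
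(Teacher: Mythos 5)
Your proposal follows essentially the same route as the paper: the first two estimates come from combining the localized bound (\ref{flow:eq:apriorig20}) of Theorem \ref{flow:thm:apriori} with the exponential $L^2$-decay of Theorem \ref{flow:thm:expconv} and Lemma \ref{lemma76dt94}, and the remaining estimates are obtained by integrating $\nabla_s$-transport identities ($\nabla_s\alpha=B_s$, $\nabla_s F_A=d_AB_s$, etc.) from $s$ to $\infty$, exactly as in the paper. The one place where you are thinner than the paper is the exponent $\varepsilon^{2-\frac1p}$ in (\ref{flow:eq:apriorig1227})--(\ref{flow:eq:apriorig1228}): integrating $\nabla_sF_A=d_AB_s$ only yields $\|F_A-F_{A_+}\|_{L^\infty(\Sigma)}\le c\varepsilon e^{-\rho s_0}$, and the paper then interpolates this against the separate bounds $\|F_A\|_{L^\infty(\Sigma)},\|F_{A_+}\|_{L^\infty(\Sigma)}\le c\varepsilon^2$ via $\|F_A-F_{A_+}\|^p\le\|F_A-F_{A_+}\|\cdot(\|F_A\|+\|F_{A_+}\|)^{p-1}$, which is what produces $\varepsilon^{2-\frac1p}$ at the cost of replacing $\rho$ by $\rho/p$; your alternative of rerunning the subsolution arguments for the differences does not obviously reproduce this exponent, so you should add that interpolation step.
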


\begin{proof} By the estimate (\ref{flow:eq:apriorig20}), the theorem \ref{flow:thm:expconv} and the lemma \ref{lemma76dt94} we know that there two constants $\rho$ and $c$ such that
\begin{equation}\label{kdsfaogh}
\begin{split}
\sup_{(t,s)\in S^1\times[s_0,\infty)}\left(\|B_s\|_{L^\infty(\Sigma)}+\varepsilon\|C\|_{L^\infty}\right)\leq c e^{-\rho s_0},\\
\sup_{(t,s)\in S^1\times[s_0,\infty)}\left(\mathcal N_1+\mathcal N_2+\mathcal N_3+\mathcal N_4\right)\leq c\varepsilon^2 e^{-\rho s_0}.
\end{split}
\end{equation}
Thus, if we integrate the first estimate of $\ref{kdsfaogh}$ we have
$$\sup_{(t,s)\in S^1\times[s_0,\infty)}\left(\|\alpha\|_{L^\infty(\Sigma)}+\varepsilon\|\psi\|_{L^\infty(\Sigma)}\right)\leq c e^{-\rho s_0}$$
and if we pick $s_2\in [s_0,\infty)$, then the third estimate of the theorem follows from the computation
\begin{align*}
\varepsilon\|\nabla_t\alpha(s_2)\|_{L^\infty(\Sigma)}\leq& \int_{\infty}^{s_2}\varepsilon\|\nabla_s\nabla_t(A(s)-A_+(s)\|_{L^\infty(\Sigma)} ds\\
\leq& \int_{\infty}^{s_2}\varepsilon\|C\|_{L^\infty}\|A(s)-A_+(s)\|_{L^\infty(\Sigma)} ds\\
&+ \int_{\infty}^{s_2}\varepsilon\|\nabla_t\nabla_s(A(s)-g(s)^*A_+)\|_{L^\infty(\Sigma)} ds\\
\leq& ce^{-\rho s_2}+\int_{\infty}^{s_2}\varepsilon\|\nabla_t(\partial_sA(s)+[\Psi(s),A(s)]\\
&\qquad\qquad\qquad\quad-[\Phi(s),g(s)^*A_+]-d_{g(s)^*A_+}\Phi(s))\|_{L^\infty(\Sigma)}ds\\
\leq& ce^{-\rho s_2}+\int_{\infty}^{s_2}\varepsilon\|\nabla_tB_s\|_{L^\infty(\Sigma)}ds\\
\leq& ce^{-\rho s_2}+c\int_{\infty}^{s_2}\left(\varepsilon\|\nabla_tB_s\|_{L^2(\Sigma)}+\varepsilon\|d_A^*d_A\nabla_tB_s\|_{L^2(\Sigma)}\right) ds\\
 &+\int_{\infty}^{s_2}\varepsilon\|d_Ad_A^*\nabla_tB_s\|_{L^2(\Sigma)}ds\\
\leq& ce^{-\rho s_2}
\end{align*}
where the constant $c$ does not depend on $(t,s_2)$ for $(t,s_2)\in S^1\times [s_0,\infty)$. The second step of the computation follows from the commutation formula (\ref{flow:eq:apriori:eq4}), the third by the definition of $g(s)$ and the previous estimates, the fifth by the lemma \ref{lemma76dt94} and the last one by \ref{kdsfaogh}. The estimates (\ref{flow:eq:apriorig1223})-(\ref{flow:eq:apriorig1226}) follows in the same way. Next we prove the first part of the (\ref{flow:eq:apriorig1227}). By (\ref{flow:eq:apriorig1})
$$\sup_{(t,s)\in S^1\times[s_0,\infty) }\|F_A\|_{L^\infty(\Sigma)}\leq c \varepsilon^{2}$$
and by the Bianchi identity $d_AB_s=\nabla_sF_A$ and by the lemma \ref{lemma76dt94}:
\begin{align*}
\sup_{(t,s)\in S^1\times[s_0,\infty) }\|\nabla_sF_A\|_{L^\infty(\Sigma)}=&c\sup_{(t,s)\in S^1\times[s_0,\infty) }\|d_Ad_A^*d_AB_s\|_{L^2(\Sigma)}
\leq c\varepsilon e^{-\rho s_0}.
\end{align*}
Thus, integrating the last estimate, $\sup_{(t,s)\in S^1\times[s_0,\infty) }\|F_A-F_{A_+}\|_{L^\infty(\Sigma)}\leq c\varepsilon e^{-\rho s_0}$ and hence
\begin{align*}
\sup_{(t,s)\in S^1\times[s_0,\infty) }&\|F_A-F_{A_+}\|^p_{L^\infty(\Sigma)}\\
\leq& c\varepsilon e^{-\rho s_0}\sup_{(t,s)\in S^1\times[s_0,\infty) }\left(\|F_A\|_{L^\infty(\Sigma)}+\|F_{A_+}\|_{L^\infty(\Sigma)}\right)^{p-1}\\
\leq &c\varepsilon^{2p-1} e^{-\rho s_0}
\end{align*}
and finally we obtain
$$\sup_{(t,s)\in S^1\times[s_0,\infty) }\|F_A-F_{A_+}\|_{L^\infty(\Sigma)}\leq c\varepsilon^{2-\frac 1p} e^{-\frac\rho p s_0}.$$
The other estimates of (\ref{flow:eq:apriorig1227})-(\ref{flow:eq:apriorig12210}) follow in the same way using the Bianchi identity, the Yang-Mills equation (\ref{flow:eq}) in order to commute the operators and the estimates (\ref{kdsfaogh}).
\end{proof}

\section{Relative Coulomb gauge}\label{ch:rcg}

\begin{theorem}\label{surj:thm:prop6.2}
Assume $q\geq p>2$, $q>4$ and $qp/(q-p)>4$. We choose $\Xi_0=A_0+\Psi_0 dt+\Phi_0 ds \in \mathcal A^{1,p}(\Xi_-,\Xi_+)$ such that $F_{A_0}=0$. Then for every constant $c_0>0$ there exist constants $\delta>0$ and $c>0$ such that the following holds for $0<\varepsilon \leq 1$. If $\Xi\in \mathcal A^{1,p}(\Xi_-,\Xi_+)$ satisfies
\begin{equation}\label{surj:prop62eq1}
\varepsilon^2\left\|d_{\Xi_0}^{*_\varepsilon}\left(\Xi-\mathcal K_2^\varepsilon(\Xi_0)\right)\right\|_{L^p}\leq c_0 \varepsilon^{\frac 3p}, \quad \left\|\Xi-\mathcal K_2^\varepsilon(\Xi_0)\right\|_{0,q,\varepsilon}\leq \delta \varepsilon^{\frac 3q },
\end{equation}
then there exists a gauge transformation $g\in\mathcal G^{2,p}_0(P\times S^1\times \mathbb R)$ such that $$d_{\Xi_0}^{*_\varepsilon }\left(g^*\Xi-\mathcal K_2^\varepsilon(\Xi_0)\right)=0$$ and 
\begin{equation}\label{surj:prop62eq2}
\left\|g^*\Xi-\Xi\right\|_{1,p,\varepsilon}\leq c\varepsilon^2\left(1+\varepsilon^{-\frac 3p }\left\|\Xi-\mathcal K_2^\varepsilon(\Xi_0)\right\|_{1,p,\varepsilon}\right)\left\|d_{\Xi_0}^{*_\varepsilon}\left(\Xi-\mathcal K_2^\varepsilon(\Xi_0)\right)\right\|_{L^p}.
\end{equation}
\end{theorem}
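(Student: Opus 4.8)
The plan is to prove Theorem \ref{surj:thm:prop6.2} by a standard Banach fixed point / Newton iteration argument applied to the relative Coulomb gauge condition, following the pattern of the analogous results in \cite{MR1283871} and \cite{remyj6}. First I would reformulate the problem: writing $g=\exp(\xi)$ for a $0$-form $\xi\in W^{2,p}(\Sigma\times S^1\times\mathbb R,\mathfrak g_{P\times S^1\times\mathbb R})$, the condition $d_{\Xi_0}^{*_\varepsilon}(g^*\Xi-\mathcal K_2^\varepsilon(\Xi_0))=0$ becomes an equation of the form
\begin{equation*}
d_{\Xi_0}^{*_\varepsilon}d_{\Xi_0}\xi = -d_{\Xi_0}^{*_\varepsilon}(\Xi-\mathcal K_2^\varepsilon(\Xi_0)) + Q(\xi),
\end{equation*}
where $Q(\xi)$ collects the quadratic-and-higher order terms coming both from the nonlinearity of $g\mapsto g^*\Xi$ and from the difference $d_{\Xi_0}^{*_\varepsilon}d_{\exp(\xi)^*\Xi_0}-d_{\Xi_0}^{*_\varepsilon}d_{\Xi_0}$; one also uses $\Xi-\Xi_0=(\Xi-\mathcal K_2^\varepsilon(\Xi_0))+(\mathcal K_2^\varepsilon(\Xi_0)-\Xi_0)$ together with the estimate (\ref{flow:k2:leqw3}) from Lemma \ref{flow:lemma:firstappr}. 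The operator $L_\varepsilon:=d_{\Xi_0}^{*_\varepsilon}d_{\Xi_0}$ is, since $F_{A_0}=0$, invertible from $W^{2,p}$ onto (the appropriate subspace of) $L^p$, and one needs the $\varepsilon$-dependent elliptic estimate $\|\xi\|_{2,p,\varepsilon}\leq c\varepsilon^2\|L_\varepsilon\xi\|_{L^p}$ in a suitably scaled norm; this is exactly the kind of estimate already developed in Sections \ref{flow:section:linest} and \ref{flow:section:firstapprox}, obtained by the rescaling $(t,s)\mapsto(\varepsilon t,\varepsilon^2 s)$ and standard elliptic theory as in the proof of Theorem \ref{flow:thm:sob}.

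Next I would set up the contraction. Define $\mathcal T(\xi):=L_\varepsilon^{-1}\big(-d_{\Xi_0}^{*_\varepsilon}(\Xi-\mathcal K_2^\varepsilon(\Xi_0))+Q(\xi)\big)$ on a ball $\{\|\xi\|_{2,p,\varepsilon}\leq r\}$ with $r$ a small multiple of $\varepsilon^2\|d_{\Xi_0}^{*_\varepsilon}(\Xi-\mathcal K_2^\varepsilon(\Xi_0))\|_{L^p}+\varepsilon^{\frac3p}\delta^{1/2}$ (roughly). The quadratic estimates of Section \ref{flow:section:qest} (Lemmas \ref{flow:lemma:qe1}, \ref{flow:lemma:qe2}), combined with the Sobolev embedding $W^{1,q}\hookrightarrow L^\infty$ in the scaled norms (Theorem \ref{flow:thm:sob}, using $q>4$) and the mixed Hölder exponent condition $qp/(q-p)>4$ to control cross terms, give
\begin{equation*}
\|Q(\xi)\|_{L^p}\leq c\,\big(\|\Xi-\mathcal K_2^\varepsilon(\Xi_0)\|_{0,q,\varepsilon}\varepsilon^{-\frac3q}+\|\xi\|_{\infty}\big)\,\varepsilon^{-2}\|\xi\|_{2,p,\varepsilon},
\end{equation*}
and a similar bound for $Q(\xi)-Q(\xi')$. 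Because of the smallness hypothesis $\|\Xi-\mathcal K_2^\varepsilon(\Xi_0)\|_{0,q,\varepsilon}\leq\delta\varepsilon^{\frac3q}$, choosing $\delta$ small makes the prefactor $\leq\frac12$, so $\mathcal T$ maps the ball to itself and is a contraction; the Banach fixed point theorem yields a unique $\xi$, hence $g=\exp(\xi)\in\mathcal G_0^{2,p}(P\times S^1\times\mathbb R)$ with $d_{\Xi_0}^{*_\varepsilon}(g^*\Xi-\mathcal K_2^\varepsilon(\Xi_0))=0$. The estimate (\ref{surj:prop62eq2}) then follows from $\|g^*\Xi-\Xi\|_{1,p,\varepsilon}\leq c\|\xi\|_{2,p,\varepsilon}(1+\|\Xi-\Xi_0\|_{1,p,\varepsilon})$ (the $\xi$-linear term being $d_{\Xi_0}\xi+[\xi,\Xi-\Xi_0]$ plus higher order) together with the fixed point bound $\|\xi\|_{2,p,\varepsilon}\leq c\varepsilon^2\|d_{\Xi_0}^{*_\varepsilon}(\Xi-\mathcal K_2^\varepsilon(\Xi_0))\|_{L^p}$ and once more (\ref{flow:k2:leqw3}) to absorb $\mathcal K_2^\varepsilon(\Xi_0)-\Xi_0$ into the factor $(1+\varepsilon^{-3/p}\|\Xi-\mathcal K_2^\varepsilon(\Xi_0)\|_{1,p,\varepsilon})$.

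The main obstacle I expect is bookkeeping the $\varepsilon$-powers correctly throughout: the norms $\|\cdot\|_{0,q,\varepsilon}$, $\|\cdot\|_{1,p,\varepsilon}$, $\|\cdot\|_{2,p,\varepsilon}$ all carry different weights on the $dt$- and $ds$-components, the operator $d_{\Xi_0}^{*_\varepsilon}$ itself contains the factor $\varepsilon^{-2}$ in the $\Sigma$-directions and $\varepsilon^{-4}$ in the $s$-direction (cf. (\ref{flow:op:linym3})), and the Sobolev constants pick up $\varepsilon^{3/q-3/p}$ and $\varepsilon^{-3/p}$ factors. Getting the contraction to close requires that every quadratic term, after inserting these weights and the Sobolev bound, be dominated by $\delta$ times a linear term; this is where the hypotheses $q\geq p>2$, $q>4$, $qp/(q-p)>4$ are used, and verifying it is essentially the same scaling computation as in Proposition 6.2 of \cite{MR1283871}, transplanted to the present four-manifold geometry with the partially rescaled metric. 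A secondary technical point is the regularity/ellipticity of $L_\varepsilon$ on the affine space $W^{1,p}(\Xi_-,\Xi_+)$ with the exponential limit conditions at $s\to\pm\infty$; since $\Xi_0$ is smooth and $F_{A_0}=0$ this is standard, but it must be stated carefully so that $g$ indeed lies in $\mathcal G_0^{2,p}(P\times S^1\times\mathbb R)$ and not merely in $\bar{\mathcal G}_0^{1,2;2}$.
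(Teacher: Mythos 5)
Your overall strategy coincides with the paper's: both reduce the relative Coulomb gauge condition to an iteration for the generator of the gauge transformation, driven by the $\varepsilon$-weighted elliptic estimate for $d_{\Xi_0}^{*_\varepsilon}d_{\Xi_0}$ (the paper quotes Lemma 6.4 of \cite{MR1283871} for this, yielding (\ref{flow:thm62:2a}) and (\ref{flow:thm62:2b})) and by the quadratic estimates for the gauge action (Lemma \ref{flow:lemma:dkdk}). The paper runs a Newton iteration $\Xi_{\nu+1}=\exp(\eta_\nu)^*\Xi_\nu$, re-solving the linearized gauge condition at each iterate and composing the gauge transformations $g=\lim g_1\cdots g_\nu$, whereas you solve for a single $\xi$ with $g=\exp(\xi)$ by Banach's fixed point theorem; this is a cosmetic difference and either packaging works.

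There is, however, one concrete point where your contraction argument as written does not close. Your prefactor for $\|Q(\xi)\|_{L^p}$ contains $\|\xi\|_{\infty}$, and you assert that choosing $\delta$ small makes the whole prefactor at most $\tfrac12$. But the only bound on $\xi$ your scheme produces is $\|\xi\|_{2,p,\varepsilon}\leq c\varepsilon^2\|d_{\Xi_0}^{*_\varepsilon}(\Xi-\mathcal K_2^\varepsilon(\Xi_0))\|_{L^p}\leq c\,c_0\varepsilon^{3/p}$, which via Theorem \ref{flow:thm:sob} only gives $\|\xi\|_{\infty}\leq c\,c_0$: bounded by the \emph{arbitrary} constant $c_0$ of the first hypothesis, not small. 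The smallness has to come from the second hypothesis instead: one needs, in addition, the $L^q$-based elliptic estimate $\|\xi\|_{1,q,\varepsilon}\leq c\|\Xi-\mathcal K_2^\varepsilon(\Xi_0)\|_{0,q,\varepsilon}\leq c\delta\varepsilon^{3/q}$, which combined with the embedding into $L^\infty$ with constant $\varepsilon^{-3/q}$ (this is where $q>4$ enters) gives $\|\xi\|_{\infty}\leq c\delta$. This is precisely the two-norm bookkeeping the paper performs in (\ref{flow:thm62:2a})--(\ref{flow:thm62:2b}) and propagates through its induction hypotheses (\ref{flow:thm62:3})--(\ref{flow:thm62:6}); in your formulation you must carry both $\|\xi\|_{2,p,\varepsilon}$ and $\|\xi\|_{1,q,\varepsilon}$ through the fixed-point scheme (choosing the invariant set as an intersection of the two corresponding balls) so that the contraction constant is $O(\delta)$ rather than $O(c_0)$. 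With that amendment your argument goes through.
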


This last theorem is analogous to the proposition 6.2 in \cite{MR1283871}, but if we compare them, we will remark some differences. The first one is given by different rescaling in the $s$ direction induced by the equations (we have an $\varepsilon^2$ factor instead of $\varepsilon$) and this also induces a difference in the Sobolev's estimates and it causes the change in some exponents: $\frac 2p$, $\frac 2q$ and $-\frac 2p$ become $\frac 3p$, $\frac 3q$ and $-\frac 3p$.
The second difference is an extra $\varepsilon^2$ factor in the first estimate of (\ref{surj:prop62eq1}) and in (\ref{surj:prop62eq2}); this is given by the difference in the definitions of $d_{\Xi_0}^{*_\varepsilon}$. In  \cite{MR1283871} it is defined by, with $\Xi_0=:A+\Psi dt+\Phi ds$,
\begin{equation}
d_{\Xi_0}^{*_\varepsilon}(\alpha+\psi dt+ \phi ds)=d_{A}^*\alpha-\varepsilon^2\nabla_t\psi-\varepsilon^2\nabla_s\phi,
\end{equation}
our definition is instead
\begin{equation}
\varepsilon^2 d_{\Xi_0}^{*_\varepsilon}(\alpha+\psi dt+ \phi ds)=d_{A}^*\alpha-\varepsilon^2\nabla_t\psi-\varepsilon^4\nabla_s\phi.
\end{equation}
The third difference is that we use the difference $\Xi-\mathcal K_2^\varepsilon(\Xi_0)$ instead of $\Xi-\Xi_0$ and this is needed in order to have finite norms.\\

For any $\sigma\in \mathbb R$ we define $\rho_\sigma:\mathbb R^2\to \mathbb R^2$ by $\rho_\sigma(t,s)=(t,s+\sigma)$.

\begin{theorem}\label{flow:thm:timeshift}
We choose $p>10$ and $b>0$. Let $\Xi_0\in \mathcal M^0(\Xi_-,\Xi_+)$, $\Xi_\pm \in \mathrm{Crit}_{E^H}^b$ with index difference $1$. Then there exist three positive constants $\varepsilon_0, \delta$ and $c$ such that the following holds. If $0<\varepsilon <\varepsilon_0$ and $\Xi\in \mathcal M^\varepsilon(\mathcal T^{\varepsilon, b }(\Xi_-),\mathcal T^{\varepsilon,b}(\Xi_+))$ such that
\begin{equation}
\left\|\Xi-\mathcal K_2^\varepsilon(\Xi_0)\right\|_{1,p,\varepsilon}\leq \delta \varepsilon^{1-\frac 4p },\quad \varepsilon^2\left\|\nabla_s \Xi\right\|_{0,p,\varepsilon}\leq c \varepsilon^{1+\frac 7p }
\end{equation}
then there exist $\sigma\in \mathbb R$ and $g\in \mathcal G^{2,p}_0(P\times S^1\times \mathbb R)$ such that $\Xi^\varepsilon=g^*\left(\Xi \circ \rho_\sigma\right)$ satisfies
\begin{equation}
d_{\Xi_0}^{*_\varepsilon}(\Xi^\varepsilon-\mathcal K_2^\varepsilon(\Xi_0))=0,\quad \Xi^\varepsilon-\mathcal K_2^\varepsilon(\Xi_0) \in \textrm{im }\left(\mathcal D^\varepsilon(\mathcal K_2^\varepsilon(\Xi_0))\right)^*
\end{equation}
and
\begin{equation}
\left\|\Xi^\varepsilon-\mathcal K_2^\varepsilon(\Xi_0)\right\|_{1,p,\varepsilon}\leq c\left\|\Xi-\mathcal K_2^\varepsilon(\Xi_0)\right\|_{1,p,\varepsilon}.
\end{equation}
Furthermore, for $\Xi^\varepsilon-\mathcal K_2^\varepsilon(\Xi_0):=\alpha^\varepsilon+\psi^\varepsilon dt+\phi^\varepsilon ds $ and $\Xi-\mathcal K_2^\varepsilon(\Xi_0):=\alpha+\psi dt+\phi ds $, then
\begin{equation}\label{flow:eq:789}
\left\|\nabla_t\alpha^\varepsilon\right\|_{L^p}\leq \left\|\nabla_t\alpha\right\|_{L^p}+c \left\|\Xi-\mathcal K_2^\varepsilon(\Xi_0)\right\|_{1,p,\varepsilon},
\end{equation}
\begin{equation}
\left\|\nabla_s\alpha^\varepsilon \right\|_{L^p}\leq \left\|\nabla_s\alpha\right\|_{L^p}+c \left\|\Xi-\mathcal K_2^\varepsilon(\Xi_0)\right\|_{1,p,\varepsilon},
\end{equation}
\begin{equation}
\left\|\nabla_t\psi^\varepsilon\right\|_{L^p}\leq \left\|\nabla_t\psi\right\|_{L^p}+c \left\|\Xi-\mathcal K_2^\varepsilon(\Xi_0)\right\|_{1,p,\varepsilon},
\end{equation}
\begin{equation}
\left\|\nabla_s\psi^\varepsilon \right\|_{L^p}\leq \left\|\nabla_s\psi\right\|_{L^p}+c \left\|\Xi-\mathcal K_2^\varepsilon(\Xi_0)\right\|_{1,p,\varepsilon},
\end{equation}
\begin{equation}
\left\|\nabla_t\phi^\varepsilon\right\|_{L^p}\leq \left\|\nabla_t\phi\right\|_{L^p}+c \left\|\Xi-\mathcal K_2^\varepsilon(\Xi_0)\right\|_{1,p,\varepsilon},
\end{equation}
\begin{equation}\label{flow:eq:7888899}
\left\|\nabla_s\phi^\varepsilon \right\|_{L^p}\leq \left\|\nabla_s\phi\right\|_{L^p}+c \left\|\Xi-\mathcal K_2^\varepsilon(\Xi_0)\right\|_{1,p,\varepsilon}.
\end{equation}
\end{theorem}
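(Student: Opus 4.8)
The plan is to combine the relative Coulomb gauge theorem \ref{surj:thm:prop6.2} with a time-shift argument, where the shift parameter $\sigma$ is chosen so that the component of $\Xi - \mathcal K_2^\varepsilon(\Xi_0)$ in the one-dimensional cokernel direction (coming from the $\mathbb R$-reparametrization of the geodesic flow $\Xi_0$, which has index difference $1$) is killed. First I would set up the finite-dimensional reduction: the operator $\mathcal D^\varepsilon(\mathcal K_2^\varepsilon(\Xi_0))$ is Fredholm of index $1$, its cokernel being generated (up to the small $\varepsilon$-corrections controlled by the estimates of section \ref{flow:section:firstapprox}) by $\pi_{A^0}(\partial_s\Xi_0)$, i.e. by the infinitesimal reparametrization. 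So I would define a function $h(\sigma) := \langle \Xi \circ \rho_\sigma - \mathcal K_2^\varepsilon(\Xi_0), \zeta^\varepsilon \rangle$, where $\zeta^\varepsilon$ spans the cokernel, show that $h$ is $C^1$ with $h'(\sigma)$ bounded below in absolute value by a positive constant (using that $\|\nabla_s\Xi_0\|$ is bounded away from zero on compact sets together with the exponential convergence estimates of section \ref{flow:section:expconv} to control the ends), and apply the implicit function theorem / intermediate value theorem to produce a $\sigma$ with $|\sigma|$ small and $h(\sigma)=0$. The smallness hypotheses $\|\Xi - \mathcal K_2^\varepsilon(\Xi_0)\|_{1,p,\varepsilon} \leq \delta\varepsilon^{1-4/p}$ and $\varepsilon^2\|\nabla_s\Xi\|_{0,p,\varepsilon} \leq c\varepsilon^{1+7/p}$ are exactly what is needed to run this: the first controls the $C^0$-size via the Sobolev theorem \ref{flow:thm:sob}, the second controls how much $h$ and the norms move as $\sigma$ varies.

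Next I would apply theorem \ref{surj:thm:prop6.2} to $\Xi \circ \rho_\sigma$: one checks the two hypotheses (\ref{surj:prop62eq1}) hold with the given bounds (here the exponent bookkeeping $\frac3p$ versus $\frac3q$ must be verified using $p>10$, which gives enough room), producing $g \in \mathcal G_0^{2,p}$ with $d_{\Xi_0}^{*_\varepsilon}(g^*(\Xi\circ\rho_\sigma) - \mathcal K_2^\varepsilon(\Xi_0)) = 0$ and the quantitative bound (\ref{surj:prop62eq2}) on $\|g^*(\Xi\circ\rho_\sigma) - \Xi\circ\rho_\sigma\|_{1,p,\varepsilon}$. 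Setting $\Xi^\varepsilon := g^*(\Xi\circ\rho_\sigma)$, the Coulomb condition plus the cokernel-orthogonality arranged by the choice of $\sigma$ together give $\Xi^\varepsilon - \mathcal K_2^\varepsilon(\Xi_0) \in \operatorname{im}(\mathcal D^\varepsilon(\mathcal K_2^\varepsilon(\Xi_0)))^*$: indeed $\operatorname{im}(\mathcal D^\varepsilon)^* = (\ker \mathcal D^\varepsilon)^\perp$ is the orthogonal complement of the span of $\zeta^\varepsilon$ intersected with $\ker d_{\Xi_0}^{*_\varepsilon}$, which is precisely the subspace we have landed in. The norm estimate $\|\Xi^\varepsilon - \mathcal K_2^\varepsilon(\Xi_0)\|_{1,p,\varepsilon} \leq c\|\Xi - \mathcal K_2^\varepsilon(\Xi_0)\|_{1,p,\varepsilon}$ follows by the triangle inequality from (\ref{surj:prop62eq2}), the shift estimate, and the hypotheses.

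For the six derivative estimates (\ref{flow:eq:789})--(\ref{flow:eq:7888899}), the point is that $\Xi^\varepsilon - \mathcal K_2^\varepsilon(\Xi_0)$ differs from $\Xi - \mathcal K_2^\varepsilon(\Xi_0)$ by (i) the reparametrization $\rho_\sigma$ and (ii) the gauge transformation $g$. For (i), $\nabla_t$, $\nabla_s$ of a shifted form is the shift of $\nabla_t$, $\nabla_s$ of the form, so the $L^p$-norms are unchanged; the $\sigma$-shift contributes nothing to these particular norms. For (ii), each derivative of $g^*\xi - \xi$ where $\xi = \Xi\circ\rho_\sigma - \mathcal K_2^\varepsilon(\Xi_0)$ expands into terms involving $g^{-1}d g$ (controlled in $W^{1,p}$ by (\ref{surj:prop62eq2}) and the construction of $g$ as near the identity) times lower-order pieces of $\xi$, all of which are absorbed into $c\|\Xi - \mathcal K_2^\varepsilon(\Xi_0)\|_{1,p,\varepsilon}$ after applying Hölder and the Sobolev theorem \ref{flow:thm:sob}; this is where the quadratic-type bookkeeping of lemma \ref{flow:lemma:qe1} is used. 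I expect the main obstacle to be the verification that $h'(\sigma)$ is uniformly bounded below — the transversality/nondegeneracy input. This requires showing the cokernel generator $\zeta^\varepsilon$ genuinely has nonzero pairing with $\partial_s\Xi^\varepsilon$, uniformly in $\varepsilon$, which one gets from: the index difference being $1$ (so the cokernel of $\mathcal D^\varepsilon$ is exactly one-dimensional once $\varepsilon$ is small, by the spectral-gap / linear estimates of theorem \ref{flow:thm:estk2}), the exponential convergence of section \ref{flow:section:expconv} (so the pairing is concentrated on a compact $s$-interval), and the fact that $\partial_s\Xi_0 \not\equiv 0$ there since $\Xi_0$ is a nonconstant flow line. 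Getting all the $\varepsilon$-powers to line up with the hypotheses while doing this is the delicate part.
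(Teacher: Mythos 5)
Your overall strategy — kill the pairing with the one–dimensional distinguished direction coming from $\partial_s\Xi_0$ by a time shift, and impose the Coulomb condition via theorem \ref{surj:thm:prop6.2} — is the same as the paper's. But the order in which you perform the two steps leaves a genuine gap. You first choose $\sigma$ so that $\langle \Xi\circ\rho_\sigma-\mathcal K_2^\varepsilon(\Xi_0),\zeta^\varepsilon\rangle=0$, and only afterwards apply the relative Coulomb gauge theorem to produce $g$ and set $\Xi^\varepsilon=g^*(\Xi\circ\rho_\sigma)$. The gauge correction $g^*(\Xi\circ\rho_\sigma)-\Xi\circ\rho_\sigma$ is small but not zero, and its pairing with $\zeta^\varepsilon$ has no reason to vanish; so the exact condition $\Xi^\varepsilon-\mathcal K_2^\varepsilon(\Xi_0)\in\textrm{im }(\mathcal D^\varepsilon(\mathcal K_2^\varepsilon(\Xi_0)))^*$ is destroyed by the very gauge transformation you apply last. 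The theorem demands exact membership, not approximate orthogonality. The paper resolves this by gauging \emph{first}, for every admissible $\sigma$, obtaining a family $\Xi_\sigma=g_\sigma^*(\Xi\circ\tau_\sigma)$ already in Coulomb gauge, and then defining $\theta(\sigma)=\langle\xi_\varepsilon,\Xi_\sigma-\mathcal K_2^\varepsilon(\Xi_0)\rangle_\varepsilon$ on this gauged family; a zero of $\theta$ then delivers both conditions simultaneously. The price is that $\theta'(\sigma)=\langle\xi_\varepsilon,\partial_s\Xi_\sigma+d_{\Xi_\sigma}\eta_\sigma\rangle_\varepsilon$ picks up the term $d_{\Xi_\sigma}\eta_\sigma$ with $\eta_\sigma=g_\sigma^{-1}(\partial_\sigma g_\sigma-\partial_s g_\sigma)$ recording how the gauge moves with $\sigma$, and controlling $|\langle\xi_\varepsilon,d_{\Xi_\sigma}\eta_\sigma\rangle_\varepsilon|$ (using $d_{\Xi_0}^{*_\varepsilon}\xi_\varepsilon=0$) is precisely where the hypothesis $\varepsilon^2\|\nabla_s\Xi\|_{0,p,\varepsilon}\le c\varepsilon^{1+7/p}$ is consumed. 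Your proposal never confronts this coupling between $\sigma$ and the gauge, which is the actual content of the lower bound $\theta'(\sigma)\ge\rho_0$.

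Two smaller points. First, since $E^H$ is Morse--Smale and the index difference is $1$, the operator $\mathcal D^0(\Xi_0)$ is \emph{onto} with one–dimensional \emph{kernel} spanned by $\partial_s\Xi_0$; the cokernel is trivial. You call $\zeta^\varepsilon$ a cokernel generator and then use it as a kernel generator; only the kernel statement is correct, and it is what the condition $\textrm{im }(\mathcal D^\varepsilon)^*=(\ker\mathcal D^\varepsilon)^\perp$ requires. Second, your claim that the shift $\rho_\sigma$ "contributes nothing" to the derivative estimates is not right: the reference connection $\mathcal K_2^\varepsilon(\Xi_0)$ is \emph{not} shifted, so $\Xi\circ\rho_\sigma-\mathcal K_2^\varepsilon(\Xi_0)\ne(\Xi-\mathcal K_2^\varepsilon(\Xi_0))\circ\rho_\sigma$, and the shift contributes terms of size $|\sigma|\cdot\|\partial_s\nabla_t\mathcal K_2^\varepsilon(\Xi_0)\|_{0,p,\varepsilon}$ etc.; these are exactly the $c\|\Xi-\mathcal K_2^\varepsilon(\Xi_0)\|_{1,p,\varepsilon}$ corrections appearing in (\ref{flow:eq:789})--(\ref{flow:eq:7888899}), via the bound $|\sigma|\le\theta(0)/\rho_0\le c\|\Xi-\mathcal K_2^\varepsilon(\Xi_0)\|_{1,p,\varepsilon}$.
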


\noindent In order to prove the theorem \ref{surj:thm:prop6.2} we need the following lemma

\begin{lemma}\label{flow:lemma:dkdk}
Assume $q\geq p >2$, $q>4$, and $pq/(q-p)>4$. Given $c_0>0$ there exists a constant $c>0$ such that, if $\|\eta\|_{L^\infty}\leq c_0$ and $g=\exp (\eta)$, then
\begin{equation}
\begin{split}
\varepsilon^2\left\|d_{\Xi_0}^{*_\varepsilon}\left(g^*\Xi-\Xi -d_\Xi\eta\right)\right\|_{L^p}\leq & 
c\varepsilon^{-\frac 3q}\left(\|\eta\|_{1,q,\varepsilon}+\|\Xi-\mathcal K_2^\varepsilon(\Xi_0)\|_{0,q,\varepsilon}+\varepsilon^2\right)\|\eta\|_{2,p,\varepsilon}\\
&+c\varepsilon^{-\frac 3q}\left(\varepsilon^2\left\|d_{\Xi_0}^{*_\varepsilon}\left(\Xi-\mathcal K_2^\varepsilon(\Xi_0)\right)\right\|_{L^p}+\varepsilon^2\right)\|\eta\|_{1,q,\varepsilon}
\end{split}
\end{equation}
and if $\|\eta\|_{1,q,\varepsilon}+\|\Xi-\Xi_0\|_{0,q,\varepsilon}\leq c_0 \varepsilon^{\frac 3q}$, then
\begin{equation}
 \left\|g^*\Xi-\Xi\right\|_{0,q,\varepsilon}\leq c\|\eta\|_{1,q,\varepsilon},
\end{equation}
\begin{equation}
 \left\|g^*\Xi-\Xi\right\|_{1,p,\varepsilon}
\leq c\left(\|\eta\|_{2,p,\varepsilon}+\varepsilon^{-\frac 2q}\|\Xi-\Xi_0\|_{1,p,\varepsilon}\|\eta\|_{1,q,\varepsilon}\right).
\end{equation}
\end{lemma}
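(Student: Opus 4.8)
\textbf{Proof plan for Lemma \ref{flow:lemma:dkdk}.}

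The plan is to expand the gauge action $g^*\Xi = \Xi + d_\Xi\eta + (\text{higher order in }\eta)$ explicitly via the Taylor expansion of $\exp(\eta)$ and then estimate each term, exactly in the spirit of the corresponding calculation in \cite{MR1283871} (lemma 7.3 there), but tracking the $\varepsilon$-weights dictated by our norms $\|\cdot\|_{k,p,\varepsilon}$ and by the modified definition of $d_{\Xi_0}^{*_\varepsilon}$ with its extra $\varepsilon^2$ factor. Concretely, writing $\xi := \Xi - \mathcal K_2^\varepsilon(\Xi_0)$ one has
\begin{equation*}
g^*\Xi - \Xi - d_\Xi\eta = \sum_{k\geq 2}\frac{1}{k!}\left((\mathrm{ad}\,\eta)^{k-1}d_\Xi\eta\right) + \sum_{k\geq 1}\frac{(-1)^k}{(k+1)!}(\mathrm{ad}\,\eta)^k\,\mathrm{(terms)},
\end{equation*}
so the right-hand side is a sum of brackets of $\eta$ with itself and with $d_\Xi\eta$. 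The first task is therefore purely algebraic bookkeeping of these multilinear terms; the second is to apply H\"older's inequality in the split form $\|[\eta\wedge\cdot]\|_{L^p}\leq c\|\eta\|_{L^r}\|\cdot\|_{L^s}$ with $1/p = 1/r + 1/s$, choosing $r$ so that the $\varepsilon$-weighted Sobolev theorem \ref{flow:thm:sob} applies, i.e. using $\|\eta\|_{L^\infty}\leq c\varepsilon^{-3/q}\|\eta\|_{1,q,\varepsilon}$ and $\|\eta\|_{0,pq/(q-p),\varepsilon}\leq c\varepsilon^{3(q-p)/pq - 3/q}\|\eta\|_{1,p,\varepsilon}$ — here the hypotheses $q>4$ and $pq/(q-p)>4$ are exactly what is needed for the two cases of theorem \ref{flow:thm:sob} to be available.

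First I would prove the two pointwise/$L^q$ estimates at the end of the statement, since they are the easiest: $\|g^*\Xi - \Xi\|_{0,q,\varepsilon}\leq c\|\eta\|_{1,q,\varepsilon}$ follows from $g^*\Xi - \Xi = g^{-1}d_\Xi g = d_\Xi\eta + O(\eta\cdot d_\Xi\eta) + O(\eta\cdot\eta)$ together with the smallness assumption $\|\eta\|_{1,q,\varepsilon} + \|\Xi-\Xi_0\|_{0,q,\varepsilon}\leq c_0\varepsilon^{3/q}$ (which keeps $\|\eta\|_{L^\infty}$ bounded and lets the higher-order terms be absorbed); the $\|\cdot\|_{1,p,\varepsilon}$ bound is the same computation one derivative up, where the product rule produces a term $\|\eta\|_{L^\infty}\|\nabla(\Xi-\Xi_0)\|$ estimated by $\varepsilon^{-2/q}\|\Xi-\Xi_0\|_{1,p,\varepsilon}\|\eta\|_{1,q,\varepsilon}$ after Sobolev. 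Then I would turn to the main estimate on $d_{\Xi_0}^{*_\varepsilon}(g^*\Xi - \Xi - d_\Xi\eta)$: since $d_{\Xi_0}^{*_\varepsilon}$ is a first-order operator, applying it to the bracket expansion above produces, by the Leibniz rule, terms of the schematic form $[\nabla\eta\wedge(\text{stuff})]$ and $[\eta\wedge\nabla(\text{stuff})]$, where "stuff" is $\eta$, $d_\Xi\eta$, or (through the reference connection $\Xi_0$ inside $d_{\Xi_0}^{*_\varepsilon}$) the difference $\Xi - \Xi_0 = \xi + (\mathcal K_2^\varepsilon(\Xi_0) - \Xi_0)$; the $\|\mathcal K_2^\varepsilon(\Xi_0)-\Xi_0\|_{1,2;p,1}\leq c\varepsilon^2$ bound from lemma \ref{flow:lemma:firstappr} is what produces the stray additive $\varepsilon^2$ terms in the statement. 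Each such term is split by H\"older so that the factor carrying the most derivatives lives in $L^p$ (contributing $\|\eta\|_{2,p,\varepsilon}$ or $\varepsilon^2\|d_{\Xi_0}^{*_\varepsilon}\xi\|_{L^p}$) and the lower-order factor lives in $L^q$ or $L^\infty$ (contributing the $\varepsilon^{-3/q}(\|\eta\|_{1,q,\varepsilon} + \|\xi\|_{0,q,\varepsilon} + \varepsilon^2)$ prefactor); collecting and using that $d_{\Xi_0}^{*_\varepsilon}(g^*\Xi - \Xi) = d_{\Xi_0}^{*_\varepsilon}d_\Xi\eta + (\text{the estimated remainder})$ gives the claimed inequality after noting $d_\Xi = d_{\Xi_0} + [(\Xi - \Xi_0)\wedge\cdot]$, so that $d_{\Xi_0}^{*_\varepsilon}d_\Xi\eta$ differs from the "Laplace-type" piece by yet another bracket term of the same type.

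The main obstacle is neither the algebra nor the Sobolev embeddings individually, but the careful tracking of the $\varepsilon$-exponents through the anisotropic norms: the operator $d_{\Xi_0}^{*_\varepsilon}$ weights the $dt$ and $ds$ components of $\eta$ differently (by $\varepsilon^2$ and $\varepsilon^4$), the norm $\|\cdot\|_{k,p,\varepsilon}$ weights derivatives $\nabla_t$, $\nabla_s$ by $\varepsilon$, $\varepsilon^2$, and the Sobolev constant in theorem \ref{flow:thm:sob} contributes powers $\varepsilon^{3/q-3/p}$ and $\varepsilon^{-3/p}$; one must check that in every multilinear term the total power of $\varepsilon$ coming out is at least that demanded by the left-hand norm, with the deficit exactly accounted for by the $\varepsilon^{-3/q}$ prefactor on the right. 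The cleanest way to manage this is to rescale as in the proof of theorem \ref{flow:thm:sob} — set $\bar\eta(t,s) = $ (the $\varepsilon$-rescaled $\eta$) on $[0,\varepsilon^{-1}]\times\mathbb R$ — so that all the $\varepsilon$-weighted norms become honest unweighted $W^{k,p}$ norms and the estimates reduce to the standard (scale-invariant) multiplication and Sobolev inequalities on $\mathbb R^4$, after which one rescales back; this is exactly the device used for proposition 6.2 / lemma 7.3 in \cite{MR1283871}, and adapting it here is routine once the extra $\varepsilon^2$ in $d_{\Xi_0}^{*_\varepsilon}$ and the use of $\mathcal K_2^\varepsilon(\Xi_0)$ in place of $\Xi_0$ are respected.
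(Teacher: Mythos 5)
Your plan is exactly what the paper's one-line proof ("follows as lemma 6.6 in \cite{MR1283871} using the estimate (\ref{flow:k2:leqw3})") is asking the reader to carry out: expand $g^*\Xi$ via the exponential, estimate the multilinear bracket terms by H\"older together with the $\varepsilon$-weighted Sobolev inequalities of theorem \ref{flow:thm:sob}, and use $\|\mathcal K_2^\varepsilon(\Xi_0)-\Xi_0\|\leq c\varepsilon^2$ to account for the stray additive $\varepsilon^2$ terms. This is the same approach as the paper's, so no further comparison is needed.
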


\begin{proof}
The lemma follows exactly as the lemma 6.6 in \cite{MR1283871} using the estimate (\ref{flow:k2:leqw3}).
\end{proof}

\begin{proof}[Proof of theorem \ref{surj:thm:prop6.2}] We choose $\Xi_1=\Xi$ and we define the sequence $\Xi_\nu$, for $\nu\geq 2$, by 
$$\Xi_{\nu+1}=g_\nu^*\Xi_\nu,\quad g_\nu=\exp(\eta_\nu),\quad d_{\Xi_0}^{*_\varepsilon}\left(d_{\Xi_0}\eta_\nu+\Xi_\nu-\mathcal K_2^\varepsilon(\Xi_0)\right)=0$$
by the definition of $\eta_\nu$ and the lemma 6.4 in \cite{MR1283871} and the Sobolev theorem \ref{flow:thm:sob} we have that
\begin{equation}\label{flow:thm62:2a}
\|\eta_\nu\|_{2,p,\varepsilon}+\varepsilon^{\frac 3p-\frac 3q}\|\eta_\nu\|_{1,q,\varepsilon}\leq c_1\varepsilon^2\left\| d_{\Xi_0}^{*_\varepsilon}(\Xi_\nu-\mathcal K_2^\varepsilon(\Xi_0))\right\|_{L^p},
\end{equation}
\begin{equation}\label{flow:thm62:2b}
\|\eta_\nu\|_{1,q,\varepsilon}\leq c_1 \|\Xi_\nu-\mathcal K_2^\varepsilon(\Xi_0)\|_{0,q,\varepsilon}.
\end{equation}
In order to conclude the proof of the theorem we need first to show by induction that there are three positive constants $c_2$, $c_3$ and $c_4$ such that the following estimates hold.
\begin{equation}\label{flow:thm62:3}
\left\|\Xi_\nu-\mathcal K_2^\varepsilon(\Xi_0)\right\|_{0,q,\varepsilon}\leq c_2 \left\|\Xi-\mathcal K_2^\varepsilon(\Xi_0)\right\|_{0,q,\varepsilon}+c_2\left\|\Pi_{\textrm{im } d_{\Xi_0}}(\Xi-\mathcal K_2^\varepsilon(\Xi_0))\right\|_{0,p,\varepsilon},
\end{equation}

\begin{equation}\label{flow:thm62:4}
\begin{split}
\varepsilon^2\left\|d_{\Xi_0}^{*_\varepsilon}\left(\Xi_\nu-\mathcal K_2^\varepsilon(\Xi_0)\right)\right\|_{L^p}\leq& c_3 \varepsilon^{2-\frac 3q}\left\|\Xi_{\nu-1}-\mathcal K_2^\varepsilon(\Xi_0)\right\|_{0,q,\varepsilon}\left\| d_{\Xi_0}^{*_\varepsilon}\left(\Xi_{\nu-1}-\mathcal K_2^\varepsilon(\Xi_0)\right)\right\|_{L^p}\\
&+c_3\varepsilon^2\|\eta_{\nu-1}\|_{2,p,\varepsilon},
\end{split}
\end{equation}

\begin{equation}\label{flow:thm62:5}
\left\|  d_{\Xi_0}^{*_\varepsilon}\left(\Xi_\nu-\mathcal K_2^\varepsilon(\Xi_0)\right) \right\|_{L^p}\leq 2^{1-\nu} \left\| d_{\Xi_0}^{*_\varepsilon}\left(\Xi-\mathcal K_2^\varepsilon(\Xi_0)\right)\right\|_{L^p},
\end{equation}

\begin{equation}\label{flow:thm62:6}
\|\eta_\nu\|_{1,q,\varepsilon}\leq c_4 2^{-\nu}\left( \left\|\Xi-\mathcal K_2^\varepsilon(\Xi_0)\right\|_{0,q,\varepsilon}+\left\|\Pi_{\textrm{im } d_{\Xi_0}}(\Xi-\mathcal K_2^\varepsilon(\Xi_0))\right\|_{0,p,\varepsilon}\right).
\end{equation} 
For $\nu=1$  (\ref{flow:thm62:3}) and (\ref{flow:thm62:5}) are satisfied by definition and with $c_2\geq1$, (\ref{flow:thm62:2b}) implies (\ref{flow:thm62:6}) for $c_4\geq 2c_1$  and (\ref{flow:thm62:4}) is empty. Next, we consider $\nu\geq 2$. By the assumptions of the theorem and by (\ref{flow:thm62:2b}), for $\delta$ small enough, we have that
$$\|\eta_j\|_{1,q,\varepsilon}+\|\Xi_j-\mathcal K_2^\varepsilon(\Xi_0)\|_{0,q,\varepsilon}\leq \varepsilon^{\frac 3q},\quad j=1,...,\nu-1.$$
By lemma \ref{flow:lemma:dkdk} and (\ref{flow:thm62:6})
\begin{align*}
\left\| \Xi_{j+1}-\Xi_j\right\|_{0,q,\varepsilon}\leq& c_5 \|\eta_j\|_{1,q,\varepsilon}\\
\leq &c_5c_4 2^{-\nu}\left(\left\| \Xi-\mathcal K_2^\varepsilon(\Xi)\right\|_{0,q,\varepsilon}+\left\|\Pi_{\textrm{im } d_{\Xi_0}}(\Xi-\mathcal K_2^\varepsilon(\Xi_0))\right\|_{0,p,\varepsilon}\right)
\end{align*}
and thus we have (\ref{flow:thm62:3}). Next,
\begin{align*}
d_{\Xi_0}^{*_\varepsilon}&\left(\Xi_{\nu+1}-\mathcal K_2^\varepsilon(\Xi_0)\right)
=d_{\Xi_0}^{*_\varepsilon}\left(g_\nu^*\Xi_\nu-\Xi_\nu-d_{\Xi_0}\eta_\nu\right)\\
=&d_{\Xi_0}^{*_\varepsilon}\left(g_\nu^*\Xi_\nu-\Xi_\nu-d_{\Xi_\nu}\eta_\nu\right)+\left[d_{\Xi_0}^{*_\varepsilon}\left(\Xi_\nu-\mathcal K_2^\varepsilon(\Xi_0)\right)\wedge \eta_\nu\right]\\
&+\left[d_{\Xi_0}^{*_\varepsilon}\left(\mathcal K_2^\varepsilon(\Xi_0)-\Xi_0\right)\wedge \eta_\nu\right]
-*_\varepsilon\left[*_\varepsilon\left(\Xi_\nu-\mathcal K_2^\varepsilon(\Xi_0)\right)\wedge d_{\Xi_0}\eta_\nu\right]\\
&-*_\varepsilon\left[*_\varepsilon\left(\mathcal K_2^\varepsilon(\Xi_0)-\Xi_0\right)\wedge d_{\Xi_0}\eta_\nu\right]
\end{align*}
and hence by the lemma (\ref{flow:thm62:2a}) and by (\ref{flow:thm62:2b}), (\ref{flow:thm62:4}), we can conclude (\ref{flow:thm62:4}). By (\ref{flow:thm62:2a}) and (\ref{flow:thm62:4}) we get
\begin{align*}
\|\eta_\nu\|_{2,p,\varepsilon}&+\varepsilon^{\frac 3p-\frac 3q}\|\eta_\nu\|_{1,q,\varepsilon}\\
\leq& c_1c_3\varepsilon^{2-\frac 3q}\left\|\Xi_{\nu-1}-\mathcal K_2^\varepsilon(\Xi_0)\right\|_{0,q,\varepsilon}\left\|d_{\Xi_0}^{*_\varepsilon}\left(\Xi_{\nu-1}-\mathcal K_2^\varepsilon(\Xi_0)\right)\right\|_{L^p}\\
&+c_2c_3\varepsilon^{2-\frac 3p}\|d_{\Xi_0}\eta_\nu\|_{0,p,\varepsilon}
\end{align*}
and hence 
\begin{align*}
\|\eta_\nu\|_{2,p,\varepsilon}&+\varepsilon^{\frac 3p-\frac 3q}\|\eta_\nu\|_{1,q,\varepsilon}\\
\leq &c_1c_3\varepsilon^{-\frac 3q}\left\|\Xi_{\nu-1}-\mathcal K_2^\varepsilon(\Xi_0)\right\|_{0,q,\varepsilon}\left\|d_{\Xi_0}^{*_\varepsilon}\left(\Xi_{\nu-1}-\mathcal K_2^\varepsilon(\Xi_0)\right)\right\|_{L^p}\\
&+c_2c_3\varepsilon^{2-\frac 3p}\|\Pi_{d_{\Xi_0}}(\Xi_{\nu-1}-\mathcal K_2^\varepsilon(\Xi_0))\|_{0,p,\varepsilon}
\end{align*}
and 
\begin{equation}\label{flow:thm62:4b}
\begin{split}
\varepsilon^2&\left\|d_{\Xi_0}^{*_\varepsilon}\left(\Xi_\nu-\mathcal K_2^\varepsilon(\Xi_0)\right)\right\|_{L^p}\\
\leq& 2c_3 \varepsilon^{2-\frac 3q}\left\|\Xi_{\nu-1}-\mathcal K_2^\varepsilon(\Xi_0)\right\|_{0,q,\varepsilon}\left\| d_{\Xi_0}^{*_\varepsilon}\left(\Xi_{\nu-1}-\mathcal K_2^\varepsilon(\Xi_0)\right)\right\|_{L^p}\\
&+c_2c_3\varepsilon^{2-\frac 3p}\|\Pi_{d_{\Xi_0}}(\Xi_{\nu-1}-\mathcal K_2^\varepsilon(\Xi_0))\|_{0,p,\varepsilon}.
\end{split}
\end{equation}
By (\ref{flow:thm62:2b}) and (\ref{flow:thm62:3}) 
$$\|\eta_2\|_{1,q,\varepsilon}\leq c_0 c_3 \left\|\Xi-\mathcal K_{2}^\varepsilon(\Xi_0)\right\|_{0,q,\varepsilon}$$
using (\ref{flow:thm62:4b}) 
\begin{align*}
\|\eta_\nu\|_{1,q,\varepsilon}\leq &16c_1c_2c_3^2\delta 2^{-\nu}\left\|\Xi_{\nu-1}-\mathcal K_2^\varepsilon(\Xi_0)\right\|_{0,q,\varepsilon}\\
&+c_2c_3\varepsilon^{2-\frac 3p}\|\Pi_{d_{\Xi_0}}(\Xi_{\nu-1}-\mathcal K_2^\varepsilon(\Xi_0))\|_{0,p,\varepsilon}.
\end{align*}
(\ref{flow:thm62:6}) therefore holds for $c_4=1$ whenever $\delta$ and $\varepsilon$ are small enough.
The lemma \ref{flow:lemma:dkdk} with (\ref{flow:thm62:2a}) and (\ref{flow:thm62:2b}) implies 
$$\left\|\Xi_{\nu+1}-\Xi_\nu\right\|_{1,p,\varepsilon}\leq c_6\varepsilon^2\left(1+\varepsilon^{-\frac 3p}\left\|\Xi_\nu-\mathcal K_2^\varepsilon(\Xi_0)\right\|_{1,p,\varepsilon}\right)\left\|d_{\Xi_0}^{*_\varepsilon}\left(\Xi_\nu-\Xi_0\right)\right\|_{L^p}$$
and thus, for $\delta$ sufficiently small,
$$\left\|\Xi_\nu-\Xi_0\right\|_{1,p,\varepsilon}\leq \varepsilon^{\frac 3p}+2\left\|\Xi-\mathcal K_2^\varepsilon(\Xi_0)\right\|_{1,p,\varepsilon}.$$
The sequence converges therefore in $W^{1,p}$ to a connection $\Xi_\varepsilon$ which satisfies the condition $d_{\Xi_0}^{*_\varepsilon}\left(\Xi_\varepsilon-\Xi_0\right)=0$ and the estimate (\ref{surj:prop62eq2}). In addition, the sequence $h_\nu:=g_1g_2...g_\nu$ satisfies $h_\nu^*\Xi=\Xi_\nu$ and converges in $\mathcal G^{2,p}_0(P\times S^1\times \mathbb R)$ to a gauge transformation $g$ which satisfies $g^*\Xi=\Xi_\varepsilon$. 
\end{proof}

\begin{proof}[Proof of theorem \ref{flow:thm:timeshift}] We follows the proof of the proposition 6.3 in \cite{MR1283871} adapting it for our needs. First, we consider the estimates
\begin{equation}\label{flow:eq:78999}
\begin{split}
\left\|\Xi\circ \tau_\sigma- \mathcal K_2^\varepsilon(\Xi_0)\right\|_{1,p,\varepsilon}
\leq & \left\|\Xi-\mathcal K_2^\varepsilon(\Xi_0)\right\|_{1,p,\varepsilon}+\left\|\mathcal K_2^\varepsilon(\Xi_0)\circ\tau_\sigma-\mathcal K_2^\varepsilon(\Xi_0)\right\|_{1,p,\varepsilon}\\
\leq & \left\|\Xi-\mathcal K_2^\varepsilon(\Xi_0)\right\|_{1,p,\varepsilon}+|\sigma|\cdot\left\|\partial_s\mathcal K_2^\varepsilon(\Xi_0)\right\|_{1,p,\varepsilon},\\
\left\|\nabla_s\left(\Xi\circ \tau_\sigma-\mathcal K_2^\varepsilon(\Xi_0)\right)\right\|_{0,p,\varepsilon}
\leq & \left\|\nabla_s\left(\Xi-\mathcal K_2^\varepsilon(\Xi_0)\right)\right\|_{0,p,\varepsilon}
+c|\sigma|\cdot\left\|\Xi-\mathcal K_2^\varepsilon(\Xi_0)\right\|_{L^p}\\
&+\left\|\nabla_s\left(\mathcal K_2^\varepsilon(\Xi_0)\circ\tau_\sigma\right)-\nabla_s\mathcal K_2^\varepsilon(\Xi_0)\right\|_{0,p,\varepsilon}\\
\leq &\left\|\nabla_s\left(\Xi-\mathcal K_2^\varepsilon(\Xi_0)\right)\right\|_{0,p,\varepsilon}
+c|\sigma|\cdot\left\|\Xi-\mathcal K_2^\varepsilon(\Xi_0)\right\|_{L^p}\\
&+c|\sigma|\cdot \left\|\partial_s\nabla_s \mathcal K_2^\varepsilon(\Xi_0)\right\|_{0,p,\varepsilon},\\
\left\|\nabla_t\left(\Xi\circ \tau_\sigma-\mathcal K_2^\varepsilon(\Xi_0)\right)\right\|_{0,p,\varepsilon}
\leq & \left\|\nabla_t\left(\Xi-\mathcal K_2^\varepsilon(\Xi_0)\right)\right\|_{0,p,\varepsilon}
+c|\sigma|\cdot\left\|\Xi-\mathcal K_2^\varepsilon(\Xi_0)\right\|_{L^p}\\
&+\left\|\nabla_t\left(\mathcal K_2^\varepsilon(\Xi_0)\circ\tau_\sigma\right)-\nabla_t\mathcal K_2^\varepsilon(\Xi_0)\right\|_{0,p,\varepsilon}\\
\leq &\left\|\nabla_t\left(\Xi-\mathcal K_2^\varepsilon(\Xi_0)\right)\right\|_{0,p,\varepsilon}
+c|\sigma|\cdot\left\|\Xi-\mathcal K_2^\varepsilon(\Xi_0)\right\|_{L^p}\\
&+c|\sigma|\cdot \left\|\partial_s\nabla_t \mathcal K_2^\varepsilon(\Xi_0)\right\|_{0,p,\varepsilon},
%
%
%
\end{split}
\end{equation}
where, by the definitions of the section \ref{flow:section:firstapprox},
\begin{equation}\label{flow:eq:78999c}
\begin{split}
\left\| \partial_s\mathcal K_2^\varepsilon(\Xi_0)\right\|_{1,p,\varepsilon}
\leq& \left\|\partial_s \Xi_0\right\|_{1,p,\varepsilon}+\left\|\nabla_s(\mathcal K_2^\varepsilon(\Xi_0)-\Xi_0)\right\|_{1,p,\varepsilon}\\
\leq& \left\|\partial_s\Xi_0\right\|_{1,p,\varepsilon}+c\varepsilon^2,\\
\left\|\partial_s\nabla_s \mathcal K_2^\varepsilon(\Xi_0)\right\|_{0,p,\varepsilon}\leq &
\left\|\partial_s\nabla_s \Xi_0\right\|_{0,p,\varepsilon}+\left\|\nabla_s\nabla_s (\mathcal K_2^\varepsilon(\Xi_0)-\Xi_0)\right\|_{L^p}\\
&+\left\|\nabla_s(\mathcal K_2^\varepsilon(\Xi_0)-\Xi_0)\right\|_{L^p}+c\varepsilon^2\leq c,\\
\left\|\partial_s\nabla_t \mathcal K_2^\varepsilon(\Xi_0)\right\|_{0,p,\varepsilon}\leq &
\left\|\partial_s\nabla_t \Xi_0\right\|_{0,p,\varepsilon}+\left\|\nabla_s\nabla_t (\mathcal K_2^\varepsilon(\Xi_0)-\Xi_0)\right\|_{L^p}\\
&+\left\|\nabla_t(\mathcal K_2^\varepsilon(\Xi_0)-\Xi_0)\right\|_{L^\infty}+c\varepsilon^2\leq c.
\end{split}
\end{equation}
Therefore for $|\sigma|\leq \delta \varepsilon^{\frac3p}$, $\left\|\Xi\circ \tau_\sigma-\mathcal K_2^\varepsilon(\Xi_0)\right\|_{1,p,\varepsilon}\leq \delta \varepsilon^{\frac 3p}$ for $\varepsilon$ small enough, and thus by theorem \ref{surj:thm:prop6.2}, there is a gauge transformation $g_\sigma\in \mathcal G_0^{2,p}(P\times S^1\times \mathbb R)$ such that for $\Xi_\sigma=g_\sigma^*\left(\Xi\circ \tau_\sigma\right)$, $d_{\Xi_0}^{*_\varepsilon}\left(\Xi_\sigma-\mathcal K_2^\varepsilon(\Xi_0)\right)=0$ and 
$$\left\|\Xi_\sigma-\mathcal K_2^\varepsilon(\Xi_0)\right\|_{1,p,\varepsilon}\leq c_1\left(|\sigma|+\left\| \Xi-\mathcal K_2^\varepsilon(\Xi_0)\right\|_{1,p,\varepsilon}\right).$$ 
We assume that $g_0= 1$. We need to show that there is a $\sigma$ such that $\Xi_\sigma-\mathcal K_2^\varepsilon(\Xi_0)\in \textrm{im } \mathcal D^\varepsilon(\mathcal K_2^\varepsilon(\Xi_0))^*$ and $|\sigma|\leq c_2 \left\| \Xi-\mathcal K_2^\varepsilon(\Xi_0)\right\|_{1,p,\varepsilon}$. The estimates (\ref{flow:eq:789})-(\ref{flow:eq:7888899}) follows as (\ref{flow:eq:78999}) and (\ref{flow:eq:78999c}) computing separately every component.\\

$\mathcal D^0(\Xi_0)$ is onto and has index $1$; therefore its kernel is spanned by $\xi_0=\partial_s\Xi_0\in W^{1,p}$. Then $\mathcal D^\varepsilon (\mathcal K_2^\varepsilon(\Xi_0))$ has index $1$ with the kernel spanned by 
$$\xi_\varepsilon=\xi_0-\mathcal D^\varepsilon(\mathcal K_2^\varepsilon(\Xi_0))^*\left(\mathcal D^\varepsilon(\mathcal K_2^\varepsilon(\Xi_0))\mathcal D^\varepsilon(\mathcal K_2^\varepsilon(\Xi_0))^*\right)^{-1}\mathcal D^\varepsilon(\mathcal K_2^\varepsilon(\Xi_0))\xi_0.$$
Consider now the function $\theta(\sigma)=\langle \xi_\varepsilon,\Xi_\sigma-\mathcal K_2^\varepsilon(\Xi_0)\rangle_\varepsilon$; thus, if $\theta(\sigma)=0$, then $\Xi_\sigma-\mathcal K_2^\varepsilon(\Xi_0)\in \textrm{im } \mathcal D^\varepsilon(\mathcal K_2^\varepsilon(\Xi_0))^*$. We assume that there are positive constants $\delta_0$, $\varepsilon_0$ and $\rho_0$ such that, for $0<\varepsilon<\varepsilon_0$,
\begin{equation}\label{flow:surj:thetaestsa}
|\sigma|+\left\|\Xi-\mathcal K_2^\varepsilon(\Xi_0)\right\|_{1,p,\varepsilon}\leq \delta_0\varepsilon^{1-\frac 4p}\quad \Rightarrow\quad \theta'(\sigma)\geq \rho_0.
\end{equation}
Then the existence of a zero for $\theta(\sigma)$ follows from
\begin{equation*}
\begin{split}
|\theta(0)|=\left|\langle \xi_\varepsilon, \Xi-\mathcal K_2^\varepsilon(\Xi_0)\rangle_\varepsilon \right|\leq \left\|\xi_\varepsilon\right\|_{0,q,\varepsilon}\left\|\Xi-\mathcal K_2^\varepsilon(\Xi_0)\right\|_{0,p,\varepsilon}\leq c_3\delta \varepsilon^{1-\frac 4p}
\end{split}
\end{equation*}
where $q=\frac p{p-1}$. In fact, if $c_3\delta<\frac 12\delta_0\rho_0$, $\delta\leq \frac 12 \delta_0$, then $\left\|\Xi-\mathcal K_2^\varepsilon(\Xi_0)\right\|_{1,p,\varepsilon}\leq \frac 12 \delta_0\varepsilon^{1-\frac 4p}$. Therefore, by (\ref{flow:surj:thetaestsa}), there is a $\sigma\in \mathbb R$ with $|\sigma|\leq \frac {\theta(0)}{\rho_0}\leq \frac 12 \delta_0\varepsilon^{1-\frac 4p}$ such that $\theta(\sigma)=0$. For this $\sigma$, we have
$$|\sigma|\leq c\left\|\Xi-\mathcal K_2^\varepsilon(\Xi_0)\right\|_{1,p,\varepsilon},\quad \Xi_\sigma-\mathcal K_2^\varepsilon(\Xi_0)\in \textrm{im } \mathcal D^\varepsilon(\mathcal K_2^\varepsilon(\Xi_0))^*.$$
Thus, in order to finish the proof of the theorem we need only to show (\ref{flow:surj:thetaestsa}).
\begin{proof}[Proof of (\ref{flow:surj:thetaestsa})]
We define $\eta_\sigma=g_\sigma^{-1}\left(\partial_\sigma g_\sigma-\partial_s g_\sigma\right)$, then 
\begin{equation}\label{flow:relgeq2}
\theta'(\sigma)=\langle \xi_\varepsilon,\partial_s\Xi_\sigma+d_{\Xi_\sigma}\eta_\sigma\rangle_\varepsilon,\quad \partial_\sigma d_{\Xi_0}^{*_\varepsilon}\left(\Xi_\sigma-\mathcal K_2^\varepsilon(\Xi_0)\right)=0.
\end{equation}
Thus,
\begin{equation}\label{flow:surj:eqthete5}
d_{\Xi_0}^{*_\varepsilon}\partial_s\Xi_\sigma+d_{\Xi_0}^{*_\varepsilon}d_{\Xi_0}\eta_\sigma
+d_{\Xi_0}^{*_\varepsilon}\left[\left(\Xi_\sigma-\Xi_0\right)\wedge \eta_\sigma\right]=0.
\end{equation}
If $\varepsilon^{-\frac3p}\left\|\Xi_\sigma-\mathcal K_2^\varepsilon(\Xi_0)\right\|_{1,p,\varepsilon}$ and $\left\|\mathcal K_2^\varepsilon(\Xi_0)-\Xi_0\right\|_{1,\infty,\varepsilon}$ are sufficiently small, then there is a unique $\eta_\sigma$ which satisfies (\ref{flow:surj:eqthete5}), furthermore
\begin{equation}\label{flow:relgeq1}
\begin{split}
\left\|\eta_\sigma\right\|_{1,p,\varepsilon}\leq &c\left\|\partial_s\Xi_\sigma\right\|_{0,p,\varepsilon}
\leq c\left(1+\left\|\partial_s\left(\Xi_\sigma-\Xi_0\right)\right\|_{0,p,\varepsilon}\right)\\
\leq &c\left(1 +\left\|\nabla_s\left(\Xi_\sigma-\mathcal K_2^\varepsilon(\Xi_0)\right)\right\|_{0,p,\varepsilon}+\left\|\Xi_\sigma-\mathcal K_2^\varepsilon(\Xi_0)\right\|_{0,p,\varepsilon}\right).
\end{split}
\end{equation}
where the last step follows from the definition of $\mathcal K_2^\varepsilon$ and the estimate (\ref{flow:k2:leqw3}). Since $d_{\Xi_0}^{*_\varepsilon}\xi_\varepsilon=0$, we have
\begin{equation*}
\begin{split}
\left|\langle \xi_\varepsilon,d_{\Xi_\sigma}\eta_\sigma\rangle_\varepsilon\right|=&\left|\langle \xi_\varepsilon, \left[\left(\Xi_\sigma-\Xi_0\right)\wedge \eta_\sigma\right]\rangle_\varepsilon\right|\\
\leq & c \left\|\Xi_\sigma-\Xi_0\right\|_{\infty,\varepsilon}\left\|\eta_\sigma\right\|_{0,p,\varepsilon}\\
\leq &c\varepsilon^{-\frac 3p}\left\|\Xi_\sigma-\Xi_0\right\|_{1,p,\varepsilon}\left(1 +\left\|\nabla_s\left(\Xi_\sigma-\mathcal K_2^\varepsilon(\Xi_0)\right)\right\|_{0,p,\varepsilon}\right)\\
&+c\varepsilon^{-\frac 3p}\left\|\Xi_\sigma-\Xi_0\right\|_{1,p,\varepsilon}\left\|\Xi_\sigma-\mathcal K_2^\varepsilon(\Xi_0)\right\|_{0,p,\varepsilon}\\
\leq & c\delta+c\delta \varepsilon^{-\frac 3p}\varepsilon^{1-\frac 4p}\left\|\nabla_s\left(\Xi_\sigma-\mathcal K_2^\varepsilon(\Xi_0)\right)\right\|_{0,p,\varepsilon}.
\end{split}
\end{equation*}
where the second inequality follows from the Sobolev theorem \ref{flow:thm:sob} and from (\ref{flow:relgeq1}) and the last one is a consequence of the assumptions. Next we consider
$$\langle \xi_\varepsilon,\partial_s\Xi_\sigma\rangle_\varepsilon =\langle \partial_s\xi_\varepsilon,\mathcal K_2^\varepsilon(\Xi_0)-\Xi_\sigma\rangle_\varepsilon+\langle \xi_\varepsilon,\partial_s\mathcal K_2^\varepsilon(\Xi_0)\rangle_\varepsilon,$$
Since $\left\|\partial_s\Xi_0\right\|_{0,2,\varepsilon}\geq 3\rho_0>0$ for some $\rho_0$, $\langle \xi_\varepsilon,\partial_s\mathcal K_2^\varepsilon (\Xi_0)\rangle_\varepsilon\geq 2\rho_0$
by the definition of $\xi_\varepsilon$ and thus $\langle \xi_\varepsilon,\partial_s\Xi_\sigma+d_{\Xi_\sigma}\eta_\sigma\rangle_\varepsilon >\rho_0$ for $\delta_0$ small enough. Hence, by (\ref{flow:relgeq2}) $\theta'(\sigma)>\rho_0$.
\end{proof}
\end{proof}

%
%
\section{Surjectivity of $\mathcal R^{\varepsilon,b }$}\label{flow:section:surj}

In this section we will show that the map $\mathcal R^{\varepsilon,b}$ defined in the section \ref{flow:section:themap} is also surjective.

\begin{theorem}\label{flow:thm:surj}
We assume that the energy functional $E^H$ is Morse-Smale and we choose $b>0$ to be a regular value of $E^H$. Then there is a constant $\varepsilon_0>0$ such that the following holds. For every $\varepsilon\in (0,\varepsilon_0)$, every pair $\Xi_\pm:=A_\pm+\Psi_\pm dt\in \mathrm{Crit}^b_{E^H}$, the map
\begin{equation}
\mathcal R^{\varepsilon,b}: \mathcal M^0\left(\Xi_-,\Xi_+\right)\to \mathcal M^\varepsilon\left(\mathcal T^{\varepsilon,b}(\Xi_-),\mathcal T^{\varepsilon,b}(\Xi_+)\right)
\end{equation}
is surjective.
\end{theorem}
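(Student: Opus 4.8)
The plan is to argue by contradiction, following the scheme outlined in the introduction. Suppose the claim fails: then there exist sequences $\varepsilon_\nu\to 0$ and perturbed Yang-Mills flows $\Xi^{\varepsilon_\nu}\in\mathcal M^{\varepsilon_\nu}(\mathcal T^{\varepsilon_\nu,b}(\Xi_-),\mathcal T^{\varepsilon_\nu,b}(\Xi_+))$ none of which lies in the image of $\mathcal R^{\varepsilon_\nu,b}$. By the $L^\infty$-estimate of theorem \ref{flow:thm:linf}, the curvature terms $\partial_tA^{\varepsilon_\nu}-d_{A^{\varepsilon_\nu}}\Psi^{\varepsilon_\nu}$ and $\partial_sA^{\varepsilon_\nu}-d_{A^{\varepsilon_\nu}}\Phi^{\varepsilon_\nu}$ are uniformly bounded, so the a priori estimates of theorem \ref{flow:thm:apriori} apply; in particular $\|F_{A^{\varepsilon_\nu}}\|_{L^\infty}\leq c\varepsilon_\nu^2$ and the relevant higher-order curvature norms $\mathcal N_j$ are controlled by $c\varepsilon_\nu^2$ on compact subsets. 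The exponential convergence of theorem \ref{flow:thm:expconv} and its consequence theorem \ref{flow:thm:apriori22} give uniform exponential decay of $B_s$, $C$ and of $F_{A^{\varepsilon_\nu}}-F_{A_+^{\varepsilon_\nu}}$, $B_t-B_t^+$ as $s\to\pm\infty$, so no energy escapes to the ends.

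\textbf{Key steps.} First I would use the a priori bounds together with the weak Uhlenbeck compactness theorem (as in the proof of lemma \ref{flow:lemma:expconv}) to extract, after passing to a subsequence and applying gauge transformations, a limit connection $\Xi^0=A^0+\Psi^0 dt+\Phi^0 ds$ with $F_{A^0}=0$ and $\partial_tA^0-d_{A^0}\Psi^0,\ \partial_sA^0-d_{A^0}\Phi^0\in H^1_{A^0}$, i.e. a map into $\mathcal M^g(P)$. Passing to the limit in the Yang-Mills flow equation (\ref{flow:eq}) — using that $\frac1{\varepsilon_\nu^2}d_{A^{\varepsilon_\nu}}^*F_{A^{\varepsilon_\nu}}$ converges to $d_{A^0}^*\omega(A^0)$ because $\frac1{\varepsilon_\nu^2}F_{A^{\varepsilon_\nu}}$ is bounded and the non-harmonic part is pinned down by the defining elliptic equation for $\omega$ — one checks that $\Xi^0$ solves the heat flow equation (\ref{flow:eqgeoflow}), hence $\Xi^0\in\mathcal M^0(\Xi_-,\Xi_+)$. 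The exponential decay estimates ensure $\Xi^0$ has the correct limits at $s=\pm\infty$ and no breaking occurs, so since the index difference is $1$ the limit is a genuine (non-constant) flow line. Next, I would apply the relative Coulomb gauge and time-shift theorem \ref{flow:thm:timeshift} to replace each $\Xi^{\varepsilon_\nu}$ (after a gauge transformation and an $s$-translation $\sigma_\nu$) by a connection satisfying $d_{\Xi^0}^{*_{\varepsilon_\nu}}(\Xi^{\varepsilon_\nu}-\mathcal K_2^{\varepsilon_\nu}(\Xi^0))=0$ and $\Xi^{\varepsilon_\nu}-\mathcal K_2^{\varepsilon_\nu}(\Xi^0)\in\mathrm{im}\,(\mathcal D^{\varepsilon_\nu}(\mathcal K_2^{\varepsilon_\nu}(\Xi^0)))^*$. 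The quantitative estimates in that theorem, combined with the convergence $\Xi^{\varepsilon_\nu}\to\Xi^0$ and the estimate (\ref{flow:k2:leqw3}) for $\mathcal K_2$, show $\|\Xi^{\varepsilon_\nu}-\mathcal K_2^{\varepsilon_\nu}(\Xi^0)\|_{1,2;p,\varepsilon_\nu}\leq c\varepsilon_\nu^2$ for $\nu$ large. Then the local uniqueness theorem \ref{flow:thm:locuniq}, applied with this $\Xi^0$, forces $\Xi^{\varepsilon_\nu}$ (up to gauge and translation) to equal the unique Yang-Mills flow $\mathcal R^{\varepsilon_\nu,b}(\Xi^0)$ produced by definition \ref{defiR}, contradicting the assumption that $\Xi^{\varepsilon_\nu}$ is not in the image of $\mathcal R^{\varepsilon_\nu,b}$.

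\textbf{Main obstacle.} The delicate point is the compactness and convergence step: one must show that a sequence of Yang-Mills flows with uniformly bounded energy below $b$ converges, modulo gauge, to a single heat flow line with no bubbling and no breaking into multiple trajectories. The a priori estimates of theorem \ref{flow:thm:apriori} rule out curvature concentration in the $\Sigma$-directions (since $\|F_{A^{\varepsilon_\nu}}\|_{L^\infty}=O(\varepsilon_\nu^2)$), and the exponential convergence of theorem \ref{flow:thm:apriori22} controls the ends, but one still needs to extract a $C^1_{loc}$-convergent subsequence on $S^1\times\mathbb R$ for the connection itself, identify the limit as an element of $\mathcal M^0(\Xi_-,\Xi_+)$, and — crucially — verify that it is the \emph{right} kind of object to feed into the local uniqueness theorem, namely that after the Coulomb gauge fixing of theorem \ref{flow:thm:timeshift} the error $\Xi^{\varepsilon_\nu}-\mathcal K_2^{\varepsilon_\nu}(\Xi^0)$ is $O(\varepsilon_\nu^2)$ in the $\|\cdot\|_{1,2;p,\varepsilon_\nu}$-norm and $o(\varepsilon_\nu)$ in $L^\infty$, which is exactly the hypothesis (\ref{flow:eq:locuniq:l7}) of theorem \ref{flow:thm:locuniq}. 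Handling the interplay between the rescaled $\varepsilon$-dependent norms, the translation parameter $\sigma_\nu$, and the gauge choice is where most of the technical work lies; the implicit function theorem argument alluded to in the outline enters precisely here, via theorems \ref{flow:thm:estk2} and \ref{flow:thm:timeshift}.
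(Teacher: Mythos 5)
Your overall architecture — contradiction, the $L^\infty$-bound of theorem \ref{flow:thm:linf} feeding the a priori estimates of theorems \ref{flow:thm:apriori} and \ref{flow:thm:apriori22}, then Coulomb gauge/time shift via theorem \ref{flow:thm:timeshift} and finally local uniqueness (theorem \ref{flow:thm:locuniq}) — matches the paper. But the central identification step is carried out by a mechanism that does not deliver what you need, and this is a genuine gap. You propose to extract, by weak Uhlenbeck compactness, a \emph{single} limit heat flow $\Xi^0$ and then claim that ``the convergence $\Xi^{\varepsilon_\nu}\to\Xi^0$'' together with theorem \ref{flow:thm:timeshift} yields $\|\Xi^{\varepsilon_\nu}-\mathcal K_2^{\varepsilon_\nu}(\Xi^0)\|_{1,2;p,\varepsilon_\nu}\le c\varepsilon_\nu^2$. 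Soft subsequential convergence gives no rate: the distance from $\Xi^{\varepsilon_\nu}$ to a fixed $\Xi^0$ could tend to zero far more slowly than $\varepsilon_\nu$, and theorem \ref{flow:thm:timeshift} only preserves (up to constants) the size of $\Xi-\mathcal K_2^\varepsilon(\Xi_0)$, it does not improve it. Since hypothesis (\ref{flow:eq:locuniq:l7}) of the local uniqueness theorem demands smallness of order $\delta\varepsilon_\nu$ in both $\|\cdot\|_{1,2;p,\varepsilon_\nu}$ and $L^\infty$, a fixed Uhlenbeck limit cannot in general be fed into it.

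What the paper does instead is quantitative and $\nu$-dependent: after subtracting the $1$-form $\xi_0^\nu$ that moves the ends back to $\mathrm{Crit}^b_{E^H}$ (step 1), it projects $\Xi_1^\nu$ onto the flat connections using lemma \ref{lemma82dt94} (step 2), obtaining a map into $\mathcal M^g(P)$ whose heat-flow defect is only $O(\varepsilon_\nu^{1-1/p})$; it then invokes Weber's implicit function theorem \ref{flow:thm:implfunctthm} on the loop space — not theorems \ref{flow:thm:estk2} or \ref{flow:thm:timeshift}, which play a different role — to produce for \emph{each} $\nu$ an exact heat flow $\Xi_5^\nu$ at controlled distance $O(\varepsilon_\nu^{1-1/p})$ from $\Xi^{\varepsilon_\nu}$ (steps 3--6). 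Even this is not yet enough for (\ref{flow:eq:locuniq:l7}); a further bootstrap (step 7), exploiting that the anti-self-dual part of the difference lies in $\textrm{im}\, d_{A_5^\nu}^*$ and the linear estimates of theorem \ref{flow:thm:estk2}, improves the harmonic component to $O(\varepsilon_\nu^{1+\delta_1})$. Your proposal is missing both the projection-plus-quantitative-IFT construction of a $\nu$-dependent reference geodesic flow and the final improvement of the harmonic part, and without them the appeal to theorem \ref{flow:thm:locuniq} is unjustified.
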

\begin{figure}[ht]
\begin{center}
\begin{tikzpicture} 

\draw (-1,-1) .. controls (-1,0) and (0,0.7) .. (0.5,1); 
\draw (6.5,-1) .. controls (6.5,-0.5) and (7,0.1) .. (7.2,0.2); 
\draw (-1,-1).. controls (3,-1) and (3,-1) .. node[near start,above] {$F_A=0$}(6.5,-1); 

\filldraw (0,0) circle (1pt) node[below] {$\Xi_-$}
(6,0) circle (1pt) node[below] {$\Xi_+$}
(0,2) circle (1pt) node[above, left] {$\mathcal T^{\varepsilon_\nu,b}(\Xi_-)$}
(6,1) circle (1pt) node[above] {$\mathcal T^{\varepsilon_\nu,b}(\Xi_+)$}; 
\draw (0,0) .. controls (1,0.5) and (2,0.2) .. (3,0); 
\draw (3,0) .. controls (4,-0.2)  and (5,-0.2) .. node[near start,below] {$\Xi^\nu_2$}(6,0); 
 
\draw (0,0) .. controls (0,1) and (0,1) .. node[left] {$\alpha_{-}^{\nu}+\psi_-^{\nu} dt$}  (0,2); 
\draw (6,0) .. controls (6,0.5) and (6,0.5) .. node[right] {$\alpha_{+}^{\nu}+\psi_+^{\nu} dt$}  (6,1); 

\draw[blue] (0,0) .. node[above] {$\Xi^\nu_1$} controls (1,1) and (2,0.4) ..(2.5,0.2); 
\draw[blue] (2.5,0.2) .. controls (3,0) and (2.7,1.9) ..  (3,1.8); 
\draw[blue] (3,1.8) .. controls (3.3,1.7) and (3,0.7) .. (3.5,0.5); 
\draw[blue] (3.5,0.5) .. controls (4,0.3)  and (5,0.2) .. (6,0); 

\draw[magenta] (0,2) .. controls (1,3) and (2,2.4) ..(3,1.8); 
\draw[magenta] (3,1.8) .. controls (4,1.2)  and (5,1.2) ..node[above] {$\Xi^\nu$} (6,1); 
\end{tikzpicture} 
\caption{Idea of the proof of theorem \ref{flow:thm:surj}.}
\end{center}
\end{figure}
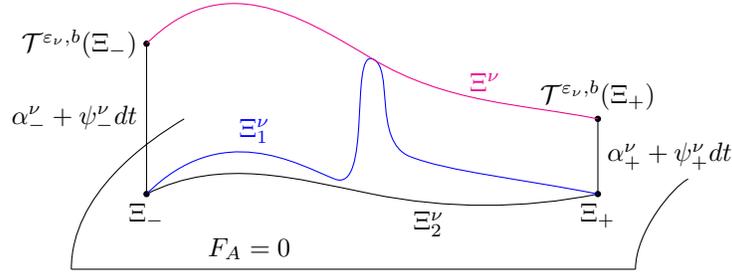
\begin{proof} 
We prove the theorem indirectly. We assume that there is a sequence $\Xi^\nu\in\mathcal M^{\varepsilon_\nu}\left(\mathcal T^{\varepsilon_\nu,b}(\Xi_-),\mathcal T^{\varepsilon_\nu,b}(\Xi_+)\right)$, $\varepsilon_\nu\to 0$, that is not in the image of $\mathcal R^{\varepsilon_\nu,b}$. Hence by the theorems \ref{flow:thm:apriori} and \ref{flow:thm:linf}, for a positive constant $c_0$,
\begin{equation}\label{flow:est:surj:bo}
\begin{split}
\|\partial_s A^\nu-d_{A^\nu}\Phi^\nu\|_{L^\infty(\Sigma)}+\varepsilon_\nu \|\partial_s\Psi-\partial_t\Phi-[\Psi,\Phi]\|_{L^\infty(\Sigma)}\leq c_0,\\
\varepsilon_\nu^2\|\partial_t A^\nu-d_{A^\nu}\Psi^\nu\|_{L^\infty(\Sigma)}+\|F_{A^\nu}\|_{L^\infty(\Sigma)}+\|d_{A^\nu}d_{A^\nu}^*F_{A^\nu}\|_{L^2(\Sigma)}\leq c_0\varepsilon_\nu^2
\end{split}
\end{equation}
and all the estimates of the theorem \ref{flow:thm:apriori22}. The proof is structured in the following way. In the first step, see figure \ref{flow:section:surj}.1, we will define a sequence $\Xi_1^\nu$ which converges to $\Xi_\pm$ for $s\to \pm\infty$ and in the following step we will project it on the space $\mathcal A_0(P)$ defining a new sequence $\Xi_2^\nu$; then by the implicit function theorem \ref{flow:thm:implfunctthm} there is a subsequence of $\Xi_2^\nu$ which converges to a geodesic flow  (step 3). Finally, after choosing appropriate gauge transformations and time shifts (steps 4-6) we can show that the sequence satisfies the assumptions of the local uniqueness theorem \ref{flow:thm:locuniq} (step 7) and therefore a subsequence turns out to lie in the image of $\mathcal R^{\varepsilon,b}$. This yields to a contradiction.\\

Furthermore, we assume that there is a positive $S_0$ such that $\Phi^\nu=0$ for $|s|\geq S_0$. In the general case, the $\Phi^\nu$ converge to $0$ for $|s|\to\infty$ in an exponential way and hence we can find a sequence $\{\tilde g_\nu\}_{\nu\in \mathbb N }$ of gauge transformations such that $\tilde g_\nu^*\Xi^\nu$ has the above property. First, we pick the sequence $\{g_\nu\}_{\nu\in\mathbb N}$ of gauge transformations defined by $g_{\nu }^{-1}\partial_s g_\nu:=\Phi_\nu$ and we define
$$A^\nu_\pm(s)+\Psi^\nu_\pm(s) dt:=g(s)^*(A^\nu_\pm+\Psi^\nu_\pm dt),$$
for $\Xi^\nu_{\pm}:=A^\nu_\pm+\Psi^\nu_\pm dt :=\lim_{s\to \pm\infty} \Xi^\nu$. Second, like in the section \ref{flow:section:themap} we choose a smooth positive function $\theta(s)=0$ for $s\leq1$ and $\theta(s)=1$ for $s\geq2$, such that $0\leq \theta\leq 1$ and $0\leq \partial_s\theta\leq c_0$ with $c_0>0$ and we define a family of 1-forms $\alpha_0^\nu+\psi_0^\nu dt$ as  
\begin{equation}\label{flow:surj:dsiabp}
\begin{split}
\xi_0^\nu=\alpha_0^\nu+\psi_0^\nu dt:=&\theta(-s)\left((A^\nu_-+\Psi^\nu_- dt)-\left(\mathcal T^{\varepsilon_\nu,b }\right)^{-1}(A^\nu_-+\Psi^\nu_- dt)\right)\\
&+\theta(s)\left((A^\nu_++\Psi^\nu_+ dt)-\left(\mathcal T^{\varepsilon_\nu,b }\right)^{-1}(A^\nu_++\Psi^\nu_+ dt)\right);
\end{split}
\end{equation}  
in addition we denote 
$$\alpha^\nu+\psi^\nu dt+\phi^\nu ds:=\theta(-s)\left(\Xi^\nu-\Xi^\nu_-\right)+\theta(s)\left(\Xi^\nu-\Xi^\nu_+\right)$$
which satisfies the uniformly exponential convergence estimates of the theorem \ref{flow:thm:apriori22}.\\

\noindent{\bf Step 1.} We define $\Xi^\nu_1=A^\nu_1+\Psi^\nu_1 dt+\Phi^\nu_1 ds:=\Xi^\nu-\xi_0^\nu$ then there is a constant $c>0$ such that
\begin{equation}\label{flow:est:surj:bo2}
\begin{split}
\|\partial_t A^\nu_1-d_{A^\nu_1}\Psi^\nu_1\|_{L^\infty(\Sigma)}+\|\partial_s A^\nu_1-d_{A^\nu_1}\Phi^\nu_1\|_{L^\infty(\Sigma)}\leq c
\end{split}
\end{equation}
\begin{equation}\label{flow:est:surj:bodfs}
\begin{split}
\|F_{A_1^\nu}\|_{L^\infty(\Sigma)}+\varepsilon_\nu^2\left\|\nabla_t^{\Psi_1^\nu}\left(\partial_tA_1^\nu-d_{A_1^\nu}\Psi_1^\nu\right)\right\|_{L^p(\Sigma)}\leq c\varepsilon^2
\end{split}
\end{equation}
and for $s\geq 0$
\begin{equation*}
\begin{split}
\partial_s& A^\nu_1-d_{A^\nu_1}\Phi^\nu_1-\nabla^{\Psi^\nu_1}_t\left(\partial_t A^\nu_1-d_{A^\nu_1}\Psi^\nu_1\right)-*X_t(A^{\nu}_1)\\
=&-\frac 1{\varepsilon_\nu^2}d_{A^\nu_1}^*d_{A^-}\bar\alpha_{0,+}^\nu
+\left[\psi_0^\nu,\left(\left(\partial_t A^\nu-d_{A^\nu}\Psi^\nu\right)- \left(\partial_t A_+^\nu-d_{A^\nu_+}\Psi^\nu_+\right)\right)\right]\\
&-\frac1{\varepsilon_\nu^2}d_{A^\nu}^*\left(F_{A^\nu}-F_{A^\nu_+}\right)+\frac 1{\varepsilon_\nu^2}*\left[\alpha^\nu,\left(d_{A_+}\left(\alpha_0^\nu-\bar\alpha_{0,+}^\nu\right)+\frac 12[\alpha_0^\nu\wedge\alpha_0^\nu]\right)\right]\\
&+\left(*X_t(A_+)-*X_t(A^\nu_1)\right)+\left(*X_t(A^\nu)-*X_t(A^\nu_+)\right)\\
&+\left[\psi^\nu,\left(\left(\partial_t A^\nu-d_{A^\nu}\Psi^\nu\right)- \left(\partial_t A_1^\nu-d_{A^\nu_1}\Psi^\nu_1\right)\right)\right]
+\nabla_t^{\Psi_+}\left([\psi^\nu,\alpha_0^\nu]-[\alpha^\nu,\psi_0^\nu]\right)
\end{split}
\end{equation*}
where $\bar \alpha_{0,\pm}^\nu(s)\in \textrm{im } d_{A_\pm(s)}^*$ are defined uniquely by $$d_{A_\pm}^*d_{A_\pm}\bar\alpha_{0,\pm}^\nu=\varepsilon_\nu^2\nabla_t^{\Psi_\pm}\left(\partial_t A_\pm-d_{A_\pm}\Psi_\pm\right).$$

\begin{proof}[Proof of step 1.] The estimates (\ref{flow:est:surj:bo2}) and (\ref{flow:est:surj:bodfs}) follow from (\ref{flow:est:surj:bo}), from the inequalities of the theorem \ref{thm:mainthm}, the remark \ref{thm:existence:crit:est} and the Sobolev theorem \ref{lemma:sobolev}:
$$\|\partial_s A^\nu_1-d_{A^\nu_1}\Phi^\nu_1\|_{L^\infty(\Sigma)}=\|\partial_s A^\nu-d_{A^\nu}\Phi^\nu\|_{L^\infty(\Sigma)}\leq c,$$
\begin{align*}
\|\partial_t A^\nu_1-d_{A^\nu_1}\Psi^\nu_1\|_{L^\infty(\Sigma)}\leq &
\|\partial_t A^\nu-d_{A^\nu}\Psi^\nu\|_{L^\infty(\Sigma)}\\
&+\|\nabla_t^{\Psi^\nu}\alpha_0^\nu\|_{L^\infty(\Sigma)}+\|d_{A_1^\nu}\psi_0^\nu\|_{L^\infty(\Sigma)}
\leq c,
\end{align*}
$$\|F_{A_1^\nu}\|_{L^\infty(\Sigma)}\leq \|F_{A_1^\nu}\|_{L^\infty(\Sigma)}+\|d_{A_1^\nu}\alpha_0^\nu\|_{L^\infty(\Sigma)}+c\|\alpha_0^\nu\|_{L^\infty(\Sigma)}\leq c,$$
\begin{align*}
\Big\|\nabla_t^{\Psi_1^\nu}&\left(\partial_tA_1^\nu-d_{A_1^\nu}\Psi_1^\nu\right)\Big\|_{L^p(\Sigma)}\leq 
\left\|\nabla_t^{\Psi^\nu}\left(\partial_tA^\nu-d_{A^\nu}\Psi^\nu\right)\right\|_{L^p(\Sigma)}+c\\
&\qquad\leq \frac 1{\varepsilon_\nu^2}\left\|d_{A^\nu}^*F_{A^\nu}\right\|_{L^p(\Sigma)}+\left\|\partial_s A^\nu-d_{A^\nu}\Psi^\nu\right\|_{L^p(\Sigma)}+c
\leq c
\end{align*}
where for the last estimate we use also the Yang-Mills flow equation (\ref{flow:eq}). In order to prove the identity, we first remark that
\begin{equation}\label{flow:surj:eq.slsl}
\begin{split}
\partial_s A^\nu_1&-d_{A^\nu_1}\Phi^\nu_1-\nabla^{\Psi^\nu_1}_t\left(\partial_t A^\nu_1-d_{A^\nu_1}\Psi^\nu_1\right)-*X_t(A^{\nu}_1)\\
=&\partial_s A^\nu -d_{A^\nu}\Phi^\nu-\nabla_t^{\Psi^\nu} \left(\partial_t A^\nu-d_{A^\nu}\Psi^\nu\right)-*X_t(A^\nu)\\
&+*X_t(A^\nu)-*X_t(A^\nu_1)+\left[\psi_0^\nu,\left(\partial_t A^\nu-d_{A^\nu}\Psi^\nu\right)\right]\\
&+\nabla_t^{\Psi_1^\nu}\left(\left(\partial_t A^\nu-d_{A^\nu}\Psi^{\nu}\right)-\left(\partial_t A^\nu_1-d_{A^\nu_1}\Psi^\nu_1\right)\right);
\end{split}
\end{equation}
next, in order to simplify the exposition, we consider $s\geq 0$, for a negative $s$ the proof is analogous. Since $\Xi^\nu$ is a Yang-Mills flow, we have
\begin{align*}
\partial_s A^\nu_1&-d_{A^\nu_1}\Phi^\nu_1-\nabla^{\Psi^\nu_1}_t\left(\partial_t A^\nu_1-d_{A^\nu_1}\Psi^\nu_1\right)-*X_t(A^{\nu}_1)\\
=&-\frac 1{\varepsilon_\nu^2}d_{A^\nu}^*F_{A^\nu}+\left[\psi_0^\nu,\left(\partial_t A^\nu_+-d_{A^\nu_+}\Psi^\nu_+\right)\right]\\
&+\left[\psi_0^\nu,\left(\left(\partial_t A^\nu-d_{A^\nu}\Psi^\nu\right)- \left(\partial_t A_+^\nu-d_{A^\nu_+}\Psi^\nu_+\right)\right)\right]\\
&+\left[\psi^\nu,\left(\left(\partial_t A^\nu-d_{A^\nu}\Psi^\nu\right)- \left(\partial_t A_1^\nu-d_{A^\nu_1}\Psi^\nu_1\right)\right)\right]\\
&+\nabla_t^{\Psi_+}\left(\left(\partial_t A^\nu-d_{A^\nu}\Psi^\nu\right)- \left(\partial_t A_1^\nu-d_{A^\nu_1}\Psi^\nu_1\right)\right)\\
&+\left(-*X_t(A^\nu_1)+*X_t(A^\nu)\right).
\end{align*}
Furthermore, since 
\begin{align*}
\left[\psi_0^\nu,\left(\partial_t A^\nu_+-d_{A^\nu_+}\Psi^\nu_+\right)\right]
&+\nabla_t^{\Psi_+}\left(\left(\partial_t A^\nu-d_{A^\nu}\Psi^\nu\right)- \left(\partial_t A_1^\nu-d_{A^\nu_1}\Psi^\nu_1\right)\right)\\
=&+\nabla_t^{\Psi_+^\nu}\left(\partial_t A^\nu_+-d_{A^\nu_+}\Psi^\nu_+\right)-\nabla_t^{\Psi_+}\left(\partial_t A_+-d_{A_+}\Psi_+\right)\\
&+\nabla_t^{\Psi_+}\left([\psi^\nu,\alpha_0^\nu]-[\alpha^\nu,\psi_0^\nu]\right)\\
=&\frac 1{\varepsilon_\nu^2}d_{A_+^\nu}^*F_{A_+^\nu}-*X_t(A^\nu_+)-\frac1{\varepsilon_\nu^2}d_{A_+}^*d_{A_+}\bar\alpha_{0,+}^\nu\\
&+*X_t(A_+)+\nabla_t^{\Psi_+}\left([\psi^\nu,\alpha_0^\nu]-[\alpha^\nu,\psi_0^\nu]\right)
\end{align*}
where the last identity follows from the equations for the perturbed geodesics (\ref{intro:eqg0}) and for the perturbed Yang-Mills connections (\ref{intro:YMeq1}). Thus,
\begin{align*}
\partial_s A^\nu_1&-d_{A^\nu_1}\Phi^\nu_1-\nabla^{\Psi^\nu_1}_t\left(\partial_t A^\nu_1-d_{A^\nu_1}\Psi^\nu_1\right)-*X_t(A^{\nu}_1)\\
=&-\frac 1{\varepsilon_\nu^2}d_{A^\nu}^*F_{A^\nu}+\frac 1{\varepsilon_\nu^2}d_{A_+^\nu}^*F_{A_+^\nu}
-\frac1{\varepsilon_\nu^2}d_{A_+}^*d_{A_+}\bar\alpha_{0,+}^\nu\\
&+\nabla_t^{\Psi_+}\left([\psi^\nu,\alpha_0^\nu]-[\alpha^\nu,\psi_0^\nu]\right)\\
&+\left[\psi_0^\nu,\left(\left(\partial_t A^\nu-d_{A^\nu}\Psi^\nu\right)- \left(\partial_t A_+^\nu-d_{A^\nu_+}\Psi^\nu_+\right)\right)\right]\\
&+\left(-*X_t(A^\nu_1)+*X_t(A_+)\right)+\left(*X_t(A^\nu)-*X_t(A^\nu_+)\right)\\
&+\left[\psi^\nu,\left(\left(\partial_t A^\nu-d_{A^\nu}\Psi^\nu\right)- \left(\partial_t A_1^\nu-d_{A^\nu_1}\Psi^\nu_1\right)\right)\right]
\end{align*}
Next, the first step then follows directely using the identities
$$d_{A^\nu}^*F_{A^\nu}
= d_{A^\nu}^*(F_{A^\nu}-F_{A^\nu_+})-*\left[\alpha^\nu,*\left(d_{A_+}\alpha_0^\nu
+\frac12[\alpha_0^\nu\wedge\alpha_0^\nu]\right)\right]+d_{A^\nu_+}^*F_{A^\nu_+},$$
$$-\frac1{\varepsilon_\nu^2}d_{A_+}^*d_{A_+}\bar\alpha_{0,+}^\nu
=-\frac1{\varepsilon_\nu^2}d_{A_1^\nu}^*d_{A_+}\bar\alpha_{0,+}^\nu
-\frac 1{\varepsilon_\nu^2}*[\alpha^\nu,*d_{A_+}\bar\alpha_{0,+}^\nu].$$
\end{proof}
Since $F_{A_1^\nu}+d_{A_1^\nu}\alpha_0^\nu+\frac 12[\alpha_0^\nu\wedge \alpha_0^\nu]=F_{A^\nu }$, $d_{A_+}\alpha_0^\nu+\frac 12[\alpha_0^\nu\wedge \alpha_0^\nu]=F_{A^\nu_+}$ for $s\geq 2$,   we have
\begin{equation}\label{flow:est:913}
F_{A_1^\nu }=F_{A^\nu }-F_{A^\nu_+ }-\left[\alpha^\nu\wedge\alpha_0^\nu\right],\quad \textrm{for }s\geq 2, 
\end{equation}
and thus we can estimate the norm of $F_{A_1^\nu}$, for any $q\geq2$, by
\begin{equation}\label{flow:est:913a}
\left\| F_{A_1^\nu }\right\|_{L^q(\Sigma) }\leq \left\| F_{A^\nu }-F_{A^\nu_+ }\right\|_{L^q(\Sigma)}+\left\|\left[\alpha^\nu\wedge\alpha_0^\nu\right]\right\|_{L^q(\Sigma) }
\end{equation}
for $s\geq 2$.\\

\noindent{\bf Step 2.} There are two positive constants $c$ and $\delta$ such that the following holds. For every $\Xi^\nu$ there is a connection $\Xi^\nu_2:= A^\nu_2+\Psi^\nu_2 dt+\Phi^\nu_2 ds= \Xi^\nu_1+\alpha_1^\nu+\psi^\nu_1 dt+\phi^\nu_1 ds$, $\alpha_1^\nu\in\textrm{im } d_{A_1^\nu}^*$, which satisfies
$$\begin{array}{llll}
i)&F_{A^{\nu}_2}=0,
&ii)\,\,& d_{ A^{\nu}_2}^*(\partial_t A^{\nu}_2-d_{ A^{\nu}_2}\Psi^{\nu}_2)=0,\\[5pt]
iii)\,\,&d_{ A^{\nu}_2}^*(\partial_s A^{\nu}_2-d_{A^{\nu}_2}\Phi^{\nu}_2)=0,\quad
&iv)& \|\alpha_1^\nu(s)\|_{L^\infty(\Sigma)}\leq ce^{(\theta(-s)-\theta(s))\delta s}\varepsilon_\nu^{2-\frac1p}\\
v)&\left\|\pi_{ A^{\nu}_2}\left(\mathcal F^0\left(\Xi^{\nu}_2\right)\right)\right\|_{L^p}\leq c\varepsilon_\nu^{1-\frac1p},
&vi)&\left\|\pi_{ A^{\nu}_2}\left(\mathcal F^0\left(\Xi^{\nu}_2\right)\right)\right\|_{L^p(\Sigma)}\leq c\varepsilon_\nu^{1-\frac1p},\\
vii)&\lim_{s\to\pm\infty }\Xi^\nu_2=\Xi_\pm.
&&
\end{array}$$

\begin{proof}[Proof of step 2]
By the the first step we know that $\|F_{A_1^\nu}\|_{L^\infty}\leq c\varepsilon^{2}$ and thus, for $\varepsilon$ small enough, the lemma \ref{lemma82dt94} allows us to find a positive constant $c$ such that for any $A^\nu_1$ there is a unique 0-form $\gamma^\nu$ such that 

\begin{equation}\label{flow:est:914}
F_{A^\nu_1+*d_{A^\nu_1}\gamma^\nu}=0,\quad \left\| d_{A^\nu_1}\gamma^\nu\right\|_{L^\infty(\Sigma)}\leq c \left\|F_{A^\nu_1}\right\|_{L^{\infty}(\Sigma)}\leq c\varepsilon_\nu^2.
\end{equation}
and $\Psi^\nu_2$, $\Phi^\nu_2$ are then uniquely given by
\begin{equation}
d_{A^\nu_2}^*\left(\partial_t A^\nu_2-d_{A^\nu_2}\Psi^\nu_2\right)=0,\quad
d_{A^\nu_2}^*\left(\partial_s A^\nu_2-d_{A^\nu_2}\Phi^\nu_2\right)=0.
\end{equation}
Therefore $i)$, $ ii)$ and $iii)$ are satisfied. By (\ref{flow:est:913a}), (\ref{flow:est:914}) one can remark that 
$$\|\alpha^\nu_1(s)\|_{L^\infty(\Sigma)}\leq ce^{(\theta(-s)-\theta(s))\delta s}\varepsilon_\nu^{2-\frac1p}$$
using the a priori estimate of the theorem \ref{flow:thm:apriori22}. In order to show the inequalities $v)$ and $vi)$ of the second step, we consider
\begin{equation*}
\begin{split}
\partial_s A^\nu_2&-d_{A^\nu_2}\Phi^\nu_2
-\nabla_t^{\Psi^\nu_2}\left(\partial_t A^\nu_2-d_{A^\nu_2}\Psi^\nu_2\right)-*X_t(A^\nu_2)\\
=&\partial_s A^\nu_1-d_{A^\nu_1}\Phi^\nu_1
-\nabla_t^{\Psi^\nu_1}\left(\partial_t A^\nu_1-d_{A^\nu_1}\Psi^\nu_1\right)-*X_t(A^\nu_1)\\
&+\nabla_s^{\Phi^\nu_1}\alpha_1^\nu-d_{A^\nu_1}\phi^\nu_1-[\alpha_1^\nu,\phi^\nu_1]
-\left[\psi^\nu_1,\left(\partial_t A^\nu_1-d_{A^\nu_1}\Psi^\nu_1\right)\right]\\
&-\nabla_t^{\Psi^\nu_1}\left(\nabla_t^{\Psi^\nu_1}\alpha_1^\nu-d_{A^\nu_1}\psi^\nu_1-[\alpha^\nu_1,\psi^\nu_1]\right)
-\left(*X_t(A^\nu_2)-*X_t(A^\nu_1)\right)
\end{split}
\end{equation*}
and we remark that
\begin{equation*}
\begin{split}
\nabla_t^{\Psi^\nu_1}\nabla_t^{\Psi^\nu_1}\alpha_1^\nu=&
\nabla_t^{\Psi^\nu_1}\nabla_t^{\Psi^\nu_1}*d_{A^\nu_1}\gamma^\nu\\
=&*d_{A^\nu_2}\nabla_t^{\Psi^\nu_1}\nabla_t^{\Psi^\nu_1}\gamma^\nu
+*\nabla_t^{\Psi^\nu_1}\left[\left(\partial_t A^\nu_1-d_{A^\nu_1}\Psi^\nu_1\right),\gamma^\nu\right]\\
&+*\left[\left(\partial_t A^\nu_1-d_{A^\nu_1}\Psi^\nu_1\right),\nabla_t^{\Psi^\nu_1}\gamma^\nu\right]-*\left[\alpha^\nu_1, \nabla_t^{\Psi^\nu_1}\nabla_t^{\Psi^\nu_1}\gamma^\nu\right],
\end{split}
\end{equation*}
\begin{equation*}
\nabla_t^{\Psi^\nu_1}d_{A^\nu_1}\psi^\nu_1=
d_{A^\nu_2}\nabla_t^{\Psi^\nu_1}\psi^\nu_1-\left[\alpha^\nu_1,\nabla_t^{\Psi^\nu_1}\psi^\nu_1\right]+\left[\left(\partial_t A^\nu_1-d_{A^\nu_1}\Psi^\nu_1\right),\psi^\nu_1\right],
\end{equation*}
$$\nabla_s^{\Phi_1^\nu}\alpha_1^\nu=*d_{A_2^\nu}\nabla_s^{\Phi_1^\nu}\gamma^\nu
+*\left[\left(\partial_sA_1^\nu-d_{A_1^nu}\Phi_1^\nu\right),\gamma^\nu\right]
+*\left[\alpha_1^\nu,\nabla_s^{\Phi_1^\nu}\gamma^\nu\right].$$
Using the first step, the next lemma and the the uniformly exponential convergence estimates of the theorem \ref{flow:thm:apriori22}, we can conclude that
\begin{equation*}
\left\|\pi_{A^\nu_2}\left(\partial_s A^\nu_2-d_{A^\nu_2}\Psi^\nu_2-\nabla_t^{\Psi^\nu_2}\left(\partial_t A^\nu_2-d_{A^\nu_2}\Psi^\nu_2\right)-*X_t(A^\nu_2)\right)\right\|_{L^p}\leq c \varepsilon_\nu^{1-\frac1p},
\end{equation*}
\begin{equation*}
\left\|\pi_{A^\nu_2}\left(\partial_s A^\nu_2-d_{A^\nu_2}\Psi^\nu_2-\nabla_t^{\Psi^\nu_2}\left(\partial_t A^\nu_2-d_{A^\nu_2}\Psi^\nu_2\right)-*X_t(A^\nu_2)\right)\right\|_{L^p(\Sigma)}\leq c \varepsilon_\nu^{1-\frac1p}.
\end{equation*}
\end{proof}

\begin{lemma}\label{flow:lemmasurj1}
There are two positive constants $c$ and $\varepsilon_0$ such that
$$\varepsilon_\nu \left\|\nabla_t^{\Psi^\nu_2}\alpha^\nu_1\right\|_{L^p}+\varepsilon_\nu \left\|\nabla_t^{\Psi^\nu_2}\gamma^\nu\right\|_{L^p}+\varepsilon_\nu^2\left\|\nabla_t^{\Psi^\nu_2}\nabla_t^{\Psi^\nu_2}\gamma^\nu\right\|_{L^p}       \leq c\varepsilon_\nu^{2-\frac 1p},$$
$$\varepsilon_\nu \left\|\nabla_t^{\Psi^\nu_2}\alpha^\nu_1\right\|_{L^p(\Sigma)}+\varepsilon_\nu \left\|\nabla_t^{\Psi^\nu_2}\gamma^\nu\right\|_{L^p(\Sigma)}+\varepsilon_\nu^2\left\|\nabla_t^{\Psi^\nu_2}\nabla_t^{\Psi^\nu_2}\gamma^\nu\right\|_{L^p(\Sigma)}       \leq c\varepsilon_\nu^{2-\frac 1p},$$
$$\left\|d_{A_2^\nu}\phi^\nu_1\right\|_{L^p}+\left\|\nabla_s^{\Phi^\nu_2}\alpha_1^\nu\right\|_{L^p}+\left\|\nabla_s^{\Phi^\nu_2}\gamma^\nu\right\|_{L^p}\leq c \varepsilon_\nu,$$
$$
\left\|d_{A_2^\nu}\phi^\nu_1\right\|_{L^p(\Sigma)}+\left\|\nabla_s^{\Phi^\nu_2}\alpha_1^\nu\right\|_{L^p(\Sigma)}+\left\|\nabla_s^{\Phi^\nu_2}\gamma^\nu\right\|_{L^p(\Sigma)}\leq c \varepsilon_\nu,$$
$$
\|\psi^\nu_1\|_{L^\infty(\Sigma)}+\|d_{A^\nu_2}\psi^\nu_1\|_{L^\infty(\Sigma)}+\varepsilon_\nu\left\|\nabla_t^{\Psi^\nu_2}\psi^\nu_1\right\|_{L^p(\Sigma)}+\varepsilon_\nu\left\|\nabla_t^{\Psi^\nu_2}\psi^\nu_1\right\|_{L^p}\leq c\varepsilon_\nu^{1-\frac 1p},$$
$$\left\|\partial_sA_2^\nu-d_{A_2^\nu}\Phi_2^\nu\right\|_{L^p(\Sigma)}\leq c e^{\delta (\theta(-s)-\theta(s))s},\quad\left\|\partial_tA_2^\nu-d_{A_2^\nu}\Psi_2^\nu\right\|_{L^p(\Sigma)}\leq c,$$

for any $0<\varepsilon_\nu<\varepsilon_0$.
\end{lemma}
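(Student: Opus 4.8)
The plan is to regard $\gamma^\nu$, $\alpha_1^\nu=*d_{A_1^\nu}\gamma^\nu$, $\psi_1^\nu$ and $\phi_1^\nu$ as solutions of elliptic equations over $\Sigma$, and to estimate them together with their $\nabla_t$- and $\nabla_s$-derivatives by differentiating the defining equations, commuting the derivatives through $d_{A_1^\nu}$, $d_{A_1^\nu}^*$ with the commutation formulas (\ref{flow:eq:apriori:eq4}), and then applying the elliptic estimate of lemma \ref{lemma76dt94} on one side and the a priori bounds of theorems \ref{flow:thm:apriori} and \ref{flow:thm:apriori22} on the other. Throughout one uses $\|F_{A_1^\nu}\|_{L^\infty(\Sigma)}\le c\varepsilon_\nu^2$ from step 1, so that $A_1^\nu$ is $C^1$-close to a flat reference connection uniformly in $\nu$ and lemma \ref{lemma76dt94} applies with a $\nu$-independent constant, and $\|d_{A_1^\nu}\gamma^\nu\|_{L^\infty(\Sigma)}\le c\varepsilon_\nu^2$, $\|\alpha_1^\nu(s)\|_{L^\infty(\Sigma)}\le ce^{(\theta(-s)-\theta(s))\delta s}\varepsilon_\nu^{2-1/p}$ from step 2, so that all quadratic nonlinearities in $\alpha_1^\nu$ can be absorbed into the left-hand side once $\varepsilon_\nu$ is small. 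The $\|\cdot\|_{L^p}$-version over $\Sigma\times S^1\times\mathbb R$ and the fibrewise $\|\cdot\|_{L^p(\Sigma)}$-version of each inequality come from the same argument, the only extra input for the former being the exponential decay in $s$ supplied by theorem \ref{flow:thm:apriori22}, which makes the $s$-integral converge.

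For $\gamma^\nu$ I would start from $F_{A_1^\nu+*d_{A_1^\nu}\gamma^\nu}=0$, which on the surface is equivalent to an equation of the form $d_{A_1^\nu}^*d_{A_1^\nu}\gamma^\nu=\mp*F_{A_1^\nu}+(\text{terms quadratic in }\alpha_1^\nu)$. Applying $\nabla_t^{\Psi_1^\nu}$, $\nabla_t^{\Psi_1^\nu}\nabla_t^{\Psi_1^\nu}$ and $\nabla_s^{\Phi_1^\nu}$ and commuting (the commutators produce terms in $B_t^{(1)}:=\partial_tA_1^\nu-d_{A_1^\nu}\Psi_1^\nu$, $B_s^{(1)}$ acting on $\gamma^\nu$, $d_{A_1^\nu}\gamma^\nu$, $\nabla_t\gamma^\nu$, whose coefficients are bounded by step 1), one gets elliptic equations for $\nabla_t\gamma^\nu$, $\nabla_t\nabla_t\gamma^\nu$, $\nabla_s\gamma^\nu$ whose right-hand sides are controlled by the corresponding derivatives of $F_{A_1^\nu}$ plus lower-order and quadratic terms. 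The derivatives of $F_{A_1^\nu}$ are bounded via the identity $F_{A_1^\nu}=F_{A^\nu}-F_{A^\nu_+}-[\alpha^\nu\wedge\alpha_0^\nu]$ of step 1 (with the analogous expression on the compact part of the cylinder), the Bianchi identities, the Yang-Mills flow equations (\ref{flow:eq:apriori:eq2}), the estimates (\ref{flow:eq:apriorig1227}) and (\ref{flow:eq:apriorig1228}) for $\nabla_t(F_{A^\nu}-F_{A^\nu_+})$, $\nabla_t\nabla_t(F_{A^\nu}-F_{A^\nu_+})$ and $\nabla_sF_{A^\nu}$, and the estimates of theorem \ref{thm:mainthm} and remark \ref{thm:existence:crit:est} for $\xi_0^\nu$. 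Inserting these and absorbing the quadratic terms via lemma \ref{lemma76dt94} yields $\varepsilon_\nu\|\nabla_t\gamma^\nu\|+\varepsilon_\nu^2\|\nabla_t\nabla_t\gamma^\nu\|\le c\varepsilon_\nu^{2-1/p}$ and $\|\nabla_s\gamma^\nu\|\le c\varepsilon_\nu$, and the analogous bounds for $\alpha_1^\nu=*d_{A_1^\nu}\gamma^\nu$ follow by one further commutation (writing $\nabla_t\alpha_1^\nu=*d_{A_1^\nu}\nabla_t\gamma^\nu+*[B_t^{(1)},\gamma^\nu]$, etc.).

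For $\psi_1^\nu$ and $\phi_1^\nu$ I would use the Coulomb conditions $d_{A_2^\nu}^*(\partial_tA_2^\nu-d_{A_2^\nu}\Psi_2^\nu)=0$ and $d_{A_2^\nu}^*(\partial_sA_2^\nu-d_{A_2^\nu}\Phi_2^\nu)=0$. Writing $A_2^\nu=A_1^\nu+\alpha_1^\nu$, $\Psi_2^\nu=\Psi_1^\nu+\psi_1^\nu$, $\Phi_2^\nu=\Phi_1^\nu+\phi_1^\nu$ and expanding $d_{A_2^\nu}^*=d_{A_1^\nu}^*+*[\alpha_1^\nu\wedge*\,\cdot\,]$, these turn into elliptic equations $d_{A_1^\nu}^*d_{A_1^\nu}\psi_1^\nu=d_{A_1^\nu}^*B_t^{(1)}+(\text{terms with }\alpha_1^\nu,\nabla_t\alpha_1^\nu)$ and similarly for $\phi_1^\nu$ with $B_s^{(1)}$ in place of $B_t^{(1)}$. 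The key point is that $d_{A^\nu}^*(\partial_tA^\nu-d_{A^\nu}\Psi^\nu)=\varepsilon_\nu^2C^\nu$ by the second Yang-Mills flow equation (\ref{flow:eq:apriori:eq2}), which together with $\varepsilon_\nu\|C^\nu\|_{L^\infty}\le ce^{-\rho s_0}$ from (\ref{flow:eq:apriorig122}) and the $\xi_0^\nu$-estimates gives $\|d_{A_1^\nu}^*B_t^{(1)}\|$ of order $\varepsilon_\nu^{1-1/p}$; an analogous argument using the first Yang-Mills flow equation bounds $\|d_{A_1^\nu}^*B_s^{(1)}\|$ of order $\varepsilon_\nu$. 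Applying lemma \ref{lemma76dt94} (and the Sobolev embedding $W^{1,p}(\Sigma)\hookrightarrow L^\infty(\Sigma)$, $p>2$, for the $L^\infty(\Sigma)$-bounds) and differentiating once more in $t$ for the $\nabla_t\psi_1^\nu$-bound gives the claimed estimates for $\psi_1^\nu$ and $\phi_1^\nu$. Finally, the last two inequalities follow by writing $\partial_sA_2^\nu-d_{A_2^\nu}\Phi_2^\nu=B_s^{(1)}+\nabla_s^{\Phi_1^\nu}\alpha_1^\nu-d_{A_1^\nu}\phi_1^\nu-[\alpha_1^\nu,\phi_1^\nu]$ and $\partial_tA_2^\nu-d_{A_2^\nu}\Psi_2^\nu=B_t^{(1)}+\nabla_t^{\Psi_1^\nu}\alpha_1^\nu-d_{A_1^\nu}\psi_1^\nu-[\alpha_1^\nu,\psi_1^\nu]$ and inserting the bounds just obtained together with step 1 and the exponential decay of $B_s^\nu$ from (\ref{flow:eq:apriorig122}), which produces the factor $e^{\delta(\theta(-s)-\theta(s))s}$.

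The main obstacle will be the bookkeeping: tracking the exact $\varepsilon_\nu$-powers and exponential-decay factors through each differentiation and each commutator, and in particular patching the a priori bounds of theorem \ref{flow:thm:apriori}, valid on compact $s$-intervals, with the exponentially decaying bounds of theorem \ref{flow:thm:apriori22}, valid near $s=\pm\infty$, while also controlling the discrepancies between $d_{A_1^\nu}$ and $d_{A_2^\nu}$ (and between $\nabla_t^{\Psi_1^\nu}$ and $\nabla_t^{\Psi_2^\nu}$) that enter when one differentiates the $A_2^\nu$-Coulomb conditions. The quadratic nonlinearities are harmless, since $\alpha_1^\nu$ and $d_{A_1^\nu}\gamma^\nu$ are already of order $\varepsilon_\nu^{2-1/p}$ and $\varepsilon_\nu^{2}$ in $L^\infty(\Sigma)$ respectively, so they can always be absorbed once $\varepsilon_\nu$ is small.
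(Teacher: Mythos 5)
Your proposal follows essentially the same route as the paper's proof: differentiating the zero-curvature identity $d_{A_1^\nu}*d_{A_1^\nu}\gamma^\nu=F_{A_1^\nu}-\tfrac12[*d_{A_1^\nu}\gamma^\nu\wedge*d_{A_1^\nu}\gamma^\nu]$ in $t$ and $s$ with the commutation formulas and the identity $F_{A_1^\nu}=F_{A^\nu}-F_{A_+^\nu}-[\alpha^\nu\wedge\alpha_0^\nu]$ plus theorems \ref{flow:thm:apriori} and \ref{flow:thm:apriori22} for $\gamma^\nu$ and $\alpha_1^\nu$, then expanding the Coulomb conditions on $\Xi_2^\nu$ into elliptic equations for $\psi_1^\nu$ and $\phi_1^\nu$ whose sources are controlled because $d_{A^\nu}^*B_t^\nu=\varepsilon_\nu^2C^\nu$ (the paper organizes the $\xi_0^\nu$-corrections so that a combination cancels via (\ref{cond:eqg0}) and (\ref{intro:YMeq2}), but this is the same computation as your direct estimate), and finally lemma \ref{lemma76dt94} and the triangle inequality. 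No substantive differences.
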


\begin{proof}[Proof of lemma \ref{flow:lemmasurj1}]
The first estimate of the lemma can be obtained deriving by $\nabla_t^{\Psi^\nu_1}$ the identity
\begin{equation}\label{id:ndsao}
d_{A_1^\nu}*d_{A_1^\nu}\gamma^\nu=F_{A_1^\nu}-\frac 12 \left[*d_{A_1^\nu}\gamma^\nu\wedge *d_{A_1^\nu}\gamma^\nu\right];
\end{equation}
Then, by the commutation formula \ref{commform} and the theorem \ref{flow:thm:apriori} 
\begin{equation}\label{lemma79:1}
\left \| d_{A_1^\nu}*d_{A_1^\nu}\nabla_t^{\Psi_1^\nu}\gamma^\nu\right \|_{L^2(\Sigma)}
\leq c\left(\left\| \nabla_t^{\Psi_1^\nu}F_{A_1^\nu}\right\|_{L^2(\Sigma)}+\left\| \alpha_1^\nu \right\|_{L^\infty(\Sigma)}\right)\leq c\varepsilon^{1-\frac 1p}_\nu
\end{equation}
or by the theorem \ref{flow:thm:apriori22} for $|s|\geq2$
\begin{equation}\label{lemma79:2}
\begin{split}
\left \| d_{A_1^\nu}*d_{A_1^\nu}\nabla_t^{\Psi_1^\nu}\gamma^\nu\right \|_{L^2(\Sigma)}
\leq &c\left(\left\| \nabla_t^{\Psi_1^\nu}F_{A_1^\nu}\right\|_{L^2(\Sigma)}+\left\| \alpha_1^\nu \right\|_{L^\infty(\Sigma)}\right)\\
\leq & c\left\| \nabla_t^{\Psi_1^\nu}\left(F_{A^\nu}-F_{A^\nu_+}\right)\right\|_{L^2(\Sigma)}\\
&+c\left(\left\|\nabla_t^{\Psi_1^\nu}[\alpha\wedge \alpha_0^\nu] \right\|_{L^2(\Sigma)}+\left\| \alpha_1^\nu \right\|_{L^\infty(\Sigma)}\right)\\
\leq & ce^{(\theta(-s)-\theta(s))\delta s}\varepsilon^{1-\frac 1p}_\nu,
\end{split}
\end{equation}
where for the second estimate we used (\ref{flow:est:913}) and for the last one from the theorem \ref{flow:thm:apriori22}. Analogously, if we derive (\ref{id:ndsao}) twice by $\nabla_t^{\Psi^\nu_1}$, then we obtain
\begin{equation}\label{lemma79:3}
\left \| d_{A_1^\nu}*d_{A_1^\nu}\nabla_t^{\Psi_1^\nu}\nabla_t^{\Psi_1^\nu}\gamma^\nu\right \|_{L^2(\Sigma)}
\leq   c\varepsilon^{-\frac 1p}_\nu,
\end{equation}
\begin{equation}\label{lemma79:4}
\left \| d_{A_1^\nu}*d_{A_1^\nu}\nabla_t^{\Psi_1^\nu}\nabla_t^{\Psi_1^\nu}\gamma^\nu\right \|_{L^2(\Sigma)}
\leq   ce^{(\theta(-s)-\theta(s))\delta s}\varepsilon^{-\frac 1p}_\nu.
\end{equation}
In fact since  
 $$\left\| \nabla_t^{\Psi_1^\nu}\nabla_t^{\Psi_1^\nu}F_{A_1^\nu}\right\|_{L^2(\Sigma)}=\left\| \nabla_t^{\Psi_1^\nu}\nabla_t^{\Psi_1^\nu}\left(F_{A^\nu}-F_{A^\nu_+}-[\alpha^\nu\wedge \alpha_0^\nu]\right)\right\|_{L^2(\Sigma)},$$
 $$\left\| \nabla_t^{\Psi_1^\nu}\nabla_t^{\Psi_1^\nu}F_{A_1^\nu}\right\|_{L^2(\Sigma)}\leq c$$ 
 by theorem \ref{flow:thm:apriori} and by the estimates on the $1$-forms $\alpha_0+\psi_0 dt$ and $\alpha_1$ and if we consider the estimates of the theorem \ref{flow:thm:apriori22} we get 
 $$\left\| \nabla_t^{\Psi_1^\nu}\nabla_t^{\Psi_1^\nu}F_{A_1^\nu}\right\|_{L^2(\Sigma)}\leq ce^{(\theta(-s)-\theta(s))\delta s}\varepsilon_\nu^{-\frac 1p}.$$
Thus , we can also conclude that by (\ref{lemma79:1}) and (\ref{lemma79:3})  and the commutation formula (\ref{commform})
\begin{equation}\label{lemma79:6}
\varepsilon_\nu\left\|\nabla_t^{\Psi_1^\nu}\alpha_1^\nu\right\|_{L^p(\Sigma)}+\varepsilon_\nu^2\left\|\nabla_t^{\Psi_1^\nu}\nabla_t^{\Psi_1^\nu}\alpha_1^\nu\right\|_{L^p(\Sigma)}\leq c\varepsilon_\nu^{2-\frac 1p}
\end{equation} 
and  by (\ref{lemma79:2}) and (\ref{lemma79:4})  and the commutation formula (\ref{commform})
$$\varepsilon_\nu\left\|\nabla_t^{\Psi_1^\nu}\alpha_1^\nu\right\|_{L^p(\Sigma)}+\varepsilon_\nu^2\left\|\nabla_t^{\Psi_1^\nu}\alpha_1^\nu\right\|_{L^p(\Sigma)}\leq ce^{(\theta(-s)-\theta(s))\delta s}\varepsilon^{2-\frac 1p}_\nu. $$ 
Next, we can derive
\begin{equation*}
F_{A^\nu}+d_{A^\nu}\left(*d_{A^\nu_1}\gamma^\nu-\alpha_0^\nu\right)
+\frac12\left[\left(*d_{A^\nu_1}\gamma^\nu-\alpha_0^\nu\right)\wedge\left(*d_{A^\nu_1}\gamma^\nu-\alpha_0^\nu\right)\right]=0
\end{equation*}
by $\nabla_s$ and we obtain
\begin{equation*}
\begin{split}
d_{A^\nu}*d_{A^\nu}\nabla_s^{\Psi^\nu}\gamma^\nu=&
-\nabla_s^{\Psi^\nu}F_{A^\nu}-d_{A^\nu}\nabla_s^{\Psi^\nu}*\left[\alpha_0^\nu,\gamma^\nu\right]\\
&+\left[\left(\partial_sA^\nu-d_{A^\nu}\Phi^\nu\right),\alpha_0^\nu\right]\\
&-\left[\left(\partial_s A^\nu-d_{A^\nu}\Phi^\nu\right)\wedge * d_{A^\nu}\gamma^\nu\right]
-d_{A^\nu}*\left[\left(\partial_s A^\nu-d_{A^\nu}\Phi^\nu\right),\gamma^\nu\right]\\
&-\left[*\left(\nabla_s^{\Psi^\nu}d_{A^\nu_1}\gamma^\nu\right)\wedge \left(*d_{A^\nu_1}\gamma^\nu-\alpha_0^\nu\right)\right]
\end{split}
\end{equation*}
and thus we can conclude that
\begin{equation*}
\left\|d_{A^\nu}*d_{A^\nu}\nabla_s^{\Psi^\nu}\gamma^\nu\right\|_{L^2(\Sigma)}
\leq c \left\|\nabla_s F_{A^\nu}\right\|_{L^2(\Sigma)}+c \varepsilon_\nu^{2-\frac 1p}e^{\delta s}\leq c \varepsilon_\nu e^{\theta(-s)\delta s}e^{-\theta(s)\delta s}
\end{equation*}
by theorem \ref{flow:thm:apriori22} and hence $\|\nabla_s\alpha_1^\nu\|_{L^p}\leq c\varepsilon_\nu$. In order to prove the other estimates we need to use the definitions of $\Psi_2$ and $\Phi_2$ and to expand the identity. On the one hand,
\begin{equation}\label{flow:dsoaaaa}
\begin{split}
0=& d_{A^\nu_2}^*\left(\partial_t A^\nu_2-d_{A^\nu_2}\Psi^\nu_2\right)\\
=&-d_{A^\nu_2}^*d_{A^\nu_2}\psi_1^\nu
-*\left[\alpha_1^\nu\wedge * \left(\partial_t A^\nu_1-d_{A^\nu_1}\Psi^\nu_1\right)\right]\\
&+d_{A^\nu_2}^*\nabla_t^{\Psi^\nu_1}\alpha^\nu_1+d_{A^\nu_1}^*\left(\partial_t A_1^\nu-d_{A^\nu_1}\Psi^\nu_1\right);
\end{split}
\end{equation}
where the last term can be written in the following way:
\begin{align*}
d_{A^\nu_1}^*&\left(\partial_t A^\nu_1-d_{A^\nu_1}\Psi^\nu_1\right)\\
=&
d_{A^\nu}^*\left(\partial_t A^\nu-d_{A^\nu}\Psi^\nu\right)+d_{A^\nu_1}^*\left(\nabla_t^{\Psi^\nu}\alpha_0^\nu-d_{A^\nu}\psi_0^\nu-[\alpha_0^\nu,\psi_0^\nu]\right)\\
&-*\left[\alpha_0^\nu\wedge *\left(\partial_t A^\nu-d_{A^\nu}\Psi^\nu\right)\right]\\
=&d_{A^\nu}^*\left(\partial_t A^\nu-d_{A^\nu}\Psi^\nu\right)
-*\left[\alpha^\nu\wedge *\left(\nabla_t^{\Psi^\nu_-}\alpha_0^\nu-d_{A^\nu_-}\psi_0^\nu-[\alpha_0^\nu,\psi_0^\nu]\right)\right]\\
&+d_{A^\nu_-}^*\left(-\nabla_t^{\Psi^\nu_-}\alpha_0^\nu+d_{A^\nu_-}\psi_0^\nu-[\alpha_0^\nu,\psi_0^\nu]\right)+*\left[\alpha_0^\nu\wedge*\left(\partial_t A^\nu_--d_{A^\nu_-}\Psi^\nu_-\right)\right]\\
&+*\left[\alpha_0^\nu\wedge *\left(-\nabla_t^{\Psi^\nu_-}\alpha_0^\nu+d_{A^\nu_-}\psi_0^\nu-[\alpha_0^\nu,\psi_0^\nu]\right)\right]\\
&+d_{A^\nu_1}^*\left(-[\psi^\nu,\alpha_0^\nu]+[\alpha^\nu,\psi_0^\nu]\right)\\
&+*\left[\alpha_0^\nu\wedge *\left(\left(\partial_t A^\nu-d_{A^\nu}\Psi^\nu\right)-\left(\partial_t A^\nu_--d_{A^\nu_-}\Psi^\nu_-\right)\right)\right]
\end{align*}
where the second and the third line of the last expression vanish because they can be written as 
$$d_{A^\nu_-}^*\left(\partial_t A^\nu_--d_{A^\nu_-}\Psi^\nu_-\right)
-d_{A_-}^*\left(\partial_t A_--d_{A_-}\Psi_-\right)=0$$
by the condition for the perturbed geodesics (\ref{cond:eqg0}) and the equation for the perturbed Yang-Mills connections (\ref{intro:YMeq2}). Thus
\begin{align*}
d_{A^\nu_1}^*&\left(\partial_t A^\nu_1-d_{A^\nu_1}\Psi^\nu_1\right)\\
=& d_{A^\nu}^*\left(\partial_tA^\nu-d_{A^\nu}\Psi^\nu\right)-*\left[\alpha^\nu\wedge *\left(\nabla_t^{\Psi^\nu_-}\alpha_0^\nu-d_{A^\nu_-}\psi_0^\nu-[\alpha_0^\nu,\psi_0^\nu]\right)\right]\\
&+d_{A^\nu_1}^*\left(-[\psi^\nu,\alpha_0^\nu]+[\alpha^\nu,\psi_0^\nu]\right)\\
&+*\left[\alpha_0^\nu\wedge *\left(\left(\partial_t A^\nu-d_{A^\nu}\Psi^\nu\right)-\left(\partial_t A^\nu_--d_{A^\nu_-}\Psi^\nu_-\right)\right)\right]\\
=:& d_{A^\nu}^*\left(\partial_tA^\nu-d_{A^\nu}\Psi^\nu\right)+D^\nu
\end{align*}
and
\begin{equation}\label{flow:clsdhf}
\left\|d_{A^\nu_1}^*\left(\partial_t A^\nu_1-d_{A^\nu_1}\Psi^\nu_1\right)\right\|_{L^p}\leq c\varepsilon_\nu^{1-\frac 1p}.
\end{equation}
Furthermore for the term $d_{A^\nu_2}^*\nabla_t^{\Psi^\nu_1}\alpha^\nu_1$ we have
\begin{align*}
d_{A^\nu_2}^*\nabla_t^{\Psi^\nu_1}\alpha^\nu_1=&
\nabla_t^{\Psi^\nu}d_{A^\nu}^**d_{A^\nu}\gamma^\nu-*\left[\alpha_1^\nu\wedge *\nabla_t^{\Psi^\nu_1}\alpha_1^\nu\right]-*\left[\left(\partial_t A^\nu_1-d_{A^\nu_1}\Psi^\nu_1\right)\wedge *\alpha_1^\nu\right]\\
=&
*[F_{A^\nu},\nabla_t^{\Psi^\nu}\gamma^\nu]+*[\nabla_t^{\Psi^\nu}F_{A^\nu},\gamma^\nu]-*\left[\alpha_1^\nu\wedge *\nabla_t^{\Psi^\nu_1}\alpha_1^\nu\right]\\
&-*\left[\left(\partial_t A^\nu_1-d_{A^\nu_1}\Psi^\nu_1\right)\wedge *\alpha_1^\nu\right].
\end{align*}
Therefore, estimating term by term (\ref{flow:dsoaaaa}) we obtain
\begin{equation}\label{lemma79:5}
\left\| d_{A_1^\nu}^*d_{A_1^\nu}\psi_1^\nu \right\|_{L^p(\Sigma)}\leq c\varepsilon_\nu^{1-\frac 1p} \quad \textrm{or}\quad \left\| d_{A_1^\nu}^*d_{A_1^\nu}\psi_1^\nu \right\|_{L^p(\Sigma)}\leq ce^{(\theta(-s)-\theta(s))\delta s}\varepsilon_\nu^{1-\frac 1p}
\end{equation}
and hence by (\ref{flow:est:surj:bo2}), (\ref{flow:est:914}), (\ref{lemma79:6}) and (\ref{lemma79:5})
$$\|\partial_t A_2^\nu-d_{A_2^\nu}\Psi_2^\nu\|_{L^p(\Sigma)}\leq c.$$
Analogously, deriving $d_{A_2^\nu}^*\left(\partial_tA_2^\nu-d_{A_2^\nu}\Psi_2^\nu\right)$ by $\nabla_t^{\Psi^\nu_2}$ we can obtain $\varepsilon_\nu^2\left\|\nabla_t^{\Psi^\nu_2}\psi^\nu\right\|_{L^p}\leq \varepsilon^{2-\frac 1p}_\nu$ using 
\begin{align*}
\left\|\nabla_t^{\Psi^\nu}d_{A^\nu}^*\left(\partial_t A^\nu-d_{A^\nu}\Psi^\nu\right)\right\|_{L^p(\Sigma)}\leq& \left\|d_{A^\nu}^*\nabla_t^{\Psi^\nu}\left(\partial_t A^\nu-d_{A^\nu}\Psi^\nu\right)\right\|_{L^p(\Sigma)}\\
= &\left\|d_A^*\left(\partial_s A^\nu-d_{A^\nu}\Phi^\nu\right)\right\|_{L^p(\Sigma)}\\
\leq& ce^{(\theta(-s)-\theta(s))\delta s}\varepsilon_\nu
\end{align*}
by the commutation formula (\ref{commform2}), by the identities
$$\left[\left(\partial_s A^\nu-d_{A^\nu}\Phi^\nu\right)\wedge*\left(\partial_s A^\nu-d_{A^\nu}\Phi^\nu\right)\right]=0, \quad \left[F_{ A^\nu},*F_{A^\nu}\right]=0,$$
by the Yang-Mills flow equation (\ref{flow:eq}) and by the theorem \ref{flow:thm:apriori22}. On the other hand, since $\Xi^\nu$ is a Yang-Mills flow
and $d_{A_\nu^2}^*\left(\partial_s A^\nu_2-d_{A^\nu_2}\Phi^\nu_2\right)=0$, we can estimate
$$\left\|d_{A_\nu}^*\left(\partial_s A^\nu-d_{A^\nu}\Phi^\nu\right)-d_{A_\nu^2}^*\left(\partial_s A^\nu_2-d_{A^\nu_2}\Phi^\nu_2\right)\right\|_{L^p(\Sigma)}\leq ce^{(\theta(-s)-\theta(s))\delta s}\varepsilon_\nu$$
by the theorem \ref{flow:thm:apriori22} and the estimates computed so far; thus
\begin{equation*}
\begin{split}
d_{A^\nu_2}^*d_{A^\nu_2}\phi_1^\nu= &-d_{A_\nu}^*\left(\partial_s A^\nu-d_{A^\nu}\Phi^\nu\right)+d_{A_\nu^2}^*\left(\partial_s A^\nu_2-d_{A^\nu_2}\Phi^\nu_2\right)\\
&-*\left[\left(\alpha_0^\nu+\alpha_1^\nu\right)\wedge* \left(\partial_s A^\nu-d_{A^\nu}\Phi^\nu\right)\right]+d_{A^\nu_2}^*\nabla_s^{\Phi^\nu}\alpha_1^\nu\\
=&-d_{A_\nu}^*\left(\partial_s A^\nu-d_{A^\nu}\Phi^\nu\right)\\
&-*\left[\left(\alpha_0^\nu+\alpha_1^\nu\right)\wedge* \left(\partial_s A^\nu-d_{A^\nu}\Phi^\nu\right)\right]\\
&+d_{A^\nu_1}^*[\psi_0,\alpha_1]+\left[\alpha_1^\nu\wedge*\nabla_s^{\Phi^\nu}\alpha_1^\nu\right]\\
&+\nabla_s^{\Phi^\nu_1}*\left[F_{A_1^\nu},\gamma^\nu\right]
+*\left[\left(\partial_sA_1^\nu-d_{A_1^\nu}\Phi_1^\nu\right)\wedge*\alpha_1^\nu\right]
\end{split}
\end{equation*}
and this implies that $\|d_A\phi^\nu_1\|_{L^p}\leq c\varepsilon^{1-\frac 1p}$, $\|d_A\phi^\nu_1\|_{L^p}\leq ce^{(\theta(-s)-\theta(s))\delta s}\varepsilon^{1-\frac 1p}$ for $|s|\geq2$ and $\|\partial_sA_2^\nu-d_{A_2^\nu}\Phi_2^\nu\|_{L^p(\Sigma)}\leq c e^{(\theta(-s)-\theta(s))\delta s}$. 
\end{proof}

\noindent Weber proved the following theorem (cf. \cite{Weberhab}, theorem 1.12)

\begin{theorem}[Implicit function theorem]\label{flow:thm:implfunctthm}
Fix a perturbation $H:\mathcal{LM} \to \mathbb R$ that satisfies (\ref{flow:intro:cond}). Assume $E^H$ is Morse and that $\mathcal D_u^0$ is onto for every $u\in \mathcal M^0(x_-,x_+; H)$ and every pair $x_\pm\in \mathrm{Crit}_{E^H}^b$. Fix two critical points $x_\pm\in  \mathrm{Crit}_{E^H}^b$ with Morse index difference one. Then, for all $c_0>0$ and $p>2$, there exist positive constants $\delta_0$ and $c$ such that the following holds. If $u:\mathbb R\times S^1\to M$ is a smooth map such that $\lim_{s\to\pm\infty}u(s,\cdot)=x_\pm(\cdot)$ exists, uniformly in $t$, and such that
\begin{equation}\label{flow:ift:eq1}
|\partial_s u(s,t)|\leq \frac{c_0}{1+s^2},\quad |\partial_t u(s,t)|\leq c_0
,\quad |\nabla_t\partial_t u(s,t)|\leq c_0
\end{equation}
for all $(s,t)\in \mathbb R\times S^1$ and
\begin{equation}
\left\|\partial_s u-\nabla_t\partial_t u-\mathrm{grad}H(u)\right\|_{L^p}\leq \delta_0.
\end{equation}
Then there exist elements $u_0\in \mathcal M^0(x_-,x_+;H)$ and $\xi\in \textrm{im }(\mathcal D_{u_0}^0)^*\cap \mathcal W_{u_0}$ satisfying\footnote{The norm $ \|\xi\|_{\mathcal W_{u_0}}$ sums the the norms $ \|\xi\|_{L^p}$, $\|\nabla_t\xi\|_{L^p}$,$ \|\nabla_t\nabla_t\xi\|_{L^p}$,$ \|\nabla_s\xi\|_{L^p}$ and the space $\mathcal W_{u_0}$ is completion of the space of smooth, compactly supported, vector fields along $u_0$ respect to the norm $\|\cdot\|_{\mathcal W_{u_0}}$.}

\begin{equation}
u=\exp_{u_0}(\xi),\quad \|\xi\|_{\mathcal W_{u_0}}\leq c\|\partial_s u-\nabla_t\partial_t u-\mathrm{grad} H(u)\|_{L^p}.
\end{equation}

\end{theorem}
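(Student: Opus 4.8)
The plan is to prove Theorem \ref{flow:thm:implfunctthm} by the standard Newton–Picard scheme for a quantitative implicit function theorem, with the given map $u$ playing the role of an approximate flow line of the $L^2$-gradient (heat) flow of $E^H$ on the loop space. Write $\mathcal{F}$ for the section $w\mapsto \partial_s w-\nabla_t\partial_t w-\mathrm{grad}\,H(w)$ over the completed path space, and for a vector field $\xi$ along $u$ let $\mathcal{F}_u(\xi)$ be the value of $\mathcal{F}$ at $\exp_u(\xi)$, parallel-transported back along the connecting geodesic to a section along $u$. One has the Taylor expansion $\mathcal{F}_u(\xi)=\mathcal{F}_u(0)+\mathcal{D}_u^0\,\xi+N_u(\xi)$, where $\mathcal{D}_u^0$ is the linearised heat flow operator and $N_u$ gathers the quadratic and higher-order terms. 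The aim is to solve $\mathcal{F}_u(\xi)=0$ in the complement of $\ker\mathcal{D}_u^0$, obtain a genuine solution $\exp_u(\xi)$ nearby, and then re-normalise so that $u$ itself is written as $\exp_{u_0}(\xi)$ with $\xi\in\mathrm{im}(\mathcal{D}_{u_0}^0)^*$.

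First I would establish the quadratic estimate for the nonlinearity: using the Sobolev embedding $\mathcal{W}_u\hookrightarrow C^0$ (valid for $p>2$ on $\mathbb{R}\times S^1$) together with the a priori bounds (\ref{flow:ift:eq1}) on $u$, which control the geometry of $M$ along $u$ and the curvature-type terms entering $N_u$, one gets
\begin{equation*}
\|N_u(\xi)-N_u(\eta)\|_{L^p}\le c\left(\|\xi\|_{\mathcal{W}_u}+\|\eta\|_{\mathcal{W}_u}\right)\|\xi-\eta\|_{\mathcal{W}_u}
\end{equation*}
whenever $\|\xi\|_{\mathcal{W}_u},\|\eta\|_{\mathcal{W}_u}\le 1$; in particular $N_u(0)=0$ and $DN_u(0)=0$.

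The decisive ingredient is a uniform linear estimate: there exist $\delta_1>0$ and $c_1>0$ such that whenever $u$ satisfies (\ref{flow:ift:eq1}), $\lim_{s\to\pm\infty}u=x_\pm$, and $\|\mathcal{F}_u(0)\|_{L^p}\le\delta_1$, the operator $\mathcal{D}_u^0$ admits a right inverse $Q_u$ with $\|Q_u\|\le c_1$ (equivalently $\|(\mathcal{D}_u^0)^*\eta\|_{L^p}\ge c_1^{-1}\|\eta\|$ on the relevant dual space, so that $Q_u=(\mathcal{D}_u^0)^*(\mathcal{D}_u^0(\mathcal{D}_u^0)^*)^{-1}$). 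I would prove this by contradiction and compactness: a sequence $u_\nu$ with $\|\mathcal{F}_{u_\nu}(0)\|_{L^p}\to 0$, uniform bounds (\ref{flow:ift:eq1}), and $\|(\mathcal{D}_{u_\nu}^0)^*\eta_\nu\|_{L^p}\to 0$ with $\|\eta_\nu\|=1$ would, by parabolic regularity (local $L^p$-estimates for the heat flow) and the $s^{-2}$-decay of $\partial_s u_\nu$, which together with the energy bound prevents breaking, subconverge in $C^\infty_{loc}$ to a genuine flow line $u_\infty\in\mathcal{M}^0(x_-,x_+;H)$ connecting the same critical points; since the index difference is one and $E^H$ is Morse–Smale, $\mathcal{D}_{u_\infty}^0$ is onto with one-dimensional kernel spanned by $\partial_s u_\infty$, so $(\mathcal{D}_{u_\infty}^0)^*$ is injective with a lower bound, and carrying this estimate to the limit of the $\eta_\nu$ contradicts $\|(\mathcal{D}_{u_\nu}^0)^*\eta_\nu\|\to 0$. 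This compactness step is the main obstacle: one must rule out breaking and loss of mass at the ends and identify the limit as a flow line between exactly $x_-$ and $x_+$, which is precisely where the integrable decay hypothesis on $\partial_s u$ in (\ref{flow:ift:eq1}) is essential.

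With these two estimates in hand I would run the Newton iteration $\xi_0:=0$, $\xi_{k+1}:=\xi_k-Q_u\mathcal{F}_u(\xi_k)$. A routine induction, using $\|\mathcal{F}_u(0)\|_{L^p}\le\delta_0$ and choosing $\delta_0$ small enough that $2c_1\delta_0$ lies below the threshold of the quadratic estimate, gives $\|\xi_{k+1}-\xi_k\|_{\mathcal{W}_u}\le 2^{-k}\|\xi_1\|_{\mathcal{W}_u}$ and $\|\xi_k\|_{\mathcal{W}_u}\le 2c_1\|\mathcal{F}_u(0)\|_{L^p}$, so $\xi_k\to\xi_\infty$ in $\mathcal{W}_u$ with $\mathcal{F}_u(\xi_\infty)=0$ and $\|\xi_\infty\|_{\mathcal{W}_u}\le c\,\|\partial_s u-\nabla_t\partial_t u-\mathrm{grad}\,H(u)\|_{L^p}$. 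The map $v_0:=\exp_u(\xi_\infty)$ is then a genuine heat flow whose asymptotics are $x_\pm$ (the decay of $\xi_\infty$ together with that of $\partial_s u$ forces $\lim_{s\to\pm\infty}v_0=x_\pm$), hence $v_0\in\mathcal{M}^0(x_-,x_+;H)$. Finally, writing $u=\exp_{v_0}(\eta)$ with $\eta$ the vector field obtained by reversing the geodesic, $\|\eta\|_{\mathcal{W}_{v_0}}\le c\|\xi_\infty\|_{\mathcal{W}_u}$, I would remove the component of $\eta$ along $\ker\mathcal{D}_{v_0}^0=\mathbb{R}\,\partial_s v_0$ by a small time-shift $v_0\mapsto v_0(\cdot+\sigma)=:u_0$, the shift $\sigma$ being determined by a one-dimensional implicit function argument exactly as in the proof of Theorem \ref{flow:thm:timeshift}. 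This produces $u_0\in\mathcal{M}^0(x_-,x_+;H)$ and $\xi\in\mathrm{im}(\mathcal{D}_{u_0}^0)^*\cap\mathcal{W}_{u_0}$ with $u=\exp_{u_0}(\xi)$ and $\|\xi\|_{\mathcal{W}_{u_0}}\le c\,\|\partial_s u-\nabla_t\partial_t u-\mathrm{grad}\,H(u)\|_{L^p}$, which is the assertion of the theorem.
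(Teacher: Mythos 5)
The paper does not prove this statement at all: it is quoted verbatim as Theorem 1.12 of Weber's habilitation thesis \cite{Weberhab} (see the sentence immediately preceding the theorem, ``Weber proved the following theorem''), and the only thing the author adds is the remark that the third bound in (\ref{flow:ift:eq1}) is redundant under the other hypotheses. So there is no in-paper proof to compare against; what you have written is a reconstruction of the argument behind the cited result.

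As a reconstruction it follows the standard and, as far as I can tell, correct route — the same Newton--Picard scheme with a quadratic estimate for the nonlinearity and a uniform right inverse for $\mathcal D_u^0$ at approximate solutions that Weber (and Salamon--Weber) actually use. Two remarks on where the weight of the proof really sits, which your sketch acknowledges but does not discharge. First, in the compactness argument for the uniform linear estimate you must also prevent the normalized cokernel elements $\eta_\nu$ from losing mass at the ends $s\to\pm\infty$; this is handled by the fact that near the ends the operator converges to the (invertible, since $E^H$ is Morse) asymptotic operator at $x_\pm$, which gives an estimate on $\{|s|\ge S\}$ independent of $\nu$ — without this the limit of the $\eta_\nu$ could be zero and no contradiction results. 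Second, the identification of the $C^\infty_{\mathrm{loc}}$-limit $u_\infty$ as an unbroken trajectory from $x_-$ to $x_+$ uses both the uniform $c_0/(1+s^2)$ decay (to confine the energy) and the index-one hypothesis together with Morse--Smale transversality (to exclude broken limits), exactly as you indicate. With those points filled in, the iteration, the passage to the genuine solution $v_0=\exp_u(\xi_\infty)$, and the final time-shift to arrange $\xi\in\mathrm{im}\,(\mathcal D_{u_0}^0)^*$ are routine. In short: the proposal is a sound outline of the external proof the paper leans on, not an alternative to anything done in the paper itself.
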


\begin{remark}
The third condition of  (\ref{flow:ift:eq1}) follows from the first one and, for a positive constant $c_1$,
$$|\partial_s u-\nabla_t\partial_t u-\mathrm{grad} H(u)|\leq c_1;$$
therefore, in our case all the assumptions are satisfied by the second step and by the lemma \ref{flow:lemmasurj1}.
\end{remark}

\noindent {\bf Step 3.} We choose $p>4$. There are $\varepsilon_0, c>0$ such that the following holds. If $\varepsilon_\nu<\varepsilon_0$, then there is a smooth map 
$A^\nu_3:\mathbb R^2\to \mathcal A_0(P)$ such that $[A^\nu_3]\in \mathcal M^0(\Xi_-,\Xi_+)$,
\begin{equation}\label{flow:surj:step3:cond11}
d_{A^\nu_3}^*\left( A_3^\nu-A_2^\nu\right)=0,
\end{equation}
\begin{equation}
\left\|A^\nu_3-A^\nu_2\right\|_{L^p}+\left\|A^\nu_3-A^\nu_2\right\|_{L^\infty}\leq c\varepsilon_\nu^{1-\frac1p}
\end{equation}
\begin{equation}
\left\|\left(\partial_t A^\nu_3-d_{A^\nu_3}\Psi_3^\nu\right)-\left(\partial_t A^\nu_2-d_{A_2^\nu }\Psi_2^\nu\right)\right\|_{L^p}\leq c\varepsilon_\nu^{1-\frac1p }
\end{equation}
\begin{equation}
\left\|\left(\partial_s A^\nu_3-d_{A^\nu_3}\Phi_3^\nu\right)-\left(\partial_s A^\nu_2-d_{A_2^\nu }\Phi_2^\nu\right)\right\|_{L^p}\leq c\varepsilon_\nu^{1-\frac1p }
\end{equation}
where $\Psi_3^\nu$ and $\Phi_3^\nu$ are defined uniquely by $$d_{A^\nu_3}^*\left(\partial_t A^\nu_3-d_{A^\nu_3}\Psi_3^\nu\right)=0\textrm{ and }d_{A^\nu_3}^*\left(\partial_s A^\nu_3-d_{A^\nu_3}\Phi_3^\nu\right)=0.$$

\begin{proof}[Proof of step 3.]
The third step follows directly from the theorem \ref{flow:thm:implfunctthm}. The condition (\ref{flow:surj:step3:cond11}) can be reached using the local slice theorem (theorem 8.1 in \cite{MR2030823}).
\end{proof}


\noindent {\bf Step 4.} We choose $p>4$. There are $\varepsilon_0, c>0$ such that the following holds. If $\varepsilon_\nu<\varepsilon_0$, then there is a smooth map 
$A^\nu_4:\mathbb R^2\to \mathcal A_0(P)$ such that $[A^\nu_4]\in \mathcal M^0(\Xi_-,\Xi_+)$,
\begin{equation}\label{flow:surj:step4:cond1}
d_{A^\nu_4}^*\left( A_4^\nu-A^\nu_1\right)=0,
\end{equation}
\begin{equation}
\left\|A^\nu_4-A^\nu_1\right\|_{L^p}+\left\|A^\nu_4-A^\nu_1\right\|_{L^\infty}\leq c\varepsilon_\nu^{1-\frac1p}
\end{equation}
\begin{equation}\label{flow:surj:step4:eq2d}
\left\|\left(\partial_t A^\nu_4-d_{A^\nu_4}\Psi_4^\nu\right)-\left(\partial_t A^\nu_1-d_{A_1^\nu }\Psi_1^\nu\right)\right\|_{L^p}\leq c\varepsilon_\nu^{1-\frac1p }
\end{equation}
\begin{equation}\label{flow:surj:step4:eq2}
\left\|\left(\partial_s A^\nu_4-d_{A^\nu_4}\Phi_4^\nu\right)-\left(\partial_s A^\nu_1-d_{A_1^\nu }\Phi_1^\nu\right)\right\|_{L^p}\leq c\varepsilon_\nu^{1-\frac1p }
\end{equation}
where $\Psi_4^\nu$ and $\Phi_4^\nu$ are defined uniquely by 
$$d_{A^\nu_4}^*\left(\partial_t A^\nu_4-d_{A^\nu_4}\Psi_4^\nu\right)=0\textrm{ and }d_{A^\nu_4}^*\left(\partial_s A^\nu_4-d_{A^\nu_4}\Phi_4^\nu\right)=0.$$

\begin{proof}[Proof of step 4.]
By the previous two steps and the lemma \ref{flow:lemmasurj1} we can conclude that
\begin{equation*}
\left\|A^\nu_3-A^\nu_1\right\|_{L^p}+\left\|A^\nu_3-A^\nu_1\right\|_{L^\infty}\leq c\varepsilon_\nu^{1-\frac1p},
\end{equation*}
\begin{equation*}
\left\|\left(\partial_s A^\nu_3-d_{A^\nu_3}\Phi_3^\nu\right)-\left(\partial_s A^\nu_1-d_{A_1^\nu }\Phi_1^\nu\right)\right\|_{L^p}\leq c\varepsilon_\nu^{1-\frac1p }.
\end{equation*}
Since
\begin{equation*}
\begin{split}
d_{A^\nu_3}^*\left( A_3^\nu-A^\nu_1\right)=&d_{A^\nu_3}^*\left( A_3^\nu-A^\nu_2\right)+d_{A^\nu_3}^*\alpha_1^\nu\\
=&*d_{A^\nu}d_{A^\nu }\gamma^\nu-*[(A^\nu_3-A^\nu_2)\wedge \alpha_1^\nu],
\end{split}
\end{equation*}
\begin{equation*}
d_{A^\nu}\left(A^\nu_1-A^\nu_2\right)=F_{A_1^\nu}-\frac 12\left[\left(A^\nu_1-A_3^\nu\right)\wedge\left(A^\nu_1-A^\nu_3\right)\right]
\end{equation*}
hold, we obtain
\begin{equation*}
\left\|d_{A^\nu_3}^*\left( A_3^\nu-A^\nu_1\right)\right\|_{L^p(\Sigma)}+\varepsilon_\nu\left\|d_{A^\nu_3}\left( A_3^\nu-A^\nu_1\right)\right\|_{L^p(\Sigma)}\leq c \varepsilon_\nu^{3-\frac2p }.
\end{equation*}
Thus, by the local gauge theorem there are maps $g_\nu:\mathbb R^2\to \mathcal G_0^{2,p}(P)$ such that
\begin{equation*}
d_{A^\nu_1}^*\left( g_\nu^*A_3^\nu-A^\nu_1\right)=0,\quad
\left\|g_\nu^*A_3^\nu-A^\nu_1\right\|_{W^{1,p}(\Sigma) }\leq c\left\|A_3^\nu-A^\nu_1\right\|_{W^{1,p}(\Sigma) };
\end{equation*}
then we conclude the proof of the fourth step defining $A^\nu_4:=g_\nu^*A^\nu_3$.
\end{proof}

\noindent {\bf Step 5.} For two positive constants $c$, $\varepsilon_0$, $0<\varepsilon<\varepsilon_0$, $\Xi^\nu_4:=A^\nu_4+\Psi^\nu_4dt+\Phi^\nu_4ds\in \mathcal M^0\left(\Xi_-,\Xi_+\right)$ satisfies
\begin{equation}\label{setpdsp0}
\left\|\left(1-\pi_{A_4^\nu}\right)\left(\Xi^\nu_1-\Xi^\nu_4\right)\right\|_{\Xi_4^\nu,1,p,\varepsilon_\nu}+\varepsilon_\nu\|d_{A_4^\nu}\nabla_t^{\Psi_4^\nu}(\Psi_1^\nu-\Psi_4^\nu)\|\leq c \varepsilon_\nu^{2-\frac 2p},
\end{equation}
\begin{equation}
\left\|\pi_{A_4^\nu}\left(A^\nu_1-A^\nu_4\right)\right\|_{\Xi_4^\nu,1,p,1}\leq c \varepsilon_\nu^{1-\frac 1p}.
\end{equation}

\begin{proof}[Proof of step 5.] Since $d_{A_4^\nu}^*(A_1^\nu-A_4^\nu)=0$ and 
\begin{equation*}
d_{A^\nu_4}\left(A^\nu_1-A^\nu_4\right)=F_{A^\nu_1}-\frac 12 \left[\left(A^\nu_1-A^\nu_4\right)\wedge \left(A^\nu_1-A^\nu_4\right)\right],
\end{equation*}
by lemma \ref{lemma76dt94}
\begin{equation*}
\left |\left|(1-\pi_{A_4^\nu})\left(A^\nu_1-A^\nu_4\right) \right|\right | _{L^p}+\left |\left|d_{A^\nu_4}\left(A^\nu_1-A^\nu_4\right) \right|\right | _{L^p}\leq c\varepsilon_\nu^{2-\frac 2p }.
\end{equation*}
By (\ref{flow:surj:step4:cond1}) we have
\begin{equation*}
\begin{split}
d_{A_4^\nu}^*\nabla_t^{\Psi_4^\nu}\left(A^\nu_1-A^\nu_4\right)=&*\left[\left(\partial_t A_4^\nu-d_{A^\nu_4}\Psi^\nu_4\right)\wedge * \left(A^\nu_1-A^\nu_4\right)\right]\\
d_{A_4^\nu}^*\nabla_s^{\Phi_4^\nu}\left(A^\nu_1-A^\nu_4\right)=&*\left[\left(\partial_s A_4^\nu-d_{A^\nu_4}\Psi^\nu_4\right)\wedge * \left(A^\nu_1-A^\nu_4\right)\right],
\end{split}
\end{equation*}
and by the properties and definitions of the connections
$$
d_{A_4^\nu}^*\left(\partial_t A_{4}^\nu-d_{A_4^\nu}\Psi_4^\nu\right)=0, \quad d_{A_4^\nu}^*\left(\partial_s A_{4}^\nu-d_{A_4^\nu}\Phi_4^\nu\right)=0, $$
$$d_{A_1^\nu}^*\left(\partial_t A_{1}^\nu-d_{A_1^\nu}\Psi_1^\nu\right)=d_{A^\nu}^*\left(\partial_tA^\nu+d_{A^\nu}{\Psi^\nu}\right)+D^\nu,$$
$$d_{A_1^\nu}^*\left(\partial_s A_{1}^\nu-d_{A_1^\nu}\Phi_1^\nu\right)=d_{A^\nu}^*\left(\partial_sA^\nu+d_{A^\nu}{\Phi^\nu}\right)-\left[\alpha_0^\nu,\left(\partial_s A^\nu-d_{A^\nu}\Phi^\nu\right)\right],$$
where $D^\nu$ is defined in the proof of the lemma \ref{flow:lemmasurj1}. Hence we have
\begin{equation}\label{flow:surj:step5:es3}
\begin{split}
d_{A_4^\nu}^*d_{A_4^\nu}\left(\Psi_4^\nu-\Psi_1^\nu\right)
=&d_{A_1^\nu}^*\left(\partial_t A_{1}^\nu-d_{A_1^\nu}\Psi_1^\nu\right)\\
&+*\left[\left(A^\nu_1-A^\nu_4\right)\wedge *\left(\partial_t A_{1}^\nu-d_{A_1^\nu}\Psi_1^\nu\right)\right]\\
&-d_{A_4^\nu}^*\left(\nabla_t^{\Psi_4^\nu} \left(A^\nu_1-A^\nu_4\right)-\left[\left(A^\nu_1-A^\nu_4\right),\left(\Psi_1^\nu-\Psi_4^\nu\right)\right]\right)\\
=&d_{A_1^\nu}^*\left(\partial_t A_{1}^\nu-d_{A_1^\nu}\Psi_1^\nu\right)\\
&+*\left[\left(A^\nu_1-A^\nu_4\right)\wedge *\left(\partial_t A_{1}^\nu-d_{A_1^\nu}\Psi_1^\nu\right)\right]\\
&-*\left[\left(\partial_t A_4^\nu-d_{A^\nu_4}\Psi^\nu_4\right)\wedge * \left(A^\nu_1-A^\nu_4\right)\right]\\
&+*\left[*\left(A^\nu_1-A^\nu_4\right)\wedge d_{A_4^\nu}\left(\Psi_1^\nu-\Psi_4^\nu\right)\right],
\end{split}
\end{equation}
\begin{equation}\label{flow:surj:step5:esfgd}
\begin{split}
d_{A_4^\nu}^*d_{A_4^\nu}\left(\Phi_4^\nu-\Phi_1^\nu\right)
=&d_{A_1^\nu}^*\left(\partial_s A_{1}^\nu-d_{A_1^\nu}\Phi_1^\nu\right)\\
&+*\left[\left(A^\nu_1-A^\nu_4\right)\wedge *\left(\partial_s A^\nu-d_{A^\nu}\Phi^\nu\right)\right]\\
&-*\left[\left(\partial_s A_4^\nu-d_{A^\nu_4}\Phi^\nu_4\right)\wedge * \left(A^\nu_1-A^\nu_4\right)\right]\\
&+*\left[*\left(A^\nu_1-A^\nu_4\right)\wedge d_{A_4^\nu}\left(\Phi_1^\nu-\Phi_4^\nu\right)\right]
\end{split}
\end{equation}
and thus by the first step, (\ref{flow:clsdhf}), the a priori estimates for a geodesics flow (\ref{flow:web:w1})-(\ref{flow:web:w1}) and the a priori estimates of the theorem \ref{flow:thm:apriori22}, we obtain
\begin{equation*}
\left\| d_{A_4^\nu}^*d_{A_4^\nu}\left(\Psi_4^\nu-\Psi_1^\nu\right)\right\|_{L^p}+\left\| d_{A_4^\nu}^*d_{A_4^\nu}\left(\Phi_4^\nu-\Phi_1^\nu\right)\right\|_{L^p}\leq c\varepsilon_\nu^{1-\frac 1p }
\end{equation*}
By the lemma \ref{lemma76dt94}, the estimates (\ref{flow:surj:step4:eq2d}), (\ref{flow:surj:step4:eq2}) and the triangular inequality, we have also that
$$\varepsilon_\nu\left\| \Psi_4^\nu-\Psi_1^\nu\right\|_{L^p}+\varepsilon_\nu\left\| \Phi_4^\nu-\Phi_1^\nu\right\|_{L^p}\leq c\varepsilon_\nu^{2-\frac 1p },$$
$$\varepsilon_\nu\left\| d_{A_4^\nu}\left(\Psi_4^\nu-\Psi_1^\nu\right)\right\|_{L^p}+\varepsilon_\nu\left\| d_{A_4^\nu}\left(\Phi_4^\nu-\Phi_1^\nu\right)\right\|_{L^p}\leq c\varepsilon_\nu^{2-\frac 1p },$$
$$\varepsilon_\nu\left\| \nabla_t^{\Psi_4^\nu}\left(A_4^\nu-A_1^\nu\right)\right\|_{L^p}+\varepsilon_\nu\left\| \nabla_s^{\Psi_4^\nu}\left(A_4^\nu-A_1^\nu\right)\right\|_{L^p}\leq c\varepsilon_\nu^{2-\frac 1p }.$$
Furthermore, deriving by $\nabla_s^{\Phi_4^\nu}$ and by $\nabla_t^{\Psi_4^\nu}$ the identities (\ref{flow:surj:step5:es3}) and (\ref{flow:surj:step5:esfgd}), we can obtain the other estimates needed for (\ref{setpdsp0}).
\end{proof}

\noindent {\bf Step 6.} We choose $p>10$. Then there are $\varepsilon_0, c >0$ such that the following holds. There are two sequences $g_{\nu}\in \mathcal G_0^{2,p}(P\times S^1\times \mathbb R)$ and $s_\nu\in \mathbb R$ such that $\Xi^\nu_5:=g_\nu^*\Xi_4^\nu(t,s+s_\nu)=A_5^\nu+\Psi_5^\nu dt+\Phi_5^\nu ds$ satisfy
\begin{equation}
d_{\Xi_5^\nu }^{*_{\varepsilon_\nu} }\left(\Xi^\nu-\mathcal K_2^{\varepsilon_\nu}\left(\Xi_5^\nu\right)\right)=0,\quad \Xi^\varepsilon-\mathcal K_2^{\varepsilon_\nu}(\Xi_5^\nu) \in \textrm{im } \mathcal D^{\varepsilon_\nu}\left(\mathcal K_2^{\varepsilon_\nu}\left(\Xi_5^\nu\right)\right)^*
\end{equation}
\begin{equation}
\left\|\left(1-\pi_{A_5^\nu}\right)\left(\Xi^\nu_1-\Xi^\nu_5\right)\right\|_{\Xi_5^\nu,1,p,\varepsilon_\nu}\leq c \varepsilon_\nu^{2-\frac 2p},
\end{equation}
\begin{equation}
\left\|\pi_{A_5^\nu}\left(A^\nu_1-A^\nu_5\right)\right\|_{\Xi_5^\nu,1,p,1}\leq c \varepsilon_\nu^{1-\frac 1p}.
\end{equation}

\begin{remark}
In the sixth step we use the connection $\mathcal K_2(\Xi^\nu_4)$ introduced in the section \ref{flow:section:firstapprox}; the definition of the $1$-form $\alpha_0^\varepsilon+\psi_0^\varepsilon dt$ in that section is not the same as (\ref{flow:surj:dsiabp}) even is we consider that this holds. In fact, one can replace the definition (\ref{flow:k2:eqq1}) by
{\small \begin{equation*}
\begin{split}
\alpha_0^\varepsilon(s)+\psi_0^\varepsilon(s) dt:=&\theta(-s) (h(s)g(s))^{-1}(\mathcal T^{\varepsilon,b }(A_-+\Psi_- dt)-(A_-+\Psi_- dt))h(s)g(s)\\
&+\theta(s) (h(s)g(s))^{-1}(\mathcal T^{\varepsilon,b }(A_++ \Psi_+ dt)-(A_++\Psi_+ dt))h(s)g(s),
\end{split}
\end{equation*}}
where $g(s)$ is defined as in (\ref{flow:k2:eqq2}) and $h$ by $h^{-1}\partial_sh=g(\Phi^\nu-\Phi_4^\nu)g^{-1}$. In this case, 
$(hg)^{-1}\partial_s(hg)=g^{-1}(h^{-1}\partial_sh)g+g^{-1}\partial_sg=\Psi^\nu$. With this change all the theorems proved for $\mathcal K_2^\varepsilon$ continues to hold.
\end{remark}

\begin{proof}[Proof of step 6.]
Step 5 and the theorem \ref{flow:lemma:firstappr} tells us that
\begin{equation*}
\left\| \Xi^\nu-\mathcal K_2(\Xi^\nu_4)\right\|_{\Xi_4^\nu,1,p,\varepsilon_\nu}
\leq c\varepsilon_\nu^{1-\frac3p }\leq \delta \varepsilon_\nu^{1-\frac 4p},
\end{equation*}
\begin{equation*}
\varepsilon^2\left\| \nabla_s^{\Phi_4^\nu}\left(\Xi^\nu-\mathcal K _2(\Xi^\nu_4)\right)\right\|_{0,p,\varepsilon_\nu}\leq c \varepsilon_\nu^{2-\frac 3p}\leq c\varepsilon_\nu^{1+\frac 7p }
\end{equation*}
for $c\varepsilon^{\frac 1p}\leq \delta$ where $\delta$ is given by the theorem \ref{flow:thm:timeshift}. Then by theorem \ref{flow:thm:timeshift}, there is a sequence $g_\nu\in \mathcal G^{2,p}_0(P\times S^1\times \mathbb R)$, $\sigma_\nu\in \mathbb R$ such that
\begin{equation*}
d_{\Xi_4^\nu }^{*_\varepsilon}(g_\nu^*(\Xi^\nu\circ\rho_{\sigma_\nu})-\mathcal K_2(\Xi^\nu_4))=0,
\end{equation*}
\begin{equation*}
g_\nu^*(\Xi^\nu\circ\rho_{\sigma_\nu})-\mathcal K_2(\Xi^\nu_4) \in \textrm{im }\left(\mathcal D^\varepsilon(\mathcal K_2(\Xi_4^\nu))\right)^*.
\end{equation*}
We define $\Xi_5^\nu$ by $(g_{\nu}^{-1})^*\Xi_4^\nu\circ \rho_{-\sigma_\nu}$. By the step 5, the theorem \ref{flow:thm:timeshift} and the triangular inequality we have
\begin{equation*}
\left\|\Xi^\nu_1-\Xi^\nu_5\right\|_{\Xi_5^\nu,1,p,\varepsilon_\nu}\leq c \varepsilon_\nu^{1-\frac 2p},
\end{equation*}

\begin{equation*}
\varepsilon_\nu\left\|\nabla_t^{\Psi_5^\nu}(A^\nu_1-A^\nu_5)\right\|_{L^p}+ \varepsilon_\nu^2\left \|\nabla_t^{\Psi_5^\nu}\left(\Psi^\nu_1-\Psi_5^\nu\right)\right \|_{L^p}+\varepsilon_\nu^3\left \|\nabla_t^{\Psi_5^\nu}\left(\Phi^\nu_1-\Phi_5^\nu\right)\right\|_{L^p}\leq c \varepsilon_\nu^{2-\frac 2p},
\end{equation*}

\begin{equation*}
\varepsilon_\nu^3\left \|\nabla_t^{\Psi_5^\nu}\left(\Phi^\nu_1-\Phi_5^\nu\right)\right\|_{L^p}+\varepsilon_\nu^2\left\|\nabla_s^{\Phi_5^\nu}(A^\nu_1-A^\nu_5)\right\|_{L^p}\leq c \varepsilon_\nu^{2-\frac 2p},
\end{equation*}
\begin{equation*}
 \varepsilon_\nu^3\left \|\nabla_s^{\Phi_5^\nu}\left(\Psi^\nu_1-\Psi_5^\nu\right)\right|\|_{L^p}+\varepsilon_\nu^4\left\|\nabla_s^{\Phi_5^\nu}\left(\Phi^\nu_1-\Phi_5^\nu\right)\right\|_{L^p}\leq c \varepsilon_\nu^{2-\frac 2p},
\end{equation*}

\begin{equation*}
\left\|\pi_{A_5^\nu}\left(\Xi^\nu_1-\Xi^\nu_5\right)\right\|_{\Xi_5^\nu,1,p,1}\leq c \varepsilon_\nu^{1-\frac 1p};
\end{equation*}
in order to improve the estimates for the non-harmonic part, we use the identity $$d_{\Xi_5^\nu }^{*_{\varepsilon_\nu} }\left(\Xi_1^\nu-\mathcal K_2^{\varepsilon_\nu}\left(\Xi_5^\nu\right)\right)=0,$$
i.e. if we define $\Xi_1^\nu-\Xi_5^\nu=:\bar\alpha^\nu+\bar\psi^\nu dt+ \bar\psi^\nu ds$ and  $\Xi^\nu-\mathcal K_2^{\varepsilon_\nu}\left(\Xi_5^\nu\right)=:\alpha^\nu+\psi^\nu dt+ \psi^\nu ds $, by the definition of $\mathcal K_2^{\varepsilon_\nu}$
$$\|d_{A_5^\nu}^*\bar\alpha^\nu \|_{L^p}= \|d_{A_5^\nu}^*\alpha^\nu \|_{L^p}\leq \varepsilon_\nu^2\|\nabla_t\psi^\nu\|_{L^p}+\varepsilon^4_\nu\|\nabla_s\phi^\nu\|_{L^p}\leq c\varepsilon^{2-\frac 1p}; $$
and since $F_{A_1^\nu}=d_{A_5^\nu}\bar\alpha^\nu+\frac 12 \left[\bar\alpha^\nu \wedge \bar\alpha^\nu\right]$ and 
$$\|d_{A_5^\nu}\bar\alpha\|_{L^p}\leq c\varepsilon^{2-\frac 1p}+ \|\bar\alpha^\nu\|_{L^{2p}}^2\leq c\varepsilon^{2-\frac2p}.$$
\end{proof}

\noindent{\bf Step 7.} We choose $p>13$. There are three positive constants $\delta_1, \varepsilon_0, c$ such that for any $\varepsilon_\nu<\varepsilon_0$
\begin{equation}
\left\|\pi_{A_5^\nu}\left(A^\nu_1-A^\nu_5\right)\right\|_{L^p}+\left\|\pi_{A_5^\nu}\left(A^\nu_1-A^\nu_5\right)\right\|_{L^\infty }\leq c \varepsilon_\nu^{1+\delta_1}.
\end{equation}

\noindent {\bf End of the proof: }Finally we can apply the theorem \ref{flow:thm:locuniq} choosing $\varepsilon_0$ such that $c\varepsilon_0^{\delta_1}<\delta$ for the $\delta$ needed in the theorem \ref{flow:thm:locuniq}; thus, we can conclude that for $\nu$ big enough $\Xi^\nu=\mathcal R^{\varepsilon_\nu,b}(\Xi^\nu_5)$ which is a contradiction to the fact that the $\Xi^\nu$ are not in the image of $\mathcal R^{\varepsilon_\nu,b}$. Therefore the proof of theorem \ref{flow:thm:surj} is concluded.

\begin{proof}[Proof of step 7] The idea is to consider the situation in the figure \ref{picsur1f} in order to improve the norm of $\pi_{A_5^\nu}(A^\nu-\mathcal K_2^{\varepsilon_\nu}(\Xi_5^\nu))$. In particular, we use that $\alpha^\nu_1\in \textrm{im } d_{A_2^\nu}^*$ and the fact that the norm of $\Pi_{\textrm{im } d_{A^\nu_5}^*}(\bar\alpha^\nu)$ can be estimate using the identity $d_{A_5^\nu}\bar\alpha^\nu=-\frac 12 [\bar\alpha^\nu\wedge \bar\alpha^\nu]$ deduced from $F_{A_2^\nu}=F_{A^0}+d_{A_5^\nu}\bar\alpha^\nu+\frac 12 [\bar\alpha^\nu\wedge \bar\alpha^\nu]$. We denote by $A^\nu_k+\Psi_k^\nu dt+\Phi_k^\nu ds$ the connection $\mathcal K_2^{\varepsilon_\nu}(\Xi_5^\nu)$.

\begin{figure}[ht]
\begin{center}
\begin{tikzpicture} 

\draw (-1,-1) .. controls (-1,0) and (0,0.7) .. (0.5,1); 
\draw (6.5,-1) .. controls (6.5,-0.5) and (7,0.1) .. (7.2,0.2); 
\draw (-1,-1).. controls (0,-1) and (1,-1) .. node[near start,above] {$F_A=0$}(2,-1); \draw (2,-1).. controls (3,-1) and (4,-1) .. (6.5,-1); 

\draw (1,2) .. controls (3,1.75) and (3,1.75) ..node[above] {$\tilde \alpha^\nu(t)$}(5,1.5); 

\filldraw (1,0) circle (1pt) node[below] {$A_2^\nu(t,s)$}
(5,0) circle (1pt) node[below] {$A_5^\nu(t,s)$}
(5,1.5) circle (1pt) node[above,right] {$A_k^\nu(t,s)$}
(1,2) circle (1pt) node[above, left] {$A^\nu(t,s)$}; 
\draw[red] (1,0) .. controls (2,0) and (4,0) .. node[below] {$\bar \alpha^\nu(t,s)$}(5,0); 
\draw[violet] (5,0) .. controls (5,1) and (5,1) .. node[right] {$\bar\alpha_0^\nu(t,s)$}(5,1.5); 
\draw[blue] (1,0) .. controls (1,1) and (1,1) .. node[right] {$\alpha^\nu_1(t,s)$}  (1,2); 

\end{tikzpicture} 
\caption{The splitting of the seventh step.}\label{picsur1f}
\end{center}
\end{figure}
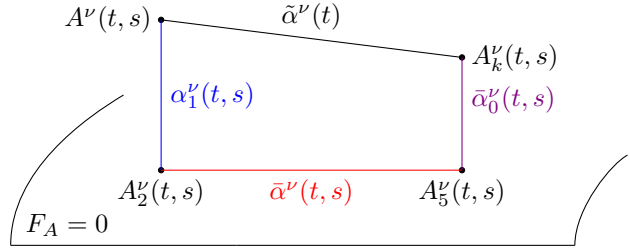

Let $\nabla_t:=\nabla_t^{\Psi_5^\nu}$ and $\nabla_s:=\nabla_s^{\Phi_5^\nu}$. By the lemmas \ref{flow:lemma:saweb2}, \ref{flow:lemma:diffD} and the estimates (\ref{flow:k2fhods2}) and (\ref{flow:k2fhods}) we have
\begin{equation}\label{flowste7o1}
\begin{split}
\Big\|\pi_{A_5^\nu}&\left(A^\nu-\mathcal K_2^{\varepsilon_\nu}(\Xi_5^\nu)\right)\Big\|_{L^p}
+\left\|\nabla_t \pi_{A_5^\nu}\left(A^\nu-\mathcal K_2^{\varepsilon_\nu}(\Xi_5^\nu)\right)\right\|_{L^p}\\
&+\left\|\nabla_s \pi_{A_5^\nu}\left(A^\nu-\mathcal K_2^{\varepsilon_\nu}(\Xi_5^\nu)\right)\right\|_{L^p}
+\left\|\nabla_t\nabla_t \pi_{A_5^\nu}\left(A^\nu-\mathcal K_2^{\varepsilon_\nu}(\Xi_5^\nu)\right)\right\|_{L^p}
\\
\leq& c\left\| \pi_{A_5^\nu}\mathcal D_1^{\varepsilon_\nu}(\mathcal K_2^{\varepsilon_\nu}(\Xi_5^\nu))(\Xi^\nu-\mathcal K_2^{\varepsilon_\nu}(\Xi_5^\nu))\right\|_{L^p}\\
&+c\left\|\left(1-\pi_{A_5^\nu}\right)\left(A^\nu-A_k^\nu\right)\right\|_{1,p,\varepsilon_\nu}+c\varepsilon^{3-\frac 2p}\\
&+c \left\|\nabla_t\left(1-\pi_{A_5^\nu}\right)\left(A^\nu-A_k^\nu\right)\right\|_{L^p}\\
&+c \varepsilon_\nu^2\left\| \mathcal D_2^{\varepsilon_\nu}(\mathcal K_2^{\varepsilon_\nu}(\Xi_5^\nu))(\Xi^\nu-\mathcal K_2^{\varepsilon_\nu}(\Xi_5^\nu))\right\|_{L^p}.
\end{split}
\end{equation}
and thus our task is to estimate all the norms on the right hand side of the inequality; the second one can be estimate by $c\varepsilon_\nu^{2-\frac2p}$ by the previous step and the lemma \ref{flow:lemma:firstappr}. The last term of (\ref{flowste7o1}) can be estimate by
\begin{equation}\label{flowste7o3}
\begin{split}
 \varepsilon_\nu^2\big\| \mathcal D_2^{\varepsilon_\nu}(\mathcal K_2^{\varepsilon_\nu}(\Xi_5^\nu))&(\Xi^\nu-\mathcal K_2^{\varepsilon_\nu}(\Xi_5^\nu))\big\|_{L^p}\\
\leq &\varepsilon_\nu^2\left\| \mathcal F_2^{\varepsilon_\nu}(\mathcal K_2^{\varepsilon_\nu}(\Xi_5^\nu))\right\|_{L^p}\\
&+\varepsilon_\nu^2\left\| \mathcal C_2^{\varepsilon_\nu}(\mathcal K_2^{\varepsilon_\nu}(\Xi_5^\nu))(\Xi^\nu-\mathcal K_2^{\varepsilon_\nu}(\Xi_5^\nu))\right\|_{L^p}
\leq  c\varepsilon^{2-\frac 5p}
\end{split}
\end{equation}
where the first inequality follows from 
$$\mathcal D_2^{\varepsilon_\nu}(\mathcal K_2^{\varepsilon_\nu}(\Xi_5^\nu))(\Xi^\nu-\mathcal K_2^{\varepsilon_\nu}(\Xi_5^\nu))=-\mathcal F_2^{\varepsilon_\nu}(\mathcal K_2^{\varepsilon_\nu}(\Xi_5^\nu))-\mathcal C_2^{\varepsilon_\nu}(\mathcal K_2^{\varepsilon_\nu}(\Xi_5^\nu))(\Xi^\nu-\mathcal K_2^{\varepsilon_\nu}(\Xi_5^\nu))$$
because of $\mathcal F_2^{\varepsilon_\nu}(\Xi^\nu)=0$ and the second estimating $\mathcal C_2^{\varepsilon_\nu}(\mathcal K_2^{\varepsilon_\nu}(\Xi_5^\nu))(\Xi^\nu-\mathcal K_2^{\varepsilon_\nu}(\Xi_5^\nu))$ term by term using the formula (\ref{qestnvdsopapa}). Next, we define $\tilde \alpha^\nu+\tilde \psi^\nu dt+\tilde \phi^\nu ds:=\Xi^\nu-\mathcal K_2^{\varepsilon_\nu}(\Xi_5^\nu)$ and 
\begin{equation}
A^\nu-A^\nu_k=(A^\nu-A_2^\nu)+(A_2^\nu-A_5^\nu)+(A_5^\nu-A^\nu_k)=\alpha^\nu_1+\bar\alpha^\nu-\bar\alpha_0^\nu
\end{equation}
where $\bar \alpha_0^\nu:=\mathcal K_2^{\varepsilon_\nu}(\Xi_5^\nu)-\Xi_5^\nu$. We remark that $0=F_{A_5^\nu+\bar\alpha^\nu }=d_{A_5^\nu}\bar\alpha^\nu+\frac 12\left[\bar \alpha_\nu\wedge \bar\alpha^\nu\right]$. Furthermore, since by $\mathcal F_1^{\varepsilon_\nu}(\Xi^\nu)=0$
$$\mathcal D_1^{\varepsilon_\nu}(\mathcal K_2^{\varepsilon_\nu}(\Xi_5^\nu))(\Xi^\nu-\mathcal K_2^{\varepsilon_\nu}(\Xi_5^\nu))=-\mathcal F_1^{\varepsilon_\nu}(\mathcal K_2^{\varepsilon_\nu}(\Xi_5^\nu))-\mathcal C_1^{\varepsilon_\nu}(\mathcal K_2^{\varepsilon_\nu}(\Xi_5^\nu))(\Xi^\nu-\mathcal K_2^{\varepsilon_\nu}(\Xi_5^\nu))$$
and by (\ref{flow:k2:leqw4}) $\left\|\pi_{A_5^\nu}\mathcal F_1^{\varepsilon_\nu}(\mathcal K_2^{\varepsilon_\nu}(\Xi_5^\nu))\right\|_{L^p}\leq c\varepsilon^2$,
\begin{equation}\label{flowste7o2}
\begin{split}
&\left\|\pi_{A_5^\nu}\mathcal D_1^{\varepsilon_\nu}(\mathcal K_2^{\varepsilon_\nu}(\Xi_5^\nu))(\tilde \alpha^\nu+\tilde\psi^\nu dt+\tilde \phi^\nu ds)\right\|_{L^p}\\
\leq& \left\|\pi_{A_5^\nu}\mathcal F_1^{\varepsilon_\nu}(\mathcal K_2^{\varepsilon_\nu}(\Xi_5^\nu))\right\|_{L^p}\\
&+\left\|\pi_{A_5^\nu}\mathcal C_1^{\varepsilon_\nu}(\mathcal K_2^{\varepsilon_\nu}(\Xi_5^\nu))(\tilde\alpha^\nu+\tilde\psi^\nu dt+\tilde \phi^\nu ds)\right\|_{L^p}\\
\leq & c \varepsilon_\nu^{2-\frac 7p }+\frac 1{\varepsilon_\nu^2}\left\|\pi_{A_5^\nu}\left(\left[\tilde\alpha^\nu\wedge *\left(d_{A_5^\nu}\tilde\alpha^\nu+\frac 12 \left[\tilde\alpha^\nu\wedge\tilde\alpha^\nu\right] \right)\right]\right)\right\|_{L^p}\\
\leq & c \varepsilon_\nu^{2-\frac 7p }+\frac 1{\varepsilon_\nu^2}\left\|\pi_{A_5^\nu}\left(\left[\tilde\alpha^\nu\wedge *\left(d_{A_5^\nu}\left(\tilde\alpha^\nu-\bar\alpha^\nu\right)\right)\right]\right)\right\|_{L^p}+c\varepsilon_\nu^{1-\frac 7p }\left\|\pi_{A^\nu_5}(\tilde\alpha^\nu)\right\|_{L^p}
\end{split}
\end{equation}
where the second step follows estimating (\ref{xvasvdb}) term by term. Next, we consider the following operator
\begin{equation}
\begin{split}
Q^{\varepsilon_\nu}\left(\Xi_5^\nu\right)(\Xi^\nu-\mathcal K_2^{\varepsilon_\nu}(\Xi_5^\nu)):=& \mathcal D^{\varepsilon_\nu}\left(\Xi_5^\nu\right)(\Xi^\nu-\mathcal K_2^{\varepsilon_\nu}(\Xi_5^\nu))+\frac 1{2\varepsilon_\nu^2}d_{A_5^\nu}^*\left[\bar\alpha^\nu\wedge \bar\alpha^\nu\right] 
\end{split}
\end{equation}
whose first component can be written as
\begin{equation}\label{flow:surj:step7:eqqq}
\begin{split}
Q^{\varepsilon_\nu}_1&\left(\Xi_5^\nu\right)(\tilde \alpha^\nu+\tilde\psi^\nu dt+\tilde \phi^\nu ds)
= \nabla_s\left(\tilde\alpha^\nu-\Pi_{\textrm{im } d_{A_5^\nu}^*}(\bar \alpha^\nu)\right)-d_{A_5^\nu}\tilde\phi^\nu\\
&+\frac 1{\varepsilon_\nu^2}d_{A_5^\nu}^*d_{A_5^\nu}\left(\tilde\alpha^\nu-\bar\alpha^\nu\right)
-d*X_t(A_5^\nu)\left(\tilde\alpha^\nu-\Pi_{\textrm{im } d_{A_5^\nu}^*}(\bar \alpha^\nu)\right)\\
&-\nabla_t\nabla_t\left(\tilde\alpha^\nu-\Pi_{\textrm{im } d_{A_5^\nu}^*}(\bar \alpha^\nu)\right)
-2\left[\tilde\psi^\nu,\left(\partial_t A_5^\nu-d_{A_5^\nu}\Psi_5^\nu\right)\right]\\
&+\nabla_s\left(\Pi_{\textrm{im } d_{A_5^\nu}^*}(\bar \alpha^\nu)\right)-d*X_t(A_5^\nu)\Pi_{\textrm{im } d_{A_5^\nu}^*}(\bar \alpha^\nu)-\nabla_t\nabla_t\left(\Pi_{\textrm{im } d_{A_5^\nu}^*}(\bar \alpha^\nu)\right).
\end{split}
\end{equation}
By theorem \ref{flow:thm:3eqbasic} we obtain that
\begin{align*}
&\left\| d_{A_5^\nu}^*d_{A_5^\nu}\left(\tilde\alpha^\nu-\bar\alpha^\nu\right)\right\|_{L^p}
+\varepsilon_\nu^2\left\| \nabla_t\nabla_t(1-\pi_{A_5^\nu})\left(\tilde\alpha^\nu-\Pi_{\textrm{im } d_{A_5^\nu}^*}(\bar \alpha^\nu)\right)\right\|_{L^p}\\
&+\varepsilon_\nu\left\| \nabla_t(1-\pi_{A_5^\nu})\left(\tilde\alpha^\nu-\Pi_{\textrm{im } d_{A_5^\nu}^*}(\bar \alpha^\nu)\right)\right\|_{L^p}\\
\leq& c\varepsilon_\nu^2\left\|  \mathcal D_1^{\varepsilon_\nu}(\Xi_5^\nu)\left(\tilde\alpha^\nu-\Pi_{\textrm{im } d_{A_5^\nu}^*}(\bar \alpha^\nu),\tilde\psi^\nu,\tilde\phi^\nu\right)\right\|_{L^p}
+\varepsilon^2\left\|\nabla_s\pi_{A_5^\nu}\left(\tilde\alpha^\nu-\bar\alpha^\nu\right)\right\|_{L^p}\\
&+\varepsilon^2\left\|\nabla_t\nabla_t\pi_{A_5^\nu}\left(\tilde\alpha^\nu-\bar\alpha^\nu\right)\right\|_{L^p}+c\varepsilon_\nu^{3-\frac 1p}\\
\intertext{and by (\ref{flow:surj:step7:eqqq}), step 2, the lemma \ref{flow:lemmasurj1} and the theorem \ref{flow:thm:existence}}
\leq& c\varepsilon_\nu^2\left\|  Q_1^{\varepsilon_\nu}(\Xi_5^\nu)(\tilde\alpha^\nu,\tilde\psi^\nu,\tilde\phi^\nu)\right\|_{L^p}+c\varepsilon_\nu^{3-\frac 2p}\\
&+\varepsilon_\nu^2\left\|\nabla_t\nabla_t\left(\Pi_{\textrm{im } d_{A_5^\nu}^*}(\bar \alpha^\nu)\right)\right\|_{L^p}+\varepsilon_\nu^2\left\|\nabla_s\left(\Pi_{\textrm{im } d_{A_5^\nu}^*}(\bar \alpha^\nu)\right)\right\|_{L^p}\\
\intertext{since $\mathcal F_1^{\varepsilon_\nu}\left(\mathcal K_2^{\varepsilon_\nu}\left(\Xi_5^\nu\right)\right)+ Q_1^{\varepsilon_\nu}\left(K_2^{\varepsilon_\nu}\left(\Xi_5^\nu\right)\right)(\tilde\alpha^\nu,\tilde\psi^\nu,\tilde\phi^\nu)+ \mathcal C_1^{\varepsilon_\nu}\left(K_2^{\varepsilon_\nu}\left(\Xi_5^\nu\right)\right)(\tilde\alpha^\nu,\tilde\psi^\nu,\tilde\phi^\nu)-\frac {1}{2\varepsilon_\nu^2}d_{A_5^\nu}^*\left[\bar\alpha^\nu\wedge\bar\alpha^\nu\right]=0$,}
\leq& c\varepsilon_\nu^2\left\|  \mathcal F_1^{\varepsilon_\nu}(\mathcal K_2^{\varepsilon_\nu}(\Xi_5^\nu))\right\|_{L^p}+c\varepsilon_\nu^{3-\frac 2p}\\
&+c\varepsilon_\nu^2\left\|  \mathcal C_1^{\varepsilon_\nu}(\mathcal K_2^{\varepsilon_\nu}(\Xi_5^\nu))(\tilde\alpha^\nu,\tilde\psi^\nu,\tilde\phi^\nu)-\frac 1{2\varepsilon_\nu^2}d_{A_5^\nu}^*\left[\bar\alpha^\nu\wedge\bar\alpha^\nu\right]\right\|_{L^p}\\
&+\varepsilon_\nu^2\left\|\nabla_t\nabla_t\left(\Pi_{\textrm{im } d_{A_5^\nu}^*}(\bar \alpha^\nu)\right)\right\|_{L^p}+\varepsilon_\nu^2\left\|\nabla_s\left(\Pi_{\textrm{im } d_{A_5^\nu}^*}(\bar \alpha^\nu)\right)\right\|_{L^p}\\
\leq& c\varepsilon_\nu^{3-\frac 7p}+\varepsilon_\nu^2\left\|\nabla_t\nabla_t\left(\Pi_{\textrm{im } d_{A_5^\nu}^*}(\bar \alpha^\nu)\right)\right\|_{L^p}+\varepsilon_\nu^2\left\|\nabla_s\left(\Pi_{\textrm{im } d_{A_5^\nu}^*}(\bar \alpha^\nu)\right)\right\|_{L^p}\\.
\end{align*}
where the last step follows estimating (\ref{xvasvdb}) term by term. Hence, by the last estimate, (\ref{flowste7o1}), (\ref{flowste7o3}) (\ref{flowste7o2}) and the next claim:
\begin{equation*}
\begin{split}
\left\|\pi_{A_5^\nu}\left(A^\nu-\mathcal K_2^{\varepsilon_\nu}(\Xi_5^\nu)\right)\right\|_{L^p}
&+\left\|\nabla_t \pi_{A_5^\nu}\left(A^\nu-\mathcal K_2^{\varepsilon_\nu}(\Xi_5^\nu)\right)\right\|_{L^p}\\
&+\left\|\nabla_t\nabla_t \pi_{A_5^\nu}\left(A^\nu-\mathcal K_2^{\varepsilon_\nu}(\Xi_5^\nu)\right)\right\|_{L^p}\\
&+\left\|\nabla_s \pi_{A_5^\nu}\left(A^\nu-\mathcal K_2^{\varepsilon_\nu}(\Xi_5^\nu)\right)\right\|_{L^p}
\\
\leq& c\varepsilon_\nu^{2-\frac {10}p }+\varepsilon_\nu^{1-\frac 1p}\left\|\nabla_t\nabla_t\left(\Pi_{\textrm{im } d_{A_5^\nu}^*}(\bar \alpha^\nu)\right)\right\|_{L^p}\\
&+\varepsilon_\nu^{\frac 1p}\left\|\nabla_s\left(\Pi_{\textrm{im } d_{A_5^\nu}^*}(\bar \alpha^\nu)\right)\right\|_{L^p}
\leq c\varepsilon_\nu^{2-\frac {10}p }.
\end{split}
\end{equation*}
Therefore, for $p>10$ and by the Sobolev's theorem \ref{flow:thm:sob} for $\varepsilon=1$, there is a $\delta_1>0$, such that
\begin{equation*}
\left\|\pi_{A_5^\nu}\left(A^\nu_1-A^\nu_5\right)\right\|_{L^p}+\left\|\pi_{A_5^\nu}\left(A^\nu_1-A^\nu_5\right)\right\|_{L^\infty }\leq c \varepsilon_\nu^{1+\delta_1}
\end{equation*}
holds for $\varepsilon_\nu$ small enough. Thus we concluded the proof of the seventh step.
\end{proof}

\noindent{\bf Claim. }There are two positive constants $c$ and $\varepsilon_0$ such that, for $0<\varepsilon<\varepsilon_0$,
\begin{align*}
\left\|\nabla_t\nabla_t\left(\Pi_{\textrm{im } d_{A_5^\nu}^*}(\bar \alpha^\nu)\right)\right\|_{L^p}&+\left\|\nabla_s\left(\Pi_{\textrm{im } d_{A_5^\nu}^*}(\bar \alpha^\nu)\right)\right\|_{L^p}\\
\leq& c\varepsilon^{1-\frac 2p}\left(1+\|\nabla_t\nabla_t\pi_{A_5^\nu}(\bar\alpha^\nu)\|_{L^p}+\|\nabla_s\pi_{A_5^\nu}(\bar\alpha^\nu)\|_{L^p}\right)
\end{align*}

\begin{proof}[Proof of the claim]
We write $\left(\Pi_{\textrm{im } d_{A_5^\nu}^*}(\bar \alpha^\nu)\right)=d_{A_5^\nu}^*\omega^\nu$ for $2$-form $\omega^\nu$ and hence
\begin{align*}
\|\nabla_t\nabla_td_{A_5^\nu}^*\omega^\nu\|_{L^p}
\leq & \|d_{A_5^\nu}\nabla_t\nabla_td_{A_5^\nu}^*\omega^\nu\|_{L^p}
+\left\|\left(1-\Pi_{\textrm{im } d_{A_5^\nu}^*}\right)\nabla_t\nabla_td_{A_5^\nu}^*\omega^\nu\right\|_{L^p}\\
\intertext{using the commutation formulas, the $L^\infty$-bound on the curvature terms and the lemma \ref{lemma76dt94}, we obtain}
\leq & \|\nabla_t\nabla_td_{A_5^\nu}\bar\alpha^\nu\|_{L^p}
+c\|d_{A_5^\nu}^*\omega^\nu\|_{L^p}+c\|\nabla_t d_{A_5^\nu}^*\omega^\nu\|_{L^p}\\
\intertext{and by the identity $d_{A_5^\nu}\bar \alpha^\nu+\frac 12[\bar\alpha^\nu\wedge \bar \alpha^\nu]$}
\leq & \frac 12\|\nabla_t\nabla_t[\bar\alpha^\nu\wedge \bar \alpha^\nu]\|_{L^p}
+c\|\bar\alpha^\nu\|_{L^p}+c\|\nabla_t \bar\alpha^\nu\|_{L^p}\\
\leq & c\| \bar \alpha^\nu\|_{L^\infty}
\|\nabla_t\nabla_t\bar\alpha^\nu\|_{L^p}
+ c\| \nabla_t\bar \alpha^\nu\|_{L^{2p}}
\|\nabla_t\bar\alpha^\nu\|_{L^{2p}}\\
&+c\|\bar\alpha^\nu\|_{L^p}+c\|\nabla_t \bar\alpha^\nu\|_{L^p}\\
\leq & c\varepsilon^{1-\frac 2p}\left(1+\left\|\nabla_t\nabla_t\left(\Pi_{\textrm{im } d_{A_5^\nu}^*}(\bar \alpha^\nu)\right)\right\|_{L^p}\right)\\
&+ c\varepsilon^{1-\frac 2p}\left\|\nabla_t\nabla_t\left(\pi_{A_5^\nu}(\bar \alpha^\nu)\right)\right\|_{L^p}.
\end{align*}
In the same way, we can show that
\begin{align*}
\left\|\nabla_s\left(\Pi_{\textrm{im } d_{A_5^\nu}^*}(\bar \alpha^\nu)\right)\right\|_{L^p}\leq&  c\varepsilon^{1-\frac 2p}\left(1+\left\|\nabla_s\left(\Pi_{\textrm{im } d_{A_5^\nu}^*}(\bar \alpha^\nu)\right)\right\|_{L^p}\right)\\
& +c\varepsilon^{1-\frac 2p}\left\|\nabla_s\left(\pi_{A_5^\nu}(\bar \alpha^\nu)\right)\right\|_{L^p}
\end{align*}
and thus the claim holds for $\varepsilon$ sufficiently small.
\end{proof}
\noindent With the last claim we concluded also the proof of the theorem \ref{flow:thm:surj}.
\end{proof}

\section{Proofs of the main theorems}\label{c:flow:mt}

The definition \ref{defiR} of the map $\mathcal R^{\varepsilon,b}$ and the theorem \ref{flow:thm:surj}, 
which assures its surjectivity, allow us to conclude that the theorem \ref{flow:thm:main1} holds. Thus, we need only to explain the proof of theorem \ref{thm:main}.

\begin{proof}[Proof of theorem \ref{thm:main}]

If we fix a regular value $b$ of $E^H$, then by the theorem \ref{thm:mainthm} there 
is a positive constant $\varepsilon_0$ such that the map $\mathcal T^{\varepsilon,b}$ 
is a bijection for $0<\varepsilon<\varepsilon_0$. In addition, 
since $\mathcal G_0(P)$ acts freely, ${\mathcal T}^{\varepsilon, b}$ descends to the map 
$$\bar {\mathcal T}^{\varepsilon, b}:
\mathrm{Crit}_{E^H}^b/\mathcal G_0(P\times S^1)\to 
\mathrm{Crit}_{\mathcal {YM}^{\varepsilon,H}}^b/\mathcal G_0(P\times S^1)$$
which maps 
perturbed closed geodesics to orbits of perturbed Yang-Mills connections with the 
same Morse index and therefore we can see it as chain complex homeomorphism
$$\bar{\mathcal T}^{\varepsilon,b}: C_*^{E^H,b}\to C_*^{\mathcal {YM}^{\varepsilon,H},b}.$$
For any two perturbed geodesics $\gamma_\pm\in [\mathrm{Crit}_{E^H}^b]$ with index 
difference $1$, the map $\mathcal R^{\varepsilon,b}$ is, for two lifts 
$\Xi_\pm\in \mathrm{Crit}_{E^H}^b$ with $[\Xi_\pm]=\gamma_\pm$, by theorem 
\ref{flow:thm:main1}, bijective and thus
$$\sharp_{\mathbb Z_2}\bar{\mathcal M}^0\left(\Xi_-,\Xi_+\right)/\mathbb R=\sharp_{\mathbb Z_2}\bar{\mathcal M}^\varepsilon\left(\mathcal T^{\varepsilon,b}(\Xi_-),\mathcal T^{\varepsilon,b}(\Xi_+)\right)/\mathbb R$$
which yields that the following diagram commutes 
 $$\begin{CD}
 \dots @>>> C_{k+1}^{E^H,b} @>{\partial_k^{E^H}}>>C_{k}^{E^H,b} @>{\partial_{k-1}^{E^H}}>>C_{k-1}^{E^H,b} @>>> \dots\\
@.  @VV{\bar{\mathcal T}^{\varepsilon,b}}V  @VV{\bar{\mathcal T}^{\varepsilon,b}}V @VV{\bar{\mathcal T}^{\varepsilon,b}}V\\
  \dots @>>> C_{k+1}^{\mathcal {YM}^{\varepsilon,H},b} @>{\partial_k^{\mathcal {YM}^{\varepsilon,H}}}>>C_{k}^{\mathcal {YM}^{\varepsilon,H},b} @>{\partial_{k-1}^{\mathcal {YM}^{\varepsilon,H}}}>>C_{k-1}^{\mathcal {YM}^{\varepsilon,H},b} @>>> \dots\\
  @.  @VV{\left(\bar{\mathcal T}^{\varepsilon,b}\right)^{-1}}V  @VV{\left(\bar{\mathcal T}^{\varepsilon,b}\right)^{-1}}V @VV{\left(\bar{\mathcal T}^{\varepsilon,b}\right)^{-1}}V\\
  \dots @>>> C_{k+1}^{E^H,b} @>{\partial_k^{E^H}}>>C_{k}^{E^H,b} @>{\partial_{k-1}^{E^H}}>>C_{k-1}^{E^H,b} @>>> \dots
 \end{CD}$$
 and hence
 $$\left(\bar{\mathcal T}^{\varepsilon,b}\right)_*: HM_*\left(\mathcal L^b\mathcal M^g(P), \mathbb Z_2\right)\to HM_*\left(\mathcal A^{\varepsilon,b}\left(P\times S^1\right)/\mathcal G_0\left( P\times S^1\right), \mathbb Z_2\right)$$
 is an isomorphism.
\end{proof}

\begin{appendix}

\section{Norms for $1$-forms on $\Sigma\times S^1$}\label{appendix:norm}

We fix a connection $\Xi_{0}=A_{0}+\Psi_{0} dt\,\in\mathcal A(\Sigma\times S^1)$ and we define the following norms on $\Omega^i(\Sigma\times S^1,\mathfrak g_P)$, $i=1,2$. Let $\xi(t)=\alpha(t)+\psi (t)\wedge dt$ such that $\alpha(t)\in \Omega^1(\Sigma,\mathfrak g_{P})$ and $\psi(t)\in \Omega^0(\Sigma,\mathfrak g_{P})$ or $\alpha(t)\in \Omega^2(\Sigma,\mathfrak g_{P})$ and $\psi(t)\in \Omega^1(\Sigma,\mathfrak g_{P})$, then
\begin{equation*}
\|\xi\|_{0,p,\varepsilon,\Sigma\times S^1}^p:=\int_{0}^1\left(\|\alpha\|_{L^p(\Sigma)}^p
+\varepsilon^{p}\|\psi\|_{L^p(\Sigma)}^p\right)dt,
\end{equation*}
\begin{equation*}
\|\xi\|_{\infty,\varepsilon,\Sigma\times S^1}:=\|\alpha\|_{L^\infty(\Sigma\times S^1)}
+\varepsilon\|\psi\|_{L^\infty(\Sigma\times S^1)}
\end{equation*}
and
\begin{equation*}
\begin{split}
\|\xi&\|_{\Xi_0,1,p,\varepsilon,\Sigma\times S^1}^p:=\int_{0}^1\left(\|\alpha\|_{L^p(\Sigma)}^p
+\|d_{A_{0}}\alpha\|_{L^p(\Sigma)}^p+
\|d_{A_{0}}^*\alpha\|_{L^p(\Sigma)}^p\right)dt\\
&+\int_{0}^1\varepsilon^p\left(\|\nabla_{t}\alpha\|_{L^p(\Sigma)}^p+\|\psi\|_{L^p(\Sigma)}^p+\|d_{A_{0}}\psi\|_{L^p(\Sigma)}^p
+\varepsilon^{p}\|\nabla_{t}\psi\|_{L^p(\Sigma)}^p\right) dt.
\end{split}
\end{equation*}
Inductively,
\begin{equation*}
\begin{split}
\|\xi\|_{\Xi_0,k+1,p,\varepsilon,\Sigma\times S^1}^p:=&\|\alpha+\psi \,dt\|_{\Xi_0,k,p,\varepsilon,\Sigma\times S^1}^p+\|d_{A_{0}}\alpha\|_{\Xi_0,k,p,\varepsilon,\Sigma\times S^1}^p\\
&+\|d_{A_{0}}^*\alpha\|_{\Xi_0,k,p,\varepsilon,\Sigma\times S^1}^p+\varepsilon^p\|\nabla_{t}\alpha\|_{\Xi_0,k,p,\varepsilon,\Sigma\times S^1}^p\\
&+\|d_{A_{0}}\psi \wedge dt\|_{\Xi_0,k,p,\varepsilon,\Sigma\times S^1}^p
+\varepsilon^{p}\|\nabla_{t}\psi\, dt\|_{\Xi_0,k,2,\varepsilon,\Sigma\times S^1}^p.
\end{split}\end{equation*}
For $i=1,2$, we can define by $W^{k,p}(\Sigma\times S^1,\Lambda^iT^*(\Sigma\times S^1)\otimes\mathfrak g_{P\times S^1 })$ the Sobolev space of the $i$-forms respect to the norm $\|\cdot\|_{\Xi_0,k,p,1,\Sigma\times S^1}$. We now choose a reference connection $\Xi_0$ and all the Sobolev inequalities 
hold as follows by the Sobolev embedding theorem (cf. \cite{remyj6}).
\begin{theorem}[Sobolev estimates]\label{lemma:sobolev}
We choose $1\leq p,q<\infty$ and $l\leq k$. Then there is a constant $c_s$ such that for every 
$\xi\in  W^{k,p}(\Sigma\times S^1,\Lambda^iT^*(\Sigma\times S^1)\otimes\mathfrak g_{P\times S^1})$, $i=1,2$, and any reference connection $\Xi_0$:
\begin{enumerate}
\item If $l-\frac 3q\leq k-\frac3p$, then
\begin{equation}\label{eq:sobolev1}
\|\xi\|_{\Xi_0,l,q,\varepsilon,\Sigma\times S^1}
\leq c_s\varepsilon^{1/q-1/p}\,\|\xi\|_{\Xi_0,k,p,\varepsilon,\Sigma\times S^1}.
\end{equation}
\item If $0< k-\frac3p$, then
\begin{equation}\label{eq:sobolev2}
\|\xi\|_{\Xi_0,\infty,\varepsilon,\Sigma\times S^1}
\leq c_s\varepsilon^{-1/p}\,\|\xi\|_{\Xi_0,k,p,\varepsilon,\Sigma\times S^1}.
\end{equation}
\end{enumerate}
\end{theorem}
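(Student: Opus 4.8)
The plan is to reduce the $\varepsilon$-dependent estimate to the ordinary ($\varepsilon=1$) Sobolev embedding by the rescaling argument already used for Theorem~\ref{flow:thm:sob} and in \cite{remyj6}. Here only the $S^1$-factor carries the $\varepsilon$-weight (the $\Sigma$-directions are unweighted; it is the $t$-variable and the $dt$-component that are scaled), so I would unwrap the circle and pass to the circle $S^1_{1/\varepsilon}$ of circumference $\varepsilon^{-1}$. Writing a form $\xi$ on $\Sigma\times S^1$ as $\xi=\alpha+\psi\wedge dt$ with $\alpha(t)$ an $i$-form and $\psi(t)$ an $(i-1)$-form on $\Sigma$, and setting $\bar t:=t/\varepsilon$, define $\bar\xi:=\bar\alpha+\bar\psi\wedge d\bar t$ by $\bar\alpha(\bar t):=\alpha(\varepsilon\bar t)$, $\bar\psi(\bar t):=\varepsilon\,\psi(\varepsilon\bar t)$, together with the rescaled reference connection $\bar\Xi_0=\bar A_0+\bar\Psi_0\,d\bar t$, $\bar A_0(\bar t):=A_0(\varepsilon\bar t)$, $\bar\Psi_0(\bar t):=\varepsilon\,\Psi_0(\varepsilon\bar t)$. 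Then $d_{\bar A_0}=d_{A_0}$ and $d_{\bar A_0}^{*}=d_{A_0}^{*}$ on $\Sigma$-forms (these operators are intrinsic to $\Sigma$ and do not feel the rescaling), while $\nabla_{\bar t}\bar\beta=\varepsilon\,(\nabla_t\beta)(\varepsilon\bar t)$ for any $t$-family $\beta$, and correspondingly on the $d\bar t$-component; in short, the rescaling is compatible with all operators occurring in the norms $\|\cdot\|_{\Xi_0,k,p,\varepsilon,\Sigma\times S^1}$.

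The core computation is the pair of identities
\[
\|\xi\|_{\Xi_0,k,p,\varepsilon,\Sigma\times S^1}=\varepsilon^{1/p}\,\|\bar\xi\|_{\bar\Xi_0,k,p,1,\Sigma\times S^1_{1/\varepsilon}},\qquad
\|\xi\|_{\Xi_0,\infty,\varepsilon,\Sigma\times S^1}=\|\bar\xi\|_{\bar\Xi_0,\infty,1,\Sigma\times S^1_{1/\varepsilon}},
\]
which I would establish by induction on $k$ straight from the inductive definition of $\|\cdot\|_{\Xi_0,k,p,\varepsilon,\Sigma\times S^1}$. At each stage every $\nabla_t$ on the left is matched by a $\nabla_{\bar t}$ together with a factor $\varepsilon$ that exactly cancels the $\varepsilon$-weight built into the norm, the operators $d_{A_0},d_{A_0}^{*}$ pass through unchanged, and the overall factor $\varepsilon^{1/p}$ is the $1/p$-power of the Jacobian $dt=\varepsilon\,d\bar t$ of the change of variables in the single integrated direction. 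The $L^\infty$-identity carries no Jacobian, and the weight $\varepsilon\|\psi\|_{L^\infty}$ appearing in $\|\cdot\|_{\infty,\varepsilon}$ is precisely $\|\bar\psi\|_{L^\infty}$.

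Granting these identities (applied also with $(l,q)$ in place of $(k,p)$), the two assertions follow from the standard Sobolev embedding on the closed $3$-manifold $\Sigma\times S^1_{1/\varepsilon}$: for $l-\frac3q\le k-\frac3p$ one has $\|\bar\xi\|_{\bar\Xi_0,l,q,1}\le c_S\|\bar\xi\|_{\bar\Xi_0,k,p,1}$, and for $k-\frac3p>0$ one has $\|\bar\xi\|_{\bar\Xi_0,\infty,1}\le c_S\|\bar\xi\|_{\bar\Xi_0,k,p,1}$; substituting the identities yields $\|\xi\|_{\Xi_0,l,q,\varepsilon}=\varepsilon^{1/q}\|\bar\xi\|_{l,q,1}\le c_S\varepsilon^{1/q-1/p}\|\xi\|_{\Xi_0,k,p,\varepsilon}$ and likewise $\|\xi\|_{\Xi_0,\infty,\varepsilon}\le c_S\varepsilon^{-1/p}\|\xi\|_{\Xi_0,k,p,\varepsilon}$. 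The point that needs care — and the main obstacle — is that the circumference $\varepsilon^{-1}$ grows without bound as $\varepsilon\to0$, so the Sobolev constant $c_S$ must be shown to be uniform in $\varepsilon$. I would obtain this exactly as in Lemmas~\ref{flow:fjdfjdkjdkj}--\ref{flow:cor:alpha}: cover $\Sigma\times S^1_{1/\varepsilon}$ by a fixed finite atlas of $\Sigma$ times unit-length intervals in the long direction, take a partition of unity whose derivatives are bounded independently of $\varepsilon$, apply the Euclidean Sobolev inequality in each chart (where the smooth reference connection $\bar\Xi_0$ contributes only an $\varepsilon$-independent bounded zeroth-order term relative to the flat operators), and sum the local estimates. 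Since $\Sigma\times S^1$ is compact, density of smooth forms in $W^{k,p}$ is automatic, so no further approximation step is required.
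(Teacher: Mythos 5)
Your proposal is correct and is essentially the argument the paper intends: the paper itself only cites the standard Sobolev embedding and \cite{remyj6}, but the analogous four-dimensional statement (Theorem \ref{flow:thm:sob}) is proved by exactly your rescaling $\bar t=t/\varepsilon$, with the single Jacobian factor $dt=\varepsilon\,d\bar t$ accounting for the exponent $1/q-1/p$ and the weights $\varepsilon\nabla_t$, $\varepsilon\psi$ absorbed into $\nabla_{\bar t}$, $\bar\psi$. Your additional remark on the uniformity of the Sobolev constant on $\Sigma\times S^1_{1/\varepsilon}$ via a partition of unity correctly fills in the one point the paper leaves implicit.
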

%

\section{Estimates on the surface}

The first two lemmas were proved by Dostoglou and Salamon (cf. \cite{MR1283871}, lemma 7.6 and lemma 8.2) for $p> 2$ and $q=\infty$; the proofs in the case $p=2$ and $2\leq q <\infty$ are similar.

\begin{lemma}\label{lemma76dt94}
We choose $p> 2$ and $q=\infty$ or $p=2$ and $2\leq q <\infty$. Then there exist two positive constants $\delta$ and $c$ such that for every connection 
$A\in\mathcal A(P)$ with
$$\|F_A\|_{L^p(\Sigma) }\leq \delta$$
there are estimates 
$$\|\psi\|_{L^q(\Sigma)}\leq c\|d_A\psi\|_{L^p(\Sigma)},\qquad 
\|d_A\psi\|_{L^q(\Sigma)}\leq c\|d_A*d_A\psi\|_{L^p(\Sigma)},$$
for $\psi\in \Omega^0(\Sigma,\mathfrak g_P)$.
\end{lemma}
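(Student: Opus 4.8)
\textbf{Proof plan for Lemma \ref{lemma76dt94}.}

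The statement is a quantitative coercivity estimate for the covariant derivative $d_A$ on a surface, uniform in $A$ as long as the curvature is small in $L^p$. The plan is to first establish it at a flat reference connection $A_0$, where both inequalities reduce to standard elliptic estimates, and then perturb in $A$. More precisely, I would fix once and for all a flat connection $A_0\in\mathcal A_0(P)$ (these exist since the bundle admits flat connections, being in the relevant component), and recall the classical fact that on the closed surface $(\Sigma,g_\Sigma)$ the operators $d_{A_0}:\Omega^0(\Sigma,\mathfrak g_P)\to\Omega^1(\Sigma,\mathfrak g_P)$ and $d_{A_0}*d_{A_0}=\Delta_{A_0}:\Omega^0\to\Omega^0$ are injective (there are no nonzero $A_0$-parallel $0$-forms because the bundle is irreducible / $\mathcal G_0$ acts with trivial stabiliser) with closed range, so by elliptic regularity and the open mapping theorem one has $\|\psi\|_{W^{1,p}}\le c\|d_{A_0}\psi\|_{L^p}$ and $\|d_{A_0}\psi\|_{W^{1,p}}\le c\|\Delta_{A_0}\psi\|_{L^p}$. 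The Sobolev embedding $W^{1,p}(\Sigma)\hookrightarrow L^q(\Sigma)$ is continuous precisely when $p>2$ and $q=\infty$, or $p=2$ and $q<\infty$, which is exactly the hypothesis on $(p,q)$; this gives the two desired inequalities at $A_0$.

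Next I would pass from $A_0$ to a general $A$ with small curvature. Write $A=A_0+a$ with $a\in\Omega^1(\Sigma,\mathfrak g_P)$; then $d_A\psi=d_{A_0}\psi+[a,\psi]$ and $d_A*d_A\psi=\Delta_{A_0}\psi+\{\text{lower-order terms in }a\}$. The key point is that one cannot control $\|a\|$ directly from $\|F_A\|_{L^p}$ without a gauge choice, so I would invoke Uhlenbeck's local (here global, since $\Sigma$ is closed and $\|F_A\|_{L^p}$ is small) gauge theorem: there is $u\in\mathcal G^{2,p}(P)$ with $d_{A_0}^*(u^*A-A_0)=0$ and $\|u^*A-A_0\|_{W^{1,p}(\Sigma)}\le c\|F_A\|_{L^p(\Sigma)}$. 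Since all quantities in the Lemma are gauge invariant, it suffices to prove the estimate for $A$ in relative Coulomb gauge with respect to $A_0$, i.e. assuming $\|a\|_{W^{1,p}}\le c\delta$; by Sobolev, $\|a\|_{L^r}\le c\delta$ for appropriate $r$. Then $\|[a,\psi]\|_{L^p}\le c\|a\|_{L^r}\|\psi\|_{L^{r'}}\le c\delta\|\psi\|_{W^{1,p}}$, and similarly for the perturbation of $\Delta_{A_0}$, so choosing $\delta$ small enough these perturbative terms are absorbed into the left-hand side of the reference estimates, yielding the claimed inequalities for $A$ with a uniform constant.

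The main obstacle is bookkeeping the H\"older exponents so that the bilinear perturbation terms $[a,\psi]$, $[a,d_{A_0}\psi]$, $[a\wedge a,\psi]$ are genuinely subordinate — this is delicate exactly at the borderline case $p=2$, where $W^{1,2}(\Sigma)\not\hookrightarrow L^\infty$ and one must instead use $W^{1,2}\hookrightarrow L^q$ for every finite $q$ together with the fact that $a\in W^{1,2}\hookrightarrow L^q$ as well, so products $a\cdot\psi$ lie in $L^p=L^2$ with the right smallness. A secondary technical point is justifying the global Coulomb gauge and the injectivity of $\Delta_{A_0}$ on $\Omega^0$; the latter uses that the relevant moduli space $\mathcal M^g(P)$ is a manifold, equivalently $\mathcal G_0(P)$ acts freely near flat connections, which is part of the standing setup recalled in the introduction. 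Everything else is routine elliptic theory, and I would not write out the exponent arithmetic in detail, merely indicate the absorption argument and cite \cite{MR1283871} for the case $p>2$, $q=\infty$ and note that the $p=2$, $q<\infty$ case is identical after the Sobolev substitution just described.
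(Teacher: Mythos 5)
The paper contains no proof of this lemma: it is quoted from Dostoglou--Salamon (\cite{MR1283871}, Lemma~7.6) for $p>2$, $q=\infty$, with the remark that the case $p=2$, $q<\infty$ is analogous. So your proposal is to be judged as a self-contained argument, and it has one genuine gap.

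The step that fails is the reduction to a \emph{single fixed} flat connection $A_0$. You claim a gauge transformation $u$ with $d_{A_0}^*(u^*A-A_0)=0$ and $\|u^*A-A_0\|_{W^{1,p}(\Sigma)}\le c\|F_A\|_{L^p(\Sigma)}$. This is false: the moduli space $\mathcal M^g(P)$ of flat connections has dimension $6g-6>0$, so there exist flat connections $A$ (hence $\|F_A\|_{L^p}=0$) that are not gauge equivalent to anything in a $W^{1,p}$-neighbourhood of $A_0$. Smallness of $\|F_A\|_{L^p}$ only places the gauge orbit of $A$ near the set of flat connections, not near the particular orbit of $A_0$; which flat connection you land next to depends on $A$. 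Note also that you cannot simply invoke Lemma \ref{lemma82dt94} to produce the nearby flat connection, since in \cite{MR1283871} that lemma is deduced from the present one, so the argument would become circular. Two standard repairs: (i) keep your perturbative scheme but make it uniform over the space of flat connections, e.g.\ cover the compact quotient $\mathcal M^g(P)$ by finitely many Uhlenbeck slices around flat connections $A_1,\dots,A_N$ and check that the reference constants in $\|\psi\|_{W^{1,p}}\le c\|d_{A_i}\psi\|_{L^p}$ and $\|\psi\|_{W^{2,p}}\le c\|\Delta_{A_i}\psi\|_{L^p}$ can be chosen uniformly (this needs an extra continuity/compactness argument you have not supplied); or (ii) argue indirectly: if the estimate fails, take sequences $A_\nu$, $\psi_\nu$ with $\|F_{A_\nu}\|_{L^p}\to 0$ and $\|\psi_\nu\|_{L^q}=1$ but $\|d_{A_\nu}\psi_\nu\|_{L^p}\to 0$, apply weak Uhlenbeck compactness to pass (modulo gauge) to a flat limit $A_\infty$ carrying a nonzero covariantly constant $\psi_\infty$, contradicting the irreducibility of flat connections on the nontrivial $\textrm{\bf SO}(3)$-bundle. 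Route (ii) is the one underlying the cited reference.

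The remaining ingredients of your plan are sound: injectivity of $d_{A_0}$ and of $d_{A_0}*d_{A_0}$ on $\Omega^0$ follows from irreducibility, the Sobolev embeddings $W^{1,p}\hookrightarrow L^\infty$ for $p>2$ and $W^{1,2}\hookrightarrow L^q$ for $q<\infty$ match the hypotheses on $(p,q)$ exactly, and the absorption of the bilinear terms $[a,\psi]$, $[a,d_{A_0}\psi]$, $[d_{A_0}a,\psi]$ works in both regimes with the H\"older pairings you indicate, once the uniformity issue above is resolved.
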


\begin{lemma}\label{lemma82dt94}
We choose $p> 2$ and $q=\infty$ or $p=2$ and $2\leq q <\infty$. Then there exist two positive constants $\delta$ and $c$ such that the following holds. 
For every connection $A\in \mathcal A(P)$ with 
$$\|F_A\|_{L^p(\Sigma)}\leq \delta$$
there exists a unique section $\eta \in \Omega^0(\Sigma,\mathfrak g_P)$ such that
$$F_{A+*d_A\eta}=0,\qquad \|d_A\eta\|_{L^q(\Sigma)}\leq c\|F_A\|_{L^p(\Sigma)}.$$
\end{lemma}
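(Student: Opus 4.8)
The plan is to rewrite the equation $F_{A+*d_A\eta}=0$ as a quasilinear elliptic equation for the $0$-form $\eta$ and solve it by a contraction mapping argument, with all a priori control furnished by Lemma \ref{lemma76dt94}. Since $\Sigma$ is a surface, $*d_A\eta$ is a $1$-form, so
$$F_{A+*d_A\eta}=F_A+d_A*d_A\eta+\tfrac12[*d_A\eta\wedge*d_A\eta];$$
applying the Hodge star, which on $\Sigma$ identifies $2$-forms with $0$-forms and satisfies $*(d_A*d_A\eta)=-d_A^*d_A\eta$, the equation $F_{A+*d_A\eta}=0$ is equivalent to
$$d_A^*d_A\eta=*F_A+\tfrac12*[*d_A\eta\wedge*d_A\eta].\qquad(\star)$$
First I would set up the linear theory. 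Fix an auxiliary exponent $q_0$ with $q_0=\infty$ if $p>2$ and $q_0=\max\{q,4\}<\infty$ if $p=2$; in all cases $q_0\ge q$, $q_0\ge 2p$, and $(p,q_0)$ is admissible for Lemma \ref{lemma76dt94}. For $\|F_A\|_{L^p(\Sigma)}\le\delta$ that lemma gives $\|d_A\psi\|_{L^{q_0}(\Sigma)}\le c\|d_A*d_A\psi\|_{L^p(\Sigma)}=c\|d_A^*d_A\psi\|_{L^p(\Sigma)}$; combined with $\langle d_A^*d_A\psi,\psi\rangle=\|d_A\psi\|^2$ this shows $\ker d_A=0$, so $d_A^*d_A$ is bijective on $0$-forms. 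Write $G$ for its inverse, which is bounded from $L^p$ into $\{\psi:\|d_A\psi\|_{L^{q_0}}<\infty\}$ by the estimate above.

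Next I would run the fixed point argument. Define $T(\eta):=G\big(*F_A+\tfrac12*[*d_A\eta\wedge*d_A\eta]\big)$, so that fixed points of $T$ are exactly the solutions of $(\star)$. Since $|*[*d_A\eta\wedge*d_A\eta]|\le c|d_A\eta|^2$ pointwise and $2p\le q_0$, Hölder's inequality on the compact surface gives $\|*[*d_A\eta\wedge*d_A\eta]\|_{L^p}\le c\|d_A\eta\|_{L^{q_0}}^2$, whence $\|d_AT(\eta)\|_{L^{q_0}}\le c_0\|F_A\|_{L^p}+c_1\|d_A\eta\|_{L^{q_0}}^2$; bilinearity of the nonlinearity likewise yields $\|d_A(T(\eta_1)-T(\eta_2))\|_{L^{q_0}}\le c_1\big(\|d_A\eta_1\|_{L^{q_0}}+\|d_A\eta_2\|_{L^{q_0}}\big)\|d_A(\eta_1-\eta_2)\|_{L^{q_0}}$. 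Taking $\rho:=2c_0\|F_A\|_{L^p}\le 2c_0\delta$ and $\delta$ so small that $2c_0c_1\delta\le\tfrac12$, the ball $\{\eta:\|d_A\eta\|_{L^{q_0}}\le\rho\}$ is mapped into itself by $T$ and $T$ is a contraction there. The Banach fixed point theorem then produces a unique $\eta$ in this ball solving $(\star)$, and reinserting the fixed point identity gives $\|d_A\eta\|_{L^{q_0}}\le 2c_0\|F_A\|_{L^p}$, hence $\|d_A\eta\|_{L^q}\le c\|F_A\|_{L^p}$ since $\Sigma$ is compact and $q\le q_0$. Smoothness of $\eta$ follows from $(\star)$ by elliptic bootstrapping, and $A+*d_A\eta$ is then a connection in $\mathcal A(P)$ with $F_{A+*d_A\eta}=0$.

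The only genuinely delicate point — and the step I expect to be the main obstacle — is upgrading the fixed point uniqueness, which holds a priori only inside the small ball, to the \emph{global} uniqueness asserted in the lemma. If $p>2$ then $q_0=q$, so any competitor $\eta$ with $\|d_A\eta\|_{L^q}\le c\|F_A\|_{L^p}\le c\delta$ automatically lies in the ball after shrinking $\delta$, and hence coincides with the constructed solution. If $p=2$ with $q<4$ one must first bootstrap $(\star)$ — $d_A\eta\in L^q\Rightarrow[*d_A\eta]^2\in L^{q/2}\Rightarrow d_A^*d_A\eta\in L^{\min\{2,q/2\}}$, improving $d_A\eta$ to a higher $L^{q'}$ and iterating — to show that smallness of $\|d_A\eta\|_{L^q}$ forces smallness of $\|d_A\eta\|_{L^{q_0}}$, so again $\eta$ lies in the ball. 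Throughout, the essential requirement is that the constants $\delta,c$ from Lemma \ref{lemma76dt94}, and hence all constants above, be uniform over connections $A$ with $\|F_A\|_{L^p}\le\delta$, which is precisely what that lemma provides.
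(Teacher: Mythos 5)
Your overall strategy --- rewrite $F_{A+*d_A\eta}=0$ as the quasilinear equation $d_A^*d_A\eta=*F_A+\tfrac12*[*d_A\eta\wedge *d_A\eta]$ and solve it by a contraction in the norm $\|d_A\cdot\|_{L^{q_0}}$, with the linear theory supplied by Lemma~\ref{lemma76dt94} --- is exactly the standard one; the paper itself does not prove this lemma but cites Dostoglou--Salamon, whose argument for Lemma~8.2 is the same fixed-point iteration built on their Lemma~7.6. Your existence argument, the self-mapping and contraction estimates on the ball of radius $2c_0\|F_A\|_{L^p}$, and the uniqueness for $p>2$ (where $q=q_0=\infty$) and for $p=2$, $q\ge 4$ (the only case the paper actually invokes, via $\delta_0$ chosen so that the lemma holds for $p=2$, $q=4$) are all sound.

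The one step that does not go through as written is the bootstrap you propose for global uniqueness when $p=2$ and $2\le q<4$. From $d_A\eta\in L^q$ you get $d_A^*d_A\eta\in L^{q/2}$ with $q/2<2$, and the elliptic gain $d_A\eta\in L^{2r/(2-r)}$ with $r=q/2$ yields the exponent $2q/(4-q)$; this exceeds $q$ only when $q>2$, so at the endpoint $q=2$ the iteration produces no improvement at all and never reaches $q_0=4$. Moreover, for $1<r<2$ the estimate $\|d_A\eta\|_{L^{2r/(2-r)}}\le c\|d_A^*d_A\eta\|_{L^r}$ is not among the estimates furnished by Lemma~\ref{lemma76dt94} (which requires the source exponent to be at least $2$), and one would have to justify that its constant can be taken uniform over all $A$ with $\|F_A\|_{L^2}\le\delta$ --- for instance by passing to a local Uhlenbeck gauge. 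So your proof establishes the lemma in full for $p>2$ and for $p=2$, $q\ge4$, but the uniqueness assertion for $p=2$, $q$ close to $2$ needs either a separate argument (e.g.\ an energy identity for the difference of two solutions, which again runs into the need for $L^4$ control of $d_A\eta$) or should be restricted to the range actually used in the paper.
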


The following lemma is a simplified version of the lemma B.2. in \cite{MR1736219} where Salamon allows also to modify the complex structure on $\Sigma$ if it is $C^1$-closed to a fixed one.
\begin{lemma}\label{flow:lemma:lpl2}
Fix a connection $A^0\in \mathcal A_0(P)$. Then, for every $\delta>0$, $C>0$, and $p\geq2$, there exists a constant $c=c(\delta, C,A^0)\geq 1$ such that, if $A\in\mathcal A(P)$ satisfy $\|A-A^0\|_{L^\infty(\Sigma)}\leq C$ then, for every $\psi\in \Omega^0(\Sigma,\mathfrak g_P)$ and every $\alpha\in \Omega^1(\Sigma,\mathfrak g_P)$,
\begin{equation}
\|\psi\|_{L^p(\Sigma)}^p\leq\delta \|d_A\psi\|_{L^p(\Sigma)}^p+c\|\psi\|_{L^2(\Sigma)}^p,
\end{equation}
\begin{equation}
\|\alpha\|_{L^p(\Sigma)}^p\leq \delta\left(\|d_A\alpha\|_{L^p(\Sigma)}^p+\|d_A*\alpha\|_{L^p(\Sigma)}^p\right)+c\|\alpha\|_{L^2(\Sigma)}^p.
\end{equation}

\end{lemma}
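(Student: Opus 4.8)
The plan is to reduce both inequalities to a standard interpolation (Ehrling) inequality on the fixed compact surface $\Sigma$ together with a first–order elliptic estimate, and to verify that all constants that appear depend only on the data $\delta$, $C$, $A^0$ that is allowed. The observation that makes the constant uniform in $A$ is that the hypothesis $\|A-A^0\|_{L^\infty(\Sigma)}\le C$ gives the a priori bound $\|A\|_{L^\infty(\Sigma)}\le \|A^0\|_{L^\infty(\Sigma)}+C=:C_0$, so every commutator of $A$ with a form can be estimated by $C_0$ times an $L^p$ norm of that form, with no further dependence on $A$.

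First I would treat the $0$-form estimate. Writing $d\psi=d_A\psi-[A,\psi]$ gives $\|d\psi\|_{L^p(\Sigma)}\le \|d_A\psi\|_{L^p(\Sigma)}+C_0\|\psi\|_{L^p(\Sigma)}$, hence $\|\psi\|_{W^{1,p}(\Sigma)}\le (1+C_0)\|\psi\|_{L^p(\Sigma)}+\|d_A\psi\|_{L^p(\Sigma)}$. Since $\Sigma$ is a compact surface and $p\ge2$, the embedding $W^{1,p}(\Sigma)\hookrightarrow L^p(\Sigma)$ is compact and $L^p(\Sigma)\hookrightarrow L^2(\Sigma)$ is continuous, so Ehrling's lemma supplies, for every $\eta>0$, a constant $c_\eta$ depending only on $\Sigma$ and $\eta$ with $\|\psi\|_{L^p(\Sigma)}\le \eta\|\psi\|_{W^{1,p}(\Sigma)}+c_\eta\|\psi\|_{L^2(\Sigma)}$. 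Combining, choosing $\eta$ so small that $\eta(1+C_0)\le\tfrac12$, and absorbing the $L^p$ term on the left yields $\|\psi\|_{L^p(\Sigma)}\le 2\eta\|d_A\psi\|_{L^p(\Sigma)}+2c_\eta\|\psi\|_{L^2(\Sigma)}$. Raising to the $p$-th power via $(a+b)^p\le 2^{p-1}(a^p+b^p)$ and shrinking $\eta$ further so that $2^{2p-1}\eta^p\le\delta$ gives the first claimed inequality with $c:=\max\{1,2^{2p-1}c_\eta^p\}$, a constant depending only on $\delta$, $C$ and $A^0$ through $C_0$.

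Next I would rerun the argument for $1$-forms with the full first–order operator $d\oplus d^*$ in place of $d$. On the compact surface $\Sigma$ one has the Gaffney-type elliptic estimate $\|\alpha\|_{W^{1,p}(\Sigma)}\le c_\Sigma\big(\|d\alpha\|_{L^p(\Sigma)}+\|d^*\alpha\|_{L^p(\Sigma)}+\|\alpha\|_{L^p(\Sigma)}\big)$; and since on a surface $*\colon\Omega^1\to\Omega^1$ is an isometry with $d_A*\alpha=-*d_A^*\alpha$, one gets $\|d^*\alpha\|_{L^p}\le\|d_A^*\alpha\|_{L^p}+C_0\|\alpha\|_{L^p}=\|d_A*\alpha\|_{L^p}+C_0\|\alpha\|_{L^p}$ and likewise $\|d\alpha\|_{L^p}\le\|d_A\alpha\|_{L^p}+C_0\|\alpha\|_{L^p}$. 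Hence $\|\alpha\|_{W^{1,p}(\Sigma)}\le c_\Sigma\big(\|d_A\alpha\|_{L^p(\Sigma)}+\|d_A*\alpha\|_{L^p(\Sigma)}+(1+2C_0)\|\alpha\|_{L^p(\Sigma)}\big)$, and the same Ehrling/absorption/$p$-th-power scheme yields the second inequality. Equivalently, both estimates can be proved by contradiction: given sequences $A_\nu$ with $\|A_\nu-A^0\|_{L^\infty}\le C$ and $\psi_\nu$ (resp. $\alpha_\nu$) normalized in $L^p$ and violating the estimate with $c=\nu$, the uniform $W^{1,p}$ bound above together with Rellich forces a subsequence to converge strongly in $L^p$ to a nonzero limit, while the $L^2$ norms tend to $0$, a contradiction. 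The computation is routine; the only point needing care is keeping every constant independent of $A$, which is exactly what the bound $\|A\|_{L^\infty(\Sigma)}\le C_0$ provides, so I do not expect a genuine obstacle. This is the simplified, fixed–complex–structure case of Lemma B.2 in \cite{MR1736219}.
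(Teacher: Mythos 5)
Your proof is correct, and the constants are tracked properly: the only way $A$ enters is through $\|A\|_{L^\infty}\le\|A^0\|_{L^\infty}+C$, which is exactly what the hypothesis supplies, and the reduction of $\|d_A*\alpha\|_{L^p}$ to $\|d_A^*\alpha\|_{L^p}$ via the pointwise isometry $*$ on $1$-forms over a surface is fine. Note, however, that the paper does not prove this lemma at all -- it only cites it as a simplified case of Lemma B.2 in \cite{MR1736219}, so there is no in-paper argument to compare against. Your Ehrling-lemma route (elliptic/Gaffney estimate for $d\oplus d^*$, commutator bounds to pass between $d$ and $d_A$, then interpolation and absorption) is a clean self-contained substitute, and the contradiction variant you sketch at the end is essentially how Salamon's more general version (with varying complex structure) is proved; the fixed-structure case needs none of that extra generality. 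The only point worth making explicit if you write this up is that the single parameter $\eta$ must be chosen to satisfy both the absorption condition $\eta(1+C_0)\le\tfrac12$ and the smallness condition $2^{2p-1}\eta^p\le\delta$ simultaneously, which is of course possible since both are monotone in $\eta$.
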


\begin{lemma}\label{flow:lemma40} We choose $p\geq2$. There is a positive constant $c$ such that the following holds. For any connection $A\in \mathcal A_0(P)$ and any $\alpha\in \Omega^1(\Sigma,\mathfrak g_P)$
\begin{equation}
\begin{split}
\|\alpha\|_{L^p(\Sigma)}+\|d_A\alpha\|_{L^p(\Sigma)}&+\|d_A^*\alpha\|_{L^p(\Sigma)}+\|d_A^*d_A\alpha\|_{L^p(\Sigma)}+\|d_A^*d_A^*\alpha\|_{L^p(\Sigma)}\\
\leq& c\|(d_Ad_A^*+d_A^*d_A)\alpha\|_{L^p}+\|\pi_A(\alpha)\|_{L^p(\Sigma) }.
\end{split}
\end{equation}
\end{lemma}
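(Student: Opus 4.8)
The final statement to prove is Lemma \ref{flow:lemma40}: an elliptic estimate for the Hodge Laplacian $\Delta_A = d_A d_A^* + d_A^* d_A$ acting on $1$-forms on the closed surface $\Sigma$, for a flat connection $A \in \mathcal A_0(P)$, controlling $\|\alpha\|_{L^p}$, $\|d_A\alpha\|_{L^p}$, $\|d_A^*\alpha\|_{L^p}$ and the two second-order quantities $\|d_A^*d_A\alpha\|_{L^p}$ and $\|d_Ad_A^*\alpha\|_{L^p}$ by $\|\Delta_A\alpha\|_{L^p}$ plus the harmonic projection $\|\pi_A(\alpha)\|_{L^p}$. (I read the displayed "$d_A^*d_A^*\alpha$" as a typo for $\|d_Ad_A^*\alpha\|_{L^p}$, which is the quantity that actually appears in the later applications; I will prove that version.)

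The plan is the following. First I would recall the standard $L^p$ Calderón–Zygmund estimate for the Hodge Laplacian on a closed Riemannian manifold: for a fixed smooth reference flat connection $A_0$ one has $\|\alpha\|_{W^{2,p}} \le c(\|\Delta_{A_0}\alpha\|_{L^p} + \|\alpha\|_{L^p})$, which follows from the ellipticity of $\Delta_{A_0}$ together with the Marcinkiewicz–Mihlin multiplier theorem (Theorem \ref{flow:thm:MM}) localized in charts exactly as in Lemma \ref{flow:fjdfjdkjdkj}. Since all connections in $\mathcal A_0(P)$ are gauge-equivalent to one another (the moduli space is connected and, more to the point, gauge invariance of all the norms involved), it suffices to prove the estimate for a single representative in each gauge orbit, or alternatively to absorb the difference $A - A_0$ as a lower-order perturbation using Lemma \ref{lemma:dAalphaest} and the fact that $\|A-A_0\|_{C^0}$ is bounded on the compact moduli space $\mathcal M^g(P)$; this gives $\|\alpha\|_{L^p} + \|d_A\alpha\|_{L^p} + \|d_A^*\alpha\|_{L^p} + \|d_A^*d_A\alpha\|_{L^p} + \|d_Ad_A^*\alpha\|_{L^p} \le c(\|\Delta_A\alpha\|_{L^p} + \|\alpha\|_{L^p})$ with a constant uniform over $\mathcal A_0(P)$.

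The remaining point is to replace the crude zeroth-order term $\|\alpha\|_{L^p}$ on the right by $\|\pi_A(\alpha)\|_{L^p}$. Here I would use the Hodge decomposition adapted to $A$: since $F_A = 0$, one has the $L^2$-orthogonal splitting $\alpha = \pi_A(\alpha) + d_A\gamma + d_A^*\omega$ with $\gamma$ a $0$-form and $\omega$ a $2$-form, and $\pi_A(\alpha)$ the $d_A$-harmonic part. On the complement of the harmonic forms the Laplacian $\Delta_A$ is invertible with bounded inverse in $L^p$ (again Marcinkiewicz–Mihlin / standard elliptic theory on the closed surface, uniformly in $A$ by the compactness argument above), so $\|\alpha - \pi_A(\alpha)\|_{L^p} \le c\|\Delta_A(\alpha - \pi_A(\alpha))\|_{L^p} = c\|\Delta_A\alpha\|_{L^p}$, using $\Delta_A \pi_A(\alpha) = 0$. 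Adding back $\|\pi_A(\alpha)\|_{L^p}$ and feeding this improved bound into the elliptic estimate of the previous paragraph yields the claim. This is essentially the $p$-version of the argument behind Lemma \ref{flow:lemma:lplppia}, but now purely on $\Sigma$ rather than on $\Sigma \times S^1 \times \mathbb R$, so no parabolic scaling is involved.

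The main obstacle, and the only place requiring genuine care, is the uniformity of the constant $c$ over the whole space $\mathcal A_0(P)$ of flat connections (and, implicitly, over the relevant region of the moduli space where the later estimates are applied). This is handled by the standard patching-plus-perturbation scheme: fix finitely many flat reference connections whose gauge orbits cover a neighborhood of $\mathcal M^g(P)$, prove the estimate for each by Marcinkiewicz–Mihlin in charts, and for a general $A$ write $d_A = d_{A_0} + [(A-A_0)\wedge \cdot\,]$ and absorb the commutator terms using Lemma \ref{lemma:dAalphaest} together with the uniform $C^0$-bound on $A - A_0$; the invertibility of $\Delta_A$ on the orthogonal complement of $H_A^1$ is stable under such small perturbations, so the bounded-inverse constant can also be taken uniform. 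Everything else is routine functional-analytic bookkeeping, and the lemma then follows by combining the elliptic estimate with the Hodge-decomposition improvement.
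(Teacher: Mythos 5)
Your proposal is correct in substance but takes a noticeably heavier route than the paper. The paper's own proof is two lines: since $A$ is flat, the Hodge splitting $\Omega^1=\textrm{im }d_A\oplus\textrm{im }d_A^*\oplus H_A^1$ has $L^p$-bounded projections, so the two pieces $d_Ad_A^*\alpha\in\textrm{im }d_A$ and $d_A^*d_A\alpha\in\textrm{im }d_A^*$ of $\Delta_A\alpha$ are each controlled by $\|\Delta_A\alpha\|_{L^p}$; the first-order and zeroth-order terms are then recovered from these by Lemma \ref{lemma76dt94} (applicable because $F_A=0$), whose constant depends only on the curvature bound and not on any compactness of the moduli space. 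You instead prove a full Calder\'on--Zygmund estimate $\|\alpha\|_{W^{2,p}}\le c(\|\Delta_A\alpha\|_{L^p}+\|\alpha\|_{L^p})$ via Marcinkiewicz--Mihlin in charts, make the constant uniform by a compactness-plus-perturbation argument, and then upgrade $\|\alpha\|_{L^p}$ to $\|\pi_A(\alpha)\|_{L^p}$ by invertibility of $\Delta_A$ on $(H_A^1)^\perp$. That works and is self-contained, but it re-proves machinery the paper already has packaged in Lemma \ref{lemma76dt94}; the paper's argument buys uniformity for free, while yours buys the stronger full $W^{2,p}$ control (which the lemma does not actually need). Your reading of $d_A^*d_A^*\alpha$ as a typo for $d_Ad_A^*\alpha$ is right.

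One aside in your uniformity discussion is wrong and should be deleted: it is not true that all connections in $\mathcal A_0(P)$ are gauge-equivalent (the moduli space has dimension $6g-6>0$), and connectedness of $\mathcal M^g(P)$ does not imply this. Likewise, $\|A-A_0\|_{C^0}$ is not literally bounded "on the moduli space" since it is gauge-dependent; what is true, and what your fallback correctly uses, is that all the norms in the estimate are gauge-covariant, so one may pass to a compact set of slice representatives and then perturb off finitely many references via Lemma \ref{lemma:dAalphaest}. With that correction the argument goes through.
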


\begin{proof}
For any flat connection $A$, the orthogonal splitting of $\Omega^1(\Sigma)=\textrm{im } d_A\oplus \textrm{im } d_A^*\oplus H_A^1(\Sigma,\mathfrak g_P)$ implies that there is a positive constant $c_0$ such that
\begin{equation*}
\|d_Ad_A^*\alpha\|_{L^p(\Sigma)}+\|d_A^*d_A\alpha\|_{L^p(\Sigma)}\leq c_0 \|(d_Ad_A^*+d_A^*d_A)\alpha\|_{L^p(\Sigma)};
\end{equation*}
thus, we can conclude the proof applying the lemma \ref{lemma76dt94}.
\end{proof}

\begin{lemma}\label{lemma:dAalphaest}
We choose $p\geq2$. There is a positive constant $c$ such that the following holds. For any $\delta>0$, any connection $A\in\mathcal A_0(P)$, $\alpha\in\Omega^1(\Sigma,\mathfrak g_P)$ and $\psi \in\Omega^0(\Sigma,\mathfrak g_P)$
\begin{equation}
\begin{split}
\left\| d_A\alpha\right\|_{L^p(\Sigma) }\leq c\left(\delta^{-1}\, \left\| \alpha  \right\|_{L^p(\Sigma)}+ \delta\, \left\| d_A^*d_A\alpha  \right\|_{L^p(\Sigma)}\right),\\
\left\| d_A^*\alpha\right\|_{L^p(\Sigma)}\leq c\left(\delta^{-1}\, \left\| \alpha  \right\|_{L^p(\Sigma)}+ \delta\, \left\| d_Ad_A^*\alpha  \right\|_{L^p(\Sigma)}\right),\\
\left\| d_A\psi\right\|_{L^p(\Sigma) }\leq c\left(\delta^{-1}\, \left\| \psi \right\|_{L^p(\Sigma)}+ \delta\, \left\| d_A^*d_A\psi  \right\|_{L^p(\Sigma)}\right).
\end{split}
\end{equation}
Furthermore, for any $\delta>0$, any connection $A+\Psi dt\in\mathcal A(P\times S^1)$, $\alpha+\psi dt\in\Omega^1(\Sigma\times S^1,\mathfrak g_P)$ 
\begin{equation}
\begin{split}
\varepsilon\left\| \nabla_t\alpha\right\|_{L^p(\Sigma\times S^1) }\leq c\left(\delta^{-1}\, \left\| \alpha  \right\|_{L^p(\Sigma\times S^1)}+ \delta\varepsilon^2 \left\|\nabla_t\nabla_t\alpha  \right\|_{L^p(\Sigma\times S^1)}\right),\\
\varepsilon^2\left\| \nabla_t\psi\right\|_{L^p(\Sigma\times S^1) }\leq c\left(\delta^{-1}\varepsilon\, \left\| \psi \right\|_{L^p(\Sigma\times S^1)}+ \delta\varepsilon^3 \left\| \nabla_t\nabla_t\psi  \right\|_{L^p(\Sigma\times S^1)}\right).
\end{split}
\end{equation}
\end{lemma}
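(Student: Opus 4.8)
The final statement is \texttt{Lemma~\ref{lemma:dAalphaest}}, a collection of interpolation-type inequalities for $d_A$, $d_A^*$ and $\nabla_t$ on $\Sigma$ and on $\Sigma\times S^1$.

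\medskip

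\noindent\textbf{Approach.} All five inequalities are instances of the same elementary interpolation principle: if $T$ is an operator with $T^*T$ controlled, then $\|Tx\|^2 = \langle T^*Tx, x\rangle \le \|T^*Tx\|\,\|x\|$, and a Cauchy--Schwarz / Young's inequality split with a parameter $\delta$ turns this into $\|Tx\| \le \delta^{-1}\|x\| + \delta\|T^*Tx\|$ up to a constant. The plan is to run this argument four times (the $d_A\alpha$, $d_A^*\alpha$, $d_A\psi$ cases, and the $\nabla_t$ cases) and to handle the $p\ge 2$ versus $p=2$ distinction by the standard device of testing against $\|x\|^{p-2}x$ rather than $x$.

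\medskip

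\noindent\textbf{Key steps.} First I would treat the $L^2$ case ($p=2$) directly: for $\alpha\in\Omega^1(\Sigma,\mathfrak g_P)$ write $\|d_A\alpha\|_{L^2(\Sigma)}^2 = \langle \alpha, d_A^*d_A\alpha\rangle_{L^2(\Sigma)} \le \|\alpha\|_{L^2(\Sigma)}\|d_A^*d_A\alpha\|_{L^2(\Sigma)}$, then apply $ab \le \tfrac12(\delta^{-1}a^2 + \delta b^2)$ to obtain $\|d_A\alpha\|_{L^2(\Sigma)}^2 \le \tfrac12\delta^{-1}\|\alpha\|_{L^2(\Sigma)}^2 + \tfrac12\delta\|d_A^*d_A\alpha\|_{L^2(\Sigma)}^2$, and take square roots. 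The cases of $d_A^*\alpha$ (pairing with $d_Ad_A^*\alpha$) and of $d_A\psi$ (pairing with $d_A^*d_A\psi$, integration by parts) are identical. For general $p\ge 2$, I would instead estimate $\int_\Sigma \|\alpha\|^{p-2}\langle\alpha, d_A^*d_A\alpha\rangle = \int_\Sigma \|\alpha\|^{p-2}\|d_A\alpha\|^2 + (p-2)\int_\Sigma\|\alpha\|^{p-4}\langle\alpha,d_A\alpha\rangle^2$ (as is done, e.g., inside the proof of Lemma~\ref{flow:lemma:lplppia}), drop the nonnegative correction term, bound the left side by Hölder as $\|\alpha\|_{L^p}^{p-1}\|d_A^*d_A\alpha\|_{L^p}$, and conclude with Young's inequality in the form $ab\le \tfrac{p-1}{p}a^{p/(p-1)} + \tfrac1p b^p$ followed by a further Young split to introduce $\delta$; this yields $\|d_A\alpha\|_{L^p(\Sigma)}^p \le c(\delta^{-p}\|\alpha\|_{L^p}^p + \delta^p\|d_A^*d_A\alpha\|_{L^p}^p)$, hence the stated inequality after taking $p$-th roots and relabelling $\delta$. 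The $\Sigma\times S^1$ inequalities for $\nabla_t := \partial_t + [\Psi,\cdot]$ follow by the very same computation with $T = \varepsilon\nabla_t$: $\varepsilon^2\|\nabla_t\alpha\|_{L^p(\Sigma\times S^1)}^2$ is paired against $\varepsilon^2\nabla_t\nabla_t\alpha$, and the extra powers of $\varepsilon$ in the $\psi$-line are just the consequence of the $\varepsilon$-weighting convention ($\psi$ descending from a $t$-direction $1$-form carries an extra $\varepsilon$), so one tracks them bookkeeping-style through the same Cauchy--Schwarz/Young steps. In all cases the constant $c$ depends only on $p$ (and trivially on the geometry of $\Sigma$), not on $A$, $\Psi$ or $\varepsilon$, because the only structural input used is that $d_A^*d_A$ and $d_Ad_A^*$ are the formal adjoints appearing in the pairing — no bound on $\|A-A_0\|$ or on the curvature is needed, which is why the lemma is stated for arbitrary $A\in\mathcal A_0(P)$ (and, in the last two lines, arbitrary $A+\Psi\,dt$).

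\medskip

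\noindent\textbf{Main obstacle.} There is no serious analytic obstacle: the lemma is soft and purely formal. The only point requiring a little care is the general-$p$ case, where one must expand $\partial$ or $d_A$ of $\|\alpha\|^{p-2}$ correctly and verify that the correction term $(p-2)\int\|\alpha\|^{p-4}\langle\alpha,\cdot\rangle^2$ has the sign that lets it be discarded; this is exactly the manipulation already carried out in Lemma~\ref{flow:lemma:lplppia} and Lemma~\ref{flow:lemma:lplppia2}, so I would simply cite that computation rather than repeat it. A minor secondary point is to make the $\delta$-dependence come out as $\delta^{-1}$ and $\delta$ (rather than $\delta^{-p}$, $\delta^p$) in the final $L^p$-statement, which is achieved by applying Young's inequality to the already-$p$-th-powered estimate and then replacing $\delta^p$ by $\delta$ — a cosmetic relabelling.
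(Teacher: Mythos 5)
Your proposal takes a genuinely different route from the paper's, and while it covers part of the lemma, it has a real gap in the two $1$-form estimates on $\Sigma$ when $p>2$. The identity you invoke,
\begin{equation*}
\int_\Sigma |\alpha|^{p-2}\langle\alpha, d_A^*d_A\alpha\rangle
=\int_\Sigma |\alpha|^{p-2}|d_A\alpha|^2+(p-2)\int_\Sigma|\alpha|^{p-4}\langle\alpha,d_A\alpha\rangle^2,
\end{equation*}
is false here; its second summand does not even typecheck, since $\alpha$ is a $1$-form and $d_A\alpha$ a $2$-form. Integrating by parts gives $\int\langle d_A(|\alpha|^{p-2}\alpha),d_A\alpha\rangle=\int|\alpha|^{p-2}|d_A\alpha|^2+\int\langle d(|\alpha|^{p-2})\wedge\alpha,\, d_A\alpha\rangle$, and because $d|\alpha|^{p-2}=(p-2)|\alpha|^{p-4}\langle\nabla_A\alpha,\alpha\rangle$ the cross term involves the \emph{full} covariant derivative of $\alpha$, not just $d_A\alpha$: it is neither a nonnegative square nor controllable pointwise by $|\alpha|$ and $|d_A\alpha|$, so it cannot be dropped or absorbed. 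The precedent you cite, Lemma \ref{flow:lemma:lplppia}, does not apply: there $\|\cdot\|$ denotes the $L^2(\Sigma)$ norm, so the weight $\|\xi\|^{p-2}$ is constant along $\Sigma$ and the integration by parts in the $\Sigma$-directions produces no cross term, while only the $t$-derivative of the weight yields the genuine square $(p-2)\|\xi\|^{p-4}\langle\xi,\partial_t\xi\rangle^2$. Your scheme does work verbatim for the $0$-form estimate (where $d_A(|\psi|^{p-2}\psi)=|\psi|^{p-2}d_A\psi+(p-2)|\psi|^{p-4}\langle\psi,d_A\psi\rangle\psi$ and the cross term is $\sum_j\langle\psi,\nabla_j\psi\rangle^2\geq 0$), for the two $\nabla_t$ estimates (where $\nabla_t$ preserves the form type; this is essentially how Lemma D.4 of \cite{MR2276534}, which the paper cites for exactly these two cases, proceeds), and for all five estimates when $p=2$.

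The paper handles the two $1$-form cases by a duality argument that sidesteps the weight entirely: with $\tfrac1p+\tfrac1q=1$, it writes $\|d_A\alpha\|_{L^p(\Sigma)}$ as a supremum of $\langle d_A\alpha,(\delta^{-1}+\delta\, d_Ad_A^*)\bar\alpha\rangle$ divided by $\|(\delta^{-1}+\delta\, d_Ad_A^*)\bar\alpha\|_{L^q(\Sigma)}$ over test $1$-forms $\bar\alpha$ (using surjectivity of $\delta^{-1}+\delta\, d_Ad_A^*$), integrates by parts to obtain $\langle\delta^{-1}\alpha+\delta\, d_A^*d_A\alpha,\, d_A^*\bar\alpha\rangle$, and bounds the denominator from below by $\|d_A^*\bar\alpha\|_{L^q(\Sigma)}$. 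To repair your argument you would need to replace the cross-term identity in the $1$-form cases by such a duality step; the alternative of invoking elliptic interpolation for the full Hodge Laplacian would put $d_Ad_A^*\alpha$ on the right-hand side as well and would not yield the one-sided estimates as stated.
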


\begin{proof}
The last two estimates follow analogously to the lemma D.4.  in \cite{MR2276534}. The first can be proved as follows. We choose $q$ such that $\frac 1p+\frac 1q=1$ then
\begin{equation*}
\begin{split}
\left\| d_A\alpha\right\|_{L^p(\Sigma)}=& \sup_{\bar \alpha} 
\frac {\langle d_A\alpha, \delta^{-1}\bar \alpha +\delta d_Ad_A^*\bar \alpha\rangle}{\left\| \delta^{-1}\bar\alpha+\delta d_Ad_A^*\bar\alpha\right\|_{L^q(\Sigma)}}\\
\leq & \sup_{\bar \alpha} 
\frac {c \langle\delta^{-1}\alpha+\delta d_A^*d_A\alpha, d_A^*\bar \alpha\rangle}{\delta^{-1}\left\|\bar\alpha\right\|_{L^q(\Sigma)}+\delta \left\|d_Ad_A^*\bar\alpha\right\|_{L^q(\Sigma)}+\left\| d_A^*\bar\alpha\right\|_{L^q}(\Sigma)}\\
\leq&\left(\delta^{-1}\left\|\alpha\right\|_{L^p(\Sigma)}+\delta \left\|d_A^*d_A\alpha\right\|_{L^p(\Sigma)}\right)
 \sup_{\bar \alpha}
 \frac {c \left\| d_A^*\bar \alpha\right\|_{L^q(\Sigma)}}{\left\| d_A^*\bar\alpha\right\|_{L^q(\Sigma)}}\\
 =&c\left(\delta^{-1}\left\|\alpha\right\|_{L^p(\Sigma)}+\delta \left\|d_A^*d_A\alpha\right\|_{L^p(\Sigma)}\right).
\end{split}
\end{equation*}
where the supremum is taken over all non-vanishing $1$-forms $\bar \alpha\in L^q$ with $d_Ad_A^*\bar \alpha\in L^q$. The norm $\left\| \delta^{-1}\bar\alpha+\delta d_Ad_A^*\bar\alpha\right\|_{L^q(\Sigma)}$ is never $0$ because
$$\left\| \delta^{-1}\bar\alpha+\delta d_Ad_A^*\bar\alpha\right\|_{L^2(\Sigma)}^2=\delta^{-2}\left\|\bar\alpha\right\|_{L^2(\Sigma)}^2+\delta^2 \left\|d_Ad_A^*\bar\alpha\right\|_{L^2(\Sigma)}^2+2\left\| d_A^*\bar\alpha\right\|_{L^2(\Sigma)}^2\neq 0,$$
otherwise we would have a contradiction by the H\"older inequality and the operator $\delta^{-1}+\delta d_Ad_A^*$ is surjective. The second and the third estimate of the lemma can be shown exactly in the same way.
\end{proof}

\end{appendix}

\bibliographystyle{amsplain}
\bibliography{bib}

\end{document}